\numberwithin{equation}{section}
\theoremstyle{plain}
\newtheorem{thm}{Theorem}[section]
\newtheorem{lem}[thm]{Lemma}
\newtheorem{prop}[thm]{Proposition}
\theoremstyle{definition}
\newtheorem{defn}[thm]{Definition}
\theoremstyle{remark}
\newtheorem{rem}[thm]{Remark}
\newcommand{\LieTr}[2]{e^{-#1} #2 e^{#1}}
\DeclareMathOperator{\diag}{diag}
\DeclareMathOperator{\dist}{dist}
\newcommand{\Op}{{\rm Op}}
\newcommand{\Ops}{{\rm OP}S}
\renewcommand{\bar}{\overline}
\newcommand{\even}{{\rm even}}
\newcommand{\odd}{{\rm odd}}
\newcommand{\ora}[1]{\vec{#1}}
\newcommand{\pa}{\partial}
\newcommand{\vs}{\varsigma}
\newcommand{\vphi}{\varphi}
\newcommand{\IK}{\mathfrak K}
\newcommand{\sign}{{\rm sign}\,}
\newcommand{\normk}[2]{\| #1 \|_{#2}^{k_0,\upsilon}}
\renewcommand{\whi}{\widehat \imath}
\renewcommand{\wti}{\widetilde \imath}
\newcommand{\acca}{\fH}
\title{\bf Traveling quasi-periodic  
water waves \\
 with constant vorticity}
\begin{document}

\date{}

\author{M. Berti, 
 L. Franzoi,
 A. Maspero\footnote{
International School for Advanced Studies (SISSA), Via Bonomea 265, 34136, Trieste, Italy. 
 \textit{Emails: } \texttt{berti@sissa.it}, \texttt{luca.franzoi@sissa.it}, \texttt{alberto.maspero@sissa.it} 
 }}

\maketitle

\noindent
{\bf Abstract.}
We prove the first bifurcation result of time quasi-periodic {\it traveling} waves 
solutions for space periodic water waves with vorticity. In particular 
we prove existence of small amplitude 
time quasi-periodic  solutions of the gravity-capillary water waves equations 
with  {\it constant vorticity},  for a bidimensional fluid over a flat bottom 
delimited by a space-periodic free interface. 
These quasi-periodic solutions exist for all the values of depth, gravity and vorticity,  and restricting 
the surface tension  to a Borel set of asymptotically full Lebesgue measure. 

\smallskip

\noindent
{\it Keywords:} Traveling waves, Water waves, vorticity, KAM for PDEs,  quasi-periodic solutions.

\smallskip

\noindent
{\it MSC 2010:} 76B15,  37K55, 76D45  (37K50, 35S05).

\tableofcontents

\section{Introduction and main result}

The search for traveling surface waves in  inviscid  fluids is a very important problem in fluid mechanics, widely studied  
since the pioneering work of Stokes \cite{stokes} in 1847. 
The existence of  steady traveling  waves,
namely solutions which look stationary in a moving frame,  
either periodic or localized in space, 
  is nowadays  well understood in many different situations, 
mainly  for bidimensional fluids. 

On the other hand, 
the natural question regarding the existence of  {\it time quasi-periodic traveling 
waves} --which can not be reduced to steady solutions in a moving frame-- has been
not answered so far. This is the goal of the present paper. We  consider  space periodic 
waves. Major difficulties in this project concern the presence of  `small divisors"  
and the quasi-linear nature of the equations. 
Related difficulties appear in the search of time periodic standing waves
which have been constructed  in the last  years
in a series of papers 
by Iooss, Plotnikov, Toland \cite{PlTo, IPT, IP-SW1,IP-Mem-2009} for pure gravity waves,  by 
Alazard-Baldi \cite{AB} in presence of surface tension and subsequently extended
to time quasi-periodic standing waves solutions
 by Berti-Montalto  \cite{BM} and Baldi-Berti-Haus-Montalto \cite{BBHM}.  Standing
 waves are not traveling 
 as they are even in the space variable. We  also mention that all these last results concern  irrotational fluids.   

In this paper we prove the first existence result of {\em time quasi-periodic traveling} wave solutions for 
the gravity-capillary water waves equations 
with {\em constant vorticity} for bidimensional fluids. 
The small amplitude solutions that we construct  exist for any value of the vorticity (so also for irrotational fluids), any value of the gravity and 
 depth of the fluid, and provided the surface tension is restricted to a Borel set of asymptotically full measure, see Theorem \ref{thm:main0}. 
 For irrotational fluids the traveling wave solutions that we construct  
do not clearly reduce to  the standing wave solutions in \cite{BM}. 
We remark that, in case of 
non zero vorticity, one can not expect the bifurcation of standing waves since they are not allowed by the linear theory.

Before presenting in detail our main result, we  introduce the water waves equations. 

\paragraph{The water waves equations.}

We consider the Euler equations of hydrodynamics for a 2-dimensional perfect, incompressible, inviscid fluid with {constant vorticity $\gamma$}, under the action of gravity and capillary forces at the free surface.
The fluid fills  an ocean with  depth $\tth > 0 $ (eventually infinite) and with space periodic boundary conditions,   namely it  occupies the region
\begin{equation}
\label{domain}
	\cD_{\eta, \tth} := \big\{ (x,y)\in \T\times \R \ : \ -\tth\leq y<\eta(t,x) \big\} \, , 
	\quad \T := \T_x :=\R/ (2\pi\Z) \,.
\end{equation} 
The unknowns of the problem are the divergence free  velocity field
$\begin{pmatrix} u(t,x,y) \\ v(t,x,y) \end{pmatrix} $ 
which solves the Euler equation and the free surface  $ y = \eta (t, x)$
of the time dependent domain $\cD_{\eta,\tth} $. 
In case of a fluid with constant vorticity 
$$ v_x - u_y = \gamma \, ,  $$
 the velocity field   is the sum of the Couette flow $\begin{pmatrix} - \gamma y \\ 0 \end{pmatrix}$, which carries
all the  vorticity $ \gamma $ of the fluid,  and an irrotational field, 
expressed as the gradient of a harmonic function $\Phi $, called the generalized velocity potential. 

Denoting by $\psi(t,x)$  the evaluation of the generalized velocity potential at the free interface
$ \psi (t,x) := \Phi (t,x, \eta(t,x)) $, 
one recovers $ \Phi $ by solving the elliptic problem
\begin{equation}
\label{dir}
\Delta \Phi = 0  \ \mbox{ in } \cD_{\eta, \tth} \, , \quad
\Phi = \psi \  \mbox{ at } y = \eta(t,x) \, , \quad
\Phi_y \to  0  \  \mbox{ as } y \to  - \tth \, .
\end{equation}
The third condition in \eqref{dir}  means the impermeability  property of the bottom
$$
\Phi_y ( t, x, - \tth) = 0 \, ,  \ {\rm if} \ \tth < \infty  \, , \qquad
\lim\limits_{y \to - \infty } \Phi_y ( t, x, y) = 0 \, , \  {\rm if} \ \tth = + \infty  \, . 
$$
Imposing  
that the fluid particles at the free surface remain on it along the evolution
(kinematic boundary condition), and that
the pressure of the fluid 
plus the capillary forces at the free surface  is equal to the constant 
atmospheric pressure (dynamic boundary condition),  the 
time evolution of the fluid is determined by the following 
system of equations 
(see \cite{CIP, Wh})
\begin{equation}
\label{ww}
\begin{cases}
\eta_t = G(\eta)\psi + \gamma \eta \eta_x \\
\displaystyle{\psi_t = - g\eta  - \frac{\psi_x^2}{2} + 
\frac{(  \eta_x \psi_x + G(\eta)\psi)^2}{2(1+\eta_x^2)} +\kappa \Big(\frac{\eta_x}{\sqrt{1+\eta_x^2}} \Big)_{\!\!x} + \gamma \eta \psi_x  + \gamma \partial_x^{-1} G(\eta) \psi} \, .
\end{cases}
\end{equation}
Here $ g $ is the gravity, $ \kappa $ is the surface tension coefficient, which we assume to belong to an interval $ [\kappa_1, \kappa_2] $ with $ \kappa_1 > 0 $, and 
$G(\eta)$ is the  Dirichlet-Neumann operator  
\begin{equation}
\label{DN}
G(\eta)\psi := G(\eta,\tth)\psi := \sqrt{1+\eta_x^2} \, (\partial_{\vec n} \Phi )\vert_{y = \eta(x)} = (- \Phi_x \eta_x + \Phi_y)\vert_{y = \eta(x)} \, . 
\end{equation}
The water waves equations \eqref{ww} are a Hamiltonian system that we describe
in Section \ref{ham.s},  and 
 enjoy two important symmetries. 
 First, they are time reversible: we say that a solution of \eqref{ww} is \emph{reversible} if  
\begin{equation}\label{time-rev}
\eta(-t,-x) = \eta (t,x) \, , \quad  \psi (-t,-x) = - \psi (t,x) \, . 
\end{equation}
Second, since the bottom of the fluid domain is flat, the equations \eqref{ww} 
 are invariant by space translations. 
We refer to Section \ref{ham.s} for more details.

Let us comment shortly about the phase space of \eqref{ww}. As 
$ G(\eta)\psi $ is a function with zero average, 
the quantity $\int_\T \eta(x) \, \di x$ is a prime integral of \eqref{ww}.
Thus, with no loss of generality,  we restrict to interfaces  with zero spatial average
$ \int_\T \eta(x) \, \di x = 0 $.  
Moreover,  since $ G(\eta ) [1] = 0 $, the vector field on the right hand side of \eqref{DN} depends only on $ \eta $ and $  \psi - \frac{1}{2 \pi}\int_\T \psi \, \di x  $. As a consequence, 
the variables $ (\eta, \psi) $ of system \eqref{ww} belong to some Sobolev space
$ H^s_0(\T) \times \dot H^s (\T) $ for some $ s $ large.  
Here  $H^s_0(\T)$, $s \in \R$, 
 denotes the Sobolev space of functions with zero average
$$
H^s_0(\T) := \Big\{
u \in H^s(\T) \ \colon 
\  
\int_\T u(x) \di x = 0 \Big\} 
$$
and  $\dot H^s(\T)$, $s \in \R$, the corresponding 
homogeneous Sobolev space, namely the quotient space obtained by identifying all 
the $H^s(\T)$ functions which differ only by a constant. For simplicity of notation
we shall denote the equivalent class $ [\psi] = \{ \psi + c,   c \in \R \} $, just by 
$ \psi $.  

\paragraph{Linear water waves.}

When looking to small amplitude solutions of \eqref{ww}, 
a fundamental role is played by  
the  system obtained linearizing \eqref{ww} at the equilibrium $(\eta, \psi) = (0,0)$, namely
\begin{equation}
\label{lin.ww1}
\begin{cases}
\partial_t \eta & =G(0) \psi \\
\partial_t \psi & = -(g-\kappa\partial_x^2) \eta + \gamma\partial_x^{-1} G(0) \psi \, .
\end{cases} 
\end{equation}
The  Dirichlet-Neumann operator at the flat surface $\eta = 0$  is  the  Fourier 
multiplier 
\begin{equation}\label{G(0)}
G(0) := G(0,\tth) = 
\begin{cases}
D \, \tanh( \tth D) & {\rm if } \ \tth < \infty \\
|D| & {\rm if } \ \tth = + \infty \, , 
\end{cases} \qquad {\rm where} \qquad D := \frac{1}{\im} \partial_x  \, , 
\end{equation}
with symbol
\begin{equation}\label{def:Gj0}
G_j(0):= 	G_j(0,\tth)= \begin{cases}
	j\tanh(\tth j) & \text{ if }\tth<\infty \\
	\abs j & \text{ if } \tth=+\infty \, . 
	\end{cases} 
\end{equation}
As we will show in Section \ref{lin:op}, 
all reversible solutions (see \eqref{time-rev}) of \eqref{lin.ww1}
are
{\begin{equation}
\label{linz200}
\begin{aligned}
\begin{pmatrix}
\eta(t,x) \\ \psi(t,x)
\end{pmatrix} & =  \sum_{n \in \N}  \begin{pmatrix}
M_n \rho_n \cos ( n x - \Omega_n (\kappa) t) \\ 
P_n \rho_n \sin ( n x - \Omega_n (\kappa) t) 
\end{pmatrix} \\ 
& + \sum_{n \in \N}
\begin{pmatrix}
M_n \rho_{-n} \cos ( n x + \Omega_{-n}(\kappa) t) \\ 
P_{-n} \rho_{-n} \sin ( n x + \Omega_{-n} (\kappa) t) 
\end{pmatrix} 
 \,,
 \end{aligned}
\end{equation}}
where $\rho_n\geq 0$ are arbitrary amplitudes and   $M_n$ and $P_{\pm n}$ 
are the real coefficients
 \begin{equation}\label{def:Mn}
 M_j :=\left( \frac{G_j(0)}{\kappa j^2 + g +
\frac{\gamma^2}{4} \frac{G_j(0)}{j^2}} \right)^{\frac14}, \ 
j \in \Z \setminus \{0\} \,  , 
 \quad 
P_{\pm n} := \frac{\gamma}{2} \frac{M_n}{n} \pm M_n^{-1}, \ 
n \in \N \, .
\end{equation}
Note  that the map $ j \mapsto M_j$  is even. 
The frequencies $\Omega_{\pm n}(\kappa)$ in \eqref{linz200} are 
\begin{equation}
\label{def:Omegajk}
\Omega_j(\kappa) := 
\sqrt{ \Big( \kappa  j^2 +g  + \frac{\gamma^2}{4}\frac{G_j(0)}{j^2}   \Big) 
G_j(0) } + \frac{\gamma}{2}\frac{G_j(0)}{j} , \qquad
j \in \Z \setminus\{0 \} \, . 
\end{equation}
Note that the map $ j \mapsto \Omega_j (\kappa )$
is not even due to the vorticity term $ \gamma G_j (0) / j $, which is odd in $j $. 
Note that  $\Omega_j(\kappa) $ 
actually depends also on the depth $ \tth $, the gravity $ g $ and the vorticity $ \gamma $, 
but we highlight in \eqref{def:Omegajk} only its dependence with respect to the surface tension coefficient $ \kappa $, 
since in this paper we shall move just $ \kappa $ 
as a parameter to impose suitable non-resonance conditions, see Theorem \ref{thm:main0}. 
Other choices are possible. 

All the linear solutions \eqref{linz200}, depending on the irrationality properties of the frequencies 
$ \Omega_{\pm n} (\kappa) $ and the number of non zero 
amplitudes $\rho_{\pm n}  > 0 $, 
are either time periodic, quasi-periodic or almost-periodic.
Note that  the 
functions \eqref{linz200} are the   linear superposition  of plane waves traveling 
either to the right 
or to the left. 

\begin{rem}
Actually,  \eqref{linz200} contains also standing waves, for example when
the vorticity $\gamma = 0$ (which implies $ \Omega_{-n}(\kappa) = \Omega_n(\kappa) $, $ P_{-n} = - P_n$) and $ \rho_{-n} = \rho_n $,  giving  solutions  even in $x$. 
This is the well known  superposition effect of 
waves with the same amplitude, frequency and wavelength  
 traveling in opposite directions. 
\end{rem}

\paragraph{Main result.}
We first provide  the notion of  quasi-periodic traveling wave.

\begin{defn} {\bf (Quasi-periodic traveling wave)} \label{def:TV}
We say that $ (\eta (t,x), \psi(t,x)) $ is a 
time quasi-periodic {\it traveling} wave  
with irrational frequency vector  $ \omega = ( \omega_1, \ldots, \omega_\nu)  \in \R^\nu $, $ \nu \in \N $,  i.e.
$ \omega \cdot 	\ell \neq 0 $, $ \forall \ell \in \Z^\nu \setminus \{0 \} $, 
and ``wave vectors'' $ ( j_1, \ldots, j_\nu)  \in \Z^\nu $,  if there exist 
functions
$ ( \breve\eta, \breve\psi ) : \T^\nu \to \R^2 $ such that 
\begin{equation} \label{trav-etazeta}
\begin{pmatrix}
\eta ( t, x) \\ \psi ( t, x) 
\end{pmatrix} = 
\begin{pmatrix}
\breve\eta( \omega_1  t- j_1 x ,\ldots, \omega_\nu t- j_\nu x )  \\
\breve\psi( \omega_1  t- j_1 x ,\ldots, \omega_\nu t- j_\nu x )
\end{pmatrix} \, . 
\end{equation}
\end{defn}
\begin{rem}
If $ \nu = 1 $, such functions are  time periodic and indeed stationary in a moving frame
with speed $ \omega_1 / j_1 $.
On the other hand, if the number of frequencies  $ \nu $ is $ \geq 2 $,  
the waves \eqref{trav-etazeta} cannot be reduced to steady waves by any appropriate choice of the moving frame. 
\end{rem}
In this  paper we  shall construct traveling quasi-periodic  
solutions of \eqref{ww} with a diophantine frequency vector $ \omega $ belonging to an open bounded 
subset $  {\mathtt \Omega} $ in $ \R^\nu $, 
namely, for some 
$ \upsilon \in (0,1) $, $ \tau >  \nu - 1 $, 
\begin{equation}\label{def:DCgt}
\tD\tC (\upsilon, \tau) := \Big\{ \omega\in {\mathtt \Omega} \subset\R^\nu \ : \ 
\abs{\omega\cdot \ell}\geq \upsilon \braket{\ell}^{-\tau} \ , 
\ \forall\,\ell\in\Z^\nu \setminus \{0\} \, , \ \langle \ell \rangle :=  \max\{1, |\ell| \} 
\Big\} \, . 
\end{equation}
Regarding regularity, we will prove the existence of 
quasi-periodic traveling waves $ (\breve\eta,  \breve\psi ) $
 belonging to some Sobolev space 
\begin{equation} \label{unified norm}
 H^s(\T^{\nu}, \R^2)
= \Big\{ \breve f (\vf) = 
\sum_{\ell \in \Z^{\nu}} f_{\ell} \, e^{\im \ell \cdot \vf } \ , \ \ \ f_\ell \in \R^2 \ \ : \, 
\| \breve f \|_s^2 := \sum_{\ell \in \Z^{\nu}} | f_{\ell}|^2 \langle \ell \rangle^{2s} < \infty 
\Big\} \, . 
\end{equation}
Fixed   finitely  many  arbitrary  
{\em  distinct} natural numbers 
\begin{equation}
\label{Splus}
\S^+ := \{ \bar n_1, \ldots, \bar n_\nu \}\subset \N  \ , \quad
1 \leq \bar n_1 < \ldots < \bar n_\nu \, , 
\end{equation}
and signs
\begin{equation}
\label{signs}
\Sigma := \{ \sigma_1 , \ldots , \sigma_\nu \} , \quad \sigma_a \in \{ -1, 1 \} \, ,
\quad a = 1, \ldots, \nu \, ,  
\end{equation}
consider the reversible quasi-periodic traveling wave 
solutions   of the linear system \eqref{lin.ww1} given by 
 \begin{equation}
\label{sel.sol}
\begin{aligned}
\begin{pmatrix}
\eta(t,x) \\ \psi(t,x)
\end{pmatrix} & =  \sum_{a \in \{1, \ldots, \nu \colon   \sigma_a = + 1\}}  
\begin{pmatrix}
M_{\bar n_a} \sqrt{\xi_{\bar n_a}} \cos ( \bar n_a x - \Omega_{\bar n_a} (\kappa) t) \\ 
P_{\bar n_a} \sqrt{\xi_{\bar n_a}} \sin ( \bar n_a x - \Omega_{\bar n_a} (\kappa) t) 
\end{pmatrix} \\ 
& +   \sum_{a \in \{1, \ldots, \nu \colon   \sigma_a = - 1\}}
\begin{pmatrix}
M_{\bar n_a} \sqrt{\xi_{- \bar n_a}} \cos ( \bar n_a x + \Omega_{- \bar n_a}(\kappa) t) \\ 
 P_{-\bar n_a} \sqrt{\xi_{- \bar n_a}} \sin ( \bar n_a x + \Omega_{- \bar n_a} (\kappa) t) \ 
\end{pmatrix}  
 \end{aligned}
\end{equation}
where $ \xi_{\pm \bar n_a} >  0 $, $ a  = 1, \ldots, \nu $. The frequency vector 
of \eqref{sel.sol}  is 
\begin{equation}\label{Omega-kappa}
\vec \Omega (\kappa) := (\Omega_{\sigma_a \bar n_a} (\kappa ))_{a=1, \ldots, \nu}  
\in \R^\nu \, . 
\end{equation}
\begin{rem}\label{rem:st}
If $\sigma_a = +1$, we select in \eqref{sel.sol}  a right traveling wave, whereas, if $\sigma_a = -1$, a left traveling one. By \eqref{Splus}, the linear solutions \eqref{sel.sol} are genuinely  traveling waves: superposition of identical waves traveling in opposite direction, generating standing waves, does not happen.
\end{rem}

The main result of this paper proves that the  linear 
solutions \eqref{sel.sol}
can be continued to  quasi-periodic traveling wave  solutions of the nonlinear 
water waves equations
\eqref{ww}, 
for most values of the surface tension $ \kappa \in [\kappa_1, \kappa_2 ]$,
 with a  frequency vector 
$ \widetilde  \Omega  := ( \widetilde  \Omega_{\sigma_a \bar n_a})_{a=1, \ldots, \nu}  $,   
 close to 
$  \vec \Omega (\kappa) := (\Omega_{\sigma_a \bar n_a} (\kappa))_{a =1, \ldots, \nu} $. 
Here is the precise statement.

\begin{thm} \label{thm:main0}  {\bf (KAM for traveling gravity-capillary 
water waves with constant vorticity)}
Consider finitely many tangential sites $ \S^+ \subset \N  $
as in \eqref{Splus} and signs $ \Sigma $ as in \eqref{signs}. Then 
there exist $ \bar s >  0 $,  
$ \varepsilon_0 \in (0,1) $ such that,  
for every $ |\xi |   \leq \varepsilon_0^2  $, 
$  \xi := (\xi_{ \sigma_a {\bar n}_a} )_{a = 1, \ldots, \nu} \in \R_+^\nu $, the following hold:
\begin{enumerate}
\item
there exists 
a Cantor-like set  $ {\cal G}_\xi \subset [\kappa_1, \kappa_2] $ 
with asymptotically full measure as $ \xi \to 0 $, i.e. 
$ \lim_{\xi \to 0} | {\cal G}_\xi |  = {\kappa}_2- {\kappa}_1 $;
\item
for any  $ \kappa \in {\cal G}_\xi $,  the 
gravity-capillary water waves equations \eqref{ww}
have a  reversible 
quasi-periodic traveling wave solution (according to 
Definition \ref{def:TV}) of the form 
\begin{equation}
\label{QP:soluz}
\begin{aligned}
\begin{pmatrix}
\eta( t ,x) \\ \psi( t ,x)
\end{pmatrix} & =  \sum_{a \in \{1, \ldots, \nu\} \colon   \sigma_a = + 1}  
\begin{pmatrix}
M_{\bar n_a} \sqrt{\xi_{\bar n_a}} \cos ( \bar n_a  x - \widetilde{\Omega}_{ \bar n_a}  t) \\ 
P_{\bar n_a} \sqrt{\xi_{\bar n_a}} \sin ( \bar n_a x - \widetilde{\Omega}_{\bar n_a} t) 
\end{pmatrix}   \\ 
& +   \sum_{a \in \{1, \ldots, \nu\} \colon   \sigma_a = - 1}
\begin{pmatrix}
M_{\bar n_a} \sqrt{\xi_{- \bar n_a}} \cos ( \bar n_a x + \widetilde{\Omega}_{-\bar n_a}  t) \\ 
P_{-\bar n_a} \sqrt{\xi_{- \bar n_a}} \sin ( \bar n_a x + \widetilde{\Omega}_{-\bar n_a}   t) 
\end{pmatrix} + r ( t, x ) 
 \end{aligned}
\end{equation}
where
$$
 r ( t, x )
 = 
\breve r(\wt\Omega_{\sigma_1 \bar n_1}t-\sigma_1\bar n_1 x,\ldots, \wt\Omega_{\sigma_\nu \bar n_\nu}t-\sigma_\nu\bar n_\nu x)  \, , \quad \breve r \in H^{\bar s} ( \T^\nu , \R^2) \, ,
\quad  \lim_{\xi \to 0} \frac{\| \breve r \|_{\bar s}}{\sqrt{|\xi|}} = 0 \, , 
$$
 with a Diophantine 
frequency vector $ \widetilde  \Omega  := ( \widetilde  \Omega_{\sigma_a \bar n_a})_{a=1, \ldots, \nu} \in \R^\nu $, depending on $\kappa, \xi$, and  satisfying 
$ \lim_{\xi \to 0}{\widetilde \Omega} = \vec \Omega (\kappa) $. 
In addition these quasi-periodic solutions are  linearly stable.  
\end{enumerate}
\end{thm}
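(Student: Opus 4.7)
The plan is to construct the quasi-periodic traveling solutions \eqref{QP:soluz} as fixed points of a Nash--Moser iteration combined with a KAM reducibility scheme for the linearized operator in the normal directions, adapting to the vorticity/traveling setting the strategy developed for standing waves in \cite{AB,BM,BBHM}. Plugging the traveling ansatz $u(t,x) = \breve u(\omega t - \vec\jmath x)$, with $\vec\jmath := (\sigma_a \bar n_a)_{a=1,\ldots,\nu} \in \Z^\nu$, into \eqref{ww} converts the time-dependent PDE into the functional equation $\omega\cdot\partial_\vf \breve u = X_{\text{WW}}(\breve u)$ on $\T^\nu$, where $X_{\text{WW}}$ is the water-waves vector field of \eqref{ww} with $\partial_x$ replaced by $-\vec\jmath\cdot\partial_\vf$; the unknowns are the profile $\breve u \in H^s(\T^\nu,\R^2)$ and the frequency $\omega\in\R^\nu$, constrained by the reversibility condition $\breve u(-\vf) = (\breve\eta,-\breve\psi)(\vf)$ and by a Diophantine assumption of the form \eqref{def:DCgt}.

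Introducing the tangential set $\mathcal{S} := \{\sigma_a\bar n_a \colon a=1,\ldots,\nu\}$, I would diagonalize the linear system \eqref{lin.ww1} via complex coordinates built on the coefficients $M_j,P_j$ of \eqref{def:Mn}, split the phase space into action-angle variables $(\theta,I)\in\T^\nu\times\R^\nu$ on $\mathcal{S}$ plus a normal coordinate $z$ in the $L^2$-complement, and perform a weak Birkhoff normal form step normalizing the Hamiltonian to $\vec\Omega(\kappa)\cdot I + \tfrac12 A(\kappa) I\cdot I + (\text{normal quadratic part}) + O(|I|^3 + \|z\|^3)$. Verifying that the twist matrix $A(\kappa)$ is non-degenerate for almost every $\kappa\in[\kappa_1,\kappa_2]$ would yield the local diffeomorphism $\xi\mapsto\omega(\xi,\kappa)$, and set up the Nash--Moser scheme, which searches for a reversible embedded torus $i(\vf) = (\theta_0(\vf),I_0(\vf),z_0(\vf))$ \emph{equivariant under translations} on $\T^\nu$; this equivariance is exactly the algebraic condition that forces $\breve u$ to lie in the traveling class of Definition \ref{def:TV}.

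The analytic heart of the proof is the approximate inversion at each Nash--Moser step of the operator linearized along the tangential directions to the torus, which after isotropic symplectic straightening reduces to the linearized water-waves equation in the normal subspace. Following \cite{BBHM}, I would combine the Alinhac good-unknown substitution with Alazard--M\'etivier paralinearization of $G(\eta)$ to identify the order-$3/2$ dispersive symbol, absorb the order-$0$ vorticity contribution $\gamma\partial_x^{-1}G(\eta)$ into the same symbol, and then apply a finite chain of symplectic and $\vf$-dependent pseudodifferential conjugations (transport straightening, order-by-order descent, symbol diagonalization) reducing the normal operator to a constant-coefficient Fourier multiplier plus an arbitrarily smoothing remainder. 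A KAM reducibility iteration, governed by first and second Melnikov conditions on $(\omega,\kappa)$, would then fully diagonalize the remaining perturbation, yielding at once the tame inverse required by Nash--Moser and the linear stability claim.

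The measure estimate $|\mathcal{G}_\xi|\to \kappa_2-\kappa_1$ as $\xi\to 0$ would then follow by using $\kappa\in[\kappa_1,\kappa_2]$ as the sole transversality parameter, the momentum selection rule $\vec\jmath\cdot\ell-\sigma j+\sigma' j'=0$ imposed by translation invariance drastically pruning the index set of second Melnikov resonances and keeping the infinite product of small-measure sets summable. The main obstacles I anticipate are twofold: analytically, every pseudodifferential conjugation in the reducibility chain must be chosen to commute with the momentum operator so as to preserve the traveling structure, which is non-trivial in the presence of the non-local term $\gamma\partial_x^{-1}G(\eta)$; arithmetically, the asymmetry $\Omega_j(\kappa)\neq\Omega_{-j}(\kappa)$ induced by the vorticity breaks the even pairing of normal frequencies that simplified the standing wave analysis of \cite{BM,BBHM}, so the two dispersion branches must be tracked separately throughout the reducibility and the Melnikov arguments, making the verification of transversality in $\kappa$ the most delicate technical step.
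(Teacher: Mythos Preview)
Your overall architecture (Nash--Moser on an embedded torus, pseudodifferential reduction of the normal linearized operator, KAM reducibility, measure via transversality in $\kappa$) matches the paper's, but three substantive choices diverge, and two of them carry real risk. First, you propose a weak Birkhoff normal form to extract and invert a twist matrix $A(\kappa)$; the paper instead uses the \emph{counterterm method}: it introduces a modified Hamiltonian $H_\alpha$ with an extra unknown $\alpha\in\R^\nu$ (see \eqref{Halpha}), solves $\cF(i,\alpha)=0$ by Nash--Moser with $\omega$ a free parameter, and only afterward inverts $\omega\mapsto\alpha_\infty(\omega,\kappa)$ and evaluates along $\alpha=\ora{\Omega}(\kappa)$. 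This bypasses any twist verification entirely; the measure estimate is then handled by \emph{degenerate KAM theory} (Proposition~\ref{prop:trans_un}), proving directly that the curves $\kappa\mapsto(\ora{\Omega}(\kappa),\Omega_j(\kappa),\ldots)$ are non-degenerate, with the momentum constraint $\ora{\jmath}\cdot\ell+j-j'=0$ removing the otherwise fatal identities $\Omega_j(\kappa)=\Omega_{-j}(\kappa)$ at $\gamma=0$. Verifying non-degeneracy of a Birkhoff twist matrix for water waves with vorticity would be a separate, hard computation that the paper deliberately avoids. Second, rather than eliminating $x$ via $\partial_x\to-\ora{\jmath}\cdot\partial_\vf$, the paper keeps functions of $(\vf,x)\in\T^{\nu+1}$ and encodes the traveling structure as the symmetry $u(\vf-\ora{\jmath}\vs,\cdot)=\tau_\vs u$, verified at every step as \emph{momentum preservation} (Section~\ref{subsec:momentum}); this keeps all pseudodifferential calculus one-dimensional in $x$ and $G(\eta)$ in its natural form, whereas your substitution would turn $G(\eta)$ into a genuinely $\nu$-dimensional nonlocal operator on $\T^\nu$ for which the Egorov and symbol-reduction machinery would have to be rebuilt from scratch. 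Third, two technical points the paper flags as essential are absent from your sketch: one must first pass to Wahl\'en coordinates $\zeta=\psi-\tfrac{\gamma}{2}\partial_x^{-1}\eta$ to make the Poisson tensor canonical before any action-angle splitting; and the conjugations in the linearized reduction must be kept \emph{symplectic} (not merely reversibility-preserving) up through the block-diagonalization step of Section~\ref{sec:block_dec}, since otherwise a term $\im\, a(\vf)\cH|D|^{1/2}$---reversible but not Hamiltonian---survives at order $1/2$ and cannot be eliminated, an obstruction specific to the traveling (as opposed to standing) case. The paper also performs the quasi-periodic time reparametrization \emph{before} the good-unknown step, for a vorticity-specific Egorov reason (Remark~\ref{rem:REPA}), and does not use Alazard--M\'etivier paralinearization but rather the simpler decomposition $G(\eta)=G(0)+\cR_G(\eta)$ with $\cR_G\in\Ops^{-\infty}$.
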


Let us make some comments.
\\[1mm]
\indent  1) 
 Theorem \ref{thm:main0} holds for any value of the vorticity $\gamma$, so in particular  it guarantees existence of quasi-periodic traveling waves also for irrotational fluids, i.e. $\gamma =0$.
In this case 
the solutions \eqref{QP:soluz} 
do not reduce to those in \cite{BM}, 
which are standing, i.e. even in $ x $.
If  the vorticity $ \gamma \neq 0 $,  
one does not expect the existence of standing wave solutions
since the water waves vector field \eqref{ww}
does not leave invariant the subspace of functions even in $ x $. 
\\[1mm]
\indent  2) Theorem \ref{thm:main0} produces time quasi-periodic solutions of the Euler equation with a velocity field which is a small perturbation of the Couette flow $\begin{pmatrix}
-\gamma y \\ 0
\end{pmatrix}$. Indeed, from the solution $ (\eta(t, x), \psi(t, x))$  in \eqref{QP:soluz}, one  recovers the generalized velocity potential $\Phi(t, x,y)$ by solving the elliptic problem \eqref{dir} and finally constructs the velocity field 
$
\begin{pmatrix}
u(t,x,y) \\
v(t,x,y)
\end{pmatrix} = 
\begin{pmatrix}
-\gamma y\\
0
\end{pmatrix} + \grad \Phi(t,x,y)$. 
The time quasi-periodic potential  $\Phi(t, x,y)$ has size  $O(\sqrt{|\xi|})$, 
as $\eta(t,x)$ and $\psi(t,x)$. 
\\[1mm]
\indent  3)  In the case $ \nu = 1 $ the solutions constructed in Theorem \ref{thm:main0} reduce to steady  
periodic traveling waves, 
which can be obtained 
by an application of the Crandall-Rabinowitz theorem, see e.g. \cite{Martin, Wh0, Wh1}.  
\\[1mm] 
\indent
4)  Theorem \ref{thm:main0} selects initial data giving raise to global in time solutions
\eqref{QP:soluz} of the water waves equations \eqref{ww}. So far,  
 no results about global existence for \eqref{ww} with periodic boundary conditions 
 are known. The available results concern  local well posedness with a general vorticity, see e.g. \cite{CouSh},  and a $ \varepsilon^{-2} $ existence for
  initial data of size $  \varepsilon $ 
 in the case of constant vorticity \cite{IT}.
\\[1mm]
\indent  5) 
With the choice \eqref{Splus}-\eqref{signs} the unperturbed frequency vector
$ \vec \Omega (\kappa) = (\Omega_{\sigma_a \bar n_a} (\kappa ) )_{a = 1, \ldots, \nu} $ 
is diophantine for most values  of the surface tension $ \kappa $ and for all values of  vorticity, gravity and depth. It follows by the more general 
results of Sections \ref{sec:deg_kam} and \ref{subsec:measest}.
This may  not be  
true for an arbitrary choice of the linear frequencies 
$ \Omega_{j}(\kappa) $, $ {j \in \Z\setminus\{0\}}$. 
For example, in the case $ \tth = + \infty $, the vector
$$ 
\vec \Omega(\kappa) = \big( \Omega_{-n_3}(\kappa), \Omega_{-n_2}(\kappa), \Omega_{-n_1}(\kappa), \Omega_{n_1}(\kappa), \Omega_{n_2}(\kappa), 
\Omega_{n_3}(\kappa) \big)   
$$
is resonant, for all the values of $ 	\kappa $,  also 
taking into account the restrictions on the indexes for the search of traveling waves, see
Section \ref{subsec:momentum}.  
Indeed, recalling \eqref{def:Omegajk} and that, for $ \tth = + \infty $,  $ G_j (0, \tth) = |j| $, 
we have, 
for $ \ell = 
\big( - \ell_{n_3}, - \ell_{n_2}, - \ell_{n_1},  \ell_{n_1},  \ell_{n_2},  \ell_{n_3} \big) $
 that the system 
$$
\vec \Omega(\kappa) \cdot \vec \ell = \gamma  ( \ell_{n_1} + \ell_{n_2} +   \ell_{n_3} )  = 0 
\, ,  \quad 
  n_1 \ell_{n_1} + n_2 \ell_{n_2} + n_3 \ell_{n_3} = 0 \, , 
$$
has integer solutions. 
In this case the possible existence of quasi-periodic solutions of the water waves system 
\eqref{ww} depends on the frequency modulation induced by the nonlinear terms. 
\\[1mm]
{6) {\sc Comparison with \cite{BM}.}  
There are significant differences with respect to \cite{BM}, which proves the existence
of quasi-periodic {\it standing} waves for {\it irrotational} fluids, 
not only in the result --the solutions of Theorem \ref{thm:main0} 
are {\it traveling} waves of fluids with {\it constant vorticity}-- but also in the techniques. 

({\it 1})  
The first difference --which is a novelty of this paper-- is a new formulation  of degenerate KAM theory exploiting ``momentum conservation'', namely the space invariance
of the Hamilton equations. The degenerate  KAM theory approach for PDEs has been 
developed in \cite{BaBM}, and then \cite{BM}, \cite{BBHM}, 
in order to prove the
non-trivial dependence of the linear frequencies with respect to a 
parameter --in our case the surface tension 
 $ \kappa $--, see the ``Transversality" 
Proposition \ref{prop:trans_un}.  A key assumption  used in 
\cite{BaBM}, \cite{BM}, \cite{BBHM} is that the linear frequencies are simple
(because of Dirichlet boundary conditions in \cite{BaBM} and  
Neumann boundary conditions  in 
\cite{BM}, \cite{BBHM}). 
This is not true for traveling waves (e.g. in case of zero vorticity one has $\Omega_j(\kappa) = \Omega_{-j}(\kappa)$ identically in $\kappa$). In order to deal with these resonances
we strongly exploit the invariance of the 
equations \eqref{ww} under space translations, which ultimately imply   
the restrictions to the indexes \eqref{eq:1_meln}-\eqref{eq:2_meln+}.
In this way,  assuming that the moduli of the tangential sites are all different
as in \eqref{Splus}, cfr. with item 5),  we can remove  some otherwise possibly  degenerate case.  
This requires to keep trace along 
all the proof of the ``momentum conservation property'' that we 
characterize in different ways in Section \ref{subsec:momentum}.
The momentum conservation law 
has been used in several KAM results for semilinear PDEs 
since the works \cite{geng1, geng2}, \cite{kl, PP}, 
see also  \cite{MP18, G, FGP} and references therein. 
The present paper gives a new application  in the context of degenerate KAM theory
(with additional difficulties arising by the quasi-linear nature of the water waves equations).

({\it 2}) Other significant differences with respect to \cite{BM} arise 
in the reduction in orders (Section \ref{sec:linnorm})
of the quasi-periodic linear operators obtained 
along the Nash-Moser iteration. 
In particular  we mention  that we have to preserve the Hamiltonian nature of these 
operators (at least until Section \ref{sec:block_dec}). 
Otherwise it would appear a  time dependent
operator at the order $ |D|^{1/2} $,  of the form $ \im a(\vf) {\cal H} |D|^{\frac12} $, 
with $ a(\vf) \in \R $ independent of $ x $, 
compatible with the reversible structure,  
which can not be eliminated.   Note that the operator 
$ \im a(\vf) {\cal H} |D|^{\frac12} $ is not Hamiltonian (unless $ a(\vf) = 0 $). 
Note also that the above difficulty was not present in \cite{BM} 
dealing with standing waves, because an operator of the form 
$ \im a(\vf) {\cal H} |D|^{\frac12} $ 
does not map even functions into even functions. 
In order to overcome this difficulty 
we have to perform always symplectic changes of variables (at least until Section \ref{sec:block_dec}), and not just reversible  ones as in \cite{BM, BBHM}. 
We finally mention that we perform 
as a first step in Section  \ref{sec:repa}  
a quasi-periodic time reparametrization 
to avoid otherwise a technical difficulty in the conjugation 
of the remainders obtained by the Egorov theorem in Section \ref{sec:order32}.
This difficulty was not present in \cite{BM}, since
it arises conjugating the additional pseudodifferential term 
due to vorticity, see Remark \ref{rem:REPA}. 
\\[1mm]
{7)} Another novelty  of our result is to exploit the momentum conservation  also to prove  that the obtained quasi-periodic  solutions  are indeed 
quasi-periodic traveling waves, according to Definition \ref{def:TV}. 
This requires to check that the approximate solutions 
constructed along the Nash-Moser iteration of Section \ref{sec:NaM} 
(and  Section \ref{sec:approx_inv}) are indeed traveling waves.
Actually this approach  shows that the preservation of the 
momentum condition along  the Nash-Moser-KAM 
iteration is equivalent to the construction of 
embedded invariant tori 
which support quasi-periodic  traveling waves, namely 
of the form 
$ u(\vf,x) = U(\vf-\ora{\jmath}x)  $ (see Definition \ref{QPTW}), or  equivalently,   
in action-angle-normal variables,  which satisfy 
\eqref{mompres_aa}. We expect  this method  can be used to obtain quasi-periodic traveling waves for other PDE's which are translation invariant.

\paragraph{Literature.}\label{sec:lit}

We now shortly describe the literature 
regarding the existence of  time periodic or quasi-periodic solutions of the water waves equations, focusing 
on the results more related to Theorem \ref{thm:main0}.
We describes only results concerning space periodic waves, 
that we divide in   three  distinct
  groups:   
\begin{description}
\item $(i)$ steady traveling solutions,  
\item $(ii)$ time periodic standing waves,
\item  $(iii)$  time quasi-periodic standing waves.
\end{description}
This distinction takes into account 
not only  the different shapes of 
the waves,  but also  the  techniques for their construction.
\\[1mm]
($i$) {\it Time and space periodic traveling waves which are steady in a moving frame}.
The literature concerning  steady traveling wave solutions is huge, and we 
refer to \cite{const_book} for an extended presentation. Here we only mention that,
after the pioneering work of Stokes  \cite{stokes}, the first rigorous construction 
of small amplitude  space periodic steady traveling waves goes back to the 1920's with the papers of 
Nekrasov \cite{Nek}, Levi-Civita \cite{LC} and Struik \cite{Struik}, in case of  irrotational bidimensional flows  under the action of pure gravity.
Later  Zeidler \cite{Zei} considered the effect of capillarity.
In the presence of vorticity, the first result is due to  Gerstner \cite{gerstner} in 1802, who gave an explicit example of periodic traveling wave, in infinite depth, and with a particular non-zero vorticity.
One has to wait  the work of Dubreil-Jacotin \cite{dubreil} in 1934 for the first existence  results of small amplitude, periodic traveling waves with general  (H\"older continuous, small) vorticity, and, later,  the works of  Goyon \cite{goyon} and Zeidler \cite{Zei2} in the case of large vorticity.
More recently we point out the works of Wahl\'en \cite{Wh0}  for capillary-gravity waves and non-constant vorticity, and of Martin  \cite{Martin} and Walh\'en \cite{Wh} for constant vorticity. 
All these results deal with 2d water waves, and  can ultimately 
be  deduced by the Crandall-Rabinowitz 
bifurcation theorem from a simple eigenvalue. 

We also mention that these local bifurcation results can be extended to  global branches 
of steady traveling waves by applying the methods of global bifurcation theory. We refer to 
Keady-Norbury \cite{KN}, Toland \cite{To}, McLeod \cite{ML}
for irrotational flows and  Constantin-Strauss \cite{CS}
for fluids with non-constant vorticity. 

In the case of three dimensional irrotational fluids, bifurcation of small amplitude traveling waves periodic in space  has been proved  in Reeder-Shinbrot \cite{RS}, Craig-Nicholls \cite{CN,CN2} 
for both gravity-capillary  waves (by  variational bifurcation arguments \a la Weinstein-Moser)  
and  by Iooss-Plotnikov \cite{IP-Mem-2009,IP2} for gravity waves (this is a small divisor problem).   
These solutions,
in a moving frame,
look steady bi-periodic waves.  
\\[1mm]
($ii$) {\it Time  periodic standing waves}. 
Bifurcation of 
time periodic standing water waves were obtained in a series of  pioneering 
papers
by Iooss, Plotnikov and Toland   \cite{PlTo, IPT, IP-SW1,IP-Mem-2009} for  pure gravity waves, and by  Alazard-Baldi \cite{AB}  for  gravity-capillary fluids. 
Standing waves are even in the space variable and so they do not travel in space. 
 There is a huge difference with the results of the first group: 
 the construction of time periodic standing waves involves small divisors. 
Thus the proof  
is based on  Nash-Moser implicit function techniques and not only on the
 classical implicit function theorem. 
\\[1mm]
($iii$) {\em Time quasi-periodic standing waves}. The first results in this direction 
were obtained very recently by Berti-Montalto \cite{BM} for the gravity-capillary system and by Baldi-Berti-Haus-Montalto \cite{BBHM} for the gravity water waves. 
Both papers deal with  irrotational fluids. 

\section{Hamiltonian structure and linearization at the origin }

In this section we describe the Hamiltonian structure of the water waves equations \eqref{ww}, their symmetries and the solutions of the linearized system \eqref{lin.ww1} at the equilibrium.

\subsection{Hamiltonian structure}\label{ham.s}

The Hamiltonian formulation of the water waves equations \eqref{ww} 
with non-zero constant 
vorticity was obtained by Constantin-Ivanov-Prodanov  \cite{CIP} and Wahl\'en 
 \cite{Wh} in the case of finite depth. 
For irrotational flows it reduces to the classical Craig-Sulem-Zakharov formulation
in \cite{Zak1}, \cite{CrSu}. 

On the phase space $H^1_0(\T) \times \dot H^1(\T)$, endowed with the non canonical Poisson tensor
 \begin{equation}
 \label{eq:magn_sympl}
 J_M(\gamma) := \begin{pmatrix}
0 & {\rm Id} \\
- {\rm Id}  & 
\gamma \partial_x^{-1} 
 \end{pmatrix} \, ,
\end{equation}
we
consider the Hamiltonian 
\begin{equation}
\label{ham1}
H(\eta, \psi) = 
\frac12 \int_{\T} \left( \psi G(\eta ) \psi + g \eta^2 \right) \wrt x 
+ \kappa  \int_{\T}  \sqrt{1 + \eta_x^2} \, \wrt x + \frac{\gamma}{2} 
\int_{\T} 
\left( -   \psi_x \eta^2 + \frac{\gamma}{3} \eta^3 \right) \wrt x \, . 
\end{equation}
Such Hamiltonian  is well defined on $ H^1_0(\T) \times \dot H^1(\T) $ since $ G(\eta ) [1] = 0 $ 
 and $ \int_{\T} G(\eta) \psi \, \di x = 0 $.

It turns out \cite{CIP, Wh} 
that equations \eqref{ww} are the Hamiltonian system generated by
$H(\eta, \psi)$ with respect to the Poisson tensor $J_M(\gamma)$, namely 
\begin{equation}
\label{ham.eq1}
\partial_t 
\begin{pmatrix}
\eta \\
\psi  
 \end{pmatrix} = J_M (\gamma) 
 \begin{pmatrix}
\nabla_\eta H  \\
 \nabla_\psi H
 \end{pmatrix}
\end{equation}
 where  $ (\nabla_\eta H, \nabla_\psi H) \in \dot L^2(\T) \times L^2_0(\T) $  denote the $ L^2 $-gradients.  

\begin{rem}\label{rem:dual}
	The non canonical Poisson tensor $J_M(\gamma)$ in \eqref{eq:magn_sympl} has to be regarded as an operator from (subspaces of) $(L_0^2\times \dot L^2)^* = \dot{L}^2\times L_0^2$ to $L_0^2\times \dot{L}^2$, that is
	\begin{equation*}
		J_M(\gamma) = \begin{pmatrix}
		0 & {\rm Id}_{L_0^2\to L_0^2} \\ -{\rm Id}_{\dot{L}^2\to \dot{L}^2} & \gamma\pa_x^{-1}
		\end{pmatrix}\,.
	\end{equation*}
	The operator $ \pa_x^{-1} $ maps a dense subspace of $ L^2_0 $ in $ \dot L^2 $. 
	For sake of simplicity, throughout the paper we may omit this detail. 
 Above the dual space $(L_0^2\times \dot L^2)^* $ with respect to the 
	scalar product in $L^2$ is identified with 
	$ \dot L^2\times L_0^2 $. 
\end{rem}

The Hamiltonian \eqref{ham1} enjoys several symmetries which we  now describe.

\paragraph{Reversible structure.}
Defining on the phase space $H_0^1(\T) \times \dot{H}^1(\T)$ the involution
\begin{equation}\label{rev_invo}
\cS\left( \begin{matrix}
\eta \\ \psi 
\end{matrix} \right) := \left( \begin{matrix}
 \eta^\vee \\ -  \psi^\vee 
\end{matrix} \right) \, , \quad \eta^\vee (x) :=  \eta (-  x) \, ,  
\end{equation}
the Hamiltonian \eqref{ham1} is invariant under $\cS$, that is 
$$ 
H \circ \cS = H \, , 
$$ 
or, equivalently, the water waves vector field 
$ X $  defined in the right hand side on \eqref{ww} satisfies 
\begin{equation}\label{revNL}
X\circ \cS = - \cS \circ X \, .
\end{equation}
This property follows noting that the Dirichlet-Neumann operator satisfies 
\begin{equation}\label{DN-rev}
G(  \eta^\vee ) [ \psi^\vee ] = \left( G(\eta) [\psi ] \right)^\vee  \, . 
\end{equation}
{\bf Translation invariance.}
Since the bottom of the fluid domain \eqref{domain} is flat (or in case of infinite depth there is no bottom), the water waves 
equations   \eqref{ww}  are  invariant under space translations. Specifically, 
defining the translation operator  
\begin{equation}\label{trans}
\tau_\vs \colon u(x) \mapsto u(x+\vs) \, ,  \qquad \varsigma \in \R \, , 
\end{equation} 
the Hamiltonian \eqref{ham1} satisfies 
$ H \circ \tau_\vs = H $ for any $\vs \in \R $, 
or, equivalently, the water waves vector field 
$ X $  defined in the right hand side on \eqref{ww} satisfies 
\begin{equation}\label{eq:mom_pres}
X\circ \tau_\vs = \tau_\vs\circ X  
\, , \quad \forall \vs \in \R \,  .
\end{equation}
In order to verify this property, note that 
the Dirichlet-Neumann operator satisfies
\begin{equation}\label{DN:trans}
\tau_\vs \circ G( \eta ) = G(\tau_\vs  \eta ) \circ \tau_\vs \, , \quad 
\forall \vs \in \R \, .
\end{equation}
\paragraph{Wahl\'en coordinates.} 
The variables $(\eta, \psi)$ are not Darboux coordinates, in the sense that the Poisson tensor \eqref{eq:magn_sympl}  is not the canonical one for values of the vorticity $\gamma \neq 0$.
 Wahl\'en \cite{Wh} noted that  in the variables $(\eta, \zeta)$, where $\zeta$ is defined by 
\begin{equation}\label{Whalen-c}
  \zeta:= \psi -   \frac{\gamma}{2} \partial_x^{-1} \eta \, , 
\end{equation}
the  symplectic form induced by $J_M(\gamma)$ becomes  the canonical one. 
Indeed, under the linear transformation of the phase space 
$H^1_0 \times \dot H^1  $ into itself defined by 
\begin{equation}\label{eq:gauge_wahlen}
\left( \begin{matrix}
	\eta \\ \psi
	\end{matrix} \right) =  W \left(\begin{matrix}
	\eta \\ \zeta
	\end{matrix}\right) \, , \quad
	W :=  \left(\begin{matrix}
	{\rm Id} & 0 \\  \frac{\gamma}{2}\partial_x^{-1} & {\rm Id}
	\end{matrix}\right)  \, , \quad
	W^{-1}  := 
	 \left(\begin{matrix}
	{\rm Id} & 0 \\ - \frac{\gamma}{2}\partial_x^{-1} & {\rm Id}
	\end{matrix}\right) \, , 
\end{equation}
the Poisson tensor 
$ J_M(\gamma)$ is transformed into the canonical one,
\begin{equation}\label{Jtensor}
	W^{-1} J_M(\gamma) (W^{-1})^{*} =  J \, , \quad 
	J := \begin{pmatrix}
	0 & {\rm Id} \\ - {\rm Id}  & 0
	\end{pmatrix}   \, .
\end{equation}
Here $ W^* $ and $ (W^{-1})^*$ are the adjoints 
maps from (a dense subspace of) $ \dot L^2 \times L^2_0 $ into itself,
and the Poisson tensor $ J $ acts 
from (subspaces of) $ \dot{L}^2\times L_0^2$ to $L_0^2\times \dot{L}^2$.
Then the Hamiltonian
\eqref{ham1} becomes 
\begin{equation}\label{Ham-Wal}
\cH := H \circ W\,, \quad \text{ i.e. } \quad  {\cal H}(\eta,\zeta):=H\Big(\eta,\zeta + \frac{\gamma}{2}\partial_x^{-1}\eta\Big) \, , 
\end{equation}
and the Hamiltonian equations \eqref{ham.eq1} (i.e. \eqref{ww})  are transformed into
\begin{equation}\label{eq:Ham_eq_zeta}
	\partial_t\left(\begin{matrix}
	\eta \\ \zeta 
	\end{matrix}\right) = X_{\cal H} (\eta, \zeta) \, , \quad 
	X_{\cal H} (\eta, \zeta) := J
	 \begin{pmatrix} \nabla_\eta {\cal H} \\ \nabla_\zeta {\cal H} \end{pmatrix}  ( \eta, \zeta )\,. 
\end{equation}
By \eqref{Jtensor}, the symplectic form of  \eqref{eq:Ham_eq_zeta} is  the standard one,
\begin{align} \label{sympl-form-st}
{\cal W}  
\left( \begin{pmatrix}
\eta_1 \\
\zeta_1
\end{pmatrix}, 
\begin{pmatrix}
\eta_2 \\
\zeta_2
\end{pmatrix}
\right) = 
\left( J^{-1} \left(\begin{matrix}
\eta_1 \\ \zeta_1
\end{matrix}\right), 
\left(\begin{matrix}
\eta_2 \\ \zeta_2
\end{matrix}\right) \right)_{L^2}  
 = (  -  \zeta_1 , \eta_2 )_{L^2} + (\eta_1  , \zeta_2 )_{L^2}  \, , 
\end{align}
where  $ J^{-1} $ is the symplectic operator 
\begin{equation}\label{def:J-1}
J^{-1} = \begin{pmatrix}
	0 & - {\rm Id} \\  {\rm Id}  & 0
	\end{pmatrix} 
\end{equation}
regarded as a map 
from $L_0^2\times \dot{L}^2$ into $ \dot{L}^2\times L_0^2 $. 
Note that $ J J^{-1} = {\rm Id}_{L_0^2\times \dot{L}^2}$ and $ J^{-1}J  = {\rm Id}_{\dot{L}^2\times L_0^2}$. 
The  Hamiltonian vector field 
$ X_{\cal H} (\eta, \zeta) $	 in  \eqref{eq:Ham_eq_zeta} is characterized by the identity
$$
d {\cal H} (\eta, \zeta) [ \widehat u ]= 
{\cal W} \big( X_{\cal H} (\eta, \zeta),  \widehat u \big) \, , \quad  \forall \widehat u :=  \begin{pmatrix}
\widehat \eta \\
\widehat \zeta
\end{pmatrix} \, . 
 $$
The transformation $W$ defined in \eqref{eq:gauge_wahlen} is reversibility preserving, 
namely it commutes with the involution
 $ {\cal S} $ in \eqref{rev_invo} (see Definition \ref{rev_defn} below), and thus also the Hamiltonian $\cH$ in \eqref{Ham-Wal} is invariant under the involution $\cS$, as well as 
 $H$ in \eqref{ham1}. 
For this reason  we look for solutions $(\eta(t,x),\zeta(t,x))$
of \eqref{eq:Ham_eq_zeta} which are reversible, i.e. see  \eqref{time-rev}, 
\begin{equation}\label{rev:soluz}
\left(\begin{matrix}
\eta \\ \zeta
\end{matrix}\right)(-t)= \cS\left(\begin{matrix}
\eta \\ \zeta
\end{matrix}\right)(t)\, . 
\end{equation}
The corresponding solutions $(\eta(t,x), \psi (t,x))$ of \eqref{ww} induced by 
\eqref{eq:gauge_wahlen} are reversible as well. 

We finally note that  the transformation $W$ defined in \eqref{eq:gauge_wahlen} 
 commutes with  the translation operator $ \tau_\vs$, 
 therefore the Hamiltonian $\cH$ in \eqref{Ham-Wal} is invariant under
$ \tau_\vs $, 
as well as $ H $ in \eqref{ham1}. 
By  Noether theorem, the horizontal momentum 
$ \int_\T \zeta \eta_x  \wrt x  $
is a prime integral of \eqref{eq:Ham_eq_zeta}.

\subsection{Linearization at the equilibrium}\label{lin:op}

In this section we study the linear system \eqref{lin.ww1} and prove that its reversible solutions have the form \eqref{linz200}.

In view of the Hamiltonian  \eqref{ham1} of the water waves equations \eqref{ww}, 
also  the linear system \eqref{lin.ww1} is Hamiltonian and it is  generated by the quadratic Hamiltonian
$$
H_L(\eta,\psi) := \frac{1}{2} \int_\T\left( \psi G(0)\psi + g \eta^2 + \kappa \eta_x^2 \right) \wrt x =
\frac12 
\left( \b\Omega_L  \left(\begin{matrix}
\eta \\ \psi
\end{matrix}\right), 
\left(\begin{matrix}
\eta \\ \psi
\end{matrix}\right) \right)_{L^2} 
\, .
$$
Thus, recalling \eqref{ham.eq1}, the linear system \eqref{lin.ww1} is 
\begin{equation}\label{Lin:HS}
\partial_t \left(\begin{matrix}
\eta \\ \psi
\end{matrix}\right) = J_M(\gamma) \b\Omega_L \left(\begin{matrix}
\eta \\ \psi
\end{matrix}\right) \ , \qquad 
\b\Omega_L:=\left(\begin{matrix}
-\kappa\partial_x^2 + g & 0 \\ 0 & G(0)
\end{matrix}\right) \, .
\end{equation} 
The linear operator $ \b\Omega_L $ acts from (a dense subspace) of 
$ L^2_0 \times \dot L^2 $  to $ \dot L^2 \times L^2_0 $. 
In the Wahl\'en  coordinates  \eqref{eq:gauge_wahlen}, 
the linear Hamiltonian system \eqref{lin.ww1}, i.e. 
\eqref{Lin:HS}, 
transforms into the  linear Hamiltonian system 
\begin{equation}\label{eq:lin00_wahlen} 
\begin{aligned}
&	\partial_t\left(\begin{matrix}
\eta \\ \zeta
\end{matrix}\right) = J \b\Omega_W \left( \begin{matrix}
\eta \\ \zeta
\end{matrix} \right) \ ,  \\
& \b\Omega_W := W^* \b  \Omega_L W =\left(\begin{matrix}
-\kappa \partial_x^2 + g -  \left( \frac{\gamma}{2}\right)^2 \partial_x^{-1}G(0)\partial_x^{-1} & - \frac{\gamma}{2}\partial_x^{-1}G(0) \\ \frac{\gamma}{2}G(0)\partial_x^{-1} & G(0)
\end{matrix}\right) 	
\end{aligned}
\end{equation}
generated by the  quadratic Hamiltonian  
\begin{equation}\label{lin_real}
\cH_L (\eta, \zeta) := (H_L \circ W) (\eta, \zeta)  = \frac12 
\left( \b\Omega_W  \left(\begin{matrix}
\eta \\ \zeta
\end{matrix}\right), 
\left(\begin{matrix}
\eta \\ \zeta
\end{matrix}\right) \right)_{L^2} \, .
\end{equation}
The linear operator $ \b\Omega_W $ acts from (a dense subspace) of 
$ L^2_0 \times \dot L^2 $  to $ \dot L^2 \times L^2_0 $. 
The linear  system
\eqref{eq:lin00_wahlen}  is the  
Hamiltonian system obtained by linearizing \eqref{eq:Ham_eq_zeta} at the equilibrium
$ (\eta, \zeta) = (0, 0) $. 
We want to transform \eqref{eq:lin00_wahlen}
in diagonal form by using a symmetrizer  and then introducing complex coordinates. 
We first conjugate \eqref{eq:lin00_wahlen} under  the  symplectic transformation
(with respect to the standard symplectic form $ {\cal W}   $ in \eqref{sympl-form-st}) of the phase space
$$
\begin{pmatrix}
\eta \\
\zeta
\end{pmatrix} = \cM \begin{pmatrix}
u \\
v
\end{pmatrix} 
$$
where $ \cM $ is the diagonal matrix of self-adjoint Fourier multipliers 
\begin{equation}\label{eq:T_sym}
	\cM:= \left(\begin{matrix}
	 M(D) & 0 \\ 0 & M(D)^{-1}
	\end{matrix}  \right) \ , \quad 
	M(D) :=\left( \frac{G(0)}{\kappa D^2 + g - 
	\frac{\gamma^2}{4} \partial_x^{-1}G(0)\partial_x^{-1}} \right)^{1/4} \, , 
\end{equation}
with the real valued symbol $ M_j $ defined in \eqref{def:Mn}.
The map $ \cal {M} $ is reversibility preserving. 

\begin{rem}
In \eqref{eq:T_sym} the  Fourier multiplier $ M(D) $  
acts in $ H^1_0 $. On the other hand, with a slight abuse of notation,  
$M(D)^{-1} $  denotes  the Fourier multiplier operator in  $ \dot H^1 $ defined as 
$$ 
M(D)^{-1} [\zeta] :=  \big[ \sum_{j \neq 0} M_j^{-1} \zeta_j e^{\im j x } \big] \, , \quad 
\quad  \zeta (x) =  \sum_{j \in \Z} \zeta_j e^{\im j x }  \, .  
$$
where $[\zeta] $ is the element in $ \dot H^1 $ with representant 
$ \zeta (x)  $. 
\end{rem}
By a direct computation, 
the Hamiltonian system  \eqref{eq:lin00_wahlen} assumes the symmetric form
\begin{eqnarray}
\label{lin.ww3}
	\partial_t \begin{pmatrix}
u \\
v
\end{pmatrix} = J \b\Omega_S \begin{pmatrix}
u \\
v
\end{pmatrix} \, , \ \  \b\Omega_S:=\cM^*\b\Omega_W\cM = \left( \begin{matrix}
	\omega (\kappa, D) & -\frac{\gamma}{2}\partial_x^{-1}G(0) \\ \frac{\gamma}{2}G(0)\partial_x^{-1} & \omega (\kappa, D)
	\end{matrix}\right)  \, ,
	\end{eqnarray}
	where
\begin{equation}\label{eq:omega0}
	\omega(\kappa,D):= \sqrt{\kappa D^2 \,G(0) + g\,G(0) - \left( \frac{\gamma}{2}\pa_x^{-1} G(0) \right)^2 } \, . 
\end{equation}
\begin{rem} To be  precise, the Fourier multiplier 
operator $ \omega (\kappa, D) $ in the top left position in \eqref{lin.ww3} maps 
$ H^1_0 $ into $ \dot H^1 $ and the one  in the bottom right 
position maps $ \dot H^1 $ into $ H^1_0 $. The operator
$\partial_x^{-1}G(0) $ acts on $ \dot H^1  $ and 
$ G(0) \partial_x^{-1} $ on $ H^1_0  $. 
\end{rem}
Now we introduce   complex coordinates  by  the transformation 
\begin{equation}\label{C_transform}
\begin{pmatrix}
u \\
v
\end{pmatrix} = \cC 
\begin{pmatrix}
z \\
\bar z
\end{pmatrix} \, , \qquad 
\cC := \frac{1}{\sqrt 2} \left(\begin{matrix}
{\rm Id} & {\rm Id} \\ -\im & \im
\end{matrix}\right) \ ,\quad \cC^{-1}:= \frac{1}{\sqrt 2}\left(\begin{matrix}
{\rm Id} & \im \\ {\rm Id} & -\im
\end{matrix}\right) \, .
\end{equation}
In these variables, the Hamiltonian system  \eqref{lin.ww3} becomes the diagonal system 
\begin{eqnarray}\label{eq:lin00_ww_C}
	\partial_t\left( \begin{matrix}
	z \\ \bar z
	\end{matrix} \right)=
	\begin{pmatrix}
	- \im & 0 \\ 0 & \im
	\end{pmatrix} 
	\b\Omega_D\left( \begin{matrix}
	z \\ \bar z
	\end{matrix} \right) \, , \quad \b\Omega_D := \cC^*\b\Omega_S\cC = \begin{pmatrix}
	\Omega (\kappa, D) & 0 \\ 0 & \bar \Omega (\kappa, D)
	\end{pmatrix} \,,
	\end{eqnarray}
	where 
\begin{equation}
\label{Omega}
\Omega (\kappa, D) := \omega (\kappa, D) + \im \,\frac{\gamma}{2}\partial_x^{-1} G(0)
\end{equation}
	is the Fourier multiplier with symbol  $  \Omega_j(\kappa)   $ defined in \eqref{def:Omegajk} and
$\bar \Omega(\kappa, D)$ is  defined by 
$$
\bar \Omega(\kappa, D) z:= \bar{\Omega(\kappa, D) \bar z} \, , 
\quad \bar \Omega(\kappa, D)  = 
\omega (\kappa, D) - \im \,\frac{\gamma}{2}\partial_x^{-1} G(0)  \, . 
$$
Note that $\bar \Omega(\kappa, D)$ is the Fourier multiplier with symbol $\{\Omega_{-j}(\kappa)\}_{j \in \Z\setminus\{0\}}$.

\begin{rem}
We regard the system  \eqref{eq:lin00_ww_C} in $ \dot H^1 \times \dot H^1 $. 
\end{rem}
The diagonal system 
\eqref{eq:lin00_ww_C} amounts to the scalar equation
\begin{equation}\label{zjF}
\partial_t z = - \im \Omega (\kappa, D) z \, , \quad 
z(x) =  \sum_{j \in \Z\setminus\{0\}} z_j e^{\im j x } \, , 
\end{equation}
and, writing \eqref{zjF}
 in the exponential Fourier basis,  
to the infinitely many decoupled harmonic oscillators 
\begin{equation}\label{eq:zj}
\dot z_j = - \im \Omega_j (\kappa) z_j \, , \quad j \in \Z \setminus \{0\} \, .
\end{equation}
Note that, in these complex coordinates,   
the involution $\cS$ defined in \eqref{rev_invo} reads as the map
\begin{equation}\label{inv-complex}
	\begin{pmatrix}
	z(x) \\ \bar{z(x)}
	\end{pmatrix} \mapsto \begin{pmatrix}
	\, \bar{z(-x)} \\ z(-x) \, 
	\end{pmatrix}
\end{equation}
that we may read just  as the scalar map $z(x)\mapsto \bar{z(-x)}$. Moreover,
in the Fourier coordinates introduced in \eqref{zjF}, it amounts to 
\begin{equation}\label{inv-zj}
z_j \mapsto \overline{z_j} \, , \quad \forall j \in \Z \setminus \{ 0 \} \,  .
\end{equation}
In view of \eqref{eq:zj} and \eqref{inv-zj}
every {\em reversible} solution (which is characterized as in \eqref{rev:soluz}) of  
\eqref{zjF} has  the form 
\begin{equation}
\label{linz}
	z(t,x):= \frac{1}{\sqrt{2}}\sum_{j\in \Z\setminus\{0\}} \rho_{j} \,  e^{-\im\,\left( \Omega_j(\kappa)t- j\,x \right)} 
	 \quad {\rm with} \quad  \rho_j \in \R \, .
\end{equation}
Let us see the form of these solutions 
back 
in  the original variables $ (\eta, \psi)$.
First, by \eqref{eq:T_sym}, \eqref{C_transform},
\begin{equation}
\label{linz2}
\begin{pmatrix}
\eta \\
\zeta
\end{pmatrix} = \cM \, \cC 
\begin{pmatrix}
z \\
\bar z
\end{pmatrix} = 
\frac{1}{\sqrt{2}}\begin{pmatrix}
M(D) & M(D)  \\
-\im M(D)^{-1} & \im M(D)^{-1} 
\end{pmatrix}\begin{pmatrix}
z \\
\bar z
\end{pmatrix} =
\frac{1}{\sqrt{2}}\begin{pmatrix}
M(D) (z + \bar z)  \\
-\im M(D)^{-1} (z - \bar z )
\end{pmatrix} \, , 
\end{equation} 
and the solutions \eqref{linz}  assume the form
$$
\begin{aligned}
\begin{pmatrix}
\eta(t,x) \\ \zeta(t,x)
\end{pmatrix} & =  \sum_{n \in \N}  \begin{pmatrix}
M_n \rho_n \cos ( n x - \Omega_n (\kappa) t) \\ 
M_n^{-1} \rho_n \sin ( n x - \Omega_n (\kappa) t) 
\end{pmatrix} \\ 
& +  \sum_{n \in \N}
\begin{pmatrix}
M_n \rho_{-n} \cos ( n x + \Omega_{-n}(\kappa) t) \\ 
- M_n^{-1} \rho_{-n} \sin ( n x + \Omega_{-n} (\kappa) t) 
\end{pmatrix} 
 \,.
 \end{aligned}
$$
Back to the variables $(\eta, \psi)$ with the change of coordinates  \eqref{eq:gauge_wahlen} one obtains formula \eqref{linz200}.

\paragraph{Decomposition of the phase space in Lagrangian  subspaces invariant 
under
\eqref{eq:lin00_wahlen}.}
We express the Fourier coefficients $ z_j \in \C $ in  \eqref{zjF} as 
$$
z_j = \frac{\alpha_j + \im \beta_j}{\sqrt{2}}, \quad (\alpha_j, \beta_j) \in \R^2 \, , \quad j \in \Z\setminus\{0\} \, .
$$
In the new coordinates $ (\alpha_j, \beta_j)_{ j \in \Z\setminus\{0\}} $,
  we  write  \eqref{linz2} as (recall that $ M_j = M_{-j} $)
\begin{equation}\label{deco-real}
\begin{pmatrix}
\eta (x) \\
\zeta (x) 
\end{pmatrix}  = \sum_{j \in \Z\setminus\{0\}}  
\begin{pmatrix}
M_j ( \alpha_j \cos (jx) - \beta_j  \sin (jx) )  \\
M_j^{-1} (  \beta_j  \cos (jx) +  \alpha_j \sin (jx) ) 
\end{pmatrix} 
\end{equation}
with
\begin{equation}\label{proj-Vj}
\begin{aligned}
& \alpha_j = \frac{1}{2\pi} 
\Big( M_j^{-1} (\eta, \cos (jx))_{L^2} + M_j (\zeta, \sin (jx))_{L^2}  \Big) \, , \\
& \beta_j = 
\frac{1}{2\pi} 
\Big( M_j (\zeta, \cos (jx))_{L^2} - M_j^{-1} (\eta, \sin (jx))_{L^2}  \Big)\, .
\end{aligned}
\end{equation}
The symplectic form \eqref{sympl-form-st}  then becomes
$$
 2\pi\sum_{j \in \Z\setminus\{0\}} \di \alpha_j \wedge \di \beta_j\,.
$$
Each $ 2$-dimensional 
subspace in the sum \eqref{deco-real}, spanned by 
$ (\alpha_j, \beta_j ) \in \R^2 $ is therefore a symplectic subspace. 
The quadratic Hamiltonian $ \cH_L $ in \eqref{lin_real} reads 
\begin{equation}\label{QFH}
2 \pi \sum_{j \in \Z\setminus\{0\}} \frac{\Omega_j(\kappa)}{2} (\alpha_j^2 + \beta_j^2 )\, . 
\end{equation}
In view of \eqref{deco-real}, the involution $ \cS $ defined in \eqref{rev_invo} reads
\begin{equation}\label{ab-rev}
(\alpha_j, \beta_j) \mapsto (\alpha_j, - \beta_j) \, ,  \quad \forall j \in \Z \setminus \{ 0 \} \, ,  
\end{equation}
and the translation operator $ \tau_\vs $ defined in \eqref{trans} as 
\begin{equation}\label{ab-tras}
\begin{pmatrix}
\alpha_j  \\
\beta_j
\end{pmatrix}   \mapsto \begin{pmatrix}
\cos (j \vs) & - \sin (j \vs) \\
\sin (j \vs) & \cos (j \vs) 
\end{pmatrix}
\begin{pmatrix}
\alpha_j  \\
\beta_j
\end{pmatrix} \, ,  \quad \forall j \in \Z \setminus \{ 0 \}  \, .
\end{equation}
We may also  enumerate the independent variables $ (\alpha_j, \beta_j )_{j \in \Z\setminus\{0\}} $ as
$  \big( \alpha_{-n}, \beta_{-n}, \alpha_{n}, \beta_{n} \big) $, $  n \in \N $. 
Thus  the phase space $\acca := L^2_0 \times \dot L^2 $ of 
\eqref{eq:Ham_eq_zeta} decomposes as the direct sum 
$$
\acca = \sum_{n \in \N} V_{n,+}  \oplus V_{n,-} 
$$
of $ 2 $-dimensional Lagrangian symplectic subspaces 
\begin{align} 
V_{n,+} & :=
\left\{ 
\begin{pmatrix}
	\eta \\ \zeta
	\end{pmatrix} 
 = 
 \begin{pmatrix}
M_{n} ( \alpha_{n} \cos (nx) - \beta_n \sin (nx) ) \\ 
M_{n}^{-1} ( \beta_{n} \cos (nx) + \alpha_n \sin (nx) ) 
\end{pmatrix} \, , (\alpha_n , \beta_n) \in \R^2  
\right\} \, , \label{Vn+} \\
V_{n,-} & :=
\left\{ 
\begin{pmatrix}
	\eta \\ \zeta
	\end{pmatrix}  = 
 \begin{pmatrix}
M_{n} ( \alpha_{-n} \cos (nx) + \beta_{-n} \sin (nx) ) \\ 
M_{n}^{-1} ( \beta_{-n} \cos (nx) - \alpha_{-n} \sin (nx) ) 
\end{pmatrix} \, , (\alpha_{-n} , \beta_{-n}) \in \R^2  
\right\} \, , \label{Vn-}
 \end{align}  
which are invariant for the linear Hamiltonian system \eqref{eq:lin00_wahlen}, namely
$ J  \b\Omega_W : V_{n,\sigma} \mapsto V_{n,\sigma} $ (for a proof see e.g. remark \ref{rem:Vn+}).  
The symplectic projectors $ \Pi_{V_{n,\sigma}}$, $ \sigma \in \{ \pm \} $, 
on the symplectic subspaces $ V_{n, \sigma} $ are explicitly provided by 
\eqref{deco-real} and \eqref{proj-Vj} with $ j = n \sigma $. 

Note that the involution $ \cS $ defined in \eqref{rev_invo} and the translation operator 
$ \tau_\vs $ in \eqref{trans} leave the subspaces $ V_{n,\sigma} $, $ \sigma \in \{ \pm  \} $,  invariant.

\subsection{Tangential and normal subspaces of the phase space}
\label{sec:decomp}

We  decompose the phase space $ \acca $ of \eqref{eq:Ham_eq_zeta}  
into a direct sum of {\em tangential} and {\em normal }  Lagrangian subspaces 
$ \acca_{\S^+,\Sigma}^\intercal $ and 
$ \acca_{\S^+,\Sigma}^\angle  $. Note that the 
main part of the solutions \eqref{QP:soluz} that we shall obtain in Theorem 
\ref{thm:main0} is the component  in the tangential subspace 
$ \acca_{\S^+,\Sigma}^\intercal $,  whereas the component in  the normal subspace $ \acca_{\S^+,\Sigma}^\angle  $ is much smaller. 

Recalling  the definition of the sets $\S^+$ and 
$\Sigma$ defined in \eqref{Splus} respectively \eqref{signs}, we split 
\begin{equation}\label{H-split}
\acca =\acca_{\S^+,\Sigma}^\intercal\oplus\acca_{\S^+,\Sigma}^\angle 
\end{equation}
where $\acca^{\intercal}_{\S^+, \Sigma} $ is the  finite dimensional {\em tangential subspace}  
\begin{equation}
\label{cHStang}
\acca^{\intercal}_{\S^+, \Sigma} := 
\sum_{a = 1}^\nu V_{{\bar n}_a, \sigma_a} 
\end{equation}
and $\acca^{\angle}_{\S^+, \Sigma} $ is the {\em normal subspace} defined as
 its symplectic orthogonal 
 \begin{equation}
\label{cHSnorm}
\acca^{\angle}_{\S^+, \Sigma} := \sum_{a = 1}^\nu V_{{\bar n}_a, - \sigma_a} 
\oplus  
 \sum_{n \in \N \setminus {\mathbb S}^+} \big( V_{n,+}  \oplus V_{n,-}\big) \, . 
\end{equation}
Both the subspaces 
$\acca_{\S^+,\Sigma}^\intercal$ and $\acca_{\S^+,\Sigma}^\angle$ are Lagrangian. 
We denote  by
$ \Pi_{\S^+,\Sigma}^\intercal $ and
$  \Pi_{\S^+,\Sigma}^\angle  $ 
the symplectic projections on the subspaces $\acca_{\S^+,\Sigma}^\intercal$ and $\acca_{\S^+,\Sigma}^\angle$, respectively. 
Since $\acca_{\S^+,\Sigma}^\intercal$ and $\acca_{\S^+,\Sigma}^\angle$ are symplectic orthogonal, the symplectic form $ {\cal W} $ in  \eqref{sympl-form-st} decomposes as
$$
{\cal W} ( v_1+ w_1, v_2 + w_2) = {\cal W} ( v_1, v_2)+ {\cal W} ( w_1, w_2) \, ,
\quad \forall v_1, v_2 \in \acca_{\S^+,\Sigma}^\intercal \, ,  \
w_1, w_2 \in \acca_{\S^+,\Sigma}^\angle \, .
$$
The symplectic 
projections $ \Pi_{\S^+,\Sigma}^\intercal $ and
$  \Pi_{\S^+,\Sigma}^\angle  $  satisfy the following properties:

 \begin{lem}
\label{Pi_adj_1}
We have that
\begin{align}
\label{pi.t.J}
&\Pi^\intercal_{\S^+, \Sigma}\, J  = J  \big( \Pi^\intercal_{\S^+, \Sigma} \big)^* \ , \qquad 
\big(\Pi^\intercal_{\S^+, \Sigma} \big)^* \, J^{-1}  = J^{-1} \,\Pi^\intercal_{\S^+, \Sigma} \,  , \\
\label{pi.a.J}
&\Pi^\angle_{\S^+, \Sigma}\, J  = J \, \big(\Pi^\angle_{\S^+, \Sigma} \big)^* \ , \qquad 
\big(\Pi^\angle_{\S^+, \Sigma} \big)^* \,  J^{-1} = J^{-1} \Pi^\angle_{\S^+, \Sigma}  \, . 
\end{align}
\end{lem}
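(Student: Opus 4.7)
The plan is to derive both identities from the fact that the decomposition $\acca = \acca^\intercal_{\S^+,\Sigma} \oplus \acca^\angle_{\S^+,\Sigma}$ is \emph{symplectically orthogonal} with respect to $\cW$, which is simply a restatement of the way $\acca^\angle_{\S^+,\Sigma}$ is defined in \eqref{cHSnorm}. The key observation is that any symplectic projector onto one summand of a symplectically orthogonal decomposition is self-adjoint with respect to the symplectic form, and then translating this self-adjointness via the identity $\cW(u,v) = (J^{-1}u,v)_{L^2}$ yields precisely \eqref{pi.t.J}–\eqref{pi.a.J}.

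Concretely, I would first record the symplectic-orthogonality identity
\[
\cW(v,w) = 0 \qquad \forall\,v \in \acca^\intercal_{\S^+,\Sigma}\,,\ w \in \acca^\angle_{\S^+,\Sigma}\,,
\]
which is immediate because each $V_{n,\sigma}$ is spanned by the real and imaginary parts of a single complex Fourier mode $z_j$ with $j = \sigma n$ and $\cW$ is diagonal in that basis (see the formula $2\pi\sum_j d\alpha_j\wedge d\beta_j$). Writing any $u,v \in \acca$ as $u = \Pi^\intercal u + \Pi^\angle u$ and $v = \Pi^\intercal v + \Pi^\angle v$, this symplectic orthogonality gives
\[
\cW(\Pi^\intercal u, v) = \cW(\Pi^\intercal u,\Pi^\intercal v) = \cW(u,\Pi^\intercal v)\,,
\]
and analogously $\cW(\Pi^\angle u, v) = \cW(u, \Pi^\angle v)$.

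Next I would rewrite these equalities using the $L^2$ pairing. From \eqref{sympl-form-st} we have $\cW(u,v) = (J^{-1}u,v)_{L^2}$, so
\[
(J^{-1}\Pi^\intercal u,v)_{L^2} = (J^{-1}u,\Pi^\intercal v)_{L^2} = \big((\Pi^\intercal)^* J^{-1}u,v\big)_{L^2}
\]
for all admissible $u,v$, yielding $(\Pi^\intercal_{\S^+,\Sigma})^* J^{-1} = J^{-1}\Pi^\intercal_{\S^+,\Sigma}$, which is the second identity in \eqref{pi.t.J}. Composing this with $J$ on both sides (and using $JJ^{-1} = \mathrm{Id}$, $J^{-1}J = \mathrm{Id}$ on the appropriate spaces as recalled after \eqref{def:J-1}) gives the first identity $\Pi^\intercal_{\S^+,\Sigma} J = J (\Pi^\intercal_{\S^+,\Sigma})^*$. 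The same argument verbatim, applied to $\Pi^\angle_{\S^+,\Sigma}$, produces \eqref{pi.a.J}.

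The only delicate point, and the one I would treat with some care, is the bookkeeping of domains and codomains, since $J$ sends $\dot L^2\times L^2_0$ to $L^2_0\times \dot L^2$ while $J^{-1}$ goes the other way, and $(\Pi^\intercal)^*$ acts on the dual spaces; one must verify that the compositions in \eqref{pi.t.J}–\eqref{pi.a.J} are well defined on the spaces asserted. This is straightforward since $\Pi^\intercal_{\S^+,\Sigma}$ has finite-dimensional range spanned by smooth trigonometric polynomials, so its $L^2$-adjoint is a finite-rank operator and extends/restricts to all the Sobolev scales involved; no small-divisor or regularity issue arises. I do not anticipate any other obstacle, since the lemma is essentially a linear-algebraic consequence of the symplectic orthogonality already built into the definition of $\acca^\angle_{\S^+,\Sigma}$.
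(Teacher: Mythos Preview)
Your proof is correct and follows essentially the same approach as the paper: both arguments use the symplectic orthogonality of $\acca^\intercal_{\S^+,\Sigma}$ and $\acca^\angle_{\S^+,\Sigma}$ together with $\cW(u,v)=(J^{-1}u,v)_{L^2}$ to deduce $(\Pi^\intercal)^*J^{-1}=J^{-1}\Pi^\intercal$, and then conjugate by $J$. Your organization is slightly more direct---you establish $\cW(\Pi^\intercal u,v)=\cW(u,\Pi^\intercal v)$ in one line by expanding both sides---whereas the paper first derives the intermediate relation $(\Pi^\angle)^*J^{-1}\Pi^\intercal=0$ and then substitutes $\Pi^\angle=\mathrm{Id}-\Pi^\intercal$; but the substance is identical.
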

\begin{proof}
Since  the subspaces  $\acca^\intercal := \acca_{\S^+,\Sigma}^\intercal $ and 
$\acca^\angle := \acca_{\S^+,\Sigma}^\angle $ are symplectic orthogonal, we have, recalling 
 \eqref{sympl-form-st}, that
$$
( J^{-1} v , w )_{L^2} = (J^{-1} w , v )_{L^2} = 0 , \qquad \forall v \in \acca^\intercal \, , \ 
\forall w \in \acca^\angle \, .
$$
Thus, using the projectors 
$ \Pi^\intercal := \Pi^\intercal_{\S^+, \Sigma} $, 
$\Pi^\angle := \Pi^\angle_{\S^+, \Sigma} $,  we have that 
$$
( J^{-1} \Pi^\intercal v , \Pi^\angle w )_{L^2} = ( J^{-1} \Pi^\angle w , \Pi^\intercal v )_{L^2} = 0 \, , \quad \forall v, w \in \acca \, , 
$$
and, taking adjoints,
$ ( (\Pi^\angle)^* J^{-1} \Pi^\intercal v ,  w )_{L^2} = 
( (\Pi^\intercal)^* J^{-1} \Pi^\angle w ,  v )_{L^2} = 0 $ for any 
$ v, w \in \acca $, 
so that 
\begin{equation}\label{rel:sopra}
(\Pi^\angle)^* J^{-1} \Pi^\intercal  = 0 = (\Pi^\intercal)^* J^{-1} \Pi^\angle \,  .
\end{equation}
Now inserting the identity  $\Pi^\angle = {\rm Id} - \Pi^\intercal$  in  \eqref{rel:sopra}, 
we get
$$
 J^{-1} \Pi^\intercal = (\Pi^\intercal)^*  J^{-1} \Pi^\intercal  = (\Pi^\intercal)^* J^{-1} 
$$
proving  the second identity of \eqref{pi.t.J}. 
The first identity  of \eqref{pi.t.J} follows applying  $J$ to the left and to the right of the second identity.
The identity \eqref{pi.a.J} follows in the same way. 
\end{proof}

Note that the restricted symplectic form $\cW\vert_{\acca_{\S^+, \Sigma}^\angle} $ is represented by the symplectic structure  
\begin{equation}\label{simpl-ristretto}
J_\angle^{-1} : \acca_{\S^+, \Sigma}^\angle \to \acca_{\S^+, \Sigma}^\angle \, , \
\quad 
J_\angle^{-1} := \Pi^{L^2}_\angle \,   J^{-1}_{| \acca_{\S^+,\Sigma}^\angle }    \, ,  
\end{equation}
where $ \Pi^{L^2}_\angle $ is the $ L^2 $-projector on the subspace 
$ \acca_{\S^+,\Sigma}^\angle $.  
Indeed 
\begin{align*}
\cW\vert_{\acca_{\S^+, \Sigma}^\angle}(w, \hat w ) & = (  J_\angle^{-1} w, \wh{w} )_{L^2}  
= (  J^{-1} w, \wh{w} )_{L^2}  , \quad \forall  w, \hat w \in  \acca_{\S^+, \Sigma}^\angle \, . 
\end{align*}
We also  denote 
the associated (restricted) Poisson tensor
\begin{equation}
\label{Jangle}
J_\angle :  \acca_{\S^+, \Sigma}^\angle \to \acca_{\S^+, \Sigma}^\angle \, , \quad
J_\angle := \Pi^\angle_{\S^+, \Sigma}  \, J_{| \acca_{\S^+,\Sigma}^\angle } \, . 
\end{equation}
In the next lemma we prove that $J^{-1}_\angle$ and $J_\angle$ are  each other inverses.
\begin{lem}\label{lem:invPS}
$J^{-1}_\angle  \, J_\angle  = J_\angle \, J^{-1}_\angle =  {\rm Id}_{ \acca_{\S^+, \Sigma}^\angle} $. 
\end{lem}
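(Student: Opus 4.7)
My overall plan is to take an arbitrary $w \in \acca^\angle_{\S^+,\Sigma}$ and to simplify each of the compositions $J^{-1}_\angle J_\angle w$ and $J_\angle J^{-1}_\angle w$ by unfolding the definitions \eqref{simpl-ristretto}, \eqref{Jangle} and then applying the intertwining identities \eqref{pi.a.J} from Lemma \ref{Pi_adj_1}. The conceptual bridge between the two sides of the argument is the standard fact that the kernel of the $L^2$-adjoint of a projection equals the $L^2$-orthogonal complement of its range, i.e.
$$
\ker (\Pi^\angle_{\S^+, \Sigma})^* = (\acca^\angle_{\S^+, \Sigma})^{\perp_{L^2}}.
$$

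For the first identity, I would write
$$
J^{-1}_\angle J_\angle w = \Pi^{L^2}_\angle \, J^{-1} \, \Pi^\angle_{\S^+, \Sigma} \, J w = \Pi^{L^2}_\angle \, (\Pi^\angle_{\S^+, \Sigma})^* w,
$$
where the second equality uses $\Pi^\angle_{\S^+, \Sigma} J = J (\Pi^\angle_{\S^+, \Sigma})^*$ from \eqref{pi.a.J}. To see that this equals $w$, I would check that $(\Pi^\angle_{\S^+, \Sigma})^* w - w$ is $L^2$-orthogonal to $\acca^\angle_{\S^+, \Sigma}$: for every $v \in \acca^\angle_{\S^+, \Sigma}$,
$$
\big( (\Pi^\angle_{\S^+, \Sigma})^* w - w, \, v \big)_{L^2} = \big( w, \, \Pi^\angle_{\S^+, \Sigma} v - v \big)_{L^2} = 0,
$$
since $\Pi^\angle_{\S^+, \Sigma}$ restricts to the identity on $\acca^\angle_{\S^+, \Sigma}$. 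Hence $\Pi^{L^2}_\angle (\Pi^\angle_{\S^+, \Sigma})^* w = \Pi^{L^2}_\angle w = w$.

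For the second identity I would compute $J_\angle J^{-1}_\angle w = \Pi^\angle_{\S^+, \Sigma} \, J \, \Pi^{L^2}_\angle \, J^{-1} w$ and exploit the key observation $(\Pi^\angle_{\S^+, \Sigma})^* \circ \Pi^{L^2}_\angle = (\Pi^\angle_{\S^+, \Sigma})^*$ on all of $\acca$: indeed, for any $u \in \acca$, the difference $u - \Pi^{L^2}_\angle u$ belongs to $(\acca^\angle_{\S^+, \Sigma})^{\perp_{L^2}} = \ker (\Pi^\angle_{\S^+, \Sigma})^*$ and is thus annihilated by $(\Pi^\angle_{\S^+, \Sigma})^*$. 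Combining with both intertwining identities of \eqref{pi.a.J} then yields
$$
J_\angle J^{-1}_\angle w = J \, (\Pi^\angle_{\S^+, \Sigma})^* \, \Pi^{L^2}_\angle \, J^{-1} w = J \, (\Pi^\angle_{\S^+, \Sigma})^* \, J^{-1} w = \Pi^\angle_{\S^+, \Sigma} w = w.
$$

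The main subtlety is to keep the symplectic projector $\Pi^\angle_{\S^+, \Sigma}$ carefully distinct from the $L^2$-orthogonal projector $\Pi^{L^2}_\angle$: the splittings $\acca = \acca^\intercal_{\S^+, \Sigma} \oplus \acca^\angle_{\S^+, \Sigma}$ and $\acca = (\acca^\angle_{\S^+, \Sigma})^{\perp_{L^2}} \oplus \acca^\angle_{\S^+, \Sigma}$ do \emph{not} coincide (as one already sees from the explicit formulas \eqref{Vn+}--\eqref{Vn-}, where the factors $M_n^{\pm 1}$ break $L^2$-orthogonality between $V_{n,+}$ and $V_{n,-}$). Once the two identities $\Pi^{L^2}_\angle (\Pi^\angle_{\S^+, \Sigma})^* |_{\acca^\angle_{\S^+,\Sigma}} = {\rm Id}$ and $(\Pi^\angle_{\S^+, \Sigma})^* \Pi^{L^2}_\angle = (\Pi^\angle_{\S^+, \Sigma})^*$ are extracted by means of the $L^2$-adjoint relations and Lemma \ref{Pi_adj_1}, the rest of the argument is automatic.
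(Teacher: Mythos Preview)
Your proof is correct and rests on the same key input as the paper's, namely the intertwining identities of Lemma~\ref{Pi_adj_1}. The organization differs: the paper verifies $J^{-1}_\angle J_\angle v = v$ by testing weakly against $h \in \acca^\angle_{\S^+,\Sigma}$, i.e.\ by computing $(J^{-1}_\angle J_\angle v, h)_{L^2}$ and simplifying via $(\Pi^\angle_{\S^+,\Sigma})^* J^{-1} = J^{-1}\Pi^\angle_{\S^+,\Sigma}$ and the antisymmetry of $J$, whereas you work directly at the operator level and isolate the two relations $\Pi^{L^2}_\angle (\Pi^\angle_{\S^+,\Sigma})^*|_{\acca^\angle_{\S^+,\Sigma}} = {\rm Id}$ and $(\Pi^\angle_{\S^+,\Sigma})^* \Pi^{L^2}_\angle = (\Pi^\angle_{\S^+,\Sigma})^*$. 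The second of these is exactly the content of Lemma~\ref{ident-simp} (after conjugating by $J$), so you have in effect reproved that lemma inline. Both routes are short; yours makes the interaction between the symplectic projector, its adjoint, and the orthogonal projector more transparent, while the paper's weak formulation is slightly more economical.
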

\begin{proof}
Let $v \in \acca_{\S^+, \Sigma}^\angle $. By \eqref{simpl-ristretto} and \eqref{Jangle},  
for any  $h \in  \acca_{\S^+, \Sigma}^\angle $ one has 
\begin{align*}
 (J^{-1}_\angle  \, J_\angle \, v , h )_{L^2} & =  (J^{-1} \Pi^\angle_{\S^+, \Sigma}  \ J v,  \Pi^{L^2}_\angle h)_{L^2}  
 = -( \Pi^\angle_{\S^+, \Sigma}  \, J v, \, J^{-1}  h)_{L^2} \\
 & = - ( J v, (\Pi^\angle_{\S^+, \Sigma} )^* J^{-1}  h)_{L^2} 
 \stackrel{\eqref{pi.a.J}} =- ( J v,  J^{-1} \, \Pi^\angle_{\S^+, \Sigma}  h)_{L^2}  = ( v, h)_{L^2} \, .
\end{align*}
The proof that 
$ J_\angle J^{-1}_\angle  \,   =  {\rm Id}_{ \acca_{\S^+, \Sigma}^\angle}$ is similar.
\end{proof}

\begin{lem}\label{ident-simp}
$ \Pi^\angle_{\S^+, \Sigma} J \Pi^{L^2}_\angle = \Pi^\angle_{\S^+, \Sigma} J $.
\end{lem}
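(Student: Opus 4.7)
The plan is to reduce the identity to showing that $\Pi^\angle_{\S^+,\Sigma} J$ annihilates the $L^2$-orthogonal complement of $\acca^\angle_{\S^+, \Sigma}$. Writing $\mathrm{Id} = \Pi^{L^2}_\angle + (\mathrm{Id} - \Pi^{L^2}_\angle)$, the claim $\Pi^\angle_{\S^+, \Sigma} J \Pi^{L^2}_\angle = \Pi^\angle_{\S^+, \Sigma} J$ is equivalent to $\Pi^\angle_{\S^+, \Sigma} J (\mathrm{Id} - \Pi^{L^2}_\angle) = 0$, and the range of $\mathrm{Id} - \Pi^{L^2}_\angle$ is exactly $(\acca^\angle_{\S^+, \Sigma})^{\perp_{L^2}}$. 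So it suffices to show that $\Pi^\angle_{\S^+, \Sigma} J w = 0$ for every $w \in (\acca^\angle_{\S^+, \Sigma})^{\perp_{L^2}}$.

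To this end, I would invoke the second identity in \eqref{pi.a.J}, which gives $\Pi^\angle_{\S^+, \Sigma} J = J (\Pi^\angle_{\S^+, \Sigma})^*$. The computation then reduces to showing $(\Pi^\angle_{\S^+, \Sigma})^* w = 0$ for any $w \in (\acca^\angle_{\S^+, \Sigma})^{\perp_{L^2}}$. This is immediate from the defining identity of the $L^2$-adjoint: for every $u \in \acca$,
\begin{equation*}
\big( (\Pi^\angle_{\S^+, \Sigma})^* w , u \big)_{L^2} = \big( w , \Pi^\angle_{\S^+, \Sigma} u \big)_{L^2} = 0 ,
\end{equation*}
since $\Pi^\angle_{\S^+, \Sigma} u \in \acca^\angle_{\S^+, \Sigma}$ and $w$ is $L^2$-orthogonal to this subspace. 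Therefore $(\Pi^\angle_{\S^+, \Sigma})^* w = 0$, and consequently $\Pi^\angle_{\S^+, \Sigma} J w = J (\Pi^\angle_{\S^+, \Sigma})^* w = 0$, which concludes the argument.

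I do not anticipate any real obstacle: the statement is essentially a bookkeeping identity relating the symplectic projector to the $L^2$-orthogonal projector on the same Lagrangian subspace, and Lemma \ref{Pi_adj_1} already supplies the exact conjugation rule for $\Pi^\angle_{\S^+, \Sigma}$ with $J$. The only point that deserves care is to respect the fact that $J$ and $(\Pi^\angle_{\S^+, \Sigma})^*$ act between the dual pairs $\dot L^2 \times L^2_0$ and $L^2_0 \times \dot L^2$ (as emphasized in Remark \ref{rem:dual}), but since the identity \eqref{pi.a.J} is stated in precisely these spaces, the composition $J (\Pi^\angle_{\S^+, \Sigma})^*$ is well defined and equals $\Pi^\angle_{\S^+, \Sigma} J$ on $\acca$.
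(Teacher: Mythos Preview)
Your proof is correct and uses essentially the same ingredient as the paper, namely the identity $\Pi^\angle_{\S^+,\Sigma} J = J(\Pi^\angle_{\S^+,\Sigma})^*$ from \eqref{pi.a.J} (note: this is the \emph{first} identity in \eqref{pi.a.J}, not the second). The paper argues by testing $(\Pi^\angle_{\S^+,\Sigma} J \Pi^{L^2}_\angle u, h)_{L^2}$ against arbitrary $h$ and using that $\Pi^\angle_{\S^+,\Sigma} Jh \in \acca^\angle_{\S^+,\Sigma}$, while you instead decompose $\mathrm{Id} = \Pi^{L^2}_\angle + (\mathrm{Id}-\Pi^{L^2}_\angle)$ and show directly that $(\Pi^\angle_{\S^+,\Sigma})^*$ vanishes on $(\acca^\angle_{\S^+,\Sigma})^{\perp_{L^2}}$; these are dual formulations of the same observation.
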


\begin{proof}
For any $ u, h \in \acca $ we have, using Lemma \ref{Pi_adj_1},  
\begin{align*}
(  \Pi^\angle_{\S^+, \Sigma} J \Pi^{L^2}_\angle u, h )_{L^2} & = 
- (   \Pi^{L^2}_\angle u, J (\Pi^\angle_{\S^+, \Sigma})^* h )_{L^2}  
= - (   \Pi^{L^2}_\angle u, \Pi^\angle_{\S^+, \Sigma} J  h )_{L^2} \\
& = - (  u, \Pi^\angle_{\S^+, \Sigma} J  h )_{L^2} =
( J (\Pi^\angle_{\S^+, \Sigma})^*  u,  h )_{L^2}   = 
( \Pi^\angle_{\S^+, \Sigma} J  u,  h )_{L^2} 
\end{align*}
implying the lemma. 
\end{proof}

\paragraph{Action-angle coordinates.}
Finally we introduce action-angle coordinates on the tangential subspace 
$ \acca^{ \intercal}_{\S^+, \Sigma}$ defined in \eqref{cHStang}. 
Given the sets $\S^+$ and $\Sigma$ defined respectively in \eqref{Splus} and \eqref{signs},  
we define the set 
\begin{equation}
\label{def.S}
\S := \{ \bar \jmath_1 , \ldots, \bar \jmath_\nu \} \subset \Z \,\setminus\{0\}\, , \quad \bar 
\jmath_a := \sigma_a \bar n_a \, ,
\quad a = 1, \ldots, \nu \, , 
\end{equation} 
and  the action-angle coordinates $ (\theta_j, I_j)_{j \in {\mathbb S}} $, by the relations  
\begin{equation}\label{ajbjAA}
\alpha_j = \sqrt{\frac{1}{\pi}(I_j + \xi_j)}\cos(\theta_j) \,, \ 
\beta_j = -\sqrt{\frac{1}{\pi}(I_j + \xi_j)}\sin(\theta_j) \, , \quad \xi_j >0 \, , \ | I_j | <  \xi_j \, ,
\ \forall j \in {\mathbb S} \, . 
\end{equation}
In view of \eqref{H-split}-\eqref{cHSnorm}, we represent any function
 of the phase space $ \acca $ as
\begin{align}
\label{aacoordinates}
A(\theta,I,w)& := v^\intercal(\theta,I)+ w \notag \, ,  \\
& := \frac{1}{\sqrt{\pi}}\sum_{j\in\S}
\left[ \begin{pmatrix}
M_j\sqrt{I_j+ \xi_j}\cos(\theta_j) \\ -M_j^{-1}\sqrt{I_j+\xi_j}\sin(\theta_j)
\end{pmatrix}\cos(j x)+\begin{pmatrix}
M_j\sqrt{I_j+ \xi_j}\sin(\theta_j) \\ M_j^{-1}\sqrt{I_j+\xi_j}\cos(\theta_j)
\end{pmatrix}\sin(j x) \right] + w  \nonumber \\
& = \frac{1}{\sqrt{\pi}}\sum_{j\in\S}
\left[ \begin{pmatrix}
M_j\sqrt{I_j+ \xi_j} \cos(\theta_j - j x ) \\ 
- M_j^{-1}\sqrt{I_j+\xi_j}\sin(\theta_j - j x )
\end{pmatrix} \right] + w 
\end{align}
where $ \theta := (\theta_j)_{j \in \S} \in \T^\nu $, $ I := (I_j)_{j \in \S } \in \R^\nu $
and $ w \in  \acca^{ \angle}_{\S^+, \Sigma} $. 

\begin{rem} In  these coordinates the  solutions \eqref{sel.sol} 
of the linear system \eqref{lin.ww1}  simply read as
$ W  v^\intercal(\ora{\Omega}(\kappa)t, 0) $,  where $
\ora{\Omega}(\kappa) := (\Omega_j(\kappa))_{j \in \S} $ is given in 
\eqref{Omega-kappa}. 
\end{rem}

In view of  \eqref{aacoordinates}, 
the involution $\cS$ in \eqref{rev_invo} reads
\begin{equation}\label{rev_aa}
	\vec \cS: (\theta,I,w)\mapsto \left( -\theta,I,\cS w \right)\,, 
\end{equation}
the  translation operator $\tau_\varsigma$ in  \eqref{trans} reads
\begin{equation}
\label{vec.tau}
\vec \tau_\vs : 
(\theta, \, I, \, w) \mapsto 
(\theta - \ora{\jmath} \vs, \, I, \, \tau_\vs w), \quad \forall \vs \in \R \, , 
\end{equation}
where 
\begin{equation}\label{def:vecj}
\ora{\jmath}:= (j)_{j \in \S} = ( \bar{\jmath}_1,\ldots,\bar{\jmath}_\nu) \in \Z^\nu\setminus\{0\} \, , 
\end{equation}
and the symplectic 2-form \eqref{sympl-form-st} becomes
\begin{equation}\label{sympl_form}
{\cal W} = 
\sum_{j\in\S} (\di \theta_j \wedge  \di I_j)  \, \oplus \, 
{\cal W}|_{\acca_{\S^+, \Sigma}^\angle} \,  . 
\end{equation}
We also note that  ${\cal W} $ is exact, namely 
\begin{equation}\label{liouville}
{\cal W} = d \Lambda\,,   \qquad {\rm where} \qquad 
\Lambda_{(\theta,I,w)} [ \wh\theta,\whI ,\wh{w} ] := - \sum_{j\in\S} I_j \wh{\theta}_j + \tfrac12 \left( J_\angle^{-1} w, \wh{w} \right)_{L^2} 
\end{equation}
is the  associated  Liouville 1-form (the operator $J_\angle^{-1} $ is defined in \eqref{simpl-ristretto}).

Finally, given a Hamiltonian $ K \colon \T^\nu \times \R^\nu \times \acca_{\S^+, \Sigma}^\angle \to \R$, 
the associated Hamiltonian vector field 
(with respect to the symplectic form \eqref{sympl_form}) is
\begin{equation}\label{HS3var}
X_K  := 
 \big( \pa_I K, -\pa_\theta K,  J_\angle \nabla_{w} K \big)  =
 \big( \pa_I K,  -\pa_\theta K,  \Pi_{\S^+,\Sigma}^\angle J \nabla_{w} K \big) 	\, , 
\end{equation}
where $\grad_w K $ denotes the $L^2$ gradient of $K$ with respect to $ w
\in \acca_{\S^+, \Sigma}^\angle$. 
Indeed, the only nontrivial component of the vector field $X_K$ is the last one, which we denote by $[X_K]_w \in \acca_{\S^+, \Sigma}^\angle $. It fulfills 
\begin{equation}\label{L2-grad}
( J^{-1}_\angle [X_K]_w, \wh w )_{L^2} = \di_w K [\wh w] =
 ( \grad_w K, \wh w)_{L^2}\,
 , \quad \forall\, \wh w \in \acca_{\S^+, \Sigma}^\angle \, ,
\end{equation}
and \eqref{HS3var} follows by Lemma \ref{lem:invPS}. 
We remark that along the paper we only consider Hamiltonians such that the $ L^2$-gradient $ \nabla_w K $ defined by \eqref{L2-grad}, as well as the Hamiltonian 
vector field $ \Pi_{\S^+,\Sigma}^\angle J \nabla_{w} K $, 
 maps spaces of Sobolev functions into Sobolev functions (not just distributions), with possible loss of derivatives. 

\paragraph{Tangential and normal subspaces in complex variables.}

Each $ 2 $-dimensional symplectic subspace $ V_{n,\sigma} $, 
$ n \in \N $, $ \sigma = \pm 1 $,  defined in \eqref{Vn+}-\eqref{Vn-}
is isomorphic,   
through the linear map $ {\cal M } {\cal C} $ defined in \eqref{linz2}, 
to the complex subspace 
$$
	\bH_j  
	:= \Big\{  \begin{pmatrix}
	z_j e^{\im j x }  \\ \overline{z_j} e^{- \im j x } 
	\end{pmatrix} 	\, , \  z_j \in \C   \Big\}  \qquad {\rm with } \qquad j = n \sigma \in \Z  \, .
$$
Denoting by $ \Pi_j $  the $ L^2 $-projection on $	\bH_j   $, we have that 
$\Pi_{V_{n, \sigma}}  = {\cal M } {\cal C} \,  \Pi_j \, ({\cal M } {\cal C})^{-1} $.  
Thus $ {\cal M } {\cal C} $
is an isomorphism between the tangential subspace 
$ \acca^{\intercal}_{\S^+, \Sigma} $ defined in \eqref{cHStang} and 
$$
	\bH_{\mathbb S} 
	:= \Big\{ \begin{pmatrix}
	z \\ \bar z
	\end{pmatrix}  \, : \, 
	z (x) = \sum_{j \in {\mathbb S} } z_j e^{\im j x }  \Big\}
$$
and between  the  normal subspace $ \acca^{\angle}_{\S^+, \Sigma} $ defined in \eqref{cHSnorm} and 
\begin{equation}\label{def:HS0bot}
	 \bH_{{\mathbb S}_0}^\bot := \Big\{ \begin{pmatrix}
	z \\ \bar z
	\end{pmatrix} \, : \, 
	z (x) = \sum_{j \in \S_0^c } z_j e^{\im j x } \in L^2   \Big\}  \, ,
	\quad \S_0^c := \Z \setminus (\S \cup \{0\}) \, . 
\end{equation}
Denoting by 
$ \Pi_{\S}^\intercal $, $\Pi_{\S_0}^\perp  $, the $ L^2 $-orthogonal 
projections on the subspaces $ \bH_\S $ and 
$ \bH_{\S_0}^\perp $, we have that
\begin{equation}\label{proiez}
\Pi_{\S^+,\Sigma}^\intercal = {\cal M } {\cal C} \,  \Pi_{\S}^\intercal \, ({\cal M } {\cal C})^{-1} \, , 
\quad
\Pi_{\S^+,\Sigma}^\angle = {\cal M } {\cal C} \, \Pi_{\S_0}^\perp \, ({\cal M } {\cal C})^{-1} \, . 
\end{equation} 
The following lemma, used in Section \ref{sec:NM},  is an
easy corollary of the previous analysis. 
\begin{lem}\label{zero_term}
	We have that $\left( v^\intercal , \b\Omega_W w \right)_{L^2} = 0$, for 
	any  
	$ v^\intercal  \in \acca_{\S^+,\Sigma}^\intercal $ and $ w \in \acca_{\S^+,\Sigma}^\angle $.
\end{lem}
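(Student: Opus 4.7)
The plan is to exploit the diagonalization of the quadratic Hamiltonian $\cH_L$ in the real Fourier coordinates $(\alpha_j,\beta_j)_{j\in\Z\setminus\{0\}}$ introduced in \eqref{deco-real}, combined with the fact that $v^\intercal$ and $w$ are supported on disjoint sets of mode indices~$j$.

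First I would record that $\b\Omega_W$ is $L^2$-symmetric: from $\b\Omega_W = W^*\b\Omega_L W$ with $\b\Omega_L$ diagonal and self-adjoint (see \eqref{Lin:HS}, \eqref{eq:lin00_wahlen}), one has $(\b\Omega_W u_1, u_2)_{L^2} = (u_1, \b\Omega_W u_2)_{L^2}$. Polarising the identity $\cH_L(u) = \tfrac12 (\b\Omega_W u, u)_{L^2}$ using the explicit diagonal form \eqref{QFH} yields
\begin{equation*}
(\b\Omega_W u_1, u_2)_{L^2} \,=\, 2\pi \sum_{j\in\Z\setminus\{0\}} \Omega_j(\kappa)\bigl(\alpha_j^{(1)}\alpha_j^{(2)}+\beta_j^{(1)}\beta_j^{(2)}\bigr) ,
\end{equation*}
so that the bilinear form $(\b\Omega_W\, \cdot\, ,\, \cdot)_{L^2}$ does not couple distinct Fourier modes.

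Next I would read off the mode support of each subspace. Comparing the $j$-th summand of \eqref{deco-real} with the definitions \eqref{Vn+}-\eqref{Vn-}, the coordinates $(\alpha_j,\beta_j)$ parameterize precisely $V_{|j|,\sgn j}$. Hence $v^\intercal \in \sum_{a=1}^\nu V_{\bar n_a,\sigma_a}$ has $(\alpha_j^{(\intercal)},\beta_j^{(\intercal)})$ supported on $j\in\S=\{\sigma_a\bar n_a:a=1,\ldots,\nu\}$, while $w\in\acca^{\angle}_{\S^+,\Sigma}$ is supported on the index set $\{-\sigma_a\bar n_a\}_{a=1}^\nu\cup\{\pm n:n\in\N\setminus\S^+\}$. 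These two sets are disjoint: the equality $-\sigma_a\bar n_a = \sigma_b\bar n_b$ would force $\bar n_a = \bar n_b$ and hence $a=b$ (using that all $\bar n_a$ are distinct and positive, \eqref{Splus}), giving $\sigma_a = -\sigma_a$, a contradiction; the second family evidently avoids $\S$. Substituting these into the displayed bilinear identity, every summand vanishes, which yields $(v^\intercal, \b\Omega_W w)_{L^2} = 0$.

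The only step requiring thought is the index analysis of the previous paragraph, and in particular the use of the distinctness hypothesis \eqref{Splus} to rule out the coincidence $-\sigma_a\bar n_a = \sigma_b\bar n_b$; everything else is a direct substitution into the diagonal Hamiltonian form, which is why the statement is indeed ``an easy corollary of the previous analysis.''
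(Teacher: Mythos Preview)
Your proof is correct and follows essentially the same approach as the paper: both arguments exploit that $\b\Omega_W$ becomes diagonal (i.e.\ does not couple distinct Fourier modes $j$) after the change of coordinates set up in Section~2.2, and then observe that $v^\intercal$ and $w$ are supported on disjoint index sets. The only cosmetic difference is that the paper works in the complex coordinates via $\cM\cC$ and the diagonal operator $\b\Omega_D$ of \eqref{eq:lin00_ww_C}, whereas you polarise the real diagonal form \eqref{QFH} in the $(\alpha_j,\beta_j)$ coordinates; your explicit index check using \eqref{Splus} is a careful spelling-out of what the paper encodes in the identification $\acca^\angle_{\S^+,\Sigma}\cong \bH_{\S_0}^\perp$.
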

\begin{proof}
Write $ v^\intercal  =  \cM\cC z^\intercal  $ and $ \cM\cC z^\perp  $
with $z^\intercal  \in\bH_\S$ 
and $z^\perp \in \bH_{\S_0}^\perp $. 
Then, by  \eqref{lin.ww3} and \eqref{eq:lin00_ww_C}, 
$$
	\left( v^\intercal,\b\Omega_W w \right)_{L^2} 
	= \left( \cM\cC z^\intercal , \b\Omega_W \cM\cC z^\perp \right)_{L^2} 
	= \left( z^\intercal , \b\Omega_D z^\perp \right)_{L^2} = 0 \,,
$$
	since $\b\Omega_D$ preserves the subspace $\bH_{\S_0}^\perp$.
\end{proof}

\begin{rem}\label{rem:Vn+}
The same proof of Lemma \ref{zero_term} actually shows that 
$ ( v_{n,-\sigma} , \b\Omega_W v_{n,\s} )_{L^2} = 0 $ for any 
$ v_{n,\pm \sigma} \in V_{n, \pm \sigma}$,  
for any 
$ n \in \N $, $ \sigma = \pm 1 $. Thus $ {\cal W}( v_{n,-\sigma} , J \b\Omega_W v_{n,\s} ) =
( v_{n,-\sigma} ,  J^{-1} J \b\Omega_W v_{n,\s} )_{L^2} = 0 $ which shows that   $ 
J \b\Omega_W $ maps $ V_{n,\sigma } $ in itself. 
\end{rem}

\noindent
{\bf Notation.} For $ a \lesssim_s b $ means that $ a \leq C(s) b $ for some positive constant 
$ C (s) $. We denote $ \N := \{1, 2, \ldots \} $ and $ \N_0 := \{0\} \cup \N $.

\section{Functional setting}

Along the paper we consider functions $u(\vf,x)\in L^2\left(\T^{\nu+1},\C\right)$ depending on the space variable $x\in\T=\T_x$ and the angles $\vf\in\T^\nu=\T_\vf^\nu$ (so that $\T^{\nu+1}= \T_\vf^\nu\times \T_x$) which we expand in Fourier series as
\begin{equation}\label{u_fourier}
u(\vf,x) =  \sum_{j\in\Z} u_j(\vf)e^{\im\,jx} = 
\sum_{\ell\in\Z^\nu,j\in\Z}u_{\ell,j}e^{\im (\ell\cdot \vf +jx )}\, . 
\end{equation}
We also consider real valued functions $u(\vf,x)\in\R$, as well as vector valued functions $u(\vf,x)\in\C^2$ (or $u(\vf,x)\in\R^2$). 
When no confusion appears, we denote simply by $L^2$, $L^2(\T^{\nu+1})$, $L_x^2:=L^2(\T_x)$, $L_\vf^2:= L^2(\T^\nu)$ either the spaces of real/complex valued, scalar/vector valued, $L^2$-functions.

In this paper a crucial role is played by the following subspace of
functions  of $ (\vf,x) $. 
 
\begin{defn} \label{QPTW}
 {\bf (Quasi-periodic  traveling waves)} 
 Let  $ \ora{\jmath} := (\bar{\jmath}_1, \ldots ,\bar{\jmath}_\nu) \in \Z^\nu $ be the vector defined in 
\eqref{def:vecj}. 
A function 
 $ u (\vf, x) $  is called a {\em quasi-periodic  traveling} wave if it has 
 the form $ u(\vf,x) = U(\vf-\ora{\jmath}x)  $ 
 where $ U :  \T^\nu \to \C^K $, $ K \in \N $, is 
 a $ (2 \pi)^\nu $-periodic function. 
\end{defn}

Comparing with Definition \ref{def:TV}, we find convenient to call 
{\em quasi-periodic  traveling} wave both the function $ u(\vf,x) = U(\vf-\ora{\jmath}x)  $ and the function of time  $ u(\omega t,x) = U(\omega t-\ora{\jmath}x)  $. 

Quasi-periodic traveling waves are characterized by the relation 	
\begin{equation}
\label{chartj1}
 u(\vf - \ora{\jmath} \vs, \cdot) = \tau_\vs u \,   
 \   \ 
 \forall \vs \in \R  \, ,  
\end{equation}
where $\tau_\vs $ is the translation operator in \eqref{trans}. 
Product and composition of quasi-periodic traveling waves  is a quasi-periodic traveling wave. 
Expanded in Fourier series as in \eqref{u_fourier}, 
a quasi-periodic traveling wave has the form 
\begin{equation}\label{u-trav-Fourier}
u(\vf, x) = 
\sum_{\ell\in\Z^\nu,j\in\Z, j + \vec \jmath \cdot \ell = 0 }u_{\ell,j}e^{\im (\ell\cdot \vf +jx )}
 \, , 
\end{equation}
namely, comparing with  Definition \ref{QPTW}, 
\begin{equation}\label{uphiU}
u(\vf, x) = U(\vf - \vec \jmath x) \, , \quad U (\psi) = \sum_{\ell \in \Z^\nu}
U_\ell e^{\im \ell \cdot \psi} \, , \quad U_\ell = u_{\ell, - \vec \jmath \cdot \ell} \, . 
\end{equation}
The traveling waves $ u (\vf, x) = U (\vf - \vec \jmath x ) $ where 
$ U (\cdot ) $ belongs to the Sobolev space $ H^s (\T^\nu, \C^K ) $ 
in \eqref{unified norm} (with values in $ \C^K $, $ K \in \N $), 
form a subspace of the 
Sobolev space 
\begin{equation} \label{Sobonorm}
 H^s(\T^{\nu+1})
= \Big\{ u = \sum_{(\ell,j) \in \Z^{\nu+1}} u_{\ell,j} \, e^{\im (\ell \cdot \vf + jx)} \, : \, 
\| u \|_s^2 := \sum_{(\ell,j) \in \Z^{\nu+1}} | u_{\ell, j}|^2 \langle \ell,j \rangle^{2s} < \infty 
\Big\}
\end{equation}
where  $\langle \ell,j \rangle := \max \{ 1, |\ell|, |j| \} $. 
Note the equivalence of the norms (use \eqref{uphiU})
$$
\| u \|_{H^s (\T^{\nu}_\vf \times \T_x )} \simeq_s \| U \|_{H^s (\T^{\nu})} \, .
$$ 
For 
$ s \geq s_0 := \big[ \frac{\nu +1}{2} \big] +1 \in \N  $
one has $ H^s ( \T^{\nu+1}) \subset C ( \T^{\nu+1})$, and $H^s(\T^{\nu+1})$ is an algebra. 
Along the paper we  denote by $ \| \ \|_s $ both the Sobolev norms in 
\eqref{unified norm} and \eqref{Sobonorm}.  

For $K\geq 1$ we define the smoothing operator  $\Pi_{K}$  on the traveling waves
\begin{equation}\label{pro:N}
\Pi_K :  u = \sum_{\ell \in \Z^\nu,\, j \in \S_0^c, \,j + \vec \jmath \cdot \ell = 0}
u_{\ell,j}e^{\im(\ell\cdot \vf+jx)} \mapsto
\Pi_K u = \sum_{\braket{\ell}\leq K,\, j \in \S_0^c,\, j + \vec \jmath \cdot \ell = 0}
u_{\ell,j}e^{\im(\ell\cdot \vf+jx)}   \, ,  
\end{equation}
and $ \Pi_K^\perp := {\rm Id}-\Pi_K  $. 
Note that, writing a traveling wave as in  
\eqref{uphiU}, the projector $ \Pi_K $ in \eqref{pro:N} is equal to 
$$ 
(\Pi_K u)(\vf, x) =  U_K (\vf - \vec \jmath x ) \, , \quad U_K (\psi) := 
\sum_{\ell \in \Z^\nu,\, \langle \ell \rangle \leq K} U_\ell e^{\im \ell \cdot \psi} \, . 
$$
{\bf Whitney-Sobolev functions.} 
Along the paper we consider  families of Sobolev functions $\lambda\mapsto u(\lambda)\in H^s (\T^{\nu+1}) $ and  
$\lambda\mapsto U(\lambda)\in H^s (\T^{\nu}) $ which are $k_0$-times differentiable in the sense of Whitney with respect to the parameter
$ \lambda :=(\omega,\kappa) \in F \subset \R^\nu\times [\kappa_1,\kappa_2] $
where $F\subset \R^{\nu+1}$ is a closed set.
The case that we encounter is when $ \omega $ belongs to  the closed set of Diophantine vectors  $ \tD\tC (\upsilon, \tau)  $  defined in \eqref{def:DCgt}.
We refer to Definition 2.1 in \cite{BBHM}, for the 
definition of a Whitney-Sobolev function $ u : F \to H^s $ where 
$ H^s $ may be either the Hilbert space $ H^s (\T^\nu \times \T) $ or $ H^s (\T^\nu) $.
Here we mention that, given $ \upsilon \in (0,1) $,  
we can identify a  Whitney-Sobolev function $ u : F \to H^s $
with $ k_0 $ derivatives 
with the equivalence class of functions $ f \in W^{k_0,\infty,\upsilon}(\R^{\nu+1},H^s)/\sim$ with respect to the equivalence relation $f\sim g$ when $\pa_\lambda^j f(\lambda) = \pa_\lambda^j g(\lambda)$ for all $\lambda\in F$, $\abs j \leq k_0-1$, 
with equivalence of the norms 
\begin{equation*}
	\normk{u}{s,F}  \sim_{\nu,k_0} \norm{u}_{W^{k_0,\infty,\upsilon}(\R^{\nu+1},H^s)}:= \sum_{\abs\alpha\leq k_0} \upsilon^{\abs\alpha} \| \pa_\lambda^\alpha u \|_{L^\infty(\R^{\nu+1},H^s)} \,.
\end{equation*}
The key result is the Whitney extension theorem,  
 which associates to a Whitney-Sobolev function 
$u : F \to H^s $ with $ k_0 $-derivatives a 
 function $\wtu : \R^{\nu+1} \to H^s $, $\wtu $ in $ W^{k_0,\infty}(\R^{\nu+1},H^s) $
 (independently of the target Sobolev space $H^s$) with an equivalent norm.  
For sake of simplicity in the notation we  
often denote  $ \| \ \|_{s,F}^{k_0,\upsilon} =  \| \ \|_{s}^{k_0,\upsilon} $.

Thanks to this equivalence, all the tame 
estimates which hold for Sobolev spaces carry over for Whitney-Sobolev functions. 
For example the following classical tame estimate for the product holds: 
(see e.g.  Lemma 2.4 in \cite{BBHM}): 
for all $s\geq s_0 > (\nu+1)/2$, 
	\begin{equation}
	\label{prod}
	\normk{u v}{s} \leq C(s,k_0) \normk{u}{s}\normk{v}{s_0} + C(s_0,k_0)\normk{u}{s_0}\normk{v}{s}\,.
	\end{equation}
Moreover the following  estimates hold for the smoothing operators defined 
in \eqref{pro:N}: for any traveling wave $ u $
\begin{equation}\label{SM12}
	\normk{\Pi_K u}{s}  \leq K^\alpha \normk{u}{s-\alpha}\,, \  0\leq \alpha\leq s \,, \quad 
		\normk{\Pi_K^\perp u}{s}  \leq K^{-\alpha} \normk{u}{s+\alpha}\,, \ \alpha\geq 0 \,.
\end{equation}
We also state a standard Moser tame estimate for the nonlinear composition operator, see e.g. Lemma 2.6 in \cite{BBHM}, 
$$
u(\vf,x)\mapsto \tf(u)(\vf,x) :=f(\vf,x,u(\vf,x))\,.
$$
Since the variables $(\vf,x)=:y$ have the same role, we state it for a generic Sobolev space $H^s(\T^d)$.
\begin{lem}{\bf (Composition operator)}\label{compo_moser}
	Let $f\in\cC^\infty(\T^d\times\R,\R)$. If $u(\lambda)\in H^s(\T^d)$ is a family of Sobolev functions satisfying $\normk{u}{s_0}\leq 1$, then, for all $s\geq s_0:=(d+1)/2$,
	$$
	  \normk{\tf(u)}{s}\leq C(s,k_0,f)\big( 1+\normk{u}{s} \big) \,.
	$$
	If $ f(\varphi, x, 0) = 0 $ then 
	$	  \normk{\tf(u)}{s}\leq C(s,k_0,f) \normk{u}{s} $. 
\end{lem}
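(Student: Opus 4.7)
The plan is to reduce the estimate, via the Whitney extension theorem recalled in the excerpt and the equivalence
$$
\normk{u}{s} \sim_{\nu,k_0} \sum_{|\alpha|\leq k_0}\upsilon^{|\alpha|}\|\partial_\lambda^\alpha u\|_{L^\infty(\R^{\nu+1},H^s)}\,,
$$
to a pointwise-in-$\lambda$ bound in the classical Sobolev space $H^s(\T^d)$. Thus, after passing to a Whitney extension of $u$, it suffices to control $\|\partial_\lambda^\alpha \tf(u(\lambda))\|_{H^s}$ uniformly in $\lambda$ for each multi-index $|\alpha|\leq k_0$ and then take the weighted sum.

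The case $|\alpha|=0$ is the classical Moser tame composition estimate on $H^s(\T^d)$, namely $\|\tf(u(\lambda))\|_{H^s}\leq C(s,f)(1+\|u(\lambda)\|_{H^s})$ under the smallness hypothesis $\|u(\lambda)\|_{H^{s_0}}\leq 1$; this is standard and is proved by paraproduct decomposition, independently of the Whitney structure. For higher $|\alpha|$, I would apply Fa\`a di Bruno's formula to expand
$$
\partial_\lambda^\alpha \tf(u(\lambda)) = \sum_{k=1}^{|\alpha|}\sum_{\pi\in P_k(\alpha)} c_\pi\,(\partial_u^k f)(\varphi,x,u(\lambda))\prod_{B\in\pi}\partial_\lambda^{\alpha_B}u(\lambda)\,,
$$
where $\pi$ runs over partitions of the multi-index $\alpha$ into $k$ non-empty blocks $\alpha_B$ with $\sum_B \alpha_B=\alpha$. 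Applying the base Moser estimate to each factor $(\partial_u^k f)(\cdot,u(\lambda))$ (using $\partial_u^k f\in\cC^\infty$) and iterating the tame product estimate \eqref{prod} on the product $\prod_{B\in\pi}\partial_\lambda^{\alpha_B}u(\lambda)$, I would obtain
$$
\upsilon^{|\alpha|}\|\partial_\lambda^\alpha \tf(u(\lambda))\|_{H^s} \leq C(s,k_0,f)\bigl(1+\normk{u}{s}\bigr),
$$
because $\normk{u}{s_0}\leq 1$ forces $\upsilon^{|\alpha_B|}\|\partial_\lambda^{\alpha_B}u\|_{H^{s_0}}\leq 1$ for every sub-index, so at most one factor carries the high Sobolev index $H^s$ while all others are absorbed in $H^{s_0}$. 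Summing over $|\alpha|\leq k_0$ yields the first bound.

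For the second claim, when $f(\varphi,x,0)=0$ I would use the fundamental theorem of calculus to write
$$
\tf(u)(\varphi,x) = u(\varphi,x)\int_0^1 (\partial_u f)(\varphi,x,t u(\varphi,x))\,dt\,,
$$
and apply the tame product \eqref{prod} together with the first statement of the lemma to the integral factor, viewed as the composition operator associated with the $\cC^\infty$ function $(\varphi,x,v)\mapsto\int_0^1(\partial_u f)(\varphi,x,t v)\,dt$, to obtain
$$
\normk{\tf(u)}{s} \lesssim_{s,k_0,f} \normk{u}{s}\bigl(1+\normk{u}{s_0}\bigr) + \normk{u}{s_0}\bigl(1+\normk{u}{s}\bigr) \lesssim_{s,k_0,f} \normk{u}{s}\,,
$$
using once more the smallness of $\normk{u}{s_0}$.

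The main technical obstacle is the Fa\`a di Bruno bookkeeping: checking that, for every partition $\pi$, iterated applications of \eqref{prod} distribute the derivatives so that exactly one factor bears the high index $H^s$ while all the others are controlled by $\normk{u}{s_0}\leq 1$, and tracking the $\upsilon$-weights so that they combine to reproduce $\normk{u}{s}$. This is routine in view of \eqref{prod} and the scalar Moser composition estimate in $H^s(\T^d)$, but is the only non-trivial combinatorial step.
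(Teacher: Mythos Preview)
The paper does not prove this lemma at all: it is stated as a standard result, with the reader referred to Lemma 2.6 in \cite{BBHM} for the proof. There is therefore no ``paper's own proof'' to compare against.

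That said, your sketch is the standard route and is correct. The reduction via the Whitney extension and the norm equivalence is exactly how the $\|\cdot\|_s^{k_0,\upsilon}$ norm is handled in this framework; the base case $|\alpha|=0$ is the scalar Moser composition estimate; and the Fa\`a di Bruno expansion combined with iterated use of \eqref{prod} is the right bookkeeping for the parameter derivatives. Your observation that the smallness $\normk{u}{s_0}\leq 1$ forces every sub-factor $\upsilon^{|\alpha_B|}\|\partial_\lambda^{\alpha_B}u\|_{H^{s_0}}\leq 1$, so at most one factor carries the high index $s$, is precisely the mechanism that makes the estimate tame. The second claim via $\tf(u)=u\int_0^1(\partial_u f)(\cdot,tu)\,dt$ is also the standard trick; note that in the last line you need the monotonicity $\normk{u}{s_0}\leq\normk{u}{s}$ (valid since $s\geq s_0$) to absorb the lone $\normk{u}{s_0}$ term into $\normk{u}{s}$, which you implicitly use.
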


\paragraph{Diophantine equation.} 
If $\omega $ is a Diophantine vector in $ \tD\tC(\upsilon,\tau)$, see \eqref{def:DCgt}, then the equation $\omega\cdot \pa_\vf v = u$, where $u(\vf,x)$ has zero average with respect to $\vf$, has the periodic solution
$$
	(\omega\cdot\pa_\vf)^{-1} u := \sum_{\ell\in\Z^\nu\setminus\{0\},j\in\Z} \frac{u_{\ell,j}}{\im\,\omega\cdot \ell} e^{\im(\ell\cdot\vf+jx)} \,.
$$
For all $\omega\in\R^\nu$, we define its extension
\begin{equation}\label{paext}
	(\omega\cdot\pa_\vf)_{\rm ext}^{-1} u(\vf,x) := \sum_{(\ell,j)\in\Z^{\nu+1}}\frac{\chi(\omega \cdot \ell \upsilon^{-1}\braket{\ell}^\tau)}{\im \omega\cdot\ell} u_{\ell,j}e^{\im(\ell\cdot\vf+jx)}\,,
\end{equation}
where $\chi\in\cC^\infty(\R,\R)$ is an even positive $\cC^\infty$ cut-off function such that
\begin{equation}\label{cutoff}
	\chi(\xi) = \begin{cases}
	0 & \text{ if } \ \abs\xi\leq \frac13 \\
	1 & \text{ if } \ \abs\xi \geq \frac23 
	\end{cases}\,, \qquad \pa_\xi \chi(\xi) >0   \quad \forall\,\xi \in (\tfrac13,\tfrac23) \,.
\end{equation}
Note that $(\omega\cdot\pa_\vf)_{\rm ext}^{-1} u = (\omega\cdot\pa_\vf)^{-1}u$ for all $\omega\in\tD\tC(\upsilon,\tau)$. 
Moreover, if $ u (\vf, x) $ is a quasi-periodic traveling wave 
with zero average with respect to $ \vf $, then, by \eqref{u-trav-Fourier}, we see that 
$ (\omega\cdot\pa_\vf)_{\rm ext}^{-1} u(\vf,x)  $ 
is a  quasi-periodic traveling wave. The following estimate holds
\begin{equation}\label{lem:diopha.eq}
		\normk{(\omega\cdot\pa_\vf)_{\rm ext}^{-1}u}{s,\R^{\nu+1}} \leq C(k_0)\upsilon^{-1}\normk{u}{s+\mu,\R^{\nu+1}}\,, \quad  \mu:=k_0+\tau(k_0+1) \,.
\end{equation}
and, for $F\subseteq \tD\tC(\upsilon,\tau)\times \R_+$, one has
$\normk{(\omega\cdot\pa_\vf)^{-1}u}{s,F} \leq C(k_0)\upsilon^{-1}\normk{u}{s+\mu,F} $.

\paragraph{Linear operators.}
Along the paper we consider $\vf$-dependent families of linear operators $A:\T^\nu\mapsto \cL(L^2(\T_x))$, $\vf\mapsto A(\vf)$, acting on subspaces of $L^2(\T_x)$, either real or complex valued. We also regard $A$ as an operator (which for simplicity we denote by $A$ as well) that acts on functions $u(\vf,x)$ of space and time, that is
\begin{equation}\label{linear_A}
(Au)(\vf,x) := \left( A(\vf)u(\vf,\,\cdot \,) \right)(x) \,.
\end{equation}
The action of an operator $A$ as in \eqref{linear_A} on a scalar function $u(\vf,x)\in L^2$ expanded as in \eqref{u_fourier} is
\begin{align}\label{A_expans}
Au(\vf,x) & = \sum_{j,j'\in\Z} A_j^{j'}(\vf) u_{j'}(\vf)e^{\im\, jx}  
 = \sum_{j,j'\in\Z}\sum_{\ell,\ell'\in\Z^\nu} A_j^{j'}(\ell-\ell') u_{\ell',j'} e^{\im\left(\ell\cdot \vf + jx\right)} \,.
\end{align}
We identify an operator $A$ with its matrix $ \big( A_j^{j'}(\ell-\ell') \big)_{j,j'\in\Z,\ell,\ell'\in\Z^\nu}$, which is T\"oplitz with respect to the index $\ell$. In this paper we always consider T\"oplitz operators as in \eqref{linear_A}, \eqref{A_expans}.

\paragraph{Real operators.} 
A linear  operator $ A $ is {\it real} if 
$ A = \bar A  $,
where $ \bar A $ is defined by  
$ \bar{A}(u):= \bar{A(\bar u)} $. 
Equivalently $A$ is \emph{real} if it maps real valued functions into real valued functions.
We represent a real operator acting on $(\eta,\zeta) $ belonging to 
(a subspace of) $  L^2(\T_x,\R^2) $   by a matrix
\begin{equation}\label{real_matrix}
\cR = \begin{pmatrix}
A & B \\ C & D
\end{pmatrix}
\end{equation}
where $A,B,C,D$ are real operators acting on the scalar valued components 
$\eta,\zeta \in L^2(\T_x,\R) $. 

The  change of coordinates \eqref{C_transform}
transforms the real operator $\cR$ into a complex one acting on the variables
$(z,\bar z)  $, given by the matrix
\begin{equation}\label{C_transformed}
\begin{split}
\bR & := \cC^{-1} \cR \cC = \left( \begin{matrix}
\cR_1 & \cR_2 \\ \bar\cR_2 & \bar\cR_1
\end{matrix} \right) \ , \\
\cR_1 & := \frac{1}{2} \left\{ (A+D) -\im(B-C) \right\} \ , \quad \cR_2:= \frac{1}{2}\left\{ (A-D) + \im (B+C)  \right\} \, . 
\end{split}
\end{equation}
A matrix operator acting on the complex variables $(z,\bar z)$ of the form
\eqref{C_transformed}, we call it \emph{real}.
We shall also consider  real operators $\bR $ of the form 
\eqref{C_transformed} acting on subspaces of $ L^2 $. 

\paragraph{Lie expansion.} Let $X(\vf)$ be a linear operator
with  associated  flow $\Phi^\tau ( \vf)$ defined by
$$
	\begin{cases}
		\pa_\tau \Phi^\tau (\vf) = X(\vf) \Phi^\tau (\vf) \\
		\Phi^0 (\vf) = {\rm Id} \, , 
	\end{cases} \quad  \tau \in[0,1] \, . 
$$
Let $ \Phi(\vf) := \Phi^\tau (\vf)_{|\tau = 1} $ denote the time-$1$ flow.
Given a linear operator $ A ( \vf ) $,
the conjugated operator 
$$ 
A^+(\vf):=\Phi(\vf)^{-1}A(\vf) \Phi(\vf) 
$$
admits the  Lie expansion, for any  $ M \in \N_0 $, 
\begin{equation}\label{lie_abstract}
	\begin{aligned}
	 & A^+(\vf) = 
	 \sum_{m=0}^M \frac{(-1)^m}{m!} \ad_{X(\vf)}^m(A(\vf)) +R_M(\vf)\,, \\ 
	R_M(\vf)& = \frac{(-1)^{M+1}}{M!}\int_0^1 (1- \tau)^M \, (\Phi^\tau (\vf))^{-1} \ad_{X(\vf)}^{M+1}(A(\vf)) \Phi^\tau(\vf) \, \wrt \tau \,,
	\end{aligned}
\end{equation}
where $\ad_{X(\vf)}(A(\vf)) := [X(\vf), A(\vf)] = X(\vf) A(\vf) - A(\vf) X(\vf) $ 
and  $\ad_{X(\vf)}^0 := {\rm Id} $. 

In particular, for $A=\omega\cdot\pa_\vf$, since $[X(\vf), \omega\cdot \pa_\vf] = 
- ( \omega\cdot \pa_\vf X)(\vf) $, we obtain
\begin{equation}\label{lie_omega_devf}
	\begin{aligned}
		\Phi(\vf)^{-1} \circ  \omega\cdot\pa_\vf \circ \Phi(\vf) = &\, \omega\cdot\pa_\vf 
		+ 
		\sum_{m=1}^{M} \frac{(-1)^{m+1}}{m!} \ad_{X(\vf)}^{m-1}(\omega\cdot \pa_\vf X(\vf)) \\
		& + \frac{(-1)^M}{M!}\int_0^1(1- \tau)^{M} (\Phi^\tau(\vf))^{-1}\ad_{X(\vf)}^M(\omega\cdot\pa_\vf X(\vf))\Phi^\tau(\vf) \wrt \tau \,.
	\end{aligned}
\end{equation}
For matrices of operators $\bX(\vf)$ and $\bA(\vf)$ as in \eqref{C_transformed}, the same formula \eqref{lie_abstract} holds.

\subsection{Pseudodifferential calculus}\label{subsec:pseudo_calc}

In this section we report fundamental notions of pseudodifferential calculus,
following \cite{BM}. 
\begin{defn}{\bf ($\Psi$DO)} 
A  \emph{pseudodifferential} symbol $ a (x,j) $
	of order $m$ is 
	the restriction to $ \R \times \Z $ of a function $ a (x, \xi ) $ which is $ \cC^\infty $-smooth on $ \R \times \R $,
	$ 2 \pi $-periodic in $ x $, and satisfies 
$$
	| \pa_x^\alpha \pa_\xi^\beta a (x,\xi ) | \leq C_{\alpha,\beta} \langle \xi \rangle^{m - \beta} \, , \quad \forall \alpha, \beta \in \N_0 \, .
$$
	We denote by $ S^m $ 
	the class of  symbols  of order $ m $ and 
	$ S^{-\infty} := \cap_{m \geq 0} S^m $. 
	To a  symbol $ a(x, \xi ) $ in $S^m$ we associate its quantization acting on a $ 2 \pi $-periodic function 
$ u(x) = \sum_{j \in \Z} u_j \, e^{\im j x} $
as 
$$
[\Op(a)u](x) : = \sum_{j \in \Z}  a(x, j )  u_j \, e^{\im j x} \, . 
$$
We denote by  $ \Ops^m $ 
the  set of pseudodifferential  operators of order $ m $ and
 $ \Ops^{-\infty} := \bigcap_{m \in \R} \Ops^{m} $.
For a matrix of pseudodifferential operators 
\begin{equation}\label{operatori matriciali sezione pseudo diff}
\bA = \begin{pmatrix}
A_1  & A_2 \\
A_3 & A_4
\end{pmatrix}, \quad A_i \in \Ops^m, \quad i =1, \ldots , 4
\end{equation}
we say that  $\bA \in \Ops^m$.
\end{defn}

When the symbol $ a (x) $ is independent of $ \xi $, the operator $ \Op (a) $ is 
the multiplication operator by the function $ a(x)$, i.e.\  $ \Op (a) : u (x) \mapsto a ( x) u(x )$. 
In such a case we also denote $ \Op (a)  = a (x) $.

We shall use 
the following notation, used also in \cite{AB,BM,BBHM}.
For any $m \in \R \setminus \{ 0\}$, we set
$$
|D|^m := \Op \big( \chi(\xi) |\xi|^m \big)\,,
$$
where $\chi$ is an  even, positive $\cC^\infty$ cut-off satisfying  \eqref{cutoff}.
We also identify the Hilbert transform $\cH$, acting on the $2 \pi$-periodic functions, defined by
\begin{equation}\label{Hilbert-transf}
\cH ( e^{\im j x} ) := - \im \, \sign(j) e^{\im jx} \,  \quad  \forall j \neq 0 \, ,  \quad \cH (1) := 0\,, 
\end{equation}
with the Fourier multiplier $\Op (- \im\, \sign(\xi) \chi(\xi) )$. Similarly we regard 
the  operator 
\begin{equation}\label{pax-1}
	\pa_x^{-1}\left[ e^{\im jx}\right] := -\,\im \,j^{-1} \,e^{\im jx} \, \quad \forall\, j\neq 0\,,  \quad \pa_x^{-1}[1] := 0\,,
\end{equation}
as the  Fourier  multiplier $\pa_x^{-1} =  \Op \left( - \im\,\chi(\xi) \xi^{-1} \right)$ and the projector $\pi_0 $, defined on the $ 2 \pi $-periodic functions  as
\begin{equation}\label{defpi0}
\pi_0 u := \frac{1}{2\pi} \int_\T u(x)\, d x\, , 
\end{equation}
with the Fourier multiplier $ {\rm Op}\big( 1 - \chi(\xi) \big)$.
Finally we define, for  any $m \in \R \setminus \{ 0 \}$, 
$$
\langle D \rangle^m := \pi_0 + |D|^m
:= \Op \big( ( 1 - \chi(\xi)) + \chi(\xi) |\xi|^m \big) \, .
$$
Along the paper we consider families of pseudodifferential operators 
with a symbol $  a(\lambda;\vf,x,\xi)  $
which is $k_0$-times differentiable with respect to a parameter
$ \lambda:=(\omega,\kappa) $ in an open subset $ \Lambda_0 \subset \R^\nu\times [\kappa_1,\kappa_2] $. 
Note that $\partial_\lambda^k A = \Op\left( \partial_\lambda^k a \right) $ for any $k\in \N_0^{\nu+1}$. 

We  recall the  pseudodifferential norm introduced in Definition 2.11 in \cite{BM}. 

\begin{defn}
	{\bf (Weighted $\Psi DO$ norm)}
	Let $ A(\lambda) := a(\lambda; \vf, x, D) \in \Ops^m $ 
	be a family of pseudodifferential operators with symbol $ a(\lambda; \vf, x, \xi) \in S^m $, $ m \in \R $, which are 
	$k_0$-times differentiable with respect to $ \lambda \in \Lambda_0 \subset \R^{\nu + 1} $. 
	For $ \upsilon \in (0,1) $, $ \alpha \in \N_0 $, $ s \geq 0 $, we define  
	$$
	\norm{A}_{m, s, \alpha}^{k_0, \upsilon} := \sum_{|k| \leq k_0} \upsilon^{|k|} 
	\sup_{\lambda \in {\Lambda}_0}\norm{\partial_\lambda^k A(\lambda)}_{m, s, \alpha}  
	$$
	where 
	$
	\norm{A(\lambda)}_{m, s, \alpha} := 
	\max_{0 \leq \beta  \leq \alpha} \, \sup_{\xi \in \R} \|  \partial_\xi^\beta 
	a(\lambda, \cdot, \cdot, \xi )  \|_{s} \  \langle \xi \rangle^{-m + \beta} $. 
	For a matrix of pseudodifferential operators $\bA \in \Ops^m$ as in \eqref{operatori matriciali sezione pseudo diff}, we define 
	$
	\norm{\bA}_{m, s, \alpha}^{k_0, \upsilon} 
	:= \max_{i = 1, \ldots, 4} \norm{A_i}_{m, s, \alpha}^{k_0, \upsilon}\,. 
	$
\end{defn}

Given a function $a(\lambda; \vf, x) \in \cC^\infty$ which is $k_0$-times differentiable with respect to $\lambda$, the weighted norm of the corresponding multiplication operator is
\begin{equation}
\label{norm.mult}
\normk{\Op(a)}{0,s,\alpha} = \normk{a}{s}  \, , \quad \forall \alpha \in \N_0 \, .  
\end{equation}

\paragraph{Composition of pseudodifferential operators.}
If $ {\rm Op}(a) $, ${\rm Op}(b) 
$ are pseudodifferential operators with symbols $a\in S^m$, $b\in S^{m'}$, 
$m,m'\in\R$,  
then the composition operator 
$ {\rm Op}(a) {\rm Op}(b) $ 
is a pseudodifferential operator  $ {\rm Op}(a\# b) $ with symbol $a\# b\in S^{m+m'}$. 
It 
 admits the  asymptotic expansion: for any $N\geq 1$
\begin{align}\label{compo_symb}
	(a\# b)(\lambda;\vf,x,\xi) & = \sum_{\beta= 0}^{N-1} \frac{1}{\im^\beta \beta!} \pa_\xi^\beta a(\lambda;\vf,x,\xi) \pa_x^\beta b(\lambda;\vf,x,\xi) + (r_N(a,b))(\lambda;\vf,x,\xi) 
\end{align}
where  $ r_N(a,b) \in S^{m+m'-N} $. 
The following result is proved in Lemma 2.13 in \cite{BM}.

\begin{lem}{\bf (Composition)} \label{pseudo_compo}
	Let $ A = a(\lambda; \vf, x, D) $, $ B = b(\lambda; \vf, x, D) $ be pseudodifferential operators
	with symbols $ a (\lambda;\vf, x, \xi) \in S^m $, $ b (\lambda; \vf, x, \xi ) \in S^{m'} $, $ m , m' \in \R $. Then $ A \circ B \in \Ops^{m + m'} $
	satisfies,   for any $ \alpha \in \N_0 $, $ s \geq s_0 $, 
	\begin{equation}\label{eq:est_tame_comp}
	\begin{split}
	\norm{A B}_{m + m', s, \alpha}^{k_0, \upsilon} &
	\lesssim_{m,  \alpha, k_0} C(s) \norm{ A }_{m, s, \alpha}^{k_0, \upsilon} 
	\norm{ B}_{m', s_0 + |m|+\alpha, \alpha}^{k_0, \upsilon}  \\
	& \ \quad \qquad + C(s_0) \norm{A }_{m, s_0, \alpha}^{k_0, \upsilon}  
	\norm{ B }_{m', s + |m|+\alpha, \alpha}^{k_0, \upsilon} \, .
	\end{split}
	\end{equation}
	Moreover, for any integer $ N \geq 1  $,  
	the remainder $ R_N := {\rm Op}(r_N) $ in \eqref{compo_symb} satisfies
	\begin{equation}
	\begin{aligned}
	\norm{\Op(r_N(a,b))}_{m+ m'- N, s, \alpha}^{k_0, \upsilon}
	&\lesssim_{m, N,  \alpha, k_0} 
	C(s) \norm{ A}_{m, s, N + \alpha}^{k_0, \upsilon} 
	\norm{ B }_{m', s_0 + \abs{m} +  2N  + \alpha,N+\alpha }^{k_0, \upsilon}  
	\\
	& \ \qquad \qquad  +  C(s_0)\norm{A}_{m, s_0   , N + \alpha}^{k_0, \upsilon}
	\norm{ B}_{m', s +|m| + 2 N  + \alpha, N+ \alpha }^{k_0, \upsilon}.
	\label{eq:rem_comp_tame} 
	\end{aligned}
	\end{equation}
	Both  \eqref{eq:est_tame_comp}-\eqref{eq:rem_comp_tame} hold  
	with the constant $ C(s_0) $ 
	interchanged with $ C(s) $. \\
	Analogous estimates hold if $\bA$ and $\bB$ are matrix operators of the form \eqref{operatori matriciali sezione pseudo diff}. 
\end{lem}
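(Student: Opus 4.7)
The plan is to adapt the standard proof of symbolic calculus for periodic pseudodifferential operators to the weighted Whitney--Sobolev scale $\|\cdot\|_{m,s,\alpha}^{k_0,\upsilon}$. Both estimates \eqref{eq:est_tame_comp}--\eqref{eq:rem_comp_tame} should flow from a single algebraic identity obtained by Taylor-expanding $a(\lambda;\vf, x, \xi + \eta)$ in the $\xi$-slot around $\eta = 0$,
\begin{equation*}
a(\lambda;\vf, x, \xi + \eta) = \sum_{\beta = 0}^{N-1} \frac{\eta^\beta}{\beta!} \pa_\xi^\beta a(\lambda;\vf, x, \xi) + \frac{\eta^N}{(N-1)!} \int_0^1 (1-t)^{N-1} \pa_\xi^N a(\lambda;\vf, x, \xi + t \eta)\, dt ,
\end{equation*}
and substituting this into the composition formula $(a \# b)(\vf, x, \xi) = \sum_{\eta \in \Z} \hat a(\vf, \eta, \xi + \eta)\, b_\eta(\vf, x, \xi)$, which I would derive by a direct Fourier computation on $(ABu)(x)$ for $u(x) = \sum_j \hat u_j e^{\im j x}$. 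Reading $(\im \eta)^\beta b_\eta e^{\im \eta x}$ as $\pa_x^\beta b(\vf, x, \xi)$ then produces exactly the asymptotic expansion \eqref{compo_symb} together with an explicit integral form for the remainder $r_N(a, b)$.

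For \eqref{eq:est_tame_comp} I would bound the weighted norm of $AB = \Op(a \# b)$ by estimating each main term $\frac{(-\im)^\beta}{\beta!} \pa_\xi^\beta a \cdot \pa_x^\beta b \in S^{m+m'-\beta}$ separately. Each derivative $\pa_\xi^\gamma$, $\gamma \le \alpha$, distributes by Leibniz and reduces the bound to an application of the tame Sobolev product inequality \eqref{prod} to the symbol factors $\pa_\xi^{\beta + \gamma_1} a$ and $\pa_\xi^{\gamma_2} \pa_x^\beta b$, with $\gamma_1 + \gamma_2 = \gamma$. The asymmetric form of \eqref{eq:est_tame_comp} then comes directly from the asymmetry of \eqref{prod}, while the additional $|m|$-shift on $B$ compensates for controlling the $\langle \xi \rangle^m$ growth of $a$ uniformly when evaluating symbols at shifted frequencies $\xi + \eta$.

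The remainder estimate \eqref{eq:rem_comp_tame} follows in the same spirit: after reading $\eta^N b_\eta$ as $\pa_x^N b$ and differentiating $\alpha$ times in $\xi$, one applies \eqref{prod} to the pair $\pa_\xi^{N+\gamma_1} a$ (evaluated at the shifted frequency $\xi + t\eta$) and $\pa_x^N \pa_\xi^{\gamma_2} b$; the total loss on $B$ is then $N$ from $\pa_x^N b$, plus a further $N + \alpha$ absorbed when transferring the frequency shift $\xi + t\eta \mapsto \xi$ after $\alpha$ differentiations in $\xi$, plus the $|m|$-shift, giving precisely the Sobolev index $s + |m| + 2N + \alpha$ on $B$ stated in \eqref{eq:rem_comp_tame}. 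The dependence on $\lambda$ is handled uniformly by the Leibniz rule applied to $\pa_\lambda^k$, $|k| \le k_0$, absorbing the factor $\upsilon^{|k|}$ into the weighted norm, and the matrix version is an entrywise corollary of the scalar bounds, taking the maximum over the four entries. The main delicate point will be the careful bookkeeping of the three indices $(s, \alpha, N)$ in the remainder, and in particular verifying that the $\xi$-shift $\xi \mapsto \xi + t\eta$ appearing inside the integral does not cause additional losses beyond those already accounted for by the index $s + |m| + 2N + \alpha$; this forces one to estimate $\pa_\xi^N a$ at the shifted argument using that $S^m$-type bounds are translation-invariant in $\xi$ up to a constant depending only on $m, N, \alpha$.
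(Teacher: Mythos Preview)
The paper does not give its own proof of this lemma: it is stated with the attribution ``The following result is proved in Lemma 2.13 in \cite{BM}'' and no argument is supplied. Your proposal outlines precisely the standard proof one finds in that reference---the composition formula via the Fourier expansion $(a\#b)(\vf,x,\xi)=\sum_\eta \hat a(\vf,\eta,\xi+\eta)\,b_\eta(\vf,x,\xi)$, the Taylor expansion of $a$ in the frequency variable to produce both the asymptotic terms and the integral remainder, and then the tame product estimate \eqref{prod} applied termwise after Leibniz in $\pa_\xi^\gamma$ and $\pa_\lambda^k$. Your bookkeeping of the index shifts (the $|m|$ from transferring $\langle\xi+\eta\rangle^m$ to $\langle\xi\rangle^m\langle\eta\rangle^{|m|}$, the $N$ from $\pa_x^N b$, and the further $N+\alpha$ from the shifted argument in $\pa_\xi^{N+\gamma_1}a(\cdot,\xi+t\eta)$) matches the losses recorded in \eqref{eq:est_tame_comp}--\eqref{eq:rem_comp_tame}, so the proposal is correct and coincides with the cited argument.
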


The commutator between two pseudodifferential operators $ \Op(a)\in \Ops^m$ and $\Op(b)\in\Ops^{m'}$ is a pseudodifferential operator
in $ \Ops^{m+m'-1}$ with symbol $a\star b\in S^{m+m'-1}$, 
namely $ \left[ \Op(a), \Op(b)\right] = \Op\left( a\star b \right)$, 
that admits, by \eqref{compo_symb},  the expansion
\begin{equation}\label{eq:moyal_exp}
\begin{aligned}
& a\star b= -\im\left\{ a,b \right\} + \wt{r_2}(a,b) \,, \quad \wt{r_2}(a,b):=r_2(a,b)-r_2(b,a)\in S^{m+m'-2} \,, \\
& 
{\rm where} \quad  \{ a,b \}:= \pa_\xi a \pa_x b - \pa_x a \pa_\xi b \, , 
\end{aligned}
\end{equation}
is the Poisson bracket between $a(x,\xi)$ and $b(x,\xi)$. 
As a corollary of  Lemma \ref{pseudo_compo} we have: 
\begin{lem}{\bf (Commutator)} \label{pseudo_commu}
	Let $A = {\rm Op}(a) $ and $B = {\rm Op} (b) $ be pseudodifferential operators with symbols $a(\lambda;\vf,x,\xi)\in S^{m}$, $b(\lambda;\vf,x,\xi)\in S^{m'}$, $m,m'\in \R$. Then the commutator $[A,B]:=AB-BA\in \Ops^{m+m'-1}$ satisfies
	\begin{equation}\label{eq:comm_tame_AB}
	\begin{aligned}
	\norm{[A,B]}_{m+m'-1,s,\alpha}^{k_0,\upsilon} & \lesssim_{m, m', \alpha, k_0} C(s)\norm{A }_{m,s+|m'|+\alpha+2,\alpha+1}^{k_0,\upsilon}\norm{ B }_{m',s_0+|m|+\alpha+2,\alpha+1}^{k_0,\upsilon}\\
	& \qquad \quad \ + C(s_0)\norm{A }_{m,s_0+|m'|+\alpha+2,\alpha+1}^{k_0,\upsilon}\norm{ B }_{m',s+|m|+\alpha+2,\alpha+1}^{k_0,\upsilon} \,.
	\end{aligned}
	\end{equation}
\end{lem}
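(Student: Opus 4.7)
The strategy is to reduce the commutator estimate to the composition results already established in Lemma \ref{pseudo_compo}. Using the Moyal expansion \eqref{compo_symb} with $N=2$ on both orderings,
\[
a\#b = ab + \tfrac{1}{\im}\,\partial_\xi a\,\partial_x b + r_2(a,b), \qquad b\#a = ab + \tfrac{1}{\im}\,\partial_\xi b\,\partial_x a + r_2(b,a),
\]
the zeroth-order terms $ab$ cancel by commutativity of the symbols, so that
\[
[A,B] = \Op\big(a\#b-b\#a\big) = \Op\big(-\im\{a,b\}\big) + \Op\big(\tilde r_2(a,b)\big),
\]
exactly as in \eqref{eq:moyal_exp}, with $\tilde r_2(a,b)\in S^{m+m'-2}\subset S^{m+m'-1}$. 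Thus we only need to estimate separately the Poisson-bracket term and the smoother remainder.

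For the Poisson bracket, since $\partial_\xi a\in S^{m-1}$ and $\partial_x b\in S^{m'}$, the product $\partial_\xi a\,\partial_x b$ lies in $S^{m+m'-1}$. I would expand $\partial_\xi^\beta\big(\partial_\xi a\,\partial_x b\big)$ by Leibniz for $0\le \beta\le\alpha$ and apply the tame product estimate \eqref{prod} to the resulting products $\partial_\xi^{\gamma+1}a\cdot\partial_x\partial_\xi^{\beta-\gamma}b$ in $H^s_{\vf,x}$. Tracking the $\langle\xi\rangle$-weight through the definition of $\|\cdot\|_{m+m'-1,s,\alpha}$ (the factor $\langle\xi\rangle^{-(m+m'-1)+\beta}$ exactly absorbs the decay $\langle\xi\rangle^{m-\gamma-1}$ of $\partial_\xi^{\gamma+1}a$ and $\langle\xi\rangle^{m'-\beta+\gamma}$ of $\partial_\xi^{\beta-\gamma}b$), and the symmetric term $\partial_x a\,\partial_\xi b$ analogously, yields
\[
\normk{\Op(-\im\{a,b\})}{m+m'-1,s,\alpha} \lesssim_{\alpha,k_0} \normk{A}{m,s,\alpha+1}\normk{B}{m',s_0+1,\alpha+1}+\normk{A}{m,s_0,\alpha+1}\normk{B}{m',s+1,\alpha+1},
\]
which is well within the bound claimed.

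For the remainder, I would apply \eqref{eq:rem_comp_tame} with $N=2$ to both $r_2(a,b)$ and $r_2(b,a)$, which gives control of $\normk{\Op(\tilde r_2(a,b))}{m+m'-2,s,\alpha}$ by
\[
C(s)\normk{A}{m,s,\alpha+2}\normk{B}{m',s_0+|m|+\alpha+4,\alpha+2}+C(s_0)\normk{A}{m,s_0,\alpha+2}\normk{B}{m',s+|m|+\alpha+4,\alpha+2}
\]
(plus the symmetric estimate with the roles of $A,B$ swapped, which produces the $|m'|$ loss on the $A$-side). Since lowering the order monotonically decreases the weighted norm (as $\langle\xi\rangle^{-(m+m'-1)+\beta}\le \langle\xi\rangle^{-(m+m'-2)+\beta}$), the same bound controls $\normk{\Op(\tilde r_2(a,b))}{m+m'-1,s,\alpha}$. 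Summing the two contributions and absorbing the Poisson-bracket indices into the larger indices $s+|m'|+\alpha+2$, $s_0+|m|+\alpha+2$, $\alpha+1$ produces \eqref{eq:comm_tame_AB}.

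The whole argument is essentially bookkeeping once the Moyal cancellation is invoked; the only mildly delicate point is the symmetrization, since applying \eqref{eq:rem_comp_tame} in its stated form to $r_2(b,a)$ yields losses $|m'|$ on the $B$-side rather than on $A$, so one needs to use the version of \eqref{eq:est_tame_comp}--\eqref{eq:rem_comp_tame} with $C(s_0)$ and $C(s)$ interchanged (explicitly permitted in the statement of Lemma \ref{pseudo_compo}) to place all the $|m|,|m'|$ derivative losses on the high-norm side. Since the parameter dependence is smooth, the Whitney-weighted norms $\|\cdot\|^{k_0,\upsilon}$ satisfy the same inequalities by differentiating the symbol expansion up to order $k_0$ in $\lambda$ and applying the scalar estimates to each $\partial_\lambda^k$-component.
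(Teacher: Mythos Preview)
Your overall strategy---cancel the zeroth Moyal term and bound the remainder via Lemma~\ref{pseudo_compo}---is exactly how the paper treats this as a corollary. However, there is a bookkeeping slip in the final step. Expanding to order $N=2$ and applying \eqref{eq:rem_comp_tame} to $r_2(a,b)$ and $r_2(b,a)$ produces indices $\alpha+2$ in the $\alpha$-slot and $s_0+|m|+\alpha+4$ in the Sobolev slot, which are \emph{larger} than the target indices $\alpha+1$ and $s_0+|m|+\alpha+2$ in \eqref{eq:comm_tame_AB}; you cannot ``absorb'' a worse bound into a better one, so your argument as written only proves a slightly weaker estimate.

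The clean fix is to take $N=1$ from the start: since the pointwise product of symbols is commutative, $a\#b-b\#a=r_1(a,b)-r_1(b,a)$ already lies in $S^{m+m'-1}$, with no need to isolate the Poisson bracket at all. Applying \eqref{eq:rem_comp_tame} with $N=1$ to each of $r_1(a,b)$ and $r_1(b,a)$ (using, for the second, the version with $C(s)$ and $C(s_0)$ interchanged, as you correctly note is allowed) then yields precisely the indices $\alpha+1$ and $|m|+\alpha+2$, $|m'|+\alpha+2$ appearing in \eqref{eq:comm_tame_AB}, once one majorizes $\normk{A}{m,s,\alpha+1}\le\normk{A}{m,s+|m'|+\alpha+2,\alpha+1}$ and the analogous factors.
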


Finally we consider the exponential of a pseudodifferential operator of order $0$.
The following lemma follows as in Lemma 2.12 of \cite{BKM} (or Lemma 2.17 in \cite{BM}).

\begin{lem} {\bf (Exponential map)}
 \label{Neumann pseudo diff}
If $ A := {\rm Op}(a(\lambda; \vf, x,  \xi ))$ 
is in $ OPS^{0} $,   
then $e^A$ is in $ OPS^{0} $ and 
for any $s \geq s_0$, $\alpha \in \N_0 $,
there is a constant 
$C(s, \alpha) > 0$ so that 
$$
\normk{e^A - {\rm Id} }{0, s, \alpha} \leq  \normk{A}{0, s + \alpha, \alpha} \, {\rm exp} \big( C(s, \alpha) 
\normk{A}{0, s_0 + \alpha, \alpha}\big)\, .
$$
The same holds for a matrix $\bA$ of the form \eqref{operatori matriciali sezione pseudo diff}  in $\Ops^0$.
\end{lem}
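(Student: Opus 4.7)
I would prove the lemma by expanding $e^A - \mathrm{Id}$ as its Neumann series
\begin{equation*}
e^A - \mathrm{Id} \;=\; \sum_{n\geq 1}\frac{A^{n}}{n!}
\end{equation*}
and bounding each iterated product $A^{n}$ in the norm $\normk{\cdot}{0,s,\alpha}$ by repeated application of the composition estimate \eqref{eq:est_tame_comp} with $m=m'=0$. Throughout I would abbreviate $\|\cdot\|_{s}:=\normk{\cdot}{0,s,\alpha}$ (at fixed $\alpha$, $k_0$, $\upsilon$), and use that these norms are monotone in $s$.

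The key inductive claim is the tame bound
\begin{equation*}
\|A^{n}\|_{s} \;\leq\; \widetilde C(s,\alpha)^{\,n-1}\, n\, \|A\|_{s+\alpha}\, \|A\|_{s_{0}+\alpha}^{\,n-1},\qquad n\geq 1,
\end{equation*}
for a constant $\widetilde C(s,\alpha)>0$. The base case $n=1$ holds because $\|A\|_{s}\leq \|A\|_{s+\alpha}$. For the inductive step I would write $A^{n+1}=A^{n}\cdot A$ (putting $A^{n}$ in the ``outer'' slot and $A$ in the ``inner'' slot of \eqref{eq:est_tame_comp}) to get
\begin{equation*}
\|A^{n+1}\|_{s} \;\lesssim_{\alpha,k_0}\; C(s)\,\|A^{n}\|_{s}\,\|A\|_{s_{0}+\alpha} \;+\; C(s_{0})\,\|A^{n}\|_{s_{0}}\,\|A\|_{s+\alpha}.
\end{equation*}
Inserting the inductive hypothesis at the level $s$ in the first term and at the level $s_{0}$ in the second (so that the ``$s+\alpha$'' appearing in the hypothesis becomes just $s_{0}+\alpha$) yields
\begin{equation*}
\|A^{n+1}\|_{s} \;\leq\; \bigl(C(s)+C(s_{0})\bigr)\,\widetilde C(s,\alpha)^{\,n-1}\, n\, \|A\|_{s+\alpha}\,\|A\|_{s_{0}+\alpha}^{\,n},
\end{equation*}
which closes the induction by choosing $\widetilde C(s,\alpha):=2C(s)$ (since $n/(n+1)\leq 1$ and $C(s_0)\leq C(s)$). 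Summing the Neumann series then gives
\begin{equation*}
\|e^{A}-\mathrm{Id}\|_{s} \;\leq\; \|A\|_{s+\alpha}\sum_{n\geq 1}\frac{\bigl(\widetilde C(s,\alpha)\|A\|_{s_{0}+\alpha}\bigr)^{n-1}}{(n-1)!} \;=\; \|A\|_{s+\alpha}\,\exp\bigl(\widetilde C(s,\alpha)\,\|A\|_{s_{0}+\alpha}\bigr),
\end{equation*}
which is the claimed inequality with $C(s,\alpha):=\widetilde C(s,\alpha)$. Convergence in $\normk{\cdot}{0,s,\alpha}$ for every $s$ shows moreover that $e^{A}-\mathrm{Id}\in \Ops^{0}$, hence $e^{A}\in \Ops^{0}$. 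The matrix case follows by the same argument using the matrix form of Lemma \ref{pseudo_compo}.

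\textbf{Main obstacle.} The subtle point is that a naive iteration of \eqref{eq:est_tame_comp} seems to force an ever-growing shift $s\mapsto s+n\alpha$, because the ``high-regularity'' slot in the composition estimate sits at $s+\alpha$. Closing the induction with a single buffer of $\alpha$ requires the specific grouping $A^{n+1}=A^{n}\cdot A$ together with splitting the bound into a ``low-regularity'' part $\|A^{n}\|_{s_{0}}$ (which by the inductive hypothesis at level $s_0$ contains only the factor $\|A\|_{s_{0}+\alpha}$) and a ``high-regularity'' part $\|A^{n}\|_{s}$ (which contains the single factor $\|A\|_{s+\alpha}$ we want to extract). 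Once this bookkeeping is set up correctly, the constants collapse into a clean geometric series and the rest is routine.
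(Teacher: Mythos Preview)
Your proof is correct and follows the standard Neumann-series argument that the paper has in mind (the paper itself does not give a proof but simply refers to Lemma~2.12 in \cite{BKM} and Lemma~2.17 in \cite{BM}, which proceed in the same way). The only cosmetic point is that the implicit constant in $\lesssim_{\alpha,k_0}$ from \eqref{eq:est_tame_comp} should be absorbed into your $\widetilde C(s,\alpha)$, but you clearly intend this.
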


\paragraph{Egorov Theorem.}

Consider the family of $ \vphi $-dependent diffeomorphisms of $ \T_x $ defined by 
\begin{equation}
\label{diffeo_inverso}
y= x + \beta(\vf, x) \qquad \Longleftrightarrow \qquad 
x= y + \breve\beta(\vf, y) 	\, , 
\end{equation}
where $\beta(\vf, x)$ is a small smooth function, 
and the induced operators 
\begin{equation}\label{defB-diffeo}
(\cB u)(\vf, x) :=  u(\vf, x + \beta(\vf, x)) \, , \quad 
(\cB^{-1}u)(\vf, y) :=  u(\vf, y + \breve\beta(\vf, y)) \,.
\end{equation}
\begin{lem}{\bf (Composition)}\label{product+diffeo}
	Let $\normk{\beta}{2s_0+k_0+2}\leq \delta(s_0,k_0)$ small enough. 
Then the composition operator
	$ 	\cB $
	satisfies the tame estimates, for any $s\geq s_0$,
	$$
	\normk{\cB u}{s} \lesssim_{s,k_0} \normk{u}{s+k_0} + \normk{\beta}{s} \normk{u}{s_0+k_0+1} \, , 
	$$
and the function $\breve{\beta}$ defined in \eqref{diffeo_inverso} by the inverse diffeomorphism
	satisfies 
	$	\normk{\breve{\beta}}{s} \lesssim_{s,k_0} \normk{\beta}{s+k_0} $. 
\end{lem}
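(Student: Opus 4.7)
The plan is to reduce both statements to the classical tame composition estimate in Sobolev spaces (without parameters), and then lift it to the weighted Whitney--Sobolev setting by keeping track of how the $k_0$ parameter derivatives distribute over the composition. For the standard estimate, one has, provided $\|\beta\|_{s_0+1}$ is sufficiently small so that $x\mapsto x+\beta(\vf,x)$ is a diffeomorphism of $\T$ uniformly in $\vf$,
\[
\|u(\cdot, \cdot+\beta(\cdot,\cdot))\|_{H^s(\T^{\nu+1})} \lesssim_s \|u\|_s + \|\beta\|_s \|u\|_{s_0+1}\,,
\]
which is the standard Moser-type tame estimate for composition (see e.g.\ Appendix of \cite{BM} or Lemma 2.6 here). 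I would take this as the base case and then differentiate.

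Next, for the weighted norm, write $\cB u(\lambda;\vf,x)=u(\lambda;\vf,x+\beta(\lambda;\vf,x))$ and differentiate in $\lambda$ up to order $|k|\leq k_0$. By the chain rule, $\pa_\lambda^k(\cB u)$ is a finite sum of terms of the form
\[
\big((\pa_x^p\pa_\lambda^{k'}u)\circ(\mathrm{id}+\beta)\big)\cdot \prod_{i=1}^{p}\pa_\lambda^{k_i}\beta\,,\qquad k'+k_1+\cdots+k_p=k\,,\ p\leq |k|\leq k_0\,.
\]
Each factor $\pa_\lambda^{k_i}\beta$ is handled by Lemma \ref{compo_moser}/\eqref{prod}; the composition $(\pa_x^p\pa_\lambda^{k'}u)\circ(\mathrm{id}+\beta)$ is estimated by the unweighted composition bound above, applied to a function of regularity $s-p\geq s-k_0$. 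Using \eqref{prod} to distribute the Sobolev loss between the ``high-low'' indices, and summing over $|k|\leq k_0$ with the appropriate factors of $\upsilon^{|k|}$, one obtains
\[
\normk{\cB u}{s}\lesssim_{s,k_0}\normk{u}{s+k_0}+\normk{\beta}{s}\normk{u}{s_0+k_0+1}\,,
\]
which is the first claim. The hypothesis $\normk{\beta}{2s_0+k_0+2}\leq\delta$ controls the ``low norm'' factor $\normk{\beta}{s_0+k_0+1}$ at every step, ensuring that the small-divisor--free Neumann argument giving invertibility of $\mathrm{id}+\beta$ passes through with $k_0$ parameter derivatives.

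For the inverse, start from the defining identity $\breve\beta(\vf,y)=-\beta\bigl(\vf,y+\breve\beta(\vf,y)\bigr)$, viewed as a fixed point problem for $\breve\beta$ in a ball of $W^{k_0,\infty,\upsilon}(\R^{\nu+1},H^{s_0+1})$. Smallness of $\normk{\beta}{s_0+k_0+1}$ makes the map $\Phi(f):=-\beta\circ(\mathrm{id}+f)$ a contraction on such a ball by the just-proven composition estimate, which yields existence, uniqueness, and the low-norm bound $\normk{\breve\beta}{s_0+k_0+1}\lesssim\normk{\beta}{s_0+k_0+1}$. To obtain the high-norm tame estimate, apply the composition bound to the fixed point identity itself:
\[
\normk{\breve\beta}{s}\leq \normk{\beta\circ(\mathrm{id}+\breve\beta)}{s}\lesssim_{s,k_0}\normk{\beta}{s+k_0}+\normk{\beta}{s_0+k_0+1}\normk{\breve\beta}{s}\,,
\]
and absorb the last term on the left using the smallness hypothesis. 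This produces the asserted bound $\normk{\breve\beta}{s}\lesssim_{s,k_0}\normk{\beta}{s+k_0}$.

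The main delicate point is not the argument itself but the bookkeeping: one must verify that the ``loss'' in the composition estimate (measured by $k_0$ spatial derivatives of $u$) really matches the threshold $2s_0+k_0+2$ imposed on the low norm of $\beta$, and that the quadratic term $\normk{\beta}{s_0+k_0+1}\normk{\breve\beta}{s}$ can be absorbed at the high level $s$ rather than only at low regularity. Both require choosing $\delta(s_0,k_0)$ depending only on the constants appearing in Lemma \ref{compo_moser} and in the contraction argument, which is why the smallness threshold is stated in terms of $2s_0+k_0+2$ rather than $s_0+1$.
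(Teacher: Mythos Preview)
The paper does not give a proof of this lemma; it is stated as a standard tame composition estimate (the corresponding results are proved in the cited references, see e.g.\ Lemma~2.30 in \cite{BM}, which the paper invokes elsewhere for precisely this kind of bound). Your argument---lifting the parameter-free Moser tame composition estimate to the weighted $\|\cdot\|_s^{k_0,\upsilon}$ norm via the Fa\`a di Bruno expansion of $\partial_\lambda^k(\cB u)$, and then recovering $\breve\beta$ from the implicit relation $\breve\beta=-\beta\circ(\mathrm{id}+\breve\beta)$ by contraction at low norm followed by a bootstrap at high $s$---is exactly the standard route to these estimates and is correct.

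One minor bookkeeping point: to apply your first estimate with $\breve\beta$ playing the role of the change of variable in the bootstrap step, you need the smallness $\normk{\breve\beta}{2s_0+k_0+2}\leq\delta$, not merely at level $s_0+k_0+1$; so the contraction should be carried out (or at least checked) at level $2s_0+k_0+2$, which is why the hypothesis on $\beta$ is placed at that index. You already flag this in your last paragraph, so this is just a matter of stating the fixed-point ball at the right regularity.
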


The following result is a  small variation of Proposition 2.28 of \cite{BKM}.
\begin{prop}\label{egorov} {\bf (Egorov)}
Let $N \in \N$, $\tq_0 \in \N
_0 $,  $S > s_0$ and assume that 
$\pa_\lambda^k \beta(\lambda; \cdot, \cdot)$ 
are $ \cC^\infty$ for all $|k| \leq k_0$. There exist constants  $\sigma_N, \sigma_N(\tq_0) >0$,  $\delta = \delta(S, N, \tq_0, k_0) \in (0,1)$ such that, if
$ \normk{\beta}{s_0 + \sigma_N(\tq_0)} \leq \delta $, 
then the conjugated operator
$  \cB^{-1} \circ \pa_x^{m}\circ \cB $, $ m \in \Z $,   
is a pseudodifferential operator of order $ m $ with an expansion of the form
\begin{equation*}
\cB^{-1} \circ \pa_x^{m} \circ \cB = \sum_{i=0}^N p_{m - i}(\lambda; \vf, y) \pa_y^{m - i}  + \cR_N(\vf)
\end{equation*}
with the following properties:
\\[1mm]
1. The principal symbol of $p_{m}$  is 
$$
p_{m}(\lambda; \vf, y) = \Big( [1+\beta_x(\lambda;\vf,x)]^{m} \Big)\vert_{x=y+
\breve \beta(\lambda;\vf,y)}
$$
where $\breve{\beta}(\lambda;\vf,y)$ has been introduced in \eqref{diffeo_inverso}. For any $s \geq s_0$ and $i=1, \ldots, N$,
\begin{equation}\label{norm-pk}
 \normk{p_m - 1}{s} \, , \ 
\normk{p_{m-i}}{s} \lesssim_{s, N} \normk{\beta}{s+\sigma_N}\, . 
\end{equation}
2. For any $ \tq \in \N^\nu_0 $ with $ |\tq| \leq \tq_0$, 
 $n_1, n_2 \in \N_0 $  with $ n_1 + n_2 + \tq_0 \leq N + 1 - k_0 - m  $,   the  
 operator $\langle D \rangle^{n_1}\partial_{\vphi}^\tq {\cal R}_N(\vphi) \langle D \rangle^{n_2}$ is 
$\cD^{k_0} $-tame with a tame constant satisfying, for any $s_0 \leq s \leq S $,   
\begin{equation}\label{stima resto Egorov teo astratto}
{\mathfrak M}_{\langle D \rangle^{n_1}\partial_{\vphi}^\tq {\cal R}_N(\vphi) \langle D \rangle^{n_2}}(s) \lesssim_{S, N, \tq_0} 
\| \beta\|_{s + \sigma_N(\tq_0)}^{k_0,\upsilon} 
\,. 
\end{equation}
3.  Let $s_0 < s_1 $ 
and assume that  $\| \beta_j \|_{s_1 + \sigma_N(\tq_0)} \leq \delta,$ 
 $j = 1,2$. Then 
$ \| \Delta_{12} p_{m - i} \|_{s_1} \lesssim_{s_1, N} 
 \| \Delta_{12} \beta\|_{s_1 + \sigma_N} $, $ i = 0, \ldots, N $, 
and, for any $ |\tq| \leq \tq_0$,  $n_1, n_2 \in \N_0 $ with $n_1 + n_2 + \tq_0 \leq N  - m$, 
$$
\| \langle D \rangle^{n_1}\partial_{\vphi}^\tq \Delta_{12} {\cal R}_N(\vphi) \langle D \rangle^{n_2} \|_{{\cal B}(H^{s_1})} \lesssim_{s_1, N, n_1, n_2} 
 \| \Delta_{12} \beta\|_{s_1 + \sigma_N(\tq_0)} \, . 
$$
Finally, if $ \beta (\vf, x ) $ is a quasi-periodic traveling wave,  
then $ \cB $ is momentum preserving (we refer to 
Definition \ref{def:mom.pres} and Lemma \ref{lem:dMP}), 
as well as the conjugated operator $ \cB^{-1} \circ\pa_x^m \circ\cB $,
and each function $ p_{m-i} $, $ i = 0, \ldots, N $, is a quasi-periodic traveling wave. 
\end{prop}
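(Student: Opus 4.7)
\medskip

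\noindent\textbf{Proof plan.}
The strategy is to reduce everything to the case $m=1$, which can be computed explicitly by the chain rule, and then to iterate: positive powers are handled by composition, negative powers by pseudodifferential inversion. The first step is the identity, obtained by differentiating $(\cB u)(\vf,x) = u(\vf,x+\beta(\vf,x))$,
\begin{equation*}
\pa_x \circ \cB = (1+\beta_x(\vf,x))\, \cB \circ \pa_y,
\qquad \text{hence}\qquad
\cB^{-1} \circ \pa_x \circ \cB = p_1(\vf,y)\,\pa_y,
\end{equation*}
where $p_1(\vf,y)=(1+\beta_x(\vf,x))|_{x=y+\breve\beta(\vf,y)}$. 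This both identifies the principal symbol for $m=1$ and gives, by Lemma \ref{product+diffeo} applied to the composition with the inverse diffeomorphism, the bound $\normk{p_1-1}{s}\lesssim \normk{\beta}{s+\sigma}$ for some $\sigma=\sigma(k_0)$.

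For $m\geq 1$ integer one iterates, writing
$\cB^{-1}\circ \pa_x^m \circ \cB = (p_1\pa_y)^m$,
and expands by the Leibniz rule into $\sum_{i=0}^{m} q_{m-i}(\vf,y)\,\pa_y^{m-i}$, where $q_m=p_1^m$ gives the stated principal symbol and each $q_{m-i}$ is a polynomial in the derivatives of $p_1$, so satisfies \eqref{norm-pk} by \eqref{prod} and Lemma \ref{product+diffeo}. In this case no remainder arises: one simply sets $p_{m-i}=q_{m-i}$ for $0\leq i\leq m$, $p_{m-i}=0$ for $i>m$, and $\cR_N=0$. For $m=0$ the result is trivial. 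For $m\leq -1$ one proceeds inductively. For $m=-1$, on the subspace orthogonal to the kernel, the relation $\pa_x\circ \pa_x^{-1}=\mathrm{Id}$ yields $\cB^{-1}\pa_x^{-1}\cB = (p_1\pa_y)^{-1} = \pa_y^{-1}\circ p_1^{-1}$. By Lemma \ref{pseudo_compo} and the composition expansion \eqref{compo_symb}, this is a pseudodifferential operator whose symbol expands in decreasing orders, with principal term $p_{-1}(\vf,y)=p_1(\vf,y)^{-1}= (1+\beta_x)^{-1}|_{x=y+\breve\beta}$, as required. Negative integer powers $m<-1$ are obtained by composing $(p_1\pa_y)^{-1}$ with itself $|m|$ times and again applying Lemma \ref{pseudo_compo}; each additional composition lowers the order of the implicit remainder by one, allowing one to push the expansion up to any order $N$.

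The tame estimates \eqref{norm-pk} on $p_{m-i}$ follow from repeated application of \eqref{prod}, Lemma \ref{product+diffeo} and \eqref{norm.mult}, since each $p_{m-i}$ is built from $p_1$, its derivatives, and $\breve\beta$. The remainder estimate \eqref{stima resto Egorov teo astratto} is obtained by iterating Lemma \ref{pseudo_compo}: the remainders $r_N(a,b)$ produced at each composition gain one derivative in $\xi$ and lose a power of $\langle\xi\rangle$, so choosing $N$ large enough (depending on $n_1+n_2+\tq_0+k_0+m$) produces an operator of sufficiently negative order to absorb the left and right factors $\langle D\rangle^{n_1}$, $\langle D\rangle^{n_2}$ and the $\vf$-derivatives $\pa_\vf^\tq$, while preserving the tame structure $\cD^{k_0}$ through the norms $\normk{\cdot}{s,\alpha}$. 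Lipschitz dependence on $\beta$ follows by the same argument applied to differences, using the bilinearity of all composition operations.

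Finally, the momentum-preservation claim is a consequence of the traveling-wave structure of $\beta$: by \eqref{chartj1}, $\beta(\vf-\vec\jmath\vs,x)=\beta(\vf,x+\vs)$, which together with the formula \eqref{defB-diffeo} gives $\cB\circ \vec\tau_\vs = \vec\tau_\vs \circ \cB$ for all $\vs\in\R$, i.e.\ $\cB$ is momentum preserving (in the sense of Definition \ref{def:mom.pres} / Lemma \ref{lem:dMP}). Since $\pa_x$ commutes with $\tau_\vs$ trivially and momentum preservation is stable under composition and inversion, $\cB^{-1}\circ\pa_x^m\circ\cB$ is momentum preserving, and each symbol $p_{m-i}(\vf,y)$ is a quasi-periodic traveling wave, being a smooth function of $\beta_x$ and $\breve\beta$. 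The main technical point in the whole argument is the bookkeeping of orders in the pseudodifferential remainder for $m<0$: one must show that increasing $N$ does not destroy the uniformity of the tame constants in $s\in[s_0,S]$, which forces the sharp dependence $\sigma_N(\tq_0)$ on $N,\tq_0,k_0$ and mirrors the analogous analysis in Proposition 2.28 of \cite{BKM}.
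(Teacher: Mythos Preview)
The paper does not prove this proposition directly; it is introduced as ``a small variation of Proposition 2.28 of \cite{BKM}'' and no argument is supplied. Your sketch therefore fills in what the paper leaves to a citation, and the approach you take---reduce to the elementary chain-rule identity $\cB^{-1}\circ\pa_x\circ\cB=p_1\,\pa_y$, iterate for $m\geq 1$, and invert symbolically via Lemma \ref{pseudo_compo} for $m\leq -1$---is correct in outline and is one standard route to such an Egorov expansion. For $m\geq 1$ the expansion $(p_1\pa_y)^m$ terminates, so there is no remainder; for $m\leq -1$ the symbolic calculus produces the asymptotic expansion with a pseudodifferential remainder of order $m-N-1$, from which the tame estimates \eqref{stima resto Egorov teo astratto} follow via Lemmata \ref{pseudo_compo} and \ref{tame_pesudodiff}.

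Two small points. First, for $m\leq -1$ the identification $\cB^{-1}\pa_x^{-1}\cB=(p_1\pa_y)^{-1}$ holds only modulo a rank-one correction coming from $\pa_x\pa_x^{-1}=\mathrm{Id}-\pi_0$ (see \eqref{pax-1}, \eqref{defpi0}); the operator $\cB^{-1}\pi_0\cB$ is in $\Ops^{-\infty}$ and can be absorbed into $\cR_N$, but you should say so rather than leave it implicit in ``on the subspace orthogonal to the kernel''. Second, in the momentum-preservation paragraph you write $\cB\circ\vec\tau_\vs=\vec\tau_\vs\circ\cB$; in this paper $\vec\tau_\vs$ denotes the action on the action-angle-normal variables (see \eqref{vec.tau}), not the spatial translation. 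The correct identity, and the one proved in Lemma \ref{lem:dMP}, is $\cB(\vf-\vec\jmath\vs)\circ\tau_\vs=\tau_\vs\circ\cB(\vf)$ with $\tau_\vs$ from \eqref{trans}.
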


\paragraph{Dirichlet-Neumann operator.} 

We finally remind the following decomposition of the Dirichlet-Neumann operator proved in
\cite{BM}, in the case of infinite depth, and in \cite{BBHM},  for finite depth.

\begin{lem}\label{DN_pseudo_est}
{\bf (Dirichlet-Neumann)}
Assume that $\pa_\lambda^k \eta(\lambda, \cdot, \cdot) $ is $\cC^\infty (\T^\nu \times \T_x)$ for all $|k| \leq k_0$.
 There exists $ \delta(s_0, k_0) >0$ such that, if
$ \normk{\eta}{2s_0 +2k_0  +1} \leq \delta (s_0, k_0) $, 
then the Dirichlet-Neumann operator $ G(\eta ) = G(\eta, \tth)$ may be written as 
\begin{equation}
\label{DN.dec}
G(\eta, \tth) = G(0, \tth) + \cR_G(\eta)
\end{equation}
where $ \cR_G(\eta) := \cR_G(\eta, \tth)  \in \Ops^{-\infty}$  
satisfies, for all $m, s, \alpha \in \N_0$, the estimate
\begin{align}
\label{est:RG}
\normk{\cR_G(\eta)}{-m, s, \alpha} 
\leq  C(s, m, \alpha, k_0) \normk{\eta}{s+s_0 +2k_0 +m+\alpha + 3} \, . 
\end{align}
\end{lem}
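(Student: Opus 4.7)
The decomposition $G(\eta) = G(0) + \cR_G(\eta)$ is merely the definition $\cR_G(\eta):= G(\eta)-G(0)$; the content of the lemma is the quantitative smoothing estimate \eqref{est:RG}. My plan is to reduce the analysis of $G(\eta)$ to a Dirichlet problem on the flat strip $\T \times [-\tth,0]$ via a boundary-straightening diffeomorphism, and then perform a quantitative pseudodifferential expansion of the resulting variable-coefficient elliptic problem.

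First I would introduce a diffeomorphism of the form $(x,y) = (x,\, z + \beta(\eta)(x,z))$ mapping $\T\times [-\tth,0]$ onto $\cD_{\eta,\tth}$; a convenient choice is $\beta(\eta)(x,z):=(1+z/\tth)\eta(x)$ in finite depth, and a harmonic-type extension of $\eta$ decaying as $z\to -\infty$ in the infinite depth case. Under this change of coordinates the problem \eqref{dir} for $\Phi$ becomes a variable-coefficient uniformly elliptic problem $\widetilde{L}_\eta \phi = 0$ on the flat strip, with $\phi|_{z=0}=\psi$ and the Neumann condition at $z=-\tth$; the Dirichlet-Neumann operator then reads as
\[
G(\eta)\psi = \bigl(\alpha(\eta)\, \phi_z + \delta(\eta)\, \phi_x\bigr)\big|_{z=0}
\]
for explicit coefficients $\alpha(\eta)$, $\delta(\eta)$ depending tamely on $\eta$ through Lemma \ref{compo_moser}, with $\alpha(0)=1$ and $\delta(0)=0$.

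Second, I would solve $\widetilde{L}_\eta \phi = 0$ by treating the difference $\widetilde{L}_\eta - \Delta$ as a small perturbation of the flat Laplacian, writing $\phi = \phi_0 + \phi_1 + \dots + \phi_N + r_N$ where $\phi_0$ is the flat harmonic extension of $\psi$ and each $\phi_{k+1}$ corrects the equation for $\phi_k$. Inserting the expansion into the boundary formula, the $\phi_0$-contribution gives exactly $G(0)\psi$ (since $\phi_{0,z}|_{z=0}= G(0)\psi$), while every $\phi_k$ with $k\geq 1$ carries at least one factor of $\beta(\eta)$ or its derivatives and is produced by applying the flat Poisson solver to an \emph{interior} source. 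Because the solution operator sending an interior source to the boundary normal trace is smoothing of arbitrarily negative order on the flat strip, together with the $\Psi$DO calculus of Lemmas \ref{pseudo_compo}--\ref{Neumann pseudo diff}, each such term belongs to $\Ops^{-k}$ with a tame norm controlled by a Sobolev norm of $\eta$, losing only finitely many derivatives per iteration.

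The main obstacle is the quantitative bookkeeping: for each $m\in\N$ one must iterate the expansion at least $N\geq m+\alpha$ times to gain $m+\alpha$ powers of $\langle\xi\rangle^{-1}$, and carefully track the number of derivatives of $\eta$ absorbed at each step so that the total loss is \emph{linear} in $m$ and $\alpha$, matching the right hand side of \eqref{est:RG}. This is carried out using the tame product estimate \eqref{prod}, the $\Psi$DO composition and commutator bounds \eqref{eq:est_tame_comp}--\eqref{eq:comm_tame_AB}, and the Egorov-type estimates of Proposition \ref{egorov} to conjugate back the straightening diffeomorphism, following the schemes developed in \cite{BM} for infinite depth and \cite{BBHM} for finite depth.
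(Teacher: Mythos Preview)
The paper itself does not prove this lemma: the statement is prefaced by ``We finally remind the following decomposition of the Dirichlet-Neumann operator proved in \cite{BM}, in the case of infinite depth, and in \cite{BBHM}, for finite depth.'' So there is no in-paper argument to compare against beyond those references.

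Your sketch has a real gap at the crucial step. You assert that ``the solution operator sending an interior source to the boundary normal trace is smoothing of arbitrarily negative order on the flat strip'' and conclude that the contribution of each $\phi_k$, $k\geq 1$, lies in $\Ops^{-k}$. But the source $-(\widetilde L_\eta-\Delta)\phi_{k-1}$ is \emph{not} an interior source: it lives on the whole strip, right up to $z=0$. For such a source the Neumann trace of the Poisson solution is an operator of order $+1$ on $\psi$, not a smoothing one. Concretely, in infinite depth with the shift flattening $y=z+\eta(x)$, a direct Fourier computation gives for $\partial_z\phi_1|_{z=0}$ the matrix elements $(j^2-|j||m|)\,\eta_{m-j}$ acting on $\psi_j$, which is a first-order operator in $\psi$. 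The boundary term $-\eta_x\psi_x$ contributes $(mj-j^2)\,\eta_{m-j}$, also first order. Only their \emph{sum} yields $(mj-|j||m|)\,\eta_{m-j}$, which vanishes whenever $\sign j=\sign m$; when the signs differ one has $|m-j|=|m|+|j|$, and the rapid decay of $\widehat\eta_{m-j}$ then furnishes smoothing of every order. So the mechanism behind $\cR_G(\eta)\in\Ops^{-\infty}$ is a cancellation between the boundary correction $(\alpha(\eta)-1)\phi_{0,z}+\delta(\eta)\phi_{0,x}$ and the Neumann iterate $\phi_{1,z}|_{z=0}$ (and analogous cancellations at every order in $\eta$), not a smoothing property of the Poisson solver applied term by term. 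As written, your expansion accounts only for operators of order $1$ at each step and never exhibits this cancellation; the claimed gain of ``$m+\alpha$ powers of $\langle\xi\rangle^{-1}$'' by iterating $N\geq m+\alpha$ times does not follow.

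The proofs in \cite{BM,BBHM} are organized precisely so that this cancellation is built in from the start rather than hoped to emerge from a naive Neumann series; identifying and tracking it with tame estimates is the actual content of the lemma.
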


\subsection{$\cD^{k_0}$-tame and modulo-tame operators}\label{subsec:semiphi}

We present the notion of tame and modulo tame operators introduced in \cite{BM}. 
Let $ A := A(\lambda) $ be a linear operator as in \eqref{linear_A}, 
$ k_0 $-times differentiable with respect to
the parameter $ \lambda $ in the open set $ \Lambda_0 \subset \R^{\nu+1}$. 

\begin{defn}{\bf ($\cD^{k_0}$-$\sigma$-tame)} \label{Dk_0sigma}
	Let $\sigma\geq 0$. A linear operator $A:=A(\lambda)$   is $\cD^{k_0}$-$\sigma$-tame if there exists a non-decreasing function $[s_0,S]\rightarrow[0,+\infty)$, $s\mapsto\fM_A(s)$, with possibly $S=+\infty$, 
	such that, for all $s_0\leq s\leq S$ and $u\in H^{s+\sigma} $, 
	\begin{equation}\label{tame_constants}
	\sup_{\abs k \leq k_0}\sup_{\lambda\in \Lambda_0} \upsilon^{\abs k} \norm{(\pa_\lambda^k A(\lambda))u }_s \leq \fM_A(s_0) \norm u_{s+\sigma} + \fM_A(s)\norm u_{s_0+\sigma} \,.
	\end{equation}
	We say that $\fM_A(s)$ is a \emph{tame constant} of the operator $A$. The constant $\fM_A(s)=\fM_A(k_0,\sigma,s)$ may also depend on $k_0,\sigma$ but
	 we shall often omit to write them.
	When the "loss of derivatives" $\sigma$ is zero, we simply write $\cD^{k_0}$-tame instead of $\cD^{k_0}$-$0$-tame. 
	For a matrix operator as in \eqref{C_transformed}, we denote the tame constant $\fM_{\bR}(s):=\max\left\{ \fM_{\cR_1}(s),\fM_{\cR_2}(s) \right\}  $.
\end{defn}

Note that the tame constants $\fM_A(s)$ are not uniquely determined. 
An immediate consequence of \eqref{tame_constants} is that $\norm A_{\cL\left(H^{s_0+\sigma},H^{s_0}\right)}\leq 2 \fM_{A}(s_0)$. 
Also note that, representing the operator $A$ by its matrix elements 
$ (A_j^{j'}(\ell-\ell') )_{\ell,\ell'\in\Z^\nu,j,j'\in\Z}$ as in \eqref{A_expans}, we have for all $\abs k \leq k_0$, $j'\in\Z$, $\ell'\in\Z^\nu$,
\begin{equation}\label{salva}
\upsilon^{2\abs k} \sum_{\ell,j}\braket{\ell,j}^{2s} \big| \pa_\lambda^k A_j^{j'}(\ell-\ell') \big|^2 \leq 2 \big( \fM_A(s_0)\big)^2 
\braket{\ell',j'}^{2(s+\sigma)} + 2 (\fM_A(s))^2\braket{\ell',j'}^{2(s_0+\sigma)} \,.
\end{equation}
The class of $\cD^{k_0}$-$\sigma$-tame operators is closed under composition.
\begin{lem}{\bf (Composition, Lemma 2.20 in \cite{BM})}\label{tame_compo}
	Let $A,B$ be respectively $\cD^{k_0}$-$\sigma_A$-tame and $\cD^{k_0}$-$\sigma_B$-tame operators with tame constants respectively $\fM_A(s)$ and $\fM_B(s)$. Then the composed operator $A\circ B$ is $\cD^{k_0}$-$(\sigma_A+\sigma_B)$-tame with tame constant
$$
	\fM_{AB}(s) \leq C(k_0) \left( \fM_A(s) \fM_B(s_0+\sigma_A) + \fM_A(s_0)\fM_B(s+\sigma_A) \right)\,.
$$
\end{lem}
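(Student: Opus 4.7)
The plan is a direct consequence of the Leibniz rule for $\partial_\lambda^k(AB)$ combined with two successive applications of the tame bound \eqref{tame_constants}, one for $A$ and one for $B$; this is the standard composition lemma for tame operators, so I do not expect any essential obstacle beyond careful bookkeeping of the Sobolev indices and of the factor $\upsilon^{|k|}$.

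Concretely, I would fix $|k|\leq k_0$, $s\in[s_0,S]$, $\lambda\in\Lambda_0$ and a test function $u$, and write
\begin{equation*}
\partial_\lambda^k(AB)(\lambda) \;=\; \sum_{k_1+k_2=k}\binom{k}{k_1}\bigl(\partial_\lambda^{k_1}A(\lambda)\bigr)\bigl(\partial_\lambda^{k_2}B(\lambda)\bigr),
\end{equation*}
distributing the weight as $\upsilon^{|k|}=\upsilon^{|k_1|}\upsilon^{|k_2|}$. For each summand I would set $v:=\upsilon^{|k_2|}(\partial_\lambda^{k_2}B(\lambda))u$ and apply the tame estimate for $A$ with $|k_1|\leq k_0$ to get
\begin{equation*}
\upsilon^{|k_1|}\|(\partial_\lambda^{k_1}A(\lambda))v\|_s \;\leq\; \fM_A(s_0)\|v\|_{s+\sigma_A}+\fM_A(s)\|v\|_{s_0+\sigma_A},
\end{equation*}
and then bound $\|v\|_{s+\sigma_A}$ and $\|v\|_{s_0+\sigma_A}$ by a second application of \eqref{tame_constants} to $B$, at the two levels $s'=s+\sigma_A$ and $s'=s_0+\sigma_A$ (both $\geq s_0$): this yields combinations of the three norms $\|u\|_{s+\sigma_A+\sigma_B}$, $\|u\|_{s_0+\sigma_A+\sigma_B}$ and $\|u\|_{s_0+\sigma_B}$, with coefficients of the form $\fM_B(s_0)$, $\fM_B(s_0+\sigma_A)$, $\fM_B(s+\sigma_A)$.

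Assembling the four resulting products and using the monotonicity of $\fM_B$ (which gives $\fM_B(s_0)\leq \fM_B(s_0+\sigma_A)$) together with the trivial embedding $\|u\|_{s_0+\sigma_B}\leq \|u\|_{s_0+\sigma_A+\sigma_B}$, I would collect the coefficient of $\|u\|_{s+\sigma_A+\sigma_B}$ into $\fM_A(s_0)\fM_B(s_0+\sigma_A)$, and the remaining three coefficients, which all multiply a norm $\leq \|u\|_{s_0+\sigma_A+\sigma_B}$, into a sum bounded by $\fM_A(s)\fM_B(s_0+\sigma_A)+\fM_A(s_0)\fM_B(s+\sigma_A)$. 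The combinatorial factor from the Leibniz sum is at most $2^{k_0}$ and is absorbed into $C(k_0)$, yielding exactly the stated tame constant $\fM_{AB}(s)$. The matrix-valued statement for operators of the form \eqref{C_transformed} follows applying the scalar case to each entry and using the definition $\fM_{\bR}(s):=\max\{\fM_{\cR_1}(s),\fM_{\cR_2}(s)\}$.

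The only mildly delicate point, which is what I would highlight in a clean write-up, is to insert the loss $\sigma_A$ on the $B$-side (i.e.\ to measure the intermediate function $v$ in $H^{s+\sigma_A}$ and $H^{s_0+\sigma_A}$) rather than on the $A$-side. This asymmetric choice is precisely what produces the expression $\fM_B(s_0+\sigma_A),\fM_B(s+\sigma_A)$ in the final bound, and is what makes the estimate iterate cleanly under repeated compositions (as used pervasively later in the paper via Lemma \ref{tame_compo}).
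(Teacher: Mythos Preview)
Your argument is correct and is the standard proof of this composition lemma. Note that the paper does not actually give a proof of this statement: it is quoted verbatim as Lemma~2.20 of \cite{BM} and used as a black box, so there is no ``paper's own proof'' to compare against. Your Leibniz-rule-plus-two-applications-of-\eqref{tame_constants} argument, with the loss $\sigma_A$ inserted on the $B$-side and the monotonicity of $\fM_B$ used to absorb $\fM_B(s_0)$ into $\fM_B(s_0+\sigma_A)$, is exactly the proof one finds in the cited reference.
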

It is proved in Lemma 2.22 in \cite{BM} that 
the action of a $\cD^{k_0}$-$\sigma$-tame operator $A(\lambda)$ on a 
Sobolev function $  u = u(\lambda)\in H^{s+\sigma}$
is bounded by 
\begin{equation}\label{action_Hs_tame}
\normk{ Au }{s} \lesssim_{k_0} \fM_A(s_0) \normk{u}{s+\sigma} + \fM_A(s)\normk{u}{s_0+\sigma}\,.
\end{equation}

Pseudodifferential operators are tame operators. We  use in particular the following lemma: 
\begin{lem}\label{tame_pesudodiff} {\bf (Lemma 2.21 in \cite{BM})}
	Let $A=a(\lambda;\vf,x,D)\in\Ops^0$ be a family of pseudodifferential operators 
	satisfying $\normk{A}{0,s,0}<\infty$ for $s\geq s_0$. Then 
	$A$ is $\cD^{k_0}$-tame with a tame constant $ \fM_A(s) $ satisfying, for any
	$s\geq s_0$,  
	\begin{equation}\label{TameA_PS}
	\fM_A(s) \leq C(s) \normk{A}{0,s,0}\,.
	\end{equation}  
	The same statement holds for a matrix operator $\bR$ as in \eqref{C_transformed}.
\end{lem}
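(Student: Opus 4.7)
\medskip

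\noindent\textbf{Proof sketch of Lemma \ref{tame_pesudodiff}.}
The plan is to show, by a direct Fourier-side computation, that the action of $A=\Op(a)$ on $H^s(\T^{\nu+1})$ satisfies the tame bound \eqref{tame_constants} with $\sigma=0$ and with constants controlled by $\normk{A}{0,s,0}$. I would first reduce to the case $k_0=0$: since $\partial_\lambda^k A=\Op(\partial_\lambda^k a)$ and $\partial_\lambda^k a\in S^0$ with $\upsilon^{|k|}\sup_\xi\|\partial_\lambda^k a(\lambda,\cdot,\cdot,\xi)\|_s\leq \normk{A}{0,s,0}$ for $|k|\leq k_0$, it suffices to prove the estimate pointwise in $\lambda$, uniformly in $\lambda\in\Lambda_0$, for the operator norm of $\Op(\partial_\lambda^k a)$ acting on $H^s$; then one multiplies by $\upsilon^{|k|}$ and sums over $|k|\leq k_0$.

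Next, expand $u(\vf,x)=\sum_{\ell',j'}u_{\ell',j'}e^{\im(\ell'\cdot\vf+j'x)}$ and the symbol $a(\vf,x,\xi)=\sum_{\ell,k}\wh a_{\ell,k}(\xi)e^{\im(\ell\cdot\vf+kx)}$. A direct computation along the lines of \eqref{A_expans} gives
\[
(Au)_{\mu,m}=\sum_{\ell',j'}\wh a_{\mu-\ell',\,m-j'}(j')\,u_{\ell',j'}\,,
\]
so that the matrix coefficients of $A$ in the basis $\{e^{\im(\ell\cdot\vf+jx)}\}$ are determined by the Fourier coefficients of $a(\cdot,\cdot,\xi)$ evaluated at the incoming frequency $\xi=j'$. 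The hypothesis $\normk{A}{0,s,0}<\infty$ is equivalent to the uniform bound $\sup_{\xi\in\R}\sum_{\ell,k}\langle\ell,k\rangle^{2s}|\wh a_{\ell,k}(\xi)|^2\leq C\,(\normk{A}{0,s,0})^2$.

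Then I would bound $\|Au\|_s^2=\sum_{\mu,m}\langle\mu,m\rangle^{2s}|(Au)_{\mu,m}|^2$ by applying the elementary inequality
\[
\langle\mu,m\rangle^{s}\leq C(s)\bigl(\langle\mu-\ell',m-j'\rangle^{s}+\langle\ell',j'\rangle^{s}\bigr)
\]
inside the $(\ell',j')$-sum, which splits $\|Au\|_s$ into two pieces. Each piece is estimated by Cauchy--Schwarz together with the convolution structure in $(\mu-\ell',m-j')$: in the first piece one pairs $\langle\mu-\ell',m-j'\rangle^{s}|\wh a_{\mu-\ell',m-j'}(j')|$ with $|u_{\ell',j'}|$ and uses $s_0>(\nu+1)/2$ to sum $\sum_{\ell',j'}\langle\ell',j'\rangle^{-2s_0}<\infty$, obtaining a bound by $C(s)\,\normk{A}{0,s,0}\,\|u\|_{s_0}$; in the second piece one pairs $\langle\ell',j'\rangle^{s}|u_{\ell',j'}|$ with $|\wh a_{\mu-\ell',m-j'}(j')|$ and analogously gets $C(s)\,\normk{A}{0,s_0,0}\,\|u\|_s$. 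Summing yields
\[
\|Au\|_s\;\leq\;C(s)\bigl(\normk{A}{0,s,0}\,\|u\|_{s_0}+\normk{A}{0,s_0,0}\,\|u\|_s\bigr)\,,
\]
which is exactly \eqref{tame_constants} with tame constant $\fM_A(s)\leq C(s)\normk{A}{0,s,0}$, since the monotonicity $\normk{A}{0,s_0,0}\leq\normk{A}{0,s,0}$ absorbs the second summand into the definition. Reinstating the $\lambda$-derivatives via Step 1, and extending entrywise to the matrix case \eqref{operatori matriciali sezione pseudo diff} by taking the max over the four blocks (as already built into the definition of $\normk{\bA}{0,s,0}$), completes the proof.

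The main obstacle is merely bookkeeping: the convolution/Cauchy--Schwarz splitting has to be done carefully so that the summability constants depend only on $s_0$ (via $s_0>(\nu+1)/2$) and not on higher indices, and one has to keep track of the $\upsilon^{|k|}$-weights throughout; no genuinely new analytic idea is required beyond what is already encoded in the pseudodifferential norm $\normk{\cdot}{0,s,0}$.
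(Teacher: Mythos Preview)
The paper does not give its own proof of this lemma; it is quoted as Lemma~2.21 of \cite{BM} and used as a black box. Your Fourier-side argument---reduce to $k_0=0$ via $\partial_\lambda^k\Op(a)=\Op(\partial_\lambda^k a)$, write $(Au)_{\mu,m}=\sum_{\ell',j'}\wh a_{\mu-\ell',m-j'}(j')\,u_{\ell',j'}$, split the weight $\langle\mu,m\rangle^{s}$ between the two convolution factors, and close by Cauchy--Schwarz using $s_0>(\nu+1)/2$---is exactly the standard proof (and the one given in \cite{BM}), so your sketch is correct.
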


In view of the KAM reducibility scheme of Section \ref{sec:KAM} we also consider the stronger notion of $\cD^{k_0}$-modulo-tame operator, that we need only for operators with loss of derivative $\sigma=0$. We first recall the notion of \emph{majorant operator}:
	given a linear operator $A$ acting as  in \eqref{A_expans}, we define the 
	majorant operator $| A |$ by its matrix elements  
		$ (|A_j^{j'}(\ell-\ell') |)_{\ell,\ell'\in\Z^\nu, j,j'\in\Z} $.

\begin{defn}{\bf ($\cD^{k_0}$-modulo-tame)} \label{Dk0-modulo}
	A linear operator $A=A(\lambda)$ 
	is $\cD^{k_0}$\emph{-modulo-tame} if there exists a non-decreasing function $[s_0,S]\rightarrow[0,+\infty]$, $s\mapsto \fM_A^\sharp(s)$, such that for all $k\in\N_0^{\nu+1}$, $\abs k\leq k_0$, the majorant operator $\abs{\pa_\lambda^k A}$ satisfies,  
	for all $s_0\leq s\leq S$ and $u\in H^s$,
\begin{equation}\label{def:mod-tame}
	\sup_{\abs k \leq k_0}\sup_{\lambda \in {\Lambda}_0}\upsilon^{\abs k} \norm{ \abs{\pa_\lambda^k A}u }_s \leq \fM_A^\sharp(s_0) \norm u_s + \fM_A^\sharp(s) \norm u_{s_0} \,.
\end{equation}
	The constant $\fM_A^\sharp (s)$ is called a \emph{modulo-tame constant} for the operator $A$. For a matrix of operators as in \eqref{C_transformed}, we denote the modulo-tame constant $\fM_{\bR}^\sharp(s):= \max\{ \fM_{\cR_1}^\sharp(s),\fM_{\cR_2}^\sharp(s) \}$.
\end{defn}
If $A$, $B$ are $\cD^{k_0}$-modulo-tame operators with 
$ | A_j^{j'}(\ell)  | \leq | B_j^{j'}(\ell) | $, then $\fM_A^\sharp(s)\leq  \fM_B^\sharp(s)$. A $\cD^{k_0}$-modulo-tame operator is also $\cD^{k_0}$-tame and $\fM_A(s)\leq \fM_A^\sharp(s)$.

In view of the next lemma, given a linear operator $A$ acting as  in \eqref{A_expans}, we define the operator
 $\braket{\pa_{\vf}}^\tb A$, $ \tb\in\R$, whose matrix elements are $\braket{\ell-\ell'}^\tb A_j^{j'}(\ell-\ell')$.

\begin{lem}{\bf (Sum and composition, Lemma 2.25 in \cite{BM})} 
\label{modulo_sumcomp}
Let $A$, $B$, $\braket{\pa_{\vf}}^\tb A$, $\braket{\pa_{\vf}}^\tb B$ 
 be $\cD^{k_0}$-modulo-tame operators. Then 
$A+B$, $A\circ B$ and $\braket{\pa_{\vf}}^\tb(AB)$  are $\cD^{k_0}$-modulo-tame with
\begin{align*}
& \fM_{A+B}^\sharp(s)\leq \fM_A^\sharp (s)+ \fM_B^\sharp(s) \\
&
	\fM_{AB}^\sharp(s) \leq C(k_0) \big(  \fM_A^\sharp(s)\fM_B^\sharp(s_0) + \fM_A^\sharp(s_0)\fM_B^ \sharp(s) \big) \\
&	\fM_{\braket{\pa_{\vf}}^\tb(AB)}^\sharp(s) \leq C(\tb)C(k_0) \big(  \fM_{\braket{\pa_{\vf}}^\tb A}^\sharp(s) \fM_B^\sharp(s_0) +  \fM_{\braket{\pa_{\vf}}^\tb A}^\sharp(s_0) \fM_B^\sharp(s)  \notag\\
	&  \qquad \qquad \qquad \qquad \qquad \quad 
	+ \fM_{A}^\sharp(s) \fM_{\braket{\pa_{\vf}}^\tb B}^\sharp(s_0)+\fM_{A}^\sharp(s_0) \fM_{\braket{\pa_{\vf}}^\tb B}^\sharp(s) \big)  \, . 
	\end{align*}
	The same statement holds for  matrix operators $\bA$, $\bB$ as in \eqref{C_transformed}.
\end{lem}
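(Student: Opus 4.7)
\textbf{Proof plan for Lemma \ref{modulo_sumcomp}.}
The strategy is to reduce all three bounds to pointwise inequalities between majorant operators, and then feed them into the modulo-tame estimate \eqref{def:mod-tame}. Throughout, I track $\lambda$-derivatives via the Leibniz rule
\[
\pa_\lambda^k(AB) = \sum_{k_1+k_2=k}\binom{k}{k_1}(\pa_\lambda^{k_1}A)(\pa_\lambda^{k_2}B),
\]
which, after multiplying by $\upsilon^{|k|}$ and summing over $|k|\leq k_0$, produces the combinatorial constant $C(k_0)$.

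For the sum, the triangle inequality on matrix entries gives $|A+B|\leq |A|+|B|$ entrywise, and applying \eqref{def:mod-tame} to $A$ and to $B$ separately yields the first bound. For the composition, I start from
\[
(AB)_j^{j'}(\ell-\ell') = \sum_{j_1\in\Z,\,\ell_1\in\Z^\nu} A_j^{j_1}(\ell-\ell_1)\, B_{j_1}^{j'}(\ell_1-\ell'),
\]
which gives the pointwise majorant bound $|AB|\leq |A|\,|B|$. Hence for $|k|\leq k_0$, applying Leibniz,
\[
\bigl|\pa_\lambda^k(AB)\bigr|\,u \;\leq\; \sum_{k_1+k_2=k}\binom{k}{k_1}\,\bigl|\pa_\lambda^{k_1}A\bigr|\bigl(\bigl|\pa_\lambda^{k_2}B\bigr|\,u\bigr)
\]
for any nonnegative $u\in H^s$. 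Taking $\|\cdot\|_s$ and using \eqref{def:mod-tame} twice (first on the outer $|\pa_\lambda^{k_1}A|$ and then on $|\pa_\lambda^{k_2}B|$, each time picking up a factor of $\upsilon^{-|k_i|}$) gives the second estimate with constant $C(k_0)$.

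For the weighted bound, the key ingredient is the elementary inequality
\[
\braket{\ell-\ell'}^\tb \leq C(\tb)\bigl(\braket{\ell-\ell_1}^\tb+\braket{\ell_1-\ell'}^\tb\bigr), \qquad \forall\,\ell,\ell',\ell_1\in\Z^\nu,
\]
which follows from $|\ell-\ell'|\leq |\ell-\ell_1|+|\ell_1-\ell'|$. Multiplying the convolution formula for $(AB)_j^{j'}(\ell-\ell')$ by $\braket{\ell-\ell'}^\tb$ and using this inequality yields the pointwise majorant bound
\[
\bigl|\braket{\pa_\vf}^\tb(AB)\bigr| \;\leq\; C(\tb)\Bigl(\,\bigl|\braket{\pa_\vf}^\tb A\bigr|\cdot |B| \;+\; |A|\cdot \bigl|\braket{\pa_\vf}^\tb B\bigr|\,\Bigr).
\]
Combining this with the Leibniz rule in $\lambda$ and applying \eqref{def:mod-tame} to each of the four resulting products (exactly as in the composition step) produces the third estimate. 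The matrix case \eqref{C_transformed} follows by applying the scalar bounds to each block entry and taking the maximum.

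The proof is essentially bookkeeping; the only non-routine point is the triangle-type inequality for $\braket{\cdot}^\tb$, which is what allows the weight $\braket{\ell-\ell'}^\tb$ to ``split'' across the convolution, and this is precisely what makes the weighted modulo-tame class closed under composition with the asymmetric bound displayed in the statement.
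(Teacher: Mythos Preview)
Your proof is correct and follows the standard argument; the paper itself does not give a proof of this lemma, citing instead Lemma~2.25 of \cite{BM}, where the proof proceeds exactly along the lines you sketch (majorant inequality $|AB|\le |A|\,|B|$ from the convolution formula, Leibniz in~$\lambda$, and the splitting $\braket{\ell-\ell'}^\tb\lesssim_\tb \braket{\ell-\ell_1}^\tb+\braket{\ell_1-\ell'}^\tb$ for the weighted bound). One small clarification: when you write ``for any nonnegative $u\in H^s$'', you mean $u$ with nonnegative Fourier coefficients; this reduction is harmless since $\||C|u\|_s\le \||C|\,\underline u\|_s$ with $\underline u_{\ell,j}:=|u_{\ell,j}|$ and $\|\underline u\|_s=\|u\|_s$.
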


By Lemma \ref{modulo_sumcomp} we deduce the following result, 
cfr. Lemma 2.20 in \cite{BKM}.

\begin{lem}{\bf (Exponential)} \label{modulo_expo}
	Let $A$ and $\braket{\pa_\vf}^\tb A$ be $\cD^{k_0}$-modulo-tame and assume 
	that $\fM_A^\sharp(s_0) \leq 1  $. Then 
	 the operators $e^{\pm A}-{\rm Id}$ and $\braket{\pa_\vf}^\tb e^{\pm A}- {\rm Id}$ are $\cD^{k_0}$-modulo-tame  with modulo-tame constants satisfying
$$
	\fM_{e^{\pm A} -{\rm Id}}^\sharp (s) \lesssim_{k_0} \fM_A^\sharp (s) \,, \quad
	\fM_{\braket{\pa_\vf}^\tb e^{\pm A}-{\rm Id}}^\sharp(s) \lesssim_{k_0,\tb} \fM_{\braket{\pa_\vf}^\tb A}^\sharp (s)  + \fM_A^\sharp(s)\fM_{\braket{\pa_\vf}^\tb A}^\sharp (s_0) \,.
	$$
\end{lem}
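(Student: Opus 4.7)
The plan is to expand the exponential as its Taylor series
$$
e^{\pm A}-{\rm Id}=\sum_{m\geq 1}\frac{(\pm A)^m}{m!}
$$
and bound each power $A^m$ by iterating the composition estimate in Lemma \ref{modulo_sumcomp}. Since $\fM^\sharp_A(s_0)\leq 1$, the geometric factor $C(k_0)^{m-1}$ coming from iterated composition is compensated by $m!$, so the series is summable.

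More precisely, for the first bound I will prove by induction on $m\geq 1$ that
$$
\fM^\sharp_{A^m}(s)\;\leq\;\bigl(2C(k_0)\bigr)^{m-1}\,m\,\bigl(\fM^\sharp_A(s_0)\bigr)^{m-1}\fM^\sharp_A(s).
$$
The base case $m=1$ is trivial. For the inductive step, write $A^{m+1}=A\circ A^{m}$ and apply the composition bound of Lemma \ref{modulo_sumcomp}, which gives
$$
\fM^\sharp_{A^{m+1}}(s)\leq C(k_0)\bigl(\fM^\sharp_A(s)\fM^\sharp_{A^m}(s_0)+\fM^\sharp_A(s_0)\fM^\sharp_{A^m}(s)\bigr).
$$
Plugging in the inductive hypothesis at both $s$ and $s_0$, using $\fM^\sharp_A(s_0)\leq 1$ to absorb powers of it, yields the claimed bound at level $m+1$. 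Summing $\fM^\sharp_{A^m}(s)/m!$ in $m$ then gives
$$
\fM^\sharp_{e^{\pm A}-{\rm Id}}(s)\leq\sum_{m\geq 1}\frac{\fM^\sharp_{A^m}(s)}{m!}\lesssim_{k_0}\fM^\sharp_A(s)\sum_{m\geq 1}\frac{(2C(k_0))^{m-1}}{(m-1)!}\lesssim_{k_0}\fM^\sharp_A(s),
$$
using Lemma \ref{modulo_sumcomp} again to control the sum via majorant operators.

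For the weighted estimate, I will similarly induct on the quantity $\fM^\sharp_{\braket{\pa_\vf}^\tb A^m}(s)$ using the third inequality of Lemma \ref{modulo_sumcomp}, which is precisely designed to treat $\braket{\pa_\vf}^\tb(AB)$: applied to $A^{m+1}=A\circ A^m$ it yields a bound of the schematic form
$$
\fM^\sharp_{\braket{\pa_\vf}^\tb A^{m+1}}(s)\lesssim_{\tb,k_0}\!\!\sum_{\#}\fM^\sharp_{\braket{\pa_\vf}^\tb A^{(\cdot)}}(s_\#)\,\fM^\sharp_{A^{(\cdot)}}(s'_\#),
$$
with exactly one factor carrying the weight $\braket{\pa_\vf}^\tb$ and at most one factor evaluated at $s$. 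Combining this with the unweighted induction above and the hypothesis $\fM^\sharp_A(s_0)\leq 1$ gives, after routine bookkeeping,
$$
\fM^\sharp_{\braket{\pa_\vf}^\tb A^m}(s)\leq C(\tb,k_0)^{m-1}\,m\,\bigl[\fM^\sharp_{\braket{\pa_\vf}^\tb A}(s)+\fM^\sharp_A(s)\fM^\sharp_{\braket{\pa_\vf}^\tb A}(s_0)\bigr],
$$
and summing $1/m!$ gives the desired bound for $\braket{\pa_\vf}^\tb(e^{\pm A}-{\rm Id})$.

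The only real technical point is the weighted Leibniz-type step. It rests on the elementary inequality $\braket{\ell-\ell'}^\tb\leq C(\tb)\sum_{i}\braket{\ell_i-\ell_{i+1}}^\tb$ for any chain $\ell=\ell_0,\ell_1,\dots,\ell_m=\ell'$, which lets us distribute the weight $\braket{\pa_\vf}^\tb$ onto a single factor in each product of matrix elements that defines $(A^m)_j^{j'}(\ell-\ell')$; this is exactly what makes the third inequality of Lemma \ref{modulo_sumcomp} available, and iterating it gives the stated estimate. Handling the $k_0$ parameter derivatives $\pa_\lambda^k$ requires no new idea since the Leibniz rule distributes them across the $m$ factors and the modulo-tame norm already contains the sum over $|k|\leq k_0$ with weights $\upsilon^{|k|}$; the factor $C(k_0)$ absorbs the $\binom{|k|}{\cdot}$ coefficients. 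I expect the main bookkeeping difficulty to be organizing these two combinatorial expansions (the chain rule in $\ell$ and the Leibniz rule in $\lambda$) while keeping the geometric factor compatible with the $1/m!$ so that the Taylor series converges.
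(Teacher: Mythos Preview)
Your approach is correct and matches the paper's own argument: the paper simply states that the result follows from Lemma~\ref{modulo_sumcomp} and refers to Lemma~2.20 in \cite{BKM}, which is precisely the Taylor-expansion-plus-iterated-composition argument you outline. Your inductive bounds on $\fM^\sharp_{A^m}(s)$ and $\fM^\sharp_{\braket{\pa_\vf}^\tb A^m}(s)$ are the standard way to unpack that reference, and the bookkeeping you describe (factorial beating the geometric constant, Leibniz distribution of $\braket{\pa_\vf}^\tb$ onto one factor) is exactly what is needed.
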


Given a linear operator $A$ acting as  in \eqref{A_expans}, we define the 
\emph{smoothed operator}
 $\Pi_N A$, $N\in\N $ whose matrix elements are
		\begin{equation}\label{PiNA}
		(\Pi_N A)_j^{j'}(\ell-\ell') := \begin{cases}
		A_j^{j'}(\ell-\ell') & \text{if } \braket{\ell-\ell'} \leq N \\
		0 & \text{otherwise} \, . 
		\end{cases}
		\end{equation}
		We also denote $\Pi_N^\perp:= {\rm Id}-\Pi_N$. 
		It is proved in Lemma 2.27 in \cite{BM} that 	
\begin{equation}\label{modulo_smooth}
	\fM_{\Pi_N^\perp A}^\sharp(s) \leq N^{-\tb} \fM_{\braket{\pa_{\vf}}^\tb A}^\sharp(s)\,, \quad \fM_{\Pi_N^\perp A}^\sharp(s)\leq \fM_{A}^\sharp(s) \,.
\end{equation}
The same estimate holds with a matrix operator $\bR$ as in \eqref{C_transformed}.

\subsection{Hamiltonian and Reversible operators}

In this paper we  shall exploit both the Hamiltonian and reversible structure 
along the reduction of the  linearized operators, that we now present. 
\\[1mm]
{\bf Hamiltonian operators.} 
A matrix operator $\cR $ as in \eqref{real_matrix} is Hamiltonian if 
the matrix
$$
J^{-1} \cR = 
\begin{pmatrix}
0 & - {\rm Id}
 \\  {\rm Id}& 0 
\end{pmatrix}\begin{pmatrix}
A & B \\ C & D
\end{pmatrix} = \begin{pmatrix}
- C & - D \\ A &  B
\end{pmatrix}
$$
is self-adjoint, namely   
$ B^* = B $, $ C^*=C $, $  A^* = - D $
and $ A, B, C, D $ are real.

Correspondingly, a matrix operator as in \eqref{C_transformed} is Hamiltonian if 
\begin{equation} \label{RHamC}
\cR_1^* = - \cR_1 \, , \quad  \cR_2^* = \bar{\cR_2} \, . 
\end{equation}
{\bf Symplectic operators. }
A $\vphi $-dependent family of linear operators 
$ \cR (\vphi ) $, 
$ \vphi \in \T^\nu $, as in \eqref{real_matrix} is {\it symplectic} if    
\begin{equation}\label{symp-op}
{\cal W} ( \cR (\vphi) u, \cR (\vphi)  v) =
{\cal W} (u, v) \,  \quad \forall u,v \in L^2 (\T_x, \R^2)\, , 
\end{equation}
where the symplectic 2-form $ {\cal W} $ is defined in \eqref{sympl-form-st}. 

\paragraph{Reversible and reversibility preserving operators.}

Let $\cS$ be an involution  as in \eqref{rev_invo} acting on the real variables
$ (\eta, \zeta) \in \R^2 $, 
or 
 as in \eqref{rev_aa} acting on the action-angle-normal variables $ (\theta, I, w ) $, or as in
\eqref{inv-complex} acting in the $ (z, \bar z )$ complex variables introduced in \eqref{C_transform}.  
\begin{defn}{\bf (Reversibility)}\label{rev_defn}
	A $ \vf $-dependent family of operators $\cR (\vf) $, $ \vf \in \T^\nu $,
	 is
	\begin{itemize}
		\item \emph{reversible} if $\cR(-\vf) \circ\cS = -\cS \circ \cR(\vf)$ for all $\vf\in\T^\nu$;
		\item \emph{reversibility preserving} if $\cR(-\vf)\circ \cS = \cS \circ \cR(\vf)$ for all $\vf\in\T^\nu$. 
	\end{itemize}
\end{defn}

Since in the complex coordinates $(z,\bar z) $  
the involution $\cS$ defined in \eqref{rev_invo} reads as in 
\eqref{inv-complex}, 
an operator $\bR (\vf)$  as in \eqref{C_transformed} is 
reversible, respectively anti-reversible,  if, for any $i=1,2$,
\begin{equation}\label{Ri-RAR}
\cR_{i} (- \vf) \circ \cS = - \cS \circ \cR_{i} (\vf)  \, , \quad {\rm resp.} \  \
\cR_{i} (- \vf) \circ \cS =  \cS \circ \cR_{i} (\vf)  \, , 
\end{equation}
 where, with a small abuse of notation, we still denote $ (\cS u)(x) = \overline{u(-x)}$. 
Moreover, recalling that in the Fourier coordinates such involution reads as in \eqref{inv-zj}, 
we obtain the following lemma.

\begin{lem}\label{rev_defn_C}
A $ \vf $-dependent family of operators $\bR (\vf)$, $ \vf \in \T^\nu $,  as in \eqref{C_transformed} is 
	\begin{itemize}
		\item  reversible if, for any $ i = 1, 2 $, 
		\begin{equation}\label{rev:Fourier}
			\left( \cR_{i} \right)_j^{j'}(-\vf) = - \bar{ \left( \cR_{i} \right)_{j}^{j'}(\vf) } \quad \forall\,\vf\in\T^\nu \, , \ \ i.e. \ \left( \cR_{i} \right)_j^{j'}(\ell) = - 
			\bar{ \left( \cR_{i} \right)_{j}^{j'}(\ell) } \,  \quad \forall\,\ell\in\Z^\nu \,;
		\end{equation}
		\item reversibility preserving if, for any $ i = 1, 2 $, 
		\begin{equation}
		\left( \cR_{i} \right)_j^{j'}(-\vf) = \bar{ \left( \cR_{i} \right)_{j}^{j'}(\vf) } \ \  \forall\,\vf\in\T^\nu   \, , \ \ i.e.   \ \left( \cR_{i} \right)_j^{j'}(\ell) = \bar{ \left( \cR_{i} \right)_{j}^{j'}(\ell) } 
		\,  \ \  \forall\,\ell\in\Z^\nu \,.
		\end{equation}
	\end{itemize}
\end{lem}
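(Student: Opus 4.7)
The plan is to translate each operator identity in \eqref{Ri-RAR} into a scalar identity on the matrix entries $(\cR_i)_j^{j'}(\vf)$ via the $x$-Fourier expansion, and then further into identities on the Fourier coefficients in $\vf$ by expanding in the angular variable. Since \eqref{Ri-RAR} already reduces the reversibility (resp.\ reversibility-preserving) condition for a matrix operator $\bR$ of the form \eqref{C_transformed} to two independent scalar conditions on the blocks $\cR_1$ and $\cR_2$, it suffices to work with a single scalar operator $\cR_i(\vf)$, $i=1,2$.

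First I would unwind everything in the $x$-Fourier basis. Testing both sides against a fixed-$\vf$ function $u(x)=\sum_{j'\in\Z}u_{j'}e^{\im j'x}$ and using \eqref{A_expans} together with the fact (recorded in \eqref{inv-zj}) that the scalar involution $u(x)\mapsto\overline{u(-x)}$ acts on Fourier coefficients by $(\cS u)_j=\overline{u_j}$, one obtains
\begin{equation*}
(\cR_i(-\vf)\,\cS u)_j=\sum_{j'\in\Z}(\cR_i)_j^{j'}(-\vf)\,\overline{u_{j'}},\qquad (\cS\,\cR_i(\vf)u)_j=\sum_{j'\in\Z}\overline{(\cR_i)_j^{j'}(\vf)}\,\overline{u_{j'}}.
\end{equation*}
Equating the two sums with the sign $-$ for reversibility (and $+$ for reversibility preserving) and invoking the arbitrariness of the $u_{j'}$ produces the first equivalent formulation
\begin{equation*}
(\cR_i)_j^{j'}(-\vf)=\mp\,\overline{(\cR_i)_j^{j'}(\vf)}\qquad \forall\,\vf\in\T^\nu.
\end{equation*}

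Finally I would Fourier-expand in $\vf$. Writing $(\cR_i)_j^{j'}(\vf)=\sum_{\ell\in\Z^\nu}(\cR_i)_j^{j'}(\ell)e^{\im\ell\cdot\vf}$, the change of summation index $\ell\mapsto-\ell$ gives $(\cR_i)_j^{j'}(-\vf)=\sum_\ell(\cR_i)_j^{j'}(-\ell)e^{\im\ell\cdot\vf}$, while $\overline{(\cR_i)_j^{j'}(\vf)}=\sum_\ell\overline{(\cR_i)_j^{j'}(-\ell)}e^{\im\ell\cdot\vf}$. Matching coefficients mode by mode and relabelling yields the per-mode identity $(\cR_i)_j^{j'}(\ell)=\mp\overline{(\cR_i)_j^{j'}(\ell)}$, closing the chain of equivalences.

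The whole argument is essentially bookkeeping and I do not anticipate any serious obstacle; the only point worth flagging is that the involution acts nontrivially both on the $x$-Fourier coefficient (via complex conjugation) and on the $\vf$-variable (via $\vf\mapsto-\vf$), so both substitutions intervene in the passage from the operator form to the per-mode form. It is also worth noting, before specialising to the blocks, that the diagonal shape of $\cS$ in \eqref{inv-complex} on the subspace $\{(z,\bar z)\}$ is what makes \eqref{Ri-RAR} decouple into two independent scalar conditions on $\cR_1$ and $\cR_2$ separately, which is precisely what allows the above scalar computation to suffice.
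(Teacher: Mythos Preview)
Your proposal is correct and follows essentially the same approach as the paper, which merely remarks that the lemma follows by ``recalling that in the Fourier coordinates such involution reads as in \eqref{inv-zj}'' without writing out the computation. Your argument is simply a careful unwinding of that one-line justification: reduce to the scalar blocks via \eqref{Ri-RAR}, use $(\cS u)_j=\overline{u_j}$ to translate the operator identity into a relation on matrix entries, and then Fourier-expand in $\vf$.
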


Note that the composition of a reversible operator with a reversibility preserving operator is reversible.
The flow 
generated by a reversibility preserving operator is reversibility preserving. 
If $ \cR (\vf) $ is reversibility preserving, then $ (\omega \cdot \pa_\vf \cR) (\vf) $
is reversible.

We shall say that a linear operator of the form $  \omega\cdot \pa_\vf + A(\vf)$ is reversible if $A(\vf)$ is reversible.  Conjugating the linear operator $
 \omega\cdot\pa_\vf+A(\vf)$ by a family of invertible linear maps $\Phi(\vf)$, we get the transformed operator
\begin{equation}\label{trasf-op}
\begin{aligned}
	&  \Phi^{-1}(\vf) \circ \big(  \omega\cdot \pa_\vf + A(\vf) \big) \circ  \Phi(\vf) = \omega\cdot\pa_\vf + A_+(\vf)\,, \\
	&   A_+(\vf)  := \Phi^{-1}(\vf)\left( \omega\cdot \pa_\vf\Phi(\vf) \right) + \Phi^{-1}(\vf) A(\vf) \Phi(\vf)\,.
\end{aligned}
\end{equation}
The conjugation of a reversible operator with a reversibility preserving operator  is reversible. 

\begin{lem}\label{lem:rev-pse}
	A pseudodifferential operator 
	$  \Op(a(\vf, x, \xi))$ is reversible, respectively reversibility 
	preserving, if and only if its symbol
	satisfies
\begin{equation}\label{pseudo-rev}
	a(- \vf, - x, \xi) = - \overline{a(\vf, x, \xi)} \, , 
	\quad \text{resp.}  \quad 
	a(- \vf, - x, \xi) =  \overline{a(\vf, x, \xi)} \, . 
\end{equation}
\end{lem}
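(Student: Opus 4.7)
The plan is a direct Fourier-side computation: expand the two compositions appearing in the reversibility identity using the definition of $\Op$ and match symbols. Recall that, restricted to the scalar variable, the involution $\cS$ in \eqref{rev_invo} acts as $(\cS u)(x) = \overline{u(-x)}$, equivalently on Fourier coefficients as $(\cS u)_j = \overline{u_j}$ (this is \eqref{inv-complex}--\eqref{inv-zj}). Writing $u(x) = \sum_{j \in \Z} u_j e^{\im j x}$ and using
$$
(\Op(a(\vf, \cdot, \cdot))u)(x) = \sum_{j \in \Z} a(\vf, x, j)\, u_j\, e^{\im j x},
$$
I would compute
$$
(A(-\vf)\cS u)(x) = \sum_{j \in \Z} a(-\vf, x, j)\, \overline{u_j}\, e^{\im j x},
$$
and, by pulling the complex conjugation inside the sum,
$$
(\cS A(\vf) u)(x) = \overline{\sum_{j \in \Z} a(\vf, -x, j)\, u_j\, e^{-\im j x}} = \sum_{j \in \Z} \overline{a(\vf, -x, j)}\, \overline{u_j}\, e^{\im j x}.
$$

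Imposing the reversibility identity $A(-\vf)\cS = -\cS A(\vf)$ and identifying coefficients of $\overline{u_j}\, e^{\im j x}$ (valid because $u$, hence $(u_j)_{j \in \Z}$, is arbitrary) yields
$$
a(-\vf, x, j) = -\overline{a(\vf, -x, j)}, \qquad \forall\, j \in \Z,\ x \in \T,\ \vf \in \T^\nu.
$$
The substitution $x \mapsto -x$ then gives the first identity in \eqref{pseudo-rev}. The reversibility preserving identity $A(-\vf)\cS = \cS A(\vf)$ produces the same derivation with the opposite sign and yields the second identity. The converse direction is immediate: if the symbol satisfies \eqref{pseudo-rev}, then reading the previous computations backwards shows that $A(-\vf)\cS$ and $\mp \cS A(\vf)$ act identically on every Fourier basis element, hence they coincide as operators.

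There is no substantive obstacle: the statement reduces to tracking how $\cS$ acts on the plane waves $e^{\im j x}$ and how it commutes with the symbol $a(\vf, x, j)$. The only minor point is that the identity is obtained pointwise in $\xi = j \in \Z$, which is all that enters the quantization; the statement \eqref{pseudo-rev} for all $\xi \in \R$ then holds for the smooth extension of the symbol, which can be chosen to respect the linear constraint (and any two extensions agreeing on $\Z$ define the same operator).
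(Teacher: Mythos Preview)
Your proof is correct and follows essentially the same approach as the paper: both arguments work on the Fourier side using that $\cS$ acts on Fourier coefficients by complex conjugation, and your coefficient-matching computation is simply a more explicit version of the paper's use of $a(\vf,x,j)=e^{-\im j x}\Op(a)[e^{\im j x}]$ for the converse direction.
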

\begin{proof}
If the symbols $ a $ satisfies \eqref{pseudo-rev}, then, 
recalling the complex form of the involution $ \cS $ in \eqref{inv-complex}-\eqref{inv-zj},
we deduce that $  \Op(a(\vf, x, \xi)) $ is reversible, respectively anti-reversible. 
 The vice versa follows using that $ a(\vf, x, j) = e^{- \im j x } \Op(a(\vf, x, \xi)) [ e^{ \im j x } ]$. 
\end{proof}

\begin{rem}\label{rem:REV}
Let $ A(\vf) = R(\vf) + T(\vf)$ be a reversible operator. Then
$ A(\vf) = R_+(\vf) + T_+(\vf)  $ 
where both operators 
$$
R_+(\vf)  := \tfrac{1}{2} ( R(\vf) - \cS R(-\vf) \cS) \, , \quad
T_+(\vf)  := \tfrac{1}{2} ( T(\vf) - \cS T(- \vf) \cS) \, , 
$$
are reversible. If $ R(\vphi) = {\rm Op}(r (\vf,x, \xi))$ is pseudodifferential, then
$$
R_+ (\vphi) = {\rm Op}( r_+ (\vf,x, \xi)) \, , \quad 
r_+ (\vf,x, \xi) := \tfrac12 ( r (\vf,x, \xi) - \overline{r (-\vf,-x, \xi)} )
$$
and the pseudodifferential norms of $ {\rm Op}(r)$ and $ {\rm Op}(r_+) $
are equivalent. If  $ T(\vf)$ is a tame operator with a tame constant $ \fM_T (s) $,
then $ T_+ (\vf) $   is a  tame operator as well with an equivalent 
tame constant. 
\end{rem}

\begin{defn} \label{defRAR} {\bf (Reversible and anti-reversible function)}
A function $ u (\vf, \cdot) $ is called 
$$
{\it Reversible} \ \ {\rm if} \ \ \cS u (\vf, \cdot ) = u(-\vf, \cdot ) \  ({\rm cfr.} 
\eqref{rev:soluz});  \quad
{\it Anti-reversible} \  \ {\rm if} 	\  - \cS u (\vf, \cdot ) = u(-\vf, \cdot) \, . 
$$
The same definition holds in the action-angle-normal variables $ (\theta, I, w ) $
 with the involution  $ \vec \cS $ defined in \eqref{rev_aa} and in the $ (z, \bar z )$ complex variables with the involution in \eqref{inv-complex}.  
\end{defn}

A reversibility preserving operator maps reversible, respectively anti-reversible, functions into 
reversible, respectively anti-reversible, functions. 

\begin{lem}\label{lem:REV}
Let $ X $ be a reversible vector field, according to \eqref{revNL},
and $  u(\vf, x)  $ be a  reversible quasi-periodic function.
	Then the linearized operator  $ \di_u X( u(\vf, \cdot) ) $  is reversible, according to
	Definition \ref{rev_defn}.
\end{lem}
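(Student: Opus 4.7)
The plan is to differentiate the reversibility identity \eqref{revNL} for the vector field $X$ at the reversible quasi-periodic function $u(\vf,\cdot)$, and then use the fact that $\cS u(\vf,\cdot) = u(-\vf,\cdot)$ to convert the evaluation point from $\cS u(\vf,\cdot)$ to $u(-\vf,\cdot)$.

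More precisely, the reversibility property \eqref{revNL} of $X$ gives, for every function $v$ in the phase space,
\begin{equation*}
X(\cS v) = -\cS\, X(v)\,.
\end{equation*}
Since $\cS$ is a linear involution, we may take the Fréchet derivative of both sides with respect to $v$ at the point $v = u(\vf,\cdot)$ in an arbitrary direction $h$, obtaining
\begin{equation*}
\di_u X\bigl(\cS u(\vf,\cdot)\bigr)[\cS h] = -\cS\, \di_u X\bigl(u(\vf,\cdot)\bigr)[h]\,,
\end{equation*}
i.e., as operators on $h$,
\begin{equation*}
\di_u X\bigl(\cS u(\vf,\cdot)\bigr)\circ \cS = -\cS\circ \di_u X\bigl(u(\vf,\cdot)\bigr)\,.
\end{equation*}

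Now I invoke the hypothesis that $u$ is reversible in the sense of Definition \ref{defRAR}, namely $\cS u(\vf,\cdot) = u(-\vf,\cdot)$. Substituting this into the left-hand side and setting $\cR(\vf) := \di_u X(u(\vf,\cdot))$, the identity becomes
\begin{equation*}
\cR(-\vf)\circ \cS = -\cS\circ \cR(\vf)\,, \qquad \forall \vf\in\T^\nu\,,
\end{equation*}
which is exactly the reversibility condition of Definition \ref{rev_defn}. There is no real obstacle here; the only point to be careful about is to distinguish the role of $\cS$ acting on $L^2(\T_x,\R^2)$ (which is used in the chain rule) from its action as a map on $\vf$-dependent functions via $(\cS u)(\vf,\cdot) = \cS(u(\vf,\cdot))$, but since $\cS$ acts fiberwise in $\vf$ this is immediate.
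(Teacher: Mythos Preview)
Your proof is correct and follows exactly the same approach as the paper: differentiate the identity $X\circ\cS = -\cS\circ X$ to obtain $(\di_u X)(\cS u)\circ\cS = -\cS\,(\di_u X)(u)$, then substitute $\cS u(\vf,\cdot) = u(-\vf,\cdot)$. The paper's proof is just the one-line version of what you wrote.
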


\begin{proof}
Differentiating  \eqref{revNL}  we get  
$ (\di_u X)( \cS u) \circ \cS = -  \cS (\di_u X)(u) $ and use $ \cS u ( \vf, \cdot) = u (- \vf, \cdot ) $. 
\end{proof}

Finally we note the following lemma.  

\begin{lem}\label{proj_rev}
The projections $\Pi^\intercal_{\S^+, \Sigma}$, $\Pi^\angle_{\S^+, \Sigma}$ 
defined in Section \ref{sec:decomp} 
commute with the involution 
$ \cS $ defined in \eqref{rev_invo}, i.e. 
 are reversibility  preserving.
 The orthogonal projectors $\Pi_{\S}$ and $\Pi_{\S_0}^\bot $ 
 commute with the involution in \eqref{inv-complex}, i.e.  
 are reversibility  preserving. 
\end{lem}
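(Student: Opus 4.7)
The plan is to first verify that each elementary symplectic subspace $V_{n,\sigma}$ introduced in \eqref{Vn+}--\eqref{Vn-} is invariant under the involution $\cS$, and then deduce the statement for the projectors by a standard argument using the direct sum decomposition \eqref{H-split}--\eqref{cHSnorm}.

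First I would check directly that $\cS(V_{n,+}) \subset V_{n,+}$ and $\cS(V_{n,-}) \subset V_{n,-}$. Pick an element of $V_{n,+}$ of the form given in \eqref{Vn+} with parameters $(\alpha_n,\beta_n)\in \R^2$. Using the parity of cosine and sine and the definition $\cS(\eta,\zeta)(x) = (\eta(-x),-\zeta(-x))$, one sees that $\cS$ maps this element to the element of $V_{n,+}$ with new parameters $(\alpha_n,-\beta_n)$; this is precisely the action of $\cS$ on real Fourier coefficients recorded in \eqref{ab-rev}. The same direct computation works for $V_{n,-}$, again producing the map $(\alpha_{-n},\beta_{-n}) \mapsto (\alpha_{-n},-\beta_{-n})$. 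Consequently the tangential subspace $\acca^{\intercal}_{\S^+,\Sigma} = \bigoplus_{a=1}^\nu V_{\bar n_a,\sigma_a}$ and the normal subspace $\acca^{\angle}_{\S^+,\Sigma}$ defined in \eqref{cHStang}--\eqref{cHSnorm} are both $\cS$-invariant, since each summand is.

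Next I would deduce the commutation $\cS\, \Pi^{\intercal}_{\S^+,\Sigma} = \Pi^{\intercal}_{\S^+,\Sigma}\, \cS$ (and similarly with $\angle$). For any $u\in \acca$, write $u = v+w$ with $v := \Pi^{\intercal}_{\S^+,\Sigma} u\in \acca^{\intercal}_{\S^+,\Sigma}$ and $w := \Pi^{\angle}_{\S^+,\Sigma} u\in \acca^{\angle}_{\S^+,\Sigma}$. Applying $\cS$ gives $\cS u = \cS v + \cS w$, where by the invariance just established $\cS v \in \acca^{\intercal}_{\S^+,\Sigma}$ and $\cS w \in \acca^{\angle}_{\S^+,\Sigma}$. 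Since the decomposition \eqref{H-split} is unique, $\Pi^{\intercal}_{\S^+,\Sigma}\, \cS u = \cS v = \cS\, \Pi^{\intercal}_{\S^+,\Sigma} u$, proving the claim; the argument for $\Pi^{\angle}_{\S^+,\Sigma}$ is identical.

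For the complex version, the involution \eqref{inv-complex} reads in Fourier coordinates as $z_j \mapsto \overline{z_j}$ (see \eqref{inv-zj}), which preserves the support of $z$ in $\Z\setminus\{0\}$. Hence the orthogonal projectors $\Pi_{\S}$ onto $\bH_{\S}$ and $\Pi_{\S_0}^\bot$ onto $\bH_{\S_0}^\bot$, which act by restricting the Fourier support to $\S$ and to $\S_0^c$ respectively, commute with the involution. Alternatively, one can recover this from the first part by conjugating through the isomorphism $\cM\cC$ and using the identities \eqref{proiez}, noting that $\cM$ and $\cC$ are reversibility preserving. I do not foresee a real obstacle here: the whole lemma is a bookkeeping consequence of the fact that both the subspace decomposition and the involution are diagonal in the $(n,\sigma)$ (respectively $j$) index.
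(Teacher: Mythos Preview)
Your proof is correct and follows essentially the same approach as the paper: both arguments rest on the observation that $\cS$ leaves each $V_{n,\sigma}$ invariant (acting as $(\alpha_j,\beta_j)\mapsto(\alpha_j,-\beta_j)$, cf.~\eqref{ab-rev}), and then deduce commutation of the projectors from the uniqueness of the direct-sum decomposition. You spell out the details more explicitly than the paper does, and you also treat the complex projectors $\Pi_\S$, $\Pi_{\S_0}^\bot$ directly via \eqref{inv-zj}, which the paper's proof leaves implicit.
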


\begin{proof}
The involution
$ \cS $ defined in \eqref{rev_invo}  maps $ V_{n,\pm}$ into itself, acting as in \eqref{ab-rev}. 
Then, by the decomposition \eqref{deco-real},  
each projector $ \Pi_{V_{n,\sigma}} $ commutes with $ \cS $. 
\end{proof}

\subsection{Momentum preserving operators}\label{subsec:momentum}

The following definition is crucial in the construction of traveling waves. 

\begin{defn}	\label{def:mom.pres}
{\bf (Momentum preserving)}	
	A  $ \vf $-dependent family of linear operators 
	$A(\vf) $, $ \vf \in \T^\nu $,  is  {\em momentum preserving} if
	\begin{equation}\label{eq:mp_A_tw}
	A(\vf - \ora \jmath \vs )  \circ \tau_\vs = \tau_\vs \circ A(\vf ) \,  , \quad 
	\forall \,\vf \in \T^\nu \, , \ \vs \in \R \,  ,
	\end{equation}
	where the translation operator $\tau_\vs$ is defined in \eqref{trans}.
	A linear matrix operator $\bA(\vf ) $ of the form \eqref{real_matrix} or \eqref{C_transformed} is  
	 {\em momentum preserving} if each of its components is momentum preserving.
\end{defn}

Momentum preserving operators are closed under several operations.
\begin{lem}\label{lem:mom_prop}
	Let $A(\vf), B(\vf)$ be  momentum preserving operators. Then:
	\begin{itemize}
		\item[(i)] {\rm (Composition)}: $A (\vf) \circ B (\vf) $ is a momentum preserving operator. 
		\item[(ii)] {\rm (Adjoint)}: the adjoint $ (A(\vf))^*$ is momentum preserving.
		\item[(iii)] {\rm (Inversion)}: If $A(\vf)$ is invertible then $A(\vf)^{-1}$ is momentum preserving.
		\item[(iv)] {\rm (Flow)}: Assume that 
		\begin{equation}\label{same-CP}
		\partial_{t} \Phi^t (\vf) = A (\vf) \Phi^t (\vf) \, , \quad 
		\Phi^0 (\vf) = {\rm Id} \, , 
	\end{equation}
		has a unique  propagator $\Phi^t (\vf) $ for any $ t\in[0,1] $. 
		Then $\Phi^t ( \vf ) $ is  momentum preserving.
	\end{itemize}
\end{lem}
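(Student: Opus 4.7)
All four items follow by elementary algebraic manipulation of the defining identity
\begin{equation}\label{eq:mp_def_sketch}
A(\vf-\ora\jmath\vs)\circ \tau_\vs = \tau_\vs\circ A(\vf)\,,\qquad \forall\,\vf\in\T^\nu,\ \vs\in\R\,,
\end{equation}
together with the obvious group property $\tau_{\vs_1}\tau_{\vs_2} = \tau_{\vs_1+\vs_2}$ and the relation $\tau_\vs^{*} = \tau_{-\vs}$.

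For item $(i)$ the plan is a two-step substitution: applying \eqref{eq:mp_def_sketch} first to $B$ and then to $A$ gives
\[
(AB)(\vf-\ora\jmath\vs)\tau_\vs = A(\vf-\ora\jmath\vs)\bigl(B(\vf-\ora\jmath\vs)\tau_\vs\bigr) = A(\vf-\ora\jmath\vs)\tau_\vs B(\vf) = \tau_\vs A(\vf) B(\vf)\,,
\]
which is the momentum preserving identity for $A\circ B$. For item $(ii)$ I would take the $L^2$-adjoint of \eqref{eq:mp_def_sketch}, obtaining $\tau_{-\vs}\,A(\vf-\ora\jmath\vs)^{*} = A(\vf)^{*}\tau_{-\vs}$; after the relabeling $\vs\mapsto -\vs$ and $\vf\mapsto \vf-\ora\jmath\vs$ this becomes $A(\vf-\ora\jmath\vs)^{*}\tau_\vs = \tau_\vs A(\vf)^{*}$, proving the claim. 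For item $(iii)$ I would simply left-multiply \eqref{eq:mp_def_sketch} by $A(\vf-\ora\jmath\vs)^{-1}$ and right-multiply by $A(\vf)^{-1}$ to get $\tau_\vs A(\vf)^{-1} = A(\vf-\ora\jmath\vs)^{-1}\tau_\vs$.

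Item $(iv)$ is the only one requiring a slightly more structured argument, based on the uniqueness of solutions of \eqref{same-CP}. For fixed $\vs\in\R$, I would introduce
\[
\Psi^t(\vf) := \tau_{-\vs}\circ \Phi^t(\vf-\ora\jmath\vs)\circ \tau_\vs
\]
and verify that it solves the same Cauchy problem as $\Phi^t(\vf)$. Indeed $\Psi^0(\vf) = \tau_{-\vs}\tau_\vs = \mathrm{Id}$, and
\[
\partial_t \Psi^t(\vf) = \tau_{-\vs}\, A(\vf-\ora\jmath\vs)\, \Phi^t(\vf-\ora\jmath\vs)\,\tau_\vs
= A(\vf)\, \tau_{-\vs}\, \Phi^t(\vf-\ora\jmath\vs)\,\tau_\vs
= A(\vf)\,\Psi^t(\vf)\,,
\]
where in the second step I use that $A$ is momentum preserving in the equivalent form $\tau_{-\vs}\, A(\vf-\ora\jmath\vs) = A(\vf)\,\tau_{-\vs}$ obtained from \eqref{eq:mp_def_sketch}. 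By uniqueness $\Psi^t(\vf) = \Phi^t(\vf)$, which is exactly \eqref{eq:mp_A_tw} for $\Phi^t$.

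I expect no serious obstacle: the only delicate point is $(iv)$, where one must be careful that the conjugated flow $\Psi^t$ solves the \emph{same} linear ODE (this is why the translation $\tau_\vs$ acts trivially on the time variable $t$ and why the $\ora\jmath$-shift in $\vf$ is precisely the one that cancels against the translation in $x$ through the momentum preservation of $A$). Once this is noticed, uniqueness of the propagator closes the argument.
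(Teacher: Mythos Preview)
Your proposal is correct and follows essentially the same approach as the paper: items $(i)$--$(iii)$ are obtained by direct manipulation of the defining identity together with $\tau_\vs^{*}=\tau_{-\vs}=\tau_\vs^{-1}$, and item $(iv)$ by showing that $\tau_\vs^{-1}\Phi^t(\vf-\ora\jmath\vs)\tau_\vs$ solves the same Cauchy problem \eqref{same-CP} and invoking uniqueness. Your write-up is simply more explicit than the paper's one-line justifications.
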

\begin{proof}
 Item $(i)$ follows directly by \eqref{eq:mp_A_tw}. Item $(ii)$, respectively
 $ (iii)$, follows by taking the adjoint, respectively the inverse,
 of \eqref{eq:mp_A_tw} and using that
 $ \tau_\vs^* = \tau_{-\vs} = \tau_\vs^{-1} $.  
	Finally,  item $(iv)$ holds because  
	$ \tau_\vs^{-1} \Phi^t ( \vf - \vec \jmath \vs ) \tau_\vs $ solves the same Cauchy  in 
	\eqref{same-CP}. 
\end{proof}

We shall say that a linear operator of the form $  \omega\cdot \pa_\vf + A(\vf)$ is momentum preserving if $A(\vf)$ is momentum preserving.  
In particular, conjugating a momentum preserving operator 
$ \omega\cdot\pa_\vf+A(\vf) $ 
by a family of invertible linear momentum preserving maps $\Phi(\vf)$, 
we obtain  the transformed operator 
$ \omega\cdot\pa_\vf + A_+(\vf) $ in
\eqref{trasf-op} which  is momentum preserving.

\begin{lem}
\label{A.mom.cons}
Let $A(\vf)$ be a momentum preserving linear operator
 and  $u$ a quasi-periodic traveling wave, according to Definition \ref{QPTW}.
 Then $A(\vf) u $ is a quasi-periodic traveling wave.
\end{lem}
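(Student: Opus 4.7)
The plan is to use the characterization \eqref{chartj1} of quasi-periodic traveling waves, namely that $u$ is a quasi-periodic traveling wave if and only if $u(\vf - \ora{\jmath}\vs, \cdot) = \tau_\vs u(\vf, \cdot)$ for every $\vs \in \R$. The strategy is a one-line computation: combine this identity for $u$ with the momentum preserving identity \eqref{eq:mp_A_tw} for $A(\vf)$, and read off that $v(\vf, \cdot) := A(\vf) u(\vf, \cdot)$ satisfies the same translation–shift relation.

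More concretely, I would fix $\vs \in \R$ and compute
\begin{equation*}
v(\vf - \ora{\jmath}\vs, \cdot) \;=\; A(\vf - \ora{\jmath}\vs)\, u(\vf - \ora{\jmath}\vs, \cdot) \;=\; A(\vf - \ora{\jmath}\vs)\,\bigl(\tau_\vs u(\vf, \cdot)\bigr),
\end{equation*}
where in the last step I used that $u$ is a quasi-periodic traveling wave, i.e. \eqref{chartj1}. Then by the momentum preserving property \eqref{eq:mp_A_tw} applied to $A$,
\begin{equation*}
A(\vf - \ora{\jmath}\vs)\,\tau_\vs \;=\; \tau_\vs\, A(\vf),
\end{equation*}
so the right-hand side equals $\tau_\vs \bigl( A(\vf) u(\vf, \cdot)\bigr) = \tau_\vs v(\vf, \cdot)$. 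Hence $v(\vf - \ora{\jmath}\vs, \cdot) = \tau_\vs v(\vf, \cdot)$ for every $\vs$, which by \eqref{chartj1} (equivalently by Definition \ref{QPTW}) means that $v = Au$ is a quasi-periodic traveling wave, with profile $V(\psi)$ such that $v(\vf,x) = V(\vf - \ora{\jmath}x)$.

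There is no real obstacle here: the statement is essentially the content of the definitions. The only minor care needed is in the interpretation of the notations, namely that $A(\vf)$ acts on functions of $x$ at fixed $\vf$ (see \eqref{linear_A}) and that $\tau_\vs$ acts on the $x$ variable (see \eqref{trans}); once these are fixed, the computation above is unambiguous and the conclusion follows. A short remark at the end could also point out that if in addition $u$ takes values in a Sobolev space and $A(\vf)$ maps Sobolev into Sobolev (with possible loss of derivatives, as is the case for all operators considered in the paper), then $V$ inherits the corresponding Sobolev regularity from $U$ and from the tame estimates for $A$, so that $Au$ is a quasi-periodic traveling wave in exactly the functional sense used throughout the paper.
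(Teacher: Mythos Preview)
Your proof is correct and takes exactly the same approach as the paper, which simply notes that the result follows from Definition \ref{def:mom.pres} and the characterization \eqref{chartj1} of traveling waves. You have just written out explicitly the one-line computation that the paper leaves implicit.
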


\begin{proof}
It follows by Definition \ref{def:mom.pres} and  by the characterization of traveling waves in \eqref{chartj1}.
\end{proof}

\begin{lem}\label{lem:MP}
Let $ X $ be a vector field translation invariant, according to \eqref{eq:mom_pres}. 
Let $ u   $ be a quasi-periodic traveling wave. 
	Then the linearized operator  $ \di_u X( u(\vf, \cdot) ) $  is momentum preserving.
\end{lem}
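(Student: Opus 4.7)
The plan is to derive the momentum preserving identity \eqref{eq:mp_A_tw} for $A(\vf) := d_u X(u(\vf,\cdot))$ directly from the translation invariance \eqref{eq:mom_pres} of $X$, by differentiating with respect to the base point. This mirrors the argument of Lemma \ref{lem:REV}, but with the involution $\cS$ replaced by the one-parameter family $\tau_\vs$.

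First I would rewrite the translation invariance \eqref{eq:mom_pres} pointwise as $X(\tau_\vs v) = \tau_\vs X(v)$, valid for every $v$ in the phase space and every $\vs \in \R$. Since $\tau_\vs$ is a \emph{linear} bounded operator, differentiating both sides with respect to $v$ in an arbitrary direction $h$ gives, by the chain rule,
\begin{equation*}
d_v X(\tau_\vs v)[\tau_\vs h] \; = \; \tau_\vs \, d_v X(v)[h] \, , \qquad \forall\, v,h\, , \ \forall\, \vs\in\R \, .
\end{equation*}

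Next I would specialize this identity at $v = u(\vf,\cdot)$, where $u$ is a quasi-periodic traveling wave. By the characterization \eqref{chartj1} (equivalently, by Definition \ref{QPTW} and a direct calculation $\tau_\vs u(\vf,x) = U(\vf - \vec\jmath(x+\vs)) = u(\vf - \vec\jmath \vs, x)$), one has $\tau_\vs u(\vf,\cdot) = u(\vf - \vec\jmath \vs,\cdot)$. Substituting this into the displayed equation yields, for every $h$ and every $\vs\in\R$,
\begin{equation*}
d_u X\bigl( u(\vf - \vec\jmath \vs,\cdot)\bigr) \bigl[\tau_\vs h\bigr] \; = \; \tau_\vs \, d_u X\bigl(u(\vf,\cdot)\bigr)[h] \, ,
\end{equation*}
which is precisely $A(\vf - \vec\jmath \vs)\circ \tau_\vs = \tau_\vs \circ A(\vf)$, i.e.\ the momentum preserving property of Definition \ref{def:mom.pres}.

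There is no real obstacle: the only subtle point is making sure that the chain rule is applied with $\tau_\vs$ acting as a linear operator on both sides (so that its differential is itself), and that the identification $\tau_\vs u(\vf,\cdot) = u(\vf - \vec\jmath \vs,\cdot)$ from \eqref{chartj1} is invoked correctly to convert a translation in $x$ on the argument of $u$ into a shift of the $\vf$-variable by $\vec\jmath \vs$. If $X$ has a matrix structure as in \eqref{real_matrix} or \eqref{C_transformed}, the same argument applies componentwise, yielding that each entry of $d_u X(u(\vf,\cdot))$ is momentum preserving, as required by the last sentence of Definition \ref{def:mom.pres}.
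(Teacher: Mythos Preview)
Your proof is correct and follows essentially the same approach as the paper: differentiate the translation invariance identity $X(\tau_\vs v) = \tau_\vs X(v)$ to obtain $(d_u X)(\tau_\vs u)\circ \tau_\vs = \tau_\vs \circ (d_u X)(u)$, and then invoke the traveling wave characterization \eqref{chartj1} to replace $\tau_\vs u(\vf,\cdot)$ by $u(\vf - \vec\jmath\vs,\cdot)$. The paper's proof is simply a two-line compressed version of what you wrote.
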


\begin{proof}
Differentiating  \eqref{eq:mom_pres}  we get  
$ (\di_u X)(\tau_\vs u) \circ \tau_\vs = \tau_\vs (\di_u X)(u) $, 
$ \vs \in \R $. 
Then, apply \eqref{chartj1}. 
\end{proof}

We now provide a characterization of 
the momentum preserving property in Fourier space.

\begin{lem}
	\label{lem:mom_pres}
	Let $ \vf $-dependent family of operators $ A(\vf) $, $ \vf \in \T^\nu $, 
	is momentum preserving  if and only if 
	the matrix elements of $A(\vf)$, defined by \eqref{A_expans},  fulfill
		\begin{equation}\label{momentum}
		A_j^{j'}(\ell ) \neq 0 \quad \Rightarrow  \quad \ora{\jmath}\cdot \ell + j-j' = 0 \, , \quad
		\forall\, \ell \in\Z^\nu , \ \ j,j'\in\Z \, . 
		\end{equation}
\end{lem}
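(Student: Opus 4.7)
The plan is to test the operator identity \eqref{eq:mp_A_tw} directly against the exponential basis $\{e^{\im j' x}\}_{j' \in \Z}$ and to read off the equivalence of the functional condition with the arithmetic selection rule \eqref{momentum} by comparing Fourier coefficients in $\vf$ and exponents in $\vs$.

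More concretely, I first observe that by \eqref{A_expans} the matrix elements satisfy
\begin{equation*}
A(\vf)\,e^{\im j' x} \;=\; \sum_{j \in \Z} \Bigl(\sum_{\ell \in \Z^\nu} A_j^{j'}(\ell)\,e^{\im \ell \cdot \vf}\Bigr) e^{\im j x}.
\end{equation*}
Using this, I compute both sides of $A(\vf - \vec{\jmath}\,\vs)\circ \tau_\vs = \tau_\vs \circ A(\vf)$ applied to $e^{\im j' x}$. For the left-hand side, since $\tau_\vs e^{\im j' x} = e^{\im j' \vs} e^{\im j' x}$, I obtain
\begin{equation*}
A(\vf - \vec{\jmath}\,\vs)\,\tau_\vs\, e^{\im j' x} \;=\; \sum_{j,\ell} A_j^{j'}(\ell)\,e^{\im\,\ell \cdot \vf}\,e^{\im (j' - \vec{\jmath}\cdot\ell)\vs}\,e^{\im j x},
\end{equation*}
while for the right-hand side $\tau_\vs A(\vf)\,e^{\im j' x} = \sum_{j,\ell} A_j^{j'}(\ell)\,e^{\im\,\ell \cdot \vf}\,e^{\im j \vs}\,e^{\im j x}$.

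Equating these two quasi-periodic functions of $(\vf,x,\vs)$ and invoking the uniqueness of Fourier expansions in $\vf \in \T^\nu$ and $x \in \T$ yields, for each triple $(\ell, j, j')$, the identity
\begin{equation*}
A_j^{j'}(\ell)\, e^{\im(j' - \vec{\jmath}\cdot\ell)\vs} \;=\; A_j^{j'}(\ell)\, e^{\im j \vs}, \qquad \forall\, \vs \in \R.
\end{equation*}
Evaluating at two distinct values of $\vs$ (or simply differentiating at $\vs = 0$) shows that this holds for every $\vs$ if and only if either $A_j^{j'}(\ell) = 0$ or $j' - \vec{\jmath}\cdot\ell = j$, i.e.\ $\vec{\jmath}\cdot\ell + j - j' = 0$. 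This proves the forward implication. Conversely, if \eqref{momentum} holds, then on the support of $A_j^{j'}(\ell)$ the two exponentials $e^{\im(j'-\vec{\jmath}\cdot\ell)\vs}$ and $e^{\im j\vs}$ coincide identically in $\vs$, so the two sides of \eqref{eq:mp_A_tw} agree on each basis vector $e^{\im j' x}$ and, by linearity and density, on all of $L^2(\T_x)$. The argument is a direct Fourier bookkeeping, so there is no real obstacle; the only care needed is distinguishing the frequency $\vs$ of the one-parameter translation group from the angle $\vf$ so that no cross-term is lost when reading off the coefficients.
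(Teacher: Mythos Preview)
Your proof is correct and follows essentially the same approach as the paper: both compute the two sides of the identity \eqref{eq:mp_A_tw} in Fourier, compare coefficients, and read off the selection rule. The only cosmetic difference is that you test on basis vectors $e^{\im j' x}$ while the paper applies both sides to a general $u$, which amounts to the same computation after linearity.
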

\begin{proof}
By \eqref{A_expans} we have, for any function $ u (x)$, 
$$
\tau_\vs ( A (\vf) u )
 = \sum_{j,j'\in\Z}\sum_{\ell\in\Z^\nu} 
 A_j^{j'}(\ell) e^{\im j \vs} u_{j'} e^{\im (\ell\cdot \vf + jx )} 
$$
and 
$$
A (\vf - \vec \jmath \vs)  [\tau_\vs u] 
 = \sum_{j,j'\in\Z}\sum_{\ell \in\Z^\nu} 
 A_j^{j'}(\ell) e^{- \im \ell \cdot \vec \jmath \vs } e^{\im j' \vs} u_{j'} 
 e^{\im (\ell\cdot \vf + jx )}  \, .
$$
Therefore \eqref{eq:mp_A_tw} is equivalent to \eqref{momentum}. 
\end{proof}

We  characterize the symbol of a pseudodifferential operator 
which is  momentum preserving:
\begin{lem}\label{lem:mom_pseudo}
	A pseudodifferential operator 
	$A(\vf, x,D)= \Op(a(\vf, x, \xi))$ is momentum preserving if and only if its symbol
	satisfies
\begin{equation}\label{pseudo-MM}
	a(\vf - \ora{\jmath}\vs, x, \xi) =  a(\vf, x+\vs, \xi) \, , \quad \forall\, \vs \in \R \, . 
\end{equation}
\end{lem}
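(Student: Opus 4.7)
The plan is to argue by a direct computation on Fourier modes, mirroring the style of the proof of Lemma \ref{lem:rev-pse} and using in spirit the Fourier characterization provided by Lemma \ref{lem:mom_pres}. The key observation is that both sides of the momentum preserving identity \eqref{eq:mp_A_tw} are completely transparent on the exponentials $e^{\im j' x}$, so testing against such functions already forces the symbolic identity \eqref{pseudo-MM}.

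For the implication ``\eqref{pseudo-MM} $\Rightarrow$ momentum preserving'', I would take any $u(x) = \sum_{j \in \Z} \hat u_j e^{\im j x}$ and compute
$$
[\tau_\vs \Op(a)(\vf) u](x) = \sum_{j \in \Z} a(\vf, x+\vs, j)\, e^{\im j \vs}\, \hat u_j\, e^{\im j x},
$$
$$
[\Op(a)(\vf - \ora{\jmath}\vs)\, \tau_\vs u](x) = \sum_{j \in \Z} a(\vf - \ora{\jmath}\vs, x, j)\, e^{\im j \vs}\, \hat u_j\, e^{\im j x},
$$
using $\widehat{\tau_\vs u}_j = e^{\im j \vs} \hat u_j$. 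Applying \eqref{pseudo-MM} at each $\xi = j \in \Z$ makes the two series coincide, giving \eqref{eq:mp_A_tw}.

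For the converse, I would test \eqref{eq:mp_A_tw} on the single exponential $u(x) = e^{\im j' x}$, $j' \in \Z$. Since $\tau_\vs u = e^{\im j' \vs} e^{\im j' x}$, the left-hand side equals $a(\vf - \ora{\jmath}\vs, x, j')\, e^{\im j' \vs}\, e^{\im j' x}$, while the right-hand side equals $a(\vf, x+\vs, j')\, e^{\im j' \vs}\, e^{\im j' x}$. Cancelling the nonvanishing factor $e^{\im j'(\vs + x)}$ yields $a(\vf - \ora{\jmath}\vs, x, j') = a(\vf, x+\vs, j')$ for every $j' \in \Z$, which is precisely \eqref{pseudo-MM} on the integer lattice in $\xi$; this is sufficient since the pseudodifferential operator depends only on the restriction of the symbol to $\xi \in \Z$ (compare the use of $a(\vf, x, j) = e^{-\im j x} \Op(a)[e^{\im j x}]$ in the proof of Lemma \ref{lem:rev-pse}).

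There is no serious obstacle: the result is a direct dictionary between the Fourier formulation of Lemma \ref{lem:mom_pres} and the symbol formulation. Indeed, expanding $a(\vf, x, \xi) = \sum_{\ell, k} \hat a_{\ell, k}(\xi)\, e^{\im (\ell \cdot \vf + k x)}$ one reads the matrix elements as $A_j^{j'}(\ell) = \hat a_{\ell, j-j'}(j')$, and Lemma \ref{lem:mom_pres} then forces $\hat a_{\ell, k}(j') = 0$ whenever $k + \ora{\jmath} \cdot \ell \neq 0$; substituting into the formulas for $a(\vf - \ora{\jmath}\vs, x, \xi)$ and $a(\vf, x+\vs, \xi)$ produces the same series, recovering \eqref{pseudo-MM}. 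I would present the direct exponential-test argument above as it is shorter and self-contained.
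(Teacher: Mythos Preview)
Your proposal is correct and takes essentially the same approach as the paper: the forward direction in the paper is phrased as the operator identity $\tau_\vs \circ \Op(a(\vf,x,\xi)) = \Op(a(\vf,x+\vs,\xi))\circ \tau_\vs$ followed by \eqref{pseudo-MM}, which is exactly your Fourier-series computation written at the operator level, and the converse in the paper is precisely your exponential test, stated as $a(\vf,x,\xi) = e^{-\im \xi x} A(\vf,x,D)[e^{\im \xi x}]$. Your additional remark reducing the claim to Lemma \ref{lem:mom_pres} via $A_j^{j'}(\ell) = \hat a_{\ell,j-j'}(j')$ is a valid alternative but is not used in the paper's proof.
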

\begin{proof}
If the symbol $ a $ satisfies \eqref{pseudo-MM}, then, for all $ \vs \in \R  $, 
$$
\tau_\vs \circ \Op(a(\vf, x, \xi)) = \Op( a(\vf, x+\vs, \xi)  )\circ  \tau_\vs = 
 \Op( a(\vf - \ora{\jmath}\vs, x, \xi ) )\circ  \tau_\vs \, ,
 $$
 proving that $ \tau_\vs \circ A(\vf, x,D) = A(\vf - \ora{\jmath}\vs , x,D) \circ \tau_\vs $. 
 The vice versa follows using that $ a(\vf, x, \xi) = e^{- \im \xi x } A(\vf, x,D) [ e^{ \im \xi x } ]$. 
\end{proof}

Note that, if a symbol $ a(\vf, x, \xi) $ satisfies \eqref{pseudo-MM}, then 
$ (\omega \cdot \pa_\vf a)(\vf, x, \xi) $
satisfies \eqref{pseudo-MM} as well. 

\begin{lem}\label{lem:dMP}
If $ \beta (\vf, x ) $ is a quasi-periodic traveling wave,  then 
the operator $ \cB (\vf) $ defined in \eqref{defB-diffeo} is momentum preserving.
\end{lem}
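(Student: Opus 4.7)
The plan is to verify the momentum preserving identity \eqref{eq:mp_A_tw} for $\cB$ by direct computation, using only the characterization \eqref{chartj1} of quasi-periodic traveling waves.

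First I would unpack the two sides of \eqref{eq:mp_A_tw} as operators acting on a scalar function $u = u(x)$. By the definition \eqref{defB-diffeo} of $\cB$ and \eqref{trans} of $\tau_\vs$, we have, for any $\vf \in \T^\nu$, $\vs \in \R$,
\begin{equation*}
\bigl( \tau_\vs \circ \cB(\vf) \, u \bigr)(x)
= \bigl( \cB(\vf) u \bigr)(x+\vs)
= u\bigl( x + \vs + \beta(\vf, x+\vs) \bigr),
\end{equation*}
and, on the other hand,
\begin{equation*}
\bigl( \cB(\vf - \ora{\jmath}\vs) \circ \tau_\vs u \bigr)(x)
= (\tau_\vs u)\bigl( x + \beta(\vf - \ora{\jmath}\vs, x) \bigr)
= u\bigl( x + \vs + \beta(\vf - \ora{\jmath}\vs, x) \bigr).
\end{equation*}
Thus the identity \eqref{eq:mp_A_tw} holds for $\cB$ if and only if
\begin{equation*}
\beta(\vf, x+\vs) = \beta(\vf - \ora{\jmath}\vs, x), \qquad \forall\, \vf \in \T^\nu,\ x \in \T,\ \vs \in \R .
\end{equation*}

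The second step is to observe that this last identity is exactly the characterization \eqref{chartj1} of quasi-periodic traveling waves applied to $\beta$. Indeed, by Definition \ref{QPTW}, $\beta(\vf, x) = B(\vf - \ora{\jmath} x)$ for some $B \colon \T^\nu \to \C$, and then
\begin{equation*}
\beta(\vf - \ora{\jmath}\vs, x) = B(\vf - \ora{\jmath}\vs - \ora{\jmath} x) = B\bigl( \vf - \ora{\jmath} (x+\vs) \bigr) = \beta(\vf, x+\vs).
\end{equation*}
This yields \eqref{eq:mp_A_tw} and hence the lemma. There is no real obstacle here; the statement is a straightforward consequence of the definitions, and the only point to keep in mind is that $\tau_\vs$ acts on the $x$-variable alone while the shift $\vf \mapsto \vf - \ora{\jmath}\vs$ is absorbed into the $\vf$-dependence of $\beta$ precisely thanks to the traveling wave structure.
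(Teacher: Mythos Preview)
Your proof is correct and follows essentially the same approach as the paper: both compute the two sides of \eqref{eq:mp_A_tw} for $\cB$ and reduce the identity to $\beta(\vf - \ora{\jmath}\vs, x) = \beta(\vf, x+\vs)$, which is exactly the traveling wave characterization \eqref{chartj1} for $\beta$. The paper just condenses this into a single chain of equalities.
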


\begin{proof}
We have 
$
\cB (\vf - \vec \jmath \vs) [\tau_\vs  u] = 
u(x+ \beta (\vf - \vec \jmath \vs,x) + \vs) = u(x+ \vs + \beta (\vf,x+ \vs)) =
\tau_\vs \big( \cB (\vf )   u\big)$. 
\end{proof}

 We also note the following lemma. 

\begin{lem}\label{lem:proj.momentum}
The symplectic projections $ \Pi^\intercal_{\S^+, \Sigma}$, $ \Pi^\angle_{\S^+, \Sigma}$,
 the $ L^2 $-projections  $ \Pi^{L^2}_\angle  $
and  $\Pi_{\S}$, $\Pi_{\S_0}^\bot $ 
defined in Section \ref{sec:decomp} 
commute with the translation operators 
$ \tau_\vs $ defined in \eqref{trans}, i.e. are momentum preserving.
\end{lem}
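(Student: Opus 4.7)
The plan is to mirror the proof of Lemma \ref{proj_rev}: reduce everything to the observation that $\tau_\vs$ acts block-diagonally in the Wahl\'en--Fourier decomposition of $\acca$, and then deduce the commutation with the various projectors from the explicit formulas already introduced.

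First I would verify that each 2-dimensional symplectic subspace $V_{n,\sigma}$ ($n \in \N$, $\sigma \in \{\pm\}$) is invariant under $\tau_\vs$ for every $\vs \in \R$. This is essentially a reading of formula \eqref{ab-tras}: in the real coordinates $(\alpha_j,\beta_j)$, with $j=\sigma n$, $\tau_\vs$ acts on $V_{n,\sigma}$ as the planar rotation of angle $j\vs$, so $\tau_\vs V_{n,\sigma}\subseteq V_{n,\sigma}$. Combined with the decomposition \eqref{deco-real} together with the explicit formulas \eqref{proj-Vj} for the components of $\Pi_{V_{n,\sigma}}$, this yields $\Pi_{V_{n,\sigma}}\tau_\vs=\tau_\vs\Pi_{V_{n,\sigma}}$ for every $n,\sigma$. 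Since by \eqref{cHStang}--\eqref{cHSnorm} the symplectic projectors $\Pi^\intercal_{\S^+,\Sigma}$ and $\Pi^\angle_{\S^+,\Sigma}$ are (possibly infinite but $L^2$-convergent) sums of such $\Pi_{V_{n,\sigma}}$, they commute with $\tau_\vs$ as well, which gives the first half of the lemma.

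Next I would handle $\Pi^{L^2}_\angle$: the operator $\tau_\vs$ is an $L^2$-isometry and, by the previous step, it leaves the closed subspace $\acca^\angle_{\S^+,\Sigma}$ invariant; consequently it also leaves its $L^2$-orthogonal complement invariant, so the $L^2$-orthogonal projector onto $\acca^\angle_{\S^+,\Sigma}$ commutes with $\tau_\vs$. For $\Pi_\S$ and $\Pi_{\S_0}^\perp$ in the complex coordinates, the argument is even more direct: $\tau_\vs e^{\im j x} = e^{\im j \vs}\,e^{\im j x}$, so each Fourier mode is an eigenvector of $\tau_\vs$; hence $\bH_\S$ and $\bH_{\S_0}^\perp$, being $L^2$-closures of spans of Fourier modes indexed by $\S$ and $\S_0^c$, are invariant under $\tau_\vs$, and the corresponding orthogonal projectors commute with it.

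No real obstacle is expected, as the entire statement reduces to the diagonal/block-diagonal action of $\tau_\vs$ in a Fourier basis adapted to the decomposition of $\acca$; the only minor care point is to use Lemma \ref{Pi_adj_1} (or equivalently the fact that $\acca^\intercal$ and $\acca^\angle$ are $L^2$-orthogonal in the Wahl\'en basis, as is apparent from \eqref{Vn+}--\eqref{Vn-}) when passing from symplectic to $L^2$-orthogonal projections in the argument for $\Pi^{L^2}_\angle$.
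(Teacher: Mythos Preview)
Your proposal is correct and follows essentially the same approach as the paper: both arguments rest on the fact, read off from \eqref{ab-tras}, that $\tau_\vs$ leaves each $V_{n,\sigma}$ invariant, and then deduce the commutation with the various projectors from the block-diagonal action and the fact that $\tau_\vs$ is an $L^2$-isometry. One small caution: your parenthetical claim that $\acca^\intercal_{\S^+,\Sigma}$ and $\acca^\angle_{\S^+,\Sigma}$ are $L^2$-orthogonal is not correct in general (for a given $n$, the spaces $V_{n,+}$ and $V_{n,-}$ in \eqref{Vn+}--\eqref{Vn-} are not $L^2$-orthogonal unless $M_n=1$), but this does not affect your argument for $\Pi^{L^2}_\angle$, which only uses that $\tau_\vs$ is an isometry leaving $\acca^\angle_{\S^+,\Sigma}$ invariant.
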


\begin{proof}
Recall  that the translation 
$ \tau_\vs $  maps $ V_{n,\pm}$ into itself, acting as in \eqref{ab-tras}. 
Consider the $ L^2 $-orthogonal decomposition 
$ \acca = \acca_\angle  \oplus \acca_\angle^\bot  $, setting
$  \acca_\angle  := \acca_{\S^+,\Sigma}^\angle $ for brevity: 
$$
u = \Pi_{\acca_\angle}^{L^2} u + \Pi_{\acca_\angle^\bot}^{L^2} u \, , \quad 
\Pi_{\acca_\angle}^{L^2} u \in  \acca_\angle \, , \quad \Pi_{\acca_\angle^\bot}^{L^2} u 
\in  \acca_\angle^\bot  \, .
$$
Applying $\tau_\vs $ we get 
$ \tau_\vs u = \tau_\vs  \Pi_{\acca_\angle}^{L^2} u + \tau_\vs  \Pi_{\acca_\angle^\bot}^{L^2} u $. 
As shown above, 
$ \tau_\vs $  maps $  \acca_\angle $ into itself for all $ \vs $. Thus also the $ L^2 $-orthogonal subspace $  \acca_\angle^\bot  $ is  invariant under the action of $ \tau_\vs $ 
and we conclude, by the uniqueness of the orthogonal decomposition, 
 that 
$ \tau_\vs  \Pi_{\acca_\angle}^{L^2} u = \Pi_{\acca_\angle}^{L^2}  \tau_\vs  u $, 
$ \tau_\vs  \Pi_{\acca_\angle^\bot}^{L^2} u = \Pi_{\acca_\angle^\bot}^{L^2}  \tau_\vs  u $.  
\end{proof}

The next lemma concerns 
the Dirichlet-Neumann operator. 
\begin{lem}\label{mom_dirichlet}
 The Dirichlet-Neumann operator  $G(\bar\eta,\tth)$, evaluated at 
 a quasi-periodic  traveling wave  $\bar \eta  (\vf, x)  $,  is momentum preserving.
\end{lem}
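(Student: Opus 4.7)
The plan is to combine the translation covariance \eqref{DN:trans} of the Dirichlet-Neumann operator with the characterization \eqref{chartj1} of quasi-periodic traveling waves. Unlike a generic $\vf$-dependent linear operator, here the $\vf$-dependence of $G(\bar\eta(\vf,\cdot))$ enters only through the traveling wave $\bar\eta$ itself, so the momentum-preservation identity \eqref{eq:mp_A_tw} should follow essentially for free.

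First I would unpack the statement. I need to verify that, for every $\vf \in \T^\nu$ and every $\vs \in \R$,
\begin{equation*}
G\bigl(\bar\eta(\vf - \ora{\jmath}\vs, \cdot)\bigr) \circ \tau_\vs = \tau_\vs \circ G\bigl(\bar\eta(\vf, \cdot)\bigr) \, .
\end{equation*}
Since $\bar\eta$ is a quasi-periodic traveling wave, the characterization \eqref{chartj1} gives $\bar\eta(\vf - \ora{\jmath}\vs, \cdot) = \tau_\vs \bar\eta(\vf, \cdot)$. Substituting this into the left-hand side and then applying \eqref{DN:trans}, which asserts $\tau_\vs \circ G(\eta) = G(\tau_\vs \eta) \circ \tau_\vs$ for any profile $\eta$, yields
\begin{equation*}
G\bigl(\bar\eta(\vf - \ora{\jmath}\vs, \cdot)\bigr) \circ \tau_\vs = G\bigl(\tau_\vs \bar\eta(\vf, \cdot)\bigr) \circ \tau_\vs = \tau_\vs \circ G\bigl(\bar\eta(\vf, \cdot)\bigr) \, ,
\end{equation*}
which is precisely the momentum-preserving property in Definition \ref{def:mom.pres}.

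There is no real obstacle: the only subtle point is the conceptual one of distinguishing the two sources of $\vf$-dependence, namely the explicit shift $\vf \mapsto \vf - \ora{\jmath}\vs$ on the left of \eqref{eq:mp_A_tw} and the implicit dependence through $\bar\eta(\vf, \cdot)$. The traveling-wave structure \eqref{chartj1} is exactly what converts the explicit shift into a spatial translation of the profile, and then \eqref{DN:trans} (which reflects the invariance of the fluid domain under horizontal translations, since the bottom is flat) completes the identification. I would end by remarking that the same argument, applied to any $\eta$-dependent operator satisfying an analogue of \eqref{DN:trans}, produces a momentum-preserving operator whenever evaluated at a traveling wave; this will be used implicitly elsewhere in the paper.
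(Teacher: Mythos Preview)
Your proof is correct and follows exactly the paper's approach: the paper's own proof simply cites \eqref{DN:trans} and the characterization \eqref{chartj1}, and you have spelled out the one-line computation that these two ingredients produce.
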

\begin{proof}
It follows by \eqref{DN:trans} and the characterization in \eqref{chartj1}
of the quasi-periodic traveling wave $\bar \eta(\vf,x) $.
\end{proof}

\paragraph{Quasi-periodic traveling waves in action-angle-normal coordinates.}

We now discuss  how the momentum preserving condition reads in the 
coordinates $(\theta, I, w)$ introduced in \eqref{aacoordinates}. 
Recalling \eqref{vec.tau},
if $u(\vf,x)$ is a quasi-periodic traveling wave with action-angle-normal components $(\theta(\vf), I(\vf), w(\vf, x))$, the condition $\tau_\vs u =  u(\vf - \ora{\jmath} \vs, \cdot)$ becomes 
\begin{equation}
\label{mompres_aa}
\begin{pmatrix}
\theta(\vf) - \ora{\jmath} \vs \\
I(\vf) \\
\tau_\vs w(\vf, \cdot)
\end{pmatrix}  = 
\begin{pmatrix}
\theta(\vf -  \ora{\jmath} \vs) \\
I(\vf- \ora{\jmath} \vs) \\
 w(\vf - \ora{\jmath} \vs, \cdot)
\end{pmatrix} \, , \quad \forall\, \vs \in \R \, . 
\end{equation}
As we look for $\theta(\vf)$ of the form $\theta(\vf) = \vf + \Theta(\vf)$, with 
a $ (2 \pi)^\nu $-periodic function $ \Theta : \R^\nu \mapsto \R^\nu $, 
$ \vf \mapsto \Theta (\vf ) $, the traveling wave condition becomes
\begin{equation}
\label{mompres_aa1}
\begin{pmatrix}
\Theta(\vf)  \\
I(\vf) \\
\tau_\vs w(\vf, \cdot)
\end{pmatrix}  = 
\begin{pmatrix}
\Theta(\vf -  \ora{\jmath} \vs) \\
I(\vf- \ora{\jmath} \vs) \\
 w(\vf - \ora{\jmath} \vs, \cdot)
\end{pmatrix} \, , \quad \forall\, \vs \in \R \, . 
\end{equation}

\begin{defn} {\bf(Traveling wave variation)}\label{trav-vari}
We call a traveling wave variation 
$ g(\vf) = (g_1(\vf), g_2(\vf), g_3(\vf, \cdot)) \in 
\R^\nu \times \R^\nu \times \acca_{\S^+,\Sigma}^\angle $ a function satisfying \eqref{mompres_aa1}, i.e.
$$
g_1(\vf) = g_1(\vf - \ora{\jmath}\vs) , 
	\quad
		g_2(\vf) = g_2(\vf - \ora{\jmath}\vs) , 
		\quad
			\tau_\vs g_3(\vf) = g_3(\vf - \ora{\jmath}\vs) , 
			\  \forall\, \vs \in \R \, , 
$$
or equivalently  $D \vec \tau_\vs g(\vf) = g(\vf - \ora{\jmath}\vs) $ for any  $\vs \in \R $, 
where $D \vec \tau_\vs$ is the differential of $ \vec \tau_\vs$, namely
$$
D\vec \tau_\vs
\begin{pmatrix}
\Theta \\
I \\
w
\end{pmatrix} 
=
\begin{pmatrix}
\Theta  \\
I \\
\tau_\vs w
\end{pmatrix} \, , \quad \forall\, \vs \in \R \, . 
$$
\end{defn}

According to  Definition \ref{def:mom.pres}, 
a linear operator acting in $ \R^\nu \times \R^\nu \times \acca_{\S^+,\Sigma}^\angle$ momentum preserving if 
\begin{equation}\label{eq:mp_A_tw'}
	A(\vf - \ora \jmath \vs)\circ D\vec \tau_\vs  = D\vec \tau_\vs \circ A(\vf) 
	\, , \quad \forall\, \vs \in \R \, . 
	\end{equation}
Similarly to Lemma \ref{A.mom.cons}, one proves the following result:
\begin{lem}
\label{A.mom.cons'}
Let $A(\vf)$ be a momentum preserving linear operator acting on $\R^\nu \times \R^\nu \times \acca_{\S^+,\Sigma}^\angle$ and 
$g \in \R^\nu \times \R^\nu \times \acca_{\S^+,\Sigma}^\angle$ be  a traveling wave variation. Then  $ A(\vf) g(\vf)$ is a traveling wave variation.
\end{lem}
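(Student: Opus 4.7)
The plan is to mimic essentially verbatim the argument used in Lemma \ref{A.mom.cons}, now using the intertwining relation \eqref{eq:mp_A_tw'} in place of the scalar one \eqref{eq:mp_A_tw}, and the characterization of traveling wave variations from Definition \ref{trav-vari} in place of \eqref{chartj1}.

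Concretely, set $ h(\vf) := A(\vf) g(\vf) $. To show $ h $ is a traveling wave variation, by Definition \ref{trav-vari} I need to verify
\[
D\vec\tau_\vs \, h(\vf) = h(\vf - \ora\jmath \vs) \qquad \forall \vs \in \R .
\]
First I compute the right-hand side: since $ g $ is a traveling wave variation, $ g(\vf - \ora\jmath \vs) = D\vec\tau_\vs g(\vf) $, so
\[
h(\vf - \ora\jmath \vs) = A(\vf - \ora\jmath \vs)\, g(\vf - \ora\jmath \vs) = A(\vf - \ora\jmath \vs) \, D\vec\tau_\vs g(\vf) .
\]
Next I apply the momentum preserving hypothesis \eqref{eq:mp_A_tw'}, which gives $ A(\vf - \ora\jmath \vs) \circ D\vec\tau_\vs = D\vec\tau_\vs \circ A(\vf) $, yielding
\[
h(\vf - \ora\jmath \vs) = D\vec\tau_\vs \, A(\vf) g(\vf) = D\vec\tau_\vs \, h(\vf) ,
\]
which is the required identity.

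There is no real obstacle here: the statement is a direct translation of Lemma \ref{A.mom.cons} from the scalar setting (quasi-periodic traveling wave) to the action-angle-normal setting (traveling wave variation), and it reduces to a one-line computation once one recognizes that \eqref{eq:mp_A_tw'} is exactly the compatibility condition between $A(\vf)$ and the family $D\vec\tau_\vs$ needed to commute the shift past $A$. The only thing worth stressing in the write-up is that $ D\vec\tau_\vs $ does play the role that $ \tau_\vs $ plays in the scalar case, so the characterization from Definition \ref{trav-vari} is the appropriate one to invoke.
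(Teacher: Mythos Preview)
Your proof is correct and is exactly the argument the paper intends: it states that the lemma is proved ``similarly to Lemma \ref{A.mom.cons}'', and your computation is precisely that adaptation, replacing \eqref{eq:mp_A_tw} and \eqref{chartj1} by their action-angle-normal analogues \eqref{eq:mp_A_tw'} and Definition \ref{trav-vari}.
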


\section{Transversality of linear frequencies}\label{sec:deg_kam}

In this section we extend the  KAM theory approach of \cite{BM}, \cite{BaBM}
in order to deal with the linear frequencies $ \Omega_j(\kappa) $ defined in \eqref{def:Omegajk}. 
The main novelty  is the use of the momentum condition in the proof of 
Proposition \ref{prop:trans_un}.
We shall also exploit that the  tangential sites 
$ \S:=\Set{ \bar{\jmath}_1, \ldots ,\bar{\jmath}_\nu } \subset \Z\setminus\{0\}$  
 defined  in \eqref{def.S}, have  all distinct 
 modulus $ | \bar{\jmath}_a | = \bar n_a $, see assumption \eqref{Splus}.

We first introduce the following definition.

\begin{defn}\label{def:non-deg}
	A function $f=(f_1,\dots,f_N):[\kappa_1,\kappa_2]\rightarrow\R^N$ is 
	 \emph{non-degenerate} if, for any $c\in \R^N\setminus\{0\}$, the scalar function $f\cdot c$ is not identically zero on the whole interval $[\kappa_1,\kappa_2]$.
\end{defn}

From a geometric point of view, if $ f $ is non-degenerate it means that the image of the curve $ f([\kappa_1,\kappa_2]) \subset \R^N $ is not contained in any  hyperplane of $ \R^N $. 

\smallskip

We shall use in the sequel 
that the maps $  \kappa \mapsto \Omega_j (\kappa) $ are analytic in $ [\kappa_1, \kappa_2] $. 
We decompose  
\begin{equation}\label{Om-om}
	\Omega_j(\kappa) = \omega_j(\kappa) + \frac{\gamma}{2}\frac{G_j(0)}{j}\,, \quad \omega_j ( \kappa) := \sqrt{ \kappa \, G_j(0)j^2 + g \,G_j(0) + \left( \frac{\gamma}{2}\frac{G_j(0)}{j} \right)^2 } \, . 
\end{equation}
Note  that the dependence on $ \kappa $ of $ \Omega_j (\kappa) $ enters only through 
$ \omega_j(\kappa) $, because $ \frac{G_j(0)}{j}$ is independent of $ \kappa $. 
 Note also that $ j \mapsto 
\omega_j (\kappa) $ is even in $ j $, whereas the component   due to the vorticity 
$ j \mapsto \gamma \frac{G_j(0)}{j} $ is odd.  
Moreover this term is, in view of \eqref{def:Gj0}, uniformly bounded in $ j $.

\begin{lem}\label{lem:non_deg_vectors}
{\bf (Non-degeneracy-I)}
	The following frequency vectors are non-degenerate:
	\begin{enumerate}
		\item $\ora{\Omega}(\kappa) := ( \Omega_j (\kappa) )_{j \in \S} \in\R^\nu$;
		\item $\big( \ora{\Omega}(\kappa),\sqrt\kappa \big)\in\R^{\nu+1}$;
		\item $\big( \ora{\Omega}(\kappa),\Omega_j(\kappa) \big) \in\R^{\nu+1}$, for any  $j\in\Z\setminus\left(\{ 0 \}\cup\S\cup (-\S)\right)$;
		\item $\big( \ora{\Omega}(\kappa),\Omega_j(\kappa) ,\Omega_{j'}(\kappa) \big)\in\R^{\nu+2}$, for any  $j, j'\in\Z\setminus\left( \{0\}\cup\S\cup(-\S) \right)$ and  $|j| \neq |j'|$.
	\end{enumerate}
\end{lem}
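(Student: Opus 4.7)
The plan is to reduce all four statements to a single linear-independence fact and then close by a Vandermonde argument. By \eqref{Om-om}, the $\kappa$-dependence of $\Omega_j$ is entirely carried by $\omega_j(\kappa)$, where
\[
\omega_j(\kappa)^2 = A_j\,\kappa + B_j, \qquad A_j := G_j(0)\,j^2, \qquad B_j := g\,G_j(0) + \frac{\gamma^2}{4}\,\frac{G_j(0)^2}{j^2},
\]
while the vorticity shift $\tfrac{\gamma}{2} G_j(0)/j$ is a $\kappa$-independent constant which, in the sense of Definition \ref{def:non-deg}, can be absorbed together with the free parameter into the constant term. By analyticity on $[\kappa_1,\kappa_2]$, each of the four claims then reduces to showing that whenever the integers $j_1,\dots,j_N$ have pairwise distinct moduli, the functions $\omega_{j_1},\dots,\omega_{j_N}$ are linearly independent modulo constants. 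For item (2) I would treat $\sqrt{\kappa}$ as a formal ``$\omega_0$'' with $A=1$, $B=0$, playing the role of one more $\omega_{j_k}$.

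To close this reduction, I would take $\kappa$-derivatives. From $\omega_j^2 = A_j\kappa + B_j$ a routine induction yields
\[
\pa_\kappa^n \omega_j = c_n\,\frac{A_j^n}{\omega_j^{2n-1}}, \qquad n\geq 1,
\]
with explicit nonzero universal constants $c_n$ (for instance $c_1 = 1/2$, $c_2 = -1/4$, and in general $c_{n+1} = -\tfrac{2n-1}{2}\,c_n$). If $\sum_{k=1}^N d_k\,\omega_{j_k}(\kappa)$ is constant in $\kappa$, then taking $n=1,\dots,N$ derivatives gives, at every $\kappa$, the linear system
\[
\sum_{k=1}^N \Big(d_k\,\frac{A_{j_k}}{\omega_{j_k}(\kappa)}\Big)\,x_k(\kappa)^{n-1} = 0, \qquad x_k(\kappa) := \frac{A_{j_k}}{\omega_{j_k}(\kappa)^2} = \frac{1}{\kappa + B_{j_k}/A_{j_k}}.
\]
As soon as the nodes $x_1(\kappa),\dots,x_N(\kappa)$ are pairwise distinct, the associated Vandermonde matrix is invertible, which forces $d_k A_{j_k}/\omega_{j_k}(\kappa) = 0$ and hence $d_k = 0$ for every $k$.

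What remains is the elementary check that $j\mapsto B_j/A_j$ is injective on $\N$. Using \eqref{def:Gj0},
\[
\frac{B_j}{A_j} = \frac{g}{j^2} + \frac{\gamma^2}{4}\,\frac{G_j(0)}{j^4},
\]
where $G_j(0)/j^4$ equals $\tanh(\tth|j|)/|j|^3$ for finite depth or $1/|j|^3$ for infinite depth; both summands are strictly decreasing in $|j|\in\N$, so the map is strictly monotone. Items (1), (3), (4) then follow from the hypothesis \eqref{Splus} that the $\bar n_a$ are pairwise distinct and, in (4), from $|j|\neq |j'|$; for item (2) the extra node $1/\kappa$ associated to $\sqrt{\kappa}$ is distinct from each $1/(\kappa+B_{\bar\jmath_a}/A_{\bar\jmath_a})$ simply because $g>0$ forces $B_{\bar\jmath_a}/A_{\bar\jmath_a}>0$. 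The only even mildly delicate point in the whole argument is this monotonicity of $B_j/A_j$, which is more bookkeeping than a genuine obstacle.
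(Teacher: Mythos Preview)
Your proof is correct and follows essentially the same strategy as the paper: both compute the iterated $\kappa$-derivatives of $\omega_j$ (your $c_n A_j^n/\omega_j^{2n-1}$ is the paper's $\widetilde{c}_n \lambda_j^n \omega_j$ with $\lambda_j = A_j/(2\omega_j^2)$), reduce to a Vandermonde system, and verify the nodes are pairwise distinct via the strict monotonicity of $|j|\mapsto B_j/A_j$ (equivalently, of $|j|\mapsto \lambda_j$). The paper packages this as showing that the matrix $(\partial_\kappa^n \Omega_{j_k})_{n,k}$ has nonzero determinant, but since the vorticity shift is $\kappa$-independent this is exactly your ``linear independence modulo constants'' reduction.
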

\begin{proof}
Let
\begin{equation}\label{Omega-tilde}
		\wt\Omega_j(\kappa) := \begin{cases}
		\Omega_j(\kappa) & \text{ for }j \neq 0 \\
		\sqrt{\kappa} & \text{ for } j=0 \, , 
		\end{cases} \qquad \qquad \wt\omega_j(\kappa) := \begin{cases}
		\omega_j(\kappa) & \text{ for }j \neq 0 \\
		\sqrt{\kappa} & \text{ for } j=0 \, . 
		\end{cases} 
\end{equation}
Recalling  \eqref{Om-om}, we have that, for any $ j \in \Z $, 
\begin{equation}\label{def:la}
	\partial_\kappa\wt\omega_j(\kappa) = \lambda_j(\kappa) \wt\omega_j(\kappa) \,, \quad \lambda_j(\kappa) := \begin{cases}
	\frac{G_j(0)j^2}{2\left( \kappa \,G_j(0) j^2 + g\, G_j(0)+ \left(\frac{\gamma}{2}\frac{G_j(0)}{j}\right)^2 \right)} & \text{ for } j \neq 0 \\
	\frac{1}{2\kappa} & \text{ for } j = 0 \, . 
	\end{cases} 
\end{equation}
	Moreover $\partial_\kappa\lambda_j(\kappa)= -2 \lambda_j(\kappa)^2 $, for any $j\in\Z$, and therefore, for any $n\in\N $, 
	\begin{equation}\label{eq:it_der_omega}
	\partial_\kappa^n \wt\omega_j(\kappa) = \wtc_n \lambda_j(\kappa)^n \wt\omega_j(\kappa) \,, \quad \wtc_n:= c_1 \cdot \ldots \cdot c_n \,, \quad c_n:=3-2n \,.
	\end{equation}
	We  now prove  items 2 and 3, i.e. 
	the non-degeneracy of the vector $\big( \ora{\Omega}(\kappa),\wt\Omega_j(\kappa) \big) \in\R^{\nu+1}$ for any $j \in \Z \setminus ( \S \cup (-\S)) $, where $\wt\Omega_j(\kappa) $ is defined in \eqref{Omega-tilde}. Items 1 and 4 follow similarly. 
 For this purpose, by  analyticity, it is  sufficient to find one value of $\kappa \in [\kappa_1, \kappa_2]$ so that  the determinant of the $(\nu+1)\times(\nu+1)$ matrix 
	$$
	\cA(\kappa):= \begin{pmatrix}
	\partial_\kappa \Omega_{\bar{\jmath}_1}(\kappa) & \cdots & \partial_\kappa\Omega_{\bar{\jmath}_\nu}(\kappa) & \partial_\kappa \wt\Omega_j(\kappa) \\
	\vdots & \ddots & \vdots & \vdots \\
	\partial_\kappa^{\nu+1} \Omega_{\bar{\jmath}_1}(\kappa) & \cdots & \partial_\kappa^{\nu+1}\Omega_{\bar{\jmath}_\nu}(\kappa) & \partial_\kappa^{\nu+1}\wt\Omega_j(\kappa)
	\end{pmatrix}
	$$
	is not zero. We actually show that $\det \cA(\kappa) \neq 0$ for any $\kappa \in [\kappa_1, \kappa_2]$.
	By \eqref{Omega-tilde}-\eqref{eq:it_der_omega} and  the multilinearity of the determinant function, we get
	$$
	\det \cA(\kappa) = C(\kappa) \det\begin{pmatrix}
	1 & \cdot & 1 & 1\\
	\lambda_{\bar{\jmath}_1}(\kappa) & \cdot & \lambda_{\bar{\jmath}_\nu}(\kappa) & \lambda_j(\kappa) \\
	\vdots & \ddots & \vdots & \vdots \\
	\lambda_{\bar{\jmath}_1}(\kappa)^{\nu} & \cdot & \lambda_{\bar{\jmath}_\nu}(\kappa)^{\nu} & \lambda_j(\kappa)^{\nu}
	\end{pmatrix}=: C(\kappa) \det\cB(\kappa)
	$$
	where 
	$$
	C(\kappa) := \prod_{q=1}^{\nu+1}\wtc_q \cdot\prod_{p\in\{\bar{\jmath}_1, \ldots ,\bar{\jmath}_\nu,j\}} \lambda_p(\kappa)\wt\omega_p(\kappa) \neq 0 	\, , \quad \forall\,\kappa\in[\kappa_1,\kappa_2] \,.
	$$
Since  $\cB(\kappa)$ is a Vandermorde matrix,  we conclude that
	$$
	\det\cA(\kappa) = C(\kappa) \prod_{p,p'\in\{\bar{\jmath}_1, \ldots ,\bar{\jmath}_\nu,j\},  p< p'}
	\! \! \! \!\! \! \big( \lambda_p(\kappa) -\lambda_{p'}(\kappa)\big) \,.
	$$
	Now, the fact that $\det\cA(\kappa)\neq 0$ for any $\kappa\in[\kappa_1,\kappa_2]$
	is a consequence from the following
	\\[1mm]
	{\bf Claim}: {\it For any $p,p'\in\{\bar{\jmath}_1,\ldots,\bar{\jmath}_\nu,j\}$, 
	$p \neq  p'$, one has  $\lambda_p(\kappa)\neq \lambda_{p'}(\kappa)$ for any $\kappa\in[\kappa_1,\kappa_2]$}.
		\\[1mm]
	{\sc Proof of the Claim}: If  $p' = 0$ and $p \neq 0$, 
	the claim follows because, by \eqref{def:la}, 
$$
\lambda_p(\kappa) = \frac{1}{2\Big( \kappa + \frac{g}{p^2} + 
\frac{\gamma^2}{4} \frac{G_p(0)}{p^4} \Big)}  <
 \frac{1}{2\kappa} = \lambda_0(\kappa) \, .
$$
Consider now the case $p, p' \neq 0 $. 
We now prove that the map $ p \mapsto \lambda_p(\kappa) $
 is strictly monotone on $(0, + \infty)$. 
 In case of finite depth,   $G_p(0) =p\tanh(\tth p) $,  and 
\begin{align*}
		\pa_p\lambda_p(\kappa) &= 
		 \frac{1}{2\left( \kappa + \frac{g}{p^2}+ 
		\frac{\gamma^2}{4} \frac{\tanh(\tth p)}{p^3}\right)^2}\left\{ \frac{2 g}{p^3} + 
	\frac{\gamma^2}{4} \, \frac{3\tanh(\tth p)-(1-\tanh^2(\tth p))\tth p}{p^4} \right\}\,.\label{der_lambda}
	\end{align*}
	The function $f(y):= 3 \tanh(y)-(1-\tanh^2(y))y $ is positive for any $y>0$. Indeed
	$f(y)\rightarrow 0$ as $y\rightarrow 0$, 
	and it is strictly monotone increasing for $ y > 0 $, since	
	$ f'(y) = 2(1-\tanh^2(y)) ( 1+ y \tanh(y) ) > 0 $.
We deduce that $\pa_p\lambda_p(\kappa) > 0 $,  also 
if the depth $ \tth = + \infty $. 
Since the function  $ p \mapsto \lambda_p(\kappa)$ is  even we have proved that 
that  it  is strictly monotone decreasing on  $(-\infty,0)$ and increasing in $(0, +\infty)$. Thus, if $\lambda_p(\kappa) = \lambda_{p'}(\kappa)$ then $p = -p'$. 
	But this case is  excluded by the assumption 
    	\eqref{Splus} and the condition  $ j \not\in \S \cup (-\S) $, which together imply 
$|p| \neq |p'|$.
\end{proof}

Note that in items 3 and 4 of Lemma \ref{lem:non_deg_vectors} we require
that $ j $ and $ j' $ do not belong to 
$   \{0\} \cup {\mathbb S} \cup (-{\mathbb S}) $. In order to
deal  in Proposition \ref{prop:trans_un} when $ j $ and $  j' $  are 
in $  {\mathbb S} \cup (-{\mathbb S}) $, 
we need also the following lemma. It is actually a direct 
consequence of the proof of
Lemma \ref{lem:non_deg_vectors},  noting that
 $\Omega_j (\kappa ) - \omega_j (\kappa)$ is independent of 
$ \kappa $. 

\begin{lem}
\label{rem:omega-nondeg}
{\bf (Non-degeneracy-II)}
Let $\ora{\omega}(\kappa):= \left( \omega_{\bar{\jmath}_1}(\kappa),\ldots,\omega_{\bar{\jmath}_\nu}(\kappa) \right)$. The following  vectors are non-degenerate:
\begin{enumerate}
		\item $(\ora{\omega}(\kappa),1)\in\R^{\nu+1}$; 
		\item $\left( \ora{\omega}(\kappa),\omega_j(\kappa),1 \right)\in\R^{\nu+2}$, for any  $j\in\Z\setminus\left(\{ 0 \}\cup\S\cup (-\S)\right)$.
	\end{enumerate}
\end{lem}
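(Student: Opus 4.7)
The plan is to repeat the Wronskian/Vandermonde strategy used in the proof of Lemma \ref{lem:non_deg_vectors}, adapted to include a constant component. The crucial structural observation is that $\Omega_j(\kappa) - \omega_j(\kappa) = \tfrac{\gamma}{2} G_j(0)/j$ is independent of $\kappa$, so the differentiation formula \eqref{eq:it_der_omega}, namely $\partial_\kappa^n \omega_j(\kappa) = \wtc_n \lambda_j(\kappa)^n \omega_j(\kappa)$ for $n\geq 1$, applies verbatim, and $\omega_j(\kappa) > 0$, $\lambda_j(\kappa) > 0$ for every $\kappa \in [\kappa_1,\kappa_2]$.

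For item 1, by analyticity of $\kappa \mapsto \omega_j(\kappa)$ on $[\kappa_1,\kappa_2]$, it suffices to show that the Wronskian
\[
W(\kappa) := \det \begin{pmatrix}
\omega_{\bar{\jmath}_1}(\kappa) & \cdots & \omega_{\bar{\jmath}_\nu}(\kappa) & 1 \\
\partial_\kappa \omega_{\bar{\jmath}_1}(\kappa) & \cdots & \partial_\kappa \omega_{\bar{\jmath}_\nu}(\kappa) & 0 \\
\vdots & \ddots & \vdots & \vdots \\
\partial_\kappa^\nu \omega_{\bar{\jmath}_1}(\kappa) & \cdots & \partial_\kappa^\nu \omega_{\bar{\jmath}_\nu}(\kappa) & 0
\end{pmatrix}
\]
is non-zero for some (in fact every) $\kappa \in [\kappa_1,\kappa_2]$. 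Expanding along the last column reduces the problem to the $\nu \times \nu$ minor with entries $\partial_\kappa^q \omega_{\bar{\jmath}_p}(\kappa)$, $q,p=1,\ldots,\nu$. Applying \eqref{eq:it_der_omega} and the multilinearity of the determinant, this minor factors as
\[
\prod_{q=1}^\nu \wtc_q \cdot \prod_{p=1}^\nu \lambda_{\bar{\jmath}_p}(\kappa) \, \omega_{\bar{\jmath}_p}(\kappa) \cdot \prod_{1\leq p<p'\leq \nu} \bigl(\lambda_{\bar{\jmath}_{p'}}(\kappa) - \lambda_{\bar{\jmath}_{p}}(\kappa)\bigr),
\]
i.e., a Vandermonde in $\lambda_{\bar{\jmath}_1},\ldots,\lambda_{\bar{\jmath}_\nu}$. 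The factors $\wtc_q = \prod_{i=1}^q (3-2i)$ are non-zero since $3-2i$ is odd; $\omega_{\bar{\jmath}_p}, \lambda_{\bar{\jmath}_p}$ are strictly positive; and the Vandermonde is non-zero because the $\lambda_{\bar{\jmath}_p}(\kappa)$ are pairwise distinct. This last point is precisely the Claim established in the proof of Lemma \ref{lem:non_deg_vectors}: $\lambda_p(\kappa)$ depends strictly monotonically on $|p|$ for $p \neq 0$, and the moduli $|\bar{\jmath}_a| = \bar n_a$ are all distinct by \eqref{Splus}.

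For item 2, we repeat the same argument with the $(\nu+2) \times (\nu+2)$ Wronskian of the family $\omega_{\bar{\jmath}_1},\ldots,\omega_{\bar{\jmath}_\nu},\omega_j,1$; expansion along the last column yields a $(\nu+1) \times (\nu+1)$ minor that factors through a Vandermonde in $\lambda_{\bar{\jmath}_1}(\kappa),\ldots,\lambda_{\bar{\jmath}_\nu}(\kappa),\lambda_j(\kappa)$. The hypothesis $j \in \Z \setminus (\{0\} \cup \S \cup (-\S))$ gives $|j| \neq |\bar{\jmath}_a|$ for every $a$, so by the strict monotonicity recalled above, $\lambda_j(\kappa) \neq \lambda_{\bar{\jmath}_a}(\kappa)$, and the Vandermonde is again non-zero.

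There is no genuine obstacle: the hard work, namely the strict monotonicity of $p \mapsto \lambda_p(\kappa)$ on $(0,+\infty)$, was already carried out in the proof of Lemma \ref{lem:non_deg_vectors}. The only minor adjustment compared to that proof is the inclusion of the zeroth-derivative row of the Wronskian (to accommodate the constant component $1$), which does not affect the non-vanishing of the determinant after cofactor expansion along the corresponding column.
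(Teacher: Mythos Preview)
Your proof is correct and takes essentially the same approach as the paper, which simply states that the lemma is a direct consequence of the proof of Lemma~\ref{lem:non_deg_vectors} together with the observation that $\Omega_j(\kappa)-\omega_j(\kappa)$ is independent of $\kappa$. You have spelled out the Vandermonde argument in detail, correctly noting that the only adaptation needed for the constant component $1$ is to include the zeroth-order row of the Wronskian before expanding along the last column; after that, the minor factors exactly as in the earlier lemma and the Claim on the strict monotonicity of $p\mapsto\lambda_p(\kappa)$ applies verbatim.
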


\noindent
For later use, we provide the following asymptotic  estimate of the linear frequencies. 

\begin{lem}\label{rem:exp_omegaj_k}
{\bf (Asymptotics)}
For any $j\in\Z\setminus\{0\}$, we have 
	\begin{equation}\label{eq:rem1}
	\omega_j(\kappa)= \sqrt\kappa \abs j^\frac32 + \frac{c_j(\kappa)}{\sqrt\kappa\abs j^\frac12} \,,  
	\end{equation}
	where,  
	for any $ n \in \N_0 $,  	
	there exists a constant $C_{n,\tth} >0$ such that 
	\begin{equation}\label{eq:rem2}
\sup_{j\in \Z\setminus\{0\} \atop \kappa\in[\kappa_1,\kappa_2]}	\Big| \partial_\kappa^n \frac{c_j(\kappa)}{\sqrt\kappa} \Big| \leq C_{n,\tth} \, .
	\end{equation}	
	\end{lem}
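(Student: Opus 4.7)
The plan is to extract the leading $\sqrt{\kappa}|j|^{3/2}$ behaviour directly from the algebraic formula defining $\omega_j(\kappa)$ and then perform a Taylor expansion of the square root. First, I would write $G_j(0) = |j|\tanh(\tth|j|)$ in finite depth (respectively $|j|$ in infinite depth) and introduce the remainder $r_j := G_j(0) - |j|$, which is identically zero if $\tth = +\infty$ and satisfies $|r_j| \leq C_\tth\,|j|\,e^{-2\tth|j|}$ otherwise. Substituting in the definition of $\omega_j(\kappa)$ in \eqref{Om-om} yields
\begin{equation*}
\omega_j(\kappa)^2 = \kappa\,|j|^3\,\big(1 + \varepsilon_j(\kappa)\big)\,, \qquad \varepsilon_j(\kappa) := \frac{r_j}{|j|} + \frac{g\,G_j(0)}{\kappa\,|j|^3} + \frac{\gamma^2}{4}\,\frac{G_j(0)^2}{\kappa\,j^2\,|j|^3}\,.
\end{equation*}
The point is that $\varepsilon_j(\kappa)$ is a small perturbation: using $|r_j| \lesssim |j|e^{-2\tth|j|}$ and $G_j(0) \leq |j|$, together with $\kappa \geq \kappa_1 > 0$, one checks by a direct computation that for every $n \in \N_0$
\begin{equation*}
\sup_{\kappa \in [\kappa_1,\kappa_2]}\,|\partial_\kappa^n\varepsilon_j(\kappa)| \;\leq\; \frac{C_{n,\tth}}{|j|^2}\,, \qquad \forall\,j \in \Z\setminus\{0\}\,,
\end{equation*}
since the $\kappa$-dependence only enters through the factor $1/\kappa$ in the last two terms.

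I would then \emph{define} $c_j(\kappa) := \sqrt{\kappa}\,|j|^{1/2}\big(\omega_j(\kappa) - \sqrt{\kappa}\,|j|^{3/2}\big)$, so that \eqref{eq:rem1} holds tautologically. Using $\omega_j = \sqrt{\kappa}\,|j|^{3/2}\,\sqrt{1+\varepsilon_j}$ and writing $\sqrt{1+x}-1 = x\,h(x)$ with $h \in C^\infty((-1,1))$ and $h(0)=1/2$, we get
\begin{equation*}
\frac{c_j(\kappa)}{\sqrt{\kappa}} \;=\; |j|^{1/2}\big(\omega_j(\kappa) - \sqrt{\kappa}\,|j|^{3/2}\big) \;=\; \sqrt{\kappa}\,\big(|j|^2\,\varepsilon_j(\kappa)\big)\,h\big(\varepsilon_j(\kappa)\big)\,.
\end{equation*}
For $|j| \geq J_0$ large enough, $|\varepsilon_j(\kappa)| \leq 1/2$ uniformly in $\kappa$, so by the Faà di Bruno formula each $\partial_\kappa^n\,h(\varepsilon_j(\kappa))$ is a polynomial in bounded quantities (values of $h^{(q)}$ at $\varepsilon_j$, and the derivatives $\partial_\kappa^m\varepsilon_j = O(|j|^{-2})$), hence uniformly bounded. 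Since $|j|^2\,\partial_\kappa^n\varepsilon_j$ is uniformly bounded by the estimate above and $\partial_\kappa^n\sqrt{\kappa}$ is bounded on $[\kappa_1,\kappa_2]$, the Leibniz rule gives \eqref{eq:rem2} for $|j| \geq J_0$. For the finitely many $0 < |j| < J_0$, the function $c_j(\kappa)/\sqrt{\kappa}$ is smooth on the compact interval $[\kappa_1,\kappa_2]$ and is therefore bounded together with all its derivatives, with a bound that can be absorbed into $C_{n,\tth}$.

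The only mildly technical point is producing the uniform derivative bound on $\varepsilon_j$, but this is immediate because the $\kappa$-dependence is just through $1/\kappa$, so $\partial_\kappa^n(1/\kappa) = (-1)^n n!/\kappa^{n+1}$ is bounded for $\kappa \geq \kappa_1$. There are no small-divisor or non-trivial analytic issues here — the lemma is essentially a quantitative version of the expansion $\sqrt{1+O(|j|^{-2})} = 1 + O(|j|^{-2})$.
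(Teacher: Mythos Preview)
Your proof is correct and follows essentially the same approach as the paper: both extract the leading term $\sqrt{\kappa}|j|^{3/2}$ from $\omega_j$ and control the remainder via the rationalization $\sqrt{1+x}-1 = x/(1+\sqrt{1+x})$, using that $|j|(G_j(0)-|j|)$ and $G_j(0)/|j|$ are uniformly bounded in $j$. The paper simply writes down the explicit rationalized formula for $c_j(\kappa)$, whereas you organize the same computation through the perturbation $\varepsilon_j$ and the smooth function $h$; this is a cosmetic difference only.
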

	\begin{proof}
By \eqref{Om-om} we deduce \eqref{eq:rem1} with 
$$
c_j(\kappa):=
\tfrac{\kappa \abs j\left( G_j(0)-\abs j \right) + \frac{g\,G_j(0)}{\abs j}\left( 1+ \left(\frac{\gamma}{2}\right)^2\frac{G_j(0)}{g\abs j^2} \right)}{1 + \sqrt{ 1+ \frac{G_j(0)-\abs j}{\abs j} + \frac{g\,G_j(0)}{\kappa\abs j^3}\left( 1+ \left(\frac{\gamma}{2}\right)^2\frac{G_j(0)}{g\abs j^2} \right)  }} \,.
$$
Then  \eqref{eq:rem2} follows	exploiting that  (both for finite and infinite depth)
the quantities $|j| (G_j(0) - |j|)$ and $G_j(0)/|j|$ are uniformly bounded in $j$, see 
\eqref{def:Gj0}. 
\end{proof}

The next proposition is the key of the argument. 
We remind that 
$\ora{\jmath} = ( \bar{\jmath}_1, \ldots ,\bar{\jmath}_\nu ) $ denotes the vector in $  \Z^\nu $ 
of tangential sites  introduced in \eqref{def:vecj}.

\begin{prop} {\bf (Transversality)} \label{prop:trans_un}
There exist $m_0\in\N$ and $\rho_0>0$ such that, for any $\kappa\in[\kappa_1,\kappa_2]$, the following hold:
	\begin{align}
	&\max_{0\leq n \leq m_0}
	| \partial_\kappa^n \ora{\Omega}(\kappa)\cdot \ell | \geq \rho_0\braket{\ell} \,, \quad \forall\,\ell\in\Z^\nu\setminus\{0\} \,; \label{eq:0_meln}\\
	&\begin{cases}
	\max\limits_{0\leq n \leq m_0}|\partial_\kappa^n\,( \ora{\Omega}(\kappa)\cdot \ell + \Omega_j(\kappa) ) | \geq \rho_0 	\braket{\ell} \\
	\ora{\jmath}\cdot \ell + j = 0 \,, \quad \ell\in\Z^\nu\,, \ j\in\S_0^c \, ; 
	\end{cases} \label{eq:1_meln}\\
	&\begin{cases}
	\max\limits_{0\leq n \leq m_0}| \partial_\kappa^n\,( \ora{\Omega}(\kappa)\cdot \ell + \Omega_j(\kappa)-\Omega_{j'}(\kappa) ) | \geq \rho_0 	\braket{\ell} \\
	\ora{\jmath}\cdot \ell + j -j'= 0 \,, \quad \ell\in\Z^\nu\,, \ j,j'\in\S_0^c \,, \ (\ell,j,j')\neq (0,j,j) 
	\, ; 
	\end{cases}  \label{eq:2_meln-}\\
	&\begin{cases}
	\max\limits_{0\leq n \leq m_0}| \partial_\kappa^n\,( \ora{\Omega}(\kappa)\cdot \ell + \Omega_j(\kappa)+\Omega_{j'}(\kappa) ) | \geq \rho_0 	\braket{\ell} \\
	\ora{\jmath}\cdot \ell + j + j' = 0 \,, \ \ell\in\Z^\nu\,, \ j, j' \in\S_0^c  \, . 
	\end{cases} \label{eq:2_meln+}
	\end{align}
	We call $\rho_0$ the amount of non-degeneracy and $m_0$ the index of non-degeneracy.
\end{prop}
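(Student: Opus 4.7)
My plan is to follow the degenerate-KAM scheme of \cite{BaBM,BM,BBHM}, combining the non-degeneracy of Lemmas \ref{lem:non_deg_vectors} and \ref{rem:omega-nondeg}, the asymptotic expansion \eqref{eq:rem1}--\eqref{eq:rem2}, and, crucially, the momentum conditions $\ora{\jmath}\cdot\ell+j=0$, $\ora{\jmath}\cdot\ell+j\pm j'=0$. By real analyticity of $\kappa\mapsto\Omega_j(\kappa)$ on the compact interval $[\kappa_1,\kappa_2]$, it suffices to prove that, for every admissible triple $(\ell,j,j')$ and some $m_0$ independent of it, the relevant function of $\kappa$ is not flat to order $m_0$ at some point of $[\kappa_1,\kappa_2]$, together with a quantitative control as $\langle\ell\rangle\to\infty$; the standard Weierstrass-type argument then produces a uniform $\rho_0$.

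I would focus on the most delicate case \eqref{eq:2_meln-} and split according to $\langle\ell\rangle\le M$ or $\langle\ell\rangle>M$, with $M$ chosen below. For $\langle\ell\rangle\le M$, the momentum relation forces $|j-j'|\le|\ora{\jmath}|M$, reducing the analysis to finitely many configurations. Since $\Omega_j-\omega_j=\tfrac{\gamma}{2}G_j(0)/j$ is $\kappa$-independent, differentiating once in $\kappa$ eliminates every vorticity constant, turning the expression into a linear combination of the $\partial_\kappa^n\omega_{\bar\jmath_a}$ and of $\partial_\kappa^n\omega_j$, $\partial_\kappa^n\omega_{j'}$. When $j,j'\notin\S\cup(-\S)$ with $|j|\ne|j'|$, Lemma \ref{lem:non_deg_vectors}(4) applies directly. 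When $j'=-j$ (which forces $j\notin\pm\S$), the frequency difference is $\kappa$-independent, the expression reduces to $\vec\Omega\cdot\ell$ and is controlled by \eqref{eq:0_meln}, while the momentum relation $\ora{\jmath}\cdot\ell+2j=0$ with $j\neq 0$ rules out the critical case $\ell=0$. When $j$ or $j'$ lies in $-\S$, the expression becomes an affine combination of $\vec\omega(\kappa)$, at most one extra $\omega_{j_\ast}(\kappa)$, and a $\kappa$-independent constant; Lemma \ref{rem:omega-nondeg} yields non-degeneracy, provided the momentum relation combined with the distinct-moduli hypothesis \eqref{Splus} excludes the finitely many coefficient vectors that would otherwise make the expression vanish identically.

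For $\langle\ell\rangle>M$ I use Lemma \ref{rem:exp_omegaj_k}. Since $\partial_\kappa\Omega_j=\lambda_j(\kappa)\omega_j(\kappa)$ with $\lambda_j$ as in \eqref{def:la}, so that $\partial_\kappa\Omega_j\sim \tfrac{1}{2\sqrt\kappa}|j|^{3/2}$ for large $|j|$ and the vorticity constant drops under $\partial_\kappa$, whenever $\bigl||j|^{3/2}-|j'|^{3/2}\bigr|\gtrsim\langle\ell\rangle$ the first derivative of the expression dominates. In the complementary regime both $|j|,|j'|$ are comparable and small relative to $\langle\ell\rangle^{2/3}$, so the expression is an $O(1)$ perturbation of $\vec\Omega(\kappa)\cdot\ell$, and \eqref{eq:0_meln}---itself proved by the same scheme using only Lemma \ref{lem:non_deg_vectors}(1) together with the asymptotic of Lemma \ref{rem:exp_omegaj_k}---provides the lower bound after enlarging $M$. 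The cases \eqref{eq:1_meln} and \eqref{eq:2_meln+} follow by a strictly simpler version of the same argument, the frequency contributions having a definite sign in the large-$\ell$ regime.

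The main obstacle is the subcase analysis when $j$ or $j'$ belongs to $-\S$. These are exactly the configurations for which the vectors of Lemma \ref{lem:non_deg_vectors}(3)--(4) cease to be non-degenerate: for example when $\gamma=0$ one has $\Omega_j=\Omega_{-j}$ identically in $\kappa$. This is where the genuine novelty with respect to \cite{BaBM,BM,BBHM} enters: one must systematically substitute Lemma \ref{lem:non_deg_vectors} by Lemma \ref{rem:omega-nondeg} and use the momentum relation, together with the distinct-moduli assumption \eqref{Splus}, to exclude each would-be resonance. The bookkeeping---organised by the signs $\sigma,\sigma'\in\{\pm1\}$ and the indices $a,b\in\{1,\ldots,\nu\}$ in the decompositions $j=\sigma\bar n_a$, $j'=\sigma'\bar n_b$---is lengthy but purely combinatorial, and is what ultimately forces the hypothesis that the moduli $\bar n_1<\cdots<\bar n_\nu$ in \eqref{Splus} be distinct.
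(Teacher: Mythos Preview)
Your overall architecture—contradiction/compactness argument, split by the size of $\langle\ell\rangle$, and the subcase analysis when $j$ or $j'$ lies in $-\S$ using Lemma~\ref{rem:omega-nondeg} together with the momentum relation—matches the paper. But there is a genuine gap in your large-$\langle\ell\rangle$ treatment of \eqref{eq:2_meln-}. In the complementary regime $\bigl||j|^{3/2}-|j'|^{3/2}\bigr|\lesssim\langle\ell\rangle$ you assert that ``both $|j|,|j'|$ are \ldots\ small relative to $\langle\ell\rangle^{2/3}$'', hence $\Omega_j-\Omega_{j'}=O(1)$ and one reduces to \eqref{eq:0_meln}. This is false: take $j\sim\langle\ell\rangle^2$, $j'=j-1$ (compatible with the momentum relation for some $\ell$), so that $|j|^{3/2}-|j'|^{3/2}\sim\tfrac32|j|^{1/2}\sim\langle\ell\rangle$ lies in the complementary regime, yet $|j|\sim\langle\ell\rangle^2\gg\langle\ell\rangle^{2/3}$ and $\Omega_j-\Omega_{j'}\sim\sqrt\kappa\,\langle\ell\rangle$ is not $O(1)$. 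The bound $|j|,|j'|\lesssim\langle\ell\rangle^{2/3}$ would follow from $|j|^{3/2}+|j'|^{3/2}\lesssim\langle\ell\rangle$, which is what one has for \eqref{eq:2_meln+} but not for \eqref{eq:2_meln-}. For the same reason your small-$\langle\ell\rangle$ reduction to ``finitely many configurations'' is not justified by the momentum relation alone: $|j-j'|\le|\ora\jmath|M$ does not bound $j,j'$ individually.

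The correct argument, carried out in the paper, is to normalize by $\langle\ell\rangle$, pass to a subsequence along which $(|j_m|^{3/2}-|j_m'|^{3/2})/\langle\ell_m\rangle\to\bar d_1\in\R$, and use \eqref{eq:rem1}--\eqref{eq:rem2} to show that $\partial_\kappa^n\bigl(\vec\Omega(\kappa)\cdot\ell_m+\Omega_{j_m}(\kappa)-\Omega_{j_m'}(\kappa)\bigr)/\langle\ell_m\rangle\to\partial_\kappa^n\bigl(\vec\Omega(\kappa)\cdot\bar c+\bar d_1\sqrt\kappa\bigr)$ with $\bar c\neq0$. One then invokes the non-degeneracy of $(\vec\Omega(\kappa),\sqrt\kappa)$ from Lemma~\ref{lem:non_deg_vectors}-2—precisely the item your proposal never uses—rather than reducing to \eqref{eq:0_meln}. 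Once this is in place, the bounded-$\ell$ case is handled by combining the restriction $\bigl||j|^{3/2}-|j'|^{3/2}\bigr|\le C\langle\ell\rangle$ with either $|j|\neq|j'|$ (which then forces $|j|^{1/2}+|j'|^{1/2}$ bounded) or $j'=-j$ (which the momentum relation bounds directly), yielding the finitely many configurations you need.
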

\begin{proof}
	We prove separately  \eqref{eq:0_meln}-\eqref{eq:2_meln+}. 
	In this proof we set for brevity
	$ \IK := [ \kappa_1, \kappa_2]$. 
	\\
	{\bf Proof of \eqref{eq:0_meln}}. By contradiction, assume that
	for any $m\in\N$ there exist $\kappa_m\in \IK $ and $\ell_m\in\Z^\nu\setminus\{0\}$ such that
	\begin{equation}\label{eq:0_abs_m}
	\Big| \partial_\kappa^n \ora{\Omega}(\kappa_m) \cdot \frac{\ell_m}{\braket{\ell_m}}  \Big| < \frac{1}{\braket{m}} \,, \quad \forall\,0\leq n\leq m \, .
	\end{equation}
	The sequences $(\kappa_m)_{m\in\N}\subset\IK$ and $(\ell_m/\braket{\ell_m})_{m\in\N}\subset \R^\nu\setminus\{0\}$ are both bounded. By compactness, up to subsequences
	 $\kappa_m\to \bar\kappa\in\IK$ and $\ell_m/\braket{\ell_m}\rightarrow\bar c\neq 0$. 
	Therefore, in the limit for $m\rightarrow + \infty$, by  \eqref{eq:0_abs_m} we get $\partial_\kappa^n \ora{\Omega}(\bar\kappa)\cdot \bar c = 0$ for any $n\in\N_0$.
	 By the analyticity of $ \ora{\Omega}(\kappa)$, we deduce 
	 that the function $ \kappa \mapsto \ora{\Omega}(\kappa)\cdot \bar c$ is identically zero on $\IK$, which contradicts Lemma \ref{lem:non_deg_vectors}-1.
	 \\[1mm]
	{ \bf Proof of \eqref{eq:1_meln}}. We divide the proof in 4 steps.\\
	{\sc Step 1. } Recalling \eqref{Om-om} and 
	Lemma \ref{rem:exp_omegaj_k},  
	we have that, for any $\kappa\in\IK $, 
	$$
	| \ora{\Omega}(\kappa)\cdot \ell + \Omega_j(\kappa)|
	 \geq |\Omega_j(\kappa)| - |\ora{\Omega}(\kappa)\cdot \ell |  \geq  \sqrt{\kappa_1}\abs j^\frac32 - C \langle \ell \rangle \geq \langle \ell \rangle 
	$$
	whenever $\abs j^\frac32 \geq C_0 \langle \ell \rangle $, 
	for some $C_0>0$.  In this cases \eqref{eq:1_meln} is already fulfilled with $n=0$.  Hence we  restrict  in the sequel to indexes $\ell\in\Z^{\nu}$ and $j \in \S_0^c$ satisfying
	\begin{equation}\label{eq:1_restr}
	\abs j ^\frac32 < C_0 \langle \ell \rangle \,.
	\end{equation}
	{\sc Step 2.} By contradiction, we assume that, for any $m\in\N$, there exist $\kappa_m\in\IK$, $\ell_m\in\Z^\nu$ and $j_m\in\S_0^c$, with $\abs{j_m}^\frac32<C_0 \langle \ell_m \rangle $, such that, for any $n\in\N_0$ with $n\leq m$,
	\begin{equation}\label{eq:1_abs_m}
	\begin{cases}
	\big| \partial_\kappa^n \big( \ora{\Omega}(\kappa)\cdot\frac{\ell_m}{\braket{\ell_m}}+\frac{1}{\braket{\ell_m}}\Omega_{j_m}(\kappa) \big)_{|\kappa= \kappa_m} \big|  < \frac{1}{\braket{m}} \\
	\ora{\jmath}\cdot \ell_m + j_m = 0 \, . 
	\end{cases} 
	\end{equation}
Up to subsequences 
$\kappa_m\rightarrow\bar\kappa\in\IK$ and $\ell_m/\braket{\ell_m}\rightarrow \bar c\in\R^\nu$. \\
	{\sc Step 3.} We consider first the case when the sequence $(\ell_m)_{m\in\N}\subset \Z^\nu$ is bounded. Up to subsequences, we have definitively that 
	$\ell_m=\bar \ell\in\Z^\nu $. Moreover, since $j_m$ and $\ell_m$ satisfy \eqref{eq:1_restr}, also the sequence $(j_m)_{m\in\N}$ is bounded and, up to subsequences,  definitively  $ j_m = \bar \jmath \in\S_0^c $. Therefore, in the limit $m\rightarrow \infty$, from \eqref{eq:1_abs_m} we obtain
	\begin{equation*}
	\partial_\kappa^n\big( \ora{\Omega}(\kappa)\cdot \bar \ell + \Omega_{\bar \jmath}(\kappa) \big)_{|\kappa = \bar \kappa} = 0 \ , \ \forall\, n\in\N_0  \, , \quad 
	\ora{\jmath}\cdot \bar \ell + \bar \jmath = 0  \, . 
	\end{equation*}
By analyticity this implies 
	\begin{equation}\label{eq:1_limit_b}
	\ora{\Omega}(\kappa)\cdot \bar \ell + \Omega_{\bar \jmath}(\kappa) = 0 \, , 
	\ \forall \, \kappa \in \IK \, ,  \quad 
	\ora{\jmath}\cdot \bar \ell + \bar \jmath = 0  \, . 
	\end{equation}
	We distinguish two cases:
	\begin{itemize}
		\item  Let $\bar \jmath \notin -\S$. By \eqref{eq:1_limit_b} the vector  
		$\big( \ora{\Omega}(\kappa),\Omega_{\bar \jmath}(\kappa) \big) $ 
		is degenerate according to Definition \ref{def:non-deg} 
		with $c:=(\bar \ell, 1)\neq 0$. This contradicts Lemma \ref{lem:non_deg_vectors}-3.
		\item Let $\bar \jmath \in -\S$. With no loss of generality suppose $\bar \jmath = - \bar{\jmath}_1$. Then, denoting $\bar \ell = (\bar{\ell_1}, \ldots ,\bar{\ell_\nu})$, system \eqref{eq:1_limit_b} reads, for any $ \kappa \in \IK $,  
		\begin{equation}\label{eq:1_inter}
		\begin{cases}
		(\bar{\ell_1}+1)\omega_{\bar{\jmath}_1}(\kappa) + \sum_{a=2}^{\nu}\bar{\ell_a}\omega_{\bar{\jmath}_a}(\kappa) + \frac{\gamma}{2}\left( (\bar{\ell_1}-1)\frac{G_{\bar{\jmath}_1}(0)}{\bar{\jmath}_1} + \sum_{a=2}^\nu \bar{\ell_a} \frac{G_{\bar{\jmath}_a}(0)}{\bar{\jmath}_a} \right) = 0  \\
		(\bar{\ell_1}-1)\bar{\jmath}_1 + \sum_{a=2}^\nu \bar{\ell_a}\, \bar{\jmath}_a = 0 \, . 
		\end{cases}
		\end{equation}
By Lemma \ref{rem:omega-nondeg} the   vector   $(\ora{\omega}(\kappa),1) $ 
is non-degenerate, which is a contradiction for $\gamma \neq 0 $. If $ \gamma = 0 $ we 
only deduce $\bar{\ell_1}= - 1$ and $\bar{\ell_2}=\ldots=\bar{\ell_\nu}=0$. Inserting these values in the momentum  condition in \eqref{eq:1_inter}, we get $2\bar{\jmath}_1=0$. This is a contradiction with $ \bar{\jmath}_1 \neq 0$.
	\end{itemize}
	{\sc Step 4.} We consider now the case when the sequence $(\ell_m)_{m\in\N}$ is unbounded. Up to subsequences
	$\abs{\ell_m}\rightarrow \infty$ as $m\rightarrow\infty$ and  $  \lim_{m\rightarrow\infty}\ell_m/\braket{\ell_m} =: \bar c  \neq 0 $. By  \eqref{Om-om} and  \eqref{eq:rem1}, 
	for any $ n \in \N_0 $,
	\begin{equation*}\label{eq:1_expand}
	\begin{split}
	\partial_\kappa^n \frac{1}{\braket{\ell_m}}\Omega_{j_m}(\kappa_m) & 
	=  \partial_\kappa^n \Big( \frac{1}{\braket{\ell_m}}\sqrt{\kappa}\abs{j_m}^\frac32 + \frac{c_{j_m}(\kappa)}{\braket{\ell_m}\sqrt{\kappa}\abs{j_m}^\frac12} + \frac{\gamma}{2\braket{\ell_m}}\frac{G_{j_m}(0)}{j_m} \Big)_{|\kappa= \kappa_m} \\
&   
\stackrel{\eqref{eq:rem2}} \to 
\bar d (\partial_\kappa^n\sqrt{\kappa})_{|\kappa = \bar \kappa } \, , \ {\rm for}\ m \rightarrow \infty \, , 
	\end{split}
	\end{equation*}
with $\bar d:= \lim_{m\rightarrow\infty}\abs{j_m}^\frac32/\braket{\ell_m} \in \R $.  Note that $ \bar d $ is finite  because 
	$j_m$ and $\ell_m$ satisfy \eqref{eq:1_restr}. Therefore
	 \eqref{eq:1_abs_m}  becomes, in the limit $m\rightarrow\infty $,   
	$$
	\partial_\kappa^n \big( \ora{\Omega}(\kappa) \cdot \bar c + \bar d \sqrt{\kappa} \, 
	\big)_{|\kappa= \bar \kappa } = 0 \, , \quad \forall \, n\in\N_0 \, . 
$$
By analyticity, this implies that $\ora{\Omega}(\kappa)\cdot \bar{c} + \bar{d}\sqrt{\kappa} = 0 $ for any $\kappa\in\IK$. This contradicts the non-degeneracy of the vector $(\ora{\Omega}(\kappa), \sqrt{\kappa})$ in Lemma \ref{lem:non_deg_vectors}-2, since $(\bar c, \bar d) \neq 0$.
\\[1mm]
	{\bf Proof of \eqref{eq:2_meln-}}. We split again the proof into 4 steps.\\
	{\sc Step 1.} By Lemma \ref{rem:exp_omegaj_k}, for any $\kappa\in\IK $, 
	\begin{equation*}
	| \ora{\Omega}(\kappa)\cdot \ell  + \Omega_j(\kappa)-\Omega_{j'}(\kappa)|  \geq 
	|\Omega_j(\kappa)-\Omega_{j'}(\kappa)| - | \ora{\Omega}(\kappa)\cdot \ell|  
	\geq \sqrt{\kappa_1}
	\big| \abs j^\frac32 - |j'|^\frac32 \big| - C  \langle \ell \rangle \geq \langle \ell \rangle 
	\end{equation*}
	whenever $ | \abs j^\frac32 - |j'|^\frac32 | \geq C_1 \langle \ell \rangle $ 
	for some $C_1>0$. In this case \eqref{eq:2_meln-} is already fulfilled with $ n = 0 $.	
	Thus we restrict to 
	indexes $\ell\in\Z^\nu$ and $j,j'\in\S_0^c$, 
	such that
	\begin{equation}\label{eq:2_restr}
	\big| |j|^\frac32 - |j'|^\frac32 \big|  < C_1 \langle \ell \rangle \,.
	\end{equation}
	Furthermore we may assume $ j_m \neq j_m' $ because the case $ j_m = j_m' $ is included in \eqref{eq:0_meln}.
	\\[1mm]
	{\sc Step 2.} By contradiction, we assume that, for any $m\in\N$,  there exist $\kappa_m\in\IK$, $\ell_m\in\Z^\nu$ and $j_m,j_m'\in\S_0^c$, 
	satisfying \eqref{eq:2_restr}, such that, for any $0\leq n\leq m$,
	\begin{equation}\label{eq:2_abs_m}
	\begin{cases}
	\big| \partial_\kappa^n\big( \ora{\Omega}(\kappa) \cdot \frac{\ell_m}{\braket{\ell_m}}+ \frac{1}{\braket{\ell_m}}\big( \Omega_{j_m}(\kappa)-\Omega_{j_m'}(\kappa) \big) \big)_{|
	\kappa= \kappa_m} \big| < \frac{1}{\braket{m}} \\
	\ora{\jmath} \cdot \ell_m + j_m -j_m' =0 \, . 
	\end{cases} 
	\end{equation}
	Up to subsequences $\kappa_m\rightarrow\bar\kappa\in\IK$ and $\ell_m/\braket{\ell_m}\rightarrow \bar c\in\R^\nu$. \\
	{\sc Step 3.} We start with the case when $(\ell_m)_{m\in\N}\subset\Z^\nu$ is bounded. Up to subsequences, we have definitively that  $\ell_m = \bar \ell\in\Z^\nu $. Moreover, if
	$ |j_m| \neq |j_m'| $, there is $ c > 0 $ such that
	\begin{equation*}
	c \big( |j_m|^\frac12 + |j_m'|^\frac12 \big) 
	\leq \big| |j_m|^{\frac32} -|j_m'|^\frac32 \big| < C_1 \langle \ell_m \rangle 
	\leq C \, ,
	 \qquad \forall m \in \N \, ,
	\end{equation*}
	If $ j_m = - j_m'  $ we deduce by the momentum relation that 
	$ |j_m| = |j_m'| \leq C \langle \ell_m \rangle \leq C $, and  
	we conclude that in any case 
	the sequences $(j_m)_{m\in\N}$ and $(j_m')_{m\in\N}$ are bounded.
	Up to subsequences, we have definitively that  $j_m= \bar \jmath$ and 
	$ j_m'=\bar{\jmath}' $, 
	with $\bar \jmath, \bar{\jmath}'\in\S_0^c$  and such that
\begin{equation}
\label{elljjneq0}
\jmath \neq  \bar{\jmath}' \, .  
\end{equation}
 Therefore \eqref{eq:2_abs_m} becomes,  in the limit $m\rightarrow\infty $,  
	\begin{equation*}
 \partial_\kappa^n\big( \ora{\Omega}(\kappa) \cdot \bar \ell + \Omega_{\bar \jmath}(\kappa)-\Omega_{\bar{\jmath}'}(\kappa)  \big)_{| \kappa= \bar \kappa }  = 0 \, , \ \forall\,n\in\N_0\, , 
  \quad 
	\ora{\jmath}\cdot \bar \ell + \bar \jmath - \bar{\jmath}' = 0 \, . 
	\end{equation*}
By analyticity, we obtain that
	\begin{equation}\label{eq:2-_limit_b}
	\ora{\Omega}(\kappa) \cdot \bar \ell + \Omega_{\bar \jmath}(\kappa) - \Omega_{\bar{\jmath}'}(\kappa) = 0 \, ,  \  \forall\,\kappa\in\IK \, , \quad 
	\ora{\jmath}\cdot \bar \ell + \bar \jmath-\bar{\jmath}' = 0 \, . 
	\end{equation}
	We distinguish several cases:
	\begin{itemize}
		\item Let $\bar \jmath,\bar{\jmath}' \notin -\S$ and $|\bar{\jmath}|\neq |\bar{\jmath}'|$. By \eqref{eq:2-_limit_b} the vector $(\ora{\Omega}(\kappa),\Omega_{\bar \jmath}(\kappa),\Omega_{\bar{\jmath}'}(\kappa))$ is degenerate with $c:= (\bar \ell,1,-1)\neq 0$, contradicting  Lemma \ref{lem:non_deg_vectors}-4.
		\item  Let $\bar \jmath,\bar{\jmath}' \notin-\S$ and $\bar{\jmath}'=- \bar{\jmath}$. 
		In view of  \eqref{Om-om},  system \eqref{eq:2-_limit_b} becomes
		\begin{equation}\label{eq:2_case1}
		\begin{cases}
		\ora{\omega}(\kappa)\cdot \bar \ell + \frac{\gamma}{2} \left( \sum_{a=1}^{\nu}\bar{\ell_a}\frac{G_{\bar{\jmath}_a }(0)}{\bar{\jmath}_a} +2 \frac{G_{\bar \jmath}(0)}{\bar \jmath} \right) = 0  \, , \qquad \forall \kappa \in \IK \, , \\
		\ora{\jmath}\cdot \bar \ell +2 \bar \jmath = 0 \, . 
		\end{cases}
		\end{equation}
By Lemma \ref{rem:omega-nondeg}  the vector $(\ora{\omega}(\kappa),1)$ is non-degenerate, which is a contradiction for $ \gamma \neq 0 $. 
If $ \gamma = 0 $ the first equation in \eqref{eq:2_case1} implies  $\bar \ell = 0$. 
Then the momentum condition implies  $ 2 \bar \jmath =0$, which  is a 
contradiction with  $\bar \jmath \neq 0$.
		\item Let $\bar{\jmath}'\notin -\S$ and $\bar{\jmath}\in-\S$. With no loss of generality 
		suppose  $\bar \jmath= -\bar{\jmath}_1 $. In view of  \eqref{Om-om}, 
		the first equation in \eqref{eq:2-_limit_b} implies that, for any $\kappa\in \IK $
$$
		(\bar{\ell_1}+1)\omega_{\bar{\jmath}_1}(\kappa)  + \sum_{a=2}^{\nu}\bar{\ell_a}\omega_{\bar{\jmath}_a}(\kappa)-\omega_{\bar{\jmath}'}(\kappa) 
	 +  \frac{\gamma}{2}\Big( (\bar{\ell_1}-1)\frac{G_{\bar{\jmath}_1}(0)}{\bar{\jmath}_1} + \sum_{a=2}^\nu \bar{\ell_a} \frac{G_{\bar{\jmath}_a}(0)}{\bar{\jmath}_a} -\frac{G_{\bar{\jmath}'}(0)}{\bar{\jmath}'} \Big) = 0 \, .
$$
		By Lemma \ref{rem:omega-nondeg}
		the vector $\big( \ora{\omega}(\kappa),\omega_{\bar{\jmath}'}(\kappa),1 \big)$ is non-degenerate, which is a contradiction.  
		\item Last, let $\bar \jmath,\bar{\jmath}'\in -\S$ and $\bar{\jmath}\neq \bar{\jmath}'$, by \eqref{elljjneq0}.
		With no loss of generality  suppose  $\bar{\jmath} =- \bar{\jmath}_1$ and $\bar{\jmath}' = -\bar{\jmath}_2$.
		Then \eqref{eq:2-_limit_b} reads 
		\begin{equation}\label{eq:2_case3}
		\begin{cases}
		(\bar{\ell_1}+1)\omega_{\bar{\jmath}_1}(\kappa) + \left( \bar{\ell_2} - 1 \right)\omega_{\bar{\jmath}_2} +\sum_{a=3}^{\nu}\bar{\ell_a}\omega_{\bar{\jmath}_a}(\kappa)\\
		  +  \frac{\gamma}{2}\left( (\bar{\ell_1}-1)\frac{G_{\bar{\jmath}_1}(0)}{\bar{\jmath}_1}+ (\bar{\ell_2}+1 )\frac{G_{\bar{\jmath}_2}(0)}{\bar{\jmath}_2} + \sum_{a=3}^\nu \bar{\ell_a} \frac{G_{\bar{\jmath}_a}(0)}{\bar{\jmath}_a} ) \right) = 0 \, , \ 
		\forall \kappa \in \IK \, ,  \\
		(\bar{\ell_1}-1)\bar{\jmath}_1+ (\bar{\ell_2}+1)\bar{\jmath}_2 +\sum_{a=3}^\nu \bar{\ell_a}\, \bar{\jmath}_a  = 0 \, . 
		\end{cases} 
		\end{equation}
By Lemma \ref{rem:omega-nondeg}
the vector $ (\ora{\omega}(\kappa),1) $ is non-degenerate, which is a contradiction for
$ \gamma \neq 0 $. If $ \gamma = 0 $
the first equation in \eqref{eq:2_case3} implies that $\bar{\ell_1}=-1$, $\bar{\ell_2} = 1$, $\bar{\ell_3}=\ldots=\bar{\ell_\nu}=0$. Inserting these values in  the momentum condition we obtain $-2\bar{\jmath}_1+ 2 \bar{\jmath}_2 = 0$. This contradicts $\bar{\jmath}\neq \bar{\jmath}'$.
	\end{itemize}
	{\sc Step 4.} We finally consider the case when $(\ell_m)_{m\in\N}$ is unbounded. Up to subsequences 
	$\abs{\ell_m}\rightarrow \infty$ as $m\rightarrow\infty$ and 
	$	 \lim_{m\to\infty}\ell_m/\braket{\ell_m} =: \bar c  \neq 0 $.  
In addition, by \eqref{eq:2_restr}, up to subsequences
\begin{equation}
\label{eq:d1}
\lim_{m \to \infty} \frac{|j_m|^\frac32-|j_m'|^\frac32}{\braket{\ell_m}}=  \bar d_1 \in \R \, . 
\end{equation}
By \eqref{Om-om} and  \eqref{eq:rem1} we have, for any $ n  $, 
	\begin{align*}
	& \partial_\kappa^n\frac{1}{\braket{\ell_m}} 
	\Big( \Omega_{j_m}(\kappa)-\Omega_{j_m'}(\kappa) \Big)_{|\kappa= \kappa_m}  =
	\partial_\kappa^n \Big( \frac{\sqrt{\kappa}}{\braket{\ell_m}}\big( |j_m|^\frac32-|j_m'|^\frac32 \big)  \\
	&  + \frac{1}{\braket{\ell_m}\sqrt{\kappa}}\Big( \frac{c_{j_m}(\kappa)}{|j_m|^\frac12}- \frac{c_{j_m'}(\kappa)}{|j_m'|^\frac12} \Big) + \frac{\gamma}{2\braket{\ell_m}}\Big( \frac{G_{j_m}(0)}{j_m}-\frac{G_{j_m'}(0)}{j_m'} \Big)_{|\kappa= \kappa_m}   \Big) 
\to \bar d_1 \partial_\kappa^n( \sqrt{\kappa})_{|\kappa = \bar \kappa}	  \nonumber 
	\end{align*}
using \eqref{eq:d1} and $\la \ell_m \ra \to \infty$. 
 Therefore \eqref{eq:2_abs_m} becomes, in the limit $m\rightarrow\infty $,  
	$$
	\partial_\kappa^n\big( \ora{\Omega}(\kappa)\cdot \bar c + \bar d_1 \sqrt{\kappa} \big)
	_{|\kappa = \bar \kappa} = 0 \, , \quad  \forall n\in\N_0 \, .
	$$
By analyticity this implies 
 $\ora{\Omega}(\kappa)\cdot\bar c + \bar d_1 \sqrt\kappa =  0$, for all  $ \kappa \in  \IK $. Thus $(\ora{\Omega}(\kappa), \sqrt{\kappa})$ is degenerate with $c = (\bar c, \bar d_1) \neq 0$,  contradicting Lemma \ref{lem:non_deg_vectors}-2. 
\\[1mm]
	{\bf Proof of \eqref{eq:2_meln+}}. The proof is similar to that 
	for  \eqref{eq:2_meln-} and we omit it.
\end{proof}

\section{Nash-Moser theorem and measure estimates}\label{sec:NM}

Under the 
rescaling $ (\eta,\zeta)\mapsto (\varepsilon \eta, \varepsilon\zeta) $, the Hamiltonian  system \eqref{eq:Ham_eq_zeta} transforms into the Hamiltonian system generated by 
\begin{equation}\label{cHepsilon}
\cH_\varepsilon(\eta,\zeta) := \varepsilon^{-2} \cH (\varepsilon\eta,\varepsilon\zeta) = \cH_L(\eta,\zeta) + \varepsilon P_\varepsilon(\eta,\zeta) \,,
\end{equation}
where $ \cH $ is the water waves Hamiltonian \eqref{Ham-Wal} expressed in the
Wahl\'en coordinates \eqref{eq:gauge_wahlen}, 
$\cH_L $ is defined in \eqref{lin_real} and 
\begin{align*}
P_\varepsilon(\eta,\zeta):= & \ \frac{1}{2\varepsilon}\int_\T 
\left( \zeta+\frac{\gamma}{2}\pa_x^{-1} \eta\right) 
( G(\varepsilon\eta)-G(0) ) \left( \zeta+\frac{\gamma}{2}\pa_x^{-1} \eta\right) \wrt x \\
& + \frac{\kappa}{\varepsilon^3}\int_\T\left( \sqrt{1+\varepsilon^2 \eta_x^2}-1-\frac{\varepsilon^2 \eta_x^2}{2} \right) \wrt x   + \frac{\gamma}{2}\int_\T\left( 
- \left( \zeta+\frac{\gamma}{2}\pa_x^{-1} \eta\right)_{\! x} \eta^2 +\frac{\gamma}{3}\eta^3 \right)\wrt x \,. \notag
\end{align*}
We now study the Hamiltonian system  generated by the Hamiltonian $\cH_\varepsilon(\eta,\zeta) $, 
in  the action-angle and normal 
coordinates $ (\theta, I, w) $ defined in Section \ref{sec:decomp}. 
Thus we consider the Hamiltonian $H_\varepsilon (\theta, I, w )$ defined by 
\begin{equation}\label{Hepsilon}
H_\varepsilon := \cH_\varepsilon \circ A = \varepsilon^{-2} \cH \circ \varepsilon A
\end{equation}
where  
$A$ is the map defined in \eqref{aacoordinates}. 
The associated symplectic form is given in \eqref{sympl_form}. 

By  Lemma \ref{zero_term} (see also \eqref{QFH}, \eqref{ajbjAA}), 
in the variables $ (\theta, I, w) $
the quadratic Hamiltonian $\cH_L $ 
defined in \eqref{lin_real}  simply reads, up to a constant,
$$
\cN:= \cH_L\circ A = \ora{\Omega}(\kappa)\cdot I + \tfrac12 \left( \b\Omega_W w,w \right)_{L^2} 
$$
where $\ora{\Omega}(\kappa) \in \R^\nu $ is defined in \eqref{Omega-kappa} and $\b\Omega_W$ in \eqref{eq:lin00_wahlen}.
Thus the Hamiltonian $H_\varepsilon$ in \eqref{Hepsilon} is
\begin{equation}\label{cNP}
 H_\varepsilon =\cN + \varepsilon P  \qquad {\rm with}
\qquad P:= P_\varepsilon \circ A \, . 
\end{equation}
We look for an embedded invariant torus
$$
i :\T^\nu \rightarrow \R^\nu \times \R^\nu \times \acca_{\S^+,\Sigma}^\angle \,, \quad \vf \mapsto i(\vf):= ( \theta(\vf), I(\vf), w(\vf)) \, , 
$$
of the Hamiltonian vector field 
$ X_{H_\varepsilon} := ( \pa_I H_\varepsilon , -\pa_\theta H_\varepsilon, \Pi_{\S^+,\Sigma}^\angle J \nabla_{w} H_\varepsilon ) $ 
filled by quasi-periodic solutions with Diophantine frequency 
vector $\omega\in\R^\nu$ (which satisfies also first and second order Melnikov non-resonance conditions, 
see \eqref{0meln}-\eqref{2meln-}).

\subsection{Nash-Moser theorem of hypothetical conjugation}

For $\alpha\in \R^\nu$, we consider the family of modified Hamiltonians
\begin{equation}\label{Halpha}
H_\alpha := \cN_\alpha + \varepsilon P \,, \quad \cN_\alpha:= \alpha \cdot I + \tfrac12 \left( w, \b\Omega_W w \right)_{L^2} \, , 
\end{equation}
and  the nonlinear operator
\begin{align}\label{F_op}
\cF(i,\alpha) & := \cF(\omega,\kappa,\varepsilon;i,\alpha) := \omega\cdot\pa_\vf i(\vf) - X_{H_\alpha}(i(\vf)) \notag \\
& = \begin{pmatrix}
\omega\cdot \pa_\vf \theta(\vf) & - \alpha -\varepsilon \pa_I P(i(\vf)) \\
\omega\cdot \pa_\vf I(\vf) &+ \varepsilon \pa_\theta P(i(\vf)) \\
\omega\cdot \pa_\vf w(\vf) &  - \, \Pi_{\S^+,\Sigma}^\angle J ( \b\Omega_W w(\vf) +\varepsilon \nabla_{w} P(i(\vf)) )  
\end{pmatrix} \, . 
\end{align}
If $\cF(i,\alpha)=0$, then 
the embedding $\vf\mapsto i(\vf)$ is an invariant torus for the Hamiltonian vector field $X_{H_\alpha}$, filled with quasi-periodic solutions with frequency $\omega$.

Each Hamiltonian $H_\alpha$ in \eqref{Halpha} is invariant under the
 involution $\vec \cS$   
 and the translations $\vec \tau_\vs$, $\vs \in \R$, defined respectively in \eqref{rev_aa} and in  \eqref{vec.tau}:
\begin{equation}
\label{Halpha.symm}
H_\alpha\circ \vec \cS = H_\alpha \, , \qquad
H_\alpha\circ \vec\tau_\vs = H_\alpha \, ,  \quad \forall\, \vs \in \R \, . 
\end{equation}
 We look for a reversible   traveling torus embedding 
 $ \vphi \mapsto  i (\vphi) = $ $ ( \theta(\vf), I(\vf), w(\vf)) $, namely 
 satisfying  
\begin{equation}\label{RTTT}
\vec \cS i(\vf)=  i(-\vf) \, , \qquad 
\vec \tau_\vs i(\vf) = i(\vf - \ora{\jmath} \vs) \, , \quad \forall \,\vs \in \R \, . 
\end{equation}
\begin{lem}\label{lem:RT}
The operator $ \cF ( \cdot , \alpha) $ maps a reversible, respectively traveling, wave 
into an anti-reversible,  respectively traveling, wave variation, according to Definition \ref{trav-vari}.
\end{lem}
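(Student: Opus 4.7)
The plan is to derive both assertions directly from the symmetry properties \eqref{Halpha.symm} of the Hamiltonian $H_\alpha$, combined with the fact that $\vec\cS$ is anti-symplectic (squares to the identity and reverses the symplectic form) while each $\vec\tau_\vs$ is symplectic. The two statements are formally parallel, and both reduce to a computation of how $\omega\cdot\pa_\vf i$ and $X_{H_\alpha}(i(\vf))$ transform under $\vec\cS$ and $\vec\tau_\vs$.

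First I would record the two auxiliary identities for the Hamiltonian vector field. From $H_\alpha\circ\vec\cS=H_\alpha$ and the fact that $\vec\cS$ is an involution reversing the symplectic 2-form \eqref{sympl_form} (see \eqref{rev_aa} together with \eqref{rev_invo} and \eqref{ab-rev}), one obtains
\begin{equation}\label{rev-XH}
X_{H_\alpha}\circ\vec\cS = -\vec\cS\circ X_{H_\alpha}\,,
\end{equation}
which is the reversibility of the vector field $X_{H_\alpha}$. Similarly, from $H_\alpha\circ\vec\tau_\vs=H_\alpha$ and the symplecticity of $\vec\tau_\vs$ (recall \eqref{vec.tau} and \eqref{ab-tras}), one obtains
\begin{equation}\label{mom-XH}
X_{H_\alpha}\circ\vec\tau_\vs = D\vec\tau_\vs\circ X_{H_\alpha}\,,\qquad\forall\,\vs\in\R\,.
\end{equation}

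Next I would handle the reversibility part. Assume $\vec\cS i(\vf)=i(-\vf)$. Since $\vec\cS$ is linear, the chain rule gives $(\omega\cdot\pa_\vf i)(-\vf) = -\vec\cS(\omega\cdot\pa_\vf i)(\vf)$. Combining with \eqref{rev-XH} evaluated at $i(\vf)$,
\begin{align*}
\cF(i,\alpha)(-\vf) &= (\omega\cdot\pa_\vf i)(-\vf) - X_{H_\alpha}(\vec\cS i(\vf)) \\
&= -\vec\cS (\omega\cdot\pa_\vf i)(\vf) + \vec\cS\, X_{H_\alpha}(i(\vf)) = -\vec\cS\,\cF(i,\alpha)(\vf)\,,
\end{align*}
which is exactly the anti-reversibility property $\cF(i,\alpha)(-\vf)=-\vec\cS\cF(i,\alpha)(\vf)$, cf.\ Definition \ref{defRAR}.

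For the traveling-wave part, assume $\vec\tau_\vs i(\vf) = i(\vf-\ora{\jmath}\vs)$ for all $\vs\in\R$. Differentiating this relation in $\vf$ along $\omega$ and using that $D\vec\tau_\vs$ commutes with linear operations in $\vf$ gives $(\omega\cdot\pa_\vf i)(\vf-\ora{\jmath}\vs) = D\vec\tau_\vs(\omega\cdot\pa_\vf i)(\vf)$. Combining with \eqref{mom-XH} evaluated at $i(\vf)$,
\begin{align*}
\cF(i,\alpha)(\vf-\ora{\jmath}\vs) &= (\omega\cdot\pa_\vf i)(\vf-\ora{\jmath}\vs) - X_{H_\alpha}(\vec\tau_\vs i(\vf)) \\
&= D\vec\tau_\vs(\omega\cdot\pa_\vf i)(\vf) - D\vec\tau_\vs X_{H_\alpha}(i(\vf)) = D\vec\tau_\vs\,\cF(i,\alpha)(\vf)\,,
\end{align*}
which, by Definition \ref{trav-vari}, is exactly the statement that $\cF(i,\alpha)$ is a traveling wave variation. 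There is no serious obstacle here: the only point requiring care is to verify that $\vec\cS$ is genuinely anti-symplectic for the symplectic form \eqref{sympl_form} (equivalently, that $H_\alpha\circ\vec\cS=H_\alpha$ forces \eqref{rev-XH} with the correct sign), and that the action of $D\vec\tau_\vs$ on tangent vectors matches the one in Definition \ref{trav-vari}; both are immediate from the explicit expressions \eqref{rev_aa}, \eqref{vec.tau}, and \eqref{sympl_form}.
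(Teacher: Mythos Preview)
Your proof is correct and follows exactly the approach the paper intends: the paper's own proof is the single sentence ``It follows directly by \eqref{F_op} and \eqref{Halpha.symm},'' and you have simply unpacked this by deriving the vector-field identities \eqref{rev-XH}, \eqref{mom-XH} from the invariances \eqref{Halpha.symm} and then checking the transformation of $\omega\cdot\pa_\vf i$ under $\vec\cS$ and $\vec\tau_\vs$.
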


\begin{proof}
It follows directly by \eqref{F_op} and \eqref{Halpha.symm}. 
\end{proof}

The norm of the periodic components of the embedded torus
\begin{equation}\label{ICal}
\fI (\vf):= i(\vf)-(\vf,0,0) := \left( \Theta(\vf), I(\vf), w(\vf) \right)\,, \quad \Theta(\vf):= \theta(\vf)-\vf\,,
\end{equation}
is $
\norm{ \fI }_s^{k_0,\upsilon} := \norm{\Theta}_{H_\vf^s}^{k_0,\upsilon} + \norm{I}_{H_\vf^s}^{k_0,\upsilon} + \norm{w}_s^{k_0,\upsilon} $, 
where
\begin{equation}\label{k0_def}
k_0:= m_0 + 2
\end{equation}
and $m_0 \in \N $ is the index of non-degeneracy provided by Proposition \ref{prop:trans_un}, which only depends on the linear unperturbed frequencies. Thus, $k_0$ is considered as an absolute constant and we will often omit to write the dependence of the various constants with respect to $k_0$. We look for quasi-periodic solutions of frequency $\omega$ belonging to a $\delta$-neighbourhood (independent of $\varepsilon$)
$$
\t\Omega := \big\{ \omega \in \R^\nu \ : \
 \dist \big( \omega, \ora{\Omega}[\kappa_1,\kappa_2] \big) < \delta \big\} \,, \quad \delta >0 \,,
$$
of the curve
 $\ora{\Omega}[\kappa_1,\kappa_2]$ defined by \eqref{Omega-kappa}.
\begin{thm} {\bf (Nash-Moser)}\label{NMT}
There exist positive constants ${\rm a_0},\varepsilon_0, C$ depending on $\S$, $k_0 $ and $\tau\geq 1$ such that, for all $\upsilon = \varepsilon^{\rm a}$, ${\rm a}\in (0,{\rm a}_0)$ and for all $\varepsilon\in (0,\varepsilon_0)$, there exist 
	\begin{enumerate}
	\item 
	a $k_0$-times differentiable function
	\begin{align}\label{alpha_infty}
	\alpha_\infty : & \, \t\Omega \times [\kappa_1,\kappa_2] \mapsto \R^\nu \,,\notag\\
	& \alpha_\infty(\omega,\kappa) := \omega + r_\varepsilon(\omega,\kappa)  \quad \text{ with } \quad  |r_\varepsilon|^{k_0,\upsilon} \leq C \varepsilon \upsilon^{-1} \, ;  
	\end{align}
	\item
	a family of embedded reversible traveling tori $i_\infty (\vf) $ (cfr. \eqref{RTTT}), 
	defined for all $(\omega,\kappa)\in\t\Omega \times[\kappa_1,\kappa_2] $, satisfying
	\begin{equation}\label{i.infty.est}
	\| i_\infty (\vf) -(\vf,0,0) \|_{s_0}^{k_0,\upsilon} \leq C \varepsilon\upsilon^{-1} \, ; 
	\end{equation}
	\item 
	a sequence of $k_0$-times differentiable functions $\mu_j^\infty : \R^\nu \times [\kappa_1,\kappa_2] \rightarrow \R$, $j\in \S_0^c  = \Z\,\setminus\,(\S\cup\{0\})$, of the form
	\begin{equation}\label{def:FE}
	\mu_j^\infty(\omega,\kappa) = \tm_{\frac32}^\infty(\omega,\kappa) \Omega_j(\kappa) + \tm_1^\infty(\omega,\kappa)j + \tm_{\frac12}^\infty(\omega,\kappa)\abs j^\frac12 + \fr_j^\infty(\omega,\kappa)\,,
\end{equation}
	with $\Omega_j(\kappa) $ defined in \eqref{def:Omegajk}, satisfying
	\begin{equation}\label{coeff_fin_small}
	| \tm_{\frac32}^\infty-1 |^{k_0,\upsilon}  ,\,  |\tm_1^\infty |^{k_0,\upsilon}, \, 
	|\tm_{\frac12}^\infty|^{k_0,\upsilon} \leq C\varepsilon \, , \quad   \sup_{j\in\S_0^c} | \fr_j^\infty |^{k_0,\upsilon} \leq C \varepsilon \upsilon^{-1} \, , 
	\end{equation}
\end{enumerate}
such that, for all $(\omega,\kappa)$ in the Cantor-like set
	\begin{align}
	\cC_\infty^\upsilon := & \Big\{ (\omega,\kappa) \in \t\Omega\times[\kappa_1,\kappa_2] \ : \ \abs{ \omega\cdot \ell } \geq 8 \upsilon \braket{\ell}^{-\tau} \,, \ \forall\,\ell\in \Z^\nu\setminus\{ 0\}\,;  \label{0meln}\\
	&  \ \abs{ \omega\cdot \ell + \mu_j^\infty(\omega,\kappa) } \geq 4 \upsilon \abs j^{\frac32}\braket{\ell}^{-\tau} \,,  \label{1meln} 
	  \forall\, \ell \in\Z^\nu , \, j\in \S_0^c \text{ with } \ora{\jmath}\cdot\ell + j =0  \,;   \\
	& 
	 \ \abs{ \omega\cdot \ell + \mu_j^\infty(\omega,\kappa)-\mu_{j'}^\infty(\omega,\kappa) }\geq 4 \upsilon\,
	\langle | j|^\frac32 - | j'|^\frac32 \rangle 
	\braket{\ell}^{-\tau} \,,  \label{2meln-}\\
	&\  \quad \quad \forall \ell\in\Z^\nu, \, j,j'\in\S_0^c,\, (\ell,j,j')\neq (0,j,j) \text{ with } \ora{\jmath}\cdot \ell + j-j'=0 \, ,   \nonumber \\
	& \  \abs{ \omega\cdot \ell + \mu_j^\infty(\omega,\kappa) +\mu_{j'}^\infty(\omega,\kappa)  } \geq 4\upsilon \,\big(\abs j^\frac32 + |j'|^\frac32 \big) \braket{\ell}^{-\tau}  \,, \label{2meln+}\\
	& \ \quad \quad \forall\,\ell\in \Z^\nu , \, j ,   j'\in\S_0^c \, , 
	 \text{ with } \ora{\jmath}\cdot\ell+j+j'=0\, \Big\}  \,,\nonumber
	\end{align}
	the function $i_\infty(\vf):= i_\infty(\omega,\kappa,\varepsilon;\vf)$ is a 
	solution of $\cF(\omega,\kappa,\varepsilon; i_\infty,\alpha_\infty(\omega,\kappa))=0$. As a consequence, the embedded torus $\vf\mapsto i_\infty(\vf)$ is invariant for the Hamiltonian vector field $X_{H_{\alpha_\infty(\omega,\kappa)}}$ as it is filled by  quasi-periodic reversible traveling wave solutions  with frequency $\omega$.	
\end{thm}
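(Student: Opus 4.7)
The strategy is a Nash--Moser iteration with smoothing, built around an approximate right inverse of the linearized operator $d_i \cF(i,\alpha)$ at a reversible traveling approximate torus. We work with the modified Hamiltonian $H_\alpha$ in \eqref{Halpha} precisely so that at each iterative step the ``counterterm'' $\alpha$ can be tuned to solve the average equation in the $I$-component, bypassing resonances arising from loss of action--angle normal form; $\alpha_\infty$ in \eqref{alpha_infty} is the limit value of this counterterm and the Diophantine/Melnikov set $\cC_\infty^\upsilon$ is obtained by tracing the conditions needed to invert the linearized operator at each stage.

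First, I would start the iteration from $i_0(\vf) = (\vf,0,0)$ and $\alpha_0 = \omega$, which is reversible, a traveling torus (trivially), and satisfies $\cF(i_0,\alpha_0) = -\varepsilon X_P(i_0)$ of size $O(\varepsilon)$ in $H^s$. At the $n$-th step, with a reversible traveling approximate solution $(i_n,\alpha_n)$, I need an approximate right inverse of $d_{i,\alpha} \cF(i_n,\alpha_n)$. The core work is this: construct such an inverse by (i) isotropic correction of $i_n$ to a symplectic torus and reduction to action--angle--normal coordinates near $i_n$, decoupling the linearized operator modulo terms vanishing on the exact solution; (ii) in the normal direction, reduce the quasi-linear operator $\omega \cdot \partial_\vf - \Pi_{\S^+,\Sigma}^\angle J (\b\Omega_W + \varepsilon \partial_w \nabla_w P(i_n))$ to constant coefficients in decreasing orders (orders $\tfrac32$, $1$, $\tfrac12$) by symplectic, reversibility preserving, \emph{momentum preserving} conjugations (diffeomorphism of the torus \eqref{defB-diffeo}, time reparametrization as in Section~\ref{sec:repa}, Egorov Proposition~\ref{egorov}, descent lemmas in the spirit of Section~\ref{sec:linnorm}); (iii) perform a KAM diagonalization reducing the remaining bounded operator to a diagonal reversible block, producing the final eigenvalues $\mu_j^\infty$ in the form \eqref{def:FE}, where the first three coefficients are the constant symbols extracted in step (ii) and $\fr_j^\infty$ comes from the KAM remainder. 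Throughout, I must keep \emph{all} transformations Hamiltonian (symplectic) and momentum preserving, so that the reduced operator inherits the form required for small-divisor invertibility and so that Lemma~\ref{A.mom.cons'} applies at every step, guaranteeing that the correction $\hat \imath$ is itself a traveling wave variation.

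Second, given the reduced, diagonal, block-diagonal operator, I would invert it on the Cantor-like set where \eqref{0meln}--\eqref{2meln+} hold (these are precisely the first and second Melnikov conditions for the diagonalized operator, with the momentum restrictions $\ora{\jmath}\cdot \ell + j - j' = 0$ etc.\ coming from the fact that traveling-wave variations have Fourier supports on the resonant module, cf.\ Lemma~\ref{lem:mom_pres}). The loss of derivatives $\tau$ in the small divisors is compensated by the standard Nash--Moser smoothing $\Pi_K$ from \eqref{pro:N}, which restricted to traveling waves still produces traveling waves. The correction $\hat \imath_{n+1} := -\Pi_{K_n} T_n \Pi_{K_n} \cF(i_n,\alpha_n)$, with $T_n$ the approximate right inverse, is chosen simultaneously with $\hat \alpha_{n+1}$ to zero the mean in $\vf$ of the $I$-equation, defining the update of the counterterm. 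Quadratic convergence in Whitney--Sobolev norms $\|\cdot\|_{s}^{k_0,\upsilon}$ then follows from the usual Nash--Moser estimates (tame estimates of Lemma~\ref{compo_moser}, \eqref{SM12}, \eqref{action_Hs_tame}), and the limits $i_\infty$, $\alpha_\infty$, $\mu_j^\infty$ exist by telescoping, with the quantitative bounds \eqref{i.infty.est}, \eqref{coeff_fin_small}. Since at every step the triple $(i_n,\alpha_n,\mu^{(n)}_j)$ is defined on the whole parameter set $\t\Omega \times [\kappa_1,\kappa_2]$ (Whitney extension through the cutoff $\chi$ in \eqref{paext}), while the Melnikov conditions are only verified a posteriori, one obtains a solution precisely on $\cC_\infty^\upsilon$.

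The main obstacle is step (ii)--(iii): carrying the reduction of the linearized operator while \emph{simultaneously} preserving the Hamiltonian, reversible, and momentum-preserving structures. As pointed out in comment (6) of the introduction, symplecticity (not merely reversibility) must be maintained to prevent the appearance of an unremovable operator of the form $\im a(\vf)\cH|D|^{1/2}$; this forces the conjugation maps $\Phi$ to be symplectic, which is achieved by taking $\Phi = e^X$ with $X$ Hamiltonian via Lemma~\ref{Neumann pseudo diff}. Simultaneously, momentum conservation is enforced by choosing all symbols to be quasi-periodic traveling waves (Lemma~\ref{lem:mom_pseudo}), so that the reparametrizations, Egorov conjugations, and KAM generators inherit the property. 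Preservation of momentum along the iteration is the new ingredient that guarantees, via Lemma~\ref{A.mom.cons'}, that the embedded tori $i_\infty$ in fact support quasi-periodic \emph{traveling} waves, as claimed in comment (7). The measure-theoretic verification that $\cC_\infty^\upsilon$ has asymptotically full measure is deferred to Section~\ref{subsec:measest} and relies critically on the transversality Proposition~\ref{prop:trans_un}, whose momentum restriction allowed us to discard the resonances coming from the non-simple spectrum.
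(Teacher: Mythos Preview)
Your outline is correct and follows essentially the same architecture as the paper: Nash--Moser iteration (Theorem~\ref{NASH}) built on the approximate right inverse of Theorem~\ref{alm.approx.inv}, which in turn rests on the reduction of $\cL_\omega$ in Section~\ref{sec:linnorm} and the KAM diagonalization of Section~\ref{sec:KAM}, with momentum preservation tracked throughout so that each $\wti_\tn$ remains a reversible traveling torus. One refinement: you need symplecticity only up to the block-decoupling step (Section~\ref{sec:block_dec}); from that point on the paper drops the Hamiltonian structure and retains only reversibility and momentum preservation, which suffices for the KAM reduction and avoids unnecessary constraints on the generators $\bX_\tn$.
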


We remind that the conditions on the indexes in \eqref{1meln}-\eqref{2meln-}
(where $ \vec \jmath \in \Z^\nu $ is the vector in \eqref{def:vecj}) are due to the fact that
we look for traveling wave solutions. 
These restrictions are essential to prove the measure estimates of the next section. 

\subsection{Measure estimates}\label{subsec:measest}

By \eqref{alpha_infty}, the function $\alpha_\infty(\,\cdot\,,\kappa)$ from $\t\Omega$ into its image $\alpha_\infty(\t\Omega,\kappa)$ is invertible and 
\begin{equation}\label{inv_alpha}
\begin{aligned}
& \beta = \alpha_\infty(\omega,\kappa) = \omega+r_\varepsilon(\omega,\kappa) \ \Leftrightarrow \\
&  \omega = \alpha_\infty^{-1}(\beta,\kappa) = \beta+\breve{r}_\varepsilon(\beta,\kappa) , \quad \abs{ \breve{r}_\varepsilon }^{k_0,\upsilon} \leq C\varepsilon\upsilon^{-1}\,.
\end{aligned}
\end{equation}
Then, for any $\beta\in\alpha_\infty(\cC_\infty^\upsilon)$, Theorem \ref{NMT} proves the existence of an embedded invariant torus filled by quasi-periodic solutions with Diophantine frequency $\omega=\alpha_\infty^{-1}(\beta,\kappa)$ for the Hamiltonian
\begin{equation*}
H_\beta = \beta \cdot I+  \tfrac12(w,\b\Omega_W w)_{L^2} + \varepsilon P \,.
\end{equation*}
Consider the curve of the unperturbed tangential frequency vector 
$ \ora{\Omega}(\kappa) $
 in \eqref{Omega-kappa}. In Theorem \ref{MEASEST} below we prove that for "most" values of $\kappa\in[\kappa_1,\kappa_2]$ the vector $(\alpha_\infty^{-1}(\ora{\Omega}(\kappa),\kappa) ,\kappa )$ is in $\cC_\infty^\upsilon$,
 obtaining
 an embedded torus for the Hamiltonian $H_\varepsilon$ in  \eqref{Hepsilon}, filled by quasi-periodic solutions with Diophantine frequency vector 
 $\omega = \alpha_\infty^{-1}(\ora{\Omega}(\kappa),\kappa) $, denoted  $ \wt \Omega $ in Theorem \ref{thm:main0}.
Thus $\varepsilon A(i_\infty(\wt \Omega t))$,  where $A$ is defined in \eqref{aacoordinates},
is a quasi-periodic traveling wave solution of the water waves equations 
\eqref{eq:Ham_eq_zeta} written in the Wahl\'en  variables. 
Finally, going back to the original Zakharov variables via \eqref{Whalen-c} 
we obtain solutions of \eqref{ww}. 
  This proves Theorem \ref{thm:main0} together with the following measure estimate.
  
\begin{thm} {\bf (Measure estimates)}\label{MEASEST}
	Let
	\begin{equation}\label{param_small_meas}
	\upsilon = \varepsilon^{\rm a} \,, \quad  0 <{\rm a}<\min\{ {\rm a}_0,1/(1+k_0) \}\,, \quad \tau > m_0 (\nu+4) \,,
	\end{equation}
	where $m_0$ is the index of non-degeneracy given in Proposition \ref{prop:trans_un} and $k_0:= m_0+2$. Then, for $ \varepsilon \in (0, \varepsilon_0) $ small enough,  the measure of the set
	\begin{equation}\label{Gvare}
	\cG_\varepsilon := \big\{ \kappa \in [\kappa_1,\kappa_2] 
	\ : \  \big( \alpha_\infty^{-1}( \ora{\Omega}(\kappa),\kappa ),\kappa \big) 
	\in \cC_\infty^\upsilon \big\}
	\end{equation}
	satisfies $ | \cG_\varepsilon | \rightarrow\kappa_2-\kappa_1$ as 
	$\varepsilon\rightarrow 0$.
\end{thm}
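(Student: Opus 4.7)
\medskip

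\noindent\textbf{Plan of proof.} The strategy is the classical Rüssmann-style measure estimate: write the complement $[\kappa_1,\kappa_2]\setminus\cG_\varepsilon$ as a union of ``resonant'' sets on which one of the conditions \eqref{0meln}--\eqref{2meln+} fails (with $\omega=\alpha_\infty^{-1}(\ora\Omega(\kappa),\kappa)$), estimate the measure of each such set via the transversality Proposition \ref{prop:trans_un}, and then sum, using the momentum restrictions to keep the sum convergent. Concretely, define
\[
R_\ell^{(0)}(\upsilon):=\{\kappa:|\omega(\kappa)\cdot\ell|<8\upsilon\la\ell\ra^{-\tau}\},\quad R^{(\mathrm{I})}_{\ell,j}:=\{\kappa:|\omega(\kappa)\cdot\ell+\mu_j^\infty|<4\upsilon|j|^{3/2}\la\ell\ra^{-\tau}\}
\]
and analogously $R^{(\mathrm{II},\mp)}_{\ell,j,j'}$, imposing in each case the corresponding momentum relation $\ora\jmath\cdot\ell+j=0$, $\ora\jmath\cdot\ell+j\mp j'=0$. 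Then
$[\kappa_1,\kappa_2]\setminus\cG_\varepsilon\subseteq \bigcup R_\ell^{(0)}\cup\bigcup R^{(\mathrm{I})}_{\ell,j}\cup\bigcup R^{(\mathrm{II},-)}_{\ell,j,j'}\cup\bigcup R^{(\mathrm{II},+)}_{\ell,j,j'}$.

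\medskip

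\noindent For each resonant set I would first show it is \emph{empty} unless $|\ell|$ and $|j|,|j'|$ lie in a suitable range. Using \eqref{alpha_infty}, \eqref{inv_alpha}, \eqref{def:FE} and \eqref{coeff_fin_small}, one has $\omega(\kappa)=\ora\Omega(\kappa)+O(\varepsilon\upsilon^{-1})$ and $\mu_j^\infty(\omega,\kappa)=\Omega_j(\kappa)+O(\varepsilon)|j|^{3/2}+O(\varepsilon)|j|+O(\varepsilon\upsilon^{-1})$ (uniformly in $j$). Combined with the asymptotics of Lemma \ref{rem:exp_omegaj_k}, this gives lower bounds $|\omega\cdot\ell+\mu_j^\infty|\geq \tfrac12\sqrt{\kappa_1}|j|^{3/2}-C\la\ell\ra$ etc., so that each resonant set is empty whenever e.g.\ $|j|^{3/2}\geq C_0\la\ell\ra$ (and analogously $||j|^{3/2}-|j'|^{3/2}|\geq C_1\la\ell\ra$ for the II--case), matching exactly Step~1 in the proof of Proposition~\ref{prop:trans_un}. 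The remaining indices satisfy $|j|,|j'|\lesssim\la\ell\ra^{2/3}$, a finite-box restriction.

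\medskip

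\noindent\textbf{Main obstacle: transversality of the perturbed frequencies.} I then need to transfer the unperturbed transversality of Proposition \ref{prop:trans_un} to the \emph{perturbed} functions $\kappa\mapsto \omega(\kappa)\cdot\ell+\mu_j^\infty(\omega(\kappa),\kappa)$ (and the analogous $\pm$ combinations). Since $k_0=m_0+2$, one has Whitney-$k_0$-differentiability in $\kappa$, and a direct computation gives
\[
\partial_\kappa^n\bigl(\omega(\kappa)\cdot\ell+\mu_j^\infty\bigr)=\partial_\kappa^n\bigl(\ora\Omega(\kappa)\cdot\ell+\Omega_j(\kappa)\bigr)+O(\varepsilon\upsilon^{-(1+n)})\la\ell\ra,\quad 0\leq n\leq m_0,
\]
so that, by \eqref{eq:1_meln} of Proposition \ref{prop:trans_un} and $\varepsilon\upsilon^{-(1+k_0)}=\varepsilon^{1-\mathrm a(1+k_0)}\to 0$ (condition \eqref{param_small_meas}), the perturbed quantity satisfies $\max_{0\leq n\leq m_0}|\partial_\kappa^n(\cdot)|\geq \tfrac{\rho_0}{2}\la\ell\ra$ for $\varepsilon$ small. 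The same works for \eqref{eq:0_meln}, \eqref{eq:2_meln-}, \eqref{eq:2_meln+}. This is the step where the choice of $k_0$ and the smallness $\upsilon=\varepsilon^{\mathrm a}$ with $\mathrm a<1/(1+k_0)$ are genuinely used.

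\medskip

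\noindent\textbf{Russmann lemma and summation.} Applying the standard Russmann lemma (Theorem 17.1 in Russmann, or Lemma 2.1 in \cite{BaBM}): if $f\in C^{m_0}([\kappa_1,\kappa_2])$ satisfies $\max_{n\leq m_0}|\partial_\kappa^n f|\geq\rho$, then $|\{|f|<\eta\}|\leq C(\eta/\rho)^{1/m_0}$. Applied with $f=\omega\cdot\ell+\mu_j^\infty$, $\rho\sim\la\ell\ra$ and $\eta\sim\upsilon|j|^{3/2}\la\ell\ra^{-\tau}$, one obtains
\[
|R^{(\mathrm{I})}_{\ell,j}|\lesssim \bigl(\upsilon|j|^{3/2}\la\ell\ra^{-\tau-1}\bigr)^{1/m_0}.
\]
Using the momentum constraint $j=-\ora\jmath\cdot\ell$ (so $|j|\lesssim\la\ell\ra$) one has $|R^{(\mathrm{I})}_{\ell,j}|\lesssim\upsilon^{1/m_0}\la\ell\ra^{(3/2-\tau-1)/m_0}$ and the $j$-sum collapses to a single term per $\ell$. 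The analogous estimates hold for the other sets, where the momentum law fixes $j$ in terms of $\ell$ and one of $j,j'$, reducing each triple sum to a double sum. Summing over $\ell\in\Z^\nu$ gives total measure $\lesssim\upsilon^{1/m_0}\sum_{\ell}\la\ell\ra^{(3-\tau)/m_0}$, which converges provided $\tau/m_0>\nu+3/m_0$, guaranteed by $\tau>m_0(\nu+4)$. Hence $|[\kappa_1,\kappa_2]\setminus\cG_\varepsilon|\lesssim\upsilon^{1/m_0}=\varepsilon^{\mathrm a/m_0}\to 0$, proving the theorem.
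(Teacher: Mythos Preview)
Your overall strategy is exactly the one the paper uses: decompose the complement into resonant sets, show emptiness unless the indices lie in a box, transfer the transversality of Proposition~\ref{prop:trans_un} to the perturbed phases via the smallness $\varepsilon\upsilon^{-(1+k_0)}\to 0$, apply R\"ussmann's theorem, and sum. The perturbed-transversality step and the use of \eqref{param_small_meas} are handled correctly.

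There is, however, a genuine gap in your treatment of the \emph{second-Melnikov difference} sets $R^{(\mathrm{II},-)}_{\ell,j,j'}$. You correctly observe that emptiness only gives $\big||j|^{3/2}-|j'|^{3/2}\big|\le C_1\langle\ell\rangle$, but you then assert ``the remaining indices satisfy $|j|,|j'|\lesssim\langle\ell\rangle^{2/3}$''. This is false for the difference case: the bound on $\big||j|^{3/2}-|j'|^{3/2}\big|$ does \emph{not} bound $|j|$ and $|j'|$ individually (both can be simultaneously large with $|j|\approx|j'|$). Consequently, after using momentum to reduce to a double sum over $(\ell,j)$, the $j$-range is \emph{not} contained in $\{|j|\lesssim\langle\ell\rangle^{2/3}\}$, and your summation as written does not converge.

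The paper fixes this as follows (and this is exactly where the hypothesis $\tau>m_0(\nu+4)$ is genuinely used). First one removes $j=j'$ (since then $R^{(\mathrm{II},-)}_{\ell,j,j}\subset R^{(0)}_\ell$). Then one splits into $j'=-j$ and $|j|\neq|j'|$. If $j'=-j$, momentum gives $2j=-\ora\jmath\cdot\ell$, hence $|j|\lesssim\langle\ell\rangle$, and the sum is controlled by $\sum_\ell \upsilon^{1/m_0}\langle\ell\rangle^{-(\tau/m_0-1)}$. If $|j|\neq|j'|$, one uses the elementary inequality
\[
\big||j|^{3/2}-|j'|^{3/2}\big|
=\big||j|^{1/2}-|j'|^{1/2}\big|\,\big(|j|+|j'|+|j|^{1/2}|j'|^{1/2}\big)
\ \ge\ \tfrac12\big(|j|^{1/2}+|j'|^{1/2}\big),
\]
valid for integers with $|j|\neq|j'|$, which combined with $\big||j|^{3/2}-|j'|^{3/2}\big|\le C_1\langle\ell\rangle$ yields $|j|,|j'|\lesssim\langle\ell\rangle^{2}$. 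The resulting sum is $\sum_\ell \upsilon^{1/m_0}\langle\ell\rangle^{-(\tau/m_0-4)}$, and convergence requires $\tau/m_0-4>\nu$, i.e.\ $\tau>m_0(\nu+4)$. Your weaker condition $\tau>m_0\nu+3$ is not sufficient.
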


The rest of this section is devoted to prove Theorem \ref{MEASEST}. By \eqref{inv_alpha}
we have
\begin{equation}\label{Om-per}
\ora{ \Omega}_\varepsilon (\kappa):= \alpha_\infty^{-1}(\ora{\Omega}(\kappa),\kappa) =
\ora{\Omega}(\kappa) +\ora{r}_\varepsilon \,,
\end{equation}
where $\ora{r}_\varepsilon(\kappa) 
:= \breve{r}_\varepsilon(\ora{\Omega}(\kappa),\kappa) $
satisfies 
\begin{equation}\label{eq:tang_res_est}
\abs{\partial_\kappa^k {\vec r}_\varepsilon (\kappa)} \leq C \varepsilon\upsilon^{-(1+k)} \,, \quad \forall\,\abs k \leq k_0 \,, \ \text{uniformly on } [\kappa_1,\kappa_2] \,.
\end{equation}
We also denote, with a small abuse of notation, for all $j\in\S_0^c$, 
\begin{equation}\label{eq:final_eig_kappa}
\mu_j^\infty(\kappa):= \mu_j^\infty \big( \ora{\Omega}_\varepsilon(\kappa),\kappa \big) := \tm_{\frac32}^\infty(\kappa)\Omega_j(\kappa)+\tm_1^\infty(\kappa) j + \tm_{\frac12}^\infty(\kappa) \abs j^\frac12 + \fr_j^\infty(\kappa)\,,
\end{equation}
where 
$\tm_{\frac32}^\infty (\kappa) :=\tm_{\frac32}^\infty (\ora{\Omega}_\varepsilon(\kappa),\kappa) $, $\tm_1^\infty (\kappa) :=\tm_1^\infty (\ora{\Omega}_\varepsilon(\kappa),\kappa) $, $\tm_{\frac12}^\infty (\kappa) :=\tm_{\frac12}^\infty (\ora{\Omega}_\varepsilon(\kappa),\kappa) $ and $\fr_j^\infty (\kappa) :=\fr_j^\infty (\ora{\Omega}_\varepsilon(\kappa),\kappa) $.

By \eqref{coeff_fin_small} and \eqref{eq:tang_res_est} 
we have 
\begin{align}
& \big| \pa_\kappa^k\big( \tm_{\frac32}^\infty(\kappa)-1 \big) \big| , \, 
|\pa_\kappa^k\tm_1^\infty(\kappa)|,\, 
| \pa_\kappa^k\tm_{\frac12}^\infty(\kappa) |  \leq C \varepsilon\upsilon^{-k}, \label{small_coeff_k}\\
&\sup_{j\in\S_0^c }\abs{ \pa_\kappa^k \fr_j^\infty(\kappa) } \leq C\varepsilon\upsilon^{- 1-k}\,, \quad \forall\, 0\leq k\leq k_0 \,. \label{small_rem_k}
\end{align}
Recalling \eqref{0meln}-\eqref{2meln-}, the Cantor set in \eqref{Gvare} becomes
\begin{align*}
\cG_\varepsilon  := & \Big\{ 
\kappa \in [\kappa_1,\kappa_2] \ :  
\ | \ora{\Omega}_\varepsilon(\kappa)\cdot \ell | \geq 8 \upsilon \braket{\ell}^{-\tau} \,, 
\ \forall\,\ell\in \Z^\nu\setminus\{ 0\}\,;  \\
& \  \  | \ora{\Omega}_\varepsilon(\kappa)\cdot \ell + \mu_j^\infty(\kappa) | \geq 4 \upsilon 
|j|^{\frac32}\braket{\ell}^{-\tau} \,, \  \forall\, \ell \in\Z^\nu \, ,  \, j\in\S_0^c \, , \text{ with } \ora{\jmath}\cdot\ell + j =0  \,;  \notag \\
& \ \ | \ora{\Omega}_\varepsilon(\kappa)\cdot \ell + \mu_j^\infty(\kappa)-\mu_{j'}^\infty(\kappa) | \geq 4 \upsilon \, \langle | j|^\frac32 - | j'|^\frac32 \rangle\braket{\ell}^{-\tau} \,,  \\
& \ \  \forall \ell\in\Z^\nu, \, j,j'\in\S_0^c,\, (\ell,j,j')\neq (0,j,j) \text{ with } \ora{\jmath}\cdot \ell + j-j'=0  \, ; \notag \\
& \ \ | \ora{\Omega}_\varepsilon(\kappa)\cdot \ell + \mu_j^\infty(\kappa) +\mu_{j'}^\infty(\kappa)  | \geq 4\upsilon \,\big(|j|^\frac32 + |j'|^\frac32 \big) \braket{\ell}^{-\tau}  \,, \\
&  \ \  \forall\,\ell\in \Z^\nu , \, j,  j'\in\S_0^c \text{ with } \ora{\jmath}\cdot\ell+j+j'=0 \Big\} \, .  \notag 
\end{align*}
We estimate the measure of the complementary set
\begin{align}\label{union_gec}
\cG_\varepsilon^c & := [\kappa_1,\kappa_2] \setminus\cG_\varepsilon \notag \\
& = \left( \bigcup_{\ell\neq 0} R_{\ell}^{(0)} \right) \cup \left( \bigcup_{\ell\in\Z^\nu, \, j\in\S_0^c\atop \ora{\jmath}\cdot\ell+j=0} R_{\ell,j}^{(I)} \right) 
\cup \left( \bigcup_{(\ell,j,j')\neq(0,j,j), j \neq j' 
\atop \ora{\jmath}\cdot\ell+j-j'=0 } R_{\ell,j,j'}^{(II)} \right)
\cup \left( \bigcup_{\ell\in\Z^\nu, j,  j'\in\S_0^c \, , 
\atop \ora{\jmath}\cdot\ell+j+j'=0} Q_{\ell,j,j'}^{(II)} \right) \,,
\end{align}
where the ``nearly-resonant sets"
\begin{align}
R_{\ell}^{(0)} := & \big\{ \kappa\in[\kappa_1,\kappa_2] \ : \ | \ora{\Omega}_\varepsilon(\kappa)\cdot \ell | < 8 \upsilon \braket{\ell}^{-\tau}  \big\} \,, \label{R00}\\
R_{\ell,j}^{(I)} := & \big\{ \kappa\in[\kappa_1,\kappa_2]  \ : \ | \ora{\Omega}_\varepsilon(\kappa)\cdot \ell + \mu_j^\infty(\kappa) | < 4 \upsilon | j|^\frac32 \braket{\ell}^{-\tau}  \big\} \,, \label{RI0}\\
R_{\ell,j,j'}^{(II)} := & \big\{ \kappa\in[\kappa_1,\kappa_2] \ : \ | \ora{\Omega}_\varepsilon(\kappa)\cdot \ell +\mu_j^\infty(\kappa)-\mu_{j'}^\infty(\kappa) | < 4 \upsilon \,\langle | j|^\frac32 - | j'|^\frac32 \rangle\braket{\ell}^{-\tau}  \big\} \, ,  \label{RII0} \\
Q_{\ell,j,j'}^{(II)} := & \big\{ \kappa\in[\kappa_1,\kappa_2] \  : \ | \ora{\Omega}_\varepsilon(\kappa)\cdot \ell + \mu_j^\infty(\kappa)+\mu_{j'}^\infty(\kappa) | < 4 \upsilon \big(|j|^\frac32 +|j'|^\frac32 \big) \braket{\ell}^{-\tau}  \big\} \, .  \label{QII0}
\end{align}
Note that in the third union in \eqref{union_gec} we may require $ j \neq j' $ because
	$R_{\ell,j,j}^{(II)} \subset  R_\ell^{(0)}$. 
	In the sequel we shall always suppose the momentum conditions on the 
	indexes $ \ell, j, j' $  written in \eqref{union_gec}. 
Some of the above sets are empty.
\begin{lem}\label{lem:emptysets}
Consider the sets in \eqref{union_gec}-\eqref{QII0}. 
For $\varepsilon\in(0,\varepsilon_0)$ small enough,	we have that  
	\begin{enumerate}
		\item If $R_{\ell,j}^{(I)}\neq \emptyset$ then $ |j|^\frac32 \leq C \braket{\ell} $;
		\item If $R_{\ell,j,j'}^{(II)}\neq \emptyset$ then $\big| | j|^\frac32 -|j'|^\frac32 \big|\leq C \braket{\ell}$; 
		\item If $Q_{\ell,j,j'}^{(II)}\neq \emptyset$ then $ |j|^\frac32 + |j'|^\frac32\leq C \braket{\ell}$.
	\end{enumerate}
\end{lem}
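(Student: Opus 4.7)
The plan is to prove each item by extracting the inequality defining the nearly-resonant set at some $\kappa\in[\kappa_1,\kappa_2]$, inserting the expansion \eqref{eq:final_eig_kappa} of $\mu_j^\infty(\kappa)$ and the asymptotic \eqref{eq:rem1} of Lemma \ref{rem:exp_omegaj_k}, which together yield
$$
\mu_j^\infty(\kappa)=\tm_{3/2}^\infty(\kappa)\sqrt{\kappa}\,|j|^{3/2}+\rho_j(\kappa),\qquad
|\rho_j(\kappa)|\leq C+C\varepsilon|j|+C\varepsilon|j|^{1/2}+C\varepsilon\upsilon^{-1},
$$
after using \eqref{small_coeff_k}–\eqref{small_rem_k}. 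Since $\vec\Omega_\varepsilon(\kappa)=\vec\Omega(\kappa)+\vec r_\varepsilon(\kappa)$ is uniformly bounded by \eqref{eq:tang_res_est}, we also have $|\vec\Omega_\varepsilon(\kappa)\cdot\ell|\leq C\langle\ell\rangle$. Throughout we use $\tm_{3/2}^\infty\geq 1-C\varepsilon\geq 1/2$, $\sqrt{\kappa}\geq\sqrt{\kappa_1}$, and the fact that by \eqref{param_small_meas} one has $\varepsilon\upsilon^{-1}=\varepsilon^{1-{\rm a}}\leq 1$, together with the possibility of choosing $\upsilon$ so small that the term $4\upsilon(\cdots)$ on the right of each set definition can be absorbed into the leading $|j|^{3/2}$–contribution on the left.

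For item 1, the momentum relation $\vec\jmath\cdot\ell+j=0$ gives $|j|\leq C\langle\ell\rangle$, hence the error $C\varepsilon|j|+C\varepsilon|j|^{1/2}+C\varepsilon\upsilon^{-1}\leq C\langle\ell\rangle$ trivially. Combining with the lower bound $|\mu_j^\infty(\kappa)|\geq \tfrac12\sqrt{\kappa_1}|j|^{3/2}-C\langle\ell\rangle$ and absorbing the term $4\upsilon|j|^{3/2}$ yields $|j|^{3/2}\leq C\langle\ell\rangle$.

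For item 3, writing $\mu_j^\infty+\mu_{j'}^\infty=\tm_{3/2}^\infty\sqrt{\kappa}(|j|^{3/2}+|j'|^{3/2})+\rho$ with $|\rho|\leq C+C\varepsilon|j+j'|+C\varepsilon(|j|^{1/2}+|j'|^{1/2})+C\varepsilon\upsilon^{-1}$, the momentum relation bounds $|j+j'|\leq C|\ell|$. The remaining term $\varepsilon(|j|^{1/2}+|j'|^{1/2})$ is absorbed into $\tfrac14\sqrt{\kappa_1}(|j|^{3/2}+|j'|^{3/2})$ by Young's inequality. After absorbing the $4\upsilon(|j|^{3/2}+|j'|^{3/2})$ term on the right, we conclude $|j|^{3/2}+|j'|^{3/2}\leq C\langle\ell\rangle$.

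Item 2 is the main obstacle, because when $|j|\approx|j'|$ the quantity $||j|^{3/2}-|j'|^{3/2}|$ can be much smaller than $|j|^{3/2}$, so the absorption trick of items 1 and 3 does not apply verbatim. The plan is to split into cases according to the relative size and signs of $j,j'$. Assume without loss $|j|\geq|j'|$. If $|j'|\leq|j|/2$ then $||j|^{3/2}-|j'|^{3/2}|\geq (1-2^{-3/2})|j|^{3/2}$, and the argument of item 1 applies. If $|j'|>|j|/2$, split further: (a) if $j,j'$ have opposite signs then $|j|+|j'|=|j-j'|\leq C|\ell|$ by momentum, so $|j|^{1/2}+|j'|^{1/2}\leq C\langle\ell\rangle^{1/2}$ and $||j|^{3/2}-|j'|^{3/2}|\leq C\langle\ell\rangle + C\varepsilon\langle\ell\rangle^{1/2}\leq C'\langle\ell\rangle$ directly. (b) if $j,j'$ have the same sign, then $||j|-|j'||=|j-j'|\leq C|\ell|$ and, since $j\neq j'$ implies $||j|-|j'||\geq 1$, the identity $|j|^{3/2}-|j'|^{3/2}=(|j|^{1/2}-|j'|^{1/2})(|j|+|j|^{1/2}|j'|^{1/2}+|j'|)$ gives $||j|^{3/2}-|j'|^{3/2}|\geq c(|j|^{1/2}+|j'|^{1/2})$; inserting this into the basic inequality $||j|^{3/2}-|j'|^{3/2}|\leq C\langle\ell\rangle+C\varepsilon(|j|^{1/2}+|j'|^{1/2})+C$ allows absorption of the error term into the left-hand side and yields the required estimate. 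The subtle point is thus precisely that the momentum restriction in the definition of $R_{\ell,j,j'}^{(II)}$, together with the spacing $||j|-|j'||\geq 1$ when $j\neq j'$ are of the same sign, is essential for the conclusion.
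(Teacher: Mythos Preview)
Your argument is correct; items 1 and 3 are essentially the paper's approach, and your case analysis for item 2 goes through. However, for item 2 the paper avoids the case splitting entirely by a single observation you overlooked.

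The point is that in the difference $\mu_j^\infty-\mu_{j'}^\infty$ the $\tm_{1/2}^\infty$--contribution is exactly $\tm_{1/2}^\infty\big(|j|^{1/2}-|j'|^{1/2}\big)$, so the relevant error is $C\varepsilon\big||j|^{1/2}-|j'|^{1/2}\big|$, not $C\varepsilon\big(|j|^{1/2}+|j'|^{1/2}\big)$. Since $|j|,|j'|\geq 1$, the elementary inequality
\[
\big||j|^{3/2}-|j'|^{3/2}\big|
=\big||j|^{1/2}-|j'|^{1/2}\big|\,\big(|j|+|j|^{1/2}|j'|^{1/2}+|j'|\big)
\;\geq\;\big||j|^{1/2}-|j'|^{1/2}\big|
\]
lets one absorb this error term directly into the leading $\tm_{3/2}^\infty\sqrt{\kappa}\,\big||j|^{3/2}-|j'|^{3/2}\big|$ on the left, with no splitting according to signs or relative sizes of $j,j'$. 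Combined with $|j-j'|\leq C\langle\ell\rangle$ from momentum (which handles the $\tm_1^\infty(j-j')$ term), this yields item 2 in two lines. Your approach works because in each of your subcases you eventually recover a comparable absorption mechanism, but tracking the difference rather than the sum of square roots from the outset makes the argument uniform and short.
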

\begin{proof}
We provide the proof for $R_{\ell,j,j'}^{(II)}$. 
If $R_{\ell,j,j'}^{(II)}\neq \emptyset$
then there exists $\kappa\in[\kappa_1,\kappa_2]$ such that
\begin{equation}\label{RII1}
\abs{ \mu_j^\infty(\kappa)-\mu_{j'}^\infty(\kappa)} < \frac{4\upsilon\, 
\langle | j|^\frac32 - |j'|^\frac32 \rangle}{\braket{\ell}^\tau} + 
| \ora{\Omega}_\varepsilon(\kappa)\cdot\ell | \leq 
4\upsilon \,\big| |j|^\frac32-|j'|^\frac32\big|+ C\braket{\ell} \,.
\end{equation}
By \eqref{eq:final_eig_kappa} we have 
$$
\mu_{j}^\infty(\kappa)-\mu_{j'}^\infty(\kappa) = 
\tm_{\frac32}^\infty(\kappa) ( \Omega_{j}(\kappa) - \Omega_{j'}(\kappa) )
+ \tm_1^\infty(\kappa)(j-j') +  \tm_{\frac12}^\infty(\kappa)( |j|^\frac12-|j'|^\frac12 ) 
+ \fr_j^\infty(\kappa)- \fr_{j'}^\infty(\kappa) \, .
$$
Then, by \eqref{small_coeff_k}-\eqref{small_rem_k} with $ k = 0 $, 
 \eqref{eq:rem1}-\eqref{eq:rem2}, 
 the momentum condition $j-j'=-\ora{\jmath}\cdot\ell $, and
the elementary inequality 
$ |  |j|^\frac32 - |j'|^\frac32 | \geq | |j|^\frac12 - |j'|^\frac12 | $, 
 we deduce the lower bound 
\begin{align}
|\mu_{j}^\infty(\kappa)-\mu_{j'}^\infty(\kappa)|  & \geq  
(1- C \varepsilon) \sqrt\kappa \big( 
\big| |j|^\frac32 - |j'|^\frac32 \big| - C \big)
- C \varepsilon  | \ora{\jmath}\cdot\ell | - C \varepsilon 
\big| |j|^\frac12 - |j'|^\frac12 \big| - C \varepsilon \upsilon^{-1} \nonumber \\
&  \geq \tfrac{\sqrt\kappa}{2}\,\big| | j|^\frac32 - |j'|^\frac32 \big| - C\varepsilon 
| \ell| - C' -C\varepsilon\upsilon^{- 1} \, .  \label{RII2}
	\end{align}
	Combining \eqref{RII1} and \eqref{RII2}, 
	we deduce 
	$ |  |j|^{\frac32}-|j'|^\frac32 | \leq C\braket{\ell}$, for  $ \varepsilon $ small enough. 
\end{proof}

In order to estimate the measure of the sets \eqref{R00}-\eqref{QII0} that are nonempty, the key point is to prove that the perturbed frequencies satisfy estimates similar to \eqref{eq:0_meln}-\eqref{eq:2_meln+}. 

\begin{lem} {\bf (Perturbed transversality)} \label{lem:pert_trans}
For $\varepsilon\in(0,\varepsilon_0)$ small enough and for all $\kappa\in[\kappa_1,\kappa_2]$, 
	\begin{align}
	&\max_{0\leq n \leq m_0} | \partial_\kappa^n \ora{\Omega}_\varepsilon(\kappa)\cdot \ell | \geq \frac{\rho_0}{2}\braket{\ell} \,, \quad \forall\,\ell\in\Z^\nu\setminus\{0\} \,; \label{eq:0_meln_pert}\\
	&\begin{cases}
	\max_{0\leq n \leq m_0}| \partial_\kappa^n( \ora{\Omega}_\varepsilon(\kappa)\cdot \ell + \mu_j^\infty(\kappa) ) | \geq \frac{\rho_0}{2}	\braket{\ell} \\
	\ora{\jmath}\cdot \ell + j = 0 \,, \quad \ell\in\Z^\nu\,, \ j\in\S_0^c \,; 
	\end{cases} \label{eq:1_meln_pert}\\
	&\begin{cases}
	\max_{0\leq n \leq m_0}| \partial_\kappa^n( \ora{\Omega}_\varepsilon(\kappa)\cdot \ell + \mu_j^\infty(\kappa)-\mu_{j'}^\infty(\kappa) ) | \geq \frac{\rho_0}{2}	\braket{\ell} \\
	\ora{\jmath}\cdot \ell + j -j'= 0 \,, \quad \ell\in\Z^\nu\,, \ j,j'\in\S_0^c \,, \ (\ell,j,j')\neq (0,j,j) \, ; 
	\end{cases} \label{eq:2_meln-_pert}\\
	&\begin{cases}
	\max_{0\leq n \leq m_0}| \partial_\kappa^n ( \ora{\Omega}_\varepsilon(\kappa)\cdot \ell + \mu_j^\infty(\kappa)+\mu_{j'}^\infty(\kappa) ) | \geq \frac{\rho_0}{2} 	\braket{\ell} \\
	\ora{\jmath}\cdot \ell + j + j'= 0 \,, \quad \ell\in\Z^\nu\,, \ j,   j'\in\S_0^c  \, . 
	\end{cases} \label{eq:2_meln+_pert}
	\end{align}
	We recall that $\rho_0$ is the amount of non-degeneracy that has been defined in Proposition \ref{prop:trans_un}.
\end{lem}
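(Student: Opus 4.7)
The proof is perturbative: it combines the unperturbed transversality of Proposition \ref{prop:trans_un} with the smallness of the corrections $\vec r_\varepsilon$, $(\tm_{\frac32}^\infty - 1)$, $\tm_1^\infty$, $\tm_{\frac12}^\infty$, $\fr_j^\infty$ supplied by \eqref{eq:tang_res_est}, \eqref{small_coeff_k}, \eqref{small_rem_k}. The quantitative engine is the parameter choice \eqref{param_small_meas}: since $\upsilon = \varepsilon^{\rm a}$ with ${\rm a}(1+k_0) < 1$, the worst perturbative factor $\varepsilon\upsilon^{-(1+m_0)} = \varepsilon^{1-{\rm a}(1+m_0)}$ vanishes as $\varepsilon\to 0$, so for $\varepsilon$ small enough any remainder of that order can be absorbed into $\rho_0\braket{\ell}$.

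The bound \eqref{eq:0_meln_pert} is immediate: split $\vec\Omega_\varepsilon\cdot\ell = \vec\Omega\cdot\ell + \vec r_\varepsilon\cdot\ell$, apply \eqref{eq:0_meln} to the unperturbed part, and control $\max_{n\leq m_0}|\partial_\kappa^n(\vec r_\varepsilon\cdot\ell)| \leq C\varepsilon\upsilon^{-(1+m_0)}|\ell|$.

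For \eqref{eq:1_meln_pert}-\eqref{eq:2_meln+_pert}, the difficulty is that $\mu_j^\infty$ carries the factor $\tm_{\frac32}^\infty\Omega_j$ with $\Omega_j\sim|j|^{3/2}$ unbounded in $j$. We therefore split into two regimes. In the \emph{large-index regime} -- namely $|j|^{3/2}\geq C_0\braket{\ell}$ for \eqref{eq:1_meln_pert}, $\bigl||j|^{3/2} - |j'|^{3/2}\bigr|\geq C_0\braket{\ell}$ for \eqref{eq:2_meln-_pert}, and $|j|^{3/2} + |j'|^{3/2}\geq C_0\braket{\ell}$ for \eqref{eq:2_meln+_pert} -- we obtain the bound already with $n=0$: from Lemma \ref{rem:exp_omegaj_k}, the formula \eqref{eq:final_eig_kappa}, and the smallness \eqref{small_coeff_k}-\eqref{small_rem_k}, one deduces, for $\varepsilon$ small, lower bounds such as $|\mu_j^\infty(\kappa)|\geq\tfrac12\sqrt{\kappa_1}|j|^{3/2}$, so that $|\vec\Omega_\varepsilon\cdot\ell \pm \mu_j^\infty \mp \mu_{j'}^\infty| \geq \rho_0\braket{\ell}$ by taking $C_0$ large, exactly as in Step 1 of the proofs of \eqref{eq:1_meln}-\eqref{eq:2_meln+}. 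In the complementary \emph{bounded-index regime}, we write for instance
\[\vec\Omega_\varepsilon\cdot\ell + \mu_j^\infty = \bigl(\vec\Omega\cdot\ell + \Omega_j\bigr) + \bigl[\vec r_\varepsilon\cdot\ell + (\tm_{\frac32}^\infty - 1)\Omega_j + \tm_1^\infty j + \tm_{\frac12}^\infty|j|^{1/2} + \fr_j^\infty\bigr],\]
apply \eqref{eq:1_meln} to the first summand (giving $\rho_0\braket{\ell}$), and estimate every derivative of the bracket by $C\varepsilon\upsilon^{-(1+m_0)}\braket{\ell}$. Cases \eqref{eq:2_meln-_pert}, \eqref{eq:2_meln+_pert} are analogous, replacing $\Omega_j$ by $\Omega_j\mp\Omega_{j'}$ and invoking \eqref{eq:2_meln-}, respectively \eqref{eq:2_meln+}.

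The main technical subtlety appears in the bounded-index case of \eqref{eq:2_meln-_pert}: the condition $\bigl||j|^{3/2} - |j'|^{3/2}\bigr| < C_0\braket{\ell}$ allows $|j|,|j'|$ individually as large as $\braket{\ell}^{2}$ (when $|j|\approx|j'|$ are both large but the momentum condition forces $|j - j'|\leq C|\ell|$). Thus the single term $(\tm_{\frac32}^\infty - 1)\Omega_j$ is \emph{not} small. The rescue is that what actually appears is the difference $(\tm_{\frac32}^\infty - 1)(\Omega_j - \Omega_{j'})$: by the asymptotic expansion \eqref{eq:rem1}-\eqref{eq:rem2} and the uniform boundedness of $G_j(0)/j$, one has $|\partial_\kappa^k(\Omega_j - \Omega_{j'})|\leq C\braket{\ell}$ uniformly in $j,j'$, so the product is $O(\varepsilon\upsilon^{-m_0}\braket{\ell})$. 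The terms $\tm_1^\infty(j - j')$ and $\tm_{\frac12}^\infty(|j|^{1/2} - |j'|^{1/2})$ are controlled similarly, the former via the momentum relation $|j - j'|\leq C|\ell|$, the latter since $|j|^{1/2} + |j'|^{1/2}\leq C\braket{\ell}$ is a consequence of the bounded-index dichotomy.
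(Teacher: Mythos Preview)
Your proposal is correct and follows essentially the same strategy as the paper's proof: write $\vec\Omega_\varepsilon\cdot\ell+\mu_j^\infty\mp\mu_{j'}^\infty$ as the unperturbed quantity $\vec\Omega\cdot\ell+\Omega_j\mp\Omega_{j'}$ plus the correction terms, invoke Proposition \ref{prop:trans_un} on the former, and bound all $\kappa$-derivatives of the latter by $C\varepsilon\upsilon^{-(1+m_0)}\langle\ell\rangle$ after restricting (via Lemma \ref{lem:emptysets}) to the bounded-index regime. One minor variation: to control $\bigl||j|^{1/2}-|j'|^{1/2}\bigr|$ the paper uses the simpler inequality $\bigl||j|^{1/2}-|j'|^{1/2}\bigr|\le\bigl||j|^{3/2}-|j'|^{3/2}\bigr|$ (valid for $|j|,|j'|\ge1$), whereas your route through $|j|^{1/2}+|j'|^{1/2}\le C\langle\ell\rangle$ is also correct but needs the short factorization argument the paper gives only later, in the measure-estimate section.
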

\begin{proof}
	We prove  \eqref{eq:2_meln-_pert}. 
	The proofs of \eqref{eq:0_meln_pert}, \eqref{eq:1_meln_pert} and 
	\eqref{eq:2_meln+_pert} are similar. 
	By \eqref{eq:final_eig_kappa} we have
	\begin{align}
&	\ora{\Omega}_\varepsilon(\kappa)  \cdot \ell  + \mu_j^\infty (\kappa) -\mu_{j'}^\infty(\kappa) = \ora{\Omega}(\kappa) \cdot \ell + \ora{r}_\varepsilon(\kappa)\cdot \ell + \Omega_j(\kappa)-\Omega_{j'}(\kappa) \label{lobb} \\
	& +( \tm_{\frac32}^\infty(\kappa)-1 ) \left( \Omega_j(\kappa)-\Omega_{j'}(\kappa) \right) + \tm_1^\infty(\kappa)(j-j') + \tm_{\frac12}^\infty(\kappa) \,( | j|^\frac12 - |j'|^\frac12 ) + \fr_j^\infty(\kappa) - \fr_{j'}^\infty(\kappa) \, . \nonumber 
	\end{align}
By Lemma \ref{rem:exp_omegaj_k} we get that, for any 
	$n\in \{ 0,\ldots,m_0 \} $,  
	\begin{equation}\label{eq:restr_2-}
	\abs{ \pa_\kappa^n (\Omega_j(\kappa)-\Omega_{j'}(\kappa))} 
	\leq C(\kappa)\big| |j|^\frac32 -|j'|^\frac32 \big| + C \leq  C'(\kappa) \braket{\ell} \, , 
	\end{equation}
because, by Lemma \ref{lem:emptysets}-(2), we can restrict to  
indexes $\ell, j,j' $  such that
$ | |j|^\frac32 - |j'|^\frac32 | \leq C \braket{\ell} $. 	
Furthermore
\begin{equation}\label{12j12j'}
	\big| |j|^\frac12 -|j'|^\frac12 \big|  \leq \big| 
	|j|^\frac32 -|j'|^\frac32 \big| \leq  C \braket{\ell} \,.
	\end{equation}
Therefore, by \eqref{lobb}, 	
 \eqref{small_coeff_k}, \eqref{small_rem_k}, \eqref{eq:tang_res_est}, 
 \eqref{eq:restr_2-}, \eqref{12j12j'},
and the momentum condition $j-j'= -\ora{\jmath}\cdot \ell$, we have
that,  for any $n\in \{ 0,\ldots,m_0 \} $, 
	\begin{align*}
	| \partial_\kappa^n\,( \ora{\Omega}_\varepsilon(\kappa)\cdot \ell  + \mu_j^\infty(\kappa)-\mu_{j'}^\infty(\kappa)  )  | 
	&\geq | \partial_\kappa^n\,(\ora{\Omega}(\kappa) \cdot \ell+ \Omega_j(\kappa)-\Omega_{j'}(\kappa))| - C\varepsilon\upsilon^{-(1 +m_0)}\braket{\ell} \, . 
	\end{align*}
	Since $\ora{\Omega}(\kappa)\cdot \ell +\Omega_j(\kappa)-\Omega_{j'}(\kappa)$ satisfies \eqref{eq:2_meln-}, we deduce that 
	\begin{equation*}
	\max_{0\leq n \leq m_0}| \partial_\kappa^n\,( \ora{\Omega}_\varepsilon(\kappa)\cdot \ell  + \mu_j^\infty(\kappa)-\mu_{j'}^\infty(\kappa) )  | \geq \rho_0\braket{\ell}- C\varepsilon\upsilon^{-(1+m_0)}\braket{\ell} \geq \tfrac{\rho_0}{2}\braket{\ell}
	\end{equation*}
	for $\varepsilon>0$ small enough.
\end{proof}

As an application of R\"ussmann Theorem 17.1 in \cite{Russ}, we deduce the following result:

\begin{lem} {\bf (Estimates of the resonant sets)} \label{lem:meas_res}
	The measure of the sets \eqref{union_gec}-
	\eqref{QII0} satisfy
	\begin{align*}
	| R_\ell^{(0)} |\lesssim ( \upsilon\braket{\ell}^{-(\tau+1)} )^{\frac{1}{m_0}}  \,, & \quad | R_{\ell,j}^{(I)} |\lesssim \big( \upsilon |j|^{\frac32}\braket{\ell}^{-(\tau+1)} \big)^{\frac{1}{m_0}} \,,\\
	| R_{\ell,j,j'}^{(II)} |\lesssim \big( \upsilon \,
	\langle | j|^\frac32 - | j'|^\frac32 \rangle\braket{\ell}^{-(\tau+1)} \big)^{\frac{1}{m_0}}\,, & \quad | Q_{\ell,j,j'}^{(II)} |\lesssim \big( \upsilon\,\big( |j|^\frac32 + |j'|^{\frac32} \big)\braket{\ell}^{-(\tau+1)} \big)^{\frac{1}{m_0}} \, ,
	\end{align*}
	and, recalling   Lemma \ref{lem:emptysets}, 
	\begin{equation*}
	| R_{\ell,j}^{(I)} | \, , \ | R_{\ell,j,j'}^{(II)} | \, , \ | Q_{\ell,j,j'}^{(II)} | \lesssim 
	( \upsilon\braket{\ell}^{-\tau})^{\frac{1}{m_0}} \,.
	\end{equation*}
\end{lem}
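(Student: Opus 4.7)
\medskip

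\noindent\textbf{Proof plan.}
The natural strategy is to apply R\"ussmann's quantitative sublevel-set estimate (Theorem 17.1 in \cite{Russ}): if $f\in C^{m_0}([\kappa_1,\kappa_2],\R)$ satisfies
$$
\max_{0\le n\le m_0}|\partial_\kappa^n f(\kappa)|\ge \rho\,,\quad \forall\,\kappa\in[\kappa_1,\kappa_2]\,,
$$
together with a uniform upper bound on the first $m_0$ derivatives, then
$$
\big|\{\kappa\in[\kappa_1,\kappa_2]\,:\,|f(\kappa)|<h\}\big|\lesssim (h/\rho)^{1/m_0}\,.
$$
For each of the four families of resonant sets one chooses the appropriate quotient so that the denominator carries the natural size of the resonance and the numerator is the small parameter $\upsilon$.

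Concretely, for $R_\ell^{(0)}$ I would set $f(\kappa):=\vec\Omega_\varepsilon(\kappa)\cdot\ell/\langle\ell\rangle$: the transversality \eqref{eq:0_meln_pert} gives $\rho=\rho_0/2$ and the threshold is $h=8\upsilon\langle\ell\rangle^{-(\tau+1)}$. For $R_{\ell,j}^{(I)}$ put $f(\kappa):=(\vec\Omega_\varepsilon(\kappa)\cdot\ell+\mu_j^\infty(\kappa))/\langle\ell\rangle$; the transversality \eqref{eq:1_meln_pert} gives $\rho\ge\rho_0/2$ and the threshold is $h=4\upsilon|j|^{3/2}\langle\ell\rangle^{-(\tau+1)}$; but Lemma \ref{lem:emptysets}-(1) implies $|j|^{3/2}\lesssim\langle\ell\rangle$ whenever the set is nonempty, so $h\lesssim \upsilon\langle\ell\rangle^{-\tau}$, whence the second, stronger bound. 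One proceeds identically for $R_{\ell,j,j'}^{(II)}$ via \eqref{eq:2_meln-_pert} with $h=4\upsilon\langle|j|^{3/2}-|j'|^{3/2}\rangle\langle\ell\rangle^{-(\tau+1)}$ and invoking Lemma \ref{lem:emptysets}-(2), and for $Q_{\ell,j,j'}^{(II)}$ via \eqref{eq:2_meln+_pert} with $h=4\upsilon(|j|^{3/2}+|j'|^{3/2})\langle\ell\rangle^{-(\tau+1)}$ and Lemma \ref{lem:emptysets}-(3).

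The part requiring real care is checking the uniform upper bound on $\partial_\kappa^n f$ for $0\le n\le m_0$, which is the hypothesis of R\"ussmann that is \emph{not} supplied directly by Lemma \ref{lem:pert_trans}. For the four cases one must estimate $\partial_\kappa^n\vec\Omega_\varepsilon(\kappa)$, which is controlled through \eqref{Omega-kappa} and \eqref{eq:tang_res_est}, and the derivatives of the normal frequencies $\mu_j^\infty(\kappa)$, whose structure is given by \eqref{eq:final_eig_kappa}. The bounded part $\tm_{3/2}^\infty\Omega_j+\tm_1^\infty j+\tm_{1/2}^\infty|j|^{1/2}+\fr_j^\infty$ is handled using \eqref{small_coeff_k}-\eqref{small_rem_k} together with the asymptotics of Lemma \ref{rem:exp_omegaj_k}: after dividing by $\langle\ell\rangle$ (respectively by $\langle |j|^{3/2}\pm |j'|^{3/2}\rangle+\langle\ell\rangle$ before using Lemma \ref{lem:emptysets}), the resulting quotient has derivatives of order $\le m_0$ bounded by an absolute constant, provided $\varepsilon\upsilon^{-(1+m_0)}$ is small. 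This last smallness condition is guaranteed by the choice \eqref{param_small_meas} of $\upsilon=\varepsilon^{\mathtt a}$, and is precisely the point where one uses $k_0=m_0+2$.

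Once these bounds are in place, R\"ussmann's theorem applied index by index yields the four measure estimates as stated. The main technical obstacle is therefore not the application of R\"ussmann itself but organizing the bookkeeping: one must exhibit the correct normalization of $f$ in each case so that both the transversality constant $\rho_0/2$ and the $C^{m_0}$ bound come out uniformly in the Fourier indices $(\ell,j,j')$, and one must use the momentum constraints (through Lemma \ref{lem:emptysets}) to trade factors of $|j|^{3/2}$, $|j'|^{3/2}$, and $||j|^{3/2}\pm|j'|^{3/2}|$ for factors of $\langle\ell\rangle$, thereby reducing to the single scale $\langle\ell\rangle^{-\tau}$ which will eventually make the series $\sum_\ell |\ell|^{-\tau/m_0}$ summable under \eqref{param_small_meas}.
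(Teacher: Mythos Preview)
Your approach is correct and matches the paper's proof essentially line for line: normalize by $\langle\ell\rangle$, use Lemma~\ref{lem:pert_trans} for the lower bound, verify the uniform upper bound on the derivatives via \eqref{small_coeff_k}--\eqref{small_rem_k}, Lemma~\ref{rem:exp_omegaj_k} and the momentum constraint, then apply R\"ussmann's Theorem~17.1. One minor imprecision: the smallness actually required is $\varepsilon\upsilon^{-(1+k_0)}\ll 1$ rather than $\varepsilon\upsilon^{-(1+m_0)}$, since one bounds derivatives up to order $k_0$ so as to conclude $f\in C^{k_0-1}=C^{m_0+1}$, which is the regularity needed for R\"ussmann's theorem with index $m_0$.
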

\begin{proof}
	We  estimate  $R_{\ell,j,j'}^{(II)}$ defined in \eqref{RII0}. 
	The other cases follow similarly.
	Defining
	$ f_{\ell,j,j'}(\kappa):= ( \ora{\Omega}_\varepsilon(\kappa)\cdot \ell + \mu_j^\infty(\kappa)-\mu_{j'}^\infty(\kappa)  )\braket{\ell}^{-1} $, 
	we  write
$$
	R_{\ell,j,j'}^{(II)} = \big\{ \kappa\in[\kappa_1,\kappa_2]  \,:\, \abs{ f_{\ell,j,j'}(\kappa) } < 4\upsilon\,\langle | j|^\frac32 - | j'|^\frac32 \rangle\braket{\ell}^{-\tau-1} \big\} \,.
$$
By Lemma \ref{lem:emptysets} we restrict to indexes satisfying 
$ \big| | j|^\frac32 -|j'|^\frac32 \big|\leq C \braket{\ell} $.
	By \eqref{eq:2_meln-_pert},
	\begin{equation*}
	\max_{0\leq n \leq m_0}\abs{ \pa_\kappa^n f_{\ell,j,j'}(\kappa) } \geq \rho_0/2 \,, \quad \forall\,\kappa\in[\kappa_1,\kappa_2] \,.
	\end{equation*}
	In addition, by \eqref{Om-per}-\eqref{small_rem_k}, Lemma \ref{rem:exp_omegaj_k}, 
	  the momentum condition $ |j-j'| = | \vec \jmath \cdot \ell |$, and  \eqref{12j12j'}, we deduce that 
	 $\max_{0\leq n \leq k_0}\abs{ \pa_\kappa^n f_{\ell,j,j'}(\kappa) }\leq C$ for all $\kappa\in[\kappa_1,\kappa_2]$, provided $\varepsilon\upsilon^{-(1+k_0)}$ is small enough, namely, by \eqref{param_small_meas} and $\varepsilon$ small enough. In particular, $f_{\ell,j,j'}$ is of class $\cC^{k_0-1}=\cC^{m_0+1}$. Thus Theorem 17.1 in \cite{Russ} applies.
\end{proof}

\begin{proof}[Proof of Theorem \ref{MEASEST} completed.]
We estimate the measure of all the sets in  \eqref{union_gec}. 
By Lemma \ref{lem:emptysets} and Lemma \ref{lem:meas_res} we have  that
\begin{align}
\label{R0est}
&\Big| \bigcup_{\ell \neq 0} R^{(0)}_\ell \Big| \leq \sum_{\ell \neq 0} | R^{(0)}_\ell| \lesssim \sum_{\ell\neq 0} \Big( \frac{\upsilon}{\braket{\ell}^{\tau +1}} \Big)^{\frac{1}{m_0}} \,,\\
\label{R1est}
&\abs{\bigcup_{\ell,  \, j\in\S_0^c \atop \ora{\jmath}\cdot\ell+j=0}R_{\ell,j}^{(I)}} \leq 
\sum_{\abs{j} \leq C \braket{\ell}^\frac23 \atop \ora{\jmath}\cdot\ell+j=0} | R_{\ell,j}^{(I)}| 
\lesssim  \sum_{\abs j \leq C \braket{\ell}^\frac23 } \Big( \frac{\upsilon}{\braket{\ell}^{\tau}} \Big)^{\frac{1}{m_0}} \lesssim \sum_{\ell \in \Z^\nu} \frac{\upsilon^{\frac{1}{m_0}} }{\la \ell \ra^{\frac{\tau}{m_0} - \frac23}}\,, \\
\label{Q2est}
&\abs{\bigcup_{\ell, \, j,  j'\in\S_0^c\atop \ora{\jmath}\cdot \ell +j+j'=0} Q_{\ell,j,j'}^{(II)}} \leq \sum_{\abs j, \abs{j'}\leq C \braket{\ell}^\frac23 } |Q_{\ell,j,j'}^{(II)}|
\lesssim
\sum_{\abs j, \abs{j'} \leq C \braket{\ell}^\frac23 } \left( \frac{\upsilon}{\braket{\ell}^{\tau}} \right)^{\frac{1}{m_0}}   \lesssim \sum_{\ell\in\Z^\nu} \frac{ \upsilon^{\frac{1}{m_0}}}{\braket{\ell}^{\frac{\tau}{m_0}-\frac43}} \, . 
\end{align}
We are left with estimating the measure of 
\begin{equation}\label{union_gec_RII}
	\bigcup_{(\ell,j,j')\neq(0,j,j), j \neq j' \atop \ora{\jmath}\cdot\ell+j-j'=0 } R_{\ell,j,j'}^{(II)} = 
	  \left( \bigcup_{\ell,j\in\S_0^c \atop \ora{\jmath}\cdot\ell+2j=0 } R_{\ell,j,-j}^{(II)} \right)
	   \cup 
	   \left(\bigcup_{\ell,j,j'\,, \ \abs j\neq |j'|\atop \ora{\jmath}\cdot\ell+j-j'=0 } R_{\ell,j,j'}^{(II)}\right)\,.
\end{equation}
By the momentum condition $\ora{\jmath}\cdot \ell + 2j =0$ we get 
$|j| \leq C \la \ell \ra$, and,  by Lemma \ref{lem:meas_res}, 
\begin{equation}
\label{R2est2}
	\Big|   \bigcup_{\ell, j\in\S_0^c, \ora{\jmath}\cdot\ell+2j=0 } R_{\ell,j,-j}^{(II)} \Big| \leq
	\sum_{  |j| \leq C \la \ell \ra}
\big| R_{\ell,j,-j}^{(II)} \big| 
\lesssim 
\sum_{ \abs j \leq C \braket{\ell} } \left( \frac{\upsilon}{\braket{\ell}^{\tau}} \right)^{\frac{1}{m_0}}   \lesssim \sum_{\ell\in\Z^\nu} \frac{ \upsilon^{\frac{1}{m_0}}}{\braket{\ell}^{\frac{\tau}{m_0}- 1}} \, . 
\end{equation}
Finally we estimate the measure of the second union in \eqref{union_gec_RII}. 
By Lemma \ref{lem:emptysets} we can restrict  to  indexes satisfying 
$
| |j|^{3/2} - |j'|^{3/2} | \leq C \la \ell \ra \, . 
$
Now, for any  $|j| \neq |j'|$, we have 
\begin{align*}
\big| |j|^{\frac32} - |j'|^{\frac32} \big| &= \big| |j|^{\frac12} - |j'|^{\frac12} \big|\, 
\big(|j| + |j'|  + |j|^{\frac12} |j'|^{\frac12} \big)
 \geq \frac{|j| + |j'|  + |j|^{\frac12} |j'|^{\frac12}}{|j|^{\frac12} + |j'|^{\frac12}} \geq
\frac{|j|^{\frac12} + |j'|^{\frac12}}{2} \, ,  
 \end{align*} 
implying the  upper bounds $|j|, |j'| \leq C \la \ell \ra^2$.
Hence 
\begin{align}
\label{R2est3}
\abs{ \bigcup_{\ell,j,j'\,, \, \abs j\neq |j'|\atop \ora{\jmath}\cdot\ell+j-j'=0 } R_{\ell,j,j'}^{(II)}}
 \leq
\sum_{ |j| , |j'| \leq C \la \ell \ra^2  } |R_{\ell,j,j'}^{(II)}|
\lesssim 
\sum_{\abs{j}, |j'| \leq C \braket{\ell}^2 } \left( \frac{\upsilon}{\braket{\ell}^{\tau}} \right)^{\frac{1}{m_0}}   \lesssim \sum_{\ell\in\Z^\nu} \frac{ \upsilon^{\frac{1}{m_0}}}{\braket{\ell}^{\frac{\tau}{m_0}- 4}} \, . 
\end{align}
As  $\frac{\tau}{m_0}- 4> \nu$ by \eqref{param_small_meas}, all the series in \eqref{R0est}, \eqref{R1est}, \eqref{Q2est}, \eqref{R2est2}, \eqref{R2est3} are convergent, and we deduce 
	\begin{equation*}
	\abs{\cG_\varepsilon^c} \leq C \upsilon^{\frac{1}{m_0}} \,.
	\end{equation*}
For $\upsilon= \varepsilon^\ta$ as in \eqref{param_small_meas}, we get 
$| \cG_\varepsilon | \geq \kappa_2-\kappa_1 - C \varepsilon^{\ta/m_0} $. The proof of Theorem \ref{MEASEST} is concluded.
\end{proof}

\section{Approximate inverse}\label{sec:approx_inv}
In order to implement a convergent Nash-Moser scheme that leads to a solution of $\cF(i,\alpha)=0$, where $ \cF (i, \alpha) $ is the nonlinear operator  defined in \eqref{F_op},  
we construct an \emph{almost approximate right inverse} of the linearized operator
\begin{equation*}
\di_{i,\alpha}\cF(i_0,\alpha_0)[\whi,\wh\alpha] = \omega\cdot \pa_\vf \whi - \di_i X_{H_\alpha}\left( i_0(\vf) \right)[\whi] - \left(\wh\alpha,0,0\right) \,.
\end{equation*}
Note that $\di_{i,\alpha}\cF(i_0,\alpha_0)=\di_{i,\alpha}\cF(i_0)$ is independent of $\alpha_0$.
We assume that the torus $ i_0 (\vf) = ( \theta_0 (\vf), I_0 (\vf), w_0 (\vf)) $ 
is  reversible and traveling,  according to  \eqref{RTTT}.

In the sequel we shall assume  the smallness condition,  
for some $\tk := \tk (\tau,\nu)>0$, 
$$
\varepsilon\upsilon^{-\tk} \ll 1 \, . 
$$
We closely follow the  strategy presented in \cite{BB} and implemented for the water waves 
equations in \cite{BM,BBHM}. The main novelty is to check that this construction preserves
the momentum preserving properties needed for the search of traveling waves. Therefore, 
along this section we shall focus on this verification. 
The estimates are very similar to those in \cite{BM,BBHM}. 

\smallskip

First of all, we state  tame estimates for the composition operator induced by the Hamiltonian vector field $X_{P}= ( \pa_I P , - \pa_\theta P, \Pi_{\S^+,\Sigma}^\angle J \nabla_{w} P )$ in \eqref{F_op}.
\begin{lem}{\bf (Estimates of the perturbation $P$)} \label{XP_est}
	Let $\fI(\vf)$ in \eqref{ICal} satisfy $\norm{ \fI }_{3 s_0 + 2 k_0 + 5}^{k_0,\upsilon}\leq 1$. Then, for any $ s \geq s_0 $, 
	$	\norm{ X_{P}(i) }_{s}^{k_0,\upsilon} \lesssim_s 1 + \norm{ \fI }_{s+2 s_0 + 2 k_0 + 3}^{k_0,\upsilon} $, 
	and, for all $\whi:= (\wh\theta,\whI,\whw)$,
	\begin{align*}
	\norm{ \di_i X_{P}(i)[\whi] }_{s}^{k_0,\upsilon} &\lesssim_s \norm{\whi}_{s+1}^{k_0,\upsilon} + \norm{ \fI }_{s+2 s_0 + 2 k_0 + 4}^{k_0,\upsilon}\norm{ \whi}_{s_0+1}^{k_0,\upsilon} \,, \\
	\norm{ \di_i^2 X_{P}(i)[\whi,\whi] }_{s}^{k_0,\upsilon} &\lesssim_s \norm{\whi}_{s+1}^{k_0,\upsilon}\norm{\whi}_{s_0+1}^{k_0,\upsilon} + \norm{ \fI }_{s+2 s_0 + 2 k_0 + 5}^{k_0,\upsilon} ( \norm{\whi}_{s_0+1}^{k_0,\upsilon} )^2 \,.
	\end{align*}
\end{lem}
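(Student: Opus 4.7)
The strategy is to decompose $P = P_\varepsilon \circ A$ and estimate each factor separately, then combine via chain and product rules. Recall that $X_P(i) = (\partial_I P, -\partial_\theta P, \Pi_{\S^+,\Sigma}^\angle J \nabla_w P)$, so it suffices to obtain tame estimates on the three gradients of $P$, and on their first and second variations in the variables $(\theta, I, w)$.

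First I would estimate $\nabla P_\varepsilon(\eta,\zeta)$ and its variations in $(\eta,\zeta)$ in Sobolev norms. The perturbation $P_\varepsilon$ splits into three contributions:
\begin{itemize}
\item The Dirichlet--Neumann term is treated via the decomposition $G(\varepsilon\eta)=G(0)+\cR_G(\varepsilon\eta)$ of Lemma \ref{DN_pseudo_est} and the pseudodifferential estimate \eqref{est:RG}; Taylor expanding in $\varepsilon$ and using that $\partial_x^{-1}$ is an order $-1$ Fourier multiplier (so that $\zeta+\frac{\gamma}{2}\partial_x^{-1}\eta$ loses no regularity), the estimate reduces to composition and product bounds (Lemmata \ref{pseudo_compo} and \eqref{prod}).
\item The capillary term is a pure composition operator $f(\varepsilon \eta_x)$ with $f\in\cC^\infty$; Moser's Lemma \ref{compo_moser} gives the tame estimate with one derivative loss coming from $\eta_x$.
\item The vorticity term is polynomial in $(\eta,\zeta)$ and handled directly by the tame product estimate \eqref{prod}.
\end{itemize}
Tracking the loss of derivatives, one variation of $P_\varepsilon$ with respect to $(\eta,\zeta)$ costs at most $s_0+2k_0+3$ extra Sobolev regularity (coming from the worst case \eqref{est:RG} with $m=0$, $\alpha$ bounded), a second variation costs one more derivative, and so on.

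Second I would compose with the action-angle-normal map $A$ of \eqref{aacoordinates}. Since $v^\intercal(\theta,I)$ involves only the finitely many fixed Fourier modes $\{e^{\pm\im \bar n_a x}\}$ with coefficients that are $\cC^\infty$ functions of $(\theta,I)$ in the neighborhood $|I_j|<\xi_j$ of $I=0$, the map $A$ maps tame families to tame families with only a finite loss. Writing
$$ \partial_\theta P = (\partial_\theta v^\intercal)^\top \nabla_{(\eta,\zeta)} P_\varepsilon(A(\theta,I,w)), \quad \partial_I P = (\partial_I v^\intercal)^\top \nabla_{(\eta,\zeta)} P_\varepsilon(A), \quad \nabla_w P = \Pi_{\S^+,\Sigma}^\angle \nabla_{(\eta,\zeta)} P_\varepsilon(A), $$
the product/composition estimates \eqref{prod} and Lemma \ref{compo_moser} applied to the smooth finite-dimensional factors $\partial_\theta v^\intercal,\partial_I v^\intercal$ combine with the bounds of the first step. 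The symplectic projector $\Pi_{\S^+,\Sigma}^\angle$ is a bounded Fourier multiplier and so is harmless. The smallness hypothesis $\|\fI\|_{3s_0+2k_0+5}^{k_0,\upsilon}\leq 1$ is exactly what is needed so that $A(\theta(\vf),I(\vf),w(\vf))$ falls in the regime where Lemma \ref{DN_pseudo_est} applies uniformly in $\vf$.

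Finally, to obtain the three estimates of the statement, I would differentiate once and then twice in $i=(\theta,I,w)$. Each additional $\di_i$ produces either (i) a derivative of $A$ (smooth, finite-dimensional, harmless) or (ii) one more variation of $P_\varepsilon$ which, as noted above, contributes one extra unit to the loss of derivatives, producing the indices $s+2s_0+2k_0+3,\ 4,\ 5$ respectively. The $+1$ in the arguments $\|\whi\|_{s+1}$ and $\|\whi\|_{s_0+1}$ reflects the fact that $\nabla_w P$ already loses one derivative (the DN operator $G(0)$ is of order one), so a variation $\di_i X_P(i)[\whi]$ acts like a first-order differential operator on $\whi$. The main technical burden is bookkeeping: there is no new conceptual obstacle beyond carefully chaining Lemma \ref{DN_pseudo_est}, Lemma \ref{pseudo_compo}, Lemma \ref{compo_moser}, and the tame product estimate \eqref{prod}, exactly as in \cite{BM, BBHM}, with the additional vorticity polynomial terms handled by routine product estimates.
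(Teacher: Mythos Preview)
Your proposal is correct and follows essentially the same approach as the paper, which simply refers to Lemma 5.1 of \cite{BM} together with the Dirichlet--Neumann estimates of Lemma \ref{DN_pseudo_est}. You have spelled out in detail the decomposition $P = P_\varepsilon \circ A$, the splitting of $P_\varepsilon$ into Dirichlet--Neumann, capillary, and vorticity pieces, and the chain-rule bookkeeping that underlies that reference.
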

\begin{proof}
	The proof goes as in Lemma 5.1 of \cite{BM}, using also the 
	estimates of the Dirichlet-Neumann operator in Lemma \ref{DN_pseudo_est}.
\end{proof}
Along this section, we assume the following hypothesis, which is verified by the approximate solutions obtained at each step of the Nash-Moser Theorem \ref{NASH}.
\begin{itemize}
	\item {\sc ANSATZ.} The map $(\omega,\kappa)\mapsto \fI_0(\omega,\kappa) = i_0(\vf;\omega,\kappa)- (\vf,0,0)$ is $k_0$-times differentiable with respect to the parameters $(\omega,\kappa)\in \R^\nu\times [\kappa_1,\kappa_2]$ and, for some $\mu:=\mu(\tau,\nu)>0$, $\upsilon\in (0,1)$,
	\begin{equation}\label{ansatz}
	\norm{\fI_0}_{s_0+\mu}^{k_0,\upsilon} + \abs{ \alpha_0-\omega }^{k_0,\upsilon} \leq C \varepsilon \upsilon^{-1} \,.
	\end{equation} 
\end{itemize}

As in \cite{BB,BM,BBHM}, we first modify the approximate torus $i_0 (\vf) $ to obtain a nearby isotropic torus $i_\delta (\vf) $, namely such that the pull-back 1-form  $i_\delta^*\Lambda $  is closed, 
where $\Lambda$ is the Liouville 1-form defined in 
\eqref{liouville}.  
We first consider the pull-back $ 1$-form 
\begin{align}\label{ak}
i_0^*\Lambda & = \sum_{k=1}^{\nu} a_k(\vf) \di \vf_k \, , \quad
a_k(\vf) := -\big( [ \pa_\vf \theta_0(\vf) ]^\top I_0(\vf) \big)_k +\tfrac12 
\big( J_\angle^{-1} w_0(\vf), \pa_{\vf_k} w_0(\vf) \big)_{L^2} \, , 
\end{align} 
and its exterior differential 
\begin{align*}
i_0^*\cW & = \di i_0^*\Lambda = \sum_{1\leq k < j \leq \nu} A_{kj} \di \vf_k \wedge \di \vf_j \,, 
\quad 
A_{kj}(\vf)  := \pa_{\vf_k} a_j(\vf) - \pa_{\vf_j}a_k(\vf) \, .
\end{align*}
By the formula given in Lemma 5 in \cite{BB}, we deduce, 
if $\omega$ belongs to ${\tt DC}(\upsilon,\tau) $, the estimate 
\begin{equation*}\label{stimaAjk}
\norm{ A_{kj} }_s^{k_0,\upsilon} \lesssim_s  \upsilon^{-1}\big( \norm{ Z }_{s+\tau(k_0+1)+k_0+1}^{k_0, \upsilon} + \norm{ Z }_{s_0+1}^{k_0,\upsilon} \norm{ \fI_0 }_{s+\tau(k_0+1)+k_0+1}^{k_0,\upsilon} \big) \, , 
\end{equation*}
where $Z(\vf)   $  is the ``error function''
\begin{equation*}\label{ZError}
Z(\vf)   
:= \cF(i_0,\alpha_0)(\vf) = \omega\cdot \pa_\varphi i_0(\vf) - X_{H_{\alpha_0}}(i_0(\vf))\,.
\end{equation*}
Note that if $ Z (\vf) = 0 $, the torus $ i_0 (\vf) $ is invariant for $ X_{H_{\alpha_0}} $ and
 the  1-form $ i_0^* \Lambda $ is closed, namely the torus $ i_0 (\vf) $ is isotropic. 
 We denote below the Laplacian $\Delta_\vf:= \sum_{k=1}^{\nu}\pa_{\vf_k}^2$.
\begin{lem} {\bf (Isotropic torus)} \label{torus_iso}
	The torus $i_\delta(\vf):= ( \theta_0(\vf),I_\delta(\vf),w_0(\vf) )$, defined by
	\begin{equation}\label{Idelta}
	I_\delta(\vf):= I_0(\vf) + [ \pa_\vf \theta_0(\vf) ]^{-\top}\rho(\vf) \,, 
	\quad \rho = (\rho_j)_{j=1, \ldots,\nu} \, , \quad \rho_j(\vf)
	:= \Delta_\vf^{-1} \sum_{k=1}^{\nu}\pa_{\vf_k}A_{kj}(\vf)\,,
	\end{equation}
	is isotropic. Moreover, there is $\sigma:= \sigma(\nu,\tau)$ such that, for all 
	$ s \geq s_0 $, 
	\begin{align}
	\norm{ I_\delta-I_0 }_s^{k_0,\upsilon} &\lesssim_s \norm{\fI_0}_{s+1}^{k_0,\upsilon} \, ,    \label{ebb1} \\
	 \norm{ I_\delta-I_0 }_s^{k_0,\upsilon} & \lesssim_s \upsilon^{-1}
	\big( \norm{Z}_{s+\sigma}^{k_0,\upsilon} +\norm{Z}_{s_0+\sigma}^{k_0,\upsilon} \norm{ \fI_0 }_{s+\sigma}^{k_0,\upsilon} \big)  \\
	\norm{ \cF(i_\delta,\alpha_0) }_s^{k_0,\upsilon} &\lesssim_s  \norm{Z}_{s+\sigma}^{k_0,\upsilon} +\norm{Z}_{s_0+\sigma}^{k_0,\upsilon} \norm{ \fI_0 }_{s+\sigma}^{k_0,\upsilon} \\
	\norm{ \di_i(i_\delta)[\whi] }_{s_1} &
	\lesssim_{s_1} \norm{ \whi }_{s_1+1}  \, ,   \label{ebb3} 
	\end{align}
	for $  s_1 \leq s_0 + \mu $ (cfr. \eqref{ansatz}).
	Furthermore  $i_\delta(\vf)$
	is  a reversible and traveling torus,  cfr.  \eqref{RTTT}.
\end{lem}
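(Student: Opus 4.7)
The plan is to follow the Berti--Bolle construction \cite{BB} (see also \cite{BM, BBHM}), the novel point being to verify that the correction $\rho$ preserves both the reversible and the traveling wave structure.

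For the isotropy claim, I compute the pulled-back $1$-form $i_\delta^* \Lambda$ from \eqref{liouville}: since $[\pa_\vf\theta_0]^\top [\pa_\vf\theta_0]^{-\top} = \mathrm{Id}$, its components satisfy $a_k^\delta = a_k - \rho_k$, whence $A_{kj}^\delta = A_{kj} - (\pa_{\vf_k}\rho_j - \pa_{\vf_j}\rho_k)$. Because $\cW = d\Lambda$ is exact, $i_0^* \cW = d(i_0^*\Lambda)$ is a closed $2$-form on $\T^\nu$, which gives the cocycle identity
\[
\pa_{\vf_l}A_{kj} + \pa_{\vf_k}A_{jl} + \pa_{\vf_j}A_{lk} = 0 \,.
\]
Summing over $l$ after multiplying by $\pa_{\vf_l}$, and using the definition $\rho_j = \Delta_\vf^{-1}\sum_k \pa_{\vf_k}A_{kj}$, one obtains $\pa_{\vf_k}\rho_j - \pa_{\vf_j}\rho_k = A_{kj}$, so $A_{kj}^\delta \equiv 0$ and $i_\delta$ is isotropic. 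The quantitative bounds \eqref{ebb1}--\eqref{ebb3} are then a direct transcription of \cite{BB, BM}: the already-recalled tame estimate on $\|A_{kj}\|_s^{k_0,\upsilon}$, the fact that $\Delta_\vf^{-1}$ acts on zero-average functions without loss of derivatives, and the ansatz \eqref{ansatz} for inverting $\pa_\vf\theta_0$, jointly control $\rho$ and hence $I_\delta - I_0$. The bound on $\cF(i_\delta, \alpha_0)$ follows from the Taylor expansion $\cF(i_\delta, \alpha_0) = Z + \di_i \cF(i_0)[i_\delta - i_0] + O((i_\delta - i_0)^2)$ combined with Lemma \ref{XP_est} and the bound on $I_\delta - I_0$.

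The main new point is to show that $i_\delta$ is again a reversible traveling torus, which I plan to reduce to checking the corresponding symmetries of $\rho$. For reversibility, the assumption on $i_0$ gives $\theta_0$ odd, $I_0$ even, and $w_0(-\vf) = \cS w_0(\vf)$, hence $\pa_\vf\theta_0$ is even. Using $\cS^* = \cS$ and the identity $\cS J^{-1}\cS = - J^{-1}$, which follows from \eqref{rev_invo} and \eqref{def:J-1}, the two sign flips --- one from commuting $\cS$ past $J^{-1}$, the other from differentiating $w_0(-\vf) = \cS w_0(\vf)$ in $\vf_k$ --- cancel, and both terms in \eqref{ak} turn out to be even in $\vf$. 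Therefore $A_{kj}$ is odd, $\rho$ is even, and $I_\delta = I_0 + [\pa_\vf\theta_0]^{-\top}\rho$ is even, which is exactly $\vec\cS i_\delta(\vf) = i_\delta(-\vf)$. For the traveling property, $\pa_\vf\theta_0$ and $I_0$ are invariant under $\vf \mapsto \vf - \vec\jmath\vs$ by assumption, while $w_0(\vf - \vec\jmath\vs) = \tau_\vs w_0(\vf)$; since $\tau_\vs$ is unitary on $L^2$, commutes with $J^{-1}$, and commutes with $\Pi_\angle^{L^2}$ by Lemma \ref{lem:proj.momentum}, it commutes with $J_\angle^{-1}$, and both terms in $a_k$ are invariant under the shift. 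Hence $A_{kj}$, $\rho$ and $I_\delta$ are all shift-invariant, yielding $\vec\tau_\vs i_\delta(\vf) = i_\delta(\vf - \vec\jmath\vs)$.

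The only subtle point I expect is the mean-value issue when inverting $\Delta_\vf$ on $\sum_k \pa_{\vf_k}A_{kj}$: this sum has zero $\vf$-mean (being a $\vf$-derivative), so $\rho_j$ is well defined, but the identity $\pa_{\vf_k}\rho_j - \pa_{\vf_j}\rho_k = A_{kj}$ recovers $A_{kj}$ only up to its constant Fourier mode. As in \cite{BB}, the mean of $A_{kj}$ is itself bounded by $\|Z\|$ (through the Diophantine inversion of $\omega\cdot\pa_\vf$ applied to the equation $\cF(i_0,\alpha_0) = Z$), and this is what produces the loss of $\sigma = \sigma(\nu,\tau)$ derivatives and the factor $\upsilon^{-1}$ in the estimate of $\cF(i_\delta,\alpha_0)$ and of $I_\delta - I_0$ in terms of $Z$.
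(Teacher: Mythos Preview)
Your argument is correct and matches the paper's approach: the paper also reduces the reversible and traveling properties of $i_\delta$ to the corresponding symmetries of the coefficients $a_k$ in \eqref{ak}, using $\cS J^{-1} = -J^{-1}\cS$ (together with Lemma~\ref{proj_rev}) for reversibility and the unitarity of $\tau_\vs$ and its commutation with $J_\angle^{-1}$ for the traveling property, while deferring the estimates \eqref{ebb1}--\eqref{ebb3} to \cite{BBHM}. Your explicit isotropy computation via the cocycle identity is a welcome addition the paper omits.

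One small correction to your last paragraph: there is in fact no mean-value obstruction. Since $A_{kj} = \pa_{\vf_k}a_j - \pa_{\vf_j}a_k$ is itself a difference of $\vf$-derivatives, its $\vf$-average vanishes identically; likewise $\pa_{\vf_k}\rho_j - \pa_{\vf_j}\rho_k$ has zero average. Hence the harmonic function $A_{kj} - (\pa_{\vf_k}\rho_j - \pa_{\vf_j}\rho_k)$ is the zero constant, and $i_\delta$ is \emph{exactly} isotropic, not merely up to an $O(\|Z\|)$ error. The $\upsilon^{-1}$ and the loss of $\sigma$ derivatives in the estimates come only from the Diophantine bound on $\|A_{kj}\|_s^{k_0,\upsilon}$ already recalled before the lemma, not from any residual constant mode.
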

\begin{proof}
Since  $i_0(\vf)$ is a  traveling torus  (see \eqref{mompres_aa}),
in order to prove that  $i_\delta(\vf)$ is a traveling torus it is sufficient to 
prove that $ I_\delta (\vf- \ora{\jmath} \vs)  =  I_\delta (\vf ) $, for any $\vs \in \R $. 
In view of \eqref{Idelta}, this  follows by checking that 
$	\partial_\vf \theta_0(\vf - \ora{\jmath}\vs) = \partial_\vf \theta_0(\vf)$
and $\rho(\vf - \ora{\jmath}\vs) = \rho(\vf) $ for any $\vs \in \R$. 
The first identity is a trivial consequence of the fact that 
$ \theta_0(\vf - \ora{\jmath}\vs ) = \theta_0 (\vf) - \ora{\jmath} \vs $ for any $ \vs \in \R $, 
whereas the second one follows once we prove that the functions $ a_k (\vf)$ defined in 
\eqref{ak} satisfy 
	\begin{equation}
	\label{ak.trav}
	a_k(\vf - \ora{\jmath}\vs) = a_k(\vf) \quad \forall\, \vs \in \R \, , \ \ \forall k  = 1 , \ldots, \nu \, . 
	\end{equation}
Using that 
$i_0(\vf)$ is a traveling torus, we get, for any $\vs \in \R $,  
 $$
 \left( \pa_{\vf_k} w_0(\vf- \ora{\jmath}\vs ), J_\angle^{-1}  w_0(\vf- \ora{\jmath}\vs) \right)_{L^2} = 
  \left( \pa_{\vf_k} \tau_\vs w_0(\vf ), J_\angle^{-1} \tau_\vs w_0(\vf) \right)_{L^2} =
    \left( \pa_{\vf_k} w_0(\vf ), J_\angle^{-1}  w_0(\vf) \right)_{L^2} 
 $$
 and, recalling \eqref{ak},  we deduce \eqref{ak.trav}. 
Moreover, since  $ i_0 (\vf) $ is reversible, in order to prove that 
$ i_\delta (\vf) $ is reversible as well, it is sufficient to show that 
$ I_\delta (\vf ) $ is even.
 This follows by \eqref{ak}, 
Lemma \ref{proj_rev} and $ \cS J^{-1} = - J^{-1} \cS $.
Finally, the estimates \eqref{ebb1}-\eqref{ebb3} follow e.g. as in Lemma 5.3 in \cite{BBHM}.
\end{proof}

In the sequel we denote by $\sigma = \sigma(\nu,\tau) $ constants, 
which may increase from lemma to lemma, which represent "loss of derivatives".

In order to find an approximate inverse of the linearized operator $\di_{i,\alpha}\cF(i_\delta)$, we introduce the symplectic diffeomorphism $G_\delta:(\phi,y,\tw) \rightarrow (\theta,I,w)$ of the phase space $ \T^\nu\times \R^\nu \times \acca_{\S^+,\Sigma}^\angle$, 
\begin{equation}\label{Gdelta}
\begin{pmatrix}
\theta \\ I \\ w
\end{pmatrix} := G_\delta \begin{pmatrix}
\phi \\ y \\ \tw
\end{pmatrix} := \begin{pmatrix}
\theta_0(\phi) \\ 
I_\delta(\phi) + \left[ \pa_\phi \theta_0(\phi) \right]^{-\top}y + \left[(\pa_\theta\wtw_0)(\theta_0(\phi))  \right]^\top J_\angle^{-1} \tw \\
w_0(\phi) + \tw
\end{pmatrix}\,,
\end{equation}
where $\wtw_0(\theta):= w_0(\theta_0^{-1}(\theta))$.
It is proved in Lemma 2 of \cite{BB} that $G_\delta$ is symplectic, because the torus $i_\delta$ is isotropic (Lemma \ref{torus_iso}). In the new coordinates, $i_\delta$ is the trivial embedded torus $(\phi,y,\tw)=(\phi,0,0)$.
\begin{lem}
The diffeomorphism $G_\delta$ in \eqref{Gdelta} is reversibility and momentum preserving, 
in the sense that 
\begin{equation}
\label{Gdelta.inv}
\vec \cS \circ G_\delta = G_\delta \circ \vec  \cS  \, , \quad  
\vec \tau_\vs \circ G_\delta = G_\delta \circ \vec \tau_\vs \, , \quad \forall \,\vs \in \R \, , 
\end{equation}
where $\vec \cS $ and $ \vec \tau_\vs $ are defined respectively in \eqref{rev_aa}, \eqref{vec.tau}. 
\end{lem}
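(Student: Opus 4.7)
The strategy is direct: compute both sides of each identity in \eqref{Gdelta.inv} component by component using the explicit formula \eqref{Gdelta} for $G_\delta$, and verify equality by invoking the reversibility and traveling properties of $i_0$ and $i_\delta$ established in Lemma \ref{torus_iso}, together with the symmetry properties of $\cS$ and $\tau_\vs$ stated in Section \ref{sec:decomp}. The $\theta$-component and $w$-component equalities are immediate: since $\theta_0(-\phi)=-\theta_0(\phi)$, $\theta_0(\phi-\ora{\jmath}\vs)=\theta_0(\phi)-\ora{\jmath}\vs$, $\cS w_0(\phi)=w_0(-\phi)$, and $\tau_\vs w_0(\phi)=w_0(\phi-\ora{\jmath}\vs)$, one reads off $\theta_0(\pm\phi\mp\ora{\jmath}\vs)=\pm\theta_0(\phi)\mp\ora{\jmath}\vs$ and $w_0(\pm\phi\mp\ora{\jmath}\vs)+(\cS\text{ or }\tau_\vs)\tw=(\cS\text{ or }\tau_\vs)(w_0(\phi)+\tw)$, matching the $\theta$ and $w$ slots of $\vec\cS \circ G_\delta$ and $\vec\tau_\vs \circ G_\delta$ respectively.

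The substantive check is for the $I$-component. Differentiating the parities of $\theta_0$ yields $[\pa_\phi\theta_0(-\phi)]^{-\top}=[\pa_\phi\theta_0(\phi)]^{-\top}$ and $[\pa_\phi\theta_0(\phi-\ora{\jmath}\vs)]^{-\top}=[\pa_\phi\theta_0(\phi)]^{-\top}$, while Lemma \ref{torus_iso} gives $I_\delta(-\phi)=I_\delta(\phi)$ and $I_\delta(\phi-\ora{\jmath}\vs)=I_\delta(\phi)$. It remains to analyse the term $[(\pa_\theta\wtw_0)(\theta_0(\phi))]^\top J_\angle^{-1}\tw$. From $\wtw_0(\theta):=w_0(\theta_0^{-1}(\theta))$ and the identities above, one obtains $\wtw_0(-\theta)=\cS\wtw_0(\theta)$ and $\wtw_0(\theta-\ora{\jmath}\vs)=\tau_\vs\wtw_0(\theta)$; differentiating in $\theta$ this produces $(\pa_\theta\wtw_0)(-\theta)=-\cS(\pa_\theta\wtw_0)(\theta)$ and $(\pa_\theta\wtw_0)(\theta-\ora{\jmath}\vs)=\tau_\vs(\pa_\theta\wtw_0)(\theta)$.

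For the translation identity I then use $\tau_\vs^*=\tau_{-\vs}$ together with the fact that $J_\angle^{-1}$ commutes with $\tau_\vs$: this commutation follows from Lemma \ref{lem:proj.momentum} (the projector $\Pi^{L^2}_\angle$ commutes with $\tau_\vs$) and the obvious fact that the constant-coefficient operator $J^{-1}$ commutes with translations, combined with the definition \eqref{simpl-ristretto}. Consequently, for each $k$, the $L^2$-pairing $(\tau_\vs\pa_{\theta_k}\wtw_0(\theta),J_\angle^{-1}\tau_\vs\tw)_{L^2}=(\pa_{\theta_k}\wtw_0(\theta),\tau_{-\vs}\tau_\vs J_\angle^{-1}\tw)_{L^2}=(\pa_{\theta_k}\wtw_0(\theta),J_\angle^{-1}\tw)_{L^2}$, so the $I$-component of $G_\delta(\phi-\ora{\jmath}\vs,y,\tau_\vs\tw)$ equals that of $G_\delta(\phi,y,\tw)$, as needed.

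For the reversibility identity the corresponding step is the slightly subtler one, and the main (though still short) obstacle. Using $\cS^*=\cS$ and $\cS^2=\mathrm{Id}$, one reduces the claim to $\cS J_\angle^{-1}\cS=-J_\angle^{-1}$. This holds because a direct computation from \eqref{rev_invo} gives $\cS J=-J\cS$, hence $\cS J^{-1}=-J^{-1}\cS$, while Lemma \ref{proj_rev} ensures that $\cS$ commutes with $\Pi^\angle_{\S^+,\Sigma}$ (and hence with $\Pi^{L^2}_\angle$), so $\cS J_\angle^{-1}=-J_\angle^{-1}\cS$ by \eqref{simpl-ristretto}. Combining this with $(\pa_\theta\wtw_0)(-\theta)=-\cS(\pa_\theta\wtw_0)(\theta)$, the two minus signs cancel, giving $(-\cS\pa_{\theta_k}\wtw_0(\theta),J_\angle^{-1}\cS\tw)_{L^2}=(\pa_{\theta_k}\wtw_0(\theta),J_\angle^{-1}\tw)_{L^2}$, so the $I$-component is invariant. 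Collecting the three components proves both identities \eqref{Gdelta.inv}.
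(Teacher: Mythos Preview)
Your proof is correct and follows essentially the same route as the paper: a component-by-component verification of \eqref{Gdelta.inv} using the reversible and traveling properties of $i_\delta$ from Lemma \ref{torus_iso}, the derived identities $\wtw_0(-\theta)=\cS\wtw_0(\theta)$ and $\wtw_0(\theta-\ora{\jmath}\vs)=\tau_\vs\wtw_0(\theta)$, and the (anti-)commutation of $J_\angle^{-1}$ with $\tau_\vs$ and $\cS$. The only cosmetic difference is that you compute the $I$-component term via explicit $L^2$-pairings, while the paper phrases the same step through transposition; one minor remark is that the implication ``$\cS$ commutes with $\Pi^\angle_{\S^+,\Sigma}$, hence with $\Pi^{L^2}_\angle$'' needs the (easy) extra observation that $\cS$ is an $L^2$-isometry preserving $\acca_{\S^+,\Sigma}^\angle$, after which the argument in the proof of Lemma \ref{lem:proj.momentum} applies verbatim.
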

\begin{proof}
We prove the second identity in  \eqref{Gdelta.inv}, which, in view of
\eqref{Gdelta}, \eqref{vec.tau} amounts to   
\begin{align}
\label{Gdelta.inv1}
&\theta_0(\phi) - \ora{\jmath}\vs = \theta_0(\phi - \ora{\jmath}\vs) \, , \ \forall \vs \in \R\,, \\
\label{Gdelta.inv2}
&I_\delta(\phi) + \left[ \pa_\phi \theta_0(\phi) \right]^{-\top}y + \left[(\pa_\theta\wtw_0)(\theta_0(\phi))  \right]^\top J_\angle^{-1}\tw 
\\
\notag
& \qquad =
I_\delta(\phi- \ora{\jmath}\vs) + \left[ \pa_\phi \theta_0(\phi- \ora{\jmath}\vs) \right]^{-\top}y + \left[(\pa_\theta\wtw_0)(\theta_0(\phi- \ora{\jmath}\vs))  \right]^\top J_\angle^{-1}\tau_\vs \tw\,,
\\
\label{Gdelta.inv3}
&  \tau_\vs w_0(\phi) + \tau_\vs \tw  = w_0(\phi- \ora{\jmath}\vs) + \tau_\vs \tw  \, . 
\end{align}
Identities \eqref{Gdelta.inv1} and \eqref{Gdelta.inv3} follow because 
$i_\delta(\vf)$ is a traveling torus (Lemma \ref{torus_iso}). 
For the same reason 
$I_\delta(\phi) = I_\delta(\phi- \ora{\jmath}\vs) $ and
$ \pa_\phi \theta_0(\phi) =  \pa_\phi \theta_0(\phi- \ora{\jmath}\vs)$ for any $\vs \in \R$. Hence, for verifying \eqref{Gdelta.inv2} it is sufficient to check that
$ [(\pa_\theta\wtw_0)(\theta_0(\phi)) ]^\top   = [(\pa_\theta\wtw_0)(\theta_0(\phi- \ora{\jmath}\vs))  ]^\top \tau_\vs $
(we have used that $J_\angle^{-1}$ and $\tau_\vs $ commute by Lemma \ref{lem:proj.momentum}), which in turn follows by
\begin{equation}
\label{Gdelta.inv4}
\tau_\vs \circ (\pa_\theta\wtw_0)(\theta_0(\phi)) 
= 
(\pa_\theta\wtw_0)(\theta_0(\phi- \ora{\jmath}\vs)) \, ,  \quad \forall \vs \in \R \, , 
\end{equation}
by taking the transpose and using that $\tau_\vs^\top = \tau_{-\vs } = \tau_\vs^{-1} $.
We claim that  \eqref{Gdelta.inv4} is implied by 
$\wt w_0$ being a traveling wave, i.e.
\begin{equation}
\label{Gdelta.inv5}
\tau_\vs \wtw_0(\theta, \cdot) = \wtw_0(\theta - \ora{\jmath} \vs)  \, , \quad \forall \varsigma \in \R \, .
\end{equation}
Indeed, taking the differential of \eqref{Gdelta.inv5} with respect to $\theta$, evaluating at $\theta  = \theta_0(\vf) $,  and using that $\theta_0(\vf) - \ora{\jmath}\vs = \theta_0(\vf- \ora{\jmath}\vs)$ one deduces \eqref{Gdelta.inv4}.
It remains to prove \eqref{Gdelta.inv5}. By the  definition of $\wtw_0$,
and since $w_0$ is a traveling wave,  we have 
\begin{align*}
\wtw_0(\theta - \ora{\jmath}\vs) 
& = w_0(\theta_0^{-1}(\theta - \ora{\jmath}\vs)) 
 =  w_0(\theta_0^{-1}(\theta) - \ora{\jmath}\vs) 
 = \tau_\vs w_0(\theta_0^{-1}(\theta)) = \tau_\vs \wtw_0 \, ,  
\end{align*}
using also that 
$ \theta_0^{-1}(\theta - \ora{\jmath}\vs) = \theta_0^{-1}(\theta) - \ora{\jmath}\vs $,
which follows by inverting \eqref{Gdelta.inv1}.
The proof of the first identity in  \eqref{Gdelta.inv} follows by \eqref{Gdelta}, \eqref{rev_aa},
the fact that $ i_\delta $ is reversible, Lemma \ref{proj_rev} and 
since $ J^{-1} $ and $ \cS $ anti-commute.  
\end{proof}
 Under the symplectic diffeomorphism $G_\delta $, the Hamiltonian vector field $X_{H_\alpha}$ changes into
\begin{equation}\label{Kalpha}
X_{K_\alpha} = \left(DG_\delta  \right)^{-1} X_{H_\alpha} \circ G_\delta 
\qquad {\rm where} \qquad K_\alpha := H_\alpha \circ G_\delta \,.
\end{equation}
By  \eqref{Gdelta.inv} and  \eqref{Halpha.symm} we deduce  that $ K_\alpha $ is reversible and momentum preserving, in the sense that 
\begin{equation}\label{Ka.prop}
K_\alpha\circ \vec \cS = K_\alpha \, , \quad 
K_\alpha\circ \vec \tau_\vs = K_\alpha \, , \ \  \forall\, \vs \in \R \, . 
\end{equation} 
The Taylor expansion of $K_\alpha$ at the trivial torus $(\phi,0,0)$ is
\begin{equation}\label{taylor_Kalpha}
\begin{aligned}
K_\alpha(\phi,y,\tw) =& \ K_{00}(\phi,\alpha) + K_{10}(\phi,\alpha) \cdot y + 
( K_{01}(\phi,\alpha),\tw )_{L^2} + \tfrac12 K_{20}(\phi) y\cdot y \\
& + ( K_{11}(\phi)y,\tw )_{L^2} + \tfrac12 ( K_{02}(\phi)\tw,\tw )_{L^2} + K_{\geq 3}(\phi,y,\tw)\,,
\end{aligned}
\end{equation}
where $K_{\geq 3}$ collects all terms at least cubic in the variables $(y,\tw)$. By \eqref{Halpha} and \eqref{Gdelta}, the only Taylor coefficients that depend on $\alpha$ are $K_{00}\in \R$, $K_{10}\in\R^\nu$ and $K_{01}\in \acca_{\S^+,\Sigma}^\angle $, whereas the $ \nu \times \nu $ symmetric matrix $K_{20} $, $K_{11}\in\cL ( \R^\nu,\acca_{\S^+,\Sigma}^\angle )$ and the linear self-adjoint operator $ K_{02} $,  acting on 
$\acca_{\S^+,\Sigma}^\angle $, 
are independent of it. 

Differentiating the identities in \eqref{Ka.prop} at $(\phi,0,0)$, we have
(recalling \eqref{rev_aa}) 
\begin{equation}\label{Ka.rev}
\begin{aligned}
& K_{00}(-\phi) = K_{00}(\phi)\,, \quad  K_{10}(-\phi) = K_{10}(\phi)\,, \quad K_{20}(-\phi) = K_{20}(\phi)\,, \\
&  \cS \circ K_{01}(-\phi)  = K_{01}(\phi)\,, \quad \cS \circ K_{11}(-\phi) = K_{11}(\phi)\,,  \quad K_{02}(-\phi)\circ \cS =  \cS \circ K_{02}(\phi)\,,
\end{aligned}
\end{equation}
and, recalling \eqref{vec.tau} and 
using that $\tau_\vs^\top = \tau_{-\vs } = \tau_\vs^{-1} $, for any $\vs \in \R $, 
\begin{equation}
\label{K.symm.2}
\begin{aligned}
&K_{00}(\phi- \ora{\jmath} \vs) = K_{00}(\phi) \,, \quad  K_{10}(\phi- \ora{\jmath} \vs) = K_{10}(\phi)\,, \quad K_{20}(\phi- \ora{\jmath} \vs) = K_{20}(\phi)\,, \\
&K_{01}(\phi- \ora{\jmath} \vs) =  \tau_\vs K_{01}(\phi) \,, \quad K_{11}(\phi- \ora{\jmath} \vs) =  \tau_\vs K_{11}(\phi) \,, \quad 
K_{02}(\phi- \ora{\jmath} \vs) \circ \tau_\vs = \tau_\vs \circ K_{02}(\phi) \, . 
\end{aligned}
\end{equation}
The Hamilton equations associated to \eqref{taylor_Kalpha} are
\begin{equation}\label{hameq_Kalpha}
\begin{cases}
\dot\phi =   K_{10}(\phi,\alpha) + K_{20}(\phi)y + [K_{11}(\phi)]^\top \tw + \pa_y K_{\geq 3}(\phi,y,\tw) \\
\dot y =   - \pa_\phi K_{00}(\phi,\alpha) - [\pa_\phi K_{10}(\phi,\alpha)]^\top y - [\pa_\phi K_{01}(\phi,\alpha)]^\top \tw  \\
\ \ \ \ \ - \pa_\phi\left( \tfrac12 K_{20}(\phi)y\cdot y + \left( K_{11}(\phi)y,\tw \right)_{L^2} + \tfrac12 \left( K_{02}(\phi)\tw,\tw \right)_{L^2} + K_{\geq 3}(\phi,y,\tw) \right) \\
\dot\tw =  J_\angle \, \left( K_{01}(\phi,\alpha)+ K_{11}(\phi)y + K_{02}(\phi)\tw + \nabla_{\tw} K_{\geq 3}(\phi,y,\tw) \right) 
\end{cases}\,,
\end{equation}
where $\pa_\phi K_{10}^\top $ is the $\nu\times\nu$ transposed matrix and $\pa_\phi K_{01}^\top , K_{11}^\top: \acca_{\S^+,\Sigma}^\angle\rightarrow\R^\nu$ are defined by the duality relation 
$ (\pa_\phi K_{01}[\wh\phi],\tw  )_{L^2}=\wh\phi\cdot [\pa_\phi K_{01} ]^\top \tw $ 
for any $\wh\phi\in\R^\nu$, $\tw\in\acca_{\S^+,\Sigma}^\angle$. 
The transpose
$	K_{11}^\top (\phi) $ is defined similarly. 
	
On an exact solution (that is $Z=0$), the terms
$K_{00}, K_{01}$  in the Taylor expansion \eqref{taylor_Kalpha} vanish and $K_{10}= \omega$. 
More precisely, arguing as  in Lemma 5.4 in \cite{BBHM}, we have 

\begin{lem}\label{Kcoeff_est}
There is $ \sigma := \sigma (\nu, \tau) > 0 $, such that, for all $ s \geq s_0 $, 
	\begin{align*}
	&\norm{ \pa_\phi K_{00}(\cdot , \alpha_0) }_s^{k_0,\upsilon} + \norm{ K_{10}(\cdot ,\alpha_0)-\omega}_s^{k_0,\upsilon} + \norm{ K_{01}( \cdot ,\alpha_0) }_s^{k_0,\upsilon} \lesssim_s  \norm{Z}_{s+\sigma}^{k_0,\upsilon} + \norm{Z}_{s_0+\sigma}^{k_0,\upsilon} \norm{ \fI_0 }_{s+\sigma}^{k_0,\upsilon} \,, \\
	&\norm{\pa_\alpha K_{00}}_s^{k_0,\upsilon} + \norm{ \pa_\alpha K_{10}-{\rm Id} }_s^{k_0,\upsilon} + \norm{ \pa_\alpha K_{01} }_s^{k_0,\upsilon} \lesssim_s \norm{ \fI_0 }_{s+\sigma}^{k_0,\upsilon} \,, \\
	& \norm{ K_{20} }_s^{k_0,\upsilon}\lesssim_s \varepsilon ( 1 + \norm{ \fI_0 }_{s+\sigma}^{k_0,\upsilon} )\,,  \\
	& \norm{ K_{11}y }_s^{k_0,\upsilon} \lesssim_s \varepsilon ( \norm{ y }_s^{k_0,\upsilon}+ \norm{y }_{s_0}^{k_0,\upsilon}\norm{\fI_0 }_{s+\sigma}^{k_0,\upsilon} )\,,  \ 
	 \norm{ K_{11}^\top \tw  }_s^{k_0,\upsilon} \lesssim_s \varepsilon ( \norm{\tw}_{s}^{k_0,\upsilon} + \norm{\tw}_{s_0}^{k_0,\upsilon}\norm{\fI_0}_{s+\sigma}^{k_0,\upsilon}) \, . 
	\end{align*}
	\end{lem}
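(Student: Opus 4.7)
The plan is to follow closely the strategy of Lemma 5.4 in \cite{BBHM} (which in turn follows \cite{BB}), with routine adaptations to the present setting. The starting point is the conjugation identity $X_{K_\alpha} = (DG_\delta)^{-1} X_{H_\alpha} \circ G_\delta$ of \eqref{Kalpha}. Reading the Hamilton equations \eqref{hameq_Kalpha} at the trivial torus $(\phi,0,0)$, the three first-order Taylor coefficients are precisely the components of $X_{K_\alpha}(\phi,0,0)$, namely
\begin{equation*}
X_{K_\alpha}(\phi, 0, 0) = \bigl( K_{10}(\phi, \alpha),\ -\partial_\phi K_{00}(\phi, \alpha),\ J_\angle K_{01}(\phi, \alpha) \bigr)\,.
\end{equation*}
Combining this with \eqref{Kalpha} and the identity $\omega \cdot \partial_\vf i_\delta(\phi) = DG_\delta(\phi,0,0)(\omega, 0, 0)$ -- which follows from $i_\delta(\phi) = G_\delta(\phi, 0, 0)$ -- one obtains
\begin{equation*}
(\omega, 0, 0) - X_{K_{\alpha_0}}(\phi, 0, 0) = (DG_\delta(\phi, 0, 0))^{-1} \, \cF(i_\delta, \alpha_0)\,.
\end{equation*}
The first three estimates of the lemma then follow component by component from Lemma \ref{torus_iso}, which bounds $\|\cF(i_\delta, \alpha_0)\|_s^{k_0, \upsilon}$ in terms of $\|Z\|_{s+\sigma}^{k_0, \upsilon}$, together with tame estimates for $(DG_\delta)^{-1}$, derived from the explicit form \eqref{Gdelta}, standard composition estimates, and the Ansatz \eqref{ansatz}.

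For the derivatives in $\alpha$, we exploit that $\partial_\alpha H_\alpha = I$; hence from \eqref{Gdelta}
\begin{equation*}
\partial_\alpha K_\alpha(\phi, y, \tw) = I_\delta(\phi) + [\partial_\phi \theta_0(\phi)]^{-\top} y + \bigl[(\partial_\theta \wtw_0)(\theta_0(\phi))\bigr]^\top J_\angle^{-1} \tw\,,
\end{equation*}
which gives explicit formulas for $\partial_\alpha K_{00}, \partial_\alpha K_{10}, \partial_\alpha K_{01}$ by reading the Taylor expansion at $(\phi,0,0)$. Writing $\theta_0(\phi) = \phi + \Theta_0(\phi)$ with $\|\Theta_0\|_s^{k_0, \upsilon} \lesssim_s \|\fI_0\|_s^{k_0, \upsilon}$ by \eqref{ICal}, the estimate $\|\partial_\alpha K_{10} - \mathrm{Id}\|_s^{k_0,\upsilon} \lesssim_s \|\fI_0\|_{s+\sigma}^{k_0,\upsilon}$ follows by Neumann expansion of $[I + \partial_\phi \Theta_0^\top]^{-1}$, while the bounds for $\partial_\alpha K_{00}$ and $\partial_\alpha K_{01}$ use in addition Lemma \ref{torus_iso} (to estimate $I_\delta$) and the composition estimates with $\theta_0$.

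Finally, the bounds on $K_{20}$ and $K_{11}$ rest on the observation that $\cN_\alpha = \alpha \cdot I + \tfrac{1}{2}(w, \b\Omega_W w)_{L^2}$ is linear in $I$ -- and, via \eqref{Gdelta}, $I$ is affine in $(y, \tw)$ -- while the quadratic term in $w$ does not depend on $y$. Consequently the contribution of $\cN_\alpha$ to both $K_{20}$ and $K_{11}$ vanishes identically, and these two coefficients arise solely from the second differentials of $\varepsilon P \circ G_\delta$ at $(\phi, 0, 0)$. The claimed $\varepsilon$-smallness then follows from the tame estimates in Lemma \ref{XP_est} together with standard composition estimates and the Ansatz \eqref{ansatz}. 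The main technical effort -- essentially identical to that of \cite{BBHM} and not a conceptual difficulty -- lies in carefully tracking the loss of derivatives $\sigma = \sigma(\nu, \tau)$ through the composition with $G_\delta$; the momentum-preserving structure of $G_\delta$ plays no role here (it is needed only to ensure that the approximate inverse, and ultimately the solutions, are traveling waves).
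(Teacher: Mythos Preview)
Your proposal is correct and follows precisely the approach the paper indicates, namely arguing as in Lemma 5.4 of \cite{BBHM}: you correctly derive the identity $(\omega,0,0)-X_{K_{\alpha_0}}(\phi,0,0)=(DG_\delta(\phi,0,0))^{-1}\cF(i_\delta,\alpha_0)$ for the first line, read off the $\alpha$-derivatives from $\partial_\alpha K_\alpha = I\circ G_\delta$, and observe that $\cN_\alpha\circ G_\delta$ contributes no $y^2$ or $y\tw$ terms so that $K_{20},K_{11}$ come entirely from $\varepsilon P\circ G_\delta$. The paper gives no further detail beyond the reference, so there is nothing to compare.
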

Under the linear change of variables
\begin{equation*}\label{DGdelta}
DG_\delta(\vf,0,0)\begin{pmatrix}
\wh\phi \\ \why \\ \wh\tw
\end{pmatrix}:= \begin{pmatrix}
\pa_\phi \theta_0(\vf) & 0 & 0 \\ \pa_\phi I_\delta(\vf) & [\pa_\phi \theta_0(\vf)]^{-\top} &  [(\pa_\theta\wtw_0)(\theta_0(\vf))]^\top  J_\angle^{-1} \\\pa_\phi w_0(\vf) & 0 & {\rm Id}
\end{pmatrix}\begin{pmatrix}
\wh\phi \\ \why \\ \wh\tw
\end{pmatrix} \,,
\end{equation*}
the linearized operator $\di_{i,\alpha}\cF(i_\delta)$ is approximately transformed into the one obtained when one linearizes the Hamiltonian system \eqref{hameq_Kalpha} at $(\phi,y,\tw) = (\vf,0,0)$, differentiating also in $\alpha$ at $\alpha_0$ and changing $\pa_t \rightsquigarrow \omega\cdot \pa_\vf$, namely
\begin{equation}\label{lin_Kalpha}
\begin{pmatrix}
\widehat \phi  \\
\widehat y    \\ 
\widehat \tw \\
\widehat \alpha
\end{pmatrix} \mapsto
\begin{pmatrix}
\omega\cdot \pa_\vf \wh\phi - \pa_\phi K_{10}(\vf)[\wh\phi] - \pa_\alpha K_{10}(\vf)[\wh\alpha] - K_{20}(\vf)\why - [K_{11}(\vf)]^\top \wh\tw  \\
 \omega\cdot \pa_\vf\why + \pa_{\phi\phi}K_{00}(\vf)[\wh\phi]+ \pa_\alpha\pa_\phi K_{00}(\vf)[\wh\alpha] + [\pa_\phi K_{10}(\vf)]^\top \why + [\pa_\phi K_{01}(\vf)]^\top  \wh\tw  \\
 \omega\cdot \pa_\vf \wh\tw - J_\angle \,  \big( \pa_\phi K_{01}(\vf)[\wh\phi] + \pa_\alpha K_{01}(\vf)[\wh\alpha] + K_{11}(\vf) \why + K_{02}(\vf) \wh\tw \big)   
\end{pmatrix}. 
\end{equation}
In order to construct an ``almost approximate" inverse of \eqref{lin_Kalpha}, we need that
\begin{equation}\label{Lomegatrue}
\cL_\omega := \Pi_{\S^+,\Sigma}^\angle \left( \omega\cdot \pa_\vf - J K_{02}(\vf) \right)|_{\acca_{\S^+,\Sigma}^\angle}
\end{equation}
is "almost invertible" (on traveling waves) up to remainders of size $O(N_{n-1}^{-{\ta}})$, where, for $n\in\N_0$
\begin{equation}\label{scales}
N_n:= K_n^p \,, \quad K_n: = K_0^{\chi^n} \,, \quad \chi= 3/2\, .
\end{equation}
The $ (K_n)_{n \geq 0} $ is the scale used in the nonlinear Nash-Moser iteration of Section \ref{sec:NaM} and $ (N_n)_{n \geq 0} $ is the one in the reducibility scheme of 
Section \ref{sec:KAM}.
Let $H_\angle^s(\T^{\nu+1}):= H^s(\T^{\nu+1})\cap \acca_{\S^+,\Sigma}^\angle$.
\begin{itemize}
	\item[(AI)]  {\bf Almost invertibility of $\cL_\omega$}: 
	{\it There exist positive real numbers 
	$  \sigma $, $ \mu(\tb) $, $ \ta $, $ p  $, $ K_0 $ and 
	 a subset $\t\Lambda_o \subset \tD\tC(\upsilon,\tau)\times [\kappa_1,\kappa_2]$ such that, for all $(\omega,\kappa) \in \t\Lambda_o$, the operator $\cL_\omega$ may be decomposed as
	\begin{equation}\label{Lomega}
	\cL_\omega = \cL_\omega^< + \cR_\omega + \cR_\omega^\perp \,,
	\end{equation}
	where,
for every traveling wave function 
$ g\in H_\angle^{s+\sigma}(\T^{\nu+1},\R^2)$ and 
for every $  (\omega,\kappa) \in \t\Lambda_o $, there is a 
 traveling wave solution $ h \in H_\angle^{s}(\T^{\nu+1},\R^2) $ of 
$ \cL_\omega^< h = g$ satisfying, for all $s_0\leq s\leq S$,  
	\begin{equation}\label{almi4}
	\norm{ (\cL_\omega^<)^{-1}g }_s^{k_0,\upsilon} \lesssim_S \upsilon^{-1}
	\big( \norm g_{s+\sigma}^{k_0,\upsilon}+ \norm g_{s_0+\sigma}^{k_0,\upsilon}\norm{ \fI_0}_{s+\mu({\tb})+\sigma}^{k_0,\upsilon} \big) \,.
	\end{equation}
In addition, if $ g $ is anti-reversible, then $ h $ is reversible. Moreover, 
for any $s_0\leq s \leq S$, 
	for any traveling wave $ h \in \acca_{\S^+,\Sigma}^\angle    $, the operators $\cR_\omega, \cR_\omega^\perp$ satisfy the estimates
	\begin{align*}
	\norm{ \cR_\omega h }_s^{k_0,\upsilon} & \lesssim_S  \varepsilon \upsilon^{-1}N_{n-1}^{- \ta} \big( \norm h_{s+\sigma}^{k_0,\upsilon}+ \norm h_{s_0+\sigma}^{k_0,\upsilon} \norm{ \fI_0}_{s+\mu(\tb)+\sigma}^{k_0,\upsilon} \big) \,, 
	\\
	\norm{ \cR_\omega^\perp h}_{s_0}^{k_0,\upsilon} & \lesssim_S K_n^{-b} \big( \norm h_{s_0+b+\sigma}^{k_0,\upsilon} + \norm h_{s_0+\sigma}^{k_0,\upsilon}\norm{\fI_0}_{s_0+\mu(\tb)+\sigma+b} \big) \,, \ \forall\, b>0 \,,
	\\
	\norm{ \cR_\omega^\perp h}_s^{k_0,\upsilon} & \lesssim_S \norm h_{s+\sigma}^{k_0,\upsilon}+ \norm h_{s_0+\sigma}^{k_0,\upsilon}\norm{\fI_0}_{s+\mu(\tb)+\sigma}^{k_0,\upsilon} 
	\,.
	\end{align*}	
	}
\end{itemize}
This assumption shall be verified by Theorem \ref{almo.inve} 
at each $n$-th step of the Nash-Moser nonlinear iteration.

In order to find an almost approximate inverse of the linear operator in \eqref{lin_Kalpha} (and so of $\di_{i,\alpha}\cF(i_\delta)$), it is sufficient to  invert the operator
\begin{equation}\label{Dsys}
	\D\big[ \wh\phi,\why,\wh\tw,\wh\alpha \big]:=\begin{pmatrix}
	\omega\cdot \pa_\vf\wh\phi - \pa_\alpha K_{10}(\vf)[\wh\alpha] - K_{20}(\vf)\why- K_{11}^\top(\vf)\wh\tw \\
	\omega\cdot \pa_\vf \why +\pa_\alpha\pa_\phi K_{00}(\vf)[\wh\alpha] \\
	\cL_\omega^< \wh\tw -  J_\angle  \left( \pa_\alpha K_{01}(\vf)[\wh\alpha] + K_{11}(\vf)\why \right)
	\end{pmatrix}
\end{equation}
obtained neglecting in \eqref{lin_Kalpha} the terms $\pa_\phi K_{10}$, $\pa_{\phi\phi}K_{00}$, $\pa_\phi K_{00}$, $\pa_\phi K_{01}$ (they vanish at an exact solution by Lemma \ref{Kcoeff_est}) and the small remainders $\cR_\omega$, $\cR_\omega^\perp$ appearing in \eqref{Lomega}. 
We look for an inverse of $\D$ by solving the system
\begin{equation}\label{Dsys_tos}
	\D\big[ \wh\phi,\why,\wh\tw,\wh\alpha \big] = \begin{pmatrix}
	g_1 \\ g_2 \\ g_3
	\end{pmatrix}\,,
\end{equation}
 where $ (g_1, g_2, g_3)  $ is an anti-reversible traveling wave variation (cfr. Definition 
 \ref{trav-vari}), i.e. 
\begin{align}\label{g_revcond}
&g_1(\vf) = g_1(- \vf) , \qquad 
g_2(\vf) = - g_2(- \vf) , \qquad 
\cS g_3(\vf) = - g_3(- \vf) \, , \\
& 	\label{g.mom.thm}
g_1(\vf) = g_1(\vf - \ora{\jmath}\vs) , 
	\quad
		g_2(\vf) = g_2(\vf - \ora{\jmath}\vs) , 
		\quad
			\tau_\vs g_3(\vf) = g_3(\vf - \ora{\jmath}\vs) , 
			\  \forall \vs \in \R \, .  
\end{align}
We first consider the second equation in \eqref{Dsys}-\eqref{Dsys_tos}, that is $\omega\cdot\pa_\vf\why = g_2-\pa_\alpha\pa_\phi K_{00}(\vf)[\wh\alpha]$. By  \eqref{g_revcond} and \eqref{Ka.rev},  the right hand side of this equation is odd in $ \vf $.
In particular it has  
zero average and so 
\begin{equation}\label{why_sol}
	\why := (\omega\cdot\pa_\vf )^{-1}
	( g_2 -\pa_\alpha\pa_\phi K_{00}(\vf)[\wh\alpha] ) \, .
\end{equation}
Since $ g_2(\vf) = g_2(\vf-\ora{\jmath}\vs) $ for any $ \forall \vs \in \R $ by \eqref{g.mom.thm} 
 and
$ \pa_\alpha\pa_\phi K_{00}(\vf)[\wh\alpha]$ satisfies the same property by
\eqref{K.symm.2}, we deduce also that
\begin{equation}
\label{why.trav}
\wh y(\vf - \ora{\jmath}\vs) = \wh y(\vf) , \  \  \forall \vs \in \R \, .  
\end{equation}
Next we consider the third equation $\cL_\omega^< \wh\tw = g_3 + 
 J_\angle ( \pa_\alpha K_{01}(\vf)[\wh\alpha]+ K_{11}(\vf)\why )$.
 The right hand side of this equation is a traveling wave 
by \eqref{g.mom.thm}, \eqref{K.symm.2}, \eqref{why.trav}  
and since  $ J_\angle =  \Pi^\angle_{\S^+, \Sigma}  \, J_{| \acca_{\S^+,\Sigma}^\angle }  $ commutes with $ \tau_\vs $ (by Lemma \ref{lem:proj.momentum}).
Thus, by  assumption (AI), there is a traveling wave solution 
\begin{equation}\label{whw_sol}
	\wh\tw := ( \cL_\omega^< )^{-1} \big(  g_3 +  J_\angle
	( \pa_\alpha K_{01}(\vf)[\wh\alpha]+ K_{11}(\vf)\why ) \big) \, . 
\end{equation}
Finally, we solve the first equation in \eqref{Dsys_tos}, which, inserting \eqref{why_sol} and \eqref{whw_sol}, becomes
\begin{equation}\label{whphi_eq}
	\omega\cdot\pa_\vf \wh\phi = g_1 + M_1(\vf)[\wh\alpha]+ M_2(\vf)g_2 + M_3(\vf)g_3\,,
\end{equation}
where
\begin{align}
	M_1(\vf) & := \pa_\alpha K_{10}(\vf) - M_2(\vf)\pa_\alpha\pa_\phi K_{00}(\vf) + M_3(\vf) J_\angle \pa_\alpha K_{01} (\vf) \,,\notag  \\
	M_2(\vf) & := K_{20}(\vf) (\omega\cdot\pa_\vf )^{-1} + K_{11}^\top (\vf)\left(\cL_\omega^<\right)^{-1} J_\angle K_{11}(\vf)(\omega\cdot\pa_\vf)^{-1}\,, \notag\\
	M_3(\vf) & := K_{11}^\top(\vf)\left( \cL_\omega^< \right)^{-1} \,.\notag
\end{align}
In order to solve \eqref{whphi_eq}, we  choose $\wh\alpha$ such that the average  in $\vf$ of the right hand side is zero. By Lemma \ref{Kcoeff_est} and \eqref{ansatz}, the 
$ \vf $-average of the matrix $ M_1 $ satisfies $\braket{M_1}_\vf = {\rm Id} + O(\varepsilon \upsilon^{-1})$. 
Then, for $\varepsilon\upsilon^{-1}$ small enough, $\braket{M_1}_\vf$ is invertible and $\braket{M_1}_\vf^{-1} = {\rm Id} + O(\varepsilon\upsilon^{-1})$. Thus we define
\begin{equation}\label{whalpha_sol}
	\wh\alpha := -\braket{M_1}_\vf^{-1}\big( \braket{g_1}_\vf + \braket{M_2g_2}_\vf + \braket{M_3 g_3}_\vf \big) \, , 
\end{equation} 
and
the solution of equation \eqref{whphi_eq} 
\begin{equation}\label{whphi_sol}
	\wh\phi := ( \omega\cdot\pa_\vf )^{-1}\big(  g_1 + M_1(\vf)[\wh\alpha] + M_2(\vf)g_2 + M_3(\vf)g_3 \big)\,.
\end{equation}
Finally the property $ \wh \phi(\vf - \ora{\jmath}\vs) = \wh \phi(\vf)$ for any $ \vs \in \R $ follows by
\eqref{K.symm.2}, \eqref{why.trav} and the fact that $\wh\tw  $ in
\eqref{whw_sol} is a traveling wave. 
This proves  that $(\wh \phi, \why, \wh\tw)$ is a traveling wave variation,  i.e. 
\eqref{g.mom.thm} holds.
Moreover, using \eqref{g_revcond}, \eqref{Ka.rev},  Lemma \ref{proj_rev}, the fact that 
$ J $ and $ \cS $ anti-commutes and (AI), one checks that 
$ (\wh \phi, \why, \wh\tw )$  is  reversible, i.e. 
\begin{equation}\label{rev-def-TW}
  \wh\phi  (\vf) = - \wh\phi  (- \vf) , \qquad 
\why(\vf) =  \why (- \vf) , \qquad 
\cS \wh\tw (\vf) = \wh\tw (- \vf) \, . 
\end{equation}
In conclusion, we have obtained a  solution 
$ ( \wh\phi,\why,\wh\tw,\wh\alpha )$ of the linear system \eqref{Dsys_tos}, and,
denoting the norm 
$ \normk{(\phi,y,\tw,\alpha)}{s}:= \max \big\{ \normk{(\phi,y,\tw)}{s},\abs{\alpha}^{k_0,\upsilon} \big\} $, we have:  
\begin{prop}\label{Dsystem}
	Assume \eqref{ansatz} (with $\mu=\mu({\tb})+\sigma$) and {\rm (AI)}. Then, for all $(\omega,\kappa)\in\t\Lambda_o $, for any anti-reversible traveling wave 
	variation $ g =(g_1,g_2,g_3)$ (i.e. satisfying \eqref{g_revcond}-\eqref{g.mom.thm}),
	 system \eqref{Dsys_tos} has a solution $\D^{-1}g:= ( \wh\phi,\why,\wh\tw,\wh\alpha)$, with $ ( \wh\phi,\why,\wh\tw,\wh\alpha)$ defined in \eqref{whphi_sol},\eqref{why_sol},\eqref{whw_sol},\eqref{whalpha_sol}, 
	where $( \wh\phi,\why,\wh\tw)$ is a reversible traveling wave variation, 
	satisfying, for any $s_0\leq s\leq S$
	\begin{equation}
	\label{est.D-1g}
		\normk{\D^{-1}g}{s} \lesssim_{S} \upsilon^{-1}\big( \normk{g}{s+\sigma}+\normk{\fI_0}{s+\mu({\tb})+\sigma}\normk{g}{s_0+\sigma} \big) \,.
	\end{equation}
\end{prop}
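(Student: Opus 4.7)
The construction of the solution has already been carried out in the paragraphs preceding the statement: define $\wh y$, $\wh\tw$, $\wh\alpha$, $\wh\phi$ by \eqref{why_sol}, \eqref{whw_sol}, \eqref{whalpha_sol}, \eqref{whphi_sol} respectively. My plan is thus to (i) justify the well-posedness of each of these definitions, (ii) verify that the resulting quadruple is a reversible traveling wave variation, and (iii) establish the tame bound \eqref{est.D-1g}.

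For (i), the right-hand side of the equation defining $\wh y$ is odd in $\vf$ by the first identity in \eqref{g_revcond} combined with the reversibility of $K_{00}$ in \eqref{Ka.rev}; in particular it has zero $\vf$-average, so the Diophantine inverse $(\omega\cdot\pa_\vf)^{-1}$ (extended by \eqref{paext} on the larger set $\Lambda_o$) applies. The right-hand side of the equation defining $\wh\tw$ is a traveling wave thanks to \eqref{g.mom.thm}, \eqref{K.symm.2}, \eqref{why.trav}, and the fact that $J_\angle$ commutes with $\tau_\vs$ (Lemma \ref{lem:proj.momentum}); it is also anti-reversible by \eqref{g_revcond}, \eqref{Ka.rev}, Lemma \ref{proj_rev} and the anti-commutation $\cS J=-J\cS$. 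Hence assumption (AI) delivers a reversible traveling wave $\wh\tw$. Finally, $\braket{M_1}_\vf = {\rm Id} + O(\varepsilon\upsilon^{-1})$ by Lemma \ref{Kcoeff_est} and \eqref{ansatz}, so for $\varepsilon\upsilon^{-1}$ small enough the average matrix is invertible and $\wh\alpha$ is well defined; then the choice \eqref{whalpha_sol} makes the right-hand side of \eqref{whphi_eq} have zero $\vf$-average, so $\wh\phi$ is obtained by inverting $\omega\cdot\pa_\vf$.

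For (ii), that $(\wh\phi,\wh y,\wh\tw)$ is a reversible traveling wave variation is then obtained by inspecting each formula. The momentum (translation) condition $\wh y(\vf-\ora{\jmath}\vs)=\wh y(\vf)$ follows from \eqref{g.mom.thm} and the second line of \eqref{K.symm.2}; the condition $\tau_\vs\wh\tw(\vf)=\wh\tw(\vf-\ora{\jmath}\vs)$ follows because the right-hand side of the equation for $\wh\tw$ is a traveling wave and $(\cL_\omega^<)^{-1}$ preserves traveling waves by (AI); and the condition on $\wh\phi$ follows similarly combining \eqref{K.symm.2} and Lemma \ref{lem:proj.momentum}. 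Reversibility \eqref{rev-def-TW} is analogous, using \eqref{Ka.rev}, Lemma \ref{proj_rev}, $\cS J = -J\cS$ and the reversibility-preserving part of (AI).

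For (iii), the tame bound \eqref{est.D-1g} is obtained by chaining the estimates in the order in which the unknowns are defined. The Diophantine estimate \eqref{lem:diopha.eq} gives $\normk{\wh y}{s} \lesssim_s \upsilon^{-1}\bigl(\normk{g_2}{s+\sigma}+|\wh\alpha|^{k_0,\upsilon}\normk{\fI_0}{s+\sigma}\bigr)$ once the size of $\pa_\alpha\pa_\phi K_{00}$ is controlled by Lemma \ref{Kcoeff_est}. The almost-invertibility bound \eqref{almi4} applied to \eqref{whw_sol}, together with the action estimates for $K_{11}$ and $\pa_\alpha K_{01}$ in Lemma \ref{Kcoeff_est} and the product estimate \eqref{prod}, yields the analogous tame bound for $\wh\tw$ in terms of $\normk{g_3}{s+\sigma}$, $|\wh\alpha|^{k_0,\upsilon}$, and $\normk{\wh y}{s}$. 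The definition \eqref{whalpha_sol} combined with $\braket{M_1}_\vf^{-1}={\rm Id}+O(\varepsilon\upsilon^{-1})$ and Lemma \ref{Kcoeff_est} (which shows that $K_{20},K_{11}$ carry an extra smallness factor $\varepsilon$) gives $|\wh\alpha|^{k_0,\upsilon}\lesssim \normk{g}{s_0+\sigma}$ up to the $O(\varepsilon\upsilon^{-1})$ correction, absorbed into the implicit constant. Finally, applying \eqref{lem:diopha.eq} to \eqref{whphi_sol} and collecting all the previous estimates produces the bound for $\wh\phi$, and all four estimates together yield \eqref{est.D-1g}.

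The main obstacle is bookkeeping: one has to propagate the loss of derivatives through the compositions $M_2,M_3$, each of which contains a factor of $(\cL_\omega^<)^{-1}$ (contributing $\mu(\tb)+\sigma$ derivatives via (AI)) and a factor of $(\omega\cdot\pa_\vf)^{-1}$ (contributing $\mu$ derivatives via \eqref{lem:diopha.eq}), so that the cumulative loss is $\mu(\tb)+\sigma$ on $\fI_0$ and $\sigma$ on $g$, as stated. All steps beyond this are a direct application of Lemmas \ref{Kcoeff_est}, \ref{pseudo_compo} and (AI) combined with the algebra-tame product inequality \eqref{prod}.
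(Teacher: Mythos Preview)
Your proposal is correct and follows essentially the same approach as the paper: the paper's own proof is a one-line citation of the explicit formulas \eqref{why_sol}, \eqref{whw_sol}, \eqref{whalpha_sol}, \eqref{whphi_sol} together with Lemma \ref{Kcoeff_est}, \eqref{almi4}, and \eqref{ansatz}, and your argument unpacks exactly this chain of estimates in more detail. One very minor slip: when you justify that the right-hand side of the $\wh y$-equation is odd you invoke ``the first identity in \eqref{g_revcond}'', but it is the second identity (oddness of $g_2$) that is relevant; also the citation of Lemma \ref{pseudo_compo} in your final paragraph is extraneous, since no pseudodifferential composition is used here.
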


\begin{proof}
The estimate \eqref{est.D-1g} follows  by the explicit expression of the solution  in  \eqref{why_sol}, \eqref{whw_sol}, \eqref{whalpha_sol}, \eqref{whphi_sol}, 
and Lemma \ref{Kcoeff_est},  \eqref{almi4}, \eqref{ansatz}.
\end{proof}

Finally we prove that the operator
\begin{equation}\label{bT0}
	\bT_0 := \bT_0(i_0):=  ( D\wtG_\delta )(\vf,0,0) \circ \D^{-1} \circ (D G_\delta ) (\vf,0,0)^{-1}
\end{equation}
is an almost approximate right inverse for $\di_{i,\alpha}\cF(i_0)$, where
$	\wtG_\delta(\phi,y,\tw,\alpha) := \left( G_\delta(\phi,y,\tw),\alpha \right) $
is the identity on the $\alpha$-component. 
\begin{thm} {\bf (Almost approximate inverse)} \label{alm.approx.inv}
	Assume {\rm (AI)}.  Then there is $\bar\sigma :=\bar\sigma(\tau,\nu,k_0)>0$ such that, if \eqref{ansatz} holds with $\mu=\mu(\tb)+\bar\sigma$, then, for all $(\omega,\kappa)\in\t\Lambda_o$ and for any anti-reversible traveling 
	wave variation $g:=(g_1,g_2,g_3)$ (i.e. satisfying \eqref{g_revcond}-\eqref{g.mom.thm}), the operator $\bT_0$ defined in \eqref{bT0} satisfies, for all $s_0 \leq s \leq S$,
	\begin{equation}\label{tame-es-AI}
		\normk{\bT_0 g}{s} \lesssim_{S} \upsilon^{-1} \big( \normk{g}{s+\bar\sigma} +\normk{\fI_0}{s+\mu(\tb)+\bar\sigma}\normk{g}{s_0+\bar\sigma}  \big)\,.
	\end{equation}
Moreover, the first three components of $\bT_0 g $  form a reversible 
traveling wave variation (i.e. satisfy \eqref{rev-def-TW} and \eqref{g.mom.thm}).
Finally, $\bT_0$ is an almost approximate right inverse of $\di_{i,\alpha}\cF(i_0)$, namely
	\begin{equation*}
		\di_{i,\alpha}\cF(i_0) \circ \bT_0 - {\rm Id} = \cP(i_0) + \cP_\omega(i_0)+\cP_\omega^\perp(i_0)\,,
	\end{equation*}
	where, for any traveling wave variation $ g $,  for all $s_0 \leq s \leq S$,
	\begin{align}
		\normk{\cP g}{s} & \lesssim_{S} \upsilon^{-1}
		\Big(  \normk{\cF(i_0,\alpha_0)}{s_0+\bar\sigma}\normk{g}{s+\bar\sigma}  \label{pfi1} \\
		& \qquad + \,   \big(  \normk{\cF(i_0,\alpha_0)}{s+\bar\sigma}+\normk{\cF(i_0,\alpha_0)}{s_0+\bar\sigma}\normk{\fI_0}{s+\mu(\tb)+\bar\sigma}  \big)\normk{g}{s_0+\bar\sigma}  \Big)\, , \notag  \\
		\normk{\cP_\omega g}{s} & \lesssim_{S} \varepsilon\upsilon^{-2} N_{n-1}^{-\ta} \big( \normk{g}{s+\bar\sigma}+ \normk{\fI_0}{s+\mu(\tb)+\bar\sigma}\normk{g}{s_0+\bar\sigma}  \big)\, , \\
		\normk{\cP_\omega^\perp g}{s_0} & \lesssim_{S,b} \upsilon^{-1} K_n^{-b} \left( \normk{g}{s_0+\bar\sigma+b}+\normk{\fI_0}{s_0+\mu(\tb)+b+\bar\sigma}\normk{g}{s_0+\bar\sigma} \right)\,, \quad  \forall\,b>0\,,   \\
		\normk{\cP_\omega^\perp g}{s} & \lesssim_{S} \upsilon^{-1}\big(  \normk{g}{s+\bar\sigma}+ \normk{\fI_0}{s+\mu(\tb)+\bar\sigma}\normk{g}{s_0+\bar\sigma} \big) \,. 
		\label{pfi3} 
	\end{align}
\end{thm}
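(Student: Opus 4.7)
The construction follows the scheme of \cite{BB,BM,BBHM}: we have already inverted the ``reduced'' operator $\D$ in Proposition \ref{Dsystem}, so the work left is to $(i)$ show that conjugation by $DG_\delta(\vf,0,0)$ and its inverse preserve both the reversible/traveling wave structure and the relevant tame norms, and $(ii)$ identify the three error terms $\cP, \cP_\omega, \cP_\omega^\perp$ by an explicit comparison between the linear system \eqref{lin_Kalpha} and $\D$, together with the passage from $i_\delta$ back to $i_0$.

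First I would analyze the transport of symmetries. Since $G_\delta$ is reversibility and momentum preserving (the identities \eqref{Gdelta.inv}), differentiating these identities at $(\vf,0,0)$ shows that $DG_\delta(\vf,0,0)$ and $DG_\delta(\vf,0,0)^{-1}$ intertwine $\vec\cS$ and $D\vec\tau_\vs$ up to the shift $\vf\mapsto -\vf$, respectively $\vf\mapsto \vf-\vec\jmath\vs$. Consequently, if $g$ is an anti-reversible traveling wave variation, so is $DG_\delta(\vf,0,0)^{-1} g$; Proposition \ref{Dsystem} then yields a reversible traveling wave variation $(\wh\phi,\why,\wh\tw,\wh\alpha)$ satisfying \eqref{est.D-1g}; and finally $D\wtG_\delta(\vf,0,0)$, being identity in $\alpha$ and a reversibility/momentum preserving linear map on the first three components, outputs a reversible traveling wave variation in the original coordinates. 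The tame estimate \eqref{tame-es-AI} then comes from \eqref{est.D-1g} together with standard tame bounds on $DG_\delta(\vf,0,0)^{\pm 1}$ in terms of $\|\fI_0\|_{s+\bar\sigma}^{k_0,\upsilon}$ (as in Lemma 6.3 of \cite{BM}), with an appropriate $\bar\sigma$ absorbing $\sigma$ and the loss coming from $G_\delta$.

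Next I would identify the error. The key algebraic step is
\[
\di_{i,\alpha}\cF(i_\delta)\,DG_\delta(\vf,0,0)[\wh\phi,\why,\wh\tw,\wh\alpha] = DG_\delta(\vf,0,0)\,\bigl(\cL[\wh\phi,\why,\wh\tw,\wh\alpha] + \cE[\wh\phi,\why,\wh\tw,\wh\alpha]\bigr),
\]
where $\cL$ is the linearized system \eqref{lin_Kalpha} and $\cE$ gathers the $\vf$-derivative terms that are absent in \eqref{Dsys}. By Lemma \ref{Kcoeff_est}, the operator $\cL - \cD$ has coefficients bounded by $\|\cF(i_0,\alpha_0)\|_{s+\sigma}^{k_0,\upsilon}$; moreover $\cL^<_\omega - \cL_\omega = -\cR_\omega - \cR_\omega^\perp$ by the almost-invertibility hypothesis (AI). Writing
\[
\di_{i,\alpha}\cF(i_0) = \di_{i,\alpha}\cF(i_\delta) + \bigl(\di_{i,\alpha}\cF(i_0) - \di_{i,\alpha}\cF(i_\delta)\bigr)
\]
and using \eqref{ebb1}, the difference is controlled in the same $\|\cF(i_0,\alpha_0)\|$-norm by a second order Taylor argument in $i$, giving the bound \eqref{pfi1} for $\cP$. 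The terms produced by $\cR_\omega$ and $\cR_\omega^\perp$ inside $\D^{-1}$ become, after passing back through $DG_\delta(\vf,0,0)$, precisely $\cP_\omega$ and $\cP_\omega^\perp$, and their estimates follow by chaining the bounds in (AI) with the tame estimate of $\D^{-1}$ from Proposition \ref{Dsystem}.

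The main obstacle is bookkeeping: one needs to ensure that at every step (applying $DG_\delta(\vf,0,0)^{-1}$, then $\D^{-1}$, then $DG_\delta(\vf,0,0)$, and finally comparing with $\di_{i,\alpha}\cF(i_0)$) the intermediate quantities are genuine reversible traveling wave variations, so that the composed operators stay in the right spaces and $\cL^<_\omega$ can be inverted via (AI). The symmetry preservation relies on Lemmata \ref{proj_rev}, \ref{lem:proj.momentum}, \ref{lem:mom_prop}, and crucially on $i_\delta$ being both isotropic and traveling, which we already ensured in the proof of Lemma \ref{torus_iso}. Once symmetries are controlled, the estimates \eqref{tame-es-AI}--\eqref{pfi3} are routine interpolation of the tame estimates coming from Lemma \ref{Kcoeff_est}, Proposition \ref{Dsystem}, and (AI), with the loss $\bar\sigma$ chosen to dominate all intermediate losses.
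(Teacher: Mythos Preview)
Your approach is essentially the same as the paper's: transport the symmetries through $DG_\delta(\vf,0,0)^{\pm1}$ via \eqref{Gdelta.inv}, invoke Proposition~\ref{Dsystem}, and read off $\cP,\cP_\omega,\cP_\omega^\perp$ from the comparison between \eqref{lin_Kalpha}, $\D$, and the decomposition in (AI), exactly as in Theorem~5.6 of \cite{BBHM}. One point to sharpen: your ``key algebraic step'' is not quite an identity as written, because $G_\delta$ is nonlinear in $(\phi,y,\tw)$ and $\omega\cdot\pa_\vf$ does not commute with $DG_\delta(\vf,0,0)$; the correct conjugation (Lemma~6 in \cite{BB}) carries an extra second-derivative term $D^2G_\delta(\tu_\delta)\bigl[DG_\delta(\tu_\delta)^{-1}\cF(i_\delta,\alpha_0),\,DG_\delta(\tu_\delta)^{-1}\Pi[\,\cdot\,]\bigr]$, which is what your $\cE$ should be, whereas the ``$\vf$-derivative terms absent in \eqref{Dsys}'' are the $R_Z$ piece $\cL-\D$. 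Both contribute to $\cP$ with the same $\|\cF(i_0,\alpha_0)\|$ size, so your estimates go through, but keep the two sources of error distinct when you write out the details.
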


\begin{proof}
We claim that the first three components of $\bT_0 g $ 
form  a reversible  traveling wave variation. 
Indeed, differentiating \eqref{Gdelta.inv}
it follows that $ DG_\delta(\vf,0,0)$, thus $  (DG_\delta(\vf,0,0))^{-1} $,  is 
reversibility and momentum preserving (cfr. \eqref{eq:mp_A_tw'}).
 In particular these operators map an (anti)-reversible, respectively traveling, 
 waves variation into a (anti)-reversible traveling waves variation (cfr. Lemma  \ref{A.mom.cons'}). 
Moreover,  by 
 Proposition \ref{Dsystem},   the operator  $ \D^{-1}$ maps an anti-reversible traveling wave
 into a vector whose first three components form a reversible traveling wave.
This proves the claim. 

We now prove that 
the operators ${\cal P}, {\cal P}_\omega$ and ${\cal P}_\omega^\bot$ 
are defined on traveling waves. They are computed e.g. in    Theorem 5.6 of \cite{BBHM}.
To define them, introduce first the linear operators
$$
R_Z [  \widehat \phi, \widehat y, \wh\tw, \widehat \alpha]
:= \begin{pmatrix}
 - \partial_\phi K_{10}(\vf, \alpha_0) [\widehat \phi ] \\
 \partial_{\phi \phi} K_{00} (\vf, \alpha_0) [ \widehat \phi ] + 
 [\partial_\phi K_{10}(\vf, \alpha_0)]^\top \widehat y + 
 [\partial_\phi K_{01}(\vf, \alpha_0)]^\top \wh\tw  \\
 -  \, J_\angle  \partial_{\phi} K_{01}(\vf, \alpha_0)[ \widehat \phi ] 
 \end{pmatrix}
$$
and 
\begin{equation}\label{defBBbot}
{\mathbb R}_\omega[\widehat \phi, \widehat y, \wh\tw, \widehat \alpha] := \begin{pmatrix}
0 \\
0 \\
{\cal R}_\omega [\wh\tw]
\end{pmatrix}\,,\qquad {\mathbb R}_\omega^\bot[\widehat \phi, \widehat y , \wh\tw, 
\widehat \alpha] := \begin{pmatrix}
0 \\
0 \\
{\cal R}_\omega^\bot[\wh\tw]
\end{pmatrix}\ . 
\end{equation}
Next, we  denote by  $ \Pi $  the projection $ (\widehat \imath, \widehat \alpha ) \mapsto \widehat \imath $,   by $\tu_\delta(\vf) = (\vf, 0, 0)$ the trivial torus, and by $\cE$, $\cE_\omega$, $\cE_\omega^\perp$  the linear operators   
 \begin{align}
& {\cal E}  :=  \di_{i, \alpha} {\cal F}(i_0 )   - \di_{i, \alpha} {\cal F}(i_\delta )  + D^2 G_\delta( {\mathtt u}_\delta) \big[ D G_\delta( {\mathtt u}_\delta)^{-1} {\cal F}(i_\delta, \alpha_0), \,  D G_\delta ({\mathtt u}_\delta)^{-1} 
 \Pi [ \, \cdot \, ] \,  \big]  \nonumber \\
&  \qquad + D G_\delta ( {\mathtt u}_\delta)R_Z D {\widetilde G}_\delta ({\mathtt u}_\delta)^{-1}\,, \nonumber \\
&  {\cal E}_\omega :=   D G_\delta ( {\mathtt u}_\delta)   {\mathbb R}_\omega    D {\widetilde G}_\delta ({\mathtt u}_\delta)^{-1}\,, \qquad 
   {\cal E}_\omega^\bot :=   D G_\delta( {\mathtt u}_\delta)  {\mathbb R}_\omega^\bot  
D {\widetilde G}_\delta ({\mathtt u}_\delta)^{-1} \, . \label{defEE}
 \end{align}
It is then proved in  Theorem 5.6 of \cite{BBHM} that 
$ {\cal P} := {\cal E} \circ {\bf T}_0 $, 
$ {\cal P}_\omega := {\cal E}_\omega \circ {\bf T}_0 $,  
$ {\cal P}_\omega^\bot :=  
 {\cal E}_\omega^\bot \circ {\bf T}_0 $. 
A direct inspection of these formulas shows that $\cP, \cP_\omega$ and $\cP_\omega^\bot$ are defined on traveling wave variations.  
In particular, note that 
the operators 
$ {\mathbb R}_\omega $, ${\mathbb R}_\omega^\bot $  in \eqref{defBBbot}
are defined only if $ \wh\tw   $ is a traveling wave, because 
the operators ${\cal R}_\omega, {\cal R}_\omega^\bot  $  defined in (AI) 
act  only on a traveling wave. However, note that, if $ g $ is a traveling wave variation, 
the third component of 
$ D {\widetilde G}_\delta ({\mathtt u}_\delta)^{-1} {\bf T}_0 g $ is a traveling wave and 
therefore the operators $ {\cal E}_\omega, {\cal E}_\omega^\bot $ 
in  \eqref{defEE} are well defined.

The estimates 
\eqref{pfi1}-\eqref{pfi3} are proved as in  Theorem 5.6 of \cite{BBHM}, using 
Lemma \ref{Dsystem}.
\end{proof}

\section{The linearized operator in the normal subspace}\label{sec:linnorm}

We now write an explicit expression of the linear operator $\cL_\omega$ defined in \eqref{Lomegatrue}.

\begin{lem}\label{lem:K02}
The Hamiltonian operator $\cL_\omega$ defined in \eqref{Lomegatrue}, acting on 
the normal subspace $ \acca_{\S^+,\Sigma}^\angle $,  has the form
\begin{equation}\label{cLomega_again}
\cL_\omega = \Pi_{\S^+,\Sigma}^\angle 
(\cL -\varepsilon J R)|_{\acca_{\S^+,\Sigma}^\angle}  \,,
\end{equation}
where :
\begin{enumerate}
\item $ \cL $ is the Hamiltonian operator 
\begin{equation}\label{cL000}
\cL := \omega\cdot \pa_\vf  - J \pa_u\nabla_u \cH(T_\delta(\vphi)) \, ,
\end{equation}
where  
$\cH$  is the water waves Hamiltonian 
in the Wahl\'en variables defined in \eqref{Ham-Wal}, evaluated at 
	\begin{equation}\label{Tdelta}
	T_\delta(\phi):=  \varepsilon A ( i_\delta (\phi) ) =  \varepsilon A\left( \theta_0(\phi),I_\delta(\phi),w_0(\phi) \right) = \varepsilon v^\intercal\left( \theta_0(\phi),I_\delta(\phi) \right) + \varepsilon w_0(\phi)\,,
	\end{equation}
the torus  $i_\delta(\vf):= ( \theta_0(\vf),I_\delta(\vf),w_0(\vf) )$ is defined in Lemma 
	\ref{torus_iso} 
	and  $A(\theta,I,w) $, $ v^\intercal(\theta,I)$ in \eqref{aacoordinates};
\item
$ R (\phi) $ has the finite rank form 
	\begin{equation}\label{finite_rank_R}
	R(\phi)[h] = \sum_{j=1}^\nu \left( h,g_j \right)_{L^2} \chi_j \,, \quad \forall\, h\in\acca_{\S^+,\Sigma}^\angle\,,
	\end{equation}
	for functions $g_j,\chi_j \in \acca_{\S^+,\Sigma}^\angle$ which satisfy, for some $\sigma:= \sigma(\tau,\nu, k_0) > 0 $,  for all $ j = 1, \ldots, \nu $, for all $s\geq s_0$, 
\begin{equation}
\label{gjchij_est}
	\begin{aligned}
\norm{ g_j }_s^{k_0,\upsilon} + \norm{ \chi_j }_s^{k_0,\upsilon} & \lesssim_s 1 + \norm{ \fI_\delta }_{s+\sigma}^{k_0,\upsilon} \,, \\
	\norm{ \di_i g_j [\whi]}_s + \norm{\di_i \chi_j [\whi]}_s & \lesssim_s \norm{ \whi }_{s+\sigma} + \norm{ \whi }_{s_0+\sigma} 
	\norm{ \fI_\delta }_{s+\sigma} \,.
	\end{aligned}
	\end{equation}
\end{enumerate}
The operator $ \cL_\omega $ is reversible and momentum preserving.
\end{lem}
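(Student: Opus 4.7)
The starting point is the definition \eqref{Lomegatrue}, $\cL_\omega=\Pi_{\S^+,\Sigma}^\angle(\omega\cdot\pa_\vf-JK_{02}(\vf))|_{\acca_{\S^+,\Sigma}^\angle}$, so the task reduces to computing the Hessian $K_{02}(\vf)$ in the Taylor expansion \eqref{taylor_Kalpha} of $K_\alpha=H_\alpha\circ G_\delta$ and identifying its principal part with the restriction of $\pa_u\nabla_u\cH(T_\delta(\vf))$. The plan is to apply the chain rule. By \eqref{Gdelta} the $I$-component of $G_\delta$ depends linearly on $(y,\tw)$ and the $\theta$- and $w$-components do not depend on $(y,\tw)$ at all, so the pure $\tw\tw$-Hessian of $G_\delta$ vanishes and the $D^2G_\delta$-contribution in the chain rule drops out. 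Writing $DG_\delta(\phi,0,0)(0,0,\tw)=(0,L(\phi)\tw,\tw)$ with $L(\phi):=[(\pa_\theta \wt w_0)(\theta_0(\phi))]^\top J_\angle^{-1}$, one obtains
\begin{equation*}
K_{02}(\phi)[\tw_1,\tw_2]=D^2 H_\alpha(i_\delta(\phi))\bigl[(0,L(\phi)\tw_1,\tw_1),(0,L(\phi)\tw_2,\tw_2)\bigr].
\end{equation*}

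Next, I would use $H_\alpha(\theta,I,w)=\alpha\cdot I+\frac12(\b\Omega_W w,w)_{L^2}+\varepsilon P_\varepsilon(A(\theta,I,w))$ to expand this Hessian. The $\alpha\cdot I$ term contributes nothing (being linear in $I$), and only the pure $ww$-Hessian and the mixed $II,Iw$-Hessians of $\varepsilon P_\varepsilon\circ A$ survive. The pure $ww$-part gives $\Pi_{\S^+,\Sigma}^\angle(\b\Omega_W+\varepsilon\,\pa_u\nabla_u P_\varepsilon(A(i_\delta(\phi))))|_{\acca_{\S^+,\Sigma}^\angle}$; recalling $\cH_\varepsilon=\cH_L+\varepsilon P_\varepsilon$ with $\b\Omega_W=\pa_u\nabla_u\cH_L$, and using the rescaling identity $\pa_u\nabla_u\cH_\varepsilon(u)=\pa_u\nabla_u\cH(\varepsilon u)$ together with $T_\delta(\phi)=\varepsilon A(i_\delta(\phi))$, this yields exactly $\Pi_{\S^+,\Sigma}^\angle\,\pa_u\nabla_u\cH(T_\delta(\phi))|_{\acca_{\S^+,\Sigma}^\angle}$, which matches $\cL-\omega\cdot\pa_\vf$ in \eqref{cL000}. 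The mixed $II$ and $Iw$-terms collect into a bilinear form in $\tw_1,\tw_2$ of the schematic shape
\begin{equation*}
\varepsilon R(\phi)[\tw_1,\tw_2]=\varepsilon\bigl(\pa_I\nabla_I P[L\tw_1,L\tw_2]+\pa_I\nabla_w P[L\tw_1,\tw_2]+\pa_w\nabla_I P[\tw_1,L\tw_2]\bigr)_{i_\delta(\phi)};
\end{equation*}
since $L\tw=\sum_{j=1}^\nu(\pa_{\theta_j}\wt w_0(\theta_0(\phi)),J_\angle^{-1}\tw)_{L^2}\,e_j$ has $\nu$-dimensional range, each summand is a finite rank operator on $\tw$, and rearranging gives precisely the form \eqref{finite_rank_R} with explicit $g_j,\chi_j\in\acca_{\S^+,\Sigma}^\angle$ built from $\pa_\theta\wt w_0$, $\nabla_I P$, $\nabla_w P$ evaluated at $i_\delta(\phi)$. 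The tame estimates \eqref{gjchij_est} then follow from Lemma \ref{XP_est}, Lemma \ref{torus_iso}, and the ansatz \eqref{ansatz}, together with the elementary tame bounds on composition with $\theta_0$.

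Finally, for the symmetry properties: reversibility of $\cL_\omega$ follows because $i_\delta$ is a reversible torus (Lemma \ref{torus_iso}) and the water waves vector field is reversible, so its linearization $\cL$ along the reversible trajectory $T_\delta(\vf)$ is reversible by Lemma \ref{lem:REV}; momentum preservation follows because $i_\delta$ is a traveling torus and the vector field is translation invariant, hence $\cL$ is momentum preserving by Lemma \ref{lem:MP}. The projector $\Pi_{\S^+,\Sigma}^\angle$ commutes with both the involution $\cS$ and the translations $\tau_\vs$ by Lemmas \ref{proj_rev} and \ref{lem:proj.momentum}, so both properties descend to $\cL_\omega$; the finite rank term $JR$ inherits the same symmetries because $g_j,\chi_j$ are built from the reversible, traveling quantities $\wt w_0, i_\delta, P$. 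The main bookkeeping obstacle is checking that the cross terms really factor into the advertised $\nu$ rank-one contributions and extracting from the mixed $Iw$-contribution a symmetric rank-$\nu$ piece compatible with the Hamiltonian structure; everything else is a straightforward chain rule plus tame-estimate argument.
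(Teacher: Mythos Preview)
Your approach coincides with the paper's: both compute $K_{02}$ by the chain rule through $G_\delta$ (whose pure $\tw\tw$-Hessian vanishes), separate the $ww$-block---which, after the rescaling identity $\partial_u\nabla_u\cH_\varepsilon(u)=\partial_u\nabla_u\cH(\varepsilon u)$, yields $\partial_u\nabla_u\cH(T_\delta)$---from the mixed $II$/$Iw$-blocks, which are finite-rank because your $L(\phi)$ (the paper's $L_2(\phi)$) has $\nu$-dimensional range. The paper writes these three blocks as $R_1=L_2^\top\partial_I^2P\,L_2$, $R_2=L_2^\top\partial_w\partial_IP$, $R_3=\partial_I\nabla_wP\,L_2$, exactly your three summands; the tame bounds and the symmetry arguments are also the same (the paper cites \eqref{Ka.rev}--\eqref{K.symm.2} directly rather than going through Lemmata~\ref{lem:REV}--\ref{lem:MP}, but this is cosmetic).

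One algebraic step you skate over: the $ww$-block of $K_{02}$, read off from the bilinear form, is $\Pi^{L^2}_\angle\,\partial_u\nabla_u\cH(T_\delta)\big|_{\acca^\angle_{\S^+,\Sigma}}$ with the \emph{$L^2$-orthogonal} projector $\Pi^{L^2}_\angle$, not the symplectic projector $\Pi^\angle_{\S^+,\Sigma}$ you wrote. To pass from $\Pi^\angle_{\S^+,\Sigma}JK_{02}$ to $\Pi^\angle_{\S^+,\Sigma}J\,\partial_u\nabla_u\cH(T_\delta)$ and hence to \eqref{cLomega_again}--\eqref{cL000}, you need the identity $\Pi^\angle_{\S^+,\Sigma}J\Pi^{L^2}_\angle=\Pi^\angle_{\S^+,\Sigma}J$ of Lemma~\ref{ident-simp}, which the paper invokes explicitly. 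It is a short lemma (resting on the symplectic orthogonality of $\acca^\intercal$ and $\acca^\angle$), but without it your identification of the $ww$-part with $\cL-\omega\cdot\partial_\vf$ is not quite complete.
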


\begin{proof}
In view of \eqref{taylor_Kalpha}, \eqref{Kalpha} and \eqref{Halpha} we have 
	\begin{align}\label{K02_exp}
	K_{02}(\phi) & = \pa_\tw \nabla_{\tw} K_\alpha (\phi, 0,0) = \pa_\tw \nabla_{\tw}\left( H_\alpha\circ G_\delta \right) (\phi,0,0) \notag \\
	& = \Pi^{L^2}_{\angle} \b\Omega_W|_{\acca_{\S^+,\Sigma}^\angle} + \varepsilon \pa_\tw \nabla_{\tw} \left( P\circ G_\delta \right)(\phi,0,0) \,,
	\end{align}
	where $\b\Omega_W$ is defined in \eqref{eq:lin00_wahlen} and 
	$G_\delta$ in \eqref{Gdelta}. Differentiating with respect to $\tw$ the Hamiltonian 
$$
	(P\circ G_\delta)(\phi,y,\tw) = 
	P \big( \theta_0(\phi) , I_\delta(\phi) + L_1(\phi)y + L_2(\phi)\tw, w_0(\phi)+\tw \big)\,,
$$
	where $L_1(\phi):= [\pa_\phi \theta_0(\phi) ]^{-\top}$ and $L_2(\phi):=  [\pa_\phi\wtw_0(\theta_0(\phi))]^\top J_\angle^{-1} $ (see \eqref{Gdelta}), we get
	\begin{align}\label{P02_exp}
	\pa_\tw \nabla_{\tw}&(P\circ G_\delta) (\phi,0,0) = \pa_w \nabla_w P(i_\delta(\phi)) + R(\phi) \,, 
	\end{align}
	where
	$ R(\phi) := R_1(\phi)+ R_2(\phi) + R_3(\phi) $ and 
	\begin{equation}
	 R_1 := L_2(\phi)^\top \pa_I^2 P(i_\delta(\phi))L_2(\phi), \,  \ \ \ 
	 R_2 := L_2(\phi)^\top \pa_w \pa_I P(i_\delta(\phi)), \,  \ \ \ 
	R_3  := \pa_I \nabla_w P(i_\delta(\phi)) L_2(\phi) \,. \notag
	\end{equation}
Each operator $R_1,R_2,R_3$ has the finite rank form \eqref{finite_rank_R} because it is the composition of at least one operator with finite rank $\R^\nu$ in the space variable
(for more details see e.g. Lemma 6.1 in \cite{BM}) and 
the estimates \eqref{gjchij_est} follow by Lemma \ref{XP_est}. 
	By \eqref{K02_exp}, \eqref{P02_exp}, \eqref{cNP}, \eqref{Hepsilon}, \eqref{cHepsilon}, we obtain
	\begin{equation}\label{K02phi}
	K_{02}(\phi)  = \Pi^{L^2}_\angle 
 \partial_u \nabla_u {\cal H} (A(i_\delta(\phi)) )_{|\acca_{\S^+,\Sigma}^\angle } + 
 \varepsilon R(\phi) \, . 
	\end{equation}
In conclusion, by \eqref{K02phi}, Lemma \ref{ident-simp}, 
and since  $T_\delta(\phi) = A(i_\delta(\phi)) $, 
we deduce that the operator $ \cL_\omega $  in \eqref{Lomegatrue} 
has the form  \eqref{cLomega_again}-\eqref{cL000}. 
Finally the operator $  \Pi_{\S^+,\Sigma}^\angle J K_{02} (\vf) $ 	is reversible and momentum preserving, by \eqref{Ka.rev}, \eqref{K.symm.2}, Lemmata \ref{proj_rev}, \ref{lem:proj.momentum}, and the  fact that 
$ J $ commutes with $ \tau_\vs $ and  anti-commutes with $ \cS $.  
\end{proof}

We remark that $\cL$ in \eqref{cL000} is obtained by linearizing the 
water waves Hamiltonian system \eqref{Ham-Wal}, \eqref{eq:Ham_eq_zeta} 
in the Wahl\'en variables defined in \eqref{eq:gauge_wahlen}  at the torus $u=(\eta,\zeta)=T_\delta(\vf)$ defined in \eqref{Tdelta} and changing $\pa_t \rightsquigarrow \omega\cdot \pa_\vf $. This is equal to
\begin{equation}\label{cL000-or}
\cL = \omega\cdot \pa_\vf  - W^{-1} (d X)( W T_\delta(\vphi)) W \,,
\end{equation}
where 
$ X $  is the water waves vector field 
 on the right hand side of \eqref{ww}. The operator $ \cL $ acts on (a dense subspace) of the 
 phase space $ L^2_0 \times \dot L^2 $. 

In order to compute  $ d X $
we use the "shape derivative" formula,  see e.g. \cite{Lan05},
\begin{equation}\label{shape_derivative}
G'(\eta)[\wh\eta] \psi := \lim_{\epsilon\rightarrow 0} \tfrac{1}{\epsilon}
\big(  G(\eta+\epsilon\wh\eta)\psi -G(\eta)\psi \big) = - G(\eta)(B \wh\eta) -\pa_x(V \wh\eta) \,,
\end{equation}
where
\begin{equation}\label{BfVf}
B(\eta,\psi):= \frac{G(\eta) \psi +\eta_x \psi_x }{1+\eta_x^2} \,, 
\quad V(\eta,\psi):= \psi_x - B(\eta, \psi) \eta_x \,.
\end{equation}
It turns out that $ (V,B) = ( \Phi_x, \Phi_y ) $ is the gradient of the generalized
velocity potential defined  in \eqref{dir}, evaluated at the free surface $ y = \eta (x) $. 

Using \eqref{cL000-or}, \eqref{ww}, 
\eqref{shape_derivative}, \eqref{BfVf}, 
the operator $ \cL $ is 
\begin{equation}\label{Linea10}
\begin{aligned}
\cL = \omega\cdot \pa_\vf   
&+ \begin{pmatrix}
\pa_x\wtV + G(\eta)B  & -G(\eta)   \\
g-\kappa\pa_x c\pa_x + B\wtV_x + B G(\eta) B& \wtV\pa_x - B G(\eta) 
\end{pmatrix}  \\
& +\frac{\gamma}{2}\begin{pmatrix}
-G(\eta)\pa_x^{-1} & 0 \\   \pa_x^{-1}G(\eta)B- BG(\eta)\pa_x^{-1} -\frac{\gamma}{2}\pa_x^{-1}G(\eta)\pa_x^{-1} &-\pa_x^{-1}G(\eta) 
\end{pmatrix} \,,
\end{aligned}
\end{equation}
where    
\begin{equation}\label{tc}
\wtV:= V - \gamma \eta  \, , \quad 
c (\eta)  := ( 1 + \eta_x^2)^{-\frac32} \, , 
\end{equation}
and the functions
$ B := B(\eta,\psi) $, $ V := V(\eta,\psi)$, $ c := c(\eta) $ in \eqref{Linea10} are evaluated  at
the reversible traveling wave $ (\eta,\psi) := W T_\delta(\vf) $ where $ T_\delta(\vf) $ is 
defined in \eqref{Tdelta}. 

\begin{rem}\label{phase-space-ext}
From now on we consider the operator $ \cL $ in \eqref{Linea10} acting 
on (a dense subspace of) the whole $  L^2 (\T) \times  L^2 (\T) $. In particular we extend 
the operator $ \pa_x^{-1} $ to act on the whole $ L^2 (\T) $ as in \eqref{pax-1}. 
In Sections \ref{sec:repa}-\ref{sec:order12}  
we are going to make several transformations, whose aim is to conjugate 
  $ {\cal L} $ to a constant coefficients Fourier multiplier, up to a pseudodifferential operator 
  of order zero 
plus a remainder that satisfies tame estimates,  
 both small in size, see $ {\cal L}_9 $ in  \eqref{cL10}.  
 Finally, in Section \ref{sec:conc} we shall  conjugate the restricted operator 
 $ {\cal L}_\omega $ in \eqref{cLomega_again}.
\end{rem}

{\bf Notation.} In \eqref{Linea10} and hereafter 
any function $ a $ is identified with the corresponding multiplication operators $h \mapsto a h$, and, where there is no parenthesis, composition of operators is understood. For example, 
$\pa_x c \pa_x$ means: $h \mapsto \pa_x (c \pa_x h)$.

\begin{lem}
\label{BVtilde.mom}
The functions  $ (\eta, \zeta) = T_\delta(\vf) $ and $B,  \wt V, c  $ defined  in \eqref{BfVf}, 
\eqref{tc} are  quasi-periodic traveling waves.
The functions $(\eta,\zeta)= T_\delta(\vf)$  are $ ( \even(\vf,x),\odd(\vf,x))$, 
$B$ is $\odd(\vf,x)$, $\wtV$ is $\even(\vf,x)$ and $c$ is $\even(\vf,x) $.  
The Hamiltonian operator $ \cL $ is  
reversible and momentum preserving.
\end{lem}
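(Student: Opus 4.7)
The plan is to deduce all claims from the fact that $T_\delta$ is a reversible quasi-periodic traveling wave together with the structural symmetries of the water waves vector field.

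\textbf{Step 1: The torus $T_\delta$ is a reversible traveling wave.} By Lemma \ref{torus_iso} the embedding $i_\delta(\vf)=(\theta_0(\vf),I_\delta(\vf),w_0(\vf))$ satisfies \eqref{RTTT}, hence is a reversible traveling torus. Applying the map $A$ in \eqref{aacoordinates}, which is equivariant with respect to $\vec\cS$ and $\vec\tau_\vs$ (cfr.\ \eqref{rev_aa}-\eqref{vec.tau}), we deduce that $T_\delta(\vf)=\varepsilon A(i_\delta(\vf))$ satisfies $\cS T_\delta(\vf)=T_\delta(-\vf)$ and $\tau_\vs T_\delta(\vf)=T_\delta(\vf-\ora{\jmath}\vs)$ for any $\vs\in\R$. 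Recalling the form of $\cS$ in \eqref{rev_invo}, the first identity is exactly the parity statement: $\eta(-\vf,-x)=\eta(\vf,x)$ and $\zeta(-\vf,-x)=-\zeta(\vf,x)$, i.e.\ $\eta$ is $\even(\vf,x)$ and $\zeta$ is $\odd(\vf,x)$. The second identity says that $(\eta,\zeta)$ is a quasi-periodic traveling wave in the sense of Definition \ref{QPTW}.

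\textbf{Step 2: Parity and traveling-wave property of the coefficients.} Recall that in \eqref{Linea10} the functions $B,V,\tilde V,c$ are evaluated at $(\eta,\psi)=WT_\delta(\vf)$, where $W$ in \eqref{eq:gauge_wahlen} commutes with $\cS$ (i.e.\ is reversibility preserving) and with $\tau_\vs$. Therefore $(\eta,\psi)$ is also a reversible quasi-periodic traveling wave. Since $\pa_x^{-1}$ maps even functions of $x$ to odd ones (and commutes with $\tau_\vs$), and $\psi=\zeta+\tfrac{\gamma}{2}\pa_x^{-1}\eta$, we obtain that $\psi$ is $\odd(\vf,x)$. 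From the formulas \eqref{BfVf}, \eqref{tc}, using that $G(\eta)$ preserves parity via \eqref{DN-rev} (so $G(\eta)\psi$ is $\odd$), and counting parities of $\eta_x,\psi_x$, we find directly that $B$ is $\odd(\vf,x)$, $V$ is $\even(\vf,x)$, hence $\tilde V=V-\gamma\eta$ is $\even(\vf,x)$, and $c=(1+\eta_x^2)^{-3/2}$ is $\even(\vf,x)$. The traveling-wave property of $B,V,c$ follows by composition: products, quotients, $\pa_x$, $\pa_x^{-1}$ all preserve the class of quasi-periodic traveling waves, and the Dirichlet--Neumann operator evaluated at a traveling wave $\eta$ maps traveling waves to traveling waves by Lemma \ref{mom_dirichlet} (equivalently by \eqref{DN:trans}).

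\textbf{Step 3: Reversibility and momentum preservation of $\cL$.} By \eqref{cL000-or} the operator $\cL$ equals $\omega\cdot\pa_\vf-\di_u X_\cH(T_\delta(\vf))$, where $X_\cH=W^{-1}\circ X\circ W$ is the water waves vector field in Wahl\'en coordinates. The field $X$ is reversible (\eqref{revNL}) and translation invariant (\eqref{eq:mom_pres}); since $W$ commutes with $\cS$ and with each $\tau_\vs$, the same symmetries are inherited by $X_\cH$. Because $T_\delta$ is a reversible quasi-periodic traveling wave (Step 1), Lemma \ref{lem:REV} gives that $\di_u X_\cH(T_\delta(\vf))$ is reversible, and Lemma \ref{lem:MP} gives that it is momentum preserving. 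The operator $\omega\cdot\pa_\vf$ is trivially reversibility preserving and momentum preserving, so $\cL$ is reversible and momentum preserving, as claimed.

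There are no substantial obstacles in this lemma: it is a bookkeeping argument that traces the reversible/traveling structure of $i_\delta$ through the maps $A$ and $W$, and then invokes the abstract Lemmas \ref{lem:REV}--\ref{lem:MP}. The only mildly delicate point is the parity check for $B,\tilde V,c$, which requires using that $\pa_x^{-1}$ swaps the parity in $x$ and that $G(\eta)$ preserves it via \eqref{DN-rev}.
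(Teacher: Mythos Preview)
Your proof is correct and follows essentially the same approach as the paper: you trace the reversible and traveling structure of $i_\delta$ through the equivariant maps $A$ and $W$, use \eqref{DN-rev} and \eqref{DN:trans} (via Lemma~\ref{mom_dirichlet}) for the parities and traveling-wave property of $B,\wt V,c$, and then invoke Lemmata~\ref{lem:REV} and~\ref{lem:MP} for the reversibility and momentum preservation of $\cL$. Your treatment is slightly more explicit than the paper's in spelling out the role of $W$ and the parity of $\psi$, but the argument is the same.
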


\begin{proof}
The function  $ (\eta, \zeta) = T_\delta(\vf) $ is a quasi-periodic traveling wave and,
using also Lemmata 
\ref{mom_dirichlet} and \ref{A.mom.cons}, we deduce that 
$B,  \wt V, c  $ are  quasi-periodic traveling waves. 
Since $(\eta,\zeta)= T_\delta(\vf)$ is reversible, we have 
that $(\eta,\zeta) $ is $ ( \even(\vf,x), \odd(\vf,x) ) $. Therefore, using 
also \eqref{DN-rev}, we deduce that  
$B$ is $\odd(\vf,x)$, $\wtV$ is $\even(\vf,x)$ and 
$c$ is $\even(\vf,x)$.  
By  Lemmata \ref{lem:REV} and 
 \ref{lem:MP}, the operator $\cL$ in \eqref{cL000-or} 
 evaluated at the reversible quasi-periodic traveling wave 
$ W T_\delta(\vf) $ is reversible and momentum preserving.
\end{proof}

For the sequel we will always assume the following ansatz (satisfied by the approximate solutions obtained along the nonlinear  Nash-Moser iteration of Section \ref{sec:NaM}): for some constants $\mu_0 :=\mu_0(\tau,\nu)>0$, $\upsilon\in (0,1)$, (cfr. 
Lemma \ref{torus_iso})
\begin{equation}\label{ansatz_I0_s0}
\norm{ \fI_0 }_{s_0+\mu_0} ^{k_0,\upsilon} \, ,  \ 
\norm{ \fI_\delta }_{s_0+\mu_0}^{k_0,\upsilon}  
\leq 1 
\,.
\end{equation} 
In order to estimate the variation of the eigenvalues with respect to the approximate invariant torus, we need also to estimate the variation with respect to the torus $i(\vf)$ in another low norm $\norm{ \ }_{s_1}$ for all Sobolev indexes $s_1$ such that
\begin{equation}\label{s1s0}
s_1+\sigma_0 \leq s_0 +\mu_0 \,, \quad \text{ for some } \ \sigma_0:=\sigma_0(\tau,\nu)>0 \,. 
\end{equation}
Thus, by \eqref{ansatz_I0_s0}, we have
\begin{equation*}\label{ansatz_I0_s1}
\norm{ \fI_0 }_{s_1+\sigma_0} ^{k_0,\upsilon} \, , 
 \  \norm{ \fI_\delta }_{s_1+\sigma_0}^{k_0,\upsilon} \leq 1 \,.
\end{equation*}
The constants $\mu_0$ and $\sigma_0$ represent the \emph{loss of derivatives} accumulated along the reduction procedure of the next sections. What is important is that they are independent of the Sobolev index $s$. In the following sections we shall denote by $\sigma:=\sigma(\tau,\nu,k_0)>0 $, 
$ \sigma_N(\tq_0) := \sigma_N(\tq_0,\tau,\nu,k_0) $, 
$ \sigma_M:= \sigma_M(k_0,\tau,\nu)>0 $, $ \aleph_M (\alpha ) $
constants (which possibly increase from lemma to lemma) representing  losses 
 of derivatives along the finitely many steps of the reduction procedure.

\begin{rem}
In the next sections 
$ \mu_0 :=\mu_0(\tau,\nu, M, \alpha) > 0 $  will depend
 also on indexes $ M, \alpha  $, whose 
maximal values will be fixed depending only on $ \tau $ and $ \nu $ (and $ k_0 $ which is however considered an absolute constant along the paper). 
In particular $ M $ is fixed in \eqref{M_choice}, whereas the maximal 
value of $ \alpha $ depends on 
$ M $, as explained in Remark \ref{fix:alpha}.
\end{rem}

As a consequence of Moser composition Lemma \ref{compo_moser} and \eqref{ebb1}, 
the Sobolev norm of the function $u=T_\delta(\vf)$ defined in \eqref{Tdelta} satisfies for all $s\geq s_0$
\begin{equation}\label{uI0}
\norm u_s^{k_0,\upsilon} = \norm \eta_s^{k_0,\upsilon} + \norm \zeta_s^{k_0,\upsilon} \leq \varepsilon C(s)\big( 1 + \norm{\fI_0}_s^{k_0,\upsilon} \big)
\end{equation}
(the map $A$ defined in \eqref{aacoordinates} is smooth). Similarly, using \eqref{ebb3}, 
\begin{equation*}\label{Delta12}
\norm{ \Delta_{12}u }_{s_1} \lesssim_{s_1} \varepsilon \norm{i_2-i_1}_{s_1} \,, \quad \text{ where } \ \Delta_{12}u:=u(i_2)-u(i_1) \, . 
\end{equation*}
 We finally recall that $\fI_0 = \fI_0(\omega,\kappa)$ is defined for all $(\omega,\kappa)\in\R^\nu\times [\kappa_1,\kappa_2]$ and that the functions $B,\wtV$ and $c$ appearing in $\cL$ in \eqref{Linea10} 
 are $\cC^\infty$ in $(\vf,x)$, as $u=(\eta,\zeta)=T_\delta(\vf)$ is. 
 
\subsection{Quasi-periodic reparametrization of time}\label{sec:repa}

We conjugate the operator $ {\cal L} $ in \eqref{Linea10} by 
the change of variables induced by the quasi-periodic reparametrization 
of time 
\begin{equation}\label{def:p}
\vartheta:=\vf + \omega p(\vf)  \quad \Leftrightarrow \quad   \vf = \vartheta + \omega 
\breve p(\vartheta)\,,
\end{equation}
where $p(\vf)$ is the real $\T^\nu$-periodic function defined 
 in \eqref{choice_p}. 
Since
$ \eta (\vf,x) $ is a quasi-periodic traveling wave, even in $ (\vf, x) $ 
(cfr. Lemma \ref{BVtilde.mom}), it results that 
\begin{equation}\label{propp}
p(\vf-\ora{\jmath}\varsigma) = p(\vf) \, , \  \forall \varsigma\in\R \, , \quad
p \ {\rm is \ odd}(\vf) \, . 
\end{equation}
Moreover, by \eqref{choice_p},   \eqref{lem:diopha.eq},   Lemma \ref{compo_moser}, 
\eqref{uI0} and \eqref{ansatz_I0_s0} and Lemma 2.30 in \cite{BM},
both  $ p $ and $ \breve p $ satisfy,  for some $\sigma:=\sigma(\tau,\nu,k_0) > 0 $, 
the tame estimates, for $s\geq s_0$,
\begin{equation}\label{stimap}
\| p \|^{k_0,\upsilon}_s + \normk{\breve p}{s} \lesssim_s \varepsilon^2  \upsilon^{-1}\big( 1+\normk{\fI_0}{s+\sigma} \big) \, .  
\end{equation}
\begin{rem}\label{rem:REPA}
We perform  
as a first step the time reparametrization \eqref{def:p} of $ \cL  $, 
with a function $ p ( \vf )  $ which will be fixed 
only later in Step 4 of Section \ref{sec:order32}, 
to avoid otherwise a technical difficulty in the conjugation 
of the remainders obtained by the Egorov theorem in Step 1 of Section \ref{sec:order32}.
We need indeed to apply 
the Egorov Proposition \ref{egorov}
for conjugating the additional pseudodifferential term in  \eqref{Linea10} due to vorticity. 
\end{rem}
Denoting by 
$$
(\cP h)(\vf,x):= h(\vf+\omega p(\vf),x) \, , \quad 
(\cP^{-1}h)(\vartheta,x):= h(\vartheta+\omega\breve p(\vartheta),x) \, , 
$$
the induced diffeomorphism of functions $ h (\vphi, x ) \in \C^2 $, we have 
\begin{align}\label{rho_vartheta}
\cP^{-1} \circ \, \omega\cdot \partial_\vf \, \circ \cP & = 
\rho (\vartheta) \omega\cdot \partial_\vartheta \,, \quad 
\rho (\vartheta ) :=   \cP^{-1} (1+\omega\cdot \pa_\vphi p) \,.
\end{align}
Therefore, for 
any $ \omega \in \tD\tC (\upsilon, \tau) $, we get
\begin{equation}\label{Linea1}
\begin{aligned}
{\cal L}_{0} :=  \frac{1}{\rho} \cP^{-1} {\cal L} \cP  = \omega\cdot \pa_\vartheta  
&+ \frac{1}{\rho} \begin{pmatrix}
\pa_x\wtV + G( \eta )B  & -G(\eta )   \\
g-\kappa\pa_x c\pa_x + B\wtV_x + B G(\eta) B& \wtV\pa_x - B G(\eta) 
\end{pmatrix} 
\\
& + \frac{1}{\rho}  \frac{\gamma}{2}\begin{pmatrix}
-G(\eta)\pa_x^{-1} & 0 \\   \pa_x^{-1}G(\eta)B- BG(\eta)\pa_x^{-1} -\frac{\gamma}{2}\pa_x^{-1}G(\eta)\pa_x^{-1} &-\pa_x^{-1}G(\eta) 
\end{pmatrix} \,,
\end{aligned}
\end{equation}
where $ \wtV,  B, c, V   $ and $G(\eta)$ are 
evaluated at $ (\eta_p, \psi_p) :=  \cP^{-1} (\eta, \psi) $. For simplicity 
in the notation we do not report in \eqref{Linea1} the explicit dependence on $ p $, writing for example 
(cfr. \eqref{tc})  
\begin{equation}\label{defcp}
c =   \big( 1 + (\cP^{-1} \eta)_x^2  \big)^{-\frac32} = \cP^{-1} \big( 1 + \eta_x^2  \big)^{-\frac32} \, .
\end{equation}

\begin{lem}\label{timerep}
The maps $\cP$, $\cP^{-1}$ are $\cD^{k_0}$-$(k_0+1)$-tame, 
the maps $\cP-{\rm Id}$ and $\cP^{-1}-{\rm Id}$ are $\cD^{k_0}$-$(k_0+2)$-tame,
with tame constants satisfying, for some $\sigma:=\sigma(\tau,\nu,k_0)>0$ and for any
$ s_0\leq s\leq S $,  
	\begin{equation}\label{timerep1}
		\fM_{\cP^{\pm 1}}(s) \lesssim_{S} 1 + \normk{\fI_0}{s+\sigma} \, , \quad 
		\fM_{\cP^{\pm 1}-{\rm Id}}(s) \lesssim_{S}\varepsilon^2 \upsilon^{-1}\big( 1+\normk{\fI_0}{s+\sigma} \big)  \, . 
	\end{equation}
The function $ \rho $ defined in \eqref{rho_vartheta} satisfies  
\begin{equation}\label{rhopem}
\rho \ {\rm is} \ {\rm even}(\vartheta) \quad  and \quad  
\rho ( \vartheta -\ora{\jmath}\varsigma) = \rho (\vartheta) \, , \ 
\forall \varsigma\in\R \, . 
\end{equation}
The  operator $ {\cal L}_{0}  $ is Hamiltonian, reversible and 
momentum preserving. 
\end{lem}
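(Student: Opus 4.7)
The plan is to treat each of the three claims separately, reducing them to standard tools already established in the paper.

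\textbf{Tame estimates for $\cP^{\pm 1}$.} First I would view the reparametrization \eqref{def:p} as the diffeomorphism of $\T^\nu$ given by $\vf \mapsto \vf + \omega p(\vf)$, and invoke an analogue of Lemma \ref{product+diffeo} with the space variable $x$ replaced by the angle variable $\vf$ (here $\omega$ is just a Lipschitz parameter in $\upsilon\la \ell\ra^{-\tau}$-style bounds). The smallness hypothesis is furnished by \eqref{stimap}, which together with the ansatz \eqref{ansatz_I0_s0} gives $\normk{\omega p}{2s_0+k_0+2} \lesssim \varepsilon^2 \upsilon^{-1} \ll 1$. The main point is the dependence on $k_0$ parameter derivatives: each $\pa_\lambda$ applied to $\cP$ produces an extra factor $\omega\cdot \pa_\vartheta$ acting on $\pa_\lambda p$, which accounts for the loss $(k_0+1)$, and one more derivative for $\cP-{\rm Id}$. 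The estimates \eqref{timerep1} then follow by the chain rule and the Moser composition Lemma \ref{compo_moser} applied to $\breve p$, whose tame bound is identical to that of $p$ by \eqref{stimap}.

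\textbf{Symmetries of $\rho$.} From \eqref{propp}, $p$ is odd in $\vf$ and satisfies $p(\vf - \vec\jmath\vs)=p(\vf)$. The inverse diffeomorphism defined by $\vf = \vartheta + \omega \breve p(\vartheta)$ inherits both properties: evaluating the identity $\vartheta = \vf + \omega p(\vf)$ at $-\vf$ and at $\vf - \vec\jmath\vs$ and using uniqueness of $\breve p$, one gets $\breve p(-\vartheta) = -\breve p(\vartheta)$ and $\breve p(\vartheta - \vec\jmath\vs) = \breve p(\vartheta)$. Consequently $1 + \omega\cdot\pa_\vf p$ is even in $\vf$ and translation invariant (differentiation of an odd/invariant function), and both properties are preserved under composition with $\cP^{-1}$, yielding \eqref{rhopem}.

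\textbf{Structure of $\cL_0$.} The reparametrization acts only on the $\vf$ variables, so I would verify the three structural properties by using that $\cP$ is reversibility and momentum preserving. For momentum preservation, the identity $p(\vf - \vec\jmath\vs) = p(\vf)$ implies $\cP \circ (\vf \to \vf - \vec\jmath\vs) = (\vartheta \to \vartheta - \vec\jmath\vs) \circ \cP$; since $\cL$ is momentum preserving by Lemma \ref{BVtilde.mom} and $1/\rho$ is translation invariant, so is $\cL_0$. For reversibility, $p$ odd gives $\cP \circ (\vf \to -\vf) = (\vartheta \to -\vartheta)\circ \cP$, so $\cP^{-1}\cL\cP$ is reversible; multiplication by the even function $1/\rho$ preserves reversibility. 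For the Hamiltonian character, the key observation is that $\cL = \omega\cdot\pa_\vf - J \cA(\vf)$ with $\cA(\vf)$ self-adjoint (being $\pa_u\nabla_u \cH$ evaluated at $T_\delta$, cf. \eqref{cL000}, \eqref{K02phi}); under $\cP^{-1}\cdot\cP$, the transport becomes $\rho(\vartheta)\omega\cdot\pa_\vartheta$ by \eqref{rho_vartheta}, while $\cA$ transforms to the still self-adjoint $\cP^{-1}\cA\cP$ (conjugation by a real multiplication operator in $\vf$), and dividing by the scalar $\rho$ keeps $J \cdot (\rho^{-1}\cP^{-1}\cA\cP)$ with symmetric second factor, i.e.\ Hamiltonian.

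The main (mild) obstacle is the Hamiltonian property: one must verify that the rescaling by $1/\rho$ is compatible with the symplectic structure. This is fine precisely because $\rho$ is a scalar function of $\vartheta$ alone (independent of $x$), so multiplication by $1/\rho$ commutes with the $L^2$-pairing in $x$ used to define self-adjointness; this is where the symmetries of $\rho$ proved in step two play their role.
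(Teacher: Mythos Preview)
Your proposal is correct and follows essentially the same approach as the paper: the tame estimates are obtained from \eqref{stimap} and the composition lemma (the paper makes this explicit via the integral representation $(\cP-{\rm Id})h = p\int_0^1 \cP_\tau(\omega\cdot\pa_\vf h)\,\wrt\tau$), the symmetries of $\rho$ come from \eqref{propp} and \eqref{rho_vartheta}, and the structural properties of $\cL_0$ are read off from writing $\cL_0 = \omega\cdot\pa_\vartheta + \rho^{-1}(\vartheta)A(\vartheta+\omega\breve p(\vartheta))$. One small wording slip: $\cP$ is a \emph{composition} operator, not a multiplication operator, but your argument uses only that it acts purely on $\vf$ (so that $\cP^{-1}\cA(\vf)\cP$ is $\cA$ evaluated at a shifted angle, hence still self-adjoint in $L^2_x$ for each $\vartheta$), which is the correct point.
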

\begin{proof}
	Estimates \eqref{timerep1} 
	follow by \eqref{stimap} and Lemma 2.30 in \cite{BM},
	writing 
$ ( \cP-{\rm Id} ) h = p \int_0^1 \cP_\tau ( \omega\cdot \pa_\vf h ) \wrt \tau $, where
$ (\cP_\tau h)(\vf,x):= h(\vf+\tau \omega p(\vf), x) $. 
We deduce \eqref{rhopem} by \eqref{propp} and \eqref{rho_vartheta}. Denoting 
$ \cL = \omega \cdot \pa_\vf + A ( \vf ) $ 
the operator $ \cL $ in 
\eqref{Linea10}, then the operator
$ \cL_0 $ in \eqref{Linea1} is $ \cL_0  = \omega \cdot \pa_\vartheta + A_+ ( \vartheta ) $  with 
$ A_+ ( \vartheta) = \rho^{-1} (\vartheta) A( \vartheta + \breve p (\vartheta) \omega ) $. It follows that $ A_+(\vf) $ is Hamiltonian, reversible and momentum preserving as 
$ A(\vf) $ (Lemma \ref{BVtilde.mom}).
\end{proof}

\begin{rem}\label{P-rev-mom}
The map $ \cP $ is not reversibility and momentum preserving according to Definitions
\ref{rev_defn}, respectively \ref{def:mom.pres}, 
but maps (anti)-reversible, respectively traveling, waves, into
(anti)-reversible, respectively traveling, waves. Note that the multiplication operator for the function
$ \rho (\vartheta)$, which satisfies \eqref{rhopem}, is reversibility and momentum preserving according to Definitions \ref{rev_defn} and \ref{def:mom.pres}. 
\end{rem}

\subsection{Linearized good unknown of Alinhac}\label{subsec:good}

We conjugate the linear operator $ \cL_0 $ in \eqref{Linea1}, where we rename 
$ \vartheta $ with $ \vf $, 
by the multiplication matrix  operator 
\begin{equation*}\label{good_unknown}
\cZ := \left( \begin{matrix}
{\rm Id} & 0 \\ B & {\rm Id}
\end{matrix}\right) \ , \qquad \cZ^{-1}=\left( \begin{matrix}
{\rm Id} & 0 \\ - B & {\rm Id}
\end{matrix} \right) \,,
\end{equation*}
obtaining (in view of \eqref{trasf-op})
\begin{equation}\label{cL0}
	\begin{aligned}
		\cL_1 & := \cZ^{-1} \cL_0 \cZ  \\
		& =   \omega\cdot \pa_\vf 
		+ \frac{1}{\rho}  \begin{pmatrix}
		\pa_x \wtV & -G(\eta) \\
		g 
		+ a - \kappa \pa_x c \pa_x  & \wtV \pa_x
		\end{pmatrix} 
		-  \frac{1}{\rho}  \frac{\gamma}{2}\begin{pmatrix}
		G(\eta)\pa_x^{-1} & 0 \\ \frac{\gamma}{2}\pa_x^{-1}G(\eta)\pa_x^{-1} & \pa_x^{-1}G(\eta)
		\end{pmatrix} \,,  
	\end{aligned}
\end{equation}
where $a$ is the function
\begin{equation}\label{ta}
a := \wtV B_x + \rho\,(\omega\cdot\pa_\vf B) \, .
\end{equation}
The matrix $\cZ$  amounts to introduce, as in \cite{Lan05} and
\cite{BM,BBHM}, a linearized version of the ``good unknown of Alinhac".

\begin{lem}\label{lem:good_unknwon}
	The maps $\cZ^{\pm 1}-{\rm Id}$ are $\cD^{k_0}$-tame with tame constants satisfying, for some $\sigma:=\sigma(\tau,\nu,k_0)> 0 $, for all $ s \geq s_0 $,	
	\begin{equation}
	\label{lem:ga1}
	\fM_{\cZ^{\pm 1}-{\rm Id}}(s)\, , \ \fM_{(\cZ^{\pm 1}-{\rm Id})^*}(s) \lesssim_s \varepsilon\big( 1 + \norm{ \fI_0 }_{s+\sigma}^{k_0,\upsilon} \big) \,.
	\end{equation} 
The function $a$ is a quasi-periodic traveling wave $\even(\vf,x)$. There is  $\sigma:= \sigma(\tau,\nu,k_0)>0$ such that, for all $ s \geq  s_0 $,
	\begin{equation}
	\begin{aligned}
	\label{lem:ga2}
	&\norm{a}_s^{k_0,\upsilon} + \| \wtV \|_s^{k_0,\upsilon} + \norm{ B }_s^{k_0,\upsilon} \lesssim_s \varepsilon \big( 1 + \normk{\fI_0}{s+\sigma} \big) \,, 
	\quad 
	\normk{1-c }{s} \lesssim_s \varepsilon^2 \big( 1 + \normk{\fI_0}{s+\sigma} \big)\,.
	\end{aligned}
	\end{equation}
	Moreover, for any $s_1$ as in \eqref{s1s0},
	\begin{align}
\label{lem:ga4}	&\norm{\Delta_{12}a}_{s_1}+
\| \Delta_{12}\wtV \|_{s_1}+\norm{\Delta_{12}B}_{s_1}\lesssim_{s_1} \varepsilon \norm{ i_1-i_2 }_{s_1+\sigma} \,,\\
\label{lem:ga5}
	& \norm{ \Delta_{12}c }_{s_1} \lesssim_{s_1} \varepsilon^2 \norm{ i_1-i_2}_{s_1+\sigma}\,, \\
	\label{lem:ga6}
	& \| \Delta_{12} (\cZ^{\pm 1})h \|_{s_1}, \| \Delta_{12}
	(\cZ^{\pm 1})^* h\|_{s_1} \lesssim_{s_1} \varepsilon \norm{i_1-i_2}_{s_1+\sigma} \norm{h}_{s_1} \,.
	\end{align}
	The operator $\cL_1$ is Hamiltonian, reversible and momentum preserving.
\end{lem}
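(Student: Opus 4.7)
The plan is to establish the four groups of assertions in turn: size estimates on the coefficients $B, \wtV, c, a$; symmetry structure (parity, traveling wave); tame estimates on $\cZ^{\pm 1}$ and their adjoints together with the $\Delta_{12}$-estimates; and finally the Hamiltonian, reversible, momentum-preserving nature of $\cL_1$. The calculations are classical once the symmetries have been carefully transferred through the time reparametrization of Section \ref{sec:repa}, so the main bookkeeping concern — and the one most likely to cause trouble — is verifying that $\cP^{-1}$ carries the reversible traveling-wave structure of $WT_\delta(\vf)$ onto $(\eta_p,\psi_p)$ in such a way that the parities of $B,\wtV,c,a$ agree with those in Lemma~\ref{BVtilde.mom}.

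First I would derive \eqref{lem:ga2}. The formulas \eqref{BfVf}, \eqref{tc} express $B, \wtV, c$ as smooth nonlinear functions of $(\eta_p,(\eta_p)_x,\psi_p,(\psi_p)_x, G(\eta_p)\psi_p)$. Using the decomposition $G(\eta) = G(0) + \cR_G(\eta)$ of Lemma~\ref{DN_pseudo_est}, the Moser composition Lemma~\ref{compo_moser}, the product estimate \eqref{prod}, the bound \eqref{uI0} on $u = T_\delta(\vf)$, and the ansatz \eqref{ansatz_I0_s0}, all the linear-in-$\varepsilon$ estimates in \eqref{lem:ga2} follow immediately. The quadratic bound $\normk{1-c}{s} \lesssim \varepsilon^2(1+\normk{\fI_0}{s+\sigma})$ is obtained by writing $1 - (1+\eta_x^2)^{-3/2}$ as $\eta_x^2 \, f(\eta_x^2)$ for a smooth $f$, so that each factor of $\eta_x$ contributes one power of $\varepsilon$. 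For $a$ defined by \eqref{ta} one then combines the bound on $B, \wtV$ with the bound on $\rho$ from Lemma~\ref{timerep}. The $\Delta_{12}$-estimates \eqref{lem:ga4}--\eqref{lem:ga5} follow by an identical computation carried out on the variation of the formulas with respect to $i$, exploiting \eqref{ebb3}.

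Next I would transfer the symmetries through $\cP^{-1}$. From \eqref{def:p} and \eqref{propp}, the inverse function $\breve p$ is odd in $\vartheta$ and satisfies $\breve p(\vartheta - \vec\jmath\varsigma)=\breve p(\vartheta)$ for all $\varsigma\in\R$. A direct check then gives that $\cP^{-1}$ maps reversible quasi-periodic traveling waves into reversible quasi-periodic traveling waves (Remark~\ref{P-rev-mom}), so that $(\eta_p,\psi_p) = \cP^{-1}WT_\delta$ inherits the reversibility and the parities $(\even(\vf,x),\odd(\vf,x))$ from $WT_\delta$ (Lemma~\ref{BVtilde.mom}). The parities $B$ odd, $\wtV$ even, $c$ even in $(\vf,x)$ and the traveling-wave property then follow exactly as in Lemma~\ref{BVtilde.mom}, using \eqref{DN-rev} and \eqref{DN:trans}. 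For $a$, I would note that if $B$ is odd in $(\vf,x)$, then $B_x$ and $\omega\cdot\pa_\vf B$ are both even in $(\vf,x)$ by direct differentiation; combined with $\wtV$ even and $\rho$ even (Lemma~\ref{timerep}), \eqref{ta} shows $a$ is even. That $a$ is a quasi-periodic traveling wave follows because product and $\omega\cdot\pa_\vf$ preserve this class.

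Finally, for the estimates on $\cZ^{\pm 1}$ I would observe that $\cZ^{\pm 1} - {\rm Id}$ is multiplication (off-diagonal) by $\pm B$, so \eqref{lem:ga1} is a direct consequence of \eqref{prod} and the bound on $B$ just established; the adjoints $(\cZ^{\pm 1}-{\rm Id})^*$ are handled identically, and \eqref{lem:ga6} again by variation in $i$. For the last assertion, the conjugation identity \eqref{cL0} together with the fact that $\cL_0$ is Hamiltonian, reversible and momentum preserving (Lemma~\ref{timerep}) reduces matters to showing that $\cZ$ is symplectic, reversibility preserving, and momentum preserving. Symplecticity is the direct matrix computation
\[
\cZ^* J^{-1}\cZ = \begin{pmatrix}{\rm Id} & B^* \\ 0 & {\rm Id}\end{pmatrix}\begin{pmatrix}0 & -{\rm Id}\\ {\rm Id} & 0\end{pmatrix}\begin{pmatrix}{\rm Id} & 0 \\ B & {\rm Id}\end{pmatrix} = \begin{pmatrix}0 & -{\rm Id}\\ {\rm Id} & 0\end{pmatrix} = J^{-1},
\]
using that $B$, as multiplication by a real function, satisfies $B^*=B$. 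The identity $\cZ(-\vf)\circ\cS = \cS\circ\cZ(\vf)$ is equivalent to $B(-\vf,-x) = -B(\vf,x)$ (oddness of $B$), and $\cZ(\vf-\vec\jmath\varsigma)\circ\tau_\varsigma = \tau_\varsigma\circ\cZ(\vf)$ is equivalent to $B$ being a quasi-periodic traveling wave; both are precisely what Paragraph~2 yields. This closes the proof.
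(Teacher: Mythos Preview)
Your proposal is correct and follows essentially the same approach as the paper: both derive \eqref{lem:ga2} from the explicit formulas for $B,\wtV,c,a$ via Moser composition, the Dirichlet--Neumann decomposition, and the bound \eqref{uI0}; both obtain \eqref{lem:ga1} from the observation that $\cZ^{\pm1}-{\rm Id}$ is multiplication by $\pm B$ (with $\cZ^*=\begin{pmatrix}{\rm Id}&B\\0&{\rm Id}\end{pmatrix}$); and both reduce the structural claims on $\cL_1$ to the fact that $B$ is an odd quasi-periodic traveling wave. You supply slightly more detail than the paper (the explicit symplecticity check $\cZ^*J^{-1}\cZ=J^{-1}$ and the careful transfer of parities through $\cP^{-1}$), but there is no substantive difference in the argument.
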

\begin{proof}
The estimates \eqref{lem:ga2} follow by the expressions of 
 $a, \wt V, B, c$ in \eqref{ta}, \eqref{BfVf}, \eqref{tc}, 
(reparametrized by $ \cP^{-1} $ as in \eqref{defcp}), Lemmata 
\ref{compo_moser} and \eqref{timerep1},  
\eqref{prod}, \eqref{est:RG}, \eqref{TameA_PS} and \eqref{action_Hs_tame}.
The estimate \eqref{lem:ga1} follows by \eqref{TameA_PS}, \eqref{norm.mult}, \eqref{lem:ga2} and since the adjoint 
$\cZ^* = \begin{pmatrix}
{\rm Id} & B\\
0  & {\rm Id}
\end{pmatrix}$. The estimates \eqref{lem:ga4}-\eqref{lem:ga6}  follow similarly.
Since $B$ is a $ \odd(\vf,x)$ quasi-periodic traveling wave, then  
the operators $\cZ^{\pm}$ are reversibility and momentum preserving.
\end{proof}

\subsection{Symmetrization and reduction of the highest order}\label{sec:order32}

The aim of this long section is to conjugate the Hamiltonian operator $ \cL_1 $
in \eqref{cL0} to the Hamiltonian operator $ \cL_5 $ in \eqref{cL4} whose coefficient 
$ \tm_{\frac32} $ of the highest order is constant.
 This is achieved in several steps. 
All  the transformations of this section are symplectic.

Recalling the expansion \eqref{DN.dec}
 of the Dirichlet-Neumann operator, 
 we first write
\begin{equation}
\label{cL00}
\cL_1  = \omega\cdot \pa_\vf + 
\frac{1}{\rho}
\begin{pmatrix}
-\frac{\gamma}{2}G(0)\pa_x^{-1} & -G(0) \\
 - \kappa \pa_x c \pa_x  + g - \left(\frac{\gamma}{2}\right)^2\pa_x^{-1}G(0)\pa_x^{-1}& 
  -\frac{\gamma}{2}\pa_x^{-1}G(0)
\end{pmatrix} 
+
\frac{1}{\rho}\begin{pmatrix}
\pa_x \wt V & 0 \\
 a & \wt V \pa_x 
\end{pmatrix} + \bR_1\,,
\end{equation}
where 
\begin{equation}
\label{bRG}
\bR_1 :=- \frac{1}{\rho}\begin{pmatrix}
\frac{\gamma}{2}\cR_G(\eta)\pa_x^{-1} & \cR_G(\eta) \\ \left(\frac{\gamma}{2}\right)^2\pa_x^{-1}\cR_G(\eta)\pa_x^{-1} & \frac{\gamma}{2}\pa_x^{-1}\cR_G(\eta)
\end{pmatrix}  
\end{equation}
is a small remainder in $ \Ops^{-\infty}$. 

\paragraph{Step 1:}
We first conjugate $\cL_1 $ with  the symplectic change of variable
(cfr. \eqref{symp-op})
\begin{equation}\label{defcE}
(\cE u)(\vf,x) :=  \sqrt{1+\beta_x(\vf, x)} \, (\cB u)(\vf, x) \, , \qquad 
(\cB u)(\vf, x) := 
u\left(\vf,x+ \beta(\vf, x)\right) \, , 
\end{equation}
induced by a family of $\vf$-dependent diffeomorphisms of the torus
$ y = x+ \beta(\vf, x) $ \,,
where $\beta(\vf, x)$ is a small function to be determined, see \eqref{betasolved}.
We denote the inverse diffeomorphism by
$ x = y + \breve\beta(\vf, y) $. 
By direct computation we have that 
\begin{align}
\cE^{-1} \wtV \partial_x \cE & =
\big\{ \cB^{-1} \big( \wtV ( 1+\beta_x ) \big) \big\}  \pa_y + \tfrac12
\big\{ \cB^{-1} \wt V \beta_{xx}(1+\beta_x)^{-1} \big\} \, , \label{conj1} \\
\cE^{-1}\partial_x \wtV \cE  & = \big\{\cB^{-1}\big( \wtV ( 1+\beta_x ) \big)\big\} 
\pa_y + \{\cB^{-1}( \wtV_x +\tfrac12 \wt V \beta_{xx}(1+\beta_x)^{-1}) \} \,, \\
\cE^{-1}a\cE & = \{ \cB^{-1} a\}\, ,  \\
\cE^{-1} \partial_x c\partial_x \cE & = 
\cB^{-1} (1 + \beta_x)^{- \frac12} \cB \ \cB^{-1} \pa_x \cB \ \cB^{-1} c \cB \
\cB^{-1} \pa_x \cB \ \cB^{-1} (1 + \beta_x)^{\frac12} \cB \nonumber
\\
& = \big\{\cB^{-1} (1+\beta_x)^\frac12\big\}\, \pa_y \,  \big\{ \cB^{-1}( c (1+\beta_x )) \big\} \,\pa_{y}\, \big\{\cB^{-1} (1+\beta_x)^\frac12 \big\} \,,  \\
\label{omegavf}
\cE^{-1}\omega\cdot \partial_\vf \cE & = \omega\cdot \partial_\vf + \left\{ \cB^{-1}\left(\omega\cdot\pa_\vf \beta\right) \right\}\pa_y
+ \tfrac12  \{ \cB^{-1}  \big( (\omega \cdot \pa_\vf \beta_x)(1+\beta_x )^{-1} \big) \} 
 \,.
\end{align}
Then we write the Dirichlet-Neumann operator  $G(0)  $ in \eqref{G(0)} as
\begin{equation}
\label{G.exp}
G(0) = G(0, \tth) = 
\pa_x \cH T(\tth) \,,
\end{equation}
where $ \cH $ is the Hilbert transform in \eqref{Hilbert-transf} and 
\begin{equation}
\label{Ttth}
T(\tth) := \begin{cases}
\tanh(\tth |D|) = {\rm Id} +  \Op(r_\tth)  & \text{ if } \tth < + \infty \, , 
\qquad r_{\tth} (\xi) := -\frac{2}{1+ e^{2 \tth |\xi| \chi(\xi)}} \in S^{- \infty} \, ,  \\
{\rm Id}  & \text{ if } \tth = \infty \, .
\end{cases}
\end{equation}
We have the conjugation formula (see  formula (7.42) in \cite{BBHM})  
\begin{equation}\label{conG}
\cB^{-1} G(0) \cB = \left\{\cB^{-1}(1+\beta_x)\right\} G(0) + \cR_1 \,,
\end{equation}
where 
\begin{equation}\label{cR1}
\cR_1:=
\left\{\cB^{-1}(1+\beta_x) \right\} \pa_y \left( 
\l \cH \left(\cB^{-1} \Op(r_\tth) \cB - \Op(r_\tth) \right)+
\left( \cB^{-1} \cH \cB - \cH \right) ( \cB^{-1} T(\tth) \cB) \right)  \, . 
\end{equation}
The operator $\cR_1 $ is in $ \Ops^{-\infty}  $ because both 
$  \cB^{-1} \Op(r_\tth) \cB - \Op(r_\tth) $ and $\cB^{-1}\cH \cB - \cH$ 
are in $ \Ops^{-\infty}  $ and there is $ \s > 0 $
such that, for any $m \in \N$, $s \geq s_0 $,  and $\alpha \in \N_0$, 
\begin{equation}
\label{ht.t}
\begin{aligned}
& \normk{ \cB^{-1} \cH \cB - \cH}{-m, s, \alpha} 
 \lesssim_{m, s, \alpha, k_0}  
\normk{\beta}{s+m+\alpha + \sigma} \, , \\
& \normk{ \cB^{-1} \Op(r_\tth) \cB - \Op(r_\tth)}{-m, s, \alpha} 
 \lesssim_{m, s, \alpha, k_0}  
\normk{\beta}{s+m+\alpha + \sigma} \, . 
\end{aligned}
\end{equation}
The first estimate is given in Lemmata  2.36 and 2.32 in \cite{BM}, whereas 
the second one follows by that fact that $r_\tth \in S^{-\infty}$ (see \eqref{Ttth}), 
Lemma 2.18 in \cite{BBHM} and Lemmata 2.34 and  2.32  in \cite{BM}.
Therefore by \eqref{conG} we obtain
\begin{equation}\label{conDN0}
\cE^{-1} G(0) \cE = \{\cB^{-1} (1+\beta_x)^\frac12 \} \, G(0) \, \{\cB^{-1} (1+\beta_x)^\frac12 \} + \wt \cR_1 \,,
\end{equation}
where
\begin{equation}
\label{wtR1}
\wt \cR_1 := \{\cB^{-1} (1+\beta_x)^{-\frac12}  \} \, \cR_1 \, \{\cB^{-1} (1+\beta_x)^\frac12 \} .
\end{equation}
Next we transform $G(0)\pa_x^{-1}$.
By \eqref{G.exp} and using the identities $ \cH \pa_x \pa_x^{-1} = \cH $ and
$ \cH T(\tth) = G(0) \partial_y^{-1} $ on the periodic functions, 
we have that
\begin{equation}\label{G(0)pax-1_ego}
\cE^{-1} G(0)\pa_x^{-1}\cE  
= \,\cE^{-1} \pa_x \cH T(\tth) \pa_x^{-1} \cE  
= G(0)\pa_y^{-1} + \cR_2\,,
\end{equation}
where 
\begin{equation}\label{cR2}
	\begin{aligned}
		\cR_2  & :=   \{\cB^{-1} (1+\beta_x)^{-\frac12} \} \big[ \cH T(\tth),\{\cB^{-1} (1+\beta_x)^\frac12 \} - 1 \big] + \{\cB^{-1} (1+\beta_x)^{-\frac12}  \}\, \circ \\
		& \ \quad  \circ \left((\cB^{-1} \cH \cB - \cH )(\cB^{-1}T(\tth)  \cB) +
		\cH \big(\cB^{-1} \Op(r_\tth) \cB - \Op(r_\tth) \big)\right)
		\{\cB^{-1} (1+\beta_x)^\frac12 \} \, .
	\end{aligned}
\end{equation}
The operator $ \cR_2 $ is in $ \Ops^{-\infty} $  by \eqref{ht.t}, \eqref{Ttth}
and because the  commutator of $\cH$ with any smooth function $a$  
is in $\Ops^{-\infty}$, in particular 
(see Lemma 2.35 of \cite{BM}) there is $ \s > 0 $
such that, for any $m \in \N$, $s \geq s_0 $,  and $\alpha \in \N_0$, 
\begin{equation}
\label{Ht.comm}
\normk{[ \cH T(\tth),  a] }{-m, s, \alpha} 
 \lesssim_{m, s, \alpha, k_0}  
\normk{a}{s+m+\alpha + \sigma} \, .
\end{equation}
Finally we conjugate $\pa_x^{-1} G(0)\pa_x^{-1}$.
By  the Egorov Proposition \ref{egorov}, we have that, for any $ N \in\N $, 
\begin{equation}\label{pax-1_ego}
\cE^{-1} \pa_x^{-1} \cE = \Big\{ \cB^{-1}\Big( \frac{1}{1+\beta_x} \Big) \Big\}\pa_y^{-1} 
+ P^{(1)}_{-2,N} (\varphi, x, D)
+ \tR_N\,,
\end{equation}
where $ P^{(1)}_{-2,N} (\varphi, x, D) \in \Ops^{-2}$ is  
$$
P^{(1)}_{-2,N} (\varphi, x, D) := 
\{ \cB^{-1}(1 + \beta_x)^{- \frac12} \} \Big\{
\big[ p_{-1} \pa_y^{-1},  \cB^{-1}(1 + \beta_x)^{\frac12}  \big] +
 \sum_{j=1}^N p_{-1-j}\pa_y^{-1-j}  
\{ \cB^{-1}(1 + \beta_x)^{ \frac12} \} \Big\}  
$$
with functions $ p_{-1-j}(\lambda; \varphi, y)$, $ j = 0, \ldots, N $, satisfying \eqref{norm-pk}  
and $\tR_N$ 
is a regularizing operator satisfying the estimate \eqref{stima resto Egorov teo astratto}. 
So, using \eqref{pax-1_ego} and \eqref{G(0)pax-1_ego}, we obtain
\begin{equation}\label{termine_non_bello}
	\cE^{-1} \pa_x^{-1}G(0) \pa_x^{-1}\cE
	 = \left( \cE^{-1}\pa_x^{-1} \cE \right)
	 \left( \cE^{-1}G(0)\pa_x^{-1} \cE \right) 
	  = \pa_y^{-1} G(0) \pa_y^{-1} +P_{-1,N}^{(2)}  +\tR_{2,N} \,, 
\end{equation}
where 
\begin{equation}\label{tildecP-1}
	\begin{aligned}
		P_{-1,N}^{(2)}  &:= \Big( - \Big\{ \cB^{-1}\Big( \frac{\beta_x}{1+\beta_x}\Big) \Big\}\pa_y^{-1} + P^{(1)}_{-2,N} (\varphi, x, D) 
		\Big) G(0) \pa_y^{-1} \in \Ops^{-1}
	\end{aligned}
\end{equation}
and $\tR_{2,N} $ is the regularizing operator 
\begin{equation}\label{cR3}
	\tR_{2,N}:=  (\cE^{-1} \pa_x^{-1} \cE) \cR_2  + \tR_N G(0) \pa_y^{-1}  \, .
\end{equation}
The smoothing order $N\in\N$ will be chosen in Section \ref{sec:KAM} during the KAM iteration (see also Remark \ref{rem:after_block}). 

In conclusion, by \eqref{conj1}-\eqref{omegavf}, \eqref{conDN0}, \eqref{G(0)pax-1_ego} and \eqref{termine_non_bello} 
we obtain
\begin{equation}
\label{cL1}
\begin{aligned}
\cL_2  := \cE^{-1} \cL_1 \cE  
= \omega\cdot\partial_\vf + & 
\frac{1}{\rho}\begin{pmatrix}
-\frac{\gamma}{2} G(0) \pa_y^{-1} & - a_2 G(0) a_2 \\
-\kappa a_2 \pa_y a_3 \pa_y a_2 + g - \left(\frac{\gamma}{2}\right)^2 \pa_y^{-1} G(0) \pa_y^{-1} & -\frac{\gamma}{2} \pa_y^{-1} G(0)
\end{pmatrix} \\
& +\frac{1}{\rho}\begin{pmatrix}
 a_1\pa_y +a_4 & 0 \\
 a_5 - \left(\frac{\gamma}{2}\right)^2 P_{-1, N}^{(2)} &   a_1\pa_y +a_6 
\end{pmatrix}
+ \bR_{2}^\Psi + \bT_{2,N}\,,
\end{aligned}
\end{equation}
where
\begin{align} 
a_1(\vf, y) & := \cB^{-1}\big( (1+\beta_x)\wt V + \big(\omega \cdot \partial_\vf \beta
\big)\big) \,,\label{ta1}\\
a_2(\vf, y) &:=  \cB^{-1} (\sqrt{1+\beta_x})\,, \label{ta2}  \quad 
a_3(\vf, y) := \cB^{-1}\big( c(1+\beta_x) \big) \, ,  \\
a_4(\vf, y) &:= \cB^{-1}\Big( \frac{\wt V \beta_{xx} + (\omega \cdot \pa_\vf \beta_x)}{2(1+\beta_x)}+ \wtV_x  \Big) \,, \quad
a_5(\vf, y) :=   \cB^{-1}a \,, \label{ta5}\\
a_6(\vf, y) &:= \cB^{-1}\Big( \frac{\wt V \beta_{xx} + (\omega \cdot \pa_\vf \beta_x)}{2(1+\beta_x)} \Big) \,,\label{ta6}
\end{align}
the operator $P_{-1, N}^{(2)} \in \Ops^{-1} $ is defined in \eqref{tildecP-1} 
and 
\begin{equation}\label{bR1}
\bR_{2}^\Psi:=-\frac{1}{\rho}\begin{pmatrix}
\frac{\gamma}{2}\cR_{2} &  \wt\cR_{1}  \\ 
0 & \frac{\gamma}{2}\cR_{2}
\end{pmatrix} + \cE^{-1}\bR_1\cE \,,  
\qquad
\bT_{2,N} := -\frac{1}{\rho} \left(\frac{\gamma}{2}\right)^2\begin{pmatrix}
 0 & 0\\
\tR_{2,N} & 0
\end{pmatrix}\,,
\end{equation}
with $\wt\cR_1 $, $\cR_2 $, $ {\mathtt R}_{2,N}  $   defined in \eqref{wtR1}, \eqref{cR2}, \eqref{cR3} and $\bR_1$ in \eqref{bRG}.

\paragraph{Step 2:}
We now conjugate the operator  $\cL_2 $  in \eqref{cL1}  with the 
multiplication matrix operator
\begin{equation*}
\label{Q}
\cQ := \begin{pmatrix}
q & 0 \\
0 & q^{-1}
\end{pmatrix} \ , \qquad
\cQ^{-1} := \begin{pmatrix}
q^{-1} & 0 \\
0 & q
\end{pmatrix}\,,
\end{equation*}
where  $ q(\vf, y) $ is a real function, close to $ 1$,  to be determined. The maps
$ \cQ $ and $ \cQ^{-1} $ are symplectic (cfr. \eqref{symp-op}). 
We have that 
\begin{align}
\label{L31}
\cL_3  := \cQ^{-1} \cL_2 \cQ  = 
 \omega\cdot \partial_\vf   + 
\frac{1}{\rho} \begin{pmatrix}
A & B \\
C &  D 
 \end{pmatrix} 
 +  \cQ^{-1} (\bR_2^\Psi + \bT_{2,N}) \cQ \,,
\end{align}
where
\begin{align}
& A :=  q^{-1} \big(-\tfrac{\gamma}{2} G(0) \pa_y^{-1} + a_1\pa_y +a_4 \big) q + 
\rho q^{-1} (\omega \cdot \pa_\vphi q)\,, \label{defA3} \\
& B :=  - q^{-1} a_2 G(0) a_2 q^{-1} \,,\\
& C := 
q \big( -\kappa a_2 \pa_y a_3 \pa_y a_2 + g - \left(\tfrac{\gamma}{2}\right)^2 \pa_y^{-1} G(0) \pa_y^{-1}+ a_5 - \left(\tfrac{\gamma}{2}\right)^2 P_{-1, N}^{(2)}   \big)q \,,
 \\
&D := q \big( -\tfrac{\gamma}{2} \pa_y^{-1} G(0) +
 a_1\pa_y +a_6 \big) q^{-1} - \rho q^{-1} (\omega \cdot \pa_\vphi q)  \, . \label{defD3}
\end{align}
We choose the function $ q $ so that the coefficients 
of the highest order terms of the off-diagonal operators 
$ B $ and $  C $ satisfy 
\begin{equation}
\label{scelgo}
q^{-2} a_2^2 = q^2 a_2^2 a_3 =  m_{\frac32}(\vf) \, ,
\end{equation}
with $  m_{\frac32}(\vf) $ independent of $ x $. 
This is achieved  choosing 
\begin{equation}
\label{q}
 q := \left(\frac{1}{a_3}\right)^{1/4} 
  \, 
\end{equation}
and, recalling \eqref{ta2}, the function  $\beta$ so that
\begin{equation}\label{betachoi}
(1+ \beta_x(\vf,x))^3 c(\vf, x) = m(\vf) \, , 
\end{equation}
with $ m (\vf) $ independent of $x$ (the function $ c $ is defined in \eqref{defcp}). 
The solution of \eqref{betachoi} is 
\begin{equation}
\label{betasolved}
m(\vf) := \Big(\frac{1}{2\pi} \int_\T c(\vf, x)^{-1/3} \,\wrt x \Big)^{-3} , 
\qquad
\beta(\vf, x) := \partial_x^{-1} \Big(  \Big(\frac{m(\vf)}{c(\vf, x)}\Big)^{1/3} - 1 \Big) \,.
\end{equation}
In such a way, by \eqref{ta2}, we obtain \eqref{scelgo} with  
$ m_{\frac32} (\vf)  := \sqrt{m (\vphi) } $.
By \eqref{betasolved} and \eqref{defcp} we have 
\begin{equation}
\label{m32}
m_{\frac32} (\vf) 
= \cP^{-1} \Big( \frac{1}{2 \pi} \int_{\T} \sqrt{1 + \eta_x^2 (\vphi, x)} \di x \Big)^{-\frac32}\, . 
\end{equation}
Note that, since 
by \eqref{scelgo} the function $ q^{-1} a_2 $ is independent of $ x$, we have  
\begin{equation}\label{expanB}
 B =  - q^{-1} a_2 G(0) a_2 q^{-1} = - q^{-2} a_2^2 G(0) \, .
\end{equation}
Moreover we have the expansion
\begin{equation}\label{a7}
\begin{aligned}
q a_2 \pa_y a_3 \pa_y a_2 q & =  
q^2 a_2^2 a_3   \pa_y^2 + (q^2 a_2^2 a_3)_y \pa_y + 
q a_2(a_3 (q a_2)_y)_y	\\
& \stackrel{\eqref{scelgo}} =  m_{\frac32}(\vf)\pa_y^2 + a_7 , \qquad
a_7  := q a_2(a_3 (q a_2)_y)_y \, . 
\end{aligned}
\end{equation}
In conclusion, the operator $\cL_3  $ in \eqref{L31} is,  
in view of \eqref{defA3}-\eqref{defD3} and \eqref{expanB}, 
\eqref{a7}, 
\begin{equation}
\begin{aligned}
\label{cL2-final}
\cL_3 = \cQ^{-1} \cL_2 \cQ =  \omega\cdot \partial_\vf  + 
&
\frac{1}{\rho} \begin{pmatrix}
-\frac{\gamma}{2} G(0) \pa_y^{-1}  &  - m_{\frac32} (\vf) G(0) \\ m_{\frac32} (\vf)\left(-\kappa  \pa_y^2+  g   -  \left(\frac{\gamma}{2}\right)^2 
\pa_y^{-1}G(0) \pa_y^{-1} \right)
   &  -\frac{\gamma}{2}\pa_y^{-1} G(0) 
\end{pmatrix}  \\
& + \frac{1}{\rho} \begin{pmatrix}
a_1 \pa_y + a_8 & 
 0  \\
  a_9 + P_{-1,N}^{(3)} &
 a_1 \pa_y + a_{10}
\end{pmatrix} + \bR_3^\Psi +  \bT_{3,N}  \,,
\end{aligned}
\end{equation}
where
\begin{align}
\label{bV2}
&a_{8} := a_1 q^{-1} q_y  +\rho \,q^{-1} (\omega \cdot \pa_\vf q) + a_4\,, \quad
a_9 :=  a_5 q^2 + g(q^2 - m_{\frac32})  -  \kappa a_7 \,,\\
&  a_{10}:=-a_1q^{-1}q_y  -\rho\, q^{-1}(\omega\cdot\pa_\vf q)  +a_6\,, \label{BVVVV}\\
& P^{(3)}_{-1, N} := 
- \left( \tfrac{\gamma}{2}\right)^2 \left( q P^{(2)}_{-1, N} q + (q^2 - m_{\frac32}) 
 G(0) \pa_y^{-2} + q [ G(0) \pa_y^{-2} , q - 1] \right)  \in \Ops^{-1} \label{P-1N_2} \,,
\end{align} 
and $ \bR_{3}^\Psi , \bT_{3,N}$ are the smoothing remainders
\begin{align}
\label{bR2}
& \bR_{3}^\Psi := 
\frac{1}{\rho} \begin{pmatrix}
- \frac{\gamma}{2}q^{-1} [ \cH T(\tth), q-1]  & 
0   \\
0 &  
- \frac{\gamma}{2}q [ \cH T(\tth), q^{-1}-1] 
\end{pmatrix}  + \cQ^{-1} \bR_{2}^\Psi\cQ 
\in \Ops^{-\infty}\,  ,
 \\
& \bT_{3,N}:= \cQ^{-1} \bT_{2,N}\cQ \,. \label{T3N}
\end{align}

\paragraph{Step 3:}
We now conjugate $\cL_3 $  in \eqref{cL2-final},  where we rename the 
space variable $ y $ by $ x $, by the symplectic transformation (cfr. \eqref{symp-op})
\begin{equation}\label{map_test_M}
	\wt\cM:= \begin{pmatrix}
	\Lambda & 0 \\ 0 & \Lambda^{-1}
	\end{pmatrix}\,,\quad
	\wt\cM^{-1} := \begin{pmatrix}
	\Lambda^{-1} & 0 \\ 0 & \Lambda
	\end{pmatrix}\,,
\end{equation}
where $\Lambda \in \Ops^{- \frac14} $ is the Fourier multiplier
\begin{equation}\label{Lambda}
	\Lambda := \tfrac{1}{\sqrt g}\pi_0 + M(D)\,, \quad \text{with inverse} \quad \Lambda^{-1}:= \sqrt g \pi_0 + M(D)^{-1} \in \Ops^{ \frac14}  \,,
\end{equation}
with $\pi_0 $
defined in \eqref{defpi0} and  $M(D)$ in \eqref{eq:T_sym}. 
We have the identity
\begin{equation}\label{LCL}
 \Lambda \big(-\kappa  \pa_x^2+  g   -  \big(\tfrac{\gamma}{2}\big)^2 
\pa_x^{-1}G(0) \pa_x^{-1} \big) \Lambda  = \Lambda^{-1}G(0) \Lambda^{-1} + \pi_0 = \omega(\kappa, D)+\pi_0 \, ,  
\end{equation}
where $\omega(\kappa, D)$ is defined in \eqref{eq:omega0}.
In \eqref{Lambda} and \eqref{LCL} we mean that the 
symbols of $ M(D), M(D)^{-1} $ and $ \omega (\kappa, D) $ are extended to
$ 0 $ at $ j = 0 $, multiplying them by the cut-off function $ \chi $ defined  in \eqref{cutoff}.
Thus we obtain 
\begin{equation}
\begin{aligned}
\label{cL3}
\cL_4 := \wt\cM^{-1} \cL_3 \wt\cM= & \ \omega\cdot \partial_\vf   + 
\frac{1}{\rho} 
\begin{pmatrix}
-\frac{\gamma}{2} G(0) \pa_x^{-1}  
&  - m_{\frac32} (\vf) \omega(\kappa, D) \\ 
m_{\frac32} (\vf) \omega(\kappa, D)
&  -\frac{\gamma}{2} G(0) \pa_x^{-1}
\end{pmatrix} + \begin{pmatrix}
0 & 0 \\ \pi_0 & 0
\end{pmatrix}  \\
& +\frac{1}{\rho} 
\begin{pmatrix}
a_1 \pa_x + P_0^{(41)} 
& 0  \\
P_{-\frac12}^{(43)} &
a_1 \pa_x + P_0^{(44)}
\end{pmatrix} + \bR_4^\Psi+  \bT_{4,N}   \,,
\end{aligned}
\end{equation}
where
\begin{align}
& P_0^{(41)} := \Lambda^{-1} [a_1\pa_x ,\Lambda] + \Lambda^{-1} a_8 \Lambda \in \Ops^0 , \label{P41}\\ 
& P_{-\frac12,N}^{(43)} :=   \Lambda a_9 \Lambda + \Lambda P^{(3)}_{-1,N} \Lambda  
\in \Ops^{-\frac12}\,, \label{P43}\\
& P_{0}^{(44)} := \Lambda [ a_1\pa_x, \Lambda^{-1} ]  + \Lambda a_{10} \Lambda^{-1} \in \Ops^{0} \,,\label{P44}
\end{align}
and $ \bR_{4}^\Psi , \bT_{4,N}$ are the smoothing remainders
\begin{equation}\label{R4T4}
\begin{aligned}
& \bR_{4}^\Psi := \begin{pmatrix}
0 & 0 \\ (\rho^{-1}m_{\frac32}-1) \pi_0 & 0
\end{pmatrix} + \wt\cM^{-1} \bR_{3}^\Psi \wt\cM 
\in \Ops^{-\infty} \, , \\
& \bT_{4,N} := \wt\cM^{-1} \bT_{3,N} \wt\cM = 
- \frac{\gamma^2}{4 \rho} 
\begin{pmatrix}
0 & 0 \\ \Lambda  q \tR_{2,N} q \Lambda & 0 
\end{pmatrix} \, . 
\end{aligned}
\end{equation}

\paragraph{Step 4:} 
We finally move in complex coordinates, conjugating the operator $\cL_4 $  in \eqref{cL3} via the transformation $\cC$ defined in \eqref{C_transform}. 
We use the transformation formula \eqref{C_transformed}. 
We choose the 
function $ p(\vphi ) $ in \eqref{def:p} in order to obtain a constant coefficient
at the highest order. More precisely we choose the periodic function $ p(\varphi)$ such that 
\begin{equation}\label{deffp}
\frac{m_{\frac32} }{\rho} \stackrel{\eqref{m32}, \eqref{rho_vartheta}} 
= \cP^{-1} \Big( \frac{
\Big( \frac{1}{2 \pi} \int_{\T} \sqrt{1 + \eta_x^2 (\vphi, x)} \di x \Big)^{-\frac32}
}{1 + \omega \cdot \pa_{\varphi}p }    \Big)  = \tm_{\frac32}  
\end{equation}
is a real constant independent of $ \varphi $. Thus, recalling \eqref{paext},  
we define the periodic function
\begin{equation}\label{choice_p}
	p(\vphi) := (\omega \cdot \pa_\vphi)^{-1}_{\rm ext} 
	\Big( \frac{1}{\tm_{\frac32}}
	\Big( \frac{1}{2 \pi} \int_{\T} \sqrt{1 + \eta_x^2 (\vphi, x)} \di x \Big)^{-\frac32}- 1 \Big) 
\end{equation}
and the real constant 
\begin{equation}\label{defm32}
\tm_{\frac32} := \frac{1}{(2 \pi)^\nu} \int_{\T^\nu} 
\Big( \frac{1}{2 \pi} \int_{\T} \sqrt{1 + \eta_x^2 (\vphi, x)} \di x \Big)^{-\frac32} \di \vf \, .
\end{equation}
Note that \eqref{deffp} holds for $ \omega \in \tD\tC (\upsilon, \tau) $.
Moreover,  by Lemmata \ref{compo_moser}, \ref{BVtilde.mom} and \eqref{uI0},  $ p $ satisfies \eqref{stimap} and it is odd in $\vf$. 
Let
\begin{equation*}\label{Pi0}
	\b\Pi_0:= -\im \,\cC^{-1}\begin{pmatrix}
	0 & 0 \\ \pi_0 & 0
	\end{pmatrix}\cC = \frac{1}{2}\begin{pmatrix}
	\pi_0 & \pi_0 \\ - \pi_0 & -\pi_0
	\end{pmatrix}\,.
\end{equation*}

\begin{lem}\label{LEMMONE}
Let $N \in \N $, $ \tq_0 \in \N_0 $. For all $ \omega \in \tD\tC (\upsilon, \tau) $,  
we have that  
\begin{equation}
\label{cL4}
\begin{aligned}
\cL_5 & := \big(\cE \cQ \wt\cM \cC \big)^{-1} \cL_1 \big( \cE \cQ \wt\cM \cC \big) \\ 
& = \omega\cdot \partial_\vf   + 
\im   \tm_{\frac32}  \b\Omega(\kappa, D) + \bA_1 \pa_x + \im \b\Pi_0+
\bR_5^{(0,d)} + \bR_5^{(0,o)}     + \bT_{5,N} \,,
\end{aligned}
\end{equation}
where: 
\begin{enumerate}
\item The operators $\cE^{\pm 1}$ are $\cD^{k_0}$-$(k_0+1)$-tame, the operators $\cE^{\pm 1}-{\rm Id}$, $(\cE^{\pm 1}-{\rm Id})^*$ are $\cD^{k_0}$-$(k_0+2)$-tame and the operators $\cQ^{\pm 1}$, $\cQ^{\pm 1}-{\rm Id}$, $(\cQ^{\pm 1}-{\rm Id})^*$ are $\cD^{k_0}$-tame
with tame constants satisfying, for some $\sigma:=\sigma(\tau,\nu,k_0)>0$ and for all $s_0\leq s\leq S$,
\begin{align}
	& \fM_{\cE^{\pm 1}}(s) \lesssim_{S} 1+ \normk{\fI_0}{s+\sigma}\,, \quad \fM_{\cQ^{\pm 1}}(s) \lesssim_{S} 1+ \normk{\fI_0}{s+\sigma} \,, \label{step5.est10}\\
	& \fM_{\cE^{\pm 1}-{\rm Id}}(s) + \fM_{\left(\cE^{\pm 1}-{\rm Id}\right)^*}(s) \lesssim_{S}\varepsilon^2 ( 1+ \normk{\fI_0}{s+\sigma})\,, \label{step5.est11}\\
	&\fM_{\cQ^{\pm 1}-{\rm Id}}(s) + \fM_{\left(\cQ^{\pm 1}-{\rm Id}\right)^*}(s) \lesssim_{S}\varepsilon^2 ( 1+ \normk{\fI_0}{s+\sigma})\label{step5.est12} \,;
\end{align}
\item \label{i2-77} the constant $\tm_{\frac32} \in \R $  defined in \eqref{defm32}  satisfies
$ | \tm_{\frac32} - 1 |^{k_0, \upsilon} \lesssim \varepsilon^2 $;  
\item  $\b\Omega(\kappa, D)$ is the Fourier multiplier  
(see \eqref{eq:lin00_ww_C}, \eqref{Omega})
\begin{equation}
\label{step5.est1}
\b\Omega(\kappa, D)=
\begin{pmatrix}
\Omega(\kappa, D) &  0\\
0  & - \bar{\Omega(\kappa, D)} 
\end{pmatrix}, \quad 
\Omega(\kappa, D) = \omega (\kappa, D) + \im \,\frac{\gamma}{2}\partial_x^{-1} G(0) \, ; 
\end{equation}
 \item the matrix of functions $\bA_1 $ is 
\begin{equation}
\label{step5.est2}
\bA_1 := \begin{pmatrix}
a_1^{(d)} & 0\\
 0& a_1^{(d)} 
\end{pmatrix} \, , 
\end{equation}
for a real function $a_1^{(d)}(\vf, x) $ which is a quasi-periodic traveling wave, $  \even(\vf,x) $, 
satisfying, for some $\sigma:= \sigma(k_0,\tau, \nu)>0$ and for all $s\geq s_0$, 
\begin{equation}
		\label{step5.est3}
		\begin{aligned}
		\normk{a_1^{(d)}}{s}   \lesssim_{s} \varepsilon ( 1 + \normk{\fI_0}{s+\sigma} ) \,;
		\end{aligned}
	\end{equation}
\item $\bR_5^{(0,d)}$ and $\bR_5^{(0,o)}$ 
are pseudodifferential operators in $\Ops^{0}$ of the form
	\begin{align}\footnotesize
\label{step5.est4}
\bR_5^{(0,d)} :=
\begin{pmatrix}
r_5^{(d)}(\vf, x, D)  & 0 \\
0  &  \bar{r_5^{(d)}(\vf,x, D)}
\end{pmatrix} , 
\quad
\bR_5^{(0,o)} :=
\begin{pmatrix}
0 & r_5^{(o)}(\vf,x, D)  \\
  \bar{r_5^{(d)}(\vf,x, D)} & 0 
\end{pmatrix} \, , 
\end{align} 
reversibility and momentum preserving,  satisfying, 
for  some $\sigma_N := \sigma(\tau, \nu, N)>0$, for all $s\geq s_0 $, $\alpha\in\N_0$,
	\begin{equation}
		\label{step5.est5}
	\normk{ \bR_5^{(0,d)}}{0,s,\alpha} +	\normk{ \bR_5^{(0,o)} }{0,s,\alpha} \lesssim_{ s,N, \alpha} \varepsilon ( 1+ \normk{\fI_0}{s+\sigma_N +2\alpha} )\,;
	\end{equation}
\item
For any $ \tq \in \N^\nu_0 $ with $ |\tq| \leq \tq_0$, 
 $n_1, n_2 \in \N_0 $  with $ n_1 + n_2  \leq N -(k_0 + \tq_0) + \frac52  $,  the  
 operator $\langle D \rangle^{n_1}\partial_{\vphi}^\tq \bT_{5, N}(\vphi) \langle D \rangle^{n_2}$ is 
$\cD^{k_0} $-tame with a tame constant satisfying, for some $\sigma_N(\tq_0) := \sigma_N(\tq_0,k_0,\tau,\nu)>0$ and for any $s_0 \leq s \leq S $,   
\begin{equation}\label{step5.est6}
{\mathfrak M}_{\langle D \rangle^{n_1}\partial_{\vphi}^\tq \bT_{5, N}(\vphi) \langle D \rangle^{n_2}}(s) \lesssim_{S, N, \tq_0} 
\varepsilon \big( 1+ \normk{\fI_0}{s+\sigma_N(\tq_0)} \big)\,;
\end{equation}
\item Moreover, for any $s_1$ as in \eqref{s1s0}, $\alpha\in\N_0$, $\tq\in\N_0^\nu$, with $\abs\tq\leq \tq_0$, and $n_1,n_2 \in\N_0$, with $n_1+n_2\leq N- \tq_0 + \frac32 $,
	\begin{align}
		& \| \Delta_{12} (\cA) h  \|_{s_1} \lesssim_{s_1} \varepsilon \norm{i_1-i_2}_{s_1+\sigma}\norm{h}_{s_1+\sigma} \,, \quad \cA \in \{ \cE^{\pm 1} , (\cE^{\pm 1})^*, \cQ^{\pm 1} = (\cQ^{\pm 1})^*  \}\,, \label{step5.est13} \\ 
		&  \|\Delta_{12}a_1^{(d)} \|_{s_1}  \lesssim_{s_1} 
		\varepsilon \norm{i_1-i_2}_{s_1+\sigma} \,,  \ 
		| \Delta_{12} \tm_{\frac32} |  \lesssim 
		\varepsilon^2 \norm{i_1-i_2}_{s_1+\sigma}  \, , 
		\label{step5.est7}\\
		& \| \Delta_{12} \bR_5^{(d)} \|_{0,s_1,\alpha} + \|\Delta_{12} \bR_5^{(o)}\|_{0,s_1,\alpha} \lesssim_{s_1,N,\alpha} \varepsilon\norm{ i_1-i_2 }_{s_1+\sigma_N+2\alpha}\,,\label{step5.est8}\\
		& \norm{\braket{D}^{n_1}\pa_\vf^\tq \bT_{5,N}(\vf)\braket{D}^{n_2} }_{\cL(H^{s_1})} \lesssim_{s_1, N, \tq_0} \varepsilon \norm{ i_1-i_2}_{s_1+\sigma_N(\tq_0)} \,.\label{step5.est9} 
	\end{align}
\end{enumerate}
The real operator  $\cL_5$ is Hamiltonian, reversible and momentum  preserving. 
\end{lem}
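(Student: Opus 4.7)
The proof amounts to a careful bookkeeping of the four conjugations already assembled in Steps 1--4 of this section, applied in succession to $\cL_1$, and then verification of the claimed tame estimates and symmetry properties. The plan is: first define $\cL_5:=(\cE\cQ\wt\cM\cC)^{-1}\cL_1(\cE\cQ\wt\cM\cC)$, then identify the highest-order constant-coefficient term using the specific choices of $\beta,q,p$ made in \eqref{betasolved}, \eqref{q}, and \eqref{choice_p}, and finally prove the estimates (1)--(7) by combining the tame estimates and commutator calculus developed in Section~\ref{subsec:pseudo_calc}.

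First I would verify the structural properties. Each of $\cE,\cQ,\wt\cM,\cC$ is symplectic (by construction---for $\cE$ because of the Jacobian factor $\sqrt{1+\beta_x}$ in \eqref{defcE}, for $\cQ$ and $\wt\cM$ because they are block-diagonal of the form $\diag(A,(A^*)^{-1})$, and for $\cC$ trivially), hence $\cL_5$ is Hamiltonian. To check reversibility and momentum preservation it suffices, by Lemma~\ref{BVtilde.mom}, to verify that the coefficient functions $\beta$, $q$, $p$ are quasi-periodic traveling waves with the right parity in $\vf$. For $\beta$ this follows from \eqref{betasolved} because $c$ is even in $(\vf,x)$, so $c^{-1/3}$ is even, $m$ is even in $\vf$ and independent of $x$, and $\pa_x^{-1}$ of an even function of $x$ is odd; the momentum property follows from Lemma~\ref{A.mom.cons} applied to $c$. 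The function $q$ inherits these properties from $a_3=\cB^{-1}(c(1+\beta_x))$. For $p$ I would use \eqref{choice_p}, noting that $\int_\T\sqrt{1+\eta_x^2}\,\di x$ is a quasi-periodic traveling wave in $\vf$ alone (so $p$ is odd in $\vf$ and $p(\vf-\vec\jmath\varsigma)=p(\vf)$). The reversibility and momentum preservation of the resulting maps (cfr. Remark~\ref{P-rev-mom}) then follow from Lemmata \ref{lem:rev-pse}--\ref{lem:mom_pseudo} and \ref{lem:dMP}.

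Second, the identification of the main term: after Step~3 the operator $\cL_4$ in \eqref{cL3} has its highest order given by the off-diagonal multiplier $\frac{m_{\frac32}(\vf)}{\rho(\vf)}\,\omega(\kappa,D)$, and by the choice of $p$ in \eqref{choice_p} this ratio equals the constant $\tm_{\frac32}$ defined in \eqref{defm32}; the estimate $|\tm_{\frac32}-1|^{k_0,\upsilon}\lesssim\varepsilon^2$ follows from \eqref{uI0} and Moser's Lemma~\ref{compo_moser}. Passing to complex coordinates by \eqref{C_transformed} with $A=D=-\tfrac{\gamma}{2\rho}G(0)\pa_x^{-1}$, $C=-B=\tfrac{m_{\frac32}}{\rho}\omega(\kappa,D)$, the diagonal block $\cR_1$ becomes $-\tfrac{\gamma}{2\rho}G(0)\pa_x^{-1}+i\tm_{\frac32}\omega(\kappa,D)$ and the off-diagonal $\cR_2$ vanishes. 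Rewriting
\[
-\tfrac{\gamma}{2\rho}G(0)\pa_x^{-1}=-\tm_{\frac32}\tfrac{\gamma}{2}\pa_x^{-1}G(0)+\tfrac{\gamma}{2}\big(\tm_{\frac32}-\rho^{-1}\big)G(0)\pa_x^{-1},
\]
the first summand combines with $i\tm_{\frac32}\omega(\kappa,D)$ into $i\tm_{\frac32}\Omega(\kappa,D)$ by \eqref{step5.est1}, while the second summand is a zero-order pseudodifferential operator of size $\varepsilon^2$ and is absorbed into $\bR_5^{(0,d)}$. The terms $\tfrac{1}{\rho}a_1\pa_x$ on the diagonal contribute $\bA_1\pa_x$ with $a_1^{(d)}=a_1/\rho$ (diagonal in the complex variables, since $B=C=0$ at this order); the parity of $a_1^{(d)}$ follows from that of $a_1,\wt V,\beta,\rho$. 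The $\pi_0$ projector from \eqref{cL3} transforms into $i\b\Pi_0$. All remaining order-zero and order-$(-1/2)$ pseudodifferential terms $P_0^{(41)},P_0^{(44)},P_{-1/2,N}^{(43)}$ together with $\bR_4^\Psi$ contribute to $\bR_5^{(0,d)}+\bR_5^{(0,o)}$, and $\bT_{5,N}:=\cC^{-1}\bT_{4,N}\cC$.

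Finally, for the quantitative estimates I would proceed by chaining tame bounds. The bounds \eqref{step5.est10}--\eqref{step5.est12} for $\cE^{\pm 1},\cQ^{\pm 1}$ follow from Lemma~\ref{product+diffeo}, \eqref{norm.mult}, the estimates on $\beta,q$ (derived from \eqref{betasolved}, \eqref{q} via Moser composition and \eqref{uI0}), and the ansatz \eqref{ansatz_I0_s0}; the bounds on $\cE^{\pm1}-\mathrm{Id}$ gain the factor $\varepsilon^2$ from $\|\beta\|_s\lesssim\varepsilon^2(1+\|\fI_0\|_{s+\sigma})$. The estimate \eqref{step5.est3} on $a_1^{(d)}$ uses \eqref{ta1} and Lemma~\ref{lem:good_unknwon}. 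The pseudodifferential estimates \eqref{step5.est5} for $\bR_5^{(0,d)},\bR_5^{(0,o)}$ aggregate the bounds on $P_0^{(41)}, P_0^{(44)},P_{-1/2,N}^{(43)}$ in \eqref{P41}--\eqref{P44} and on $\bR_4^\Psi$, using Lemma~\ref{pseudo_compo}, the commutator estimate \eqref{Ht.comm} for $\cH T(\tth)$, and the Dirichlet-Neumann decomposition \eqref{DN.dec}. The most delicate piece is \eqref{step5.est6}: here the smoothing remainder $\bT_{5,N}$ comes from $\tR_{2,N}$ in \eqref{cR3}, itself coming from the Egorov expansion \eqref{pax-1_ego}, and the tame bound on $\langle D\rangle^{n_1}\pa_\vf^\tq \bT_{5,N}\langle D\rangle^{n_2}$ with $n_1+n_2\leq N-(k_0+\tq_0)+\tfrac52$ is obtained by invoking Proposition~\ref{egorov} (with the regularity loss $\sigma_N(\tq_0)$ arising from \eqref{stima resto Egorov teo astratto}) and then sandwiching with $\Lambda^{\pm 1}\in\Ops^{\mp 1/4}$ from \eqref{map_test_M}; the index count $N-(k_0+\tq_0)+\tfrac52$ accounts for the $\tfrac12$-smoothing lost on each side through $\Lambda^{-1}\in\Ops^{1/4}$ together with the $G(0)\pa_y^{-1}$ appearing in \eqref{cR3}. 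The Lipschitz-in-$i$ estimates \eqref{step5.est13}--\eqref{step5.est9} are obtained by the same arguments applied to the differences, using \eqref{lem:ga4}--\eqref{lem:ga6}. The main obstacle in this chain is precisely the careful tracking of derivative losses in the Egorov step and the composition with $\Lambda^{\pm 1}$, ensuring that the exponent $N-(k_0+\tq_0)+\tfrac52$ is indeed what the calculus produces; the rest is essentially a bookkeeping exercise over the four conjugations already carried out explicitly in Steps 1--4.
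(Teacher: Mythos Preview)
Your proposal is essentially correct and follows the same route as the paper: trace through the four conjugations of Steps~1--4, identify the constant-coefficient principal part via the choices \eqref{betasolved}, \eqref{q}, \eqref{choice_p}, and chain the tame and pseudodifferential estimates.

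There is one point where your splitting differs from the paper's. You put the smoothing remainder $\bR_4^\Psi$ (which carries the Hilbert-transform commutators $\wt\cR_1,\cR_2$ and the conjugated Dirichlet--Neumann remainder $\cE^{-1}\bR_1\cE$) into the pseudodifferential block $\bR_5^{(0,d)}+\bR_5^{(0,o)}$, and set $\bT_{5,N}:=\cC^{-1}\bT_{4,N}\cC$. The paper instead defines $\bT_{5,N}:=\cC^{-1}(\bR_4^\Psi+\bT_{4,N})\cC$ and keeps only $P_0^{(41)},P_0^{(44)},P_{-\frac12,N}^{(43)}$ and the order-zero correction $\gamma(\rho\,\tm_{\frac32}-1)G(0)\pa_x^{-1}$ in $r_5^{(d)},r_5^{(o)}$. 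Both splittings are legitimate since $\bR_4^\Psi\in\Ops^{-\infty}$, but they impose different bookkeeping: your choice requires controlling the $\|\cdot\|_{0,s,\alpha}^{k_0,\upsilon}$ norm of $\cE^{-1}\bR_1\cE$ for all $\alpha$, hence an Egorov-type argument applied to $\cR_G(\eta)$, whereas the paper's choice only needs the tame estimates for $\langle D\rangle^{n_1}\pa_\vf^\tq\bR_4^\Psi\langle D\rangle^{n_2}$, which follow more directly from \eqref{ht.t}, \eqref{Ht.comm}, and Lemma~\ref{DN_pseudo_est}. Your index-count explanation for \eqref{step5.est6} is consistent with your splitting (only the Egorov remainder $\tR_{2,N}$ contributes to $\bT_{5,N}$); under the paper's splitting one must also check that $\bR_4^\Psi$ satisfies the same tame bound, which it does because all its constituents lie in $\Ops^{-\infty}$ with uniform norms.
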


\begin{proof}
By the expression of $ \cL_4 $ in \eqref{cL3}, 
using \eqref{C_transformed}, and \eqref{deffp}, 
we obtain that $ {\cal L}_5 $ has the form  \eqref{cL4}. 
	The functions $\beta$ and $q$, defined respectively in \eqref{betasolved} and \eqref{q}
	with $ a_3 $ defined in  \eqref{ta2}, satisfy, by Lemmata \ref{product+diffeo}, \ref{compo_moser} and
	\eqref{lem:ga2}, for some $\sigma:=\sigma(k_0,\tau,\nu) > 0 $ and for all $s\geq s_0$,
	\begin{equation}\label{betaq.est}
		\normk{\beta}{s} \lesssim_{s} \varepsilon^2 (1+\normk{\fI_0}{s+\sigma})\,, \quad \normk{q^{\pm 1}-1}{s} \lesssim_{s} \varepsilon^2 ( 1+\normk{\fI_0}{s+\sigma} ) \,.
	\end{equation}
	The estimates \eqref{step5.est10}-\eqref{step5.est12} 
	follow by 
	Lemmata \ref{tame_compo}, \ref{tame_pesudodiff}, \ref{product+diffeo},
	 \eqref{betaq.est} and writing
	\begin{equation}\label{B_repr}
		\begin{aligned}
			& (\cB- {\rm Id})h = \beta\cB_\tau[h_x]\,, \quad \cB_\tau[h](\vf,x) := \int_0^1 h_x(\vf,x+\tau\beta(\vf,x))\wrt \tau \,, 
		\end{aligned}
	\end{equation}
	$ \cB^*h(\vf,y) = ( 1+\breve\beta(\vf,y )) h(\vf,y+\breve\beta(\vf,y)) $, 
	and similar expressions  for $\cB^{-1}-{\rm Id}$, $(\cB^{-1})^*$. 
	The estimate for $\tm_{\frac32}$ follows by  \eqref{defm32}, Lemma \ref{compo_moser} and  \eqref{uI0}.
	The real function $a_1^{(d)}$ in \eqref{step5.est2} is 
$$
		a_1^{(d)}(\vf,x) := \rho(\vf)^{-1} a_1(\vf,x)\,,
$$
	where $\rho$ and $a_1$ are defined respectively in \eqref{rho_vartheta} and \eqref{ta1}. 
	Recalling Lemmata \ref{BVtilde.mom} and \ref{timerep}, the function $ a_1^{(d)} $  is a quasi-periodic traveling wave, even in $ (\vf,x) $. 
Moreover, \eqref{step5.est3}  follows by 
Lemma \ref{compo_moser} and \eqref{uI0}, \eqref{stimap}, \eqref{lem:ga2}, \eqref{betaq.est}. 
	By direct computations, we have
	\begin{equation}\label{r5dr50}
		\begin{aligned}
			& r_5^{(d)}(\vf,x,D):= \frac{1}{2\rho}\left( P_0^{(41)}  + P_0^{(44)} + \im P_{-\frac12,N}^{(43)}  + \gamma( \rho \, \tm_\frac32 -1) G(0)\pa_x^{-1} \right) \,, \\
			&r_5^{(o)}(\vf,x,D):= \frac{1}{2\rho}\left( P_0^{(41)}  - P_0^{(44)} + \im P_{-\frac12,N}^{(43)} \right)\,,
		\end{aligned}
	\end{equation}
	where $P_0^{(41)}$, $P_{-\frac12,N}^{(43)}$, $P_0^{(44)}$ are defined in \eqref{P41}, \eqref{P43}, \eqref{P44} and $ \rho  \,\tm_\frac32 = m_\frac32 (\vf) $ 
	with 	$ m_\frac32 (\vf)  $ defined in  \eqref{m32} (cfr. \eqref{deffp}). 
	 Therefore, the estimate \eqref{step5.est5} follows by  \eqref{bV2}, \eqref{a7}, 
	   \eqref{ta1}, \eqref{ta2}, \eqref{ta5}, \eqref{ta6}, \eqref{P-1N_2}, \eqref{tildecP-1}, \eqref{Lambda}, \eqref{eq:T_sym}, applying Lemmata \ref{pseudo_compo}, \ref{pseudo_commu},  \ref{product+diffeo}, \ref{compo_moser}, Proposition \ref{egorov} and estimates \eqref{uI0}, \eqref{stimap}, \eqref{lem:ga2}, \eqref{betaq.est}. 
	The estimate \eqref{step5.est6}, where 
	$$
	\bT_{5,N}:= \cC^{-1} (\bR_{4}^{\Psi}+\bT_{4,N}) \cC \, , 
	$$
	follows by \eqref{R4T4}, \eqref{T3N}, \eqref{bR2}, \eqref{bR1}, \eqref{cR3}, \eqref{pax-1_ego}, \eqref{cR2}, \eqref{wtR1}, \eqref{cR1}, 
Lemmata \ref{tame_compo}, \ref{tame_pesudodiff},
estimates \eqref{ht.t}, \eqref{Ht.comm}, 
 Proposition \ref{egorov} and  \eqref{step5.est10}, \eqref{betaq.est}, 
Lemma \ref{DN_pseudo_est} and Lemmata 2.34, 2.32 in \cite{BM}.
	The estimates \eqref{step5.est13}, \eqref{step5.est7}, \eqref{step5.est8}, \eqref{step5.est9} are proved in the same fashion.	
	Since the transformations $\cE$, $\cQ$, $\wt\cM$ are symplectic, the operator 
	$ {\cal L}_4$ is Hamiltonian.
	Hence the operator $\cL_{5}$ obtained conjugating with $ \cC $ is Hamiltonian 
	according to \eqref{RHamC}.
	By Lemma \ref{BVtilde.mom}, the functions $\beta(\vf,x)$ and $q(\vf,x)$, defined in \eqref{betasolved}, \eqref{q}
	(with $ a_3 $ defined in \eqref{ta2}), are both quasi-periodic traveling waves,  respectively $\odd(\vf,x)$ and $\even(\vf,x)$. Therefore, the transformations $\cE$ and $\cQ$ are momentum and reversibility preserving. Moreover, also $\wt\cM$ and $\cC$ are momentum and reversibility preserving (writing the involution in complex variables as in 
	\eqref{inv-complex}). 
	Hence,  since 
	$ \cL_1 $ is momentum preserving and reversible (Lemma \ref{lem:good_unknwon}), 
	the operator $\cL_{5}$ is momentum preserving and reversible as well, in particular 
	the operators 	$	\bR_5^{(0,d)} $ and $ \bR_5^{(0,o)} $
	 in \eqref{step5.est4}
	 (e.g. check the definition in \eqref{r5dr50}, see also Remark \ref{rem:REV}).  
\end{proof}

\subsection{Symmetrization up to smoothing remainders}\label{sec:block_dec}
The goal of this section is to transform the operator $\cL_5$ in \eqref{cL4} into the operator $\cL_{6}$ in \eqref{cL6M} which is block diagonal up to a regularizing remainder. 
From this step we do not preserve any further the Hamiltonian structure, 
but only the reversible and momentum preserving one
(it is now sufficient for proving Theorem \ref{NMT}).

\begin{lem}\label{block_dec_lemma}
Fix $M, N \in \N $,  $ \tq_0 \in \N_0$.	There exist  
real, reversibility  and momentum  preserving operator matrices 
$\{ \bX_m \}_{m=1}^M$   of the form
	\begin{equation}\label{geno}
	\bX_m:= \begin{pmatrix}
	0 & \chi_m(\vf,x,D) \\ \bar{\chi_m(\vf,x,D)} & 0   
	\end{pmatrix},
	\qquad
	\chi_m(\vf,x,\xi) \in S^{- \frac12 -m} \, ,
	\end{equation}
such that, conjugating the operator $ \cL_5 $ in \eqref{cL4}  via the map 
\begin{equation}\label{bPHIM}
\b\Phi_M:= e^{\bX_1}\circ \cdots \circ e^{\bX_M} \, ,
\end{equation} 
we obtain the real, reversible and  momentum preserving operator 
	\begin{equation}\label{cL6M}
		\begin{aligned}
			\cL_6 := \cL_{6}^{(M)} & := \b\Phi_M^{- 1} \, \cL_5 \, \b\Phi_M \\
			& =  \omega\cdot\partial_\vf  + \im \,\tm_{\frac32} \b\Omega(\kappa, D) + \bA_1 \pa_x +\im\b\Pi_0  + \bR_{6}^{(0, d)} + \bR_{6}^{(- M, o)} + \bT_{6,N}\,,
		\end{aligned}
	\end{equation}
	with a block-diagonal operator 
	\begin{align*}
	\bR_{6}^{(0,d)} := \bR_{6,M}^{(0,d)} & := \begin{pmatrix}
	r_{6}^{(d)}(\vf,x,D) & 0 \\ 
	0 &\bar{r_{6}^{(d)}(\vf,x,D)}
	\end{pmatrix} \in \Ops^0  \,,
	\end{align*}
and a smoothing off diagonal remainder 	
	\begin{align}
	\bR_{6}^{(- M, o)} := \bR_{6,M}^{(- M, o)}  & := \begin{pmatrix}
	0 & r_{6}^{(o)}(\vf,x,D) \\ 
	\bar{r_{6}^{(o)}(\vf,x,D)} & 0 
	\end{pmatrix} \in \Ops^{- M} \label{R6o}
	\end{align}
both	reversibility and momentum preserving,  
	which satisfy for all  $\alpha\in\N_0 $, 
for  some $ \sigma_N := \sigma_N(k_0,\tau, \nu, N)>0 $,   
$\aleph_{M}(\alpha) > 0 $, for all 
$ s\geq s_0 $, 
	\begin{align}
		&\normk{\bR_{6}^{(0,d)}}{0,s,\alpha} + 
		\normk{\bR_{6}^{(- M, o)}}{- M,s,\alpha} \lesssim_{ s, M, N, \alpha} \varepsilon \big( 1+\normk{\fI_0}{s+\sigma_N+\aleph_M(\alpha)} \big) \, \label{bR6esti1} .
\end{align}
For any $ \tq \in \N^\nu_0 $ with $ |\tq| \leq \tq_0$, 
$n_1, n_2 \in \N_0 $  with $ n_1 + n_2 \leq N -(k_0+\tq_0) + \frac52  $,  the  
operator $\langle D \rangle^{n_1}\partial_{\vphi}^\tq \bT_{6, N}(\vphi) \langle D \rangle^{n_2}$ is 
$\cD^{k_0} $-tame with a tame constant satisfying, for some  $\sigma_N(\tq_0) := 
\sigma_N(k_0,\tau, \nu, \tq_0) $, for 
any $s_0 \leq s \leq S $, 
\begin{equation}\label{block.est2}
{\mathfrak M}_{\langle D \rangle^{n_1}\partial_{\vphi}^\tq \bT_{6, N}(\vphi) \langle D \rangle^{n_2}}(s) \lesssim_{S, M, N, \tq_0} 
\varepsilon ( 1+ \normk{\fI_0}{s+\sigma_N(\tq_0) + \aleph_M(0)} )\,.
\end{equation}
The conjugation map $ \b\Phi_M $ in \eqref{bPHIM} satisfies, 
for all $s\geq s_0$,
	\begin{equation}\label{block.est6}
		\normk{ \b\Phi_M^{\pm 1}-{\rm Id} }{0,s,0} + \normk{\left(\b\Phi_M^{\pm 1}-{\rm Id}\right)^*  }{0,s,0} \lesssim_{s, M, N} \varepsilon ( 1+\normk{\fI_0}{s+\sigma_N+ \aleph_M(0)} )\,.
	\end{equation}
Furthermore, for any $s_1$ as in \eqref{s1s0}, $\alpha\in\N_0$, $\tq\in\N_0^\nu$, with $\abs\tq \leq \tq_0$, and $n_1,n_2\in\N_0$, with $n_1+n_2\leq N- \tq_0 + \frac32$, we have
\begin{align}
	&\|\Delta_{12} \bR_{6}^{(0,d)} \|_{0,s_1,\alpha} +\|\Delta_{12} \bR_{6}^{(- M,o)} \|_{- s_1, M, \alpha} \lesssim_{ s_1, M, N, \alpha} \varepsilon \norm{ i_1-i_2 }_{s_1+\sigma_N+\aleph_{M}(\alpha)}   \,, \label{block.est3} \\
	& \| \braket{D}^{n_1} \pa_\vf^\tq \Delta_{12} \bT_{6,N} \braket{D}^{n_2}\|_{\cL(H^{s_1})} \lesssim_{s_1,
M, N, \tq_0} \varepsilon \norm{i_1-i_2}_{s_1+ \sigma_N(\tq_0)+ \aleph_{M}(0) }\,,\label{block.est4}\\
	& \|\Delta_{12} \b\Phi_M^{\pm 1} \|_{0,s_1,0} +\|\Delta_{12} (\b\Phi_M^{\pm 1})^* \|_{0,s_1,0} \lesssim_{s_1, M, N} \varepsilon \norm{ i_1-i_2 }_{s_1+\sigma_N+\aleph_{M}(0)} \,. \label{block.est5}
\end{align}
\end{lem}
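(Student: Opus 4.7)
The plan is to construct the conjugating maps $\bX_1, \dots, \bX_M$ inductively, eliminating the off-diagonal smoothing order by order. Set $\cL_6^{(0)} := \cL_5$ and assume by induction that at step $m-1$ we have an operator of the form
\begin{equation*}
\cL_6^{(m-1)} = \omega\cdot\partial_\vf + \im\tm_{\frac32}\b\Omega(\kappa,D) + \bA_1\pa_x + \im\b\Pi_0 + \bR_{6,m-1}^{(0,d)} + \bR_{6,m-1}^{(1-m,o)} + \bT_{6,N}^{(m-1)},
\end{equation*}
where $\bR_{6,m-1}^{(1-m,o)}$ is off-diagonal in $\Ops^{1-m}$ (with $m=1$ starting from $\bR_5^{(0,o)}\in\Ops^0$). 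The goal is to find $\bX_m$ of the form \eqref{geno} with $\chi_m\in S^{-\frac12-m}$ so that $\cL_6^{(m)} := e^{-\bX_m}\cL_6^{(m-1)} e^{\bX_m}$ has off-diagonal part in $\Ops^{-m}$. After $M$ iterations, composing the exponentials as in \eqref{bPHIM} gives the claimed $\b\Phi_M$.

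First I would use the Lie expansion \eqref{lie_abstract}, combined with \eqref{lie_omega_devf} for the $\omega\cdot\partial_\vf$ part, to write
\begin{equation*}
\cL_6^{(m)} = \cL_6^{(m-1)} - [\bX_m,\,\omega\cdot\partial_\vf + \im\tm_{\frac32}\b\Omega(\kappa,D)] + (\text{l.o.t.}).
\end{equation*}
Since $\bX_m$ is off-diagonal while $\b\Omega(\kappa,D)=\diag(\Omega(\kappa,D),-\bar\Omega(\kappa,D))$ is block-diagonal, a direct calculation gives that $-\im\tm_{\frac32}[\bX_m,\b\Omega(\kappa,D)]$ has principal symbol
\begin{equation*}
-\im\tm_{\frac32}\bigl(\Omega(\kappa,\xi)+\bar\Omega(\kappa,\xi)\bigr)\chi_m(\vf,x,\xi) = -2\im\tm_{\frac32}\,\omega(\kappa,\xi)\chi_m(\vf,x,\xi),
\end{equation*}
of order $1-m$. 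The homological equation for the principal symbol is therefore
\begin{equation*}
-2\im\tm_{\frac32}\,\omega(\kappa,\xi)\chi_m(\vf,x,\xi) + r_{m-1}^{(o)}(\vf,x,\xi) \equiv 0 \mod S^{-m},
\end{equation*}
where $r_{m-1}^{(o)}$ is the principal symbol of $\bR_{6,m-1}^{(1-m,o)}$. Since $\omega(\kappa,\xi)\sim\sqrt{\kappa}|\xi|^{\frac32}$ by \eqref{eq:omega0} and item \ref{i2-77} gives $|\tm_{\frac32}-1|^{k_0,\upsilon}\lesssim\varepsilon^2$, I define
\begin{equation*}
\chi_m(\vf,x,\xi) := -\im\,\frac{\chi(\xi)\,r_{m-1}^{(o)}(\vf,x,\xi)}{2\tm_{\frac32}\,\omega(\kappa,\xi)} \in S^{-\frac12-m},
\end{equation*}
with the cutoff $\chi$ from \eqref{cutoff} handling the pole at $\xi=0$. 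The full commutator $[\bX_m,\omega\cdot\partial_\vf+\im\tm_{\frac32}\b\Omega+\bA_1\pa_x+\im\b\Pi_0+\bR_{6,m-1}^{(0,d)}]$ then produces, on the diagonal, corrections of order $-m+\frac12$ (from lower-order terms of the composition \eqref{compo_symb}) that are absorbed into the new diagonal remainder $\bR_{6,m}^{(0,d)}$, and an off-diagonal remainder in $\Ops^{-m}$ which becomes $\bR_{6,m}^{(1-(m+1),o)}$. Higher iterated commutators in the Lie series \eqref{lie_abstract}, as well as conjugation of $\bT_{6,N}^{(m-1)}$ by $e^{\pm\bX_m}$, produce contributions absorbed in $\bR_{6,m}^{(0,d)}$, $\bR_{6,m}^{(-m,o)}$, and $\bT_{6,N}^{(m)}$, using that $\chi_m$ is of strictly negative order.

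To preserve structure, I would verify that $\bX_m$ is reversibility and momentum preserving: the symbol $r_{m-1}^{(o)}$ inherits these properties from $\bR_{6,m-1}^{(1-m,o)}$ via Lemmata \ref{lem:rev-pse} and \ref{lem:mom_pseudo}, and division by the even, momentum-neutral multiplier $\omega(\kappa,\xi)$ keeps \eqref{pseudo-rev}--\eqref{pseudo-MM}; hence the conjugated operator $\cL_6^{(m)}$ is reversible and momentum preserving as well. The tame estimates \eqref{bR6esti1}--\eqref{block.est6} follow at each step by iterated application of Lemmata \ref{pseudo_compo}--\ref{Neumann pseudo diff}, Lemma \ref{tame_pesudodiff}, and \eqref{step5.est5}--\eqref{step5.est6} for $\cL_5$, keeping track of the loss of derivatives encoded by the sequence $\aleph_M(\alpha)$, which grows by a fixed amount (depending on $\alpha$ and $M$) at each conjugation because of the composition estimate \eqref{eq:est_tame_comp}. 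The Lipschitz-like bounds \eqref{block.est3}--\eqref{block.est5} follow analogously from \eqref{step5.est7}--\eqref{step5.est9}.

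The main bookkeeping obstacle is not conceptual but technical: ensuring that the loss of regularity $\aleph_M(\alpha)$ accumulated over $M$ steps depends only on $M$ and $\alpha$ (not on $s$), and that the smoothing tail $\bT_{6,N}$ still satisfies the tame estimate \eqref{block.est2} for all $n_1,n_2$ with $n_1+n_2\leq N-(k_0+\tq_0)+\frac52$. The latter requires that each conjugation $e^{\pm\bX_m}$ does not destroy the decay produced by the $\langle D\rangle^{n_1},\langle D\rangle^{n_2}$ factors, which is handled by commuting them through $e^{\pm\bX_m}-\Id\in\Ops^{-\frac12-m}$ and using the composition Lemma \ref{pseudo_compo} to bound the resulting remainders. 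After $M$ steps, setting $\bR_6^{(0,d)}:=\bR_{6,M}^{(0,d)}$, $\bR_6^{(-M,o)}:=\bR_{6,M}^{(-M,o)}$ and $\bT_{6,N}:=\bT_{6,N}^{(M)}$ yields \eqref{cL6M} with all the required properties.
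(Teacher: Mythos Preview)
Your inductive scheme is exactly the paper's: at each step define $\chi_m$ by dividing the off-diagonal symbol by $2\im\,\tm_{\frac32}\,\omega(\kappa,\xi)$ (with the cutoff $\chi(\xi)$), conjugate by $e^{\bX_m}$, and iterate, collecting the lower-order pieces produced by the Lie expansion into the new diagonal and off-diagonal remainders. Two small corrections: the upper-right block of $[\bX_m,\b\Omega]$ equals $-(\Omega\chi_m+\chi_m\bar\Omega)$, so $-\im\tm_{\frac32}[\bX_m,\b\Omega]$ contributes $+2\im\tm_{\frac32}\omega\chi_m$ at leading order and the correct choice is $\chi_m=-\bigl(2\im\,\tm_{\frac32}\,\omega(\kappa,\xi)\bigr)^{-1}r_{m-1}^{(o)}\chi(\xi)$ (a sign flip from your formula); and the single commutator of the off-diagonal $\bX_m$ with the block-diagonal pieces $\bA_1\pa_x$, $\bR_{6,m-1}^{(0,d)}$ is again off-diagonal, so the diagonal corrections actually come from $[\bX_m,\bR_{6,m-1}^{(1-m,o)}]$ and from the quadratic term in the Lie expansion.
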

\begin{proof}
The proof is inductive on the index $ M $. 
The operator  $\cL_6^{(0)}:= \cL_5 $ satisfy \eqref{bR6esti1}-\eqref{block.est2}  
with $	\aleph_0(\alpha) := 2\alpha $, by Lemma \ref{LEMMONE}.
Suppose we have done already $M$ steps obtaining an operator
	$ \cL_6^{(M)} $ as in \eqref{cL6M} with a remainder 
	$ {\bf \Phi}_{M}^{-1} \bT_{5,N} {\bf \Phi}_{M}$, instead of 
	$ \bT_{6,N} $.  We now 
	show how to perform the $(M+1)$-th step.
	Define the symbol 
	\begin{equation}\label{chiM+1}
	\chi_{M+1}(\vf,x,\xi):= -\big(2\im\,  \tm_\frac32 \omega (\kappa, \xi) \big)^{-1}r_{6,M}^{(o)}(\vf,x,\xi) \chi (\xi) 
	 \in S^{-\frac32 -M } \, , 
	\end{equation}
	where $ \chi $ is the cut-off function defined in \eqref{cutoff} and 
	$\omega (\kappa, \xi) $ is the symbol 
	(cfr. \eqref{eq:omega0}) 
	$$
	\omega (\kappa, \xi):= 
	\sqrt{  G(0; \xi) \Big( \kappa \xi^2  
	+ g +  
\frac{\gamma^2}{4} \frac{G(0; \xi)}{\xi^2}    \Big)} \in S^{\frac32} \, , 
\ \ G(0; \xi) := 
\begin{cases}
\chi (\xi) |\xi| \tanh (\tth |\xi|) \, , \  \tth < + \infty \cr
\chi (\xi) |\xi| \, , \qquad \qquad 	\ \,  \ \tth = + \infty \\,.
\end{cases}
$$
Note that
$\chi_{M+1} $ in \eqref{chiM+1} is well defined because 
$ \omega (\kappa, \xi) $ is positive on the support of $ \chi (\xi ) $. 	
We conjugate the operator $\cL_6^{(M)}  $ in \eqref{cL6M}  by the flow generated by 
$\bX_{M+1}$ of the form \eqref{geno} with	
$ \chi_{M+1} (\vphi, x, \xi) $ defined in \eqref{chiM+1}. 
By	
\eqref{bR6esti1} 
and Lemma \ref{LEMMONE}-\ref{i2-77}, for any $s\geq s_0$ and $\alpha\in\N_0$,
	\begin{equation}\label{chiM+1_est}
\normk{\bX_{M+1}}{-\frac12 -(M+1),s,\alpha} \lesssim_{s, M, \alpha} \varepsilon \big( 1+ \normk{\fI_0}{s+\sigma_N + \aleph_{M}(\alpha)} \big) \,.
	\end{equation} 
	Therefore, by Lemmata \ref{Neumann pseudo diff}, \ref{pseudo_compo} and
	the induction assumption \eqref{block.est6} for $\b\Phi_{M}$, 
	the conjugation map $\b\Phi_{M+1}:= \b\Phi_{M}e^{\bX_{M+1}}$ is well defined 
	and satisfies estimate \eqref{block.est6} with $M+1$.
 By the Lie expansion \eqref{lie_abstract} 
	we have
	\begin{align}
\label{conM+1}
	\cL_6^{(M+1)} & := e^{-\bX_{M+1}} \,  \cL_6^{(M)} \, e^{ \bX_{M+1}}  =\omega\cdot \partial_\vf + \im \tm_{\frac32}\b\Omega(\kappa,D)+ \bA_1\pa_x+\im\b\Pi_0 + \bR_{6,M}^{(0,d)} \\
	\nonumber
	& - \big[\bX_{M+1}, \im\,\tm_{\frac32} \b\Omega(\kappa,D) \big] + \bR_{6,M}^{(- M,o)} + {\bf \Phi}_{M+1}^{-1} \bT_{5,N} {\bf \Phi}_{M+1}  \\
	\label{geno12}
	& -\int_0^1 \LieTr{\tau \bX_{M+1}}{\big[\bX_{M+1} \, , \, \omega\cdot \partial_\vf  + \bA_1^{(d)}\pa_x +\im\b\Pi_0+  \bR_{6,M}^{(0,d)}  \big]}\wrt \tau\\
	\label{geno13}
	& -\int_0^1 \LieTr{\tau\bX_{M+1}}{\left[\bX_{M+1}, \bR_{6,M}^{(-M,o)}  \right]}\wrt \tau \\
	\label{geno14}
	& +\int_0^1(1-\tau)\LieTr{\tau\bX_{M+1}}{\left[\bX_{M+1},\left[\bX_{M+1},\im\,\tm_{\frac32}\b\Omega(\kappa,D)\right]\right]}\wrt \tau \, .
	\end{align}
 In view of  \eqref{geno}, \eqref{step5.est1} and \eqref{R6o},  we have that
$$
-\big[ \bX_{M+1},\im\,\tm_{\frac32} \b\Omega(\kappa,D) \big] + \bR_{6,M}^{(- M,o)}  
=
	\begin{pmatrix}
	0 & Z_{M+1} \\ 
	\bar{Z_{M+1}} & 0 
	\end{pmatrix} =: \bZ_{M+1} ,,
$$
	where, denoting for brevity  $ \chi_{M+1} := \chi_{M+1}(\vf,x,\xi) $, it results 
	\begin{align}
		Z_{M+1} & 
		= 
		\im \, \tm_{\frac32} 
		\left(  {\rm Op}(\chi_{M+1}) \omega(\kappa,D) + \omega(\kappa,D)
		 {\rm Op}(\chi_{M+1})
		\right) \nonumber \\
&	\quad + \tm_{\frac32}  \tfrac{\gamma}{2}\left[ \chi_{M+1}, \pa_x^{-1}G(0)  \right]		
		  + {\rm Op}(r_{6,M}^{(o)}) \, .  \label{eq:hom_scalar}
	\end{align}
	 By \eqref{compo_symb}, Lemma \ref{pseudo_compo} and since
	$  \chi_{M+1}(\vf,x,\xi)  \in S^{-\frac32 -M } $ by 
	 \eqref{chiM+1}, 
	 we have that 
$$
{\rm Op}(\chi_{M+1}) \omega(\kappa,D) + \omega(\kappa,D) {\rm Op}(\chi_{M+1}) 
= \Op\big(  2
\omega (\kappa, \xi)\chi_{M+1}(\vf,x,\xi) \big) + {\mathtt r}_{M+1} ,,
$$
where ${\mathtt r}_{M+1} $ 
is in $ \Ops^{-M-1} $. By  \eqref{chiM+1} and \eqref{eq:hom_scalar} 
$$
Z_{M+1}  =  \im \tm_{\frac32}  {\mathtt r}_{M+1}  +
 \tm_{\frac32}  \tfrac{\gamma}{2}\left[ \chi_{M+1}, \pa_x^{-1}G(0)  \right] 
		  + {\rm Op}(r_{6,M}^{(o)}(1- \chi (\xi)))  \in \Ops^{-M-1} \, . 
$$
The
remaining pseudodifferential operators in \eqref{geno12}-\eqref{geno14} 
have order $ \Ops^{-M-\frac32} $. 
	Therefore the operator $ \cL_6^{(M+1)} $ in \eqref{conM+1} has the form 
	\eqref{cL6M} at $ M+ 1 $ with 
\begin{equation}\label{newRM+1}
	\bR_{6,M+1}^{(0,d)}+\bR_{6,M+1}^{(-(M+1),o)}
	:= \bR_{6,M}^{(0,d)}+\bZ_{M+1}+\eqref{geno12}+\eqref{geno13}+\eqref{geno14}
\end{equation}
	and a remainder 
	$ {\bf \Phi}_{M+1}^{-1} \bT_{5,N} {\bf \Phi}_{M+1}$.
	 By 
	 Lemmata \ref{pseudo_compo}, \ref{pseudo_commu}, the induction assumption 
	 \eqref{bR6esti1},
	  \eqref{chiM+1_est}, \eqref{step5.est3}, we conclude that $\bR_{6,M+1}^{(0,d)}$ and $\bR_{6,M+1}^{(-(M+1),o)}$ satisfy \eqref{bR6esti1} at order $M+1$ for
	  suitable constants $ \aleph_{M+1} (\alpha) >  \aleph_{M} (\alpha) $. 
	Moreover the operator $\b\Phi_{M+1}^{-1} \bT_{5, N} \b\Phi_{M+1} $ satisfies 
	\eqref{block.est6} (with $M+1$) by Lemmata \ref{tame_compo}, \ref{tame_pesudodiff} and estimates \eqref{step5.est6}, \eqref{block.est6}.
	Estimates \eqref{block.est3}, \eqref{block.est4}, \eqref{block.est5} follow similarly.
	By \eqref{chiM+1}, \eqref{Ri-RAR}, Lemmata \ref{lem:rev-pse}, \ref{lem:mom_pseudo},
	and the induction assumption that $\bR_{6,M}^{(-M,o)}$ is reversible and momentum preserving, we conclude that $\bX_{M+1}$ is reversibility and momentum preserving, and so are $e^{\pm\bX_{M+1}}$. By the induction assumption $\cL_{6}^{(M)}$ is reversible and momentum preserving, and so 
	$\cL_{6}^{(M+1)}$ is reversible and momentum preserving as well, in particular
	the terms 
	$ \bR_{6,M+1}^{(0,d)}+\bR_{6,M+1}^{(-(M+1),o)} $
	in \eqref{newRM+1}.
\end{proof}

\begin{rem}\label{rem:after_block}
	The number of regularizing iterations $M\in\N$ will be 
	fixed by the KAM reduction scheme in Section \ref{sec:KAM}, see 
	\eqref{M_choice}. Note that it is independent of the Sobolev index $s$. 
\end{rem}

So far the operator $ \cL_6 $ of  Lemma \ref{block_dec_lemma} depends on 
two indexes $ M, N  $ which provide respectively the order 
 of the regularizing  off-diagonal remainder $ \bR_{6}^{(- M, o)} $ and of the smoothing 
 tame operator $ \bT_{6,N} $.
From now on we  fix 
\begin{equation}\label{M=N}
N=M \, .
\end{equation}

\subsection{Reduction of the order 1}\label{sec:order1}
The goal of this section is to transform the operator $\cL_6$ in \eqref{cL6M},
with $N=M$ (cfr. \eqref{M=N}), into the 
operator $\cL_8 $ in \eqref{cL8} whose  coefficient 
in front of $ \pa_x $ is a constant. 
We first eliminate the $ x$-dependence and then the $ \vf $-dependence.  

\paragraph{Space reduction.}

First we rewrite the operator $\cL_{6}$ in \eqref{cL6M}, with $N=M$, as
$$
	\cL_{6} = \omega\cdot \pa_\vf + \begin{pmatrix}
	P_6 & 0 \\ 0 & \bar{P_6}
	\end{pmatrix} + \im\b\Pi_0 + \bR_{6}^{(-M,o)} + \bT_{6,M}\,,
$$
having denoted 
\begin{equation}
\label{P6}
P_6 := P_6(\vf,x,D) := \im \tm_{\frac32} \Omega(\kappa, D) + a_1^{(d)}(\vf,x) \pa_x  +  r_6^{(d)}(\vf,x, D) \,.
\end{equation} 
We  conjugate $\cL_{6}$ 
through the real operator 
\begin{equation}\label{Phi1}
\b\Phi(\vf) := \begin{pmatrix}
\Phi(\vf) & 0 \\ 0 & \bar{\Phi}(\vf)
\end{pmatrix} 
\end{equation}
where $\Phi(\vf):=\Phi^\tau(\vf)|_{\tau=1} $ is the time $ 1 $-flow of the PDE
\begin{equation}\label{phi.problem}
\begin{cases}
\partial_\tau \Phi^\tau(\vf) =  \im A(\vf)  \Phi^\tau(\vf) \, , \\
\Phi^0(\vf) = {\rm Id} 	\,  , 
\end{cases}   \qquad A(\vf) := b(\vf, x) |D|^{\frac12} \, , 
\end{equation}
and $b(\vf,x)  $ is a real, smooth, odd $(\vf,x)$, periodic function 
chosen later, see \eqref{b1b2}, \eqref{eq:q_choice}, \eqref{def:b2}. 
Usual energy estimates imply that the flow 
$ \Phi^\tau (\vf) $ of \eqref{phi.problem} 
is a bounded operator is $H_x^s $. 
The operator $\pa_\lambda^k\pa_\vf^\beta\Phi$ loses $\abs{D}^{\frac{\abs\beta +\abs k}{2}}$ derivatives, which  are compensated by $\braket{D}^{-m_1}$ on the left hand side and $\braket{D}^{-m_2}$ on the right hand side, with $m_1,m_2\in\R$ satisfying $m_1+m_2= \tfrac12\left( \abs\beta+\abs k \right)$, according to the 
tame estimates in the Sobolev spaces $H_{\vf,x}^s$ of Proposition 2.37 in \cite{BBHM}. 
Moreover, since $ b(\vf,x)$ is $\odd(\vf,x)$, then 
$ b(\vf, x) |D|^{\frac12}$ is reversibility preserving as well as 
$	\b\Phi (\vf)  $.
Finally, note that 
$\Phi\pi_0 = \pi_0 = \Phi^{-1}\pi_0$, which implies
\begin{equation}\label{phipi0}
	\b\Phi^{-1}\b\Pi_0 \b\Phi = \b\Pi_0\b\Phi \,.
\end{equation}
By the Lie expansion \eqref{lie_abstract} we have  
\begin{equation}
\begin{aligned}
\Phi^{-1} P_6 \Phi  & = 
P_6 - \im [A, P_6] - \frac12 [A, [A, P_6]]+  \sum_{n=3}^{2M+2} \frac{(-\im)^n}{n!} \ad_{A(\vf)}^n(P_6) + T_{M}\,,\label{lin1} \\
T_{M}& :=  \frac{(- \im)^{2M+3}}{(2M+2)!} \int_0^1 (1 - \tau)^{2M+2} \Phi^{-\tau}(\vf)\,
\ad_{A(\vf)}^{2M+3}(P_6) \,\Phi^\tau (\vf)  \di \tau \, ,   
\end{aligned}
\end{equation}
and, by \eqref{lie_omega_devf}, 
\begin{equation}
\begin{aligned}
\label{lin2}
			\Phi^{-1} \circ \omega\cdot\pa_\vf \circ \Phi & = \omega\cdot\pa_\vf + \im (\omega\cdot\pa_\vf A)(\vf) - \sum_{n=2}^{2M+1} \frac{(-\im)^n}{n!} \ad_{A(\vf)}^{n-1}(\omega\cdot \pa_\vf A(\vf)) + T_{M}'\,, \\
			T_{M}' & := - \frac{(- \im)^{2M+2}}{(2M+1)!} 
			\int_0^1 (1 - \tau)^{2M+1} \Phi^{-\tau}(\vf) \,
			\ad_{A(\vf)}^{2M+1}(\omega\cdot \pa_\vf A(\vf))\, \Phi^\tau (\vf) \di \tau \, . 
\end{aligned}
\end{equation}
Note that $ \ad_{A(\vf)}^{2M+3}(P_6) $ and $ \ad_{A(\vf)}^{2M+1}(\omega\cdot \pa_\vf A(\vf)) $ are in $ \Ops^{-M} $. 
The number $ M $ will be fixed in  \eqref{M_choice}. Note also that in 
the  expansions
\eqref{lin1}, \eqref{lin2} the  operators have  decreasing order and size. 
The terms of order 1 come from  \eqref{lin1}, in particular  from 
$P_6 - \im [A, P_6]  $. Recalling \eqref{P6}, that $ A(\vf) := b(\vf, x) |D|^{\frac12} $, 
\eqref{eq:moyal_exp} and that (cfr. \eqref{Om-om}, \eqref{eq:rem1})
 \begin{equation}\label{Omegakx}
 \Omega (\kappa, \xi) = \sqrt{\kappa} |\xi|^{\frac32} \chi(\xi) + r_{0}(\kappa, \xi) \, , \quad 
 r_{0}(\kappa, \xi)\in S^{0} \, , 
\end{equation}
(the cut-off function $ \chi $ is defined in \eqref{cutoff}) 
we deduce that
\begin{equation}\label{sviP6AP6}
[A, P_6]   =  
\im \tfrac32 \sqrt{\kappa} \, \tm_{\frac32}  \,  b_x \pa_x   +
\big( \tfrac12 (a_1^{(d)})_x b - a_1^{(d)} b_x \big) 
|D|^{\frac12} + \Op(r_{b,{0}}) ,, 
\end{equation}
where $ r_{b,0} \in S^0 $ is small with $b$.
As a consequence,
the first order term of $P_6 - \im [A, P_6]  $
is
$(a_1^{(d)}+  \tfrac32\, \sqrt{\kappa}\, \tm_{\frac32}\,  b_x )\pa_x $ and we choose
$ b (\vf,x) $ so that  it is independent of $ x $:  
we look for a solution  
\begin{equation}\label{b1b2}
b(\vf,x) = b_1 (\vf, x ) + b_2 (\vf) 
\end{equation}
of the equation 
\begin{equation}\label{eqo1}
a_1^{(d)}(\vf,x)+ \tfrac32 \tm_{\frac32}\sqrt\kappa \,  b_x(\vf, x) = \braket{a_1^{(d)}}_x(\vf) \, ,
\quad \braket{a_1^{(d)}}_x(\vf) := \frac{1}{2 \pi} \int_{\T} 
a_1^{(d)} (\vf, x) \di x  \, .
\end{equation}
Therefore 
\begin{equation}\label{eq:q_choice}
b_1 (\vf,x):= -\tfrac{2}{3\,\tm_{\frac32}\, \sqrt\kappa} \, \partial_x^{-1} \big( a_1^{(d)}(\vf,x) -\braket{a_1^{(d)}}_x(\vf) \big) \,.
\end{equation}
We now determine $ b_2 (\vf ) $ by imposing a condition at the order $ 1/ 2 $. 
We deduce by \eqref{lin1}, \eqref{lin2}, \eqref{P6}, \eqref{sviP6AP6}-\eqref{eqo1}, that  
\begin{equation}\label{defL7}
	\begin{aligned} 
L_7:= \Phi^{-1}(\vf)\left( \omega\cdot \pa_\vf + P_6 \right) \Phi(\vf) = & \, \omega\cdot\pa_\vf + \im \,\tm_{\frac32} \Omega(\kappa, D) + \braket{a_1^{(d)}}_x(\vf)\, \pa_x \\
		& +  \im \,  a_2^{(d)} |D|^{\frac12}+ \Op(r_7^{(d)}) + T_M + T_M',,
	\end{aligned}
\end{equation}
where $a_2^{(d)}(\vf,x)$ is the real function
\begin{equation}\label{a3d}
\begin{aligned}	 
a_2^{(d)}:= & - \tfrac12 (a_1^{(d)})_x b_1 + a_1^{(d)} (b_1)_x 
	 + \frac34 \sqrt{\kappa} \, 
	 \tm_{\frac32} \big(   (b_1)_x^2 
	 - \frac12 (b_1)_{xx} b_1 \big) + (\omega\cdot\pa_\vf b_1) \\
	&   -		 \big( \tfrac12 (a_1^{(d)})_x 
		  +	 \frac38  \sqrt{\kappa} \, \tm_{\frac32} (b_1)_{xx} \big) b_2 
		 + (\omega\cdot\pa_\vf b_2) 
\end{aligned}
\end{equation}
and 
\begin{equation}\label{r7d}
	\begin{aligned}
		\Op(r_7^{(d)}):=& 
		\Op( - \im r_{b,0} +r_{b,-\frac12}+  r_6^{(d)}) - 
		\frac12 \big[ b |D|^{\frac12}, 
		( \tfrac12 (a_1^{(d)})_x b - a_1^{(d)} b_x)|D|^{\frac12} +
		{\rm Op}( r_{b,0})  \big]\\
		 & +  \sum_{n=3}^{M-1} \frac{(-\im)^n}{n!} \ad_{A(\vf)}^n(P_6)- \sum_{n=2}^{M} \frac{(-\im)^n}{n!} \ad_{A(\vf)}^{n-1}(\omega\cdot \pa_\vf A(\vf))  \in \Ops^0 \, ,
	\end{aligned}
\end{equation}
where $ r_{b,-\frac12} \in S^{-\frac12} $ is small in $ b $.  
In view of Section \ref{sec:order12}
we now determine the function $ b_2 (\vphi ) $ 
so that the space average of the function 
 $ a_2^{(d)} $ in \eqref{a3d} is independent of $ \vphi $, i.e. 
\begin{equation}\label{media2cos}
\langle  a_2^{(d)} \rangle_x (\vphi) = \tm_{\frac12}\in\R \, , \quad \forall \vphi \in \T^\nu \, . 
\end{equation}
Noting that the space average 
$ \big\langle \big( \tfrac12 (a_1^{(d)})_x 
		  +	 \frac38 \tm_{\frac32} \sqrt{\kappa} (b_1)_{xx} \big)  b_2 (\vf) \big\rangle_x  = 0 $ and that $ \big\langle  \omega\cdot\pa_\vf b_1  \big\rangle_{\vf,x} = 0 $, 
we get 
\begin{align}
	\tm_{\frac12} & := \langle - \tfrac12 (a_1^{(d)})_x b_1 + a_1^{(d)} (b_1)_x 
	+ \frac34 \sqrt{\kappa} \, \tm_{\frac32} 
	 \big( (b_1)_x^2  - \frac12 (b_1)_{xx}b_1 \big)
	\rangle_{\vf,x} \, , \label{def:m12} \\ 
	b_2 (\vphi )  & := - ( \omega \cdot \partial_\vphi)_{\rm ext}^{-1} 	\Big( \big\langle - \tfrac12 (a_1^{(d)})_x b_1 + a_1^{(d)} (b_1)_x +
	  \nonumber \\
	& \qquad \qquad \qquad \qquad + \frac34 \tm_{\frac32} \sqrt{\kappa} \big((b_1)_x^2  - \frac12  (b_1)_{xx} b_1 \big) + 
	(\omega\cdot\pa_\vf b_1) \big\rangle_x  - \tm_{\frac12} \Big) 
	\, . \label{def:b2}
\end{align}
Note that \eqref{media2cos} holds  for  any $ \omega \in \tD\tC (\upsilon, \tau ) $. 

\paragraph{Time reduction.}

In order to remove the $\vf$-dependence of  
the coefficient $\braket{a_1^{(d)}}_x(\vf) $ of the first order term of the operator $ L_7 $ in \eqref{defL7}, we conjugate  $ L_7 $ with the map 
\begin{equation}\label{defBvphi}
(\cV u)(\vf, x) := u (\vf, x + \varrho(\vf)) ,,
\end{equation}
where $\varrho(\vf)$ is  a  real periodic function   to be chosen, see \eqref{eq:q2_choice}.
Note that  $ \cV $ is a particular case of the transformation $ \cE $ in 
\eqref{defcE} for a function
$ \beta (\vf, x) = \varrho(\vf) $, independent of $ x$. 
We have that 
$$
\cV^{-1} (\omega \cdot \pa_\vf) \cV = \omega\cdot \pa_\vf + 
(\omega\cdot\pa_\vf \varrho)  \pa_x\,,
$$
whereas the Fourier  multipliers are left unchanged and 
a pseudodifferential operator  of symbol $ a(\vf, x, \xi)  $  transforms as
\begin{equation}\label{TRApse}
\cV^{-1} \Op(a(\vf, x, \xi) ) \cV = \Op( a(\vf, x- \varrho(\vf), \xi))  \, . 
\end{equation}
We choose  $ \varrho(\vf)$ such that
\begin{equation}\label{defm1}
\omega\cdot\partial_\vf \varrho(\vf) + \braket{a_1^{(d)}}_x(\vf) = \tm_1 \, , 
\qquad  \tm_1 := \braket{a_1^{(d)}}_{\vf,x} \in \R \, , 
\end{equation}
(where $a_1^{(d)}$ is fixed in Lemma \ref{LEMMONE}), namely  we define 
\begin{equation}\label{eq:q2_choice}
\varrho(\vf):= -(\omega\cdot\partial_\vf)_{\rm ext}^{-1} \big( \braket{a_1^{(d)}}_x-\tm_1 \big) \,.
\end{equation}
Note that \eqref{defm1} holds  for any $\omega \in \tD\tC(\upsilon,\tau)$.  

We sum up these two transformations into the following lemma.
\begin{lem}\label{red1}
Let $ M \in \N $, $ \tq_0 \in \N_0 $. 
Let  $b(\vf,x)= b_1(\vf,x) + b_2(\vf)$ 
and $\varrho (\vf) $ be the functions defined
 respectively in  
\eqref{eq:q_choice}, \eqref{def:b2},  \eqref{eq:q2_choice}. 
Then, conjugating $ \cL_6  $ in \eqref{cL6M} 
via the invertible, real, reversibility preserving and momentum preserving 
maps  $ \b\Phi $, $ \cV $ defined in \eqref{Phi1}-\eqref{phi.problem} and 
\eqref{defBvphi}, we obtain, for any $ \omega \in \tD\tC (\upsilon, \tau ) $,   
 the real, reversible and  momentum preserving operator 
\begin{equation}
\begin{aligned}\label{cL8}
\cL_8  & :=   \cV^{-1} {\bf \Phi}^{-1} \cL_6 {\bf \Phi} \cV \\
	& = \omega\cdot\partial_\vf  + \im \,\tm_{\frac32} \b\Omega(\kappa, D) + \tm_1 \pa_x + \im \bA_3^{(d)} |D|^{\frac12} + \im\b\Pi_0 + \bR_{8}^{(0, d)} + \bT_{8,M}\,,
	\end{aligned}
	\end{equation}
	where:
	\begin{enumerate}
	\item  
	the real constant $ \tm_1$ defined in \eqref{defm1} satisfies 
	$	| \tm_1 |^{k_0, \upsilon} \lesssim 
	\varepsilon $;
	\item $\bA_3^{(d)}  $ is a  diagonal matrix of multiplication  
	$$
\bA_3^{(d)}   := \begin{pmatrix}
a_3^{(d)} & 0\\
 0& a_3^{(d)} 
\end{pmatrix} , 
$$
for a real function $ a_3^{(d)}  $ which is  a quasi-periodic traveling wave,
$\even(\vf,x)$, 
satisfying 
\begin{align}\label{a2d.est12}
 \braket{ a_3^{(d)}}_{x}(\vf) = \tm_{\frac12} \in \R \, , \quad \forall\, \vf\in\T^{\nu}\, , 
\end{align}
where $\tm_{\frac12} \in \R $ is the constant  in \eqref{def:m12}, 
and for some $\sigma= \sigma(\tau,\nu,k_0)>0$, for all $s\geq s_0$,
\begin{align}\label{a2d.est}
	& \normk{a_3^{(d)}}{s}\lesssim_{s} \varepsilon \upsilon^{-1} ( 1 + \normk{\fI_0}{s+\sigma} )\,; 
\end{align}
\item $ \bR_{8}^{(0,d)} $ is  a block-diagonal operator 
	\begin{align*}
	\bR_{8}^{(0,d)} & = \begin{pmatrix}
	r_{8}^{(d)}(\vf,x,D) & 0 \\ 
	0 &\bar{r_{8}^{(d)}(\vf,x,D)}
	\end{pmatrix} \in \Ops^{0} \,,
	\end{align*}
that satisfies  for all $\alpha\in\N_0 $, 
for  some  $\sigma_M (\alpha):= \sigma_M(k_0,\tau, \nu, \alpha)>0$ and
for all $s\geq s_0$, 
	\begin{align}
		&\normk{\bR_{8}^{(0,d)} }{0,s,\alpha}  \lesssim_{s, M, \alpha} \varepsilon \upsilon^{-1}( 1+\normk{\fI_0}{s+\sigma_M( \alpha)} ) \, \label{bR8.est} \,;
\end{align}
\item For any $ \tq \in \N^\nu_0 $ with $ |\tq| \leq \tq_0$, 
$ n_1, n_2 \in \N_0 $  with $ n_1 + n_2  \leq M - 2(k_0+\tq_0)  + \frac52  $,  the  
operator $\langle D \rangle^{n_1}\partial_{\vphi}^\tq \bT_{8,M}(\vphi) \langle D \rangle^{n_2}$ is 
$\cD^{k_0} $-tame with a tame constant satisfying, for some 
$ \sigma_M(\tq_0) := \sigma_M(k_0,\tau, \nu, \tq_0) $,
for any $s_0 \leq s \leq S $, 
\begin{equation}\label{red1.estT}
{\mathfrak M}_{\langle D \rangle^{n_1}\partial_{\vphi}^\tq \bT_{8,M}(\vphi) \langle D \rangle^{n_2}}(s) \lesssim_{S, M, \tq_0} 
\varepsilon \upsilon^{-1}( 1+ \normk{\fI_0}{s+\sigma_M(\tq_0)} )\,;
\end{equation}
\item The operators $ \b\Phi^{\pm 1} -{\rm Id}$, $(\b\Phi^{\pm 1}-{\rm Id})^*$ are $\cD^{k_0}$-$\frac12(k_0+1)$-tame and the operators $\cV^{\pm 1}- {\rm Id}$, $(\cV^{\pm 1}-{\rm Id})^*$ are $\cD^{k_0}$-$(k_0+2)$-tame, with tame constants satisfying, for some $\sigma>0$ and for all $s_0\leq s \leq S$,
\begin{align}
	& \fM_{\b\Phi^{\pm 1} -{\rm Id}}(s) + \fM_{(\b\Phi^{\pm 1}-{\rm Id})^*}(s) \lesssim_{S} \varepsilon\upsilon^{-1}( 1 + \normk{\fI_0}{s+ \sigma})\,, \label{red1.est4}\\
	& \fM_{\cV^{\pm 1} -{\rm Id}}(s) + \fM_{(\cV^{\pm 1}-{\rm Id})^*}(s) \lesssim_{S} \varepsilon\upsilon^{-1}( 1 + \normk{\fI_0}{s+ \sigma})\,. \label{red1.est5}
\end{align}
 \end{enumerate}
Furthermore, for any $s_1$ as in \eqref{s1s0}, $\alpha\in\N_0$, $\tq\in\N_0^\nu$, with $\abs\tq \leq \tq_0$, and $n_1,n_2\in\N_0$, with $n_1+n_2\leq M-2 \tq_0 + \frac12$, we have
\begin{align}
& \| \Delta_{12} a_3^{(d)} \|_{s_1} \lesssim_{s_1} \varepsilon \upsilon^{-1}\norm{i_1-i_2}_{s_1+\sigma} \,, \  | \Delta_{12} \tm_1|  
\lesssim \varepsilon \norm{i_1-i_2}_{s_0+\sigma}  \, , 
\label{red1.est1}  \\
&\|\Delta_{12} \bR_{8}^{(0,d)} \|_{0,s_1,\alpha} \lesssim_{s_1, M, \alpha} \varepsilon \upsilon^{-1}\norm{ i_1-i_2 }_{s_1+\sigma_M (\alpha)}   \,, \label{red1.est2} \\
& \| \braket{D}^{n_1} \pa_\vf^\tq \Delta_{12} \bT_{8,M} \braket{D}^{n_2}\|_{\cL(H^{s_1})} \lesssim_{s_1,M, \tq_0} \varepsilon \upsilon^{-1}\norm{i_1-i_2}_{s_1+ \sigma_M(\tq_0) }\,,\label{red1.est3} \\
& \| \Delta_{12} (\cA) h \|_{s_1} \lesssim_{s_1} \varepsilon \upsilon^{-1}\norm{i_1-i_2}_{s_1+\sigma} \norm{h}_{s_1+\sigma} \,, \quad \cA \in \{ \b\Phi^{\pm 1}, (\b\Phi^{\pm 1})^*, \cV^{\pm 1}, (\cV^{\pm 1})^* \}\,. \label{red1.est6} 
\end{align}
\end{lem}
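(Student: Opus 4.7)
The plan is to carry out the two conjugations sequentially, matching exactly the Lie-expansion bookkeeping of \eqref{lin1}--\eqref{lin2}. First I would conjugate the scalar block $\omega\cdot\pa_\vf + P_6$ by the flow $\Phi(\vf)$ generated by $\im A(\vf) = \im b(\vf,x)|D|^{1/2}$, obtaining \eqref{defL7} modulo $T_M + T_M'$; the matrix version $\b\Phi = \mathrm{diag}(\Phi,\bar\Phi)$ then commutes with $\b\Pi_0$ via the crucial identity \eqref{phipi0}, so the finite-rank term $\im\b\Pi_0$ is preserved intact, and the off-diagonal remainder $\bR_6^{(-M,o)}$ gets conjugated into an operator of the same order, which will be stored inside $\bT_{8,M}$. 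Because $A$ has order $1/2$, each iterated commutator $\ad_A^n$ drops the order by $n/2$, so truncating at $n = 2M+2$ in \eqref{lin1} and $n = 2M+1$ in \eqref{lin2} produces an $M$-smoothing tail; the tame flow bounds in Proposition~2.37 of \cite{BBHM} convert this tail into the required $\cD^{k_0}$-tame estimate \eqref{red1.estT} on $\langle D\rangle^{n_1}\pa_\vf^\tq \bT_{8,M}\langle D\rangle^{n_2}$.

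The cancellation at the principal order reduces to the scalar transport equation \eqref{eqo1}, solved by the zero-mean primitive \eqref{eq:q_choice} because $a_1^{(d)} - \langle a_1^{(d)}\rangle_x$ has vanishing $x$-average. For the order-$1/2$ coefficient, the contribution of $-\tfrac12[A,[A,P_6]]$ combined with $\im(\omega\cdot\pa_\vf A)$ gives the explicit expression \eqref{a3d} for $a_2^{(d)}$; I would then impose $\langle a_2^{(d)}\rangle_x = \tm_{1/2}$, which is a scalar equation in $b_2(\vf)$ whose solution is the extended Diophantine inverse \eqref{def:b2}, invoking \eqref{lem:diopha.eq} together with the ansatz \eqref{ansatz_I0_s0} and the bound \eqref{step5.est3} to obtain \eqref{a2d.est} with the expected $\upsilon^{-1}$ loss. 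Finally, the translation $\cV$ acts on pseudodifferential symbols by \eqref{TRApse} and leaves Fourier multipliers unchanged, so choosing $\varrho$ as in \eqref{eq:q2_choice} turns $\langle a_1^{(d)}\rangle_x(\vf)\pa_x$ into $\tm_1\pa_x$ with $\tm_1 = \langle a_1^{(d)}\rangle_{\vf,x}$; once again $\cV\b\Pi_0\cV^{-1} = \b\Pi_0$ because $\pi_0$ projects onto constants.

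For the algebraic structure I would verify parities step by step. By Lemma~\ref{BVtilde.mom} and Lemma~\ref{LEMMONE}, $a_1^{(d)}$ is an even-in-$(\vf,x)$ quasi-periodic traveling wave, hence $b_1$ from \eqref{eq:q_choice} is odd in $(\vf,x)$ and a traveling wave, $b_2$ from \eqref{def:b2} is odd in $\vf$ (the extended inverse preserves oddness) and depends on $\vf$ only through the traveling-wave combination $\vec\jmath\cdot x$-average, and $\varrho$ is odd. Lemmata~\ref{lem:rev-pse}, \ref{lem:mom_pseudo}, and \ref{lem:dMP} then imply that both $\b\Phi$ and $\cV$ are reversibility and momentum preserving, which transfers the corresponding structure of $\cL_6$ to $\cL_8$. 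The pseudodifferential bounds on $\bR_8^{(0,d)}$ in \eqref{bR8.est} and the maps bounds \eqref{red1.est4}--\eqref{red1.est5} follow by iteratively applying Lemmata \ref{pseudo_compo}--\ref{pseudo_commu} to each term of the truncated series and the composition estimates for the flow $\Phi$.

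The main obstacle will be the bookkeeping for $\bT_{8,M}$: the tail $T_M + T_M'$ from \eqref{lin1}--\eqref{lin2} is generated by the non-pseudodifferential flow $\Phi^\tau$, and the conjugated off-diagonal piece $\cV^{-1}\b\Phi^{-1}\bR_6^{(-M,o)}\b\Phi\cV$ must be merged with it into a single operator satisfying \eqref{red1.estT} with the precise combinatorial threshold $n_1+n_2 \leq M - 2(k_0+\tq_0) + 5/2$. Each of the $2M+3$ commutators, together with the $\tq$ time-derivatives and the factors of $\langle D\rangle^{n_i}$, costs roughly half a derivative against the $|D|^{1/2}$-order flow, which is exactly what the stated threshold permits. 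The Lipschitz-in-$i$ bounds \eqref{red1.est1}--\eqref{red1.est6} will then be deduced by applying the same chain of transformations to the variation $\Delta_{12}$ and using the analogous Lipschitz estimates already available for $\bR_6^{(0,d)}$, $\bR_6^{(-M,o)}$, $\bT_{6,M}$, and $a_1^{(d)}$ from Lemmata~\ref{LEMMONE} and \ref{block_dec_lemma}.
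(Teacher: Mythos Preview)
Your proposal follows the same route as the paper and is essentially correct: Lie expansion by $\b\Phi$, solving the transport equations \eqref{eqo1} and \eqref{media2cos} for $b_1,b_2$, then the $x$-translation $\cV$ with $\varrho$ from \eqref{eq:q2_choice}, and finally collecting all smoothing tails (including $\cV^{-1}\b\Phi^{-1}\bR_6^{(-M,o)}\b\Phi\cV$, $\cV^{-1}\b\Phi^{-1}\bT_{6,M}\b\Phi\cV$, and the $T_M,T_M'$ remainders) into $\bT_{8,M}$, with the flow estimates supplied by Proposition~2.37 and Lemma~2.38 of \cite{BBHM}.

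One point needs correction: you write that $\b\Phi$ \emph{commutes} with $\b\Pi_0$ so that $\im\b\Pi_0$ is ``preserved intact''. This is not what \eqref{phipi0} says. The identity $\Phi\pi_0=\pi_0=\Phi^{-1}\pi_0$ (the generator $b|D|^{1/2}$ kills constants) gives only $\b\Phi^{-1}\b\Pi_0\b\Phi=\b\Pi_0\b\Phi=\b\Pi_0+\b\Pi_0(\b\Phi-{\rm Id})$. Hence after conjugation you recover $\im\b\Pi_0$ \emph{plus} the finite-rank correction $\im\b\Pi_0(\b\Phi-{\rm Id})$, which must be absorbed into $\bT_{8,M}$; this is exactly the extra term appearing in the paper's explicit formula for $\bT_{8,M}$. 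By contrast, your claim $\cV^{-1}\b\Pi_0\cV=\b\Pi_0$ is genuinely correct, since $\cV$ is a pure $x$-translation and $\pi_0$ is the $x$-average.
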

\begin{proof}
	 The function $b(\vf,x)= b_1(\vf,x) + b_2(\vf)$, with $b_1$ and $b_2$, defined in \eqref{eq:q_choice} and \eqref{def:b2} and the function $\varrho(\vf)$  in \eqref{eq:q2_choice}, satisfy, by Lemma \ref{product+diffeo} and  \eqref{step5.est3}, 
	\begin{equation}\label{b1b2rho.est}
		\normk{b_1}{s}\lesssim_{s} \varepsilon(1+\normk{\fI_0}{s+\sigma})\,, \quad  \normk{ b }{s} \, , \ \normk{b_2}{s} \, , \ \normk{\varrho}{s} \lesssim_{s} \varepsilon \upsilon^{-1}( 1 + \normk{\fI_0}{s+\sigma}) 
	\end{equation}
	for some $\sigma>0$ and for all $s\geq s_0$.
	The estimate $ | \tm_1 |^{k_0, \upsilon} \lesssim 
	\varepsilon $  follows by \eqref{defm1} and \eqref{step5.est3}.
	The function
	$$
	a_3^{(d)} (\vf, x) := \cV^{-1}(a_2^{(d)}) = a_2^{(d)}(\vf, x- \varrho (\vf)) ,,
	$$
where $a_2^{(d)}$ is defined in \eqref{a3d}, 
	satisfies \eqref{a2d.est12} by \eqref{media2cos}. Moreover, the  estimate \eqref{a2d.est} 
	follows by Lemma \ref{product+diffeo} and  \eqref{step5.est3}, \eqref{b1b2rho.est}. 
	The estimate \eqref{bR8.est} for (cfr. \eqref{TRApse})
	$$
	r_8^{(d)}(\vf,x,D):= \cV^{-1} r_7^{(d)}(\vf,x,D) \cV = 
	r_7^{(d)}(\vf,x - \varrho (\vf),D)
	$$ 
	with $r_7^{(d)}$ defined in \eqref{r7d}, follows by  Lemmata \ref{pseudo_compo}, \ref{pseudo_commu}, \ref{product+diffeo} and 
	 \eqref{b1b2rho.est}, \eqref{bR6esti1}.
	The smoothing term $ \bT_{8, M} $  in \eqref{cL8} is, using also \eqref{phipi0},
	$$
	\bT_{8, M}:= \cV^{-1} \big( 
	\b\Phi^{-1} \bT_{6,M} \b\Phi 
	+ \im\b\Pi_0(\b\Phi -{\rm Id}) + \b\Phi^{-1}\bR_{6}^{(-M,o)} \b\Phi \big)\cV +
	\cV^{-1} 
	\begin{pmatrix}
         T_M + T_M' & 0 \\ 0 & \overline{T_M} + \overline{T_M'} 
	\end{pmatrix}  \cV 
	$$ 
	with $T_M$ and $T_M'$ defined in \eqref{lin1}, \eqref{lin2}.
		The estimate \eqref{red1.estT} 
	 follows by \eqref{P6},   Lemmata \ref{tame_compo}, \ref{tame_pesudodiff}, 
	 the tame estimates of $ \b\Phi $ in Proposition 2.37 in \cite{BBHM}, 
	  and estimates \eqref{step5.est3}, \eqref{b1b2rho.est}, \eqref{red1.est4}, \eqref{block.est2}, noting that operators of the form $ \pa_\lambda^k \pa_\vf^\tq \cV^{\pm 1}$ lose $\abs k + \abs \tq$ derivatives.
	The estimate \eqref{red1.est4} follows by 
	Lemma 2.38 in \cite{BBHM} 	
	 and \eqref{b1b2rho.est}, whereas  \eqref{red1.est5} follows by the equivalent representation for $\cV$ as in \eqref{B_repr}, Lemma \ref{tame_compo} and  \eqref{b1b2rho.est}.
	The estimates \eqref{red1.est1}, \eqref{red1.est2}, \eqref{red1.est3}, \eqref{red1.est6} are proved in the same fashion.
	By Lemma \ref{LEMMONE}, the function $a_1^{(d)}$ is 
	an $\even(\vf,x)$ quasi-periodic traveling wave, hence the function $b_1$ in \eqref{eq:q_choice} is a $\odd(\vf,x)$ quasi-periodic traveling wave, the function $b_2$ in \eqref{def:b2} is odd in $\vf$ and satisfies $b_2(\vf - \ora{\jmath}\varsigma) = b_2(\vf)$ for all $\varsigma\in\R$, whereas the function $\varrho$ in \eqref{eq:q2_choice} is odd in $\vf$ and satisfies $\varrho(\vf - \ora{\jmath}\varsigma) = \varrho(\vf)$ for all $\varsigma\in\R$. By Lemmata 
	\ref{lem:rev-pse}, 
	\ref{lem:mom_pseudo}, and \ref{lem:mom_prop}, the transformations 
	$\b\Phi$ and $\cV$ are reversibility and momentum preserving. Then the operator $\cL_{8}$ is reversible and momentum preserving. The function 
	$ a_3^{(d) }$ is an $\even(\vf,x)$  quasi-periodic traveling wave.
\end{proof}

\subsection{Reduction of the order 1/2}\label{sec:order12}

The goal of this section is to transform the operator $\cL_8$ in \eqref{cL8} into the operator $\cL_9$ in \eqref{cL10} whose coefficient in front of $|D|^{1/2}$ is a constant. 
We eliminate the $x$-dependence and, in view of the property \eqref{a2d.est12}, 
we obtain that this transformation removes also  the $\vf$-dependence.

We first write the operator $\cL_{8}$ in \eqref{cL8} as
$$
	\cL_8 = \omega\cdot \pa_\vf + \begin{pmatrix}
	P_8 & 0 \\ 0 & \bar{P_8}
	\end{pmatrix} + \im\b\Pi_0 + \bT_{8,M}\,,
$$
where 
\begin{equation}
\label{P8}
P_8 := \im \tm_{\frac32} \Omega(\kappa, D) + \tm_1 \pa_x  + \im a_3^{(d)}|D|^{\frac12} + \Op ( r_8^{(d)}) \, . 
\end{equation}
We conjugate  $\cL_{8}$ through the real operator 
\begin{equation}\label{Psi1}
\b\Psi(\vf) := \begin{pmatrix}
\Psi(\vf) & 0 \\ 0 & \bar{\Psi}(\vf)
\end{pmatrix}\, , 
\end{equation}
where $\Psi(\vf):= \Psi^\tau(\vf)|_{\tau =1}$ is the time-$1$ flow of 
\begin{equation}\label{Psi.problem}
\begin{cases}
\partial_\tau \Psi^\tau(\vf) =  B(\vf)  \Psi^\tau(\vf) \, , \\
\Psi^0(\vf) = {\rm Id} 	\,  , 
\end{cases}   \qquad B(\vf) :=  b_3(\vf,x) \cH \, , 
\end{equation}
the function $b_ 3 (\vf, x)$ is a smooth, real, periodic function to be chosen later 
(see \eqref{b3.sol}) and $\cH$ is the Hilbert transform defined in \eqref{Hilbert-transf}. Note that 
$\Psi\pi_0 = \pi_0 = \Psi^{-1}\pi_0$, so that 
\begin{equation}\label{psipi0}
\b\Psi^{-1}\b\Pi_0 \b\Psi = \b\Pi_0\b\Psi \,.
\end{equation}
By the Lie expansion in \eqref{lie_abstract} we have 
\begin{equation}
\begin{aligned}
\Psi^{-1} P_8 \Psi  & = 
P_8 -  [B, P_8] +   \sum_{n=2}^{M+1} \frac{(-1)^n}{n!} \ad_{B(\vf)}^n(P_8) + L_{M} \,,\label{lin3} \\
L_{M} & :=  \frac{(- 1)^{M+2}}{(M+1)!} \int_0^1 (1 - \tau)^{M+1} \Psi^{-\tau}(\vf) \,
\ad_{B(\vf)}^{M+2}(P_8) \, \Psi^\tau (\vf)  \di \tau \, ,  
\end{aligned}
\end{equation}
and, by \eqref{lie_omega_devf}, 
\begin{equation}
\begin{aligned}
\label{lin4}
\Psi^{-1} \circ \omega\cdot\pa_\vf \circ \Psi & = \omega\cdot\pa_\vf +  (\omega\cdot\pa_\vf B(\vf)) - \sum_{n=2}^{M} \frac{(-1)^n}{n!} \ad_{B(\vf)}^{n-1}(\omega\cdot \pa_\vf B(\vf)) + L_{M}'\,, \\
L_{M}' & :=  \frac{(- 1)^{M}}{M!} \int_0^1 (1 - \tau)^{M} \Psi^{-\tau}(\vf) \,
\ad_{B(\vf)}^{M}(\omega\cdot \pa_\vf B(\vf)) \, \Psi^\tau (\vf) \di \tau \, . 
\end{aligned}
\end{equation}
The number $ M $ will be fixed in \eqref{M_choice}.
The contributions at order $1/2$ come from \eqref{lin3}, in particular from 
$ P_8 -  [B, P_8] $ (recall \eqref{P8}). 
Since $ B = b_3 \cH $  (see \eqref{Psi.problem}), by \eqref{eq:moyal_exp} and \eqref{Omegakx} we have
\begin{align}
  P_8 -  [B, P_8] = 
  & \ \im\, \tm_{\frac32} \Omega(\kappa, D) + \tm_1 \pa_x+
  \im\, \big( a_3^{(d)}  -\tfrac32 \tm_{\frac32} \sqrt{\kappa}  (b_3)_x  \big) 
	\abs D^\frac12 \label{P8BP8}
  \\
 & + \Op(r_8^{(d)} + r_{b_3, - \frac12} ) - [B,   \tm_1 \pa_x+ \im \,a_3^{(d)}|D|^{\frac12} + \Op(r_8^{(d)})] ,,\nonumber
\end{align}
where $ \Op(r_{b_3, - \frac12}) \in\Ops^{- \frac12} $ is small with $b_3$.
Recalling that, by  \eqref{a2d.est12}, the space average
$ \braket{a_3^{(d)}}_x(\vf) =  \tm_{\frac12} $ for all $ \vf \in \T^\nu $, 
we choose the function  $b_3(\vf,x)$ such that 
$
a_3^{(d)} - \tfrac32 \tm_{\frac32} \sqrt\kappa (b_3)_x =  \tm_{\frac12} $,
namely
\begin{equation}\label{b3.sol}
	b_3(\vf,x) :=  \tfrac{2}{3 \tm_{\frac32}\sqrt\kappa} \pa_x^{-1} ( a_3^{(d)}(\vf,x) - \braket{a_3^{(d)}}_x(\vf) ) \, ,   \quad \braket{a_3^{(d)}}_x(\vf) = \tm_{\frac12} \, .
\end{equation}
We deduce by \eqref{lin3}-\eqref{lin4} and \eqref{P8BP8}, \eqref{b3.sol} that 
\begin{equation}\label{defL9}
\begin{aligned}
	L_9 & := \Psi^{-1}(\vf)( \omega\cdot \pa_\vf + P_8) \Psi(\vf) \\
	&  = \omega\cdot \pa_\vf + \im \, \tm_{\frac32} \Omega(\kappa,D) + \tm_1 \pa_x + \im \, \tm_{\frac12} \abs D^{\frac12} + \Op(r_9^{(d)}) + L_M + L_M',,
\end{aligned}	
\end{equation}
where 
\begin{equation}\label{r9d}
	\begin{aligned}
		\Op(r_9^{(d)}) := & \ \Op( r_8^{(d)} + r_{b_3, -\frac12}  ) - [B(\vf),\tm_1 \pa_x+ \im \,a_3^{(d)}|D|^{\frac12} + \Op(r_8^{(d)})]+ (\omega\cdot\pa_\vf B(\vf)) \\
		& + \sum_{n=2}^{M-1} \frac{(-1)^n}{n!} \ad_{B(\vf)}^n(P_8)  - \sum_{n=2}^{M} \frac{(-1)^n}{n!} \ad_{B(\vf)}^{n-1}(\omega\cdot \pa_\vf B(\vf)) \in \Ops^0 \,.
	\end{aligned}
\end{equation}

Define the matrix $\b\Sigma:= \begin{pmatrix}
1 & 0 \\ 0 & -1
\end{pmatrix}$. Summing up, we have obtained the following lemma.

\begin{lem}\label{red12}
Let $ M \in \N $, $ \tq_0 \in \N_0 $. 
Let  $b_3$ be the  function defined in \eqref{b3.sol}.
Then, conjugating the operator $ \cL_8  $ in \eqref{cL8} 
via the invertible, real, reversibility and momentum preserving 
map  $ \b\Psi $ defined in \eqref{Psi1}, \eqref{Psi.problem}, we obtain,
for any $ \omega \in \tD\tC (\upsilon, \tau ) $,  
 the real, reversible and  momentum preserving operator 
\begin{equation}
\begin{aligned}\label{cL10}
\cL_{9}  & :=   \b \Psi^{-1} \cL_8 \b\Psi
 = \omega\cdot\partial_\vf  +  \im \,\tm_{\frac32} \b\Omega(\kappa, D) + \tm_1 \pa_x + \im \tm_{\frac12} \b\Sigma |D|^{\frac12}+ \im\b\Pi_0 + \bR_{9}^{(0, d)} + \bT_{9,M},,
	\end{aligned}
	\end{equation}
	where
	\begin{enumerate}
	\item  
	the constant $ \tm_{\frac12}$ defined in \eqref{def:m12} satisfies $| \tm_{\frac12} |^{k_0, \upsilon} \lesssim 	\varepsilon^2 $;
\item $ \bR_{9}^{(0,d)} $ is  a block-diagonal operator 
	\begin{align*}
	\bR_{9}^{(0,d)} & = \begin{pmatrix}
	r_{9}^{(d)}(\vf,x,D) & 0 \\ 
	0 &\bar{r_{9}^{(d)}(\vf,x,D)}
	\end{pmatrix} \in \Ops^{0} \,,
	\end{align*}
that satisfies, for some $\sigma_M:= \sigma_M(k_0,\tau,\nu)>0$, 
and  for all $ s\geq s_0$, 
	\begin{align}
		&\normk{\bR_{9}^{(0,d)} }{0,s,1}  \lesssim_{s,M} \varepsilon \upsilon^{-1} ( 1+\normk{\fI_0}{s+\sigma_M} ) \,; \label{bR8} 
\end{align}
\item  For any $ \tq \in \N^\nu_0 $ with $ |\tq| \leq \tq_0$, 
$n_1, n_2 \in \N_0 $  with 
$ n_1 + n_2  \leq M - 2 (k_0+\tq_0) + \frac52  $,  the  
operator $\langle D \rangle^{n_1}\partial_{\vphi}^\tq \bT_{9, M}(\vphi) \langle D \rangle^{n_2}$ is 
$\cD^{k_0} $-tame with a tame constant satisfying, 
for some $\sigma_M(\tq_0) := \sigma_M(k_0, \tau, \nu, \tq_0) $, for any $s_0 \leq s \leq S $, 
\begin{equation}\label{red12.estT}
{\mathfrak M}_{\langle D \rangle^{n_1}\partial_{\vphi}^\tq \bT_{9,M}(\vphi) \langle D \rangle^{n_2}}(s) \lesssim_{S, M, \tq_0} 
\varepsilon \upsilon^{-1}( 1+ \normk{\fI_0}{s+\sigma_M(\tq_0)} )\,;
\end{equation}
\item The operators $ \b\Psi^{\pm 1} -{\rm Id}$, $(\b\Psi^{\pm 1}-{\rm Id})^*$ are $\cD^{k_0}$-tame, with tame constants satisfying, for some $\sigma := \sigma (k_0, \tau, \nu ) > 0 $ and for all $s \geq s_0$,
\begin{align}
& \fM_{\b\Psi^{\pm 1} -{\rm Id}}(s) + \fM_{(\b\Psi^{\pm 1}-{\rm Id})^*}(s) \lesssim_{s} \varepsilon\upsilon^{-1}( 1 + \normk{\fI_0}{s+ \sigma})\,. \label{red12.est4}
\end{align}
 \end{enumerate}
Furthermore, for any $s_1$ as in \eqref{s1s0}, $\alpha\in\N_0$, $\tq\in\N_0^\nu$, with $\abs\tq \leq \tq_0$, and $n_1,n_2\in\N_0$, with $n_1+n_2\leq M-2 \tq_0 + \frac12$, we have
\begin{align}
&\|\Delta_{12} \bR_{9}^{(0,d)} \|_{0,s_1,1} \lesssim_{s_1,M} \varepsilon \upsilon^{-1}\norm{ i_1-i_2 }_{s_1+\sigma_M}   \,, 
\  | \Delta_{12} \tm_{\frac12} |  
\lesssim \varepsilon^2 \norm{i_1-i_2}_{s_0+\sigma} \, , 
\label{red12.est2} \\
& \| \braket{D}^{n_1} \pa_\vf^\tq \Delta_{12} \bT_{9,M} \braket{D}^{n_2}\|_{\cL(H^{s_1})} \lesssim_{s_1, M, \tq_0} \varepsilon\upsilon^{-1} \norm{i_1-i_2}_{s_1+ \sigma_M(\tq_0) }\,,\label{red12.est3}\\
& \| \Delta_{12} (\b\Psi^{\pm 1})h \|_{s_1} + \| \Delta_{12} (\b\Psi^{\pm 1})^*h \|_{s_1} \lesssim_{s_1} \varepsilon\upsilon^{-1} \norm{ i_1 - i_2}_{s_1+\sigma} \norm{h}_{s_1+\sigma} \,. \label{red12.est5}
\end{align}
\end{lem}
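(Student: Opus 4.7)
The overall strategy mirrors the structure of Lemma \ref{red1}: choose the generator of the conjugating flow so as to kill the $x$-dependent part of the $|D|^{1/2}$ coefficient, then propagate tame estimates through the Lie expansions already displayed in \eqref{lin3}--\eqref{lin4}. The crucial new ingredient, exploited through the choice \eqref{b3.sol}, is the property \eqref{a2d.est12} established at the end of Section \ref{sec:order1}: because $\langle a_3^{(d)}\rangle_x(\vf)$ is already the $\vf$-independent constant $\tm_{\frac12}$, the preliminary $\pa_x^{-1}$ in \eqref{b3.sol} produces a genuinely periodic (zero-average) $b_3(\vf,x)$, so one single conjugation simultaneously removes the $x$-dependence \emph{and} the $\vf$-dependence of the half-order coefficient. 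No further $\vf$-reparametrization is needed.

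The plan is as follows. First, I would estimate $b_3$ directly from \eqref{b3.sol}: since $a_3^{(d)}$ is an $\even(\vf,x)$ quasi-periodic traveling wave by Lemma \ref{red1} satisfying \eqref{a2d.est}, the function $b_3$ is an $\odd(\vf,x)$ quasi-periodic traveling wave satisfying $\|b_3\|_s^{k_0,\upsilon}\lesssim_s \varepsilon\upsilon^{-1}(1+\|\fI_0\|_{s+\sigma}^{k_0,\upsilon})$. Consequently the generator $B(\vf)=b_3\cH\in\Ops^0$ satisfies $\|B\|_{0,s,\alpha}^{k_0,\upsilon}\lesssim_{s,\alpha}\varepsilon\upsilon^{-1}(1+\|\fI_0\|_{s+\sigma}^{k_0,\upsilon})$ by \eqref{norm.mult} and the fact that $\cH\in\Ops^0$. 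Then Lemma \ref{Neumann pseudo diff} yields that $\Psi^{\pm 1}-\mathrm{Id}\in\Ops^0$ with the bound \eqref{red12.est4}; since $b_3$ is $\odd(\vf,x)$ and a traveling wave, Lemmata \ref{lem:rev-pse}, \ref{lem:mom_pseudo} and \ref{lem:mom_prop}$(iv)$ imply that $\b\Psi^{\pm 1}$ are reversibility and momentum preserving, and $\eqref{psipi0}$ gives the action on $\b\Pi_0$.

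Next, I would plug \eqref{P8} into the Lie expansions \eqref{lin3}, \eqref{lin4} and reorganize using \eqref{P8BP8}. The $-[B,P_8]$ term produces precisely $\im\bigl(a_3^{(d)}-\tfrac32\tm_{\frac32}\sqrt\kappa\,(b_3)_x\bigr)|D|^{1/2}$, which by the choice \eqref{b3.sol} and by $\langle a_3^{(d)}\rangle_x=\tm_{\frac12}$ collapses to $\im\tm_{\frac12}|D|^{1/2}$; the residual pseudodifferential pieces are packaged into $\Op(r_9^{(d)})$ as in \eqref{r9d}. The bound \eqref{bR8} on $\bR_9^{(0,d)}$ would then follow by repeated application of the composition and commutator estimates (Lemmata \ref{pseudo_compo}, \ref{pseudo_commu}), noting that the ad-iterations in \eqref{r9d} all have nonpositive order because each commutator with $B\in\Ops^0$ gains half a derivative against $\Omega(\kappa,D)\in\Ops^{3/2}$, and that $P_8-\im\tm_{\frac32}\Omega(\kappa,D)\in\Ops^1$ is of size $\varepsilon\upsilon^{-1}$.

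For the smoothing remainder $\bT_{9,M}$, the structure is
\[
\bT_{9,M}=\b\Psi^{-1}\bT_{8,M}\b\Psi+\im\b\Pi_0(\b\Psi-\mathrm{Id})+\begin{pmatrix}L_M+L_M' & 0\\ 0 & \overline{L_M}+\overline{L_M'}\end{pmatrix},
\]
and I would control the weighted $\cD^{k_0}$-tame constants \eqref{red12.estT} by inserting $\langle D\rangle^{\pm n_1}$, $\langle D\rangle^{\pm n_2}$ around each factor and using: (a) Lemma \ref{tame_compo} together with \eqref{red12.est4} and \eqref{red1.estT} for the first term; (b) the smoothing character of $\pi_0$ for the $\b\Pi_0$ term; (c) the fact that $\mathrm{ad}_{B}^{M+2}(P_8)\in\Ops^{-M}$ and $\mathrm{ad}_B^M(\omega\cdot\pa_\vf B)\in\Ops^{-M}$, so that $L_M,L_M'$ are regularizing of order $-M$ (with the Psi-bounds  turning into tame constants via Lemma \ref{tame_pesudodiff}). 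The Lipschitz estimates \eqref{red12.est2}, \eqref{red12.est3}, \eqref{red12.est5} are obtained by the same argument applied to differences, using  \eqref{red1.est1}--\eqref{red1.est3}. Reversibility and momentum preservation of $\cL_9$ follow from those of $\cL_8$ (Lemma \ref{red1}) and of $\b\Psi$.

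The main obstacle I anticipate is the bookkeeping for \eqref{red12.estT}: one must track exactly how many derivatives the conjugation by $\b\Psi$, the flow of a $\Psi$DO of order $0$, costs in the scale $H^s_{\vf,x}$, and combine this with the order $-M$ regularization produced by the iterated commutators in $L_M,L_M'$. Choosing the admissible range $n_1+n_2\le M-2(k_0+\tq_0)+\frac52$ (matching the range allowed in \eqref{red1.estT} for $\bT_{8,M}$, minus the order-$0$ conjugation cost) is precisely what makes the composition estimates of Lemmata \ref{tame_compo}, \ref{modulo_sumcomp} close; the numerology of this threshold is what needs the most careful verification.
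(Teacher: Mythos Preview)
Your proposal is correct and follows essentially the same route as the paper: estimate $b_3$ from \eqref{b3.sol} via \eqref{a2d.est}, deduce \eqref{red12.est4} from the exponential-map bound on $B=b_3\cH\in\Ops^0$, read off \eqref{cL10} from the Lie expansions \eqref{lin3}--\eqref{lin4} and \eqref{P8BP8}, and package $\bT_{9,M}$ exactly as you wrote (this is verbatim the paper's definition). One small slip: each commutator with $B\in\Ops^0$ gains a \emph{full} derivative, not half, so $\ad_B^n(P_8)\in\Ops^{3/2-n}$; your conclusion that the ad-iterations in \eqref{r9d} land in $\Ops^0$ is still correct, and you should also note (as the paper does) that $|\tm_{\frac12}|^{k_0,\upsilon}\lesssim\varepsilon^2$ follows directly from \eqref{def:m12} being quadratic in $a_1^{(d)},b_1$, each of size $\varepsilon$ by \eqref{step5.est3}, \eqref{b1b2rho.est}.
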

\begin{proof}
The function $b_3(\vf,x)$ defined in \eqref{b3.sol}, satisfies, 
by 
\eqref{a2d.est} and the estimate of $ \tm_{\frac32}$ given  in
Lemma \ref{LEMMONE}-item-\ref{i2-77},  	for some $\sigma>0$ and for all $s\geq s_0$, 
	\begin{equation}\label{b3.est}
		\normk{ b_3 }{s} \lesssim_{s} \varepsilon \upsilon^{-1}(1+ \normk{\fI_0}{s+\sigma}) \, .
	\end{equation}
	The estimate for $\tm_{\frac12}$ follows by \eqref{def:m12}, \eqref{prod} and  \eqref{step5.est3}, \eqref{b1b2rho.est}.
	The estimate \eqref{bR8} follows by \eqref{r9d}, \eqref{P8}, Lemmata \ref{pseudo_compo}, 
	\ref{pseudo_commu}, 
	 and \eqref{a2d.est}, \eqref{bR8.est}, \eqref{b3.est}.
	By \eqref{cL8}, 
	\eqref{P8}, \eqref{defL9}, and  \eqref{psipi0},
	the smoothing term $  \bT_{9,M}  $ in \eqref{cL10} is
	\begin{equation*}
		\bT_{9,M} := {\bf \Psi}^{-1} \bT_{8,M} {\bf \Psi} + 
		\im \b\Pi_0({\bf \Psi} - {\rm Id}) +
	\begin{pmatrix}
         L_M + L_M' & 0 \\ 0 & \overline{L_M} + \overline{L_M'}  
	\end{pmatrix}   	
	\end{equation*}
 with $L_M$ and $L_M'$ introduced in \eqref{lin3}, \eqref{lin4}.
 The estimate \eqref{red12.estT}   follows by Lemmata \ref{tame_compo}, \ref{tame_pesudodiff},  \ref{Neumann pseudo diff},   \eqref{P8}, \eqref{a2d.est}, \eqref{red1.estT}, \eqref{b3.est}, \eqref{red12.est4}.
	The estimate \eqref{red12.est4} follows by Lemma \ref{tame_pesudodiff} and  \eqref{b3.est}. 
	The estimates \eqref{red12.est2}, \eqref{red12.est3}, \eqref{red12.est5} are proved in the same fashion.
	By Lemma \ref{red1}, the function $a_3^{(d)}$ is a 
	$\even(\vf,x)$ quasi-periodic traveling wave. Hence 
	the function $b_3$ in \eqref{b3.sol} is a $\odd(\vf,x)$ quasi-periodic traveling wave. By Lemmata \ref{lem:rev-pse}, 
	\ref{lem:mom_pseudo}, and \ref{lem:mom_prop},
	the transformation $\b\Psi$ is reversibility and momentum preserving, therefore the operator $\cL_{9}$ is reversible and momentum preserving.
\end{proof}

\begin{rem}\label{fix:alpha}
In Proposition \ref{end_redu} we shall 
estimate $ \| [\pa_x, \bR_9^{(0,d)} ]\|_{0,s,0}^{k_0,\upsilon}$ 
using  \eqref{bR8} and \eqref{eq:comm_tame_AB}. In order to  
control $ \| \bR_9^{(0,d)}  \|_{0,s,1}^{k_0, \upsilon} $  we used the 
 estimates \eqref{step5.est5} for finitely many $ \alpha \in \N_0 $, 
$ \alpha \leq \alpha (M) $, depending on $ M $. 
Furthermore in Proposition 
\ref{end_redu} we shall use 
\eqref{red12.est2}-\eqref{red12.est3} only  for $ s_1 = s_0 $. 
\end{rem}

\subsection{Conclusion: partial reduction of $\cL_\omega$}\label{sec:conc}

By Sections \ref{sec:repa}-\ref{sec:order12}, the linear operator $\cL$  in \eqref{Linea10} is semi-conjugated, for all $ \omega \in {\tt DC}(\upsilon,\tau) $, to the real, reversible and momentum preserving operator $\cL_{9}$ defined in \eqref{cL10}, namely
\begin{equation}\label{cL9cL}
	\cL_{9} = \cW_2^{-1} \cL \cW_1 \,,
\end{equation}
where 
\begin{equation}\label{W1W2}
	\cW_1 := \cP \cZ \cE \cQ \wt\cM \cC \b\Phi_{M} 
	\b\Phi \cV \b\Psi \,, \quad 
	\cW_2:= \cP \rho \cZ \cE \cQ \wt\cM \cC \b\Phi_{M} \b\Phi \cV \b\Psi \, . 
\end{equation}
Moreover $\cL_{9} $ is defined for all   $ \omega \in \R^\nu $. 

Now we deduce a similar conjugation result for the projected operator 
$ \cL_\omega $ in \eqref{Lomegatrue}, i.e. \eqref{cLomega_again}, which acts
in the normal subspace $ \acca_{\S^+,\Sigma}^\angle $. 
We first introduce some notation. 

We denote  by $ \Pi_{\S^+,\Sigma}^\intercal $ and
$  \Pi_{\S^+,\Sigma}^\angle  $ 
the projections 
on the subspaces $\acca_{\S^+,\Sigma}^\intercal$ and $ \acca_{\S^+,\Sigma}^\angle$ defined in Section \ref{sec:decomp}. 
In view of Remark \ref{phase-space-ext}, we denote, 
with a small abuse of notation, 
$ \Pi_{\S_0^+, \Sigma}^\intercal:= \Pi_{\S^+, \Sigma}^\intercal + \pi_0 $, so that
$ \Pi_{\S_0^+, \Sigma}^\intercal + \Pi_{\S^+,\Sigma}^\angle = {\rm Id}$ on the whole
$ L^2 \times L^2 $. 
We remind that $ \S_0 = \S \cup \{0\} $, where  $ \S $ is the set defined in \eqref{def.S}. 
We denote by $ \Pi_{{\S}_0} := \Pi_\S^\intercal + \pi_0 $, where 
$  \Pi_\S^\intercal $ is defined below  \eqref{def:HS0bot} 
together with the  definition of  $ \Pi_{{\mathbb S}_0}^\perp $, so that we have 
$  \Pi_{{\S}_0} + \Pi_{{\S}_0}^\perp  = {\rm Id}  $.

\begin{lem}\label{lemmaWperp}
Let $ M > 0 $. There is $ \sigma_M > 0 $ (depending also on $ k_0, \tau, \nu $) such that, 
assuming \eqref{ansatz_I0_s0} with $ \mu_0 \geq \sigma_M $, the following holds: 
the maps  $\cW_1$, $\cW_2$ defined in \eqref{W1W2} 
	have the form 
	\begin{equation}\label{Wi}
		\cW_i = \wt\cM\cC + \cR_{i}(\varepsilon) \,, 
	\end{equation}
	where, for any 
	$  i = 1, 2 $,  for all $s_0 \leq s \leq S$, 
	\begin{align}
		\normk{\cR_{i}(\varepsilon)h}{s} &  \lesssim_{ S, M} \varepsilon\upsilon^{-1}\big( \normk{h}{s+\sigma_M} + \normk{\fI_0}{s+\sigma_M} \normk{h}{s_0 +\sigma_M} \big) \, . \label{Wi.est1} 
	\end{align}
Moreover, 
for $\varepsilon \upsilon^{-1} \leq \delta (S) $ small enough, the operators
	\begin{equation}\label{W1W2_proj}
		\cW_1^\perp:= \Pi_{\S^+,\Sigma}^\angle \cW_1 \Pi_{{\S}_0}^\perp\,, \quad \cW_2^\perp:= \Pi_{\S^+,\Sigma}^\angle \cW_2 \Pi_{{\S}_0}^\perp \,,
	\end{equation}
	are invertible and, 
	for all $s_0 \leq s \leq S$,  $i=1,2$,
	\begin{align}
		\normk{(\cW_i^\perp)^{\pm 1} h}{s} & \lesssim_{ S, M} \normk{h}{s+\sigma_M} + \normk{\fI_0}{s+\sigma_M} \normk{h}{s_0 +\sigma_M} \,\label{qui.1}  \,, \\  
		\|  \Delta_{12} (\cW_i^\perp)^{\pm 1} h \|_{s_1} &\lesssim_{ s_1, M} \varepsilon\upsilon^{-1}\norm{i_1-i_2}_{s_1 + \sigma_M} \norm{h}_{s_1+\sigma_M} \, . 
	\end{align}
	The operators $ \cW_1^\perp $, $ \cW_2^\perp $
map (anti)-reversible, respectively traveling, waves, into
(anti)-reversible, respectively traveling, waves.
	\end{lem}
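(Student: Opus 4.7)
The strategy is to exploit that every factor composing $\cW_1,\cW_2$ in \eqref{W1W2}, except $\wt\cM\cC$, is a small perturbation of the identity with size $O(\varepsilon\upsilon^{-1})$, and then to use that $\wt\cM\cC$ is a block-diagonal Fourier multiplier intertwining the projections $\Pi^\perp_{\S_0}$ and $\Pi^\angle_{\S^+,\Sigma}$.

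First, I would establish the form \eqref{Wi} by writing $\cW_i-\wt\cM\cC$ as a telescoping sum in which each factor lying to the left of $\wt\cM\cC$ (namely $\cP,\cZ,\cE,\cQ$, plus $\rho$ in the case of $\cW_2$) and each factor lying to the right ($\b\Phi_M,\b\Phi,\cV,\b\Psi$) is successively replaced by the identity. Each summand contains exactly one factor of the form $(\cA-\mathrm{Id})$ sandwiched between bounded operators; the $\cD^{k_0}$-tame estimates \eqref{timerep1}, \eqref{lem:ga1}, \eqref{step5.est11}, \eqref{step5.est12}, \eqref{block.est6}, \eqref{red1.est4}, \eqref{red1.est5}, \eqref{red12.est4} bound each such difference by $\varepsilon\upsilon^{-1}$ up to a finite loss of derivatives. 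Iterated application of Lemma \ref{tame_compo}, combined with the ansatz \eqref{ansatz_I0_s0} (which keeps the tame constants of all factors uniformly bounded in low norm), yields \eqref{Wi.est1} with some $\sigma_M$ that is the cumulative loss of the reduction. The $\Delta_{12}$-estimate is obtained by the same telescoping, using the Lipschitz-in-$i$ estimates \eqref{step5.est13}, \eqref{red1.est6}, \eqref{red12.est5} and their analogues for $\cP,\cZ$.

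For the invertibility of $\cW_i^\perp$, the key point is that on the subspace $\bH^\perp_{\S_0}$ the symbol of $\Lambda$ in \eqref{Lambda} coincides with that of $M(D)$, so $\wt\cM\cC|_{\bH^\perp_{\S_0}} = \cM\cC|_{\bH^\perp_{\S_0}}$, and \eqref{proiez} gives $\Pi^\angle_{\S^+,\Sigma}\,\wt\cM\cC\,\Pi^\perp_{\S_0} = \wt\cM\cC\,\Pi^\perp_{\S_0}$, which is an isomorphism from $\bH^\perp_{\S_0}$ onto $\acca^\angle_{\S^+,\Sigma}$ with bounded inverse (of finite order, since $\Lambda^{\pm 1}\in\Ops^{\mp 1/4}$). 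Hence
$$
\cW_i^\perp = \wt\cM\cC|_{\bH^\perp_{\S_0}} + \Pi^\angle_{\S^+,\Sigma}\,\cR_i(\varepsilon)\,\Pi^\perp_{\S_0} = \wt\cM\cC|_{\bH^\perp_{\S_0}}\bigl(\mathrm{Id}+\cK_i\bigr),
$$
where $\cK_i$ is $\cD^{k_0}$-tame with tame constant $O(\varepsilon\upsilon^{-1})$ by the bound already proved on $\cR_i$. For $\varepsilon\upsilon^{-1}\leq \delta(S)$ small enough the Neumann series $\sum_{n\geq 0}(-\cK_i)^n$ converges in the tame class, producing $(\cW_i^\perp)^{-1}$ together with the estimate \eqref{qui.1}; the $\Delta_{12}$-bound follows by applying $\Delta_{12}$ to the identity $(\cW_i^\perp)(\cW_i^\perp)^{-1}=\mathrm{Id}$ and solving for $\Delta_{12}(\cW_i^\perp)^{-1}$.

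Finally, that $\cW_i^\perp$ preserves the class of (anti-)reversible traveling waves follows because every factor in its definition enjoys this property: $\cP$ by Remark \ref{P-rev-mom}, multiplication by $\rho$ by \eqref{rhopem}, $\cZ$ by Lemma \ref{lem:good_unknwon}, $\cE,\cQ,\wt\cM,\cC$ by Lemma \ref{LEMMONE}, $\b\Phi_M$ by Lemma \ref{block_dec_lemma}, $\b\Phi,\cV$ by Lemma \ref{red1}, $\b\Psi$ by Lemma \ref{red12}, while the projectors $\Pi^\perp_{\S_0}$ and $\Pi^\angle_{\S^+,\Sigma}$ are reversibility and momentum preserving by Lemmata \ref{proj_rev} and \ref{lem:proj.momentum}. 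The main technical obstacle is the careful book-keeping of the derivative losses $\sigma_M$, to ensure they are absorbed by the ansatz \eqref{ansatz_I0_s0}; once this is tracked, the whole lemma reduces to iterated applications of the composition and Neumann-series arguments above.
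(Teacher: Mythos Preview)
Your proposal is correct and follows essentially the same approach as the paper: the decomposition \eqref{Wi} via a telescoping argument using the same list of ``$\cA-\mathrm{Id}$'' estimates, the identification $\wt\cM\cC\,\Pi^\perp_{\S_0}=\cM\cC\,\Pi^\perp_{\S_0}$ (since $\pi_0$ vanishes on $\bH^\perp_{\S_0}$) combined with \eqref{proiez} to see that the leading part is an isomorphism, and then a Neumann-series inversion (which the paper simply defers to \cite{BBHM}). Your treatment of the reversibility/momentum-preserving property also matches the paper's, invoking Remark~\ref{P-rev-mom} for $\cP$ and $\rho$, the individual lemmas for the remaining factors, and Lemmata~\ref{proj_rev}, \ref{lem:proj.momentum} for the projectors.
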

\begin{proof}
The formulae  \eqref{Wi} and the estimates \eqref{Wi.est1} follow by \eqref{W1W2}, Lemmata \ref{tame_compo}, \ref{tame_pesudodiff}, and \eqref{action_Hs_tame}, 
 \eqref{timerep1}, \eqref{lem:ga1}, \eqref{step5.est11}, \eqref{step5.est12}, \eqref{block.est6}, \eqref{red1.est4}, \eqref{red1.est5}, \eqref{red12.est4}.
The invertibility of each $ \cW_i^\perp $ and the estimates  \eqref{qui.1}
follow as in \cite{BBHM}
and noting that  $\Pi_{\S^+,\Sigma}^\angle\, \wt\cM\cC\, \Pi_{{\S}_0}^\perp =\Pi_{\S^+,\Sigma}^\angle\, \cM\cC\, \Pi_{{\S}_0}^\perp $ are invertible on their ranges, with inverses 
	$ (\Pi_{\S^+,\Sigma}^\angle \, \cM\cC \,\Pi_{{\S}_0}^\perp)^{-1} = \Pi_{{\S}_0}^\perp (\cM \cC)^{-1} \Pi_{\S^+,\Sigma}^\angle $. 
Since 
 $ \cZ, \cE, \cQ, \wt \cM, \b\Phi_{M}, \b\Phi, \cV, \b\Psi $ are reversibility and momentum preserving and using Remark \ref{P-rev-mom}
and Lemmata \ref{proj_rev} and \ref{lem:proj.momentum}, we deduce that 
$ \cW_1^\perp $, $ \cW_2^\perp $
map (anti)-reversible, respectively traveling, waves, into
(anti)-reversible, respectively traveling, waves.
\end{proof}

\begin{rem}
The time reparametrization $ \cP $ and the multiplication for the function
$ \rho $ (which is independent of the space variable), commute with the projections
$  \Pi_{\S^+,\Sigma}^\angle $ and $ \Pi_{{\S}_0}^\perp  $. 
\end{rem}

The operator $\cL_\omega$ in \eqref{Lomegatrue} (i.e. \eqref{cLomega_again}) is semi-conjugated to
\begin{equation}\label{LomLp}
\cL_\bot :=	(\cW_2^\perp)^{-1} \cL_\omega \cW_1^\perp = 
	\Pi_{{\S}_0}^\perp \,\cL_{9} \,\Pi_{{\S}_0}^\perp +  \cR^f
\end{equation}
where $\cR^f$ is, 
by \eqref{W1W2_proj},
\eqref{cL9cL}, 
\eqref{Wi} (recall
that $ \wt\cM $ is defined in \eqref{map_test_M}-\eqref{Lambda}),  and 
\eqref{proiez}, 
	\begin{align}\label{Rintercal}
		\cR^f  & :=  (\cW_2^\perp)^{-1}\Pi_{\S^+, \Sigma}^\angle \cR_{2} (\varepsilon) 
		\Pi_{\S_0} \cL_{9}  \Pi_{\S_0}^\bot   \\
		& 
		 - (\cW_2^\perp)^{-1}\Pi_{\S^+, \Sigma}^\angle 
		\cL \Pi_{\S_0^+, \Sigma}^\intercal 
		\cR_{1} (\varepsilon) \Pi_{\S_0}^\bot 
		 - \varepsilon (\cW_2^\perp)^{-1}\Pi_{\S^+, \Sigma}^\angle J R \cW_1^\perp \, .  \nonumber 
	\end{align}

\begin{lem}
	The operator $\cR^f$ in \eqref{Rintercal} has the finite rank form \eqref{finite_rank_R}, \eqref{gjchij_est}. Moreover, 
	let $\tq_0\in\N_0$ and $M \geq 2(k_0+\tq_0) - \frac32$. There exists $\aleph(M,\tq_0)>0$ (depending also on $k_0$, $\tau$, $\nu$) such that, for any $n_1, n_2\in\N_0$, with $n_1+n_2 \leq M - 2(k_0+\tq_0)+\frac52$, and any $\tq\in\N_0^\nu$, with $\abs \tq \leq \tq_0$, the operator $\braket{D}^{n_1} \pa_\vf^\tq \cR^f \braket{D}^{n_2} $ 
	is $\cD^{k_0}$-tame, with a tame constant satisfying
	\begin{align}
		& \fM_{\braket{D}^{n_1} \pa_\vf^\tq \cR^f \braket{D}^{n_2}}(s) \lesssim_{ S, M, \tq_0} \varepsilon\upsilon^{-1}(1+\normk{\fI_0}{s+\aleph(M,\tq_0)}) \, , 
		\quad \forall s_0 \leq s \leq S \, ,  \label{quo.est1}  \\
		&  \| \braket{D}^{n_1} \pa_\vf^\tq\Delta_{12} \cR^f \braket{D}^{n_2} \|_{\cL(H^{s_1})} \lesssim_{s_1, M, \tq_0} \varepsilon\upsilon^{-1} \norm{i_1-i_2}_{s_1+\aleph(M,\tq_0)} \, , 
		\label{quo.est2}
	\end{align}
 for any $s_1$ as in \eqref{s1s0}. 
\end{lem}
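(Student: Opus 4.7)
\medskip

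\noindent\textbf{Proof proposal.} The strategy is to analyze the three summands in \eqref{Rintercal} separately, observing that each one factors through a finite-dimensional subspace, and then combine the tame estimates already available for every elementary constituent.

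First I would verify the finite rank structure \eqref{finite_rank_R}--\eqref{gjchij_est}. The third summand $-\varepsilon(\cW_2^\perp)^{-1}\Pi_{\S^+,\Sigma}^\angle J R\,\cW_1^\perp$ is already of finite rank because $R$ has the form \eqref{finite_rank_R} by Lemma~\ref{lem:K02}. For the first summand, the projector $\Pi_{\S_0}=\Pi_{\S}^\intercal+\pi_0$ maps into the $(\nu+1)$-dimensional subspace spanned by $\{e^{\pm\im\bar n_a x}\}_{a=1,\dots,\nu}\cup\{1\}$, so choosing this explicit basis yields a decomposition of the form $\sum_{k}(h,g_k^{(1)})_{L^2}\chi_k^{(1)}$ with $g_k^{(1)}=\Pi_{\S_0}^\perp(\cL_9)^*\Pi_{\S_0}\{\text{basis}\}$ and $\chi_k^{(1)}=(\cW_2^\perp)^{-1}\Pi_{\S^+,\Sigma}^\angle\cR_2(\varepsilon)\{\text{basis}\}$. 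An identical argument applies to the second summand using $\Pi_{\S_0^+,\Sigma}^\intercal$ in place of $\Pi_{\S_0}$. The estimates \eqref{gjchij_est} on these functions then follow from the tame bounds on $\cR_i(\varepsilon)$ in \eqref{Wi.est1}, the tame bounds on $(\cW_i^\perp)^{\pm 1}$ in \eqref{qui.1}, and the explicit estimates on $\cL$ and $\cL_9$ coming from Sections~\ref{sec:repa}--\ref{sec:order12}.

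Next I would establish the tame bounds \eqref{quo.est1}--\eqref{quo.est2}. The crucial point is that each of the three summands of $\cR^f$ sandwiches a finite-dimensional projector $\Pi_{\S_0}$, $\Pi_{\S_0^+,\Sigma}^\intercal$, or the finite-rank $R$ between two other operators. Therefore, for any $n_1,n_2\in\N_0$ and any $\tq\in\N_0^\nu$, I can distribute $\braket{D}^{n_1}\pa_\vf^\tq$ on the left and $\braket{D}^{n_2}$ on the right by inserting the finite-dimensional projector and noting that, on a finite-dimensional subspace of trigonometric polynomials of bounded degree, $\braket{D}^{n_2}$ and $\pa_\vf^\tq$ are bounded (indeed, tame) operators. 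Thus it suffices to control
\[
\braket{D}^{n_1}\pa_\vf^\tq (\cW_2^\perp)^{-1}\Pi_{\S^+,\Sigma}^\angle \cR_2(\varepsilon)\quad\text{and}\quad \cL_9\Pi_{\S_0}^\perp\braket{D}^{n_2}
\]
separately (and the analogous factorizations for the other two summands), together with the factor $\cL$ in the second summand. For the first factor the bound follows from Lemma~\ref{lemmaWperp} and Leibniz, while for $\cL_9\Pi_{\S_0}^\perp\braket{D}^{n_2}$ we apply \eqref{cL10} together with Lemma~\ref{tame_pesudodiff} for the pseudodifferential terms and \eqref{red12.estT} for the smoothing remainder $\bT_{9,M}$, which is precisely where the constraint $n_1+n_2+\tq_0\le M-2k_0+\frac52$ enters. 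The middle piece involving $\cL$ in the second summand is handled via \eqref{Linea10} and Lemma~\ref{DN_pseudo_est}, both of which produce only a finite loss of derivatives, absorbed into $\aleph(M,\tq_0)$.

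Finally, the Lipschitz estimate \eqref{quo.est2} is obtained by the standard telescoping: writing $\Delta_{12}\cR^f$ as a sum of terms in which exactly one factor is replaced by its $\Delta_{12}$-variation, and applying the Lipschitz versions of the estimates cited above, namely \eqref{red12.est2}--\eqref{red12.est5}, together with the Lipschitz bounds on $\cW_i^\perp$ and $\cR_i(\varepsilon)$ from Lemma~\ref{lemmaWperp}. I expect the main bookkeeping obstacle to be collecting all the loss-of-derivatives constants from these many intermediate estimates into a single index $\aleph(M,\tq_0)$; this is purely combinatorial and uses only that each conjugation performed in Sections~\ref{sec:repa}--\ref{sec:order12} costs a loss of derivatives that depends on $M$, $\tq_0$, $k_0$, $\tau$, $\nu$, but is independent of $s$.
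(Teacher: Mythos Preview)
Your proposal is correct and follows essentially the same approach as the paper's proof: both exploit that each summand of $\cR^f$ factors through a finite-dimensional projector ($\Pi_{\S_0}$, $\Pi_{\S_0^+,\Sigma}^\intercal$, or the finite-rank $R$), which absorbs the derivatives $\braket{D}^{n_1}$, $\braket{D}^{n_2}$, $\pa_\vf^\tq$, and then combine the tame estimates \eqref{Wi.est1}, \eqref{qui.1}, \eqref{bR8}, \eqref{red12.estT}, \eqref{gjchij_est} for the remaining factors. Your write-up is in fact more explicit than the paper's (which simply cites the relevant equations); one minor slip is that the constraint coming from \eqref{red12.estT} should read $n_1+n_2\le M-2(k_0+\tq_0)+\tfrac52$ rather than $n_1+n_2+\tq_0\le M-2k_0+\tfrac52$, but this does not affect the argument.
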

\begin{proof}
The first two terms in \eqref{Rintercal} have  the finite rank form 
\eqref{finite_rank_R} because of the presence of the finite dimensional projector 
$ \Pi_{\S_0} $, respectively $ \Pi_{\S_0^+,\Sigma}^\intercal $. 
In the last term, the operator $ R $ has  the finite rank form 
\eqref{finite_rank_R}. 	
	The estimate \eqref{quo.est1} follows by  \eqref{Rintercal}, \eqref{W1W2}, \eqref{W1W2_proj}, \eqref{cL10}, \eqref{finite_rank_R}, \eqref{prod} and \eqref{Wi.est1}, \eqref{qui.1},  \eqref{bR8}, \eqref{red12.estT}, \eqref{gjchij_est}.
	The estimate \eqref{quo.est2} follows similarly.
\end{proof}

\begin{prop}\label{end_redu}
{\bf (Reduction of $ {\cal L}_\omega $ up to smoothing operators)}
	For all $(\omega,\kappa)\in \tD\tC(\upsilon,\tau)\times [\kappa_1,\kappa_2]$, the operator $\cL_\omega$ in \eqref{Lomegatrue} (i.e. \eqref{cLomega_again}) is semi-conjugated via \eqref{LomLp} to the real, reversible and momentum preserving operator $\cL_\perp $. For all $ (\omega, \kappa) \in \R^\nu \times [\kappa_1, \kappa_2] $, the extended operator defined by the right hand side in \eqref{LomLp} has the form  
	\begin{equation}\label{Lperp}
		\cL_\perp = \omega\cdot\pa_\vf \uno_\perp + \im\,\bD_\perp + \bR_\perp ,,
	\end{equation}
 where $\uno_\perp$ denotes the identity map of 
 $  \bH_{{\mathbb S}_0}^\bot $ (cfr.  \eqref{def:HS0bot}) and
	\begin{enumerate}
		\item $\bD_\perp$ is the diagonal operator
		\begin{equation*}\label{Dperp}
			\bD_\perp := \begin{pmatrix}
			\cD_\perp & 0 \\ 0 & -\bar{\cD_\perp}
			\end{pmatrix} \,, \quad \cD_\perp:= \diag_{j\in \S_0^c} \mu_j \,, \quad 
			\S_0^c:= \Z\setminus (\S\cup\{0\})  \, , 
		\end{equation*}
		with eigenvalues 
$ \mu_j :=\tm_{\frac32}\Omega_j(\kappa) + \tm_1 j + \tm_{\frac12}\abs j^\frac12 \in \R\,, $
		where the real constants $\tm_{\frac32}, \tm_1, \tm_{\frac12} $, defined respectively in \eqref{defm32}, \eqref{defm1}, \eqref{def:m12}, satisfy 
		\begin{equation}\label{const_small}
		\begin{aligned}
& 			| \tm_{\frac32} - 1 |^{k_0,\upsilon} + | \tm_1|^{k_0,\upsilon} + |\tm_{\frac12} |^{k_0,\upsilon} \lesssim \varepsilon \,; 
		\end{aligned}
		\end{equation}
		In addition, for some $ \sigma > 0 $, 
	\begin{equation}\label{const_smallV} 
	| \Delta_{12} \tm_{\frac32}  | + | \Delta_{12} \tm_1| + 
|\Delta_{12} \tm_{\frac12} | \lesssim \varepsilon \norm{i_1-i_2}_{s_0+\sigma}  \, . 
\end{equation} 
		\item The operator $\bR_\perp $  is real, reversible and momentum preserving. Moreover, for any $\tq_0\in \N_0$, $M  > 2(k_0+\tq_0) - \frac32$, there is a constant $\aleph(M,\tq_0)>0$ (depending also on $k_0$, $\tau$, $\nu$) such that, assuming \eqref{ansatz_I0_s0} 
		with $\mu_0 \geq \aleph(M,\tq_0)$, for any $s_0\leq s \leq S$,   $\tq\in\N_0^\nu$, with $\abs\tq \leq \tq_0$, the operators 
		$ \pa_\vf^\tq\bR_\perp  $, $ [\pa_\vf^\tq\bR_\perp, \pa_x] $  
		 are $\cD^{k_0}$-tame with  tame constants satisfying
		\begin{align}
			&\fM_{\pa_\vf^\tq\bR_\perp } (s), \  
			\fM_{[\pa_\vf^\tq\bR_\perp, \pa_x]} (s) \lesssim_{ S, M, \tq_0} \varepsilon\upsilon^{-1} (1+ \normk{\fI_0}{s+\aleph(M,\tq_0)})\,. \label{fine.rid.1}
		\end{align}
		Moreover,
		 for any 
		$\tq\in\N_0^\nu$, with $\abs\tq\leq \tq_0$,
	\begin{equation}
		\| \pa_\vf^\tq\Delta_{12}\bR_\perp   \|_{\cL(H^{s_0})} + 
		\| \pa_\vf^\tq\Delta_{12}[\bR_\perp, \pa_x]   \|_{\cL(H^{s_0})} \lesssim_{M} \varepsilon \upsilon^{-1} \norm{i_1-i_2}_{s_0+\aleph(M,\tq_0)}\label{fine.rid.2} \,.
	\end{equation}
	\end{enumerate}
	\end{prop}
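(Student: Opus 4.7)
My approach is essentially to read off the structure and estimates for $\cL_\perp$ by combining the chain of conjugations assembled in Sections \ref{sec:repa}--\ref{sec:order12} with the semi-conjugation formula \eqref{LomLp}. The starting point is the explicit expression \eqref{cL10} for $\cL_{9}$. The key algebraic observation is that the orthogonal projector $\Pi_{\S_0}^\perp$ commutes with every Fourier multiplier appearing in $\cL_{9}$, in particular with $\b\Omega(\kappa,D)$, $\pa_x$ and $|D|^{1/2}$, while it annihilates $\pi_0$ (hence also $\b\Pi_0$). Therefore, on the normal subspace $\bH_{\S_0}^\perp$, the constant coefficient piece $\im\tm_{\frac32}\b\Omega(\kappa,D)+\tm_1\pa_x+\im\tm_{\frac12}\b\Sigma|D|^{\frac12}$ acts diagonally in the Fourier basis, yielding $\im\mu_j$ on the $z$-component and $-\im\mu_{-j}$ on the $\bar z$-component, with $\mu_j=\tm_{\frac32}\Omega_j(\kappa)+\tm_1 j+\tm_{\frac12}|j|^{\frac12}$, which is exactly the block structure $\im\bD_\perp=\im\diag(\cD_\perp,-\bar{\cD_\perp})$. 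Everything else is collected into
\[
\bR_\perp:=\Pi_{\S_0}^\perp\bigl(\bR_{9}^{(0,d)}+\bT_{9,M}\bigr)\Pi_{\S_0}^\perp+\cR^{f},
\]
with $\cR^f$ as in \eqref{Rintercal}.

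The scalar estimates \eqref{const_small}, \eqref{const_smallV} on $\tm_{\frac32},\tm_1,\tm_{\frac12}$ are immediate from the bounds in Lemma \ref{LEMMONE}.(\ref{i2-77}), Lemma \ref{red1}.(1), \eqref{red1.est1}, Lemma \ref{red12}.(1) and \eqref{red12.est2}. Reality, reversibility and the momentum-preserving property of $\bR_\perp$ follow because each conjugation $\b\Phi_M,\b\Phi,\cV,\b\Psi$ used to construct $\cL_{9}$ preserves these structures (Lemmata \ref{block_dec_lemma}, \ref{red1}, \ref{red12}), the projectors $\Pi_{\S_0}^\perp$ commute with both $\cS$ and $\tau_\vs$ (Lemmata \ref{proj_rev}, \ref{lem:proj.momentum}), and $\cR^f$ is built entirely out of operators sharing these properties, as recorded in Lemma \ref{lemmaWperp}.

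For the tame bound \eqref{fine.rid.1} on $\pa_\vf^\tq\bR_\perp$, I would split $\bR_\perp$ into its three pieces. The pseudodifferential contribution $\Pi_{\S_0}^\perp\pa_\vf^\tq\bR_{9}^{(0,d)}\Pi_{\S_0}^\perp$ is of order zero, so Lemma \ref{tame_pesudodiff} together with \eqref{bR8} (applied after losing $|\tq|$ space derivatives via $\pa_\vf^\tq$) gives the required $\varepsilon\upsilon^{-1}$ bound. The smoothing contribution $\pa_\vf^\tq\bT_{9,M}$ is controlled directly by \eqref{red12.estT} with $n_1=n_2=0$, which is available as soon as $M\geq 2(k_0+\tq_0)-\frac52$. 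The finite rank contribution $\pa_\vf^\tq\cR^f$ is controlled by \eqref{quo.est1} (again with $n_1=n_2=0$), with the loss $\aleph(M,\tq_0)$ precisely the constant appearing in that estimate. For the commutator $[\pa_\vf^\tq\bR_\perp,\pa_x]$, I would use Lemma \ref{pseudo_commu} on $\bR_{9}^{(0,d)}$ (this commutator is again in $\Ops^0$ and \eqref{bR8} provides the norm $\|\bR_{9}^{(0,d)}\|^{k_0,\upsilon}_{0,s,1}$); for $\bT_{9,M}$ and $\cR^f$ I would factor $\pa_x=\pa_x\braket{D}^{-1}\cdot\braket{D}$ on one side to convert $\pa_x$ into a bounded Fourier multiplier composed with $\braket{D}$, and then apply \eqref{red12.estT} and \eqref{quo.est1} with $(n_1,n_2)\in\{(1,0),(0,1)\}$ --- always within the admissible range $n_1+n_2\leq M-2(k_0+\tq_0)+\frac52$ once $M$ is chosen large enough. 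The Lipschitz variation bound \eqref{fine.rid.2} is proved identically, replacing \eqref{bR8}, \eqref{red12.estT}, \eqref{quo.est1} by \eqref{red12.est2}, \eqref{red12.est3}, \eqref{quo.est2}.

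I do not expect any serious obstacle: the entire proposition is a bookkeeping summary of the six conjugations carried out in Sections \ref{sec:repa}--\ref{sec:order12}, and the only nontrivial ingredient is the commutation of $\Pi_{\S_0}^\perp$ with the Fourier multipliers in $\cL_{9}$, which produces cleanly the diagonal piece $\bD_\perp$. The main care needed is to track the loss constants $\sigma_M$, $\aleph(M,\tq_0)$ across the composition, and to make sure that the range $n_1+n_2\leq M-2(k_0+\tq_0)+\frac52$ guaranteed by \eqref{red12.estT} and \eqref{quo.est1} is sufficient to absorb the single extra derivative produced by the commutator with $\pa_x$; this is where the choice $M=N$ fixed in \eqref{M=N} and the final choice of $M$ in \eqref{M_choice} play their role.
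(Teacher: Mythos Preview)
Your proposal is correct and follows essentially the same route as the paper: you identify $\bR_\perp=\Pi_{\S_0}^\perp(\bR_9^{(0,d)}+\bT_{9,M})\Pi_{\S_0}^\perp+\cR^f$, read off $\bD_\perp$ from the Fourier multipliers, collect the scalar bounds from Lemmata \ref{LEMMONE}, \ref{red1}, \ref{red12}, and obtain \eqref{fine.rid.1}--\eqref{fine.rid.2} by combining Lemmata \ref{pseudo_commu}, \ref{tame_pesudodiff} with \eqref{bR8}, \eqref{red12.estT}, \eqref{quo.est1} (and their $\Delta_{12}$ counterparts) at $(n_1,n_2)\in\{(0,0),(1,0),(0,1)\}$. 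One minor wording point: when you say $\pa_\vf^\tq$ ``loses $|\tq|$ space derivatives'' on the pseudodifferential piece, what actually happens is that $\pa_\vf^\tq\Op(a)=\Op(\pa_\vf^\tq a)$ loses $|\tq|$ derivatives in the symbol norm $\|\cdot\|_{0,s,\alpha}^{k_0,\upsilon}$ in the \emph{Sobolev} index $s$ (jointly in $\vf,x$), not specifically in $x$; this is harmless here and is absorbed into $\aleph(M,\tq_0)$.
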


\begin{proof}
By \eqref{LomLp} and \eqref{cL10} we deduce \eqref{Lperp}
with 
$$
	\bR_\perp := \Pi_{{\S}_0}^\perp(\bR_{9}^{(0,d)}+ \bT_{9,M})\Pi_{{\S}_0}^\perp + \cR^f\, . 
$$
	The estimates \eqref{const_small}-\eqref{const_smallV} follow by Lemmata \ref{LEMMONE}, \ref{red1}, \ref{red12}. 
	The estimate \eqref{fine.rid.1} follows by Lemmata \ref{pseudo_commu},  \ref{tame_pesudodiff}, 
	 \eqref{bR8} and  \eqref{red12.estT}, \eqref{quo.est1}, choosing $(n_1,n_2)=(1,0),(0,1)$.
	The estimate \eqref{fine.rid.2} follows similarly.
	The operator 
$ \cL_\omega $ in \eqref{Lomegatrue} is reversible and momentum preserving
(Lemma \ref{lem:K02}).
By Sections \ref{subsec:good}-\ref{sec:order12}, the maps 
 $ \cZ, \cE, \cQ, \wt \cM, \b\Phi_{M}, \b\Phi, \cV, \b\Psi $ are reversibility and momentum preserving. Therefore, using also 
\eqref{propp},  \eqref{rhopem}  
and Lemmata \ref{proj_rev} and \ref{lem:proj.momentum}, we deduce that 
the operator $ \cL_\bot $ in \eqref{LomLp} is reversible and momentum preserving. 
Since $ \im\,\bD_\perp $ is reversible and momentum preserving, we deduce that
  $\bR_\perp $   is reversible and momentum preserving. 
\end{proof}

\section{Almost-diagonalization and invertibility of $\cL_\omega$}\label{sec:KAM}

In Proposition \ref{end_redu} we obtained the operator $ \cL_\perp $
 in \eqref{Lperp} which is diagonal and constant coefficient up to the 
 bounded operator $\bR_\perp (\vphi) $. In this section we 
 complete the diagonalization of $ \cL_\perp $  implementing a
 KAM iterative scheme.
As starting point, we consider the real, reversible and momentum preserving operator,
acting in $  \bH_{{\mathbb S}_0}^\bot $, 
\begin{equation}\label{bL0}
	\bL_0 := \bL_0(i) := \cL_\perp = 
	\omega\cdot\pa_\vf \uno_\perp + \im \,\bD_0 + \bR_\perp^{(0)} \,,
\end{equation}
defined for all $(\omega,\kappa)\in \R^\nu \times[\kappa_1,\kappa_2]$, with diagonal part (with respect to the exponential basis)
\begin{equation}\label{D0}
	\begin{aligned}
		\bD_0&:= \begin{pmatrix}
		\cD_0 & 0 \\ 0 & -\bar{\cD_0}
		\end{pmatrix} \,,\quad
		\cD_0 := \diag_{j \in \S_0^c} \mu_{j}^{(0)} \,, \quad \mu_{j}^{(0)}:= \tm_{\frac32}\Omega_j(\kappa)+\tm_1 j+\tm_{\frac12} \abs j^\frac12 \,,
	\end{aligned}
\end{equation}
where $  \S_0^c = \Z \setminus \S_0 $, $ \S_0 = \S \cup \{0\}  $,  
 the real constants $\tm_{\frac32} $, $ \tm_1 $,  $ \tm_{\frac12} $ 
satisfy  \eqref{const_small}-\eqref{const_smallV} and 
\begin{equation}\label{Rperp0}
	\bR_\perp^{(0)}:= \bR_\perp := \begin{pmatrix}
	R_\perp^{(0,d)} & R_\perp^{(0,o)} \\ \bar{R_\perp^{(0,o)}} & \bar{R_\perp^{(0,d)}}
	\end{pmatrix}\,, \ \quad R_\perp^{(0,d)}: 
	H_{{\mathbb S}_0}^\perp  
	\rightarrow H_{{\mathbb S}_0}^\perp  \,,
	R_\perp^{(0,o)} : H_{-{\mathbb S}_0}^\perp  
	\rightarrow H_{{\mathbb S}_0}^\perp \, , 
\end{equation}
which is a real, reversible, momentum preserving operator satisfying  \eqref{fine.rid.1}, \eqref{fine.rid.2}. We denote 
$ H_{\pm \S_0}^\bot = \{ h(x) = \sum_{j \not \in \pm \S_0} h_j e^{ \pm \im j x} \in L^2 \} $.  
Note that 
\begin{equation}\label{Dbar}
\bar{\cD_0} :  H_{- {\mathbb S}_0}^\bot \to H_{- {\mathbb S}_0}^\bot  \, , 
\quad   \bar{\cD_0} = {\rm diag}_{j \in - \S_0^c}( \mu_{-j}^{(0)} ) \, . 
\end{equation}
Proposition \ref{end_redu} implies that the operator $ \bR_\perp^{(0)} $ 
satisfies the tame estimates of Lemma \ref{Rperp_small} below 
by fixing the constant $ M $ large enough
(which means  performing sufficiently many regularizing steps in Section \ref{sec:block_dec}), namely
\begin{equation}\label{M_choice}
	M:= \big[ 2(k_0 + s_0 + \tb) - \tfrac32 \big] +1 \in \N\,,
\end{equation}
where
\begin{equation}\label{tbta}
\tb:= [\ta] + 2 \in \N \,, \quad \ta:=3\tau_1  \geq 1 \,, \quad \tau_1:= k_0 +(k_0+1)\tau\,.
\end{equation}
	These conditions
	imply  the convergence of the iterative scheme 
	\eqref{remas.1}-\eqref{remas.2}, see Lemma \ref{QSKAM}.  
We also set 
\begin{equation}\label{cb.mub}
\mu(\tb):= \aleph(M,s_0+\tb) \, , 
\end{equation}
where the constant  $\aleph(M,\tq_0)$ is given  in Proposition \ref{end_redu}.  

\begin{lem}\label{Rperp_small}   
{\bf (Smallness of $ \bR_\perp^{(0)} $)}
	Assume \eqref{ansatz_I0_s0} with $\mu_0 \geq \mu(\tb)$. Then the operators
	$ \bR_\perp^{(0)} $, $ [\bR_\perp^{(0)} , \pa_x] $, and 
	$ \pa_{\vf_m}^{s_0} \bR_\perp^{(0)} $, $  [\pa_{\vf_m}^{s_0} \bR_\perp^{(0)}, \pa_x] $, 
	$ \pa_{\vf_m}^{s_0+\tb} \bR^{(0)}_\perp $, $   [\pa_{\vf_m}^{s_0+\tb} \bR^{(0)}_\perp, \pa_x] $, 
	$ m = 1, \ldots, \nu $, 
	are $\cD^{k_0}$-tame and, defining
	\begin{align}
		& \mathbb{M}_0(s):= \max \big\{ \fM_{\bR^{(0)}_\perp}(s), \ \fM_{[\bR^{(0)}_\perp,\pa_x]}(s), \  \fM_{\pa_{\vf_m}^{s_0}\bR^{(0)}_\perp}(s), \  \fM_{[\pa_{\vf_m}^{s_0}\bR^{(0)}_\perp,\pa_x]}(s) 
		\, ,\, m = 1, \ldots, \nu \big\}
		\,,\label{M0s} \\
		& \mathbb{M}_0(s,\tb):= \max \big\{ \fM_{\pa_{\vf_m}^{s_0+\tb}\bR^{(0)}_\perp}(s), \ \fM_{[\pa_{\vf_m}^{s_0+\tb}\bR^{(0)}_\perp,\pa_x]}(s) \, , \, m = 1, \ldots, \nu \big\}\,,\label{M0sb}
	\end{align}
	we have, for all $ s_0 \leq s \leq S$,
	\begin{equation}\label{M0ss.est}
	\fM_0(s,\tb):= \max\Set{\mathbb{M}_0(s), \mathbb{M}_0(s,\tb)  } \leq C(S) \frac{\varepsilon}{\upsilon}(1 + \normk{\fI_0}{s+ \mu (\tb)})  \,, \  \fM_0(s_0,\tb) \leq C(S) \frac{\varepsilon}{\upsilon} \, . 
	\end{equation}
Moreover, 
for all $\tq \in\N_0^\nu$,  with $\abs \tq \leq s_0+\tb$, 
	\begin{equation}\label{Moss.delta}
		\| \pa_\vf^\tq \Delta_{12} \bR^{(0)}_\perp  \|_{\cL(H^{s_0})}\,, \ \| \Delta_{12}[\pa_\vf^\tq \bR^{(0)}_\perp,\pa_x]  \|_{\cL(H^{s_0})} \leq C(S) \varepsilon\upsilon^{-1}\norm{i_1-i_2}_{s_0+\mu(\tb)} \,.
	\end{equation}
\end{lem}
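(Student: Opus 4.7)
The plan is that this lemma is essentially a direct corollary of Proposition \ref{end_redu}, once the choice \eqref{M_choice} of $M$ is unpacked against the hypothesis $M > 2(k_0+\tq_0) - 3/2$ required there. First I would verify that the two relevant values of $\tq_0$, namely $\tq_0 = s_0$ (to handle the operators in $\mathbb{M}_0(s)$) and $\tq_0 = s_0 + \tb$ (to handle those in $\mathbb{M}_0(s,\tb)$), are both permitted. Indeed, the choice \eqref{M_choice} gives $M = [2(k_0+s_0+\tb)-3/2]+1 > 2(k_0+s_0+\tb)-3/2$, so a fortiori $M > 2(k_0+s_0)-3/2$, and Proposition \ref{end_redu}-(2) applies with $\tq_0 \in \{s_0, s_0+\tb\}$.

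Next, for each such $\tq_0$ and for every multi-index $\tq \in \N_0^\nu$ with $|\tq| \leq \tq_0$, I would invoke the tame estimate \eqref{fine.rid.1} for $\pa_\vf^\tq \bR_\perp^{(0)} = \pa_\vf^\tq \bR_\perp$ and for the commutator $[\pa_\vf^\tq \bR_\perp, \pa_x]$. Specialising to $\tq = 0$, $\tq = s_0\, e_m$ and $\tq = (s_0+\tb)\, e_m$ for $m = 1, \ldots, \nu$, and setting $\mu(\tb) := \aleph(M, s_0+\tb) \geq \aleph(M, s_0)$ (cfr. \eqref{cb.mub}), I obtain for all $s_0 \leq s \leq S$ the estimates
$$
\fM_{\pa_{\vf_m}^{k} \bR_\perp^{(0)}}(s),\ \fM_{[\pa_{\vf_m}^{k} \bR_\perp^{(0)}, \pa_x]}(s) \,\lesssim_{S,M,\tb}\, \varepsilon\upsilon^{-1}\bigl( 1 + \normk{\fI_0}{s+\mu(\tb)} \bigr)\,, \quad k \in \{0, s_0, s_0+\tb\}\,.
$$
Taking the maximum over $m$ and $k$ gives $\fM_0(s,\tb) \leq C(S)\varepsilon\upsilon^{-1}(1+\normk{\fI_0}{s+\mu(\tb)})$, which is the first bound in \eqref{M0ss.est}. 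The sharpened bound at $s = s_0$ follows from the ansatz \eqref{ansatz_I0_s0}, which, under the assumption $\mu_0 \geq \mu(\tb)$, yields $\normk{\fI_0}{s_0 + \mu(\tb)} \leq 1$, and hence $\fM_0(s_0,\tb) \leq C(S)\varepsilon\upsilon^{-1}$.

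For the Lipschitz-type estimate \eqref{Moss.delta}, the same scheme applies using \eqref{fine.rid.2} in place of \eqref{fine.rid.1}: for any $\tq$ with $|\tq| \leq s_0 + \tb$, one gets
$$
\| \pa_\vf^\tq \Delta_{12} \bR_\perp^{(0)} \|_{\cL(H^{s_0})} + \| \pa_\vf^\tq \Delta_{12} [\bR_\perp^{(0)}, \pa_x] \|_{\cL(H^{s_0})} \lesssim_M \varepsilon\upsilon^{-1}\, \norm{i_1-i_2}_{s_0+\aleph(M,s_0+\tb)}\,,
$$
and $\aleph(M,s_0+\tb) = \mu(\tb)$.

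There is really no obstacle of substance: the only thing to check is the bookkeeping of the indices, ensuring that the regularity loss $\aleph(M, \tq_0)$ furnished by Proposition \ref{end_redu} with $\tq_0 = s_0 + \tb$ is absorbed into the single constant $\mu(\tb)$ and into the ansatz threshold $\mu_0$. The mild subtlety is that the commutator estimates in \eqref{fine.rid.1}--\eqref{fine.rid.2} are obtained in Proposition \ref{end_redu} by taking $(n_1,n_2) = (1,0),(0,1)$ in the $\langle D \rangle^{n_1}\pa_\vf^\tq (\cdot) \langle D \rangle^{n_2}$ estimate, which requires $n_1+n_2 = 1 \leq M - 2(k_0+\tq_0) + 5/2$; with $\tq_0 = s_0+\tb$ and $M$ as in \eqref{M_choice}, $M - 2(k_0+s_0+\tb) + 5/2 \geq 1$, so the commutator estimates are indeed available.
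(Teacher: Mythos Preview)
Your proof is correct and follows exactly the same approach as the paper: apply Proposition \ref{end_redu} with $\tq_0 \in \{s_0, s_0+\tb\}$, which is admissible thanks to the choice \eqref{M_choice} of $M$, and then use the definition $\mu(\tb) = \aleph(M, s_0+\tb)$ together with the ansatz \eqref{ansatz_I0_s0}. You have in fact spelled out more detail than the paper does (including the bookkeeping on the $n_1+n_2$ condition), but the argument is the same.
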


\begin{proof}
Recalling \eqref{M0s}, \eqref{M0sb}, 	the bounds \eqref{M0ss.est}-\eqref{Moss.delta} follow
	by \eqref{fine.rid.1}, 
		\eqref{M_choice}, \eqref{cb.mub},  \eqref{fine.rid.2}.
\end{proof}

We perform the almost-reducibility of $\bL_0$ along the scale
\begin{equation}\label{size_N}
	N_{-1}:= 1\,, \quad N_\tn := N_0^{\chi^\tn}\,, \quad \forall\,\tn\in\N_0\,, \quad \chi:=3/2\, .
\end{equation}

\begin{thm}\label{iterative_KAM}
{\bf (Almost-diagonalization of $\bL_0$: KAM iteration)}
	There exists $\tau_2(\tau,\nu)> \tau_1(\tau,\nu) + \ta $ 
(with $ \tau_1 , \ta $  defined in \eqref{tbta})	such that, for all $S >s_0$, there is $N_0:=N_0(S,\tb)\in\N$ such that, if
	\begin{equation}\label{small_KAM_con}
		N_0^{\tau_2} \fM_0(s_0,\tb)\upsilon^{-1} \leq 1 \,,
	\end{equation}
	then, for all $\bar\tn\in\N_0$, $\tn=0,1,\ldots,\bar\tn$:
	\\[1mm]
	$({\bf S1})_\tn$ There exists a real, reversible and momentum preserving operator
		\begin{equation}\label{bLn}
			\begin{aligned}
				&\bL_\tn := \omega\cdot\pa_\vf\uno_\perp +\im\,\bD_\tn + \bR_\perp^{(\tn)}\,, \\
				&\bD_\tn:= \begin{pmatrix}
				\cD_\tn & 0 \\ 0 & -\bar{\cD_\tn}
				\end{pmatrix}\,, \quad \cD_\tn:= \diag_{j\in\S_0^c} \mu_{j}^{(\tn)} \,,
			\end{aligned}
		\end{equation}
		defined for all $(\omega,\kappa)$ in $\R^\nu\times [\kappa_1,\kappa_2]$,
		where $\mu_{j}^{(\tn)}$ are $k_0$-times differentiable real functions 
		\begin{equation}\label{eigen_KAM}
			\mu_{j}^{(\tn)}(\omega,\kappa):= \mu_{j}^{(0)}(\omega,\kappa) + \fr_j^{(\tn)}(\omega,\kappa)\,, \quad \mu_{j}^{(0)} = \tm_{\frac32}\,\Omega_j(\kappa)+\tm_1\,j+\tm_{\frac12}\abs j^\frac12\,,
		\end{equation}
		satisfying $ \fr_j^{(0)} = 0 $ and, for $ \tn \geq 1 $,  	
		\begin{align}
		\label{rem.eigen.KAM}
&		| \fr_j^{(\tn)} |^{k_0,\upsilon}  \leq C(S, \tb) \varepsilon\upsilon^{-1} \, , \quad 	
			| \mu_{j}^{(\tn)} - \mu_{j}^{(\tn-1)} |^{k_0,\upsilon} 
			 \leq C(S, \tb) \varepsilon\upsilon^{-1} N_{\tn-2}^{-\ta} \, , 
			 \ \   \forall j\in\S_0^c \, \,. 
		\end{align}
		The remainder
		\begin{equation}\label{bRperpn}
			\bR_\perp^{(\tn)}:= \begin{pmatrix}
			R_\perp^{(\tn,d)} & R_\perp^{(\tn,o)}\\
			\bar{R_\perp^{(\tn,o)} } & \bar{R_\perp^{(\tn,d)} } 
			\end{pmatrix} 
			, \ \quad R_\perp^{(\tn,d)}: 
	H_{{\mathbb S}_0}^\perp  
	\rightarrow H_{{\mathbb S}_0}^\perp  \,, \
	R_\perp^{(\tn,o)} : H_{-{\mathbb S}_0}^\perp  
	\rightarrow H_{{\mathbb S}_0}^\perp
		\end{equation}
		is $\cD^{k_0}$-modulo-tame: more precisely, the operators $R_\perp^{(\tn,d)} $, $R_\perp^{(\tn,o)} $, $\braket{\pa_\vf}^\tb R_\perp^{(\tn,d)} $, $\braket{\pa_\vf}^\tb R_\perp^{(\tn,o)} $, are $\cD^{k_0}$-modulo-tame with modulo-tame constants 
		\begin{equation}\label{Mn.sharp}
			\begin{aligned}
			&\fM_\tn^\sharp (s):= \fM_{\bR_\perp^{(\tn)}}^\sharp(s):= \max \{ \fM_{R_\perp^{(\tn,d)} }^\sharp(s), \fM_{R_\perp^{(\tn,o)} }^\sharp(s) \} \,, \\
			& \fM_\tn^\sharp(s,\tb):= \fM_{\braket{\pa_\vf}^\tb \bR_\perp^{(\tn)} }^\sharp(s):= \max \{  \fM_{\braket{\pa_\vf}^\tb R_\perp^{(\tn,d)} }^\sharp(s), \fM_{\braket{\pa_\vf}^\tb R_\perp^{(\tn,o)} }^\sharp(s) \} \, , 
			\end{aligned}
		\end{equation} 
		which satisfy, for some constant $C_*(s_0,\tb) > 0 $, for all $s_0\leq s \leq S$,
		\begin{equation}\label{small_scheme}
			\fM_\tn^\sharp (s) \leq C_*(s_0,\tb) \fM_0(s,\tb)
			N_{\tn -1}^{-\ta}   \,, \quad \fM_\tn^\sharp(s,\tb) \leq C_*(s_0,\tb) 
			\fM_0 (s,\tb) N_{\tn-1}   \,.
		\end{equation}
		Define the sets $\t\Lambda_\tn^\upsilon=\t\Lambda_\tn^\upsilon(i)$ by $\t\Lambda_0^\upsilon:=\tD\tC(2\upsilon,\tau)\times [\kappa_1,\kappa_2]$ and, for $ \tn \geq 1 $, 
		\begin{equation}\label{tLambdan}
		\begin{aligned}
		\t\Lambda_\tn^\upsilon:= & \big\{ \lambda=(\omega,\kappa) \in \t\Lambda_{\tn-1}^\upsilon \, :  \,  \\
		& \ \ \big| \omega\cdot\ell + \mu_{j}^{(\tn -1)} - \mu_{j'}^{(\tn-1)} \big| \geq \upsilon\, \langle  |j|^\frac32 - | j'|^\frac32 \rangle \braket{\ell}^{-\tau}  \\
		& \ \ \forall\,\abs\ell \leq N_{\tn-1}\,,\ j,j'\notin\S_0 \,, \  (\ell,j,j')\neq(0,j,j),\text{ with }\ora{\jmath} \cdot\ell +j-j'=0 \, ,  \\
		&\  \ \big| \omega\cdot\ell + \mu_{j}^{(\tn -1)} + \mu_{j'}^{(\tn-1)} \big| \geq \upsilon \,\big(\abs j^\frac32 + |j'|^\frac32\big) \braket{\ell}^{-\tau}  \\
		& \ \  \forall\,\abs\ell \leq N_{\tn-1}\,, \ j,  j' \notin \S_0
		\text{ with }\ora{\jmath} \cdot\ell +j+j'=0  	
		\big\}\,.
		\end{aligned}
		\end{equation}
		For $\tn\geq 1$ there exists a real, reversibility  and momentum preserving map, defined for all $(\omega,\kappa)\in \R^\nu\times [\kappa_1,\kappa_2]$, of the form
		$$
			\b\Phi_{\tn-1} = e^{\bX_{\tn-1}}\,, \quad \bX_{\tn-1}:=\begin{pmatrix}
			X_{\tn-1}^{(d)} & X_{\tn-1}^{(o)} \\ \bar{X_{\tn-1}^{(o)}} & \bar{X_{\tn-1}^{(d)}}
			\end{pmatrix}\,, \
			 \  X_{\tn-1}^{(d)}: 
	H_{{\mathbb S}_0}^\perp  
	\rightarrow H_{{\mathbb S}_0}^\perp  \,,
	X_{\tn-1}^{(o)} : H_{-{\mathbb S}_0}^\perp  
	\rightarrow H_{{\mathbb S}_0}^\perp \, ,
		$$
		such that, for all $\lambda \in \t\Lambda_{\tn}^\upsilon$, the following conjugation formula holds:
		\begin{equation}\label{ITE-CON}
			\bL_\tn = \b\Phi_{\tn-1}^{-1} \bL_{\tn-1}\b\Phi_{\tn-1}\,.
		\end{equation}
		The operators $\bX_{\tn-1}$, $\braket{\pa_\vf}^\tb \bX_{\tn-1}$, 
		are $\cD^{k_0}$-modulo-tame 
		with modulo tame constants satisfying, for all $s_0\leq s \leq S$,
		\begin{equation}\label{gen.modulo.KAM}
			\begin{aligned}
				\fM_{\bX_{\tn-1}}^\sharp(s) & \leq
				C(s_0,\tb) \upsilon^{-1} 
				N_{\tn-1}^{\tau_1} N_{\tn-2}^{-\ta} \fM_0(s,\tb)\,,\\
				\fM_{\braket{\pa_\vf}^\tb \bX_{\tn-1}}^\sharp (s) & \leq  
				C(s_0,\tb) \upsilon^{-1}  N_{\tn-1}^{\tau_1} N_{\tn-2} \fM_0(s,\tb)\, . 
			\end{aligned}
		\end{equation}
$({\bf S2})_\tn$ Let $i_1(\omega,\kappa)$, $i_2(\omega,\kappa)$ such that $\bR_\perp^{(\tn)}(i_1)$, $\bR_\perp^{(\tn)}(i_2)$ satisfy \eqref{M0ss.est}, \eqref{Moss.delta}. Then, for all $(\omega,\kappa)\in \t\Lambda_\tn^{\upsilon_1}(i_1)\cap \t\Lambda_\tn^{\upsilon_2}(i_2)$ with $\upsilon_1,\upsilon_2 \in [\upsilon/2,2\upsilon]$, 
		\begin{align}
			&\|| \Delta_{12} \bR_\perp^{(\tn)}  |\|_{\cL(H^{s_0})}\lesssim_{S,\tb} \varepsilon\upsilon^{-1} N_{\tn-1}^{-\ta}\norm{i_1-i_2}_{s_0+\mu(\tb)} \,, \label{S3_1}\\
			& \||\braket{\pa_\vf}^\tb \Delta_{12} \bR_\perp^{(\tn)}  |\|_{\cL(H^{s_0})}\lesssim_{S,\tb} \varepsilon\upsilon^{-1} N_{\tn-1}\norm{i_1-i_2}_{s_0+\mu(\tb)} \,.\label{S3_2}
		\end{align}
		Furthermore, for $ \tn \geq 1 $, for all $j\in\S_0^c$,
		\begin{align}
			& | \Delta_{12} (\fr_j^{(\tn)} - \fr_j^{(\tn-1)}) | \leq C \| | \Delta_{12}\bR_\perp^{(\tn)} |  \|_{\cL(H^{s_0})} \,, \label{S3_3}\\
			& | \Delta_{12} \fr_j^{(\tn)} | \leq C(S, \tb) \varepsilon\upsilon^{-1} \norm{i_1-i_2}_{s_0+\mu(\tb)} \,.\label{S3_4}
		\end{align}
$({\bf S3})_\tn$ Let $i_1, i_2$ be like in $({\bf S2})_\tn$ and $0 <  \rho < \upsilon/2$. Then
\begin{equation}\label{INCLPRO}
			\varepsilon\upsilon^{-1} C(S) N_{\tn-1}^{\tau +1}\norm{i_1-i_2}_{s_0+\mu(\tb)} \leq \rho \quad \Rightarrow \quad \t\Lambda_\tn^\upsilon(i_1) \subseteq \t\Lambda_\tn^{\upsilon-\rho}(i_2) \,. 
\end{equation}
\end{thm}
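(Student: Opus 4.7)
\medskip

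\noindent
\textbf{Proof plan.} The proof is by induction on $\tn$, following the standard KAM reducibility scheme of \cite{BM,BBHM} with two novelties: (a) the dispersion relation $\mu_j^{(\tn)} \sim \tm_{3/2}\sqrt{\kappa}\,|j|^{3/2}$ provides a gain of derivatives which is exploited via the second Melnikov conditions in \eqref{tLambdan}, and (b) the preservation of the momentum-conservation structure under the conjugations, which is essential so that matrix elements $(R_\perp^{(\tn)})_j^{j'}(\ell)$ vanish unless $j-j'+\vec\jmath\cdot\ell=0$ (or $j+j'+\vec\jmath\cdot\ell=0$ off-diagonal). The base case $\tn=0$ follows directly from Lemma \ref{Rperp_small}: the estimates \eqref{M0ss.est} on the tame constants imply \eqref{small_scheme} for $\tn=0$ via Lemma \ref{tame_pesudodiff} and the inclusion of tame into modulo-tame operators, using $\braket{\pa_\vf}^{s_0+\tb}$-bounds; moreover $\bR_\perp^{(0)}$ is reversible and momentum preserving by Proposition \ref{end_redu}.

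\smallskip

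\noindent
For the inductive step, assume $(S1)_\tn$--$(S3)_\tn$ hold. First I would solve the homological equation
\[
\omega\cdot\pa_\vf \bX_\tn + \im[\bD_\tn,\bX_\tn] + \Pi_{N_\tn}\bR_\perp^{(\tn)} - \,[\bR_\perp^{(\tn)}]_{\mathrm{diag}} = 0
\]
by imposing the diagonal correction $\fr_j^{(\tn+1)} - \fr_j^{(\tn)} = (R_\perp^{(\tn,d)})_j^j(0)$ (which is real because $\bR_\perp^{(\tn)}$ is Hermitian modulo reversibility) and inverting on the remaining Fourier indices by dividing by $\im(\omega\cdot\ell + \mu_j^{(\tn)} - \mu_{j'}^{(\tn)})$ on the diagonal block and $\im(\omega\cdot\ell + \mu_j^{(\tn)} + \mu_{j'}^{(\tn)})$ on the off-diagonal block. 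On $\t\Lambda_{\tn+1}^\upsilon$ the second Melnikov conditions produce denominators of size $\gtrsim \upsilon\langle|j|^{3/2}-|j'|^{3/2}\rangle\langle\ell\rangle^{-\tau}$ and $\gtrsim \upsilon(|j|^{3/2}+|j'|^{3/2})\langle\ell\rangle^{-\tau}$; combined with the momentum restrictions this yields the loss $\langle\ell\rangle^{\tau_1}$ in \eqref{gen.modulo.KAM}. The modulo-tame estimates on $\bX_\tn$ and $\braket{\pa_\vf}^\tb\bX_\tn$ come from Definition \ref{Dk0-modulo} applied to the majorant of the matrix elements of $(\omega\cdot\ell+\mu_j^{(\tn)}\mp\mu_{j'}^{(\tn)})^{-1}\Pi_{N_\tn}(R_\perp^{(\tn)})_j^{j'}(\ell)$, using $(S1)_\tn$. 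The reversibility and momentum preserving character of $\bX_\tn$ (hence of $e^{\pm\bX_\tn}$) follows because division by small divisors preserves the structural conditions \eqref{rev:Fourier} and \eqref{momentum}; this is the step where the restriction $\vec\jmath\cdot\ell+j\mp j'=0$ is crucial.

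\smallskip

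\noindent
Next I would define $\b\Phi_\tn:=e^{\bX_\tn}$ (well-defined by Lemma \ref{modulo_expo} once $\fM_{\bX_\tn}^\sharp(s_0)\le 1$, which follows from \eqref{small_KAM_con} with $\tau_2$ large enough) and compute $\bL_{\tn+1}:=\b\Phi_\tn^{-1}\bL_\tn\b\Phi_\tn$. Using the Lie expansion \eqref{lie_omega_devf}--\eqref{lie_abstract} and the solved homological equation, the new remainder has the schematic form
\begin{equation*}
\bR_\perp^{(\tn+1)} = \Pi_{N_\tn}^\perp\bR_\perp^{(\tn)} + \int_0^1 e^{-s\bX_\tn}\bigl[\bX_\tn,\Pi_{N_\tn}\bR_\perp^{(\tn)} - [\bR_\perp^{(\tn)}]_{\mathrm{diag}} + s\,[\bX_\tn,\im\bD_\tn]\bigr]e^{s\bX_\tn}\,ds.
\end{equation*}
The smoothing term $\Pi_{N_\tn}^\perp\bR_\perp^{(\tn)}$ is controlled by \eqref{modulo_smooth} which yields the gain $N_\tn^{-\tb}$; combined with the bound on $\fM_\tn^\sharp(s,\tb)$ in $(S1)_\tn$ one gets $\fM_{\tn+1}^\sharp(s)\lesssim N_\tn^{-\tb}C_*\fM_0(s,\tb)N_{\tn-1}$. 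The quadratic terms are estimated via Lemmata \ref{modulo_sumcomp}--\ref{modulo_expo} and \eqref{gen.modulo.KAM}, giving a bound $\lesssim \upsilon^{-1}N_\tn^{2\tau_1}N_{\tn-1}^{-\ta}\fM_0(s_0,\tb)\fM_0(s,\tb)$. Both contributions are bounded by $C_*\fM_0(s,\tb)N_\tn^{-\ta}$ for $N_0$ large and $\tb>\ta+\tau_1$, which is precisely the role of the choice \eqref{tbta}. The analogous argument with $\braket{\pa_\vf}^\tb$ yields the second bound in \eqref{small_scheme}; the eigenvalue corrections \eqref{rem.eigen.KAM} follow from the diagonal block estimate since $|\fr_j^{(\tn+1)}-\fr_j^{(\tn)}|\lesssim \fM_{\tn}^\sharp(s_0)/|j|^0$ (real by reversibility).

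\smallskip

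\noindent
For $(S2)_{\tn+1}$ and $(S3)_{\tn+1}$ I would differentiate all previous estimates with respect to $i$ in the low norm $\|\cdot\|_{s_1}$: the key point is that $\Delta_{12}\bX_\tn$ satisfies the same homological equation with source $\Delta_{12}(\Pi_{N_\tn}\bR_\perp^{(\tn)})$ plus a commutator with $\Delta_{12}\bD_\tn$ (which is controlled by \eqref{const_smallV} and \eqref{S3_4}$_\tn$), yielding \eqref{S3_1}--\eqref{S3_2} for $\tn+1$. The inclusion property \eqref{INCLPRO} follows by comparing the small-divisor denominators on the two tori: using \eqref{S3_4}$_\tn$ and the bound $|\omega\cdot\ell|\le N_\tn^{\tau+1}$ on $|\ell|\le N_\tn$, a standard chain of triangle inequalities shows that the Melnikov condition for $i_1$ with constant $\upsilon$ implies the one for $i_2$ with constant $\upsilon-\rho$, provided $\varepsilon\upsilon^{-1}N_\tn^{\tau+1}\|i_1-i_2\|_{s_0+\mu(\tb)}\le\rho$.

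\smallskip

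\noindent
The main obstacle is the propagation of the quantitative modulo-tame estimate of $\braket{\pa_\vf}^\tb\bX_\tn$, since the loss $\tau_1$ in $\ell$-direction combined with the loss in eigenvalue differences must still leave enough room to gain $N_\tn^{-\ta}$ at each step; this forces the precise relation \eqref{tbta} between $\ta$, $\tb$, $\tau_1$ and the choice of $\tau_2$. A subtler but essential bookkeeping point is ensuring that momentum preservation is genuinely propagated through the Lie conjugation — this is where the structural condition \eqref{momentum}, closed under composition and inversion by Lemma \ref{lem:mom_prop}, plays a decisive role and is what permits to use the second Melnikov conditions of \eqref{tLambdan} restricted to indices satisfying the momentum constraint, consistently with the measure-theoretic estimates of Section \ref{subsec:measest}.
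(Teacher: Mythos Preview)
Your proposal follows essentially the same route as the paper: induction on $\tn$, homological equation solved on $\t\Lambda_{\tn+1}^\upsilon$ via the second Melnikov conditions with the momentum restriction built in, Lie conjugation $\bL_{\tn+1}=e^{-\bX_\tn}\bL_\tn e^{\bX_\tn}$, and closing the recursion for the modulo-tame constants via the smoothing/quadratic split. Two places deserve tightening.

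First, the base case: ``tame $\Rightarrow$ modulo-tame'' is false in general, and Lemma~\ref{tame_pesudodiff} is not the relevant tool. The paper's argument uses crucially that \emph{both} $\bR_\perp^{(0)}$ and $[\bR_\perp^{(0)},\pa_x]$ (and their $\pa_{\vf_m}^{s_0}$, $\pa_{\vf_m}^{s_0+\tb}$ versions) are $\cD^{k_0}$-tame: the commutator with $\pa_x$ provides the off-diagonal decay $\langle j-j'\rangle^{-1}$ that, combined with the $\langle\ell-\ell'\rangle^{-s_0}$ decay, allows a Cauchy--Schwarz argument upgrading the majorant bounds from tame to modulo-tame. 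This is exactly why Lemma~\ref{Rperp_small} records the commutator estimates.

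Second, some bookkeeping is off. The diagonal correction is $\tr_j=-\im(R_\perp^{(\tn,d)})_j^j(0)$, and its reality comes from \emph{reversibility} via \eqref{rev:Fourier}, not from any Hermitian property (the Hamiltonian structure was abandoned after Section~\ref{sec:block_dec}). Your quadratic bound should read $N_\tn^{\tau_1}\upsilon^{-1}\fM_\tn^\sharp(s)\fM_\tn^\sharp(s_0)\lesssim N_\tn^{\tau_1}N_{\tn-1}^{-2\ta}\upsilon^{-1}\fM_0(s_0,\tb)\fM_0(s,\tb)$, with a single $\tau_1$. Finally, the closing condition you state, $\tb>\ta+\tau_1$, is \emph{not} what \eqref{tbta} gives (there $\tb=[\ta]+2$, $\ta=3\tau_1$); the actual requirements are $\chi(\tb-\ta)>1$ for the $\Pi_{N_\tn}^\perp$-term and $\chi(\tau_1+\ta)\le 2\ta$, i.e.\ $\ta\ge 3\tau_1$, together with \eqref{small_KAM_con}, for the quadratic term.
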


Theorem \ref{iterative_KAM} implies also that the invertible operator
\begin{equation}\label{bUn}
	\bU_{\bar\tn} := \b\Phi_0 \circ \ldots \circ \b\Phi_{\bar\tn-1} \, , \quad  \bar \tn \geq 1 \, , 
\end{equation}
has almost diagonalized $\bL_0$. We have indeed the following corollary.
\begin{thm}\label{KAMRED}
{\bf (Almost-diagonalization of $ \bL_0 $)}
	Assume \eqref{ansatz_I0_s0} with $\mu_0 \geq \mu(\tb)$. 
	For all $S>s_0$, there exist $N_0=N_0(S,\tb)>0$ and $\delta_0=\delta_0(S)>0$ such that, if the smallness condition
	\begin{equation}\label{KAM_small_cond}
		N_0^{\tau_2}\varepsilon\upsilon^{-2} \leq \delta_0
	\end{equation}
	holds, where $\tau_2=\tau_2(\tau,\nu)$ is defined in Theorem \ref{iterative_KAM}, then, for all $\bar\tn\in\N$ and for all $(\omega,\kappa)\in \R^\nu\times [\kappa_1,\kappa_2]$ the operator $\bU_{\bar\tn}$ in \eqref{bUn} is well-defined, the operators $\bU_{\bar\tn}^{\pm 1}- \uno_\perp$ are $\cD^{k_0}$-modulo-tame with modulo-tame constants satisfying, for all $s_0\leq s \leq S$,
	\begin{equation}\label{flow.sharp.kam}
	\fM_{\bU_{\bar\tn}^{\pm 1}-\uno_\perp}^\sharp (s) \lesssim_{S} \varepsilon \upsilon^{-2} N_0^{\tau_1}(1+\normk{\fI_0}{s+\mu(\tb)})\,,
	\end{equation}
	where $\tau_1$ is given by \eqref{tbta}. 
	Moreover $\bU_{\bar\tn}$, $\bU_{\bar\tn}^{-1}$ are real, reversibility and momentum preserving. 
	The operator 
$ \bL_{\bar\tn} = \omega\cdot\pa_\vf\uno_\perp + \im\,\bD_{\bar\tn} + \bR_\perp^{(\bar\tn)}$, 
defined in	\eqref{bLn}  
	with $\tn=\bar\tn$ is real, reversible and momentum preserving. 
	The operator $ \bR_\perp^{(\bar\tn)}  $ 
	is $\cD^{k_0}$-modulo-tame with a modulo-tame constant 
	satisfying, for all $ s_0 \leq s \leq S $,
	$$
		\fM_{\bR_\perp^{(\bar\tn)}}^\sharp (s) 
		\lesssim_{S} \varepsilon\upsilon^{-1} N_{\bar\tn-1}^{-\ta} (1+\normk{\fI_0}{s+\mu(\tb)}) \,.
	$$
	Moreover, for all $(\omega,\kappa)$ in
	$	\t\Lambda_{\bar\tn}^\upsilon = \t\Lambda_{\bar\tn}^\upsilon(i) = \bigcap_{\tn=0}^{\bar\tn } \t\Lambda_\tn^\upsilon $, 
	where the sets $\t\Lambda_\tn^\upsilon$ are defined in \eqref{tLambdan}, 
	the  conjugation formula 
	$	\bL_{\bar\tn} := \bU_{\bar\tn}^{-1} \bL_0 \bU_{\bar\tn} $ holds.
	
\end{thm}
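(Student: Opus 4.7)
The plan is to regard Theorem \ref{KAMRED} as a corollary of the iterative Theorem \ref{iterative_KAM}: I would first verify the smallness hypothesis \eqref{small_KAM_con} under \eqref{KAM_small_cond}, then assemble the composition $\bU_{\bar\tn} = \b\Phi_0\circ\cdots\circ\b\Phi_{\bar\tn-1}$ by telescoping and controlling a geometric sum, and finally iterate the one-step conjugation \eqref{ITE-CON}. First, by Lemma \ref{Rperp_small} we have $\fM_0(s_0,\tb) \leq C(S)\varepsilon\upsilon^{-1}$, so \eqref{KAM_small_cond} with $\delta_0$ small enough (depending only on $S$) implies $N_0^{\tau_2}\fM_0(s_0,\tb)\upsilon^{-1}\leq N_0^{\tau_2}C(S)\varepsilon\upsilon^{-2}\leq 1$, which is exactly \eqref{small_KAM_con}. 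Thus Theorem \ref{iterative_KAM} is applicable for every $\bar\tn\in\N$, producing the operators $\b\Phi_\tn=e^{\bX_\tn}$ defined for all $(\omega,\kappa)\in\R^\nu\times[\kappa_1,\kappa_2]$ and satisfying \eqref{gen.modulo.KAM}, together with the diagonal operators $\bL_\tn$ in \eqref{bLn} satisfying \eqref{small_scheme}.

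Next I would prove \eqref{flow.sharp.kam}. Writing the telescopic identity
\[
\bU_{\bar\tn}-\uno_\perp \;=\; \sum_{\tn=0}^{\bar\tn-1}\b\Phi_0\cdots\b\Phi_{\tn-1}\,(\b\Phi_\tn-\uno_\perp),
\]
and the analogous one for $\bU_{\bar\tn}^{-1}$, each factor is modulo-tame by Lemma \ref{modulo_expo} applied to $\bX_\tn$ (whose modulo-tame norm at $s_0$ is much smaller than $1$ thanks to \eqref{gen.modulo.KAM} and \eqref{KAM_small_cond}). Lemma \ref{modulo_sumcomp} then controls the composition inductively, yielding
\[
\fM^\sharp_{\bU_{\bar\tn}^{\pm 1}-\uno_\perp}(s)\;\lesssim_S\; \sum_{\tn\geq 0}\fM^\sharp_{\bX_\tn}(s)
\;\lesssim_S\; \upsilon^{-1}\,\fM_0(s,\tb)\sum_{\tn\geq 0}N_\tn^{\tau_1}N_{\tn-1}^{-\ta},
\]
where absolute convergence of the series is the crucial point: with $N_\tn=N_0^{\chi^\tn}$, $\chi=3/2$, and $\ta=3\tau_1$ as in \eqref{tbta}, one has $N_\tn^{\tau_1}N_{\tn-1}^{-\ta}=N_0^{\chi^{\tn-1}(\chi\tau_1-\ta)}\leq N_0^{-\tau_1\chi^{\tn-1}/2}$, hence the partial sums are dominated by a constant multiple of $N_0^{\tau_1}$. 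Inserting the bound $\fM_0(s,\tb)\leq C(S)\varepsilon\upsilon^{-1}(1+\|\fI_0\|^{k_0,\upsilon}_{s+\mu(\tb)})$ of Lemma \ref{Rperp_small} gives \eqref{flow.sharp.kam}. The corresponding estimate for $\bR_\perp^{(\bar\tn)}$ is a direct restatement of \eqref{small_scheme}$_{\tn=\bar\tn}$ combined again with Lemma \ref{Rperp_small}.

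For the algebraic structure I would argue as follows. Each $\b\Phi_\tn$ is real, reversibility and momentum preserving by $({\bf S1})_\tn$; reality, reversibility preservation and momentum preservation are all preserved under composition (Lemma \ref{lem:mom_prop} for the translation invariance, and the analogous elementary fact that the product of two reversibility-preserving operators is reversibility preserving; reality is obvious), hence so are $\bU_{\bar\tn}$ and $\bU_{\bar\tn}^{-1}$. The operator $\bL_{\bar\tn}$ in \eqref{bLn} at $\tn=\bar\tn$ is real, reversible and momentum preserving directly from $({\bf S1})_{\bar\tn}$.

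Finally, for the conjugation identity, recall that \eqref{ITE-CON} reads $\bL_\tn=\b\Phi_{\tn-1}^{-1}\bL_{\tn-1}\b\Phi_{\tn-1}$ for all $(\omega,\kappa)\in\t\Lambda_\tn^\upsilon$. On the intersection $\t\Lambda_{\bar\tn}^\upsilon=\bigcap_{\tn=0}^{\bar\tn}\t\Lambda_\tn^\upsilon$ all such identities hold simultaneously, and composing them gives
\[
\bL_{\bar\tn}\;=\;\b\Phi_{\bar\tn-1}^{-1}\cdots\b\Phi_0^{-1}\,\bL_0\,\b\Phi_0\cdots\b\Phi_{\bar\tn-1}\;=\;\bU_{\bar\tn}^{-1}\bL_0\bU_{\bar\tn}.
\]
The main obstacle in this proof is the bookkeeping of the geometric-series bound for $\fM^\sharp_{\bU_{\bar\tn}^{\pm 1}-\uno_\perp}(s)$: one must check that the choice $\ta=3\tau_1$, $\chi=3/2$ from \eqref{tbta} together with the high-norm estimate $\fM^\sharp_{\braket{\pa_\vf}^\tb\bX_{\tn-1}}(s)\lesssim N_{\tn-1}^{\tau_1}N_{\tn-2}\fM_0(s,\tb)$ (which grows in $\tn$) does not spoil convergence, since in the composition lemma only the low-norm quantity $\fM^\sharp_{\bX_\tn}(s_0)$ (which is super-exponentially small) multiplies the high-norm quantity at step $\tn$. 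All other points are routine applications of Lemmata \ref{modulo_sumcomp}--\ref{modulo_expo} and \ref{Rperp_small}.
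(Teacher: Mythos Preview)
Your proposal is correct and is precisely the argument the paper intends: Theorem \ref{KAMRED} is stated in the paper as a corollary of Theorem \ref{iterative_KAM} without an explicit proof, and your derivation---checking \eqref{small_KAM_con} via Lemma \ref{Rperp_small}, telescoping $\bU_{\bar\tn}-\uno_\perp$ and controlling the geometric series through Lemmata \ref{modulo_sumcomp}--\ref{modulo_expo} and the decay in \eqref{gen.modulo.KAM}, reading off the remainder bound from \eqref{small_scheme}, and composing the conjugations \eqref{ITE-CON} on $\bigcap_\tn \t\Lambda_\tn^\upsilon$---is exactly the natural way to fill in those details.
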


\subsection*{Proof of Theorem \ref{iterative_KAM}}

The proof of Theorem \ref{iterative_KAM} is  inductive. We first show that $({\bf S1})_\tn$-$({\bf S3})_\tn$ hold when $\tn=0$. 

\paragraph{The step $\tn=0$.}
{\sc Proof of $({\bf S1})_0$}.
Properties \eqref{bLn}-\eqref{eigen_KAM}, \eqref{bRperpn}  for $\tn=0$ hold by \eqref{bL0}, \eqref{D0}, \eqref{Rperp0} with $\fr_j^{(0)} =0$. We now prove that also \eqref{small_scheme} for $\tn=0$ holds.
\begin{lem}\label{lem:tame}
	We have $\fM_0^\sharp(s), \fM_0^\sharp (s,\tb) \lesssim_{s_0,\tb} \fM_0(s,\tb)$.
\end{lem}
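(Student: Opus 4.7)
The claim is that the modulo-tame constants $\fM_0^\sharp(s)$ and $\fM_0^\sharp(s,\tb)$ of $\bR_\perp^{(0)}$, as defined in \eqref{Mn.sharp}, are controlled by the tame constants already bundled in $\mathbb M_0(s)$ and $\mathbb M_0(s,\tb)$ via \eqref{M0s}--\eqref{M0sb}. My plan is to carry out the standard tame-to-modulo-tame conversion of \cite{BM}, working separately on each of the four scalar blocks $A\in\{R_\perp^{(0,d)},R_\perp^{(0,o)},\overline{R_\perp^{(0,d)}},\overline{R_\perp^{(0,o)}}\}$, and then treating $\la\pa_\vf\ra^\tb \bR_\perp^{(0)}$ by exactly the same scheme.

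For a generic block $A$ and a test function $u$, the key step is to bound $\||A|u\|_s^2$ starting from the pointwise estimate
\begin{equation*}
\la\ell,j\ra^s\,|(|A|u)_{\ell,j}|\lesssim_s\sum_{\ell',j'}\bigl(\la\ell-\ell',j-j'\ra^s+\la\ell',j'\ra^s\bigr)|A_j^{j'}(\ell-\ell')|\,|u_{\ell',j'}|,
\end{equation*}
and then applying Cauchy--Schwarz with the summable weight $\la\ell-\ell'\ra^{-2s_0}\la j-j'\ra^{-2}$; this weight is summable over $\Z^{\nu+1}$ because $2s_0>\nu$ (since $s_0>(\nu+1)/2$) and $\sum_{k\in\Z}\la k\ra^{-2}<\infty$. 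After swapping the order of summation, both of the two resulting pieces reduce to controlling, at frozen $(\ell',j')$, quantities of the form
\begin{equation*}
\sum_{\ell,j}\la\ell-\ell',j-j'\ra^{2\tau}\bigl(\la\ell-\ell'\ra^{2s_0}+\la j-j'\ra^{2}\bigr)|A_j^{j'}(\ell-\ell')|^2,\qquad \tau\in\{s,s_0\}.
\end{equation*}
The weight $\la\ell-\ell'\ra^{2s_0}|A_j^{j'}|^2$ corresponds, via $|\ell-\ell'|^{2s_0}\lesssim_{s_0}\sum_{m=1}^\nu|\ell_m-\ell_m'|^{2s_0}$, to the matrix elements squared of $\pa_{\vf_m}^{s_0}A$, while $\la j-j'\ra^{2}|A_j^{j'}|^2$ corresponds to those of $[A,\pa_x]$. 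Both are then estimated by \eqref{salva} in terms of the tame constants $\fM_{\pa_{\vf_m}^{s_0}A}(\cdot)$ and $\fM_{[A,\pa_x]}(\cdot)$, which are precisely the quantities comprising $\mathbb M_0(\cdot)$. Grouping the factors of $\la\ell-\ell',j-j'\ra^{2\tau}$ with the matrix coefficients in the application of \eqref{salva} --- once with exponent $s$ and once with $s_0$ --- produces the modulo-tame structure $\fM_0^\sharp(s_0)\|u\|_s+\fM_0^\sharp(s)\|u\|_{s_0}$ required by \eqref{def:mod-tame}, yielding $\fM_0^\sharp(s)\lesssim_{s_0,\tb}\mathbb M_0(s)\leq \fM_0(s,\tb)$.

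For the second estimate $\fM_0^\sharp(s,\tb)\lesssim_{s_0,\tb}\fM_0(s,\tb)$, I would apply the very same scheme to the operator $\la\pa_\vf\ra^\tb \bR_\perp^{(0)}$, whose matrix elements carry the additional factor $\la\ell-\ell'\ra^\tb$. This factor is absorbed by upgrading $\pa_{\vf_m}^{s_0}$ to $\pa_{\vf_m}^{s_0+\tb}$ (using $\la\ell-\ell'\ra^{2(s_0+\tb)}\lesssim_{s_0,\tb}\sum_m|\ell_m-\ell_m'|^{2(s_0+\tb)}+1$), which is exactly the tame ingredient packaged into $\mathbb M_0(s,\tb)$ via \eqref{M0sb}; on the $x$-side only a single commutator with $\pa_x$ is still needed, as in the first part.

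The main anticipated obstacle is the careful combinatorial bookkeeping that makes the single commutator $[A,\pa_x]$ suffice in the $x$-direction: this works only because the Cauchy--Schwarz weight in $j-j'$ is chosen as the one-dimensional $\la j-j'\ra^{-2}$, exploiting summability over $\Z$, whereas the $\nu$-dimensional $\vf$-direction requires the weight $\la\ell-\ell'\ra^{-2s_0}$ and therefore $s_0$ derivatives in $\vf$, matching exactly the ingredients in \eqref{M0s}--\eqref{M0sb}. Once this point is pinned down, the proof is a routine application of \eqref{salva}, with all implicit constants depending only on $s_0$ and $\tb$.
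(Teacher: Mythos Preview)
Your overall strategy --- Cauchy--Schwarz with the separable weight $\langle\ell-\ell'\rangle^{-2s_0}\langle j-j'\rangle^{-2}$, then feed the resulting weighted column sums into \eqref{salva} applied to $A$, $\pa_{\vf_m}^{s_0}A$, $[A,\pa_x]$ and $[\pa_{\vf_m}^{s_0}A,\pa_x]$ --- is exactly the right mechanism and is the one the paper uses. The gap is in your very first move: the preliminary splitting $\langle\ell,j\rangle^s\lesssim_s\langle\ell-\ell',j-j'\rangle^s+\langle\ell',j'\rangle^s$ is not needed and in fact destroys the structure that makes \eqref{salva} applicable. After that split and Cauchy--Schwarz, the piece carrying $\langle\ell-\ell',j-j'\rangle^s$ leaves you, at frozen $(\ell',j')$, with
\[
\sum_{\ell,j}\langle\ell-\ell',j-j'\rangle^{2s}\,\langle\ell-\ell'\rangle^{2s_0}\langle j-j'\rangle^{2}\,|A_j^{j'}(\ell-\ell')|^2
\]
(note: the Cauchy--Schwarz weight produces the \emph{product} $\langle\ell-\ell'\rangle^{2s_0}\langle j-j'\rangle^2$, not the sum you wrote). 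The factor $\langle\ell-\ell',j-j'\rangle^{2s}$ contains $\langle j-j'\rangle^{2s}$, which would require $s$ commutators with $\pa_x$; you only have one in \eqref{M0s}--\eqref{M0sb}. If instead you try to undo the damage via $\langle\ell-\ell',j-j'\rangle^{2s}\lesssim_s\langle\ell,j\rangle^{2s}+\langle\ell',j'\rangle^{2s}$, the $\langle\ell',j'\rangle^{2s}$ contribution, after bounding the remaining unweighted column sum by \eqref{salva} at level $s_0$, produces $\mathbb M_0(s_0)^2\langle\ell',j'\rangle^{2(s+s_0)}$ and hence a term $\mathbb M_0(s_0)^2\|u\|_{s+s_0}^2$, which is an $s_0$-loss and breaks the modulo-tame form \eqref{def:mod-tame}.

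The fix is simply to drop the initial splitting: keep $\langle\ell,j\rangle^{2s}$ intact outside, apply Cauchy--Schwarz to the inner sum over $(\ell',j')$, swap the order of summation, and observe that the resulting inner sum is \emph{precisely} of the form of \eqref{salva} (which is stated with the weight $\langle\ell,j\rangle^{2s}$, not $\langle\ell-\ell',j-j'\rangle^{2s}$). The column bound \eqref{salva}, applied to each of $A$, $\pa_{\vf_m}^{s_0+\tb}A$, $[A,\pa_x]$, $[\pa_{\vf_m}^{s_0+\tb}A,\pa_x]$, then returns directly the two-term tame structure $\fM_0(s_0,\tb)^2\langle\ell',j'\rangle^{2s}+\fM_0(s,\tb)^2\langle\ell',j'\rangle^{2s_0}$, and summing against $|h_{\ell',j'}|^2$ gives the desired $\fM_0(s_0,\tb)\|h\|_s+\fM_0(s,\tb)\|h\|_{s_0}$. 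This is exactly the paper's argument, and your second paragraph (for $\langle\pa_\vf\rangle^\tb A$) then goes through as you outlined.
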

\begin{proof}
	Let $R \in \{ R_\perp^{(0,d)}, R_\perp^{(0,o)} \} $. 
	We prove that $\braket{\pa_\vf}^\tb R$ is $\cD^{k_0}$-modulo-tame. Using the inequality
	$$
		\braket{\ell-\ell'}^{2\tq_0} \braket{j-j'}^2 \lesssim_{\tq_0} 1 + \abs{\ell-\ell'}^{2\tq_0}+ \abs{j-j'}^2 + \abs{\ell-\ell'}^{2\tq_0} \abs{j-j'}^2\,,
	$$
	it follows, recalling \eqref{salva}, \eqref{M0ss.est}, 
	(the matrix elements of the 
commutator $[\pa_x,A]$  are $\im(j-j')A_j^{j'}(\ell-\ell')$), 
	that, for any $j'\in\S_0^c$, $\ell'\in\Z^\nu$,
	\begin{equation}\label{salva2}
		\begin{aligned}
			\upsilon^{2\abs k } \sum_{\ell,j} \braket{\ell,j}^{2s} \braket{\ell-\ell'}^{2(s_0+\tb)} \braket{j-j'}^2 \big| \pa_\lambda^k R_j^{j'}(\ell-\ell') \big|^2 \\
			\lesssim_{\tb} \fM_0(s_0,\tb)^2\braket{\ell',j'}^{2s} + \fM_0(s,\tb)^2 \braket{\ell',j'}^{2s_0} \,. 
		\end{aligned}
	\end{equation}
	Let $s_0\leq s \leq S$. Then, for any $\abs k \leq k_0$, by Cauchy-Schwartz inequality, we have
	\begin{align}
		\big\| | \braket{\pa_\vf}^\tb \pa_\lambda^k R | h \big\|_s^2 & \leq \sum_{\ell, j} \braket{\ell,j}^{2s}\Big( \sum_{\ell',j'} \braket{\ell-\ell'}^\tb 
		\big| (\pa_\lambda^k R)_j^{j'}(\ell-\ell') \big| \abs{h_{\ell',j'}}  \Big)^2 \notag \\
		& \leq \sum_{\ell, j} \braket{\ell,j}^{2s}
		\Big( \sum_{\ell',j'} \braket{\ell-\ell'}^{s_0+\tb}\braket{j-j'} 
		|(\pa_\lambda^k R)_j^{j'}(\ell-\ell') | |h_{\ell',j'}| \frac{1}{\braket{\ell-\ell'}^{s_0}\braket{j-j'}}  \Big)^2\notag \\
		& \lesssim_{s_0} \sum_{\ell,j}\braket{\ell,j}^{2s}\sum_{\ell',j'} \braket{\ell-\ell'}^{2(s_0+\tb)}\braket{j-j'}^2 | (\pa_\lambda R)_j^{j'}(\ell-\ell') |^2 |h_{\ell',j'}|^2 \notag \\
		&  \stackrel{\eqref{salva2}}{\lesssim_{s_0,\tb}} \upsilon^{-2\abs k}\sum_{\ell',j'} \abs{h_{\ell',j'}}^2 \big( \fM_0(s_0,\tb)^2 \braket{\ell',j'}^{2s} + \fM_0(s,\tb)^2 \braket{\ell',j'}^{2s_0} \big) \,.\nonumber 
	\end{align}
	Therefore, we obtain $\fM_{\braket{\pa_\vf}^\tb R}^\sharp(s) \lesssim_{s_0,\tb} \fM_0(s,\tb)$ and then
	$ \fM_0^\sharp(s,\tb)  \lesssim_{s_0,\tb} \fM_0(s,\tb) $.
	The inequality $\fM_0^\sharp(s) \lesssim_{s_0} \fM_0(s,\tb) $ follows similarly.
\end{proof}
\noindent
	{\sc Proof of $({\bf S2})_0$}. The proof of estimates \eqref{S3_1}, \eqref{S3_2}
	at $ \tn = 0 $ follows by \eqref{Moss.delta}, arguing  
	similarly to Lemma \ref{lem:tame}. \\ 
	{\sc Proof of  $({\bf S3})_0$}. It is trivial since, by definition, $\t\Lambda_0^\upsilon(i_1)= \tD\tC(2\upsilon,\tau)\times [\kappa_1,\kappa_2]  \subset \t\Lambda_0^{\upsilon-\rho}(i_2)$.

\paragraph{The reducibility step.}
We now describe the generic inductive step, showing how to transform $\bL_\tn$ into $\bL_{\tn+1}$ by the conjugation with $\b\Phi_{\tn}$. For sake of simplicity in the notation, we drop the index $\tn$ and we write $+$ instead of $\tn+1$, so that we write $\bL:=\bL_\tn$, $\bL_+:=\bL_{\tn+1}$, $\bR_\perp:= \bR_\perp^{(\tn)}$, $\bR_\perp^{(+)}:= \bR_\perp^{(\tn+1)}$, $N:= N_\tn$, etc. 
We conjugate $\bL$ in \eqref{bLn} by a transformation of the form 
\begin{equation}\label{trasf.KAM}
	\b\Phi:= e^{\bX} \,, \quad \bX:= \begin{pmatrix}
	X^{(d)} & X^{(o)} \\ \bar{X^{(o)}} & \bar{X^{(d)}} 
	\end{pmatrix}, 
	 \  X^{(d)}: 
	H_{{\mathbb S}_0}^\perp  
	\rightarrow H_{{\mathbb S}_0}^\perp  \,,	\
	X^{(o)} : H_{-{\mathbb S}_0}^\perp  
	\rightarrow H_{{\mathbb S}_0}^\perp \, , 
\end{equation}
where $\bX$ is a bounded linear operator, chosen below in \eqref{Xdn.sol}, \eqref{Xon.sol}. By the Lie expansions 
\eqref{lie_abstract}-\eqref{lie_omega_devf} 
we have
	\begin{align} \label{bL+}
	\bL_+  := \b\Phi^{-1} \bL \b\Phi 
	& = 
	  \omega\cdot \pa_\vf\uno_\perp + \im\,\bD +( (\omega\cdot\pa_\vf \bX) - \im [\bX,\bD] + \Pi_N \bR_\perp ) + \Pi_N^\perp \bR_\perp  \\
	& \ \ \nonumber 
	- \int_{0}^1 e^{-\tau \bX} [\bX,\bR_\perp] e^{\tau\bX} \wrt\tau 
	- \int_{0}^1 (1-\tau)e^{-\tau \bX} [\bX, (\omega\cdot\pa_\vf \bX) - \im [\bX,\bD]]e^{\tau\bX} \wrt\tau 
	\end{align}
where $\Pi_N$ is defined in \eqref{PiNA} and $\Pi_N^\perp:= {\rm Id}- \Pi_N$. We want to solve the homological equation
\begin{equation}\label{homeq.KAM}
	 \omega\cdot\pa_\vf \bX - \im [\bX,\bD] + \Pi_N \bR_\perp = [\bR_\perp]
\end{equation}
where
\begin{equation}\label{ZRperp}
	[ \bR_\perp]:= \begin{pmatrix}
	[R_\perp^{(d)}] & 0 \\ 0 & [\bar{ R_\perp^{(d)}} ]
	\end{pmatrix} \, , \quad [R_\perp^{(d)}] := {\rm diag}_{j \in \S_0^c} (R_\perp^{(d)})_j^j(0) \, . 
\end{equation}
By \eqref{bLn}, \eqref{bRperpn} and \eqref{trasf.KAM}, the homological equation \eqref{homeq.KAM} is equivalent to the two scalar homological equations
\begin{equation}\label{homeq.sys}
	\begin{aligned}
		& \omega\cdot \pa_\vf X^{(d)} -\im (X^{(d)} \cD - \cD X^{(d)} ) + \Pi_N R_\perp^{(d)} = [R_\perp^{(d)}] \,\\
		& \omega\cdot \pa_\vf X^{(o)} + \im(X^{(o)}\bar{\cD} + \cD X^{(o)}) + \Pi_N R_\perp^{(o)} = 0 \,.
	\end{aligned}
\end{equation}
Recalling \eqref{bLn} and since $ \bar{\cD} = {\rm diag}_{j \in - \S_0^c}( \mu_{-j}) $, acting in
$ H_{- \S_0}^\bot $ (see \eqref{Dbar})  the solutions of \eqref{homeq.sys} are, for 
all $ (\omega, \kappa) \in \t\Lambda_{\tn+1}^{\upsilon} $ (see \eqref{tLambdan} 
with $ \tn \rightsquigarrow \tn + 1 $)
\begin{align}
	& (X^{(d)})_j^{j'}(\ell) := \begin{cases}
	-\dfrac{(R_\perp^{(d)})_j^{j'}(\ell)}{\im(\omega\cdot\ell + \mu_j-\mu_{j'})} & \ \text{ if }\begin{cases}
		 (\ell,j,j')\neq (0,j,j), \ j,j'\in\S_0^c, \ \braket{\ell}\leq N  \\
		 \ell\cdot\ora{\jmath} + j-j'= 0
	\end{cases}\\
	 0 & \ \text{ otherwise} \, , 
	\end{cases}  \label{Xdn.sol} \\
	& (X^{(o)})_j^{j'}(\ell) := \begin{cases}
	-\dfrac{(R_\perp^{(o)})_j^{j'}(\ell)}{\im(\omega\cdot\ell + \mu_j+\mu_{-j'})} & \  \text{ if }\begin{cases}
	\forall \, \ell\in\Z^\nu \ j, - j'\in\S_0^c,  \ \braket{\ell}\leq N  \\
	\ell\cdot\ora{\jmath} + j-j'= 0
	\end{cases}\\
	0 & \  \text{ otherwise} \, . 
	\end{cases} \label{Xon.sol}
\end{align}
Note that, since $ - j' \in \S_0^c $, we can apply 
the bounds \eqref{tLambdan} for $ (\omega, \kappa) \in \t\Lambda_{\tn+1}^{\upsilon} $. 

\begin{lem}\label{X_gen.hom}
{\bf (Homological equations)}
The real operator $\bX$ defined in \eqref{trasf.KAM}, \eqref{Xdn.sol}, \eqref{Xon.sol}, 
(which for all $(\omega,\kappa)\in\t\Lambda_{\tn+1}^{\upsilon}$ solves the homological equation \eqref{tLambdan})) admits an extension to the whole parameter space 
$ \R^\nu \times [\kappa_1, \kappa_2] $. Such extended operator 
	is  $\cD^{k_0}$-modulo-tame  with a modulo-tame constant satisfying, for all $s_0\leq s \leq S$,
	\begin{equation}\label{gen.est}
		\fM_{\bX}^\sharp(s) \lesssim_{k_0} N^{\tau_1}\upsilon^{-1} \fM^\sharp(s)\,, \quad \fM_{\braket{\pa_\vf}^\tb \bX}^\sharp(s) \lesssim_{k_0} N^{\tau_1} \upsilon^{-1} \fM^\sharp (s,\tb) \,, 
 	\end{equation}
 	where $\tau_1:=\tau(k_0+1)+k_0$.
 	If $\upsilon/2 \leq \upsilon_1,\upsilon_2 \leq 2\upsilon$, then, for all $(\omega,\kappa)\in \t\Lambda_{\tn+1}^{\upsilon_1}(i_1) \cap \t\Lambda_{\tn+1}^{\upsilon_2}(i_2)$,
 	\begin{align}
 		&\| \abs{\Delta_{12} \bX} \|_{\cL(H^{s_0})} \lesssim
		  N^{2\tau} \upsilon^{-1}( \| \abs{\bR_\perp(i_2) } \|_{\cL(H^{s_0})}  \norm{i_1-i_2}_{s_0+\mu(\tb)} + \| \abs{\Delta_{12} \bR_\perp} \|_{\cL(H^{s_0})}  )\,, \label{X12.est1} \\
 		& \| | \braket{\pa_\vf}^\tb \Delta_{12} \bX | \|_{\cL(H^{s_0})} \lesssim \notag \\
 		& N^{2\tau} \upsilon^{-1}( \| | \braket{\pa_\vf}^\tb\bR_\perp(i_2) | \|_{\cL(H^{s_0})}  \norm{i_1-i_2}_{s_0+\mu(\tb)} + \| | \braket{\pa_\vf}^\tb\Delta_{12} \bR_\perp | \|_{\cL(H^{s_0})}  )\,. \label{X12.est2}
 	\end{align}
 The operator $\bX$ is  reversibility  and momentum preserving.
\end{lem}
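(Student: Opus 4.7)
The plan is standard for KAM homological equations, but with two structural points to verify: the momentum-preserving property coming from the restriction $\ora{\jmath}\cdot\ell+j-j'=0$ in \eqref{Xdn.sol} and $\ora{\jmath}\cdot\ell+j+j'=0$ (equivalently written as $\ora{\jmath}\cdot\ell+j-j'=0$ with $-j'\in\S_0^c$) in \eqref{Xon.sol}, and that the first-Melnikov-type conditions in \eqref{tLambdan} give divisors behaving like $\upsilon\langle|j|^{3/2}\pm|j'|^{3/2}\rangle\langle\ell\rangle^{-\tau}$, not just $\upsilon\langle\ell\rangle^{-\tau}$, which is what ultimately yields the modulo-tame bound with a loss of only $N^{\tau_1}\upsilon^{-1}$.

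First I would extend $\bX$ to the whole parameter space. Using the cut-off $\chi$ defined in \eqref{cutoff}, I replace each denominator $\im(\omega\cdot\ell+\mu_j-\mu_{j'})$, respectively $\im(\omega\cdot\ell+\mu_j+\mu_{-j'})$, by its cut-off version as in \eqref{paext}, multiplied by $\chi\big(\upsilon^{-1}\langle\ell\rangle^{\tau}\langle|j|^{3/2}\mp|j'|^{3/2}\rangle^{-1}(\omega\cdot\ell+\mu_j\mp\mu_{j'})\big)$. By construction the extension coincides with \eqref{Xdn.sol}, \eqref{Xon.sol} on $\t\Lambda_{\tn+1}^\upsilon$ and is $C^\infty$ (in the Whitney sense, $k_0$-times differentiable) in $\lambda=(\omega,\kappa)$ on $\R^{\nu+1}$. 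On the support of the cut-off the inverse of the denominator is controlled by $\upsilon^{-1}\langle\ell\rangle^\tau\langle|j|^{3/2}\mp|j'|^{3/2}\rangle^{-1}$, and differentiating in $\lambda$ produces the standard loss $(\upsilon^{-1}\langle\ell\rangle^\tau)^{|k|}$ (using $|\partial_\lambda^k\mu_j^{(\tn)}|\leq C$ uniformly in $j$ by \eqref{rem.eigen.KAM} and \eqref{const_small}), so that, summing the geometric factors,
\begin{equation*}
\upsilon^{|k|}\big|\partial_\lambda^k\bX_j^{j'}(\ell)\big|\lesssim_{k_0}\,\upsilon^{-1}\,\langle\ell\rangle^{\tau(k_0+1)+k_0}\,\langle|j|^{3/2}\mp|j'|^{3/2}\rangle^{-1}\,\sum_{|k'|\le|k|}\upsilon^{|k'|}\big|\partial_\lambda^{k'}(\bR_\perp)_j^{j'}(\ell)\big|
\end{equation*}
for $\langle\ell\rangle\leq N$, and $\bX_j^{j'}(\ell)=0$ otherwise. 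Since the factor $\langle|j|^{3/2}\mp|j'|^{3/2}\rangle^{-1}\leq 1$ only helps, we obtain the pointwise majorant bound $|\bX_j^{j'}(\ell)|\leq \upsilon^{-1}\langle\ell\rangle^{\tau_1}|(\bR_\perp)_j^{j'}(\ell)|$ with $\tau_1=\tau(k_0+1)+k_0$, and using $\langle\ell\rangle\leq N$ I get the modulo-tame estimate \eqref{gen.est} directly from the definition \eqref{def:mod-tame} of modulo-tame constant (with the same argument for $\langle\partial_\vf\rangle^\tb\bX$, since the weight $\langle\ell\rangle^\tb$ factors through the majorant comparison).

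Next, for the Lipschitz-in-$i$ estimates \eqref{X12.est1}, \eqref{X12.est2}, I would apply the identity
\begin{equation*}
\Delta_{12}\bigl(\tfrac{a}{b}\bigr)\;=\;\tfrac{\Delta_{12}a}{b(i_2)}\;-\;\tfrac{a(i_1)\,\Delta_{12}b}{b(i_1)b(i_2)}
\end{equation*}
with $a=(\bR_\perp)_j^{j'}(\ell)$ and $b=\omega\cdot\ell+\mu_j^{(\tn)}\mp\mu_{j'}^{(\tn)}$. The variation of the eigenvalues is controlled by $|\Delta_{12}\mu_j^{(\tn)}|\lesssim_{S,\tb}\varepsilon\upsilon^{-1}\|i_1-i_2\|_{s_0+\mu(\tb)}$ (by \eqref{const_smallV} and \eqref{S3_4}), so the second term inherits a further factor $\langle\ell\rangle^\tau\upsilon^{-1}$. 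This yields the loss $N^{2\tau}\upsilon^{-1}$ stated in \eqref{X12.est1}, \eqref{X12.est2}, with both $\|\bR_\perp(i_2)\|$ and $\|\Delta_{12}\bR_\perp\|$ appearing on the right.

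Finally, I would check the algebraic symmetries. Reversibility of $\bR_\perp$, inductively assumed in $({\bf S1})_\tn$, means by Lemma \ref{rev_defn_C} that $(R_\perp^{(d,o)})_j^{j'}(\ell)=-\overline{(R_\perp^{(d,o)})_j^{j'}(\ell)}$, i.e.\ the matrix elements are purely imaginary; after dividing by the purely imaginary denominator $\im(\omega\cdot\ell+\mu_j\mp\mu_{j'})$ (the $\mu_j$'s are real by \eqref{eigen_KAM}), each coefficient $X_j^{j'}(\ell)$ becomes real, hence $\bX$ is reversibility \emph{preserving}. Momentum preservation of $\bR_\perp$ gives, via Lemma \ref{lem:mom_pres}, that $(R_\perp^{(d,o)})_j^{j'}(\ell)\neq 0$ only when $\ora{\jmath}\cdot\ell+j-j'=0$; this is exactly the constraint built into the definitions \eqref{Xdn.sol}-\eqref{Xon.sol} of $\bX$ (note that for the off-diagonal block the variable $j'$ plays the role of $-j'$, since $\bR_\perp^{(o)}:H_{-\S_0}^\perp\to H_{\S_0}^\perp$ corresponds to matrix elements with indices $j,-j'$), so the momentum condition survives the division by the denominator, and $\bX$ is momentum preserving by Lemma \ref{lem:mom_pres} again. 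The homological identity \eqref{homeq.KAM} on $\t\Lambda_{\tn+1}^\upsilon$ is immediate by construction.

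The main obstacle is purely bookkeeping: keeping track of the constraint $-j'\in\S_0^c$ in \eqref{tLambdan} for the second Melnikov condition on sums (needed to make the denominator in the definition of $X^{(o)}$ nonzero), and making sure that the $\langle|j|^{3/2}\pm|j'|^{3/2}\rangle$ factors in the denominators of the lower bounds in \eqref{tLambdan} are harmless for the modulo-tame estimate — which they are, since they only improve the bound.
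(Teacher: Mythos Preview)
Your approach is essentially the paper's, but there is a genuine gap in the justification of the key derivative bound on the extended divisor. You claim that $|\partial_\lambda^k\mu_j^{(\tn)}|\leq C$ uniformly in $j$, citing \eqref{rem.eigen.KAM} and \eqref{const_small}. This is false for $\kappa$-derivatives: by \eqref{eigen_KAM} one has $\mu_j^{(0)}=\tm_{\frac32}\Omega_j(\kappa)+\tm_1 j+\tm_{\frac12}|j|^{1/2}$, and $\partial_\kappa\Omega_j(\kappa)$ grows like $|j|^{3/2}$ (Lemma~\ref{rem:exp_omegaj_k}). What actually saves the estimate is precisely the factor you dismiss as ``only helping'': the weight $\langle|j|^{3/2}\mp|j'|^{3/2}\rangle$ in $\rho=\upsilon\langle\ell\rangle^{-\tau}\langle|j|^{3/2}\mp|j'|^{3/2}\rangle$. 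Using Lemma~\ref{rem:exp_omegaj_k} and \eqref{12j12j'}, together with the momentum constraint $|j-j'|\lesssim|\ell|$, one obtains
\[
|\partial_\kappa^k(\mu_j\mp\mu_{j'})|\;\lesssim\;\langle|j|^{3/2}\mp|j'|^{3/2}\rangle+\langle\ell\rangle\,,
\]
and this growth is exactly compensated by the extra power of $\rho$ produced at each differentiation of $g_{\ell,j,j'}=\chi(f\rho^{-1})/f$. This is why the Melnikov conditions in \eqref{tLambdan} carry the $\langle|j|^{3/2}\mp|j'|^{3/2}\rangle$ weight in the first place, and the paper explicitly invokes \eqref{eigen_KAM}, \eqref{rem.eigen.KAM}, \eqref{const_small}, Lemma~\ref{rem:exp_omegaj_k} and \eqref{12j12j'} together to arrive at $|\partial_\lambda^{k_1}g_{\ell,j,j'}|\lesssim_{k_0}\langle\ell\rangle^{\tau_1}\upsilon^{-1-|k_1|}$.

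Once this point is fixed, the remainder of your argument --- the passage from the pointwise majorant bound to the modulo-tame estimate, the $\Delta_{12}$ bounds via the quotient identity, and the verification of reversibility and momentum preservation through Lemmata~\ref{rev_defn_C} and~\ref{lem:mom_pres} --- matches the paper's proof.
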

\begin{proof}
	We prove that \eqref{gen.est} holds for $X^{(d)}$. The proof for $X^{(o)}$ holds analogously.	First, we extend the solution in \eqref{Xdn.sol} to all
	 $\lambda  $ in $\R^\nu\times [\kappa_1,\kappa_2]$ by setting (without any further relabeling)
$ (X^{(d)})_j^{j'}(\ell) = \im\,g_{\ell,j,j'}(\lambda) (R_\perp^{(d)})_j^{j'}(\ell) $,
	where 
	$$
	g_{\ell,j,j'}(\lambda) := \frac{\chi(f(\lambda)\rho^{-1})}{f(\lambda)} \,, \quad f(\lambda):= \omega\cdot\ell + \mu_{j}-\mu_{j'} \,, \quad  \rho 
	:= \upsilon \braket{\ell}^{-\tau} \langle \abs j^\frac32 - |j'|^\frac32 \rangle \, , 
	$$
and	$\chi$ is the cut-off function  \eqref{cutoff}. 
	By \eqref{eigen_KAM}, \eqref{rem.eigen.KAM}, \eqref{const_small},  \eqref{tLambdan}, 
	 Lemma \ref{rem:exp_omegaj_k}, \eqref{12j12j'}, together with \eqref{cutoff},
	 we deduce that, for any $ k_1 \in\N_0^\nu$, $\abs{k_1}\leq k_0$,
	\begin{equation*}
		\sup_{\abs{k_1}\leq k_0} 
		\big| \pa_\lambda^{k_1} g_{\ell,j,j'} \big|	\lesssim_{k_0} \braket{\ell}^{\tau_1} \upsilon^{-1-\abs{k_1}} \, , \quad \tau_1 = \tau(k_0+1)+k_0 \, , 
	\end{equation*}
	and we deduce,  for all $0\leq \abs k \leq k_0$,
	\begin{align}
		| \pa_\lambda^k (X^{(d)})_j^{j'}(\ell) | 
		& \lesssim_{k_0} 
		\sum_{k_1+k_2=k} 
		|\pa_\lambda^{k_1}g_{\ell,j,j'}(\lambda)| 
		|\pa_\lambda^{k_2} (R_\perp^{(d)})_j^{j'}(\ell)| \nonumber  \\
	& 	\lesssim_{k_0}  \braket{\ell}^{\tau_1} \upsilon^{-1-\abs{k}} \sum_{\abs{k_2}\leq \abs k} \upsilon^{\abs{k_2}} | \pa_\lambda^{k_2} (R_\perp^{(d)})_j^{j'}(\ell) | \,.\label{equa1}
	\end{align}
	By \eqref{Xdn.sol} we have that $ (X^{(d)})_j^{j'}(\ell)= 0 $ 
	for all $ \langle \ell \rangle > N $. 
	Therefore, for all $ |k| \leq k_0 $,  we have
	\begin{align}
&		\| | \braket{\pa_\vf}^\tb \pa_\lambda^k X^{(d)}  | h \|_{s}^2 \leq \sum_{\ell,j}\braket{\ell,j}^{2s} \Big( \sum_{\braket{\ell-\ell'}\leq N,j'} | \braket{\ell-\ell'}^\tb \pa_\lambda^k (X^{(d)})_j^{j'}(\ell-\ell') | | h_{\ell',j'}| \Big)^2 \notag \\
		&  \stackrel{\eqref{equa1}}{\lesssim_{k_0}} N^{2\tau_1}\upsilon^{-2(1+\abs k)} \sum_{\abs{k_2}\leq \abs k}\upsilon^{2\abs{k_2}} \sum_{\ell,j}\braket{\ell,j}^{2s} 
		 \Big( \sum_{\ell',j'} | \braket{\ell-\ell'}^\tb \pa_\lambda^{k_2} (R_\perp^{(d)})_j^{j'}(\ell-\ell') | | h_{\ell',j'}| \Big)^2 \notag \\
		& \lesssim_{k_0}  N^{2\tau_1} \upsilon^{-2(1+\abs k)} \sum_{\abs{k_2}\leq \abs k} \upsilon^{2\abs{k_2}} \| | \braket{\pa_\vf}^\tb \pa_\lambda^{k_2}R_\perp^{(d)} | |h| \|_s^2 \notag \\
		& \stackrel{\eqref{def:mod-tame}, \eqref{Mn.sharp}}{\lesssim_{k_0}} N^{2\tau_1} \upsilon^{-2(1+\abs k)} \big( \fM^\sharp(s,\tb)^2 \norm h_{s_0}^2 + \fM^\sharp (s_0,\tb)^2 \norm h_s^2 
		\big) \,, \nonumber
	\end{align}
	and, by Definition \ref{Dk0-modulo}, 
	we conclude that
	$
		\fM_{\braket{\pa_\vf}^\tb X^{(d)}}^\sharp (s) \lesssim_{k_0} N^{\tau_1} \upsilon^{-1} \fM^\sharp(s,\tb) $.
	The analogous estimates for $\braket{\pa_\vf}^\tb X^{(o)}$,  $X^{(d)}$,
	$X^{(o)}$ and \eqref{X12.est1}, \eqref{X12.est2} follow similarly.
	By induction, the operator $\bR_\perp$ is  reversible and momentum preserving. Therefore, by \eqref{trasf.KAM}, \eqref{Xdn.sol}, \eqref{Xon.sol} and Lemmata \ref{rev_defn_C}, \ref{lem:mom_pres}, it follows that $\bX$ is 
	reversibility and momentum preserving.
\end{proof}

By \eqref{bL+}, \eqref{homeq.KAM},
for all $ \lambda \in\t\Lambda_{\tn+1}^{\upsilon}$, we have 
\begin{equation}\label{DEFL+}
	\bL_+ = \b\Phi^{-1} \bL \b\Phi = \omega\cdot\pa_\vf \uno_\perp + \im\,\bD_+ + \bR_\perp^{(+)}\,,
\end{equation}
where
\begin{equation}\label{D+R+}
	\begin{aligned}
	& \bD_+:= \bD -\im [\bR_\perp] \,, \\
	& \bR_\perp^{(+)}:=
	 \Pi_N^\perp \bR_\perp - \int_0^1 e^{-\tau\bX}[\bX,\bR_\perp] e^{\tau \bX} \wrt\tau + \int_0^1 (1-\tau) e^{-\tau \bX} [\bX, \Pi_N\bR_\perp-[\bR_\perp]] e^{\tau\bX} \wrt\tau \,.
	\end{aligned}
\end{equation}
The right hand side of \eqref{DEFL+}-\eqref{D+R+} define an extension of 
$\bL_+$ to the whole parameter space $ \R^\nu \times [\kappa_1, \kappa_2] $,
since $ \bR_\perp $ and $ \bX $ are defined on $ \R^\nu \times [\kappa_1, \kappa_2] $. 

The new operator $\bL_+$ in \eqref{DEFL+} 
has the same form of $\bL$ in \eqref{bLn} with the 
non-diagonal remainder $\bR_\perp^{(+)}$ which is the sum of a 
term $ \Pi_N^\perp \bR_\perp $ supported on high frequencies 
and a quadratic function of $\bX$ and $\bR_\perp $. 
The new normal form $\bD_+$ is diagonal:
\begin{lem}\label{new.normal}
{\bf (New diagonal part)}
	For all $(\omega,\kappa)\in\R^\nu\times [\kappa_1,\kappa_2]$,  the new normal form is
	\begin{equation*}\label{nn}
		\begin{aligned}
		\im \, \bD_+ = \im \, \bD + [\bR_\perp] = \im \begin{pmatrix}
		\cD_+ & 0 \\ 0 & -\bar{\cD_+}
		\end{pmatrix}\,, \ 
		\cD_+:= \diag_{j\in\S_0^c}\mu_{j}^{(+)}\,, 
		\  \mu_{j}^{(+)} := \mu_{j}+ \tr_j \in \R\,,
		\end{aligned}
	\end{equation*}
	where each $ \tr_j $ satisfies, on $\R^\nu\times[\kappa_1,\kappa_2]$,
\begin{equation}\label{defrjNF}
	|\tr_j|^{k_0,\upsilon} = | \mu_{j}^{(+)}-\mu_{j} |^{k_0,\upsilon} \lesssim \fM^\sharp(s_0) \, .
\end{equation}
	Moreover, given tori $i_1(\omega,\kappa), i_2(\omega,\kappa)$, we have  
	$ | \tr_j (i_1)- \tr_j(i_2) | \lesssim \| | \Delta_{12} \bR_\perp | \|_{\cL(H^{s_0})} $. 
\end{lem}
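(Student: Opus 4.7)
The plan is first to match the block structure and then extract scalar estimates from the modulo-tame bound by testing on single exponentials.

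Step 1 (structure). By \eqref{bLn} the operator $\im\bD$ is block-diagonal with top-left block $\im\cD=\im\,\diag_{j\in\S_0^c}\mu_j$ and bottom-right block $-\im\bar\cD$. By \eqref{ZRperp} the average $[\bR_\perp]$ is block-diagonal with top-left block $[R_\perp^{(d)}]=\diag_{j\in\S_0^c}(R_\perp^{(d)})_j^j(0)$ and bottom-right block its complex conjugate. Adding them gives the diagonal form displayed in the lemma, with $\mu_j^{(+)}$ defined by
\[
\im\mu_j^{(+)}:=\im\mu_j+(R_\perp^{(d)})_j^j(0),\qquad j\in\S_0^c.
\]
Consistency of the bottom-right block is automatic by conjugation since the sum just inherits the structure of $\im\bD$.

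Step 2 (reality of $\tr_j$). By the inductive hypothesis $\bR_\perp^{(\tn)}$ is real and reversible. Applying Lemma~\ref{rev_defn_C} to the $(d)$-entry with $\ell=0$, $j'=j$, we get $(R_\perp^{(d)})_j^j(0)=-\overline{(R_\perp^{(d)})_j^j(0)}$, so $(R_\perp^{(d)})_j^j(0)\in\im\R$. Hence $\tr_j:=\mu_j^{(+)}-\mu_j=-\im(R_\perp^{(d)})_j^j(0)\in\R$. Since $\bD$ and $[\bR_\perp]$ are both defined on the whole parameter space $\R^\nu\times[\kappa_1,\kappa_2]$, so are the $\mu_j^{(+)}$.

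Step 3 (quantitative bound). To reduce \eqref{defrjNF} to the modulo-tame constant of $\bR_\perp$ I will test the majorant on pure exponentials. For $u_{\ell',j'}(\vf,x):=e^{\im(\ell'\cdot\vf+j'x)}$ one has $\|u_{\ell',j'}\|_s=\la \ell',j'\ra^s$, and the Plancherel identity gives
\[
\big\||\pa_\lambda^k R_\perp^{(d)}|\,u_{\ell',j'}\big\|_{s_0}^2=\sum_{\ell,j}\la \ell,j\ra^{2s_0}\big|\pa_\lambda^k(R_\perp^{(d)})_j^{j'}(\ell-\ell')\big|^2.
\]
Retaining only the term $(\ell,j)=(\ell',j')$ and applying Definition~\ref{Dk0-modulo} with $s=s_0$ yields
\[
\la \ell',j'\ra^{s_0}\big|\pa_\lambda^k(R_\perp^{(d)})_{j'}^{j'}(0)\big|\leq 2\,\upsilon^{-|k|}\fM^\sharp(s_0)\,\la \ell',j'\ra^{s_0},
\]
so that $\upsilon^{|k|}|\pa_\lambda^k(R_\perp^{(d)})_{j'}^{j'}(0)|\lesssim\fM^\sharp(s_0)$ uniformly in $j'\in\S_0^c$ and $|k|\leq k_0$. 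Summing over $|k|\leq k_0$ and recalling $\tr_j=-\im(R_\perp^{(d)})_j^j(0)$ gives $|\tr_j|^{k_0,\upsilon}\lesssim\fM^\sharp(s_0)$.

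Step 4 (Lipschitz estimate in $i$). The same trick applied to the majorant $|\Delta_{12}R_\perp^{(d)}|$ tested on $e^{\im jx}$ (i.e.\ $\ell'=0$, $j'=j$) gives
\[
|\Delta_{12}(R_\perp^{(d)})_j^j(0)|\leq\big\||\Delta_{12}R_\perp^{(d)}|\,e^{\im jx}\big\|_{s_0}\cdot\la j\ra^{-s_0}\leq\big\||\Delta_{12}\bR_\perp|\big\|_{\cL(H^{s_0})},
\]
which yields the desired bound on $|\tr_j(i_1)-\tr_j(i_2)|$.

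No real obstacle arises: the lemma is the elementary bookkeeping step at the end of one KAM cycle, and all the work is done by the reversibility of $\bR_\perp$ (reality of $\tr_j$) and by the definition of the modulo-tame norm (extraction of scalar matrix elements).
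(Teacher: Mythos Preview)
Your proof is correct and follows essentially the same approach as the paper's: identify $\tr_j=-\im(R_\perp^{(d)})_j^j(0)$, use the reversibility characterization \eqref{rev:Fourier} to get reality, and then extract the scalar matrix-element bound from the modulo-tame constant $\fM^\sharp(s_0)$ by testing the majorant $|\pa_\lambda^k R_\perp^{(d)}|$ on single exponentials. The paper's proof is just a more compressed version of your Steps~2--4.
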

\begin{proof}
	Recalling 
	\eqref{ZRperp}, we have that  $ \tr_j:= - \im (R_\perp^{(d)})_j^j(0)$, 
	for all $ j \in \S_0^c $.  By the reversibility of $R_\perp^{(d)} $ and \eqref{rev:Fourier}
	we deduce that  $\tr_j\in\R $.
	Recalling the definition of $\fM^\sharp(s_0)$ in \eqref{Mn.sharp} (with $s=s_0$) and Definition \ref{Dk0-modulo}, we have, for all $0\leq \abs k \leq k_0$, $\| | \pa_\lambda^k R_\perp^{(d)} | h \|_{s_0} \leq 2 \upsilon^{-\abs k} \fM^\sharp(s_0) \norm h_{s_0} $, and therefore
$		| \pa_\lambda^k (R_\perp^{(d)})_j^j(0) | \lesssim \upsilon^{-\abs k} \fM^\sharp(s_0) \,.
$
	Hence \eqref{defrjNF} follows. 
	The last bound  for $ | \tr_j (i_1)- \tr_j(i_2) |$ 
	follows analogously.
\end{proof}

\paragraph{The iterative step.}
Let $\tn\in\N_0$ and assume that the statements $({\bf S1})_{\tn}$-$({\bf S3})_{\tn}$ are true. We now  prove $({\bf S1})_{\tn+1}$-$({\bf S3})_{\tn+1}$. 
For sake of simplicity in the notation (as in other parts of the paper) 
we omit to write the dependence on $ k_0 $, which is considered as a  fixed constant.
\\
{\sc Proof of $({\bf S1})_{\tn+1}$}.
The real operator $\bX_{\tn}$ 
defined in Lemma \ref{X_gen.hom} 
is defined for all $(\omega,\kappa)\in\R^\nu\times[\kappa_1,\kappa_2]$ and, by \eqref{gen.est}, \eqref{small_scheme}, 
satisfies the estimates \eqref{gen.modulo.KAM} at the step $\tn+1$. 
The flow maps $\b\Phi_{\tn}^{\pm 1} = e^{\pm \bX_{\tn}} $ are well defined by
 Lemma \ref{modulo_expo}.  
 By \eqref{DEFL+}, for all $ \lambda \in \t\Lambda_{\tn+1}^\upsilon$, the  
conjugation formula \eqref{ITE-CON} holds at the step $\tn+1$. 
 The operator $\bX_{\tn}$ is reversibility and momentum preserving, and 
so are the operators $\b\Phi_{\tn}^{\pm 1} = e^{\pm \bX_{\tn}} $.
By Lemma \ref{new.normal}, the operator $\bD_{\tn+1}$ is diagonal with eigenvalues
$ \mu_{j}^{(\tn+1)}:\R^\nu\times[\kappa_1,\kappa_2]\rightarrow \R $,
$ \mu_{j}^{(\tn+1)} = \mu_{j}^{(0)} + \fr_j^{(\tn+1)} $ 
with $ \fr_j^{(\tn+1)} := \fr_j^{(\tn)} + \tr_j^{(\tn)}$ 
satisfying, using also \eqref{small_scheme},  
\eqref{rem.eigen.KAM} 
at the step $\tn+1$. 
The next lemma provides  the estimates of the remainder $ \bR_\perp^{(\tn+1)} = \bR_\perp^{(+)} $
defined in \eqref{D+R+}.
\begin{lem}\label{rema.scheme}
	The operators $\bR_\perp^{(\tn+1)}$ and  $\braket{\pa_\vf}^\tb \bR_\perp^{(\tn+1)}$ 
	are  $\cD^{k_0}$-modulo-tame with  modulo-tame constants satisfying
	\begin{align}\label{remas.1}
	& \fM_{\tn+1}^\sharp(s)   \lesssim N_\tn^{-\tb} \fM_\tn^\sharp (s,\tb) + N_\tn^{\tau_1} \upsilon^{-1} \fM_\tn^\sharp (s)\fM_\tn^\sharp (s_0)\,, \\	
& \label{remas.2}
		\fM_{\tn+1}^\sharp(s,\tb)  \lesssim_{\tb} \fM_\tn^\sharp(s,\tb)+ N_\tn^{\tau_1}\upsilon^{-1}
		\big( \fM_\tn^\sharp(s,\tb)\fM_\tn^\sharp(s_0)+\fM_\tn^\sharp(s_0,\tb)\fM_\tn^\sharp(s) \big) \,.
	\end{align}
\end{lem}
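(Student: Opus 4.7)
The plan is to estimate each of the three contributions to $\bR_\perp^{(\tn+1)} = \bR_\perp^{(+)}$ in the decomposition \eqref{D+R+}, namely
\[
\bR_\perp^{(+)} = \Pi_N^\perp \bR_\perp \;-\; \int_0^1 e^{-\tau\bX}[\bX,\bR_\perp] e^{\tau \bX}\,d\tau \;+\; \int_0^1 (1-\tau) e^{-\tau \bX} [\bX, \Pi_N\bR_\perp-[\bR_\perp]] e^{\tau\bX}\,d\tau,
\]
and then to combine them with the triangle inequality for modulo-tame constants. The three main ingredients are: the smoothing bound \eqref{modulo_smooth}, which controls the high-frequency truncation $\Pi_N^\perp$; the composition/commutator rules of Lemma~\ref{modulo_sumcomp}, which handle $[\bX,\cdot]$ and the conjugation by $e^{\pm\tau\bX}$ once paired with Lemma~\ref{modulo_expo}; and the bounds on $\bX$ from Lemma~\ref{X_gen.hom}, which insert the small-denominator loss $N_\tn^{\tau_1}\upsilon^{-1}$ and relate $\fM_\bX^\sharp$ to $\fM_\tn^\sharp$.

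For the bound \eqref{remas.1} on $\fM_{\tn+1}^\sharp(s)$, I would proceed termwise. The first term is immediate: \eqref{modulo_smooth} gives $\fM_{\Pi_N^\perp \bR_\perp}^\sharp(s)\le N_\tn^{-\tb}\fM_\tn^\sharp(s,\tb)$. For the quadratic terms, the key estimate is $\fM_{[\bX,\bR_\perp]}^\sharp(s)\lesssim \fM_\bX^\sharp(s)\fM_\tn^\sharp(s_0)+\fM_\bX^\sharp(s_0)\fM_\tn^\sharp(s)$ by Lemma~\ref{modulo_sumcomp}, and then \eqref{gen.est} inserts the factor $N_\tn^{\tau_1}\upsilon^{-1}$ to yield $\fM_{[\bX,\bR_\perp]}^\sharp(s)\lesssim N_\tn^{\tau_1}\upsilon^{-1}\fM_\tn^\sharp(s)\fM_\tn^\sharp(s_0)$. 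To conjugate by $e^{\pm\tau\bX}$, I would first check that $\fM_\bX^\sharp(s_0)\le 1$: by \eqref{gen.est} and the inductive bound \eqref{small_scheme} this is $\lesssim N_\tn^{\tau_1}\upsilon^{-1}\fM_0(s_0,\tb)$, which is $\le 1$ under the smallness \eqref{small_KAM_con} (provided $\tau_2$ dominates $\tau_1$). Then Lemmata~\ref{modulo_sumcomp} and \ref{modulo_expo} give $\fM_{e^{-\tau\bX}A\,e^{\tau\bX}}^\sharp(s)\lesssim \fM_A^\sharp(s)+\fM_\bX^\sharp(s)\fM_A^\sharp(s_0)$, which preserves the bound up to the same order in $N_\tn,\upsilon$. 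The third integral is estimated in the same way, with the additional observation that $\fM_{\Pi_N\bR_\perp-[\bR_\perp]}^\sharp(s)\le\fM_\tn^\sharp(s)$ by direct comparison of matrix elements.

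For the higher-derivative bound \eqref{remas.2}, the same skeleton applies after multiplication by $\langle\partial_\vf\rangle^\tb$. The first term satisfies $\fM_{\langle\partial_\vf\rangle^\tb\Pi_N^\perp\bR_\perp}^\sharp(s)\le\fM_\tn^\sharp(s,\tb)$ by the second inequality in \eqref{modulo_smooth}. For the commutator and conjugation contributions, the third estimate of Lemma~\ref{modulo_sumcomp} splits the $\tb$-derivatives between the two factors, producing the four terms
\[
\fM_{\langle\partial_\vf\rangle^\tb[\bX,\bR_\perp]}^\sharp(s)\lesssim_\tb \fM_{\langle\partial_\vf\rangle^\tb\bX}^\sharp(s)\fM_\tn^\sharp(s_0)+\fM_{\bX}^\sharp(s_0)\fM_{\langle\partial_\vf\rangle^\tb\bR_\perp}^\sharp(s)+\text{(symmetric terms at }s_0\text{)},
\]
which, after inserting \eqref{gen.est}, collapse to $N_\tn^{\tau_1}\upsilon^{-1}\bigl(\fM_\tn^\sharp(s,\tb)\fM_\tn^\sharp(s_0)+\fM_\tn^\sharp(s_0,\tb)\fM_\tn^\sharp(s)\bigr)$. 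The analogous bound with $e^{\pm\tau\bX}$ in front and behind is obtained by applying Lemma~\ref{modulo_expo} also to $\langle\partial_\vf\rangle^\tb e^{\pm\tau\bX}-\mathrm{Id}$ and combining again through Lemma~\ref{modulo_sumcomp}.

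The main technical point, and the only place where one must be slightly careful, is bookkeeping the distribution of the $\tb$ derivatives through the nested compositions in the integrals $\int_0^1 e^{-\tau\bX}(\cdot) e^{\tau\bX}\,d\tau$: each time $\langle\partial_\vf\rangle^\tb$ acts, it may fall on either exponential or on the inner operator, and one must verify that the three-factor composition rule of Lemma~\ref{modulo_sumcomp} (applied iteratively) yields a bound in which the high-norm constant $\fM_\tn^\sharp(\cdot,\tb)$ always multiplies the small low-norm constant $\fM_\tn^\sharp(s_0)$ (which is $\ll 1$), while the reciprocal pairing $\fM_\tn^\sharp(s)\fM_\tn^\sharp(s_0,\tb)$ is also present but controlled the same way. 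Once this is checked, the estimates \eqref{remas.1}--\eqref{remas.2} follow by summing the three contributions.
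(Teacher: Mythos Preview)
Your proposal is correct and follows essentially the same approach as the paper: the paper's proof is a one-line reference to \eqref{D+R+}, Lemmata~\ref{modulo_sumcomp} and~\ref{modulo_expo}, the smoothing bound \eqref{modulo_smooth}, and the estimates \eqref{gen.est}, \eqref{small_scheme}, \eqref{small_KAM_con}, and you have accurately fleshed out how these ingredients combine. The only cosmetic refinement is that when verifying $\fM_\bX^\sharp(s_0)\le 1$ you may also invoke the inductive factor $N_{\tn-1}^{-\ta}$ from \eqref{small_scheme}, which together with $\tau_2>\tau_1+\ta$ (cf.\ \eqref{tbta} and the choice in Lemma~\ref{QSKAM}) makes the smallness uniform in $\tn$.
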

\begin{proof}
	The estimates \eqref{remas.1}, \eqref{remas.2} follow by \eqref{D+R+}, Lemmata 
	\ref{modulo_sumcomp}, , \ref{modulo_expo}, \eqref{modulo_smooth}
	 and  \eqref{gen.est},
	\eqref{small_scheme}, \eqref{tbta}, \eqref{size_N}, \eqref{small_KAM_con}. 
\end{proof}

\begin{lem}\label{QSKAM} Estimates \eqref{small_scheme} holds at the step $\tn+1$.
\end{lem}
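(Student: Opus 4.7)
The plan is a direct iterative step: substitute the inductive hypothesis \eqref{small_scheme} at level $\tn$ into the recursive estimates \eqref{remas.1}--\eqref{remas.2} provided by Lemma \ref{rema.scheme}, and verify that the resulting bounds still take the form \eqref{small_scheme} at level $\tn+1$. The whole argument rests on the arithmetic of exponents dictated by \eqref{tbta} and \eqref{size_N}: $\ta=3\tau_1$, $\tb\geq \ta+2$, and $N_\tn = N_{\tn-1}^{3/2}$, so $N_{\tn-1}=N_\tn^{2/3}$.

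First I would control $\fM_{\tn+1}^\sharp(s)$ using \eqref{remas.1}. The high--frequency piece gives, by the inductive bound for $\fM_\tn^\sharp(s,\tb)$,
\[
N_\tn^{-\tb}\,\fM_\tn^\sharp(s,\tb)\leq C_*\fM_0(s,\tb)\,N_\tn^{-\tb+2/3}\leq C_*\fM_0(s,\tb)\,N_\tn^{-\ta-4/3},
\]
which is $\leq \tfrac12 C_*\fM_0(s,\tb)N_\tn^{-\ta}$ for $N_0$ large. The quadratic piece gives, using the inductive bounds for both $\fM_\tn^\sharp(s)$ and $\fM_\tn^\sharp(s_0)$,
\[
N_\tn^{\tau_1}\upsilon^{-1}\fM_\tn^\sharp(s)\fM_\tn^\sharp(s_0)\leq C_*^{2}\fM_0(s,\tb)\fM_0(s_0,\tb)\upsilon^{-1}N_\tn^{\tau_1-4\ta/3}.
\]
Since $\ta=3\tau_1$, the exponent $\tau_1-4\ta/3+\ta=0$, and by the smallness assumption \eqref{small_KAM_con} one has $\fM_0(s_0,\tb)\upsilon^{-1}\leq N_0^{-\tau_2}$, so this term is bounded by $\tfrac12 C_*\fM_0(s,\tb)N_\tn^{-\ta}$ provided $N_0$ is large enough in terms of $C_*$.

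Next I would estimate $\fM_{\tn+1}^\sharp(s,\tb)$ using \eqref{remas.2}. The linear part gives $\lesssim_\tb \fM_\tn^\sharp(s,\tb)\leq C_*\fM_0(s,\tb)N_{\tn-1}=C_*\fM_0(s,\tb)N_\tn^{2/3}\leq \tfrac12 C_*\fM_0(s,\tb)N_\tn$ once $C_*=C_*(s_0,\tb)$ is chosen larger than twice the implicit constant from \eqref{remas.2}. Both cross terms $\fM_\tn^\sharp(s,\tb)\fM_\tn^\sharp(s_0)$ and $\fM_\tn^\sharp(s_0,\tb)\fM_\tn^\sharp(s)$ are bounded by $C_*^{2}\fM_0(s,\tb)\fM_0(s_0,\tb)N_{\tn-1}^{1-\ta}$, giving a contribution
\[
N_\tn^{\tau_1}\upsilon^{-1}C_*^{2}\fM_0(s,\tb)\fM_0(s_0,\tb)N_\tn^{(1-\ta)\cdot 2/3}=C_*^{2}\fM_0(s,\tb)\fM_0(s_0,\tb)\upsilon^{-1}N_\tn^{\tau_1-2\ta/3+2/3}.
\]
After dividing by the desired $N_\tn$, the remaining exponent is $\tau_1-2\ta/3-1/3=-\tau_1-1/3<0$, so invoking again \eqref{small_KAM_con} in the form $\fM_0(s_0,\tb)\upsilon^{-1}\leq N_0^{-\tau_2}$ this contribution is $\leq \tfrac12 C_*\fM_0(s,\tb)N_\tn$ for $N_0$ large.

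Both estimates of \eqref{small_scheme} at level $\tn+1$ therefore follow, closing the induction. The only delicate step is verifying the exponent arithmetic: the choices $\ta=3\tau_1$ and $\tb\geq \ta+2$ in \eqref{tbta} are exactly tailored to ensure that the exponents $-\tb+2/3$, $\tau_1-4\ta/3+\ta$, and $\tau_1-2\ta/3-1/3$ are non-positive (and strictly negative modulo the smallness factor $N_0^{-\tau_2}$ coming from \eqref{small_KAM_con}), so that every term can be absorbed into $\tfrac12$ of the target bound. This is not a true obstacle but is the sole mechanism driving the KAM convergence; the constant $C_*(s_0,\tb)$ is fixed once and for all (independent of $\tn$) by picking it strictly larger than twice the implicit constants appearing in Lemma \ref{rema.scheme}, and $N_0=N_0(S,\tb)$ is then chosen large enough to make all the correction factors $\leq 1/2$.
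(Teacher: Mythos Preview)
Your proof is correct and follows exactly the approach of the paper, which simply cites \eqref{remas.1}, \eqref{remas.2}, the inductive hypothesis \eqref{small_scheme}, the arithmetic in \eqref{tbta}, and the smallness condition \eqref{small_KAM_con}; you have carried out the exponent computations that the paper leaves implicit. One minor slip: from $\tb=[\ta]+2$ you only get $\tb\geq \ta+1$ in general (not $\ta+2$), but this still yields $-\tb+2/3\leq -\ta-1/3$, which suffices; and for the linear term in $\fM_{\tn+1}^\sharp(s,\tb)$ the absorption is achieved by taking $N_0$ large (so that $N_\tn^{-1/3}$ beats the implicit constant), not by enlarging $C_*$.
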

\begin{proof}
It follows by \eqref{remas.1},  \eqref{remas.2}, \eqref{small_scheme} at the step $\tn$, \eqref{tbta}, 
	 the smallness condition \eqref{small_KAM_con} with $N_0=N_0(s_0,\tb) >0$ large enough and taking $ \tau_2 > \tau_1 + \ta $. 
\end{proof}
Finally  $\bR_\perp^{(\tn+1)}$  is real, reversible and momentum preserving as $\bR_\perp^{(\tn)}$,
since $\bX_\tn$ is real, reversibility and momentum preserving. This concludes the proof of $({\bf S1})_{\tn+1}$.\\
{\sc Proof of $({\bf S2})_{\tn+1}$}. It follows by similar arguments and we omit it.
\\
{\sc Proof of $({\bf S3})_{\tn+1}$}. The proof follows as for $({\bf S4})_{\nu+1}$ 
of Theorem 7.3 in \cite{BM}, using $({\bf S2})_{\tn}$ 
and the fact that the momentum condition in \eqref{tLambdan} implies 
$ | j - j' | \lesssim N_{\tn} $.

\subsection*{Almost invertibility of $\cL_\omega$}

By \eqref{LomLp} and Theorem \ref{KAMRED} (where $\bL_0= \cL_\perp$) 
we obtain
\begin{equation}\label{Lom+kam}
	\cL_\omega = \bW_{2,\bar\tn} \bL_{\bar\tn} \bW_{1,\bar\tn}^{-1} \,, \quad \bW_{1,\bar\tn}:= \cW_1^\perp \bU_{\bar\tn}\,, \quad \bW_{2,\bar\tn}:= \cW_2^\perp \bU_{\bar\tn}\,,
\end{equation}
where the operator $\bL_{\bar\tn}$ is defined in	\eqref{bLn}  
	with $\tn=\bar\tn $. 
 By \eqref{qui.1} and \eqref{flow.sharp.kam},
 we have, for some $\sigma:=\sigma(\tau,\nu,k_0) > 0 $,
for any $ s_0 \leq s \leq S $,
\begin{equation}\label{bW.est}
	\normk{\bW_{1,\bar\tn}^{\pm 1}h}{s}, \normk{\bW_{2,\bar\tn}^{\pm 1}h}{s} \lesssim_{S} \normk{h}{s+\sigma} + \normk{\fI_0}{s+\mu(\tb)+\sigma}\normk{h}{s_0+\sigma} \, . 
\end{equation}
In order to verify the almost invertibility assumption (AI) of $ \cL_\omega$ 
in Section \ref{sec:approx_inv}, 
we decompose the operator $\bL_{\bar\tn}$ in \eqref{bLn} 
(with $\bar\tn$ instead of $\tn$) as
\begin{equation}\label{bL.dec}
	\bL_{\bar\tn} = \bD_{\bar\tn}^{<} + \bQ_\perp^{(\bar\tn)} + \bR_\perp^{(\bar\tn)}
\end{equation}
where
\begin{equation}\label{bDbQ}
		\bD_{\bar\tn}^<  := \Pi_{K_{\bar\tn}}(\omega\cdot \pa_\vf \uno_\perp + \im\, \bD_{\bar\tn}) \Pi_{K_{\bar\tn}} + \Pi_{K_{\bar\tn}}^\perp \, , \quad
		\bQ_\perp^{(\bar\tn)}  := \Pi_{K_{\bar\tn}}^\perp(\omega\cdot \pa_\vf \uno_\perp + \im\, \bD_{\bar\tn}) \Pi_{K_{\bar\tn}}^\perp - \Pi_{K_{\bar\tn}}^\perp \, , 
\end{equation}
and  the smoothing operator  $\Pi_{K}$ on the traveling waves is defined in 
\eqref{pro:N}, and $ \Pi_K^\perp := {\rm Id}-\Pi_K  $. The  constants $ K_\tn $ in \eqref{bDbQ} are $ K_\tn := K_0^{\chi^\tn} $, $ \chi= 3/2 $ (cfr. \eqref{scales}), 
and $ K_0 $ will be fixed  in \eqref{param.NASH}. 

\begin{lem}\label{first.meln}
{\bf (First order Melnikov non-resonance conditions)}
	For all $\lambda=(\omega,\kappa)$ in
	\begin{equation}\label{I.meln}
		\begin{aligned}
			\t\Lambda_{\bar\tn+1}^{\upsilon,I} 
			 := \Big\{ 
			\lambda\in\R^\nu\times[\kappa_1,\kappa_2] \, : \,  | \omega\cdot\ell +\mu_{j}^{(\bar\tn)} | \geq 2\upsilon \frac{\abs j^\frac32}{ \braket{\ell}^{\tau}} \, , \,  
			\forall \abs\ell\leq K_{\bar\tn}, \, j\in\S_0^c \, , j + 
			\vec \jmath \cdot \ell = 0  \Big\}\,,
		\end{aligned}
	\end{equation}
	on the subspace of the traveling waves  
	$ \tau_\vs g(\vf) = g(\vf - \ora{\jmath}\vs) $, $ \vs \in \R $,  
	such that $ g(\vf, \cdot ) \in \bH_{{\mathbb S}_0}^\bot $, 
	the operator $\bD_{\bar\tn}^<$ in \eqref{bDbQ} is invertible and there exists an extension of the inverse operator (that we denote in the same way) to the whole $\R^\nu\times [\kappa_1,\kappa_2]$ satisfying the estimate
	\begin{equation}\label{D<.inv}
		\normk{(\bD_{\bar\tn}^<)^{-1}g}{s} \lesssim_{k_0} \upsilon^{-1} \normk{g}{s+\tau_1} \,, \quad \tau_1=k_0+\tau(k_0+1) \,.
	\end{equation}
	Moreover $ (\bD_{\bar\tn}^<)^{-1} g $ is a traveling wave. 
\end{lem}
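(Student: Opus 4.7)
The plan is to split $\bD_{\bar\tn}^<$ into its low/high-$\vf$-frequency pieces using $\Pi_{K_{\bar\tn}}$. On the range of $\Pi_{K_{\bar\tn}}^\perp$ the operator $\bD_{\bar\tn}^<$ acts as the identity, and since $\Pi_{K_{\bar\tn}}^\perp$ preserves traveling waves by \eqref{pro:N}, this piece contributes $\Pi_{K_{\bar\tn}}^\perp g$ with no loss. On the range of $\Pi_{K_{\bar\tn}}$, $\bD_{\bar\tn}^<$ acts as $\omega\cdot\pa_\vf\uno_\perp + \im\bD_{\bar\tn}$, which in the exponential basis is diagonal with eigenvalues $\im(\omega\cdot\ell+\mu_j^{(\bar\tn)})$ on the first block ($j\in\S_0^c$) and $\im(\omega\cdot\ell-\mu_{-j}^{(\bar\tn)})$ on the second ($j\in-\S_0^c$). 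Under $(\ell,j)\mapsto(-\ell,-j)$, which preserves the momentum line $j+\ora{\jmath}\cdot\ell=0$ that characterizes traveling waves, the second family reduces to the first, so the single Melnikov condition in \eqref{I.meln} controls both; for $\lambda\in\tilde\Lambda_{\bar\tn+1}^{\upsilon,I}$ a solution is therefore obtained by mode-by-mode division.

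To extend the inverse to all $\lambda\in\R^\nu\times[\kappa_1,\kappa_2]$, in the spirit of \eqref{paext} I would define, for $|\ell|\leq K_{\bar\tn}$, $j\in\S_0^c$, $j+\ora{\jmath}\cdot\ell=0$,
\begin{equation*}
g^{(\bar\tn)}_{\ell,j}(\lambda)\ :=\ \frac{\chi\bigl((\omega\cdot\ell+\mu_j^{(\bar\tn)}(\lambda))\,\upsilon^{-1}|j|^{-3/2}\langle\ell\rangle^{\tau}\bigr)}{\im(\omega\cdot\ell+\mu_j^{(\bar\tn)}(\lambda))}
\end{equation*}
with $\chi$ from \eqref{cutoff}, and $0$ elsewhere; by construction the cutoff equals $1$ on $\tilde\Lambda_{\bar\tn+1}^{\upsilon,I}$, so this genuinely extends $(\bD_{\bar\tn}^<)^{-1}$. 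Since the multiplier depends only on $(\ell,j)$ and is supported on $\{j+\ora{\jmath}\cdot\ell=0\}$, the traveling wave structure is automatically preserved.

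The heart of the matter is the tame estimate \eqref{D<.inv}. The ingredients are: from \eqref{eigen_KAM}, \eqref{const_small}, \eqref{rem.eigen.KAM}, the analyticity of $\Omega_j(\kappa)$ and Lemma \ref{rem:exp_omegaj_k}, one has $\sup_{|k|\leq k_0}\upsilon^{|k|}|\partial_\lambda^k \mu_j^{(\bar\tn)}|\lesssim_{k_0}|j|^{3/2}$; the Melnikov lower bound $|\omega\cdot\ell+\mu_j^{(\bar\tn)}|\geq \upsilon|j|^{3/2}\langle\ell\rangle^{-\tau}$ is active on the support of the cutoff; and $|\partial_\omega(\omega\cdot\ell)|\leq\langle\ell\rangle$. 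The standard Faà di Bruno/Leibniz computation (analogous to the derivation of \eqref{equa1} in Lemma \ref{X_gen.hom} and to the diophantine estimate \eqref{lem:diopha.eq}) yields
\begin{equation*}
\sup_{|k|\leq k_0}\upsilon^{|k|}|\partial_\lambda^k g^{(\bar\tn)}_{\ell,j}(\lambda)|\ \lesssim_{k_0}\ \upsilon^{-1}|j|^{-3/2}\langle\ell\rangle^{\tau_1},\quad \tau_1=k_0+\tau(k_0+1),
\end{equation*}
and inserting this pointwise bound into the Fourier expansion of $g$, together with the momentum bound $|j|\leq|\ora{\jmath}|\langle\ell\rangle$ to absorb $\langle\ell\rangle^{\tau_1}$ into the Sobolev norm $\|\cdot\|_{s+\tau_1}$, gives \eqref{D<.inv}.

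The principal technical hurdle is this weighted derivative computation: a naïve bound combining $|\partial_\kappa f|\lesssim|j|^{3/2}$ with $|f|^{-1}\lesssim\upsilon^{-1}|j|^{-3/2}\langle\ell\rangle^\tau$ would produce a loss of $\tfrac32 k_0+\tau(k_0+1)$ derivatives, and the improvement to the advertised $\tau_1=k_0+\tau(k_0+1)$ relies on exploiting the Melnikov lower bound to rewrite each power $|j|^{3/2}$ appearing from $\partial_\lambda f$ as a factor $|f|\,\upsilon^{-1}\langle\ell\rangle^\tau$, thereby trading $|j|^{3/2}$-growth in $f$ for $\langle\ell\rangle$-growth that is compatible with the momentum relation. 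Once this bookkeeping is performed, the remaining steps are routine.
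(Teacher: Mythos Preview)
Your proposal is correct and takes the same approach as the paper, whose proof simply reads ``follows arguing as in Lemma \ref{X_gen.hom}''. Your explicit cutoff-multiplier extension and the derivative computation yielding the loss $\tau_1=k_0+\tau(k_0+1)$---via the cancellation of the $|j|^{3/2}$ growth of $\partial_\kappa\mu_j^{(\bar\tn)}$ against the $|j|^{3/2}$ factor built into the Melnikov lower bound $\rho=\upsilon|j|^{3/2}\langle\ell\rangle^{-\tau}$---is precisely the first-Melnikov analogue of that argument.
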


\begin{proof}
	The estimate \eqref{D<.inv} follows arguing as in Lemma \ref{X_gen.hom}. 
\end{proof}
Standard smoothing properties imply that the operator $\bQ_\perp^{(\bar\tn)}$ in \eqref{bDbQ} satisfies, for any traveling wave $ h \in \bH_{{\mathbb S}_0}^\bot $, 
for all $ b>0$, 
\begin{equation}\label{bQ.est}
	\normk{\bQ_\perp^{(\bar\tn)}h}{s_0} \lesssim K_{\bar\tn}^{- b} 
	\normk{h}{s_0+ b+\frac32} \,, \quad \normk{\bQ_\perp^{(\bar\tn)}h}{s} \lesssim \normk{h}{s+\frac32} \,.
\end{equation}
By the decompositions \eqref{Lom+kam}, \eqref{bL.dec}, Theorem \ref{KAMRED}
(note that \eqref{ansatz} and
Lemma \ref{torus_iso} imply  \eqref{ansatz_I0_s0}), 
Proposition \ref{end_redu}, 
the fact that $ \bW_{1,\bar\tn} $, $ \bW_{2,\bar\tn} $
map (anti)-reversible, respectively traveling, waves, into
(anti)-reversible, respectively traveling, waves (Lemma \ref{lemmaWperp})
and estimates \eqref{bW.est}, \eqref{D<.inv}, \eqref{bQ.est},  \eqref{SM12} 
we deduce the following theorem.
\begin{thm}\label{almo.inve}
{\bf (Almost invertibility of $ \cL_\omega $)}
	Assume \eqref{ansatz}. Let $ \ta, \tb $ as in \eqref{tbta} and 
	$ M $ as in \eqref{M_choice}.  Let  $S>s_0$ and assume the smallness condition \eqref{KAM_small_cond}.  Then the almost invertibility assumption (AI) 
	in Section \ref{sec:approx_inv}  holds with $ {\mathtt \Lambda}_0 $ replaced by  
	\begin{equation}\label{bLambdan}
		 \b\Lambda_{\bar\tn+1}^\upsilon :=  \b\Lambda_{\bar\tn+1}^\upsilon(i)
		 := \t\Lambda_{\bar\tn+1}^\upsilon\cap \t\Lambda_{\bar\tn+1}^{\upsilon,I} \,,
	\end{equation}
	(see \eqref{tLambdan}, \eqref{I.meln}) and, with $\mu(\tb)$ defined in \eqref{cb.mub},    
	$$
		\begin{aligned}
		\cL_\omega^{<} := \bW_{2,\bar\tn} \bD_{\bar\tn}^< \bW_{1,\bar\tn}^{-1} \,, \quad 
		\cR_\omega := \bW_{2,\bar\tn} \bR_\perp^{(\bar\tn)} \bW_{1,\bar\tn}^{-1} \,, \quad \quad   \cR_\omega^\perp  := \bW_{2,\bar\tn} \bQ_\perp^{(\bar\tn)} \bW_{1,\bar\tn}^{-1}\, . 
		\end{aligned}
	$$
\end{thm}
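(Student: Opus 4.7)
The plan is to read off the three requirements of assumption (AI) from the factorisation \eqref{Lom+kam} of $\cL_\omega$ combined with the block split \eqref{bL.dec} of $\bL_{\bar\tn}$. With the definitions proposed in the statement, the decomposition
\begin{equation*}
\cL_\omega=\bW_{2,\bar\tn}\big(\bD_{\bar\tn}^<+\bQ_\perp^{(\bar\tn)}+\bR_\perp^{(\bar\tn)}\big)\bW_{1,\bar\tn}^{-1}=\cL_\omega^<+\cR_\omega^\perp+\cR_\omega
\end{equation*}
is automatic, so only the solvability of $\cL_\omega^<$ on traveling waves and the tame bounds on $\cR_\omega,\cR_\omega^\perp$ need to be checked.

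For the solvability part, given an anti-reversible traveling wave $g\in H_\angle^{s+\sigma}$, I would first observe (from Lemma \ref{lemmaWperp} together with Theorem \ref{KAMRED}, which says that each $\bU_{\bar\tn}^{\pm 1}$ is reversibility and momentum preserving) that $\bW_{2,\bar\tn}^{-1}$ transports $g$ into an anti-reversible traveling wave still lying in $\bH_{\S_0}^\bot$. For parameters $(\omega,\kappa)\in \b\Lambda_{\bar\tn+1}^\upsilon\subset\t\Lambda_{\bar\tn+1}^{\upsilon,I}$, Lemma \ref{first.meln} then supplies a reversible traveling wave solution of $\bD_{\bar\tn}^<\, u=\bW_{2,\bar\tn}^{-1}g$, whose extension to the whole parameter space defines $u$ everywhere; finally $h:=\bW_{1,\bar\tn}u$ is again a reversible traveling wave and solves $\cL_\omega^< h=g$ on $\b\Lambda_{\bar\tn+1}^\upsilon$. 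The tame bound \eqref{almi4} is obtained by chaining the two inequalities \eqref{bW.est} with \eqref{D<.inv}, using the ansatz \eqref{ansatz} (which controls $\|\fI_0\|_{s_0+\mu(\tb)+\sigma}^{k_0,\upsilon}$) to absorb the products of low-norm factors, and absorbing the loss $\tau_1$ into the final loss $\sigma$.

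For the remainder $\cR_\omega$, I would use that Theorem \ref{KAMRED} provides $\fM_{\bR_\perp^{(\bar\tn)}}^\sharp(s)\lesssim_S\varepsilon\upsilon^{-1}N_{\bar\tn-1}^{-\ta}(1+\|\fI_0\|_{s+\mu(\tb)}^{k_0,\upsilon})$, and then transport this bound through $\bW_{2,\bar\tn}$ and $\bW_{1,\bar\tn}^{-1}$ via \eqref{bW.est} and the action bound \eqref{action_Hs_tame} for tame (hence modulo-tame) operators; the interpolation structure automatically gives the two-term tame estimate required of $\cR_\omega$. For $\cR_\omega^\perp$ one replaces $\bR_\perp^{(\bar\tn)}$ by $\bQ_\perp^{(\bar\tn)}$ and uses the two smoothing bounds \eqref{bQ.est}: the first, which trades $b$ derivatives for a factor $K_{\bar\tn}^{-b}$, gives the high-norm-loss estimate at $s=s_0$, and the second, uniform in $K_{\bar\tn}$, gives the general $s$-estimate. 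Both propagate cleanly through the $\bW_i$'s by \eqref{bW.est}.

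The only non-routine point is keeping careful track of the losses $\sigma$, $\tau_1$, $\mu(\tb)$ and $\aleph(M,\cdot)$, so that everything fits into the single loss $\sigma$ appearing on the right-hand side of \eqref{almi4} and the estimates in (AI); but this is entirely mechanical once $M,\tb,\ta$ have been fixed as in \eqref{M_choice}, \eqref{tbta}, \eqref{cb.mub}. I expect no genuine analytic obstacle here: the real work has already been done in building the conjugations $\cW_i^\perp$, $\bU_{\bar\tn}$ and in the KAM scheme, and in verifying in Theorem \ref{KAMRED} and Lemma \ref{first.meln} that each step preserves the reversibility and momentum-conservation structures on which the traveling-wave bookkeeping of (AI) rests.
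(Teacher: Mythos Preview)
Your proposal is correct and follows essentially the same approach as the paper: the paper's proof is precisely the sentence preceding the theorem, which lists the factorisation \eqref{Lom+kam}, the decomposition \eqref{bL.dec}, Theorem \ref{KAMRED}, Lemma \ref{lemmaWperp} (for the reversibility/momentum preservation of $\bW_{i,\bar\tn}$), and the estimates \eqref{bW.est}, \eqref{D<.inv}, \eqref{bQ.est}, \eqref{SM12} as the ingredients, and your write-up simply spells out how to combine them. The only small addition in the paper is the remark that \eqref{ansatz} together with Lemma \ref{torus_iso} implies \eqref{ansatz_I0_s0}, which is needed to invoke Theorem \ref{KAMRED}.
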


\section{Proof of Theorem \ref{NMT}}\label{sec:NaM}

Theorem \ref{NMT} is a consequence of Theorem \ref{NASH} below.
We consider the finite dimensional subspaces of traveling wave variations 
\begin{equation*}
	E_\tn := \big\{ 
	\fI(\vf)= (\Theta,I,w)(\vf) \	 {\rm such \ that } \ \eqref{mompres_aa1} \ {\rm holds} \ :  \ 
	 \Theta = \Pi_\tn \Theta\,, \ I=\Pi_\tn I \,, \ w = \Pi_\tn w \big\}
\end{equation*}
where $\Pi_\tn w := \Pi_{K_\tn} w $   
are defined  as in \eqref{pro:N} with  $ K_n $ in 
\eqref{scales}, 
and we denote with the same symbol $\Pi_\tn g(\vf)  
:= \sum_{\abs\ell\leq K_\tn} g_\ell e^{\im\ell\cdot\vf}$.  
Note that the  projector $\Pi_{\tn}$ maps (anti)-reversible traveling variations into (anti)-reversible traveling variations.

In view of the Nash-Moser Theorem \ref{NASH} we introduce the constants 
\begin{align}
	&\ta_1 := \max\{ 6\sigma_1 + 13, \chi(p(\tau+1) + \mu(\tb)+2\sigma_1)+1 \} \label{a1} \, , \quad
	 \ta_2 := \chi^{-1} \ta_1 -\mu(\tb)-2\sigma_1  \, , \\
	 &  \mu_1 := 3(\mu(\tb)+2\sigma_1)+1  \, , \quad 
	 \tb_1 := \ta_1 + 2\mu(\tb) + 4\sigma_1 + 3 +\chi^{-1}\mu_1\,,  \quad 
	  \chi = 3/2 \label{b1}\\
	& \sigma_1:= \max\{ \bar\sigma, 2 s_0+2k_0+5 \} \,, \quad S= s_0 + \tb_1\,,\label{sigma1}
\end{align}
where $\bar\sigma=\bar\sigma(\tau,\nu,k_0)>0$ is defined by Theorem \ref{alm.approx.inv}, 
$ 2 s_0+2k_0+5$ is the largest loss of regularity in the estimates of the Hamiltonian vector field $X_P$ in Lemma \ref{XP_est}, $\mu(\tb)$ is defined in \eqref{cb.mub}, 
and $\tb =[\ta]+2 $ is defined in \eqref{tbta}. The exponent $p$ in \eqref{scales} 
is required to satisfy
\begin{equation}\label{p.cond}
	p \ta >
	\tfrac12 \ta_1 + \tfrac32 \sigma_1 \,.
\end{equation}
By  \eqref{tbta}, 
and  the definition of $\ta_1$ in \eqref{a1}, there exists $p=p(\tau,\nu,k_0)$ such that \eqref{p.cond} holds, for example  we fix
$$
p:=\frac{3(\mu(\tb)+4\sigma_1+1)}{\ta} \,.
$$
\begin{rem}\label{rem:cond2.param}
	The constant $\ta_1$ is the exponent in \eqref{P2.2}. The constant $\ta_2$ is the exponent in the second bound in \eqref{P1.2}. The constant $\mu_1$ is the exponent in $(\cP 3)_\tn$. 
The conditions on the constants $ \mu_1, \tb_1, \ta_1 $ to  allow the convergence of the Nash-Moser scheme in Theorem \ref{NASH} are	
\begin{equation*}
		\ta_1 >  
		6\sigma_1+12 \,, \quad \tb_1 > \ta_1 +  2\mu(\tb)+4\sigma_1  +\chi^{-1}\mu_1 \,, \quad p\ta > \tfrac12 \ta_1 + \tfrac32 \sigma_1 \, , 
	\end{equation*}
	as well as
	$\mu_1 >  3(\mu(\tb)+2\sigma_1) $.  
	In addition, we require
$		\ta_1 \geq \chi(p(\tau+1) + \mu(\tb)+2\sigma_1) + 1 $ 
	so that $\ta_2\geq p(\tau+1) +\chi^{-1} $, which is used in  
	the proof of Lemma \ref{sub1}.
\end{rem}
Given a function
$ W = (\fI,\beta) $ where $ \fI  $ is the periodic component of a torus as in
\eqref{ICal} and $ \beta  \in  \R^\nu $, 
  we denote $	\normk{W}{s} := \normk{\fI}{s}+\abs\beta^{k_0,\upsilon} $.

\begin{thm}{\bf (Nash-Moser)} \label{NASH}
	There exist $\delta_0, C_*>0$ such that, if
	\begin{equation}\label{param.NASH}
K_0^{\tau_3} \varepsilon\upsilon^{-2} < \delta_0 \,, \
 \tau_3:= \max\{ p\tau_2, 2\sigma_1+\ta_1+4 \} \,, \
		 K_0 := \upsilon^{-1}\,, \
		  \upsilon:= \varepsilon^{\rm a}\,, \  0< {\rm a} <(2+\tau_3)^{-1}\,,
	\end{equation}
	where $\tau_2=\tau_2(\tau,\nu)$ is given by Theorem \ref{iterative_KAM}, then, for all $\tn\geq 0$:
	\begin{itemize}
		\item[$(\cP 1)_\tn$] There exists a $k_0$-times differentiable function $\wtW_\tn:\R^\nu\times[\kappa_1,\kappa_2]\rightarrow E_{\tn-1}\times \R^\nu$, $\lambda=(\omega,\kappa)\mapsto \wtW_\tn(\lambda):= (\wt\fI_\tn, \wt\alpha_\tn-\omega)$, for $\tn \geq 1 $, and $\wtW_0:=0$, satisfying
		\begin{equation}\label{P1.1}
			\normk{\wtW_\tn}{s_0+\mu(\tb)+\sigma_1} \leq C_*\varepsilon\upsilon^{-1} \,.
		\end{equation}
		Let $\wtU_\tn:= U_0+\wtW_\tn$, where $U_0:= (\vf,0,0,\omega)$. The difference $\wtH_\tn:= \wtU_\tn-\wtU_{\tn-1}$, for $\tn \geq 1 $, satisfies
		\begin{equation}\label{P1.2}
			\begin{aligned}
				& \normk{\wtH_1}{s_0+\mu(\tb)+\sigma_1}\leq C_* \varepsilon\upsilon^{-1}\,, \quad
	\normk{\wtH_\tn}{s_0+\mu(\tb)+\sigma_1} \leq C_* \varepsilon\upsilon^{-1} K_{\tn-1}^{-\ta_2}\,, \ \forall\, \tn\geq 2 \,.
			\end{aligned}
		\end{equation}
The torus embedding $ \wti_\tn := (\vf,0,0) + \wt\fI_\tn $ 
is  reversible and   traveling, 
i.e.  \eqref{RTTT} holds.
		\item[$(\cP 2)_\tn$] 
		We define
		\begin{equation}\label{P2.1}
			\cG_0:= \t\Omega \times [\kappa_1,\kappa_2]\,, \quad  \cG_{\tn+1}:= \cG_{\tn} \cap \b\Lambda_{\tn+1}^\upsilon(\wti_\tn) \,, \quad \forall\,\tn \geq 0 \,,
		\end{equation}
		where $\b\Lambda_{\tn+1}^\upsilon(\wti_\tn)$ is defined in \eqref{bLambdan}. Then, for all $\lambda \in \cG_{\tn}$ , setting $K_{-1}:=1$, we have
		\begin{equation}\label{P2.2}
			\normk{\cF(\wtU_\tn)}{s_0} \leq C_* \varepsilon K_{\tn-1}^{-\ta_1} \,.
		\end{equation}
		\item[$(\cP 3)_\tn$]{\sc (High norms)} 
		For all $\lambda \in \cG_{\tn}$, we have
$ \normk{\wtW_\tn}{s_0+\tb_1} \leq C_* \varepsilon\upsilon^{-1} K_{\tn-1}^{\mu_1} $. 
	\end{itemize}
\end{thm}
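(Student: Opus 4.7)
The plan is to prove Theorem \ref{NASH} by induction on $\tn$, following the classical Nash-Moser scheme adapted to the present setting where the linearized operator admits only an \emph{almost approximate} right inverse (cf. Theorem \ref{alm.approx.inv}). At the base step $\tn=0$ we set $\wtW_0=0$, $\wtU_0=U_0=(\vf,0,0,\omega)$; properties $(\cP1)_0$ and $(\cP3)_0$ are trivial, while $(\cP2)_0$ amounts to $\normk{\cF(U_0)}{s_0}\lesssim \varepsilon$, which follows directly from the expression \eqref{F_op} of the nonlinear operator $\cF$ and the fact that $X_{H_{\alpha_0}}(U_0)$ contains only the $O(\varepsilon)$ perturbation $X_P$ once the tangential frequency has been absorbed into $\alpha_0=\omega$. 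For the inductive step, given $\wtU_\tn$ satisfying $(\cP1)_\tn$--$(\cP3)_\tn$, the natural correction is
\[
\wtU_{\tn+1} := \wtU_\tn + \wtH_{\tn+1}\,, \qquad \wtH_{\tn+1} := -\wt\Pi_\tn\, \bT_0(\wti_\tn)\, \Pi_\tn\,\cF(\wtU_\tn)\,,
\]
where $\wt\Pi_\tn$ is the projector on $E_\tn\times\R^\nu$, $\bT_0(\wti_\tn)$ is the almost approximate inverse constructed in Theorem \ref{alm.approx.inv}, and the outer $\Pi_\tn$ truncates the source term. The ansatz \eqref{ansatz} needed to invoke Theorem \ref{alm.approx.inv} is guaranteed by $(\cP1)_\tn$ together with \eqref{param.NASH}. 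Crucially, by Lemma \ref{lem:RT} the source $\cF(\wtU_\tn)$ is an anti-reversible traveling wave variation whenever $\wti_\tn$ is reversible and traveling, and $\bT_0$ maps such variations back into reversible traveling wave variations (Theorem \ref{alm.approx.inv}), so the structure \eqref{RTTT} propagates along the iteration.

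To establish $(\cP2)_{\tn+1}$ one computes
\[
\cF(\wtU_{\tn+1}) = \cF(\wtU_\tn) + \di\cF(\wtU_\tn)\wtH_{\tn+1} + Q_\tn\,,
\]
with the quadratic term $Q_\tn := \cF(\wtU_{\tn+1})-\cF(\wtU_\tn)-\di\cF(\wtU_\tn)\wtH_{\tn+1}$. Inserting the definition of $\wtH_{\tn+1}$ and using $\di\cF(\wtU_\tn)\bT_0 = \mathrm{Id} + \cP + \cP_\omega + \cP_\omega^\perp$ from Theorem \ref{alm.approx.inv}, one decomposes
\[
\cF(\wtU_{\tn+1}) = \Pi_\tn^\perp \cF(\wtU_\tn) + \big(\cP+\cP_\omega+\cP_\omega^\perp\big)\Pi_\tn\cF(\wtU_\tn) + Q_\tn + (\mathrm{commutators\ with\ } \wt\Pi_\tn)\,.
\]
Each piece is bounded in $\|\cdot\|_{s_0}$ by combining: the smoothing estimates \eqref{SM12} for $\Pi_\tn^\perp$ against the high-norm control $(\cP3)_\tn$; the tame bounds \eqref{pfi1}--\eqref{pfi3} on $\cP,\cP_\omega,\cP_\omega^\perp$ with $b$ tuned so that $K_\tn^{-b}$ absorbs $\|\wtW_\tn\|_{s_0+\tb_1}$; the quadratic estimate on $Q_\tn$ coming from Lemma \ref{XP_est}; and finally the smallness coming from $N_{\tn-1}^{-\ta}$ in $\cP_\omega$. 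Choosing the parameters as in \eqref{a1}--\eqref{p.cond} and using $N_\tn = K_\tn^p$, the resulting bound closes to $\normk{\cF(\wtU_{\tn+1})}{s_0} \leq C_*\varepsilon K_\tn^{-\ta_1}$.

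The high-norm estimate $(\cP3)_{\tn+1}$ is obtained by the analogous computation in $\|\cdot\|_{s_0+\tb_1}$, where now no smoothing is exploited on $\Pi_\tn^\perp\cF(\wtU_\tn)$; here one uses the tame interpolation between $\normk{\cdot}{s_0}$ and $\normk{\cdot}{s_0+\tb_1}$ provided by \eqref{tame-es-AI} and \eqref{pfi1}--\eqref{pfi3}, together with $(\cP2)_\tn$ and $(\cP3)_\tn$. The growth factor $K_{\tn-1}^{\mu_1}$ is reproduced thanks to the condition $\mu_1 > 3(\mu(\tb)+2\sigma_1)$ in \eqref{b1}. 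Finally, $(\cP1)_{\tn+1}$ is a consequence of $(\cP1)_\tn$ and the low-norm bound on $\wtH_{\tn+1}$ obtained as above from \eqref{tame-es-AI} and $(\cP2)_\tn$, which yields the geometric decay \eqref{P1.2} via $\ta_2 = \chi^{-1}\ta_1-\mu(\tb)-2\sigma_1$. Telescoping \eqref{P1.2} gives the uniform estimate \eqref{P1.1}, and the fact that $\wtH_{\tn+1}$ is a reversible traveling wave variation (by the stability of $\bT_0$ under these symmetries and the commutation of $\Pi_\tn$ with $\cS$ and $\tau_\vs$) ensures that the reversibility and momentum conditions \eqref{RTTT} are inherited.

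The main obstacle is tuning the constants $\ta_1,\ta_2,\mu_1,\tb_1,p$ so that the three estimates $(\cP1)$, $(\cP2)$, $(\cP3)$ close simultaneously: the loss $K_\tn^{p(\tau+1)}$ coming from the small-divisor estimate \eqref{almi4} applied at scale $N_\tn=K_\tn^p$, the loss $\mu(\tb)$ from the reduction Proposition \ref{end_redu}, the loss $\sigma_1$ from $\bT_0$, and the smoothing gain $K_\tn^{-b}$ from $\cP_\omega^\perp$ must be balanced against the quadratic gain $K_\tn^{-2\ta_1/\chi}$ of the Newton step. This is precisely what \eqref{a1}--\eqref{sigma1} and \eqref{p.cond} encode, and verifying these inequalities at each iterative step (in particular the high-norm bound, where the Newton scheme only gives linear growth rather than decay) is the technical heart of the proof.
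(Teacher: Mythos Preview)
Your proposal is correct and follows essentially the same approach as the paper: an inductive Nash-Moser scheme with the correction $H_{\tn+1}=-\b\Pi_\tn \bT_0(\wti_\tn)\Pi_\tn\cF(\wtU_\tn)$, where the reversible/traveling structure is propagated via Lemma \ref{lem:RT} and Theorem \ref{alm.approx.inv}, and the quantitative estimates $(\cP1)$--$(\cP3)$ close exactly by the parameter choices \eqref{a1}--\eqref{p.cond}. The paper itself defers these estimates to \cite{BM,BBHM} and only spells out the new symmetry propagation, which you handle the same way. One small point you omit: the correction $H_{\tn+1}$ is a priori defined only for $\lambda\in\cG_{\tn+1}$, and one must Whitney-extend it to all of $\R^\nu\times[\kappa_1,\kappa_2]$ (with equivalent $\|\cdot\|_s^{k_0,\upsilon}$ norm) before setting $\wtU_{\tn+1}:=\wtU_\tn+\wtH_{\tn+1}$, so that $\wtW_{\tn+1}$ is indeed $k_0$-differentiable on the full parameter space as stated in $(\cP1)_{\tn+1}$.
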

\begin{proof}
The inductive proof follows exactly as in \cite{BM,BBHM}. 
Note that the almost invertibility property 
proved in Theorem \ref{almo.inve}, as well as in Theorem \ref{alm.approx.inv}, 
is  formulated exactly as in \cite{BM,BBHM}. 
The only novelty is to check that each approximate torus 
$ \wti_\tn $ is reversible and traveling. 
Clearly $i_0:=(\vf,0,0) $ satisfies \eqref{RTTT}. Supposing inductively that 
$ \wti_{\tn} $ is  reversible and traveling,  we now prove that 
the successive approximation $\wti_{\tn+1}$ defined by the modified Nash-Moser scheme
in \cite{BM,BBHM}  is a reversible and traveling wave as well. 
	By \eqref{param.NASH}, the smallness condition \eqref{KAM_small_cond} holds for $\varepsilon$ small enough.  Moreover  \eqref{ansatz} holds 
by \eqref{P1.1}. 
	Therefore Theorem \ref{almo.inve} holds and the 
	almost invertibility assumption (AI) 
	of Section \ref{sec:approx_inv}  holds 
	 for all $\lambda\in \b\Lambda_{\tn+1}^{\upsilon}$, see \eqref{bLambdan}.
	Then Theorem \ref{alm.approx.inv} implies the existence of an almost approximate inverse $\bT_\tn := \bT_\tn(\lambda,\wti_\tn)$ of  the linearized operator 
	$ \di_{i,\alpha} \cF (  \wti_{\tn} ) $, 
	which satisfies, 
for any  anti-reversible traveling wave variation $ g $, 
the tame estimate \eqref{tame-es-AI}. 
Moreover. the first  three components of  
$\bT_\tn g $ form a reversible traveling wave variation. 
	For all $\lambda \in \cG_{\tn+1} = \cG_{\tn} \cap \Lambda_{\tn+1}^\upsilon(\wti_\tn) $
	(cfr. \eqref{P2.1}) 	we define the successive approximation
	\begin{equation*}\label{succ.approx}
		\begin{aligned}
		 U_{\tn+1} := \wtU_\tn +  H_{\tn+1} \,, \quad 
		 H_{\tn+1} := (\wh\fI_{\tn+1}, \wh\alpha_{\tn+1}) := -\b\Pi_\tn \bT_\tn \Pi_\tn \cF(\wtU_\tn) \in E_\tn \times \R^\nu \,,
		\end{aligned}
	\end{equation*}
	where $\b\Pi_\tn$ is defined for any $(\fI,\alpha)$, with $ \fI $ a traveling wave variation, by 
$ \b\Pi_\tn(\fI,\alpha) := ( \Pi_\tn\fI,\alpha) $.
By Lemma \ref{lem:RT} and since 
$ \wti_{\tn} $ is a reversible traveling wave, we have that 
 $ \cF(\wtU_\tn) = \cF(\wti_\tn, \widetilde \alpha_n) $ is an 
 anti-reversible traveling wave variation, i.e \eqref{g_revcond}-\eqref{g.mom.thm} hold. 
 Thus the first three components of 
 $\bT_\tn \Pi_\tn \cF(\wtU_\tn) $ form a reversible traveling wave variation, as well as
 $ \b\Pi_\tn \bT_\tn \Pi_\tn \cF(\wtU_\tn) $.  
 Finally one extends $ H_{\tn+1} $, defined for $ \lambda \in \cG_{\tn+1} $, to 
 $ {\widetilde H}_{\tn+1} $ defined for all $ \lambda \in \R^\nu \times [\kappa_1, \kappa_2] $, with an equivalent $\| \ \|_s^{k_0,\upsilon}$-norm.  Set 
 $ {\widetilde U}_{\tn+1}  := {\widetilde U}_{\tn} + {\widetilde H}_{\tn+1}   $. 
	
	The estimates \eqref{P1.1}-\eqref{P2.2} and $(\cP 3)_{\tn+1}$ 
	follow exactly as in \cite{BM,BBHM}.
\end{proof}

\paragraph{Proof of Theorem \ref{NMT}.}
Let $\upsilon = \varepsilon^{\rm a}$, with $0<{\rm a}<{\rm a_0}:= 1/(2+\tau_3)$. Then, the smallness condition in \eqref{param.NASH} holds for $0<\varepsilon<\varepsilon_0$ small enough and Theorem \ref{NASH} holds. 
By \eqref{P1.2},
the sequence of functions
$	\wtW_\tn = \wtU_\tn - (\vf,0,0,\omega) = 
(\wt\fI_\tn,\wt\alpha_\tn-\omega) $ 
converges to a function
$	W_\infty : \R^\nu\times [\kappa_1,\kappa_2]
	 \rightarrow H_\vf^{s_0} \times H_\vf^{s_0} \times H^{s_0} \times \R^\nu $, and
we define
$$ 
U_\infty := (i_\infty,\alpha_\infty) := (\vf,0,0,\omega) + W_\infty \, . 
$$ 
The torus $ i_\infty $ is reversible and traveling, i.e. \eqref{RTTT} holds. 
By \eqref{P1.1}, \eqref{P1.2}, we also deduce
\begin{equation}\label{Uinfty.est}
	\begin{aligned}
		&\normk{U_\infty-U_0}{s_0+\mu(\tb)+\sigma_1} \leq C_* \varepsilon\upsilon^{-1} \,,
		\quad
		 \normk{U_\infty-\wtU_\tn}{s_0+\mu(\tb)+\sigma_1} \leq C \varepsilon\upsilon^{-1} K_\tn^{-\ta_2} \,, \  \forall\,\tn \geq 1 \, .
	\end{aligned}
\end{equation}
In particular \eqref{alpha_infty}-\eqref{i.infty.est} hold.
By Theorem \ref{NASH}-$(\cP 2)_\tn$, we deduce that $\cF(\lambda;U_\infty(\lambda))=0$ for any 
$$
\lambda \in	\bigcap_{\tn\in\N_0} \cG_{\tn} = \cG_0 \cap \bigcap_{\tn \geq 1} \b\Lambda_\tn^\upsilon(\wti_{\tn-1}) \stackrel{\eqref{bLambdan}}{=} \cG_0 \cap \Big[ \bigcap_{\tn \geq 1} \t\Lambda_\tn^\upsilon (\wti_{\tn-1}) \Big] 
	\cap \Big[ \bigcap_{\tn \geq 1} \t\Lambda_\tn^{\upsilon,I}(\wti_{\tn-1}) \Big] 
$$
where $ \cG_0:= \t\Omega\times[\kappa_1,\kappa_2] $. 
To conclude the proof of Theorem \ref{NMT} 
it remains only to define the $  \mu_j^\infty $ in \eqref{def:FE} and prove that the  set $\cC_\infty^\upsilon$ in \eqref{0meln}-\eqref{2meln-} is contained in $\cap_{\tn \geq 0}\cG_{\tn} $. We first define 
\begin{equation}\label{Ginfty}
	\cG_\infty := \cG_0 \cap \Big[ \bigcap_{\tn \geq 1} \t\Lambda_\tn^{2\upsilon} (i_\infty) \Big] \cap \Big[ \bigcap_{\tn \geq 1} \t\Lambda_\tn^{2\upsilon,I}(i_\infty) \Big] \, . 
\end{equation}
\begin{lem}\label{sub1}
	$\cG_\infty \subseteq \cap_{\tn \geq 0}\cG_{\tn}$, where $\cG_{\tn}$ are defined in \eqref{P2.1}.
\end{lem}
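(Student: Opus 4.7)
The plan is to proceed by induction on $\tn$, showing that $\cG_\infty \subseteq \cG_{\tn}$ for every $\tn \geq 0$. The base case $\tn=0$ holds by \eqref{Ginfty} and the definition $\cG_0 = \t\Omega\times[\kappa_1,\kappa_2]$. For the inductive step, assuming $\cG_\infty \subseteq \cG_\tn$, recall that $\cG_{\tn+1} = \cG_{\tn} \cap \b\Lambda_{\tn+1}^\upsilon(\wti_\tn)$ with $\b\Lambda_{\tn+1}^\upsilon(\wti_\tn) = \t\Lambda_{\tn+1}^\upsilon(\wti_\tn) \cap \t\Lambda_{\tn+1}^{\upsilon,I}(\wti_\tn)$, so it is enough to prove the two inclusions
\begin{equation}\label{sub1.incl}
\t\Lambda_{\tn+1}^{2\upsilon}(i_\infty) \subseteq \t\Lambda_{\tn+1}^{\upsilon}(\wti_\tn)\,, \qquad
\t\Lambda_{\tn+1}^{2\upsilon,I}(i_\infty) \subseteq \t\Lambda_{\tn+1}^{\upsilon,I}(\wti_\tn)\,.
\end{equation}

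For the first inclusion in \eqref{sub1.incl} I would apply property $({\bf S3})_{\tn+1}$ of Theorem \ref{iterative_KAM}, namely the implication \eqref{INCLPRO}, with $i_1 = i_\infty$, $i_2 = \wti_\tn$, replacing $\upsilon$ by $2\upsilon$ and taking $\rho = \upsilon$. This requires
\begin{equation*}
\varepsilon\upsilon^{-1} C(S) N_\tn^{\tau+1}\,\|i_\infty - \wti_\tn\|_{s_0+\mu(\tb)}^{k_0,\upsilon} \leq \upsilon.
\end{equation*}
Using the high-norm estimate for $U_\infty - \wtU_\tn$ in \eqref{Uinfty.est} one has $\|i_\infty - \wti_\tn\|_{s_0+\mu(\tb)}^{k_0,\upsilon} \leq C\varepsilon\upsilon^{-1} K_\tn^{-\ta_2}$, and since $N_\tn = K_\tn^p$ the condition reduces to $C\,\varepsilon^2\upsilon^{-3}\,K_\tn^{p(\tau+1)-\ta_2} \leq 1$. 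By the choice of constants in \eqref{a1}--\eqref{b1} one has $\ta_2 \geq p(\tau+1) + \chi^{-1}$, so the exponent of $K_\tn$ is strictly negative, and the inequality holds thanks to the smallness condition \eqref{param.NASH}.

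For the second inclusion in \eqref{sub1.incl} I would argue directly: if $\lambda \in \t\Lambda_{\tn+1}^{2\upsilon,I}(i_\infty)$ then for all $|\ell|\leq K_\tn$, $j\in\S_0^c$ with $\vec\jmath\cdot\ell+j=0$ one has $|\omega\cdot\ell+\mu_j^{(\tn)}(i_\infty)| \geq 4\upsilon |j|^{3/2}\langle\ell\rangle^{-\tau}$. Since $\mu_j^{(\tn)} - \mu_j^{(0)} = \fr_j^{(\tn)}$ and $\mu_j^{(0)}$ is independent of $i$, the estimate \eqref{S3_4} yields
\begin{equation*}
|\mu_j^{(\tn)}(i_\infty) - \mu_j^{(\tn)}(\wti_\tn)| \leq C(S,\tb)\,\varepsilon\upsilon^{-1}\,\|i_\infty - \wti_\tn\|_{s_0+\mu(\tb)} \leq C\,\varepsilon^2\upsilon^{-2}\,K_\tn^{-\ta_2}.
\end{equation*}
Because of the momentum constraint $j = -\vec\jmath\cdot\ell$ with $|\ell|\leq K_\tn$, we have the trivial lower bound $|j|^{3/2}\langle\ell\rangle^{-\tau} \geq K_\tn^{-\tau}$ in the worst case $|j|=1$, so to conclude $|\omega\cdot\ell+\mu_j^{(\tn)}(\wti_\tn)| \geq 2\upsilon|j|^{3/2}\langle\ell\rangle^{-\tau}$ it suffices that $C\,\varepsilon^2\upsilon^{-3}\,K_\tn^{\tau-\ta_2} \leq 1$. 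Again $\ta_2 > \tau$ by \eqref{a1}--\eqref{b1} and $p\geq 1$, and the condition follows from \eqref{param.NASH}.

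The expected obstacle is not conceptual but bookkeeping: one must verify that the single choice of exponents $\ta_1,\ta_2,\mu_1,\tb_1,p$ made in \eqref{a1}--\eqref{sigma1} simultaneously supports \emph{both} inclusions in \eqref{sub1.incl} as well as the other Nash-Moser steps; the crucial inequality driving the present lemma is $\ta_2 \geq p(\tau+1) + \chi^{-1}$, which is exactly why $\ta_1$ in \eqref{a1} is required to dominate $\chi(p(\tau+1)+\mu(\tb)+2\sigma_1)+1$.
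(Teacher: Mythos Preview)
Your strategy matches the paper's: reduce to the two inclusions in \eqref{sub1.incl}, invoke \eqref{INCLPRO} for the second-Melnikov sets, and handle the first-Melnikov sets by an analogous perturbative estimate (the paper simply writes ``by similar arguments'' for the latter, while you spell it out).

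There is, however, a factual slip in your direct argument for the second inclusion. You assert that $\mu_j^{(0)}$ is independent of the torus $i$, so that $\mu_j^{(\tn)}(i_\infty)-\mu_j^{(\tn)}(\wti_\tn)=\Delta_{12}\fr_j^{(\tn)}$ and only \eqref{S3_4} is needed. But by \eqref{D0} one has $\mu_j^{(0)}=\tm_{\frac32}\Omega_j(\kappa)+\tm_1 j+\tm_{\frac12}|j|^{1/2}$, and the constants $\tm_{\frac32},\tm_1,\tm_{\frac12}$ do depend on $i$ (they are built from $i$ in Sections~\ref{sec:order32}--\ref{sec:order12}; see Proposition~\ref{end_redu}). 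Thus you must also control $|\Delta_{12}\mu_j^{(0)}|\lesssim \varepsilon\,|j|^{3/2}\|i_\infty-\wti_\tn\|_{s_0+\sigma}$ via \eqref{const_smallV}. This extra term is harmless---after dividing by $|j|^{3/2}$ you need $\varepsilon\|i_\infty-\wti_\tn\|\lesssim \upsilon K_\tn^{-\tau}$, which follows from \eqref{Uinfty.est} and $\ta_2>\tau$---but your argument as written is incorrect and must be amended.

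A smaller bookkeeping point: the bound $\|i_\infty-\wti_\tn\|_{s_0+\mu(\tb)}^{k_0,\upsilon}\leq C\varepsilon\upsilon^{-1}K_\tn^{-\ta_2}$ from \eqref{Uinfty.est} holds only for $\tn\geq 1$. Your inductive step at $\tn=0$ (i.e.\ $\cG_\infty\subseteq\cG_1$) needs instead the first estimate in \eqref{Uinfty.est}, giving the requirement $C\varepsilon^2\upsilon^{-3}K_0^{p(\tau+1)}\leq 1$, which holds by \eqref{param.NASH} since $\tau_3>p(\tau+1)$. The paper singles out this case explicitly for exactly this reason.
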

\begin{proof}
We shall  use the inclusion property \eqref{INCLPRO}, with 
$S$ fixed in \eqref{sigma1}.
	By \eqref{Uinfty.est} 
	we have
	\begin{equation*}
		\begin{aligned}
		 \varepsilon (2 \upsilon)^{-1} 
		 C(S) N_0^{\tau+1}\| i_\infty - i_0 \|_{s_0+\mu(\tb)} &\leq \varepsilon (2 \upsilon)^{-1} C(S) K_0^{p(\tau+1)} C_* \varepsilon\upsilon^{-1} \leq \upsilon \,,\\
		 \varepsilon (2 \upsilon)^{-1} C(S) N_{\tn-1}^{\tau+1} \| i_\infty - \wti_{\tn-1} \|_{s_0+\mu(\tb)} &\leq \varepsilon (2 \upsilon)^{-1} C(S) K_{\tn-1}^{p(\tau+1)} 
	C \varepsilon\upsilon^{-1} K_{\tn-1}^{-\ta_2} \leq \upsilon \,, \quad \forall\,\tn\geq 2\,,
		\end{aligned}
	\end{equation*}
since $ \tau_3 > p (\tau +1)  $ (by \eqref{param.NASH} and $ \tau_2 > \tau_1 
= \tau (k_0 +1) + k_0$)
	 and $\ta_2 > p (\tau+1)  $ (see Remark \ref{rem:cond2.param}). 
Therefore \eqref{INCLPRO} implies
$
		\t\Lambda_{\tn}^{2\upsilon}(i_\infty) \subset \t\Lambda_{\tn}^\upsilon(\wti_{\tn-1}) $,
		$ \forall\,\tn \geq 1 $. 
	By similar arguments we deduce that $\t\Lambda_{\tn}^{2\upsilon,I}(i_\infty) \subset \t\Lambda_{\tn}^{\upsilon,I}(\wti_{\tn-1})$.
\end{proof}

Then we define the $ \mu_{j}^\infty$ in \eqref{def:FE},
where 
$  \tm_{\frac32}^\infty := \tm_{\frac32}(i_\infty) $, $ \tm_1^\infty=\tm_1(i_\infty)$, 
$ \tm_{\frac12}^\infty= \tm_{\frac12}(i_\infty)$, 
with $\tm_{\frac32}, \tm_1, \tm_{\frac12}$  provided in Proposition \ref{end_redu}.
By \eqref{rem.eigen.KAM}, 
the sequence $(\fr_j^{(\tn)}(i_\infty))_{\tn\in\N}$, 
with $\fr_j^{(\tn)}$  given by Theorem \ref{iterative_KAM}-$({\bf S1})_\tn$ (evaluated at 
$ i = i_\infty $), is a Cauchy sequence in 
$|\,\cdot\, |^{k_0,\upsilon}$. 
Then we define 
$  \fr_j^\infty := \lim_{\tn\to \infty} \fr_j^{(\tn)} (i_\infty) $, for any $ j\in\S_0^c $,  
which satisfies 
$ | \fr_j^\infty - \fr_j^{(\tn)}(i_\infty)|^{k_0,\upsilon} \leq 
C \varepsilon\upsilon^{-1} N_{\tn-1}^{-\ta} $ for any $  \tn \geq 0 $. 
Then, recalling $\fr_j^{(0)}(i_\infty) = 0 $ and \eqref{const_small},  
the estimates \eqref{coeff_fin_small} hold 
(here $C= C(S)$ with $S $ fixed in \eqref{sigma1}).
Finally one checks (see e.g.  Lemma 8.7 in \cite{BM})
that the  Cantor set $\cC_\infty^\upsilon$ in \eqref{0meln}-\eqref{2meln+} satisfies
$\cC_\infty^\upsilon\subseteq \cG_\infty$, with $\cG_\infty$ defined in \eqref{Ginfty}, 
and  Lemma \ref{sub1} implies 
that $\cC_\infty^\upsilon\subseteq \cap_{\tn \geq 0} \cG_{\tn}$.
This concludes the proof of Theorem \ref{NMT}.

\begin{footnotesize}

\end{footnotesize}

\end{document}